\newcommand{\moins}{\mathbin{\fgebackslash}}
\newcommand{\subsubsubsection}[1]{{\vskip.1cm\noindent $\diamond$~{\it {#1}}. ---}}
\theoremstyle{plain}
\numberwithin{equation}{section}
\newtheorem{prop}[equation]{Proposition}
\newtheorem{theo}[equation]{Th\'eor\`eme}
\newtheorem{coro}[equation]{Corollaire}
\newtheorem{lemm}[equation]{Lemme}
\theoremstyle{definition}
\theoremstyle{remark}
\newtheorem{defi}[equation]{D\'efinition}
\newtheorem{rema}[equation]{Remarque}
\newtheorem{exem}[equation]{Exemple}
\let\mathcal\mathscr
\let\cal\mathcal
\let\goth\mathfrak
\def\Q{{\bf Q}} \def\Z{{\bf Z}}
\def\C{{\bf C}} \def\N{{\bf N}} \def\R{{\bf R}}
\def\O{{\cal O}}  
\def\dual{{\boldsymbol *}}
\def\bmu{{\boldsymbol\mu}}
\def\rbp{{\overline\R}_+^\dual}
\def\dbar{\partial^\dual}
\def\ainf{{\bf A}_{{\rm inf}}}
\def\ast{{\bf A}_{\rm st}}
\def\bst{{\bf B}_{{\rm st}}}
\def\Bst{{\mathbb B}_{{\rm st}}}
\def\bcris{{\bf B}_{{\rm cris}}} 
\def\acris{{\bf A}_{{\rm cris}}}
\def\bdr{{\bf B}_{{\rm dR}}}
\def\piqp{{\bf P}^1}
 \def\A{{\bf A}}
\def\hJ{\widehat J}
\def\tJ{\widetilde J}
\def\ocirc#1{\accentset{\circ}{#1}}
\def\wotimes{\,\widehat\otimes\,}
\newcommand{\rig}{\operatorname{rig} }
\newcommand{\eet}{\operatorname{\acute{e}t} }
\newcommand{\proet}{\operatorname{pro\acute{e}t} }
\newcommand{\an}{\operatorname{an} }
\newcommand{\Spf}{\operatorname{Spf} }
\newcommand{\sx}{{\mathcal{X}}}
\def\epsilon{\varepsilon}
\let\emptyset\varnothing
\begin{document}
\title[Cohomologie des courbes $p$-adiques]
{Cohomologie des courbes analytiques $p$-adiques}
\author{Pierre Colmez}
\address{CNRS, IMJ-PRG, Sorbonne Universit\'e, 4 place Jussieu, 75005 Paris, France}
\email{pierre.colmez@imj-prg.fr}
\author{Gabriel Dospinescu}
\address{CNRS, UMPA, \'Ecole Normale Sup\'erieure de Lyon, 46 all\'ee d'Italie, 69007 Lyon, France}
\email{gabriel.dospinescu@ens-lyon.fr}
\author{Wies{\l}awa Nizio{\l}}
\address{CNRS, IMJ-PRG, Sorbonne Universit\'e, 4 place Jussieu, 75005 Paris, France}
\email{wieslawa.niziol@imj-prg.fr}
\dedicatory{\`A la m\'emoire de Robert Coleman et Michel Raynaud}
\begin{abstract}
La cohomologie d'un affino\"{\i}de a des propri\'et\'es peu sympathiques;
on y rem\'edie en g\'en\'eral en rendant l'affino\"{\i}de surconvergent.  Dans cet article,
nous nous int\'eressons \`a la dimension~$1$, et nous calculons, en utilisant des d\'ecompositions
analogues \`a celles des
surfaces de Riemann en pantalons,
les diff\'erentes cohomologies des affino\"{\i}des  
 (pour donner un sens \`a ces d\'ecompositions, nous sommes amen\'es 
\`a modifier l\'eg\`erement la notion de sch\'ema formel $p$-adique, ce qui nous conduit \`a d\'efinir 
la g\'eom\'etrie adoque -- interpolation entre adique et ad hoc). 
Il en r\'esulte
que la cohomologie d'un affino\"{\i}de (de dimension~$1$) n'est pas si pathologique.

Nous en d\'eduisons un calcul des diff\'erentes cohomologies des courbes sans bord (comme le
demi-plan de Drinfeld et ses rev\^etements), et obtenons en particulier
une description de leur cohomologie pro\'etale $p$-adique en termes du complexe de
de Rham et de la cohomologie de Hyodo-Kato, cette derni\`ere ayant des propri\'et\'es
similaires \`a celles de la cohomologie pro\'etale $\ell$-adique, pour $\ell\neq p$.
\end{abstract}
\begin{altabstract}
Cohomology of affinoids does not behave well; often,  this can be remedied by making  affinoids overconvergent. In this paper, we focus on  dimension 1 and  compute, using  analogs of pants decompositions of 
Riemann surfaces,
various cohomologies of affinoids. 
 To give a meaning to these decompositions we modify slightly the notion of $p$-adic
formal scheme, which gives rise to the adoc (an interpolation between adic and ad hoc) geometry. 
It turns out  that cohomology of affinoids (in dimension 1) is not that pathological. 

 From this we deduce a computation of  cohomologies of curves without boundary (like the Drinfeld half-plane and its coverings).  In particular, we obtain a description of their $p$-adic pro-\'etale cohomology in terms of de the Rham complex and the Hyodo-Kato cohomology, the later having properties similar to the ones of $\ell$-adic pro-\'etale cohomology, for $\ell\neq p$.
\end{altabstract}
\setcounter{tocdepth}{2}

\maketitle

{\Small \tableofcontents}

\section*{Introduction}

Soit $p$ un nombre premier et
soit $C$ un corps alg\'ebriquement clos, complet pour une valuation $v_p$ v\'erifiant
$v_p(p)=1$ et\footnote{Cette restriction est due au fait que nous avons choisi
d'exprimer les r\'esultats en termes de l'anneau $\bst$; une formulation (un peu moins
esth\'etique) qui n'utilise
que $\bcris$ serait possible et elle permettrait de supprimer cette restriction.}
 $v_p(C^\dual)=\Q$.  
On note $\O_C$ l'anneau des entiers de $C$, ${\goth m}_C$ l'id\'eal maximal
de $\O_C$, $k_C$ son corps
r\'esiduel, $\O_{\breve C}$ l'anneau $W(k_C)$ et $\breve C$ le sous-corps $W(k_C)[\frac{1}{p}]$ de $C$.

Si $K$ est un sous-corps ferm\'e de $C$, on note $G_K$ le groupe
${\rm Aut}_K(C)$ des automorphismes continus de $C$ laissant fixe $K$.

\Subsection{Cohomologie pro\'etale $p$-adique}\label{TTT1}
Soit $X$ une courbe analytique
d\'efinie sur $C$ (vue, selon le contexte comme une courbe rigide analytique, un espace de Berkovich de dimension~$1$, etc.) 
et, 
sauf mention explicite
du contraire, lisse et
(g\'eom\'etriquement) connexe.
Si $X$ est compacte\footnote{Quel que soit le point de vue (rigide, Berkovich, etc.), 
une courbe compacte est, dans cet article, une courbe propre, et
un affino\"{\i}de est quasi-compact mais pas compact.}, sa cohomologie a de bonnes propri\'et\'es:
les groupes de cohomologies de de Rham $H^1_{\rm dR}(X)$ ou \'etale
$H^1_{\eet}(X,\Q_\ell(1))$, o\`u $\ell$ est un nombre premier,
 sont de dimension finie, ind\'ependante de la cohomologie
consid\'er\'ee (et invariante par extension des scalaires de $C$ \`a un surcorps
avec les m\^emes propri\'et\'es), sur le corps ad\'equat.
De plus, on a des th\'eor\`emes de comparaison reliant ces diff\'erentes
cohomologies.

Il n'en est pas de m\^eme si $X$ est seulement quasi-compacte mais pas compacte
(i.e~un affino\"{\i}de\footnote{Sauf mention
du contraire, un affino\"{\i}de est, dans ce texte, de dimension~$1$, lisse et connexe.}): 
dans ce cas,
$H^1_{\eet}(X,\Q_\ell(1))$ est encore de dimension finie si $\ell\neq p$, mais
$H^1_{\rm dR}(X)$ est un $C$-espace de dimension infinie non s\'epar\'e,
et $H^1_{\eet}(X,\Q_p(1))$ est aussi de dimension infinie (et d\'epend de $C$).
Nous nous proposons de montrer que, malgr\'e ces pathologies apparentes,
la cohomologie des affino\"{\i}des (et, plus g\'en\'eralement, des courbes non compactes)
a des propri\'et\'es raisonnables.

\subsubsection{Courbes quasi-compactes}\label{BAS1}
Si $Y$ est une courbe quasi-compacte,
notons $\O(Y)^{\dual\dual}$ le sous-groupe
des $f\in\O(Y)^\dual$ telles que $f-1$ soit topologiquement nilpotente.
\begin{theo}\label{intro1.1}
Si $Y$ est un affino\"{\i}de,
on a un diagramme commutatif fonctoriel de banachs\footnote{Si $M$ est un $\Z$-module,
on pose $\Q_p\wotimes M:=\Q_p\otimes_{\Z_p}(\varprojlim_n M/p^nM)$.}
$$
\xymatrix@R=.6cm@C=.5cm{
0\ar[r] &\Q_p\wotimes \O(Y)^{\dual\dual}
\ar[r]\ar@{=}[d] & H^1_{\proet}(Y,\Q_p(1))\ar[d] \ar[r] &
(\bst^+\otimes_{\breve C} H^1_{\rm HK}(Y)^{\rm sep})^{N=0,\varphi=p}\ar[r]\ar[d]^-{\theta\otimes\iota_{\rm HK}} & 0\\
0\ar[r]& \Q_p\wotimes \O(Y)^{\dual\dual} \ar[r]^-{\rm dlog} 
& \Omega^1(Y) \ar[r] & H^1_{\rm dR}(Y)^{\rm sep}\ar[r] & 0
} $$
dans lequel la ligne du haut est exacte, celle du bas est un complexe,
et toutes les fl\`eches sont d'image ferm\'ee.
\end{theo}

\begin{rema}\label{basic1}
{\rm (i)} Il y a un \'enonc\'e au niveau entier, cf.~rem.\,\ref{WN}.

{\rm (ii)} Un affino\"{\i}de \'etant quasi-compact, on a $H^1_{\proet}(Y,\Q_\ell(1))=
H^1_{\eet}(Y,\Q_\ell(1))$, pour tout $\ell$ (y compris $\ell=p$), cf.~\cite[cor. 3.17]{RAV}.

{\rm (iii)} 
Le groupe de cohomologie de Hyodo-Kato $H^1_{\rm HK}(Y)^{\rm sep}$ est 
un $\breve C$-espace de dimension finie muni d'actions d'un frobenius semi-lin\'eaire
$\varphi$, d'un op\'erateur de monodromie $N$ v\'erifiant $N\varphi=p\,\varphi N$,
et d'un {\it isomorphisme de Hyodo-Kato}: 
$$\iota_{\rm HK}:C\otimes_{\breve C}H^1_{\rm HK}(Y)^{\rm sep}\cong H^1_{\rm dR}(Y)^{\rm sep},$$
o\`u $H^1_{\rm dR}(Y)^{\rm sep}$ d\'esigne le s\'epar\'e de $H^1_{\rm dR}(Y)$
(i.e. son quotient par l'adh\'erence de $0$):
le groupe $H^1_{\rm dR}(Y)$ est un $C$-espace de dimension infinie, non s\'epar\'e,
mais $H^1_{\rm dR}(Y)^{\rm sep}$ 
est de dimension finie.

{\rm (iv)} 
La preuve du th\'eor\`eme utilise des m\'ethodes syntomiques mais va plus loin:
les m\'ethodes syntomiques~\cite{Ts,CN}
fournissent naturellement un diagramme commutatif \`a lignes exactes\footnote{
$H^1_{\rm HK}(Y)$ est naturellement un quotient $W_1/W_2$ de $\breve C$-banachs et
$\bst^+=\bcris^+[u]$ o\`u $\bcris^+$ est un $\breve C$-banach (et donc
$\bst^+$ est une limite inductive de $\breve C$-banachs). On d\'efinit
$\bst^+\wotimes_{\breve C} H^1_{\rm HK}(Y)$ 
comme $(\bst^+\wotimes_{\breve C}W_1)/(\bst^+\wotimes_{\breve C} W_2)$ o\`u
$\bst^+\wotimes_{\breve C}W_i:=\bst^+\otimes_{\bcris^+}(\bcris^+\wotimes_{\breve C}W_i)$.}:
$$
\xymatrix@R=.6cm@C=.5cm{
0\ar[r] &\O(Y)/C
\ar[r]\ar@{=}[d] & H^1_{\proet}(Y,\Q_p(1))\ar[d] \ar[r] &
(\bst^+\wotimes_{\breve C} H^1_{\rm HK}(Y))^{N=0,\varphi=p}\ar[r]\ar[d]^{\theta\otimes\iota_{\rm HK}} & 0\\
0\ar[r]& \O(Y)/C \ar[r]
& \Omega^1(Y) \ar[r] & H^1_{\rm dR}(Y)\ar[r] & 0
} 
$$
Par rapport au th\'eor\`eme, il y a deux diff\'erences sensibles:

$\bullet$ Le groupe $H^1_{\rm HK}(Y)$ est, comme $H^1_{\rm dR}(Y)$,
 de dimension infinie et non s\'epar\'e
(on a un isomorphisme $\iota_{\rm HK}:C\wotimes_{\breve C}H^1_{\rm HK}(Y)\cong H^1_{\rm dR}(Y)$),
alors que $H^1_{\rm HK}(Y)^{\rm sep}$ est s\'epar\'e, de dimension finie, et
se d\'ecrit simplement (th.\,\ref{expo2}) en termes d'une triangulation de~$Y$. 

$\bullet$ 
La fl\`eche $\O(Y)\to \Q_p\wotimes \O(Y)^{\dual\dual}$ faisant commuter le diagramme
\'evident est $f\mapsto \exp(f)$, mais
l'image de $\O(Y)$ par $f\mapsto \exp(f)$ est $\Q_p\otimes \O(Y)^{\dual\dual}$
qui est dense dans $\Q_p\wotimes \O(Y)^{\dual\dual}$ mais ne lui est pas \'egal
car $\O(Y)^{\dual\dual}$ n'est pas complet pour la topologie $p$-adique: par exemple,
si $Y$ est la boule unit\'e (i.e.~$\O(Y)=C\langle T\rangle$), alors
$\prod_{n\geq 1}(1+p^{1/p^n}T)^{p^n}$ ne converge pas dans $C\langle T\rangle$
(mais converge dans $\O_C[[T]]$).
\end{rema}
\begin{rema}\label{basic42}
Si $Y$ est un affino\"{\i}de de dimension~$d$, et si $r\leq d$,
le r\'esultat ci-dessus sugg\`ere que l'on pourrait peut-\^etre esp\'erer une suite exacte
$$0\to \Q_p\wotimes K^r_{\cal M}(Y)^{++}\to H^r_{\proet}(Y,\Q_p(r))
\to (\bst^+\otimes H^r_{\rm HK}(Y)^{\rm sep})^{N=0,\varphi=p^r}\to 0.$$
(Le groupe $K^r_{\cal M}(Y)^{++}$ est le sous-groupe du groupe
de $K$-th\'eorie de Milnor $K^r_{\cal M}(\O(Y))$ engendr\'e par les symboles
$(f_1,\dots,f_r)$, avec $f_i\in\O(Y)^{\dual\dual}$.
Notons que, puisque l'on ne prend que des symboles d'\'el\'ements de $\O(Y)^{\dual\dual}$,
la relation de Steinberg disparait puisque $1-x\notin \O(Y)^{\dual\dual}$ si $x\in \O(Y)^{\dual\dual}$.)
\end{rema}

Une des raisons de la forme du diagramme ci-dessus est que
$H^1(Y,\O)=0$.  Si $Y$ est compacte, $H^1(Y,\O)\neq 0$ si $Y$ est de genre~$\geq1$,
mais $\Q_p\wotimes \O(Y)^{\dual\dual}=0$ et le diagramme prend la forme 
(classique, c'est un cas particulier du th\'eor\`eme de comparaison de Tsuji~\cite{Ts}
ou m\^eme, dans ce cas, de Kato~\cite{K1}) suivante:
\begin{theo}\label{basic2}
Si $Y$ est une courbe compacte, on a un diagramme commutatif
fonctoriel, dont les lignes sont exactes:
$$
\xymatrix@R=.6cm@C=.5cm{
0\ar[r] &  H^1_{\proet}(Y,\Q_p(1))\ar[d] \ar[r] &
(\bst^+\otimes_{\breve C} H^1_{\rm HK}(Y))^{N=0,\varphi=p}\ar[r]\ar[d]^{\theta\otimes\iota_{\rm HK}} &
H^1(Y,\O)\ar[r]\ar@{=}[d] &0\\
0\ar[r]& \Omega^1(Y) \ar[r] & H^1_{\rm dR}(Y)\ar[r] &
H^1(Y,\O)\ar[r]&0
}
$$
De plus, $H^1_{\proet}(Y,\Q_p(1))$ est un $\Q_p$-espace de dimension finie.
\end{theo}

\subsubsection{Affino\"{\i}des surconvergents}\label{TTT2}
Une mani\`ere standard de rendre la cohomologie de de Rham d'un affino\"{\i}de plus raisonnable
est de le rendre surconvergent: sa cohomologie de de Rham devient de dimension finie
(et topologiquement s\'epar\'ee).
En \'ecrivant un affino\"{\i}de surconvergent $Y^\dagger$ comme une limite projective
d'affino\"{\i}des $Y_\delta$, pour $\delta>0$, 
en utilisant la description du groupe $H^1_{\rm HK}(Y_\delta)^{\rm sep}$ du th.\,\ref{expo2}
ci-dessous, et
en passant \`a la limite dans le th.~\ref{intro1.1},
on obtient le r\'esultat suivant. 
\begin{theo}\label{intro1}
Si $Y^\dagger$ est un affino\"{\i}de surconvergent,
on a le diagramme commutatif fonctoriel d'espaces vectoriels topologiques
suivant:
$$
\xymatrix@R=.6cm@C=.6cm{
0\ar[r] & \O(Y^\dagger)/C\ar[r]^-{\exp}\ar@{=}[d] & H^1_{\proet}(Y^\dagger,\Q_p(1))\ar[d] \ar[r] &
(\bst^+\otimes_{\breve C} H^1_{\rm HK}(Y^\dagger))^{N=0,\varphi=p}\ar[r]\ar[d]^{\theta\otimes\iota_{\rm HK}} & 0\\
0\ar[r]& \O(Y^\dagger)/C \ar[r]^-d & \Omega^1(Y^\dagger) \ar[r] & H^1_{\rm dR}(Y^\dagger)\ar[r] & 0
}
$$
dans lequel les lignes sont exactes et toutes les fl\`eches sont d'image ferm\'ee.
\end{theo}
\begin{rema}\label{intro2}
{\rm (i)} Par d\'efinition, $H^1_{\proet}(Y^\dagger,\Q_p(1))=
\varinjlim_\delta H^1_{\proet}(Y_\delta,\Q_p(1))$, et
$H^1_{\rm HK}(Y^\dagger)=
\varinjlim_\delta H^1_{\rm HK}(Y_\delta)$
est un $\breve C$-espace de dimension finie muni d'actions d'un frobenius semi-lin\'eaire
$\varphi$ et d'un op\'erateur de monodromie $N$,
et d'un isomorphisme 
$\iota_{\rm HK}:C\otimes_{\breve C}H^1_{\rm HK}(Y^\dagger)\cong H^1_{\rm dR}(Y^\dagger)$.

{\rm (ii)} Comme $H^1_{\rm HK}(Y^\dagger)$ est de dimension finie,
$(\bst^+\otimes H^1_{\rm HK}(Y^\dagger))^{\varphi=p,N=0}$ est l'espace des $C$-points
d'un espace de Banach-Colmez~\cite{BC}.
Comme $\O(Y^\dagger)$ est l'espace des sections globales d'un faisceau coh\'erent,
on voit que la cohomologie \'etale g\'eom\'etrique d'un affino\"{\i}de surconvergent,
bien que tr\`es grosse, est compos\'ee d'objets ayant des propri\'et\'es
de finitude raisonnables.

{\rm (iii)} Les $\Q_p$-espaces vectoriels topologiques $(\bst^+\otimes_{\breve C} H^1_{\rm HK}(Y^\dagger))^{N=0,\varphi=p}$ et $H^1_{\rm dR}(Y^\dagger)$ sont des banachs mais tous les
autres espaces non nuls du diagramme sont des limites inductives de banachs.

\end{rema}

\subsubsection{Courbes sans bord}\label{intro3}
Supposons maintenant que
$Y$ n'est pas compacte mais n'a pas de bord quand m\^eme\footnote{Notons
que rendre un affino\"{\i}de surconvergent est une mani\`ere
de supprimer son bord.} (courbe Stein): par exemple,
une courbe ouverte obtenue en retirant un nombre fini de disques ferm\'es
d'une courbe propre, ou un rev\^etement \'etale du demi-plan de Drinfeld.
Dans ce cas, $H^1_{\rm dR}(Y)$ n'est pas forc\'ement de
dimension finie, mais c'est un espace s\'epar\'e, limite projective d\'enombrable
d'espaces de dimension finie.  Si $\ell\neq p$, alors $H^1_{\proet}(Y,\Q_\ell(1))$
est aussi une limite projective d\'enombrable
d'espaces s\'epar\'es de dimension finie, mais ce n'est pas le cas si $\ell=p$.

Une telle courbe est une r\'eunion croissante stricte d'affino\"{\i}des ou, au choix,
d'affino\"{\i}des surconvergents, et on d\'eduit du th.~\ref{intro1.1} (ou du th.~\ref{intro1})
le r\'esultat suivant qui fournit une preuve alternative au th.\,4.12 de~\cite{CDN2}
en dimension~$1$.

\begin{theo}\label{intro4}
Si $Y$ est une courbe non compacte, sans bord,
on a un diagramme commutatif fonctoriel
 de fr\'echets
$$
\xymatrix@R=.6cm@C=.7cm{
0\ar[r] & \O(Y)/C\ar[r]^-{\exp}\ar@{=}[d] & H^1_{\proet}(Y,\Q_p(1))\ar[d] \ar[r] &
(\bst^+\wotimes H^1_{\rm HK}(Y))^{N=0,\varphi=p}\ar[r]\ar[d]^{\theta\otimes\iota_{\rm HK}} & 0\\
0\ar[r]& \O(Y)/C \ar[r]^-d & \Omega^1(Y) \ar[r] & H^1_{\rm dR}(Y)\ar[r] & 0
}
$$
dans lequel les lignes sont exactes et toutes les fl\`eches sont d'image ferm\'ee.
\end{theo}

\begin{rema}\label{intro5}
{\rm (i)}
La principale diff\'erence avec le cas d'un affino\"{\i}de surconvergent
est que $H^1_{\rm HK}(Y)$ et $H^1_{\rm dR}(Y)$ ne sont pas forc\'ement
de dimension finie, mais sont des limites projectives d\'enombrables
d'espaces s\'epar\'es de dimension finie (ce sont donc des fr\'echets,
mais des fr\'echets un peu particuliers), cela explique les produits
tensoriels compl\'et\'es et l'isomorphisme de Hyodo-Kato
$\iota_{\rm HK}:C\wotimes_{\breve C}H^1_{\rm HK}(Y)\cong H^1_{\rm dR}(Y)$
fait aussi intervenir un produit
tensoriel compl\'et\'e.
Les autres espaces sont des duaux de limites inductives compactes de banachs.

{\rm (ii)} Si $Y$ est un affino\"{\i}de surconvergent
ou une courbe sans bord (compacte ou non),
le noyau de $H^1_{\proet}(Y,\Q_p(1))\to \Omega^1(Y)$ est isomorphe \`a
$t\, H^1_{\rm HK}(Y)^{\varphi=1}$, o\`u $t\in(\bcris^+)^{\varphi=p}$ 
est le $2i\pi$ $p$-adique de Fontaine.
\end{rema}

\Subsection{Description combinatoire des diverses cohomologies}\label{intro6}
Expliquons maintenant comment d\'ecrire les objets apparaissant dans les th.\,\ref{intro1.1},
\ref{intro1} et~\ref{intro4} \`a partir de d\'ecoupages en objets plus \'el\'ementaires.
Ces d\'ecoupages sont induits par la stratification naturelle
de la fibre sp\'eciale d'un mod\`ele semi-stable sur $\O_C$ (ouverts de lissit\'e
des composantes irr\'eductibles, et intersections de composantes irr\'eductibles).
Ils fournissent des recouvrements ouverts dont la combinatoire est particuli\`erement simple
(l'intersection de trois ouverts est vide, l'intersection de
deux ouverts est vide ou est un {\og cercle fant\^ome\fg}, 
et chacun des ouverts est affine et a {\og bonne r\'eduction\fg}),
ce qui facilite les calculs \`a la \v{C}ech.
Une des utilisations agr\'eables de ces d\'ecoupages est la trivialisation de 
la construction de l'isomorphisme de Hyodo-Kato
(cette construction est, en g\'en\'eral, assez p\'enible).
\subsubsection{D\'ecoupage en shorts et jambes}\label{intro6.1}
Soit $Y$ une courbe quasi-compacte.  On dispose d'une bijection
entre les triangulations $S$ de $Y$ et les mod\`eles semi-stables
de $Y$ sur $\O_C$. 

Choisissons donc une triangulation $S$ et notons $Y_S$ le mod\`ele 
semi-stable de $Y$ qui lui est associ\'e. On suppose $S$ assez fine
pour que les composantes irr\'eductibles
de la fibre sp\'eciale $Y_S^{\rm sp}$ soient lisses et deux d'entre elles s'intersectent
en au plus un point.
On voit $Y_S^{\rm sp}$ 
comme une {\it courbe propre sur $k_C$ munie d'un ensemble $A$ de points marqu\'es} 
$a=(P_a,\mu(a))$, avec $A=A_c\sqcup (A\moins A_c)$, o\`u:

$\bullet$ $A_c$ est l'ensemble des
points singuliers (intersections de deux composantes irr\'eductibles), 
pour lesquels $\mu(a)\in\Q_+^\dual$, 

$\bullet$ si $a\in A\moins A_c$, alors $\mu(a)=0^+$ (les $P_a$, pour $a\in A\moins A_c$, 
sont les points de $Y_S^{\rm sp}$ qu'il faut enlever pour obtenir
la fibre sp\'eciale au sens usuel). 

Les composantes irr\'eductibles de $Y_S^{\rm sp}$ (qui sont donc
propres et lisses) sont en bijection avec $S$ (on note $ Y^{\rm sp}_s$ la composante
correspondant \`a $s\in S$). 

\smallskip

A partir de ces donn\'ees, on fabrique
un graphe $\Gamma$, dont les sommets sont $S$, les ar\^etes sont $A$, chaque ar\^ete $a$
ayant pour longueur $\mu(a)$ et comme extr\'emit\'es les $s\in S$ tels que $P_a\in Y^{\rm sp}_s$
(et donc $a\in A_c$ a deux extr\'emit\'es alors que $a\in A\moins A_c$ a une seule
extr\'emit\'e).  Le graphe ainsi obtenu est donc le graphe dual de la fibre sp\'eciale
au sens classique auquel on a ajout\'e des ar\^etes de longueur $0^+$ aux sommets
correspondant aux composantes irr\'eductibles non propres, une par point manquant.

Si $s\in S$, le tube $Y_s$ de $ Y^{\rm sp}_s$ priv\'e de ses points marqu\'es
est {\it un short} (i.e.~(le mod\`ele formel d')un affino\"{\i}de
avec bonne r\'eduction), et si $a\in A_c$, le tube $Y_a$ de $P_a$ est {\it une jambe} (i.e.~une couronne
ouverte) de longueur $\mu(a)$ (on a $\O(Y_a)=\O_C[[T_{a,s_1},T_{a,s_2}]]/(T_{a,s_1}T_{a,s_2}-p^{\mu(a)})$,
si $s_1,s_2$ sont les extr\'emit\'es de~$a$).  

Les $Y_i$, pour $i\in I=S\sqcup A_c$,
forment une partition de $Y$, mais si on veut pouvoir reconstruire $Y$, il faut encore
une donn\'ee de recollement de $Y_s$ et $Y_a$ si $s$ est une extr\'emit\'e de $a$.
Le point $P_a$ d\'etermine une valuation de rang~$2$ sur $\O(Y_s)$, et donc
{\it un cercle fant\^ome} $Y_{s,a}$ (le choix d'un param\`etre local
fournit un isomorphisme $\O(Y_{s,a})\cong \O_C[[T_{s,a},T_{s,a}^{-1}\rangle$,
compl\'et\'e de $\O_C[[T_{s,a}]][T_{s,a}^{-1}]$ pour la topologie $p$-adique),
et on aussi un cercle fant\^ome $Y_{a,s}$ correspondant sur $Y_a$, et la donn\'ee
de recollement est un isomorphisme $\iota_{a,s}:Y_{a,s}\cong Y_{s,a}$, i.e.~un isomorphisme
$\O_C[[T_{s,a},T_{s,a}^{-1}\rangle\cong \O_C[[T_{a,s},T_{a,s}^{-1}\rangle$.

L'ensemble des donn\'ees pr\'ec\'edentes 
(i.e.~$\Gamma= (S,A, A_c,\mu)$, $(Y_i)_{i\in I}$, $(\iota_{i,j})_{(i,j)\in I_{2,c}}$,
o\`u $I_{2,c}=\{(a,s),\ 
{\text{$a\in A_c$ et $s$ extr\'emit\'e de $a$}}\}$)
est {\it un patron de courbe}.  Ce qui pr\'ec\`ede 
explique\footnote{Comme un dessin vaut mieux qu'un long discours, 
le lecteur est invit\'e \`a consulter les dessins du chap.~\ref{constr6} pour une repr\'esentation
 {\og physique\fg} des objets ci-dessus.} comment associer
un patron de courbe \`a une courbe munie d'une triangulation assez fine, et qu'on
peut reconstruire $Y$ \`a partir de son patron.  R\'eciproquement, on a
le r\'esultat suivant, analogue adique\footnote{Ou plut\^ot adoque, cf.~(iii) de la rem.\,\ref{constr5}
et~\S\,\ref{adoc1}. 
La g\'eom\'etrie adoque est \`a la g\'eom\'etrie adique ce que le ticket choc est au ticket chic,
\'echo lointain d'une \'epoque \'epique~\url{https://www.youtube.com/watch?v=247w8q1tPsY}.} 
de r\'esultats de Harbater~\cite{H} et Raynaud~\cite{R},
 qui est un peu surprenant au vu de l'abondance de donn\'ees
de recollement possibles.
\begin{theo}\label{expo1}
Si $(\Gamma,(Y_i)_{i\in I}, (\iota_{i,j})_{(i,j)\in I_{2,c}})$ est un patron de courbe,
il existe un unique couple $(Y,S)$, o\`u $Y$ est une courbe quasi-compacte
et $S$ une triangulation de~$Y$, dont ce soit le patron.
\end{theo}

\begin{rema}\label{expo1.1}
(i)
On peut s'amuser \`a varier les longueurs $\mu(a)$ des jambes et multiplier les
$\iota_{i,j}$ par des $\alpha_{i,j}\in\O_C^\dual$ ou, ce qui revient au m\^eme,
fixer les $\iota_{i,j}$ mais remplacer les $Y_a$ par des $Y_a^\alpha$,
avec $\O(Y_a^\alpha)=\O_C[[T_{a,s_1},T_{a,s_2}]]/(T_{a,s_1}T_{a,s_2}-\alpha_a)$ 
et $\alpha_a\in{\goth m}_C\moins\{0\}$
(ou m\^eme $\alpha_a\in{\goth m}_C$ si on se permet des courbes avec des singularit\'es nodales).
Cela fournit (cf.~\no\ref{familles}) 
des familles de courbes param\^etr\'ees par des produits de boules ouvertes.

(ii) On peut aussi, avec les m\^emes techniques, fabriquer une courbe relative
sur ${\rm Spa}(\ainf,\ainf)$ dont la fibre
en $\tilde p=p$ (cf.~\no\ref{Nota1} pour $\tilde p$) 
est $Y_S$
et celle en $\tilde p=0$ est une courbe singuli\`ere sur $\O_{\breve C}$
dont le graphe dual est le m\^eme que celui de sa fibre sp\'eciale (qui est aussi celle de $Y_S$).
\end{rema}

\begin{rema}\label{expo1.2}
{\rm (i)}
En dimension sup\'erieure, si on part d'une vari\'et\'e analytique quasi-compacte $Y$
ayant un mod\`ele semi-stable $Y_S$ sur $\O_C$, assez fin, on peut d\'ecouper cette vari\'et\'e
en prenant les images r\'eciproques des \'el\'ements du d\'ecoupage naturel
de la fibre sp\'eciale.  Chaque pi\`ece est une fibration en affino\"{\i}des
ayant bonne r\'eduction au-dessus d'une polycouronne, 
et ces pi\`eces se recollent le long de fibrations en affino\"{\i}des au-dessus de polycouronnes fant\^omes
pour reconstruire $Y$.  Cela devrait permettre de donner une description de la cohomologie
de $Y$ ne faisant intervenir que la combinatoire du squelette de $Y$ et la cohomologie
d'affino\"{\i}des avec bonne r\'eduction et celle de polycouronnes.

{\rm (ii)} On peut se demander, en l'absence d'un th\'eor\`eme de r\'eduction semi-stable
g\'en\'eral, quelles sont les pi\`eces minimales dont on a besoin pour reconstruire toute
vari\'et\'e quasi-compacte lisse.  En particulier, quel genre de pi\`eces fournit la
th\'eorie des alt\'erations \`a la Hartl~\cite{Hartl} et Temkin~\cite{Tem2}?
\end{rema}

\subsubsection{Applications \`a la cohomologie}
Le d\'ecoupage pr\'ec\'edent d'une courbe en shorts et jambes permet de ramener
l'\'etude de la cohomologie des courbes \`a celle des shorts et des jambes et celle du graphe $\Gamma$.
De mani\`ere g\'en\'erale, si $H^\bullet$ est une cohomologie \`a coefficients
dans un module $\Lambda$, raisonnable (en particulier, $H^0(Z)=\Lambda^{\pi_0(Z)}$ et, si $Z$
est un cercle fant\^ome, on dispose d'une application
r\'esidu $H^1(Y_{i,j})\to\Lambda$ ayant les propri\'et\'es habituelles),
ce d\'ecoupage fournit une filtration naturelle sur $H^1(Y_S)$
dont les quotients successifs\footnote{La notation $M=\big[ \xymatrix@C=.3cm{A\ar@{-}[r]&B\ar@{-}[r]&C}\big]$
signifie que l'on a une filtration $0=M_0\subset M_1\subset M_2\subset M_3=M$
avec $M_1=A$, $M_2/M_1=B$ et $M_3/M_2=C$.} sont:
$$H^1(Y_S)=\big[ \xymatrix@C=.3cm{H^1(\Gamma,\Lambda)
\ar@{-}[r]&\prod_{i\in I}H^1(Y_i)_0\ar@{-}[r]& H^1_c(\Gamma,\Lambda)^\dual}\big],$$
o\`u:

$\bullet$ $H^1(\Gamma,\Lambda)$ et $H^1_c(\Gamma,\Lambda)$ sont les groupes
de cohomologie et de cohomologie \`a support compact de l'espace topologique $\Gamma$
et $H^1_c(\Gamma,\Lambda)^\dual$ est le $\Lambda$-dual de $H^1_c(\Gamma,\Lambda)$;
on a
(cf.~\S\,\ref{TTT16} pour la d\'efinition des fl\`eches correspondantes):
$$H^1(\Gamma,\Lambda)={\rm Coker}(\Lambda^{S}\to\Lambda^{A_c}) 
\quad{\rm et}\quad
H^1_c(\Gamma,\Lambda)^\dual={\rm Ker}(\Lambda^A\to\Lambda^S).$$


$\bullet$ $H^1(Y_i)_0$ est l'ensemble des classes dont tous les r\'esidus 
en les cercles fant\^omes \`a la fronti\`ere de $Y_i$ sont nuls.

$\bullet$ La fl\`eche $H^1(Y_S)\to H^1_c(\Gamma,\Lambda)^\dual$ est celle
envoyant une classe sur la collection de ses r\'esidus.
\begin{rema}
(i) Le sous-groupe $H^1(\Gamma,\Lambda)$ appara\^{\i}t
comme le conoyau de $\prod_{i\in I}H^0(Y_i)\to
\prod_{(i,j)\in I_{2,c}}H^0(Y_{i,j})$; voir le (ii) de la
rem.\,\ref{change} pour le lien entre cette \'ecriture
et celle ci-dessus.

(ii)
On dispose d'un op\'erateur $N_\mu:H^1_c(\Gamma,\Q)^\dual\to H^1(\Gamma,\Q)$
(de monodromie)
qui fait intervenir les longueurs $\mu(a)$ des ar\^etes.
Cela munit $H^1(Y)$, si $\Lambda$ est un $\Q$-module,
d'un op\'erateur de monodromie $N$ (cf.~rem.~\ref{monod}).
\end{rema}

Les cohomologies de de Rham, de Hyodo-Kato, ou pro\'etale $\ell$-adique (pour tout~$\ell$)
sont raisonnables, ce qui conduit au th.\,\ref{expo2} ci-dessous.

On note $\partial Y\subset S$ {\it le bord analytique de $Y$},
i.e.~l'ensemble des $s\in S$ tels que $ Y^{\rm sp}_s$
contienne des points avec $\mu(a)=0^+$ (i.e.~les $s$ tels que la composante
irr\'eductible correspondante de la fibre sp\'eciale classique ne soit pas propre).
On pose
$S_{\rm int}=S\moins \partial Y$, et on note $\Gamma_{\rm int}$ le sous-graphe de $\Gamma$ obtenu
en supprimant les sommets de $\partial Y$ et les ar\^etes ayant une extr\'emit\'e
dans $\partial Y$.
\begin{theo}\label{expo2}
{\rm (i)}
Si $\ell\neq p$, alors $H^1_{\eet}(Y,\Q_\ell(1))$ admet une filtration naturelle
dont les quotients successifs sont
$$H^1_{\eet}(Y,\Q_\ell(1))=\big[
\xymatrix@C=.3cm{
H^1(\Gamma,\Q_\ell(1))\ar@{-}[r]&\prod\limits_{s\in S}H^1_{\eet}( Y^{\rm sp}_s,\Q_\ell(1))
\ar@{-}[r]& H_c^1(\Gamma,\Q_\ell)^\dual}\big].$$

{\rm (ii)} 
$H^1_{\rm HK}(Y)^{\rm sep}$ admet une filtration naturelle
dont les quotients successifs sont
$$H^1_{\rm HK}(Y)^{\rm sep}=\big[
\xymatrix@C=.3cm{
H^1(\Gamma_{\rm int},\breve C)\ar@{-}[r]&\prod\limits_{s\in S_{\rm int}}\hskip-.1cm H^1_{\rm rig}( Y^{\rm sp}_s)
\oplus\prod\limits_{s\in \partial Y}\hskip-.1cm H^1_{\rm rig}( Y^{\rm sp}_s)^{[1]}
\ar@{-}[r]& H_c^1(\Gamma,\breve C)^\dual(-1)}\big],$$
o\`u le $[1]$ en exposant d\'esigne le sous-espace de pente~$1$ pour l'action de $\varphi$
et le twist~$(-1)$ signifie que l'on multiplie l'action naturelle de $\varphi$ par $p$.
\end{theo}
\begin{rema}\label{expo3}

{\rm (i)} 
Les termes des filtrations ci-dessus ne d\'ependent pas de $S$ car
$H^1_{\eet}(\piqp,\Q_\ell(1))=0$ et $H^1_{\rm rig}(\piqp)=0$.  Cela permet,
en passant \`a la limite sur tous les choix possibles, d'en d\'eduire
que tout est fonctoriel.

{\rm (ii)}
Pour $\Q_\ell(1)$, le seul bonus de ce r\'esultat par rapport
\`a ce qui est dit ci-dessus est l'identification (classique)
de $H^1_{\eet}(Y_s,\Q_\ell(1))_0$ avec $H^1_{\eet}( Y^{\rm sp}_s,\Q_\ell(1))$.

{\rm (iii)} Pour $H^1_{\rm HK}$, il faut se fatiguer un peu plus pour arriver au r\'esultat:
l'identification (prop.\,\ref{basic9.11})
de $H^1_{\rm HK}(Y_s)^{\rm sep}_0$ et $H^1_{\rm rig}( Y^{\rm sp}_s)^{[1]}$
utilise la th\'eorie de Cartier.
Le passage de $H^1(\Gamma,\breve C)$ \`a $H^1(\Gamma_{\rm int},\breve C)$
d\'ecoule du calcul (lemme~\ref{basic18}) de l'intersection de $H^1(\Gamma,C)$ et de l'adh\'erence de $0$
dans $H^1_{\rm dR}(Y)$.

{\rm (iv)} Si $Y$ est propre, on a $\partial Y=\emptyset$, et la filtration du (ii)
est \`a la base de la construction de Coleman et Iovita~\cite{CI}.

{\rm (v)}
Le choix de $r\mapsto p^r$ fournit une section de la projection modulo $H^1(\Gamma,\Q_\ell(1))$.
La formule {\og de Picard-Lefschetz\fg} (rem.\,\ref{PL}), qui fait intervenir l'op\'erateur de monodromie~$N$, d\'ecrit
ce qui se passe quand on change $r\mapsto p^r$ .
\end{rema}

En passant \`a la limite, on obtient le r\'esultat suivant:
\begin{theo}\label{expo4}
Soit $Y$ un affino\"{i}de surconvergent ou une courbe sans bord, et soit $S$
une triangulation de $Y$, assez fine.

{\rm (i)}
Si $\ell\neq p$, alors $H^1_{\proet}(Y,\Q_\ell(1))$ admet une filtration naturelle
dont les quotients successifs sont
$$H^1_{\proet}(Y,\Q_\ell(1))=\big[
\xymatrix@C=.3cm{
H^1(\Gamma,\Q_\ell(1))\ar@{-}[r]&\prod_{s\in S}H^1_{\eet}( Y^{\rm sp}_s,\Q_\ell(1))
\ar@{-}[r]& H_c^1(\Gamma,\Q_\ell)^\dual}\big].$$

{\rm (ii)} 
$H^1_{\rm HK}(Y)$ admet une filtration naturelle
dont les quotients successifs sont
$$H^1_{\rm HK}(Y)=\big[
\xymatrix@C=.3cm{
H^1(\Gamma,\breve C)\ar@{-}[r]&\prod_{s\in S}H^1_{\rm rig}( Y^{\rm sp}_s)
\ar@{-}[r]& H_c^1(\Gamma,\breve C)^\dual(-1)}\big].$$
\end{theo}
\begin{rema}\label{intro10.1}
{\rm (i)} 
Pour la m\^eme raison que ci-dessus,
les termes des filtrations ne d\'ependent pas de $S$.

{\rm (ii)} 
Les actions de ${\rm Aut}(Y)$ sur $H^1_{\proet}(Y,\Q_{\ell}(1))$
et $H^1_{\rm HK}(Y)$ fournissent des repr\'esentations
 isomorphes (autant que faire se peut, i.e. apr\`es avoir
choisi un plongement de $\Q_\ell$ dans $C$...).  Si $Y$ est d\'efini sur
une extension finie $K$ de $\Q_p$, les actions du groupe de Weil-Deligne ${\rm WD}_K$ 
de $K$ sont aussi isomorphes.
Voir~\cite{O} pour une autre approche dans le cas propre.

{\rm (iii)} En utilisant les r\'esultats connus~\cite{Carayol2,Faltings2tours,Weinstein} 
sur la cohomologie \'etale $\ell$-adique
de la tour de Drinfeld (en dimension~$1$), le (ii) fournit une preuve du th.\,0.4
de~\cite{CDN} qui donne une description de
l'action de $G\times \check G\times {\rm WD}_F$ sur la cohomologie
de Hyodo-Kato de la tour (dans~\cite{CDN}, cette description est obtenue
en utilisant l'uniformisation de courbes de Shimura, la compatibilit\'e
local-global
et l'isomorphisme entre les tours de Lubin-Tate et de Drinfeld).
Coupl\'e avec le th.\,\ref{intro4}, cela
permet de retrouver le th.\,0.8 de~\cite{CDN}
dont la preuve utilise aussi des m\'ethodes globales.
\end{rema}

\subsubsection{Symboles et int\'egration $p$-adique}\label{TTT5}
Soient $X$ une courbe compacte, $B\hookrightarrow X$ un plongement de la boule
unit\'e ferm\'ee dans $X$ et $Y$ l'affino\"{\i}de de $X$ compl\'ementaire
de l'image de la boule unit\'e ouverte.
On a alors $\O(Y)^\dual/C^\dual=\O(Y)^{\dual\dual}/(1+{\goth m}_C)$,
et $H^1_{\rm HK}(X)=H^1_{\rm HK}(Y)^{\rm sep}$.
(Si on retire $r$ disques, alors $H^1_{\rm HK}(Y)^{\rm sep}/H^1_{\rm HK}(X)\cong \breve C^{r-1}$.)

Soit $J$ la jacobienne de $X$.  
On munit $J(C)$ de la topologie obtenue en voyant $J(C)$ comme l'ensemble
des $C$-points de la vari\'et\'e analytique rigide associ\'ee \`a $J$ ce qui
en fait un groupe de Lie sur $C$; si $J$ est de dimension $g$, son alg\`ebre de Lie est isomorphe \`a $C^g$
et, si $n\in\N$ est suffisamment grand (disons $n\geq n_0$),
l'application exponentielle $\exp_J$ induit un isomorphisme de $(p^n\O_C)^g$ sur un sous-groupe
ouvert $U_n$ de $J(C)$. Les translat\'es des $U_n$, pour $n\geq n_0$,
 forment alors une base de la topologie de $J(C)$.

On d\'eduit de la suite
exacte de Kummer et du calcul~\cite{vdp} de $H^1(Y,\O^\dual)$,
une suite exacte
$$0\to \Q_p\wotimes\O(Y)^\dual\to H^1_{\eet}(Y,\Q_p(1))\to\widehat J\to 0,$$
o\`u $\widehat J$ est {\it le rev\^etement universel du groupe $p$-divisible de $J$},
i.e.~l'ensemble
$$\widehat J=\{(x_n)_{n\in\Z},\ x_n\in J(C),\ p\cdot x_{n+1}=x_n,\ \lim_{n\to -\infty}x_n=0\}.$$
On en d\'eduit,
en comparant la suite exacte ci-dessus et celle du th.~\ref{intro1.1},
le r\'esultat suivant qui est classique
dans le cas de bonne r\'eduction (ou pour les groupes $p$-divisibles~\cite{FF,SW}).

\begin{theo}\label{AF1}
On a un isomorphisme naturel
$$\iota_{\rm st}:\widehat J\overset{\sim}\to (\bst^+\otimes H^1_{\rm HK}(X))^{N=0,\varphi=p}.$$
\end{theo}

\begin{rema}\label{basic3}
Si $X$ est d\'efinie sur une extension finie $K$ de $\Q_p$, et
si $C=\C_p$,
en utilisant des techniques d'int\'egration $p$-adique
sur les courbes~\cite{Cn85,CdS88,Cz-periodes,Cz-inte}, 
on peut donner une description explicite
de cet isomorphisme.
Si $\tJ$ est l'extension universelle de $J$, on
dispose d'une application naturelle $ G_K$-\'equivariante (Lemme~12 ou \S\,B.2 de~\cite{Cz-inte})
$$\iota_{\bdr}:\hJ\to \tJ(\bdr^+)$$
d\'efinie de la mani\`ere suivante: si $x=(x_n)_{n\in\Z}\in\hJ$, on choisit
une suite born\'ee $(\hat x_n)_{n\in\Z}$ de rel\`evements des $x_n$ dans
$\tJ(\bdr^+)$, et on envoie $x$ sur la limite
de $p^n\cdot \hat x_n$, quand $n\to +\infty$, la multiplication par $p^n$
\'etant celle sur $\tJ$.
Alors $$\iota_{\rm st}=\log_{\tJ}\circ\iota_{\bdr},$$ o\`u
$\bst^+\otimes H^1_{\rm HK}(X)$ s'injecte dans $\bdr^+\otimes_K H^1_{\rm dR}(X)$
via $\iota_{\rm HK}$,
$$\log_{\tJ}:\tJ(\bdr^+)\to 
\bdr^+\otimes_K H^1_{\rm dR}(J)^\dual$$
est le logarithme de $\tJ$ \`a valeurs
dans son alg\`ebre de Lie, 
et on a des identifications $H^1_{\rm dR}(J)^\dual\cong H^1_{\rm dR}(X)^\dual\cong H^1_{\rm dR}(X)$,
la seconde
\'etant fournie par le cup-produit dans $H^2_{\rm dR}(X)\cong K$. 
\end{rema}
\subsection{Preuves}
Soient $Y$ une courbe quasi-compacte, $S$ une triangulation assez fine, $Y_S$ le
mod\`ele associ\'e et $(\Gamma,(Y_i)_{i\in I},(\iota_{i,j})_{(i,j)\in I_{2,c}})$
le patron correspondant.

La preuve des th.\,\ref{intro1.1} et~\ref{basic2}
repose sur:

$\bullet$ la d\'efinition d'un groupe de symboles
${\rm Symb}_p(Y)$, :

$\bullet$ la construction de r\'egulateurs \'etale
${\rm Symb}_p(Y)\to H^1_{\eet}(Y,\Q_p(1))$ et syntomique
${\rm Symb}_p(Y)\to H^1_{\rm syn}(Y_S,1)$ qui s'av\`erent \^etre des isomorphismes,

$\bullet$
une description de $H^1_{\rm syn}(Y_S,1)$ en termes du complexe de de Rham et de ses variantes.
\subsubsection{Symboles}
Soit $Y$ une courbe quasi-compacte et soient $C(Y)$ le corps des fonctions rationnelles sur $Y$
et ${\rm Div}(Y)$ le groupe des sommes formelles $\sum_{x\in Y(C)}n_xx$, avec $n_x\in \Z$
pour tout $x\in Y(C)$ (on dit que $\sum_{x\in Y(C)}n_xx$ {\it est \`a support fini} si
$n_x=0$ pour tout $x$ sauf un nombre fini).

Si $\ell$ est un nombre premier, on d\'efinit {\it le groupe de symboles} ${\rm Symb}_\ell(Y)$
comme
$${\rm Symb}_\ell(Y)=
\frac{\{(f_n)_{n\in\N},\ f_n\in C(Y)^\dual,\ {\rm Div}(f_n)\in \ell^n{\rm Div}(Y),
\ f_{n+1}/f_n\in(C(Y)^\dual)^{\ell^n}\}}{\{(h_n^{\ell^n}),\ h_n\in C(Y)^\dual\}}.$$
On a une suite exacte
$$0\to H^1(\Gamma,\Z_\ell(1))\to
{\rm Symb}_\ell(Y)\to {\rm Ker}\big(\prod_{i\in I}{\rm Symb}_\ell(Y^{\rm gen}_i)\to
\prod_{(i,j)\in I_{2,c}}{\rm Symb}_\ell(Y^{\rm gen}_{i,j})\big),$$
o\`u les {\og gen\fg} en exposant signifient {\og fibre g\'en\'erique\fg}
et les groupes ${\rm Symb}_\ell(Y^{\rm gen}_i)$ sont d\'efinis comme ci-dessus en imposant
que le diviseur de $f_n$ soit \`a support fini si $Y_i$ est une jambe (c'est automatique
pour $Y$ ou pour un short, par quasi-compacit\'e).

\subsubsection{R\'egulateur \'etale}
L'application qui, \`a une fonction, associe sa classe de Kummer, fournit 
un r\'egulateur \'etale ${\rm Symb}_\ell(Y)\to H^1_{\eet}(Y,\Z_\ell(1))$ et
la suite exacte de Kummer
$$0\to (\Z/\ell^n)\otimes\O(Y)^\dual\to H^1_{\eet}(Y,(\Z/\ell^n)(1))\to {\rm Pic}(Y)[\ell^n]\to 0$$
permet de montrer (cor.\,\ref{basic12}) que ce r\'egulateur fournit un isomorphisme
$${\rm Symb}_\ell(Y)\overset{\sim}\longrightarrow H^1_{\eet}(Y,\Z_\ell(1)).$$
En utilisant la suite exacte ci-dessus, cela ram\`ene le calcul
de $H^1_{\eet}(Y,\Z_\ell(1))$ \`a celui des groupes
${\rm Symb}_\ell(Y^{\rm gen}_i)$.  Si $\ell\neq p$, ce calcul est trivial si $Y_i$ est
une jambe (lemme~\ref{basic13})
et classique si $Y_i$ est un short (prop.\,\ref{short21}); on en d\'eduit le (i) du th.\,\ref{expo2}
et la formule {\og de Picard-Lefschetz\fg} (rem.\,\ref{PL}, r\'esultats on ne peut plus classiques).

\subsubsection{R\'egulateur syntomique}
Si $\ell=p$, on utilise des m\'ethodes syntomiques pour faire le calcul.
On note ${\rm Syn}(Y_S,1)$  le complexe total associ\'e au complexe double\footnote{Le frobenius $\varphi$ sur les
formes diff\'erentielles est d\'efini de telle sorte que $\varphi\circ d=d\circ\varphi$.}
$$\xymatrix@C=.6cm@R=.5cm{
\prod_{i\in I}F^1\O(\widetilde Y_i)
\ar[r]\ar@<.2cm>[d]^-{1-\frac{\varphi}{p}}&
\prod_{i\in I}\Omega^1(\widetilde Y_i)
\oplus\prod_{(i,j)\in I_{2,c}}F^1\O(\widetilde Y_{i,j})\ar[r]
\ar@<-1cm>[d]^-{1-\frac{\varphi}{p}}\ar@<1.6cm>[d]^-{1-\frac{\varphi}{p}}&
\prod_{(i,j)\in I_{2,c}}\Omega^1(\widetilde Y_{i,j})_{d=0}\ar@<.3cm>[d]^-{1-\frac{\varphi}{p}}\\
\prod_{i\in I}\O(\widetilde Y_i)
\ar[r]& \prod_{i\in I}\Omega^1(\widetilde Y_i)
\oplus\prod_{(i,j)\in I_{2,c}}\O(\widetilde Y_{i,j})\ar[r]&
\prod_{(i,j)\in I_{2,c}}\Omega^1(\widetilde Y_{i,j})_{d=0}}$$
dans lequel, si $Z=Y_i,Y_{i,j}$, $\O(\widetilde Z)$ est une $\acris$-alg\`ebre
munie d'un frobenius $\varphi$ et d'une surjection $\theta:\O(\widetilde Z)\to \O(Z)$
dont la restriction \`a $\acris$ est l'application de Fontaine
$\theta:\acris\to\O_C$, $F^1\O(\widetilde Z)={\rm Ker}\,\theta$,
et $\Omega^1(\tilde Z)$ est le module $\Omega^1_{\O(\widetilde Z)/\acris}$.
Il y a des choix naturels pour les $\O(\widetilde Z)$ qui simplifient grandement
les calculs\footnote{L'anneau $\O(\widetilde Y_{i,j})$ ci-dessus correspond \`a
l'anneau $\O(\widetilde Y_{i,j})^{\rm PD}$ du \no\ref{BAS11}.}: 

$\bullet$ un short
$Y_s$ est obtenu par extension des scalaires
\`a partir d'un sch\'ema formel $\breve Y_s$ lisse sur $\O_{\breve C}$, et on pose
$\O(\widetilde Y_s)=\acris\wotimes_{\O_{\breve C}}\O(\breve Y_s)$ (et on choisit un $\varphi$
sur $\O(\breve Y_s)$); 

$\bullet$ une jambe
$Y_a$ de longueur $r$ v\'erifie $\O(Y_a)=\O_C[[T_1,T_2]]/(T_1T_2-p^r)$, et on pose
$\O(\widetilde Y_a)=\acris[[T_1,T_2]]/(T_1T_2-\tilde p^r)$, o\`u $\tilde p^r\in\acris$
est un teichm\"uller v\'erifiant $\theta(\tilde p^r)=p^r$ 
(et on prend $\varphi$ d\'efini par $\varphi(T_i)=T_i^p$, si $i=1,2$). 

Le complexe ${\rm Syn}(Y_S,1)$ est le complexe double associ\'e au c\^one
$$[\xymatrix{F^1C_{\rm dR}(\widetilde Y_S)\ar[r]^-{1-\varphi/p}&C_{\rm dR}(\widetilde Y_S)}],$$
o\`u 
$C_{\rm dR}(\widetilde Y_S)$ est le complexe
$$\xymatrix@C=.6cm{\prod_{i\in I}\O(\widetilde Y_i)
\ar[r]& \prod_{i\in I}\Omega^1(\widetilde Y_i)
\oplus\prod_{(i,j)\in I_{2,c}}\O(\widetilde Y_{i,j})\ar[r]&
\prod_{(i,j)\in I_{2,c}}\Omega^1(\widetilde Y_{i,j})_{d=0}}$$
qui calcule la cohomologie cristalline logarithmique absolue de $Y_S$.

On note $H^i_{\rm Syn}(Y_S,1)$ les groupes de cohomologie
du complexe ${\rm Syn}(Y_S,1)$.

\vskip.1cm
On d\'efinit un {\it r\'egulateur syntomique} ${\rm Symb}_p(Y)\to H^1_{\rm syn}(Y_S,1)$
en envoyant $(f_n)_{n\in\N}$ sur le cocycle
$\lim_{n\to\infty}\big(\frac{d\tilde f_{n,i}}{\tilde f_{n,i}},\frac{1}{p}\log\frac{\varphi(\tilde f_{n,i})}
{\tilde f_{n,i}^p},\log(\frac{\tilde f_{n,i}}{\tilde f_{n,j}})\big)$,
o\`u $\tilde f_{n,i}\in\O(\widetilde Y_i)$ est un rel\`evement de la restriction
de $f_n$ \`a $Y_i$ (il faut prendre un peu de pr\'ecautions en choisissant
ces rel\`evements car $f_n$ peut avoir des z\'eros et des p\^oles, mais la limite
est holomorphe car les multiplicit\'es de ces z\'eros et p\^oles sont divisibles par $p^n$).
On prouve alors (prop.\,\ref{basic35}) que ce r\'egulateur syntomique induit un isomorphisme
$${\rm Symb}_p(Y)\overset{\sim}{\longrightarrow} H^1_{\rm syn}(Y_S,1).$$
On commence par prouver le r\'esultat pour les $Y_i$ (le cas des shorts (th.\,\ref{short11}) est
nettement plus d\'elicat que celui des jambes (prop.\,\ref{basic34})), et on recolle.

\subsubsection{Cohomologie syntomique et cohomologie de Hyodo-Kato}
Les groupes de cohomologie
de $C_{\rm dR}(Y)=C\otimes_{\acris}C_{\rm dR}(\widetilde Y_S)$
sont les $H^i_{\rm dR}(Y)$,
et les $H^i_{\rm HK}(Y)$ sont, par d\'efinition,
les groupes de cohomologie
de $C_{\rm dR}(\breve Y_S)=\breve C\otimes_{\acris}C_{\rm dR}(\widetilde Y_S)$
(les extensions de scalaires se font via $\theta:\acris\to\O_C$ et $\theta_0:\acris\to\O_{\breve C}$).

Par ailleurs, on peut (lemme~\ref{BAS12.1})
modifier l\'eg\`erement $C_{\rm dR}(\widetilde Y_S)$ pour obtenir
un complexe $\overline C_{\rm dR}(\widetilde Y_S)$ quasi-isomorphe,
de telle sorte que l'inclusion de $\O_{\breve C}$ dans $\acris$
induise un morphisme de complexes 
$\overline C_{\rm dR}(\breve Y_S)\to \Q_p\otimes\overline C_{\rm dR}(\widetilde Y_S)$,
commutant \`a $\varphi$, et section de 
$\Q_p\otimes\overline C_{\rm dR}(\widetilde Y_S)
\to \overline C_{\rm dR}(\breve Y_S)$, ce qui fournit (quasiment gratuitement) des isomorphismes
$$\Q_p\otimes H^1_{\rm dR}(\widetilde Y_S)\cong \bcris^+\wotimes_{\breve C}H^1_{\rm HK}(Y)
\quad{\rm et}\quad
\iota_{\rm HK}:H^1_{\rm dR}(Y)\cong C\wotimes_{\breve C}H^1_{\rm HK}(Y).$$

En jouant avec les diff\'erentes pr\'esentations possibles du c\^one ci-dessus,
on obtient une suite exacte
$$0\to\O(Y)/C\to \Q_p\otimes H^1_{\rm syn}(Y_S,1)\to \Q_p\otimes H^1_{\rm dR}(\widetilde Y_S)^{\varphi=p}\to H^1(Y,\O)\to
\Q_p\otimes H^2_{\rm syn}(Y_S,1)\to\cdot$$
Si $Y$ est compacte, $\O(Y)=C$ et la fl\`eche
$\Q_p\otimes H^1(\widetilde Y_S)^{\varphi=p}\to H^1(Y,\O)$ est surjective, alors
que si $Y$ n'est pas compacte, $H^1(Y,\O)=0$.
Si $Y$ est un affino\"{\i}de, cela fournit donc
une suite exacte
$$0\to \O(Y)/C\to \Q_p\otimes H^1_{\rm syn}(Y_S,1)\to (\bcris^+\wotimes_{\breve C}H^1_{\rm HK}(Y))^{\varphi=p}\to 0.$$
Pour passer de $H^1_{\rm HK}(Y)$ \`a $H^1_{\rm HK}(Y)^{\rm sep}$, il s'agit alors de
comprendre le lien entre $(\Q_p\wotimes\O(Y)^{\dual\dual})/\exp(\O(Y))$ et
l'adh\'erence de $0$ dans $(\bcris^+\wotimes_{\breve C}H^1_{\rm HK}(Y))^{\varphi=p}$.
Localement, ceci entre dans l'isomorphisme
${\rm Symb}_p(Y_i^{\rm gen})\cong H^1_{\rm syn}(Y_i,1)$ (implicitement pour les jambes (prop.\,\ref{basic34})
 et explicitement
pour les shorts (prop.\,\ref{basic29})) et le cas g\'en\'eral s'obtient par recollement (th.\,\ref{basic26}).

\begin{rema}\label{WN}
On dispose de vrais isomorphismes 
$$H^1_{\eet}(Y,\Z_p(1))\overset{\sim}\leftarrow {\rm Symb}_p(Y)\overset{\sim}{\rightarrow}
H^1_{\rm syn}(Y_S,1),$$ ne faisant pas intervenir de d\'enominateurs (au moins si $p\neq 2$).
Les d\'enominateurs apparaissent dans la description de $H^1_{\rm dR}(\widetilde Y_S)$
en termes de $H^1_{\rm HK}(Y)$; ces d\'enominateurs
d\'ependent des longueurs des jambes de $Y_S$ (plus ces jambes sont courtes et plus
les r\'esultats sont impr\'ecis).  Si $Y$ est un affino\"{\i}de, on a une suite
$p^2$-exacte (th.\,\ref{basic26}):
$$0\to \Z_p\wotimes\O(Y)^{\dual\dual}\to H^1_{\rm syn}(Y_S,1)
\to (H^1_{\rm dR}(\widetilde Y_S)^{\rm sep})^{\varphi=p}\to H^1(Y_S,\O),$$
et $H^1(Y_S,\O)$ est tu\'e par une puissance de $p$.
\end{rema}

\Subsubsection{Remerciements}
Nous remercions Bhargav Bhatt, Antoine Ducros, Antoine Chambert-Loir et Peter Scholze pour des conversations
(en pr\'esentiel ou \'electroniques) au sujet de cet article, et le rapporteur
pour sa lecture attentive et ses commentaires judicieux.
W.N. a donn\'e un cours sur des parties de cet article \`a l'automne 2016, \`a l'universit\'e
Fudan de Shanghai; elle voudrait remercier Shanwen Wang pour son invitation et les auditeurs pour leurs
commentaires.

\Subsection{Quelques notations et d\'efinitions}\label{Nota0}
\subsubsection{Anneaux de p\'eriodes}\label{Nota1}
On fixe un morphisme $r\mapsto p^r$ de $(\Q,+)$ dans $(C^\dual,\times)$ prenant la valeur
$p$ pour $r=1$.  
Cela fournit aussi un morphisme $r\mapsto \tilde p^r$
du mono\"{\i}de $\Q_+$ dans $W(\O_C^\flat)=\ainf\subset\acris$, avec $\tilde p^r=[(p^r)^\flat]$
et $(p^r)^\flat=(p^r,p^{r/p},\dots)$. 
On note 
$$\theta_0:\acris\to\O_{\breve C}\quad{\rm et}\quad\theta:\acris\to \O_C$$
les morphismes usuels (on a $\theta(\tilde p^r)=p^r$ si $r\geq 0$, et $\theta_0(\tilde p^r)=0$
si $r>0$).

\subsubsection{S\'eries enti\`eres}\label{Nota2}
Si $A$ est un anneau topologique s\'epar\'e et complet pour la topologie $I$-adique,
o\`u $I$ est un id\'eal de $A$ de type fini et contenant $p$, et si $x=(x_1,\dots,x_d)$, on note:

$\bullet$ $A[[x]]$ l'anneau des s\'eries enti\`eres $\sum_{i\in\N^d}a_ix^i$ avec $a_i\in A$, pour tout $i$,

$\bullet$ $A\langle x\rangle$ le sous-anneau de $A[[x]]$ des
$\sum_{i\in\N^d}a_ix^i$ avec $a_i\to 0$ quand $i\to\infty$,

$\bullet$ $A[[x]]^\dagger$ le sous-anneau de $A[[x]]$ des
$\sum_{i\in\N^d}a_ix^i$ tels qu'il existe $r>0$ tel que $a_i\in p^{\lceil r|i|\rceil}A$, pour tout $i$,

$\bullet$ 
$A[[T,T^{-1}\rangle$ le compl\'et\'e de $A[[T]][T^{-1}]$ pour la topologie $I$-adique.

\subsubsection{Boules, couronnes, cercles fant\^omes}\label{Nota3}
On appelle\footnote{On renvoie au \no\ref{adoc6} pour un point de vue adoque sur ces objets.}
 {\it boule unit\'e ouverte} un sch\'ema formel 
de la forme ${\rm Spf}(\O_C[[T]])$.
Une {\it jambe} $Y$
est un sch\'ema formel de la forme ${\rm Spf}(\O_C[[T_1,T_2]]/(T_1T_2-\alpha))$, avec $\alpha\in{\goth m}_C$.
La longueur\footnote{Elle ne d\'epend pas du choix de l'uniformisante $T_1$ gr\^ace au lemme~\ref{basic32}:
l'inclusion de $\Lambda:=\O_C[[T_1,T_2]]/(T_1T_2-\alpha)$ dans $\O_C[[T_i,T_i^{-1}\rangle$ fournit,
si $u\in (\Lambda[\frac{1}{p}])^\dual$, des couples $(v_1(u),v'_1(u))$ et $(v_2(u),v'_2(u))$,
qui sont
reli\'es par la formule
$v_{2}(u)=v_{1}(u)+v'_{1}(u)v_p(\alpha)$ et $v'_2(u)=-v'_1(u)$, si $u\in \O(Y)$.}, 
de $Y$ est $v_p(\alpha)$ et on l'oriente en choisissant $T_1$ comme param\`etre local
(choisir $T_2$ \`a la place renverse l'orientation).
On appelle {\it cercle fant\^ome} un sch\'ema formel de la forme ${\rm Spf}(\O_C[[T,T^{-1}\rangle)$.

La {\it fronti\`ere de la boule unit\'e} ${\rm Spf}(\O_C[[T]])$ est le cercle
fant\^ome ${\rm Spf}(\O_C[[T,T^{-1}\rangle)$.
La {\it fronti\`ere de la couronne ouverte} ${\rm Spf}(\O_C[[T_1,T_2]]/(T_1T_2-\alpha))$
est constitu\'ee des deux cercles fant\^omes $Y_1={\rm Spf}(\O_C[[T_1,T_1^{-1}\rangle)$
et $Y_2={\rm Spf}(\O_C[[T_2,T_2^{-1}\rangle)$. Dans les deux cas, la fronti\`ere est incluse
dedans. 

Si $Y$ est un sch\'ema formel sur $\O_C$, on pose $\O(Y^{\rm gen})=\O(Y)[\frac{1}{p}]$.
On renvoie \`a l'appendice (en particulier au \S\,\ref{appen21})
pour des consid\'erations sur les fibres g\'en\'eriques.

\subsubsection{R\'esidus}\label{Nota4}
Si $Y$ est un cercle fant\^ome ${\rm Spf}(\O_C[[T,T^{-1}\rangle)$,
on note $\Omega^1(Y)$ le module $\O_C[[T,T^{-1}\rangle\frac{dT}{T}$
des formes diff\'erentielles continues.  
Si $\omega=\sum_{k\in\Z}\alpha_kT^k\frac{dT}{T}$, on d\'efinit\footnote{L'ind\'ependance
de ce r\'esidu par rapport au choix de $T$ r\'esulte du lemme~\ref{basic32}: si 
$u\in(\O_C[[T,T^{-1}\rangle)^\dual$, 
on a ${\rm Res}(\frac{du}{u})=v'(u)$
(ind\'ependant du choix de $T$) et le cas g\'en\'eral s'en d\'eduit par lin\'earit\'e et continuit\'e.}
{\it son r\'esidu}
${\rm Res}_Y(\omega)$ (ou simplement ${\rm Res}(\omega)$) par la formule
${\rm Res}_Y(\omega)=\alpha_0$.

Si $Y$ est une couronne ${\rm Spf}(\O_C[[T_1,T_2]]/(T_1T_2-\alpha))$,
on note $\Omega^1(Y)$ le module $\O_C[[T_1,\frac{\alpha}{T_1}]]\frac{dT_1}{T_1}$
des formes diff\'erentielles continues.
Si $\omega=\sum_{k\in\Z}\alpha_kT_1^k\frac{dT_1}{T_1}$, on d\'efinit {\it son r\'esidu}
${\rm Res}_Y(\omega)$ par la formule
${\rm Res}_Y(\omega)=\alpha_0$.
Notons que ${\rm Res}_Y(\omega)$ d\'epend de l'orientation de $Y$: comme $\frac{dT_1}{T_1}+\frac{dT_2}{T_2}=0$,
changer l'orientation change le signe de ${\rm Res}_Y(\omega)$.
Notons aussi que, si $Y_1,Y_2$ sont les cercles fant\^omes \`a la fronti\`ere de $Y$,
on a ${\rm Res}_Y(\omega)={\rm Res}_{Y_1}(\omega)=-{\rm Res}_{Y_2}(\omega)$.

\section{Cohomologie des graphes}\label{TTT10}
Ce court chapitre contient des d\'efinitions de base concernant les graphes.
Comme nous le verrons, on peut associer \`a une courbe analytique
$X$ (resp.~\`a un mod\`ele semi-stable $X_S$ de $X$)
deux graphes, \`a savoir son squelette analytique $\Gamma^{\rm an}(X)$, cf.~\no\ref{TTT22}, et le graphe
dual $\Gamma(X^{\rm sp})$ de sa fibre sp\'eciale, cf.~\S\,\ref{bTTT25}, 
(resp.~$\Gamma(S)$ et $\Gamma(X_S^{\rm sp})=\Gamma^{\rm ad}(S)$, cf.~\no\ref{GRAB13} et \S\,\ref{TTT25}). 
Si $X$ est une courbe sans bord,
ces deux graphes sont \'egaux mais, si $X$ a un bord, le squelette
$\Gamma^{\rm an}(X)$ est naturellement muni d'une m\'etrique, alors que
$\Gamma(X^{\rm sp})$ est seulement muni d'une semi-m\'etrique et son s\'epar\'e est
$\Gamma^{\rm an}(X)$.  La raison pour laquelle nous nous int\'eressons \`a
ces notions est que la cohomologie de $X$ (resp.~$X_S$) a une d\'ecomposition naturelle
faisant intervenir la cohomologie de $\Gamma(X^{\rm sp})$ (resp.~$\Gamma(X_S^{\rm sp})$).  
\Subsection{Graphes}\label{TTT11}
\subsubsection{Sommets et ar\^etes}\label{TTT12}
Un {\it graphe}\footnote{Tous nos graphes sont suppos\'es
{\it localement finis}, i.e.~tout point de $\Gamma$ 
a un voisinage $U$ tel que $U\cap(\Gamma\moins S)$ n'ait
qu'un nombre fini de composantes connexes.} 
$\Gamma$ est un espace topologique s\'epar\'e muni d'un sous-ensemble discret $S$
(les {\it sommets de $\Gamma$})
 tel que 
les composantes connexes de $\Gamma\moins S$
({\it les ar\^etes de $\Gamma$})
soient des courbes r\'eelles sans bord (i.e.~hom\'eomorphes \`a des segments ouverts ou des cercles).

On note $A$ l'ensemble des ar\^etes.  Si $a\in A$, on note 
$S(a)$ l'ensemble des $s\in S$ dans l'adh\'erence de $a$ (les {\it extr\'emit\'es de $a$}) et
$\overline a$ l'adh\'erence
de $a$ dans $\Gamma$ (r\'eunion de $a$ et $S(a)$). 
Alors $S(a)$ contient au plus deux points.
Remarquons que, si $\Gamma$ est connexe, et si $A$ contient une ar\^ete $a$ sans extr\'emit\'e,
alors $\Gamma=a$ (et $S=\emptyset$ et $A=\{a\}$); dans le cas contraire, toutes les ar\^etes
ont au moins une extr\'emit\'e et, en particulier, ce sont toutes des segments.   

On note $A_c$ l'ensemble des ar\^etes $a$ telles que $\overline a$ soit compacte.

\subsubsection{Orientation}\label{TTT13}
On dit que {\it $\Gamma$ est
orient\'e} si toutes les ar\^etes
sont munies d'une orientation.
Dans ce cas, on peut diff\'erencier les extr\'emit\'es d'une ar\^ete $a$, et on note 
$s_1(a)$ {\it l'origine de $a$} et
$s_2(a)$ {\it le bout de $a$} (s'ils existent: si $\Gamma$ n'est pas un cercle sans sommet,
alors $a$ a une origine et un bout si et seulement si $a\in A_c$, notons que rien
n'emp\^eche alors que $s_2(a)=s_1(a)$;
si $\Gamma$ n'est pas un segment sans sommet, alors
 une ar\^ete n'appartenant pas \`a $A_c$ a soit une origine, soit un bout, mais pas les deux
\`a la fois).

Soit $\Gamma$ un graphe orient\'e.
Si $s\in S$, on pose 
\begin{align*}
A(s)^+=\{a\in A,\ s_1(a)=s\},&\quad A(s)^-=\{a\in A,\ s_2(a)=s\},\\
A(s)=A(s)^+\cup A(s)^-,&\quad A_c(s)=A(s)\cap A_c,
\end{align*} 
et donc $A(s)$ (resp.~$A_c(s)$) 
est l'ensemble des ar\^etes (resp.~ar\^etes relativement compactes)
dont une des extr\'emit\'es est $s$.
Le cardinal de $A(s)$ est {\it la valence de $s$}.

\subsubsection{L'ensemble $\rbp$ des longueurs}\label{TTT14}
On note $\rbp$ l'ensemble
constitu\'e des:

$\bullet$ $r$, pour $r\in \R_+^\dual\sqcup\{\infty,2\infty\}$,

$\bullet$ $r^+$ pour $r\in \R_+\sqcup\{\infty\}$,

$\bullet$ $r^{++}$ pour $r\in\R_+$.

\subsubsection{M\'etrique}\label{TTT15}
On dit que $\Gamma$
 {\it est m\'etris\'e} si, pour toute ar\^ete $a$, son adh\'erence $\overline a$ est munie
d'un hom\'eomorphisme sur un intervalle de $\R$ (pas forc\'ement de longueur finie) ou
un homoth\'etique du cercle unit\'e. On note alors $\mu(a)\in \R_+^\dual\sqcup\{\infty,2\infty\}$ 
la {\it longueur} de l'ar\^ete $a$
(i.e.~la longueur de l'intervalle ou du cercle correspondant), en posant $\mu(a)=\infty$
si l'intervalle de $\R$ est une demi-droite et $\mu(a)=2\infty$ si cet intervalle est $\R$
tout entier.  Un graphe m\'etris\'e est naturellement un espace m\'etrique.

\smallskip
On dit que $\Gamma$
 {\it est semi-m\'etris\'e} si: 

$\bullet$ L'adh\'erence $\overline a$ de tout $a\in A_c$ est munie
d'un hom\'eomorphisme sur un intervalle (compact) de $\R$ ou
un homoth\'etique du cercle unit\'e.

 $\bullet$ 
L'adh\'erence $\overline a$ de tout $a\in A\moins A_c$ est munie
d'une application continue $\overline a\to\R$ induisant un hom\'eomorphisme
d'un 
sous-intervalle $\overline a'$ contenant
l'extr\'emit\'e \'eventuelle de $a$ 
sur un sous-intervalle de $\R$, et contractant les {\it bouts} de $a$, i.e. 
les composantes connexes de $\overline a\moins \overline a'$ (au nombre de $0$, $1$ ou $2$) 
en des points.

On d\'efinit la longueur $\mu(a)\in\rbp$, si $a\in A$, par:

$\bullet$ Si $a\in A_c$, alors $\mu(a)\in \R_+^\dual\sqcup\{\infty,2\infty\}$ est la longueur
de l'intervalle ou du cercle correspondant.

$\bullet$ Si $a\in A\moins A_c$, et si $r\in \R_+^\dual\sqcup\{\infty,2\infty\}$
est la longueur du segment correspondant, alors $\mu(a)=r$ (resp.~$\mu(a)=r^+$, resp.~$\mu(a)=r^{++}$)
si $\overline a\moins\overline a'$ a $0$ (resp.~$1$, resp.~$2$)
composantes connexes.
(Le nombre de $+$ est le nombre de bouts.)

\begin{rema}\label{gam1}
Si $\Gamma$ est semi-m\'etris\'e, on peut fabriquer \`a partir de $\Gamma$
un graphe $\overline\Gamma$ m\'etris\'e, appel\'e {\it le s\'epar\'e de $\Gamma$},
en rempla\c{c}ant $\overline a$
par $\overline a'$ si $a\in A\moins A_c$ 
(i.e.~en contractant tous les bouts d'ar\^etes de longueur nulle)
et en rajoutant les extr\'emit\'es de $\overline a'$ aux sommets
(par exemple, si $a$ a une extr\'emit\'e $s$, et si $\overline a'$ est compact,
alors une des extr\'emit\'es de $\overline a'$ est $s$ et l'autre est un nouveau sommet).

Pour retrouver $\Gamma$ \`a partir de $\overline\Gamma$, il faut rajouter
des ar\^etes de longueur nulle et supprimer les sommets dont la valence
est pass\'ee de $1$ \`a $2$.
\end{rema}

Un graphe semi-m\'etris\'e est dit:

\quad $\bullet$ {\it compact} s'il est m\'etris\'e et l'espace m\'etrique associ\'e est compact,

\quad $\bullet$ {\it quasi-compact} si son s\'epar\'e est compact,

\quad $\bullet$ {\it complet} s'il est m\'etris\'e et l'espace m\'etrique associ\'e est complet.

Il est facile de v\'erifier que:

\quad $\bullet$ $\Gamma$ est compact si et seulement si il est quasi-compact et complet.

\quad $\bullet$ $\Gamma$ est compact si et seulement si $S$ est fini et $A=A_c$.

\quad $\bullet$ Un graphe fini est complet si et seulement si les \'el\'ements de $A\moins A_c$ sont
de longueur $\infty$.

\Subsection{Cohomologie}\label{TTT16}
Soient $\Gamma$ un graphe orient\'e ayant un nombre fini de composantes connexes 
et $L$ un groupe ab\'elien.  
\subsubsection{Cohomologie et cohomologie \`a support compact}\label{TTT17}
On dispose des groupes de cohomologie $H^i(\Gamma,L)$, pour $i=0,1$,
et de cohomologie \`a support compact $H^i_c(\Gamma,L)$, pour $i=0,1$.
Alors $\dim_LH^0(\Gamma,L)$ est le nombre de composantes connexes
de $\Gamma$ et $\dim_LH^0_c(\Gamma,L)$ est le nombre de ses composantes connexes compactes.

Les groupes $H^1(\Gamma,L)$ et $H^1_c(\Gamma,L)$ ont une description combinatoire
qui va nous \^etre utile.
Si $X=A_c,A,S$,  on note $L^X$ l'espace des fonctions $\phi:X\to L$
et $L^{(X)}$ le sous-espace de $L^X$ des fonctions \`a support fini.
On dispose d'applications
$$\partial: L^S\to L^{A_c}\quad{\rm et}\quad \partial:L^{(S)}\to L^{(A_c)},$$
d\'efinies par:
$$\partial\phi(a)=\phi(s_2(a))-\phi(s_1(a)).$$
Alors
$$H^1(\Gamma,L)={\rm Coker}\big[\partial: L^S\to L^{A_c}\big]\quad{\rm et}\quad
H^1_c(\Gamma,L)={\rm Coker}\big[\partial:L^{(S)}\to L^{(A)}\big].$$

\begin{rema}\label{sympat3}
{\rm (o)} Si $\Gamma$ est compact, alors $H^1_c(\Gamma,L)=H^1(\Gamma,L)$.

{\rm (i)} Si $A$ ne contient pas d'ar\^ete sans extr\'emit\'e, alors $S$ rencontre toutes
les composantes connexes de $\Gamma$ et
$H^0(\Gamma,L)={\rm Ker}\big[\partial: L^S\to L^{A_c}\big]$, et si $A$ ne contient pas de cercle,
alors $S$ rencontre
toutes les composantes connexes compactes, et
$H^0_c(\Gamma,L)={\rm Ker}\big[\partial:L^{(S)}\to L^{(A)}\big]$.

{\rm (ii)} Si $S$ est fini, la restriction \`a $A_c$ d'une fonction sur $A$ induit
une suite exacte:
$$0\to H^0_c(\Gamma,L)\to H^0(\Gamma,L)\to L^{A\moins A_c}\to H^1_c(\Gamma,L)\to H^1(\Gamma,L)\to 0.$$
En particulier, si $\Gamma$ est connexe et non compact (et $S$ fini), on a une suite exacte:
$$0\to L\to L^{A\moins A_c}\to H^1_c(\Gamma,L)\to H^1(\Gamma,L)\to 0.$$
\end{rema}

\subsubsection{L'op\'erateur $\dbar$}\label{TTT18}
Comme $\Gamma$ est suppos\'e localement fini,
on dispose d'applications
$$\dbar:L^{A}\to L^{S}\quad{\rm et}\quad \dbar:L^{(A)}\to L^{(S)},$$
d\'efinies par 
$$ \dbar\phi(s)=\sum_{a\in A(s)^+}\phi(a)-\sum_{a\in A(s)^-}\phi(a).$$

Les espaces $L^S$ et $L^A$ sont les duaux de $L^{(S)}$ et $L^{(A)}$,
et $\dbar$ est l'adjoint de $\partial$.  Il s'ensuit que:
$${\rm Ker}\big[\dbar:L^{A}\to L^{S}\big]=H^1_c(\Gamma,L)^\dual.$$

\begin{rema}\label{lapl1}
Si $s\in S$, notons $\dbar_s:L^{A(s)}\to L^{\{s\}}$ la restriction de $\dbar$.
Comme $L^A={\rm Ker}\big[\prod_s L^{A(s)}\to L^{A_c}\big]$ (car $a\in A_c$ appara\^{\i}t
dans exactement deux $A(s)$),
on a aussi
$$H^1_c(\Gamma,L)^\dual=
{\rm Ker}\big[\prod_s{\rm Ker}\big[\dbar_s:L^{A(s)}\to L^{\{s\}}\big]\to L^{A_c}\big].$$
\end{rema}

\subsubsection{Graphes bipartites marqu\'es}
A partir d'un graphe semi-m\'etri\'se $\Gamma=(S,A,\mu)$, on peut fabriquer
un nouveau graphe $\Gamma^{(2)}=(I,I_2,\mu)$ en rajoutant
un sommet sur chaque ar\^ete relativement compacte (ce qui d\'ecoupe l'ar\^ete en deux),
et en donnant au nouveau point une multiplicit\'e \'egale \`a $\mu(a)$;
la structure obtenue est appel\'ee {\it un graphe bipartite marqu\'e}
(bipartite car les sommets sont de deux types possibles: des sommets standard
et des sommets de valence $2$ munis d'une multiplicit\'e).
Les ensembles $I$ des sommets, $I_2$ des ar\^etes et $I_{2,c}$ des
ar\^etes relativement compactes de $\Gamma^{(2)}$ sont donn\'es par 
$$I=A_c\sqcup S, 
\quad I_2=\{(a,s),\ a\in A,\ s\in S(a)\}
\quad I_{2,c}=\{(a,s),\ a\in A_c,\ s\in S(a)\}.$$
On a $I_{2}=I_{2,c}\sqcup (A\moins A_c)$.

\begin{rema}\label{change}
{\rm (o)} La construction ci-dessus peut s'inverser et permet de construire
un graphe semi-m\'etris\'e \`a partir d'un graphe bipartite marqu\'e.

{\rm (i)} 
Le graphe $\Gamma^{(2)}$ poss\`ede une orientation naturelle: toute ar\^ete de $\Gamma^{(2)}$
a exactement une extr\'emit\'e appartenant \`a $S$, et on prend cette extr\'emit\'e
comme origine.

{\rm (ii)}
Le graphe $\Gamma^{(2)}$ est, topologiquement, hom\'eomorphe \`a $\Gamma$,
et donc \`a m\^eme cohomologie.  Il s'ensuit que
les complexes naturels
$K^{I}\to K^{I_{2,c}}$ et $K^{I_2}\to K^{I}$
calculent aussi la cohomologie de $\Gamma$.
\end{rema}

\subsubsection{Monodromie}\label{TTT19}
Si $\Gamma$ est m\'etris\'e ou semi-m\'etris\'e, 
et {\it si $\mu(a)\in\Q$
pour tout $a\in A_c$},
on d\'efinit {\it un op\'erateur de monodromie}, 
$$N_\mu:H^1_c(\Gamma,L)^\dual\to H^1(\Gamma,L)$$ en prenant la compos\'ee
de $\mu:K^A\to K^{A_c}$, qui envoie $\phi$ sur la restriction \`a $A_c$
de $(a\mapsto\mu(a)\phi(a))$,
et de l'application naturelle $K^{A_c}\to H^1(\Gamma,L)$.

\begin{rema}\label{sympat4}
Si $\Gamma$ est compact
et m\'etris\'e, 
alors $N_\mu$
est un isomorphisme.
\end{rema}

\begin{rema}\label{monod}
Soit $M$ un $\Lambda$-module 
admettant $H^1_c(\Gamma,\Lambda)^\dual$ comme quotient et $H^1(\Gamma',\Lambda)$
comme sous-objet, o\`u $\Gamma'$ est un sous-graphe de $\Gamma$.
Alors $M$ est naturellement muni d'un op\'erateur de monodromie $N$ d\'efini
comme la compos\'ee:
$$M\to H^1_c(\Gamma,\Lambda)^\dual\overset{N_\mu}{\longrightarrow}
H^1(\Gamma,\Lambda)\to H^1(\Gamma',\Lambda)\to M,$$
o\`u les fl\`eches autres que $N_\mu$ sont les fl\`eches \'evidentes.
Si l'image de $H^1(\Gamma',\Lambda)$ dans $H^1_c(\Gamma,\Lambda)^\dual$ est nulle,
alors $N$ est nilpotent d'exposant~$2$ (i.e.~$N\circ N=0$).
\end{rema}

\section{Courbes analytiques}\label{TTT20}

Soit $C$ un corps alg\'ebriquement clos, complet pour une valuation $v_p$
(\`a valeurs r\'eelles) v\'erifiant $v_p(p)=1$.
Le but de ce chapitre est de rappeler rapidement les principaux r\'esultats concernant 
la structure des courbes analytiques sur $C$. Nous renvoyons le lecteur \`a \cite{Bal}, \cite{BPR}, \cite{Duc}
pour plus de d\'etails et les preuves des r\'esultats \'enonc\'es ci-dessous. 
   
Une\, {\it courbe analytique} (ou simplement {\it courbe}) est 
un espace rigide analytique s\'epar\'e, purement de dimension $1$ et lisse (sur le corps de base). 
L'espace de Berkovich associ\'e \`a une courbe sur $C$ est une courbe $C$-analytique quasi-lisse au sens de \cite{Duc} (une telle courbe peut donc avoir un bord non vide).
Pour simplifier, nous supposerons sans mention explicite du contraire que les courbes sont connexes. Si $K$ est un sous-corps ferm\'e de $C$, 
une telle courbe $Y$ est dite {\it d\'efinie sur $K$} si elle est obtenue, par extension
des scalaires, \`a partir d'une $K$-courbe $Y_K$.

   \Subsection{Structure globale d'une courbe}\label{Global} 
Une $C$-courbe analytique est soit propre (auquel cas c'est l'espace analytique
associ\'e \`a une courbe projective), soit une r\'eunion croissante d'affino\"ides
de dimension~$1$ et m\^eme une r\'eunion croissante stricte (chaque affino\"{\i}de
est inclus dans l'int\'erieur du suivant) si la courbe est sans bord (et non propre). 

Plus g\'en\'eralement et plus pr\'ecis\'ement, 
soit $K$ un sous-corps ferm\'e de $C$ et soit $X$ un $K$-espace analytique s\'epar\'e, de dimension $1$
(non n\'ecessairement lisse).  
     
$\bullet$ Si $X$ est quasi-compact et irr\'eductible, alors $X$ est  
affino\"ide ou projectif, i.e. l'analytifi\'e d'une courbe alg\'ebrique projective sur $K$ \cite[th.2]{FM}. Du point de vue de la th\'eorie de Berkovich, toute 
courbe analytique compacte (en tant qu'espace topologique) et irr\'eductible $X$ est isomorphe \`a un domaine analytique
d'une courbe projective, et $X$ est affino\"ide si et seulement si 
le bord de $X$ est non vide \cite[th. 6.1.3, cor. 6.1.4]{Duc}. 

$\bullet$ Si $X$ est quasi-compact et purement de dimension $1$, on peut d\'ecrire les affino\"ides de $X$ comme suit. Si $f$ est une fonction m\'eromorphe globale sur $X$, notons $U_f$ l'ensemble des $x\in X$ pour lesquels 
$f$ est analytique au voisinage de $x$ et v\'erifie $|f(x)|\leq 1$. Tout 
affino\"ide $U$ de $X$ est de la forme $U_f$ \cite[th.3]{FM} et, 
si $X=Y^{\rm an}$ pour une courbe alg\'ebrique projective $Y$, $f$ peut \^etre choisie  
rationnelle sur $Y$. Si de plus $Y$ est connexe et r\'eguli\`ere\footnote{Sans cette hypoth\`ese de r\'egularit\'e le r\'esultat qui suit tombe en d\'efaut.} et si 
$f$ est une fonction rationnelle, mais pas r\'eguli\`ere sur $Y$ tout entier, alors 
$U_f$ est un affino\"ide \cite[prop 2, p.168]{FM}. 
     
Supposons de plus que $X$ est connexe. Alors $X$ est {\it paracompact} (i.e. poss\`ede un recouvrement affino\"ide localement fini), en particulier $X$ 
est une r\'eunion d\'enombrable croissante d'affino\"ides, cf. \cite{Liu-vdp} et \cite[th. 4.5.10]{Duc}. Plus pr\'ecis\'ement, le r\'esultat principal de \cite{Liu-vdp} montre que $X$ poss\`ede un mod\`ele formel $\mathfrak{X}$ plat, s\'epar\'e et localement de type fini sur $\O_K$, et $\mathfrak{X}$ poss\`ede un recouvrement localement fini par des ouverts affines. De plus:
     
$\bullet$ 
Si $X$ est lisse, on peut imposer \`a $\mathfrak{X}$ d'\^etre semi-stable, i.e. les singularit\'es de la fibre sp\'eciale sont nodales. 
     
$\bullet$ Si $X$ est lisse, rappelons que $X$ est dit quasi-Stein s'il poss\`ede un recouvrement affinoide admissible croissant $(X_n)_{n\geq 1}$ tel que les fl\`eches de restriction $\O_X(X_{n+1})\to \O_X(X_n)$ aient une image dense pour tout $n$. Si $X_n$ est relativement compact dans $X_{n+1} $ pour tout $n$, l'espace est dit Stein. 
De plus, si 
$X$ est Stein et lisse les composantes irr\'eductibles de la fibre sp\'eciale 
de $\mathfrak{X}$ sont propres.

Enfin, tout affino\"ide connexe, de dimension $1$ et lisse sur $C$ est un domaine 
affino\"ide d'une courbe propre et lisse, 
et s'obtient en retirant une famille finie de disques ouverts, deux 
\`a deux disjoints, de cette courbe \cite{vdp} (voir aussi la prop. \ref{Pconstr9}).

\Subsection{Affino\"{\i}des}\label{constr1}
\subsubsection{Alg\`ebres de Tate}\label{constr2}
Soient $K$ un sous-corps ferm\'e de $C$, d'anneau des entiers $\O_K$,
d'id\'eal maximal ${\goth m}_K$ et de corps r\'esiduel $k_K$. Si $Y$ est un affino\"{\i}de sur $K$, on note: 

$\bullet$ $\O^+(Y)$ le sous-anneau de $\O(Y)$ des fonctions \`a valeurs enti\`eres, 
i.e. de norme spectrale $\leq 1$.

$\bullet$ $\O^{++}(Y)$ l'id\'eal ${\goth m}_C\otimes_{\O_C}\O^+(Y)$ de $\O^+(Y)$, 
i.e. l'id\'eal des fonctions de norme spectrale $<1$.

$\bullet$  
 $\O(Y)^{\dual\dual}$ le sous-groupe $1+\O^{++}(Y)$ de $\O(Y)^\dual$.

Si $Y$ est un affino\"{\i}de r\'eduit sur $K$ et si $K$ est de valuation discr\`ete ou bien alg\'ebriquement clos, alors
$\O^+(Y)$ est une {\it alg\`ebre de Tate} sur $\O_K$, i.e. une alg\`ebre de la forme
$\O_K\langle x_1,\dots,x_n\rangle/I$, o\`u $I$ est un id\'eal de type fini, $\O_K$-satur\'e. Cela r\'esulte du 
th\'eor\`eme de finitude de Grauert-Remmert-Gruson \cite[th. 3.1.17]{Lutk} et du th\'eor\`eme de 
Gruson-Raynaud \cite[th. 3.2.1]{Lutk}.

\begin{lemm}\label{paire10}
Si $f\in\O(Y)^{\dual\dual}$, alors $\log f\in\O(Y)$.
\end{lemm}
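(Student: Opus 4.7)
Le plan est de définir $\log f$ par la série logarithmique
$$\log f = \log(1+g) = \sum_{n\geq 1} \frac{(-1)^{n-1}}{n}\,g^n, \qquad g := f-1 \in \O^{++}(Y),$$
et de vérifier que cette série converge dans l'algèbre de Banach $\O(Y)$.

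L'ingrédient clé est que $\O^{++}(Y)$ est, par définition, l'ensemble des fonctions de norme spectrale strictement inférieure à~$1$. Pour $g\in\O^{++}(Y)$, on dispose donc de $c := |g|_{\rm sp} < 1$, d'où $|g^n|_{\rm sp}\leq c^n$. D'autre part, l'inégalité élémentaire $v_p(n)\leq \log_p n$ donne $|1/n|_p = p^{v_p(n)}\leq n$. On en déduit
$$\Big|\frac{g^n}{n}\Big|_{\rm sp}\leq n\, c^n,$$
expression qui tend vers $0$ lorsque $n\to\infty$. L'affinoïde $Y$ étant lisse (donc réduit), la norme spectrale est équivalente à la norme de Banach naturelle de $\O(Y)$, et la série converge absolument dans $\O(Y)$, ce qui définit $\log f\in\O(Y)$.

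L'argument est essentiellement un calcul standard de convergence $p$-adique et ne présente pas de véritable obstacle. Une variante qui évite toute discussion sur la comparaison entre la norme spectrale et la norme de Banach consiste à partir directement de la présentation $\O^{++}(Y) = {\goth m}_C \otimes_{\O_C} \O^+(Y)$: écrire $g = \sum_i \alpha_i h_i$ avec $\alpha_i\in{\goth m}_C$, $h_i\in\O^+(Y)$, et choisir $\alpha\in{\goth m}_C$ tel que $\alpha_i/\alpha\in\O_C$ pour tout $i$; on a alors $g\in \alpha\cdot\O^+(Y)$, donc $g^n/n\in(\alpha^n/n)\cdot\O^+(Y)$, et la convergence dans $\O(Y)$ en résulte immédiatement puisque $v_p(\alpha^n/n)\to +\infty$ tandis que $\O^+(Y)$ est borné pour la norme de Banach.
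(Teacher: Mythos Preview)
Your proof is correct and follows exactly the same approach as the paper: write $f=1+g$ with $g\in\O^{++}(Y)$ and show that the logarithmic series $\sum_{n\geq 1}\frac{(-1)^{n-1}}{n}g^n$ converges in $\O(Y)$. The paper's proof is a single sentence asserting this convergence; you have simply spelled out the estimate $|g^n/n|_{\rm sp}\leq n\,c^n\to 0$ (and given a clean alternative via $g\in\alpha\cdot\O^+(Y)$), which the paper leaves to the reader.
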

\begin{proof}
On peut \'ecrire $f=1+g$ avec $g\in\O^{++}(Y)$,
et alors $\log f=
\sum_{n\geq 1}\frac{(-1)^{n-1}}{n}g^n$, et la s\'erie
converge dans $\O(Y)$.
\end{proof}

\subsubsection{Bord, fronti\`ere, cercles fant\^omes et r\'esidus}\label{constr4}
Soit $Y$ un affino\"{\i}de sur $C$. La $k_C$-alg\`ebre $\O({\cal Y}):=\O^+(Y)/\O^{++}(Y)=\O^+(Y)\otimes_{\O_C} k_C$
est de type fini sur $k_C$ (cela r\'esulte du lemme de normalisation de Noether et de \cite[th. 3.1.17]{Lutk}). On appelle ${\cal Y}:={\rm Spec}(\O({\cal Y}))$ la {\it r\'eduction canonique de $Y$}. L'application de r\'eduction $r: Y\to {\cal Y}$ est surjective et anti-continue\footnote{On consid\`ere ici $Y$ comme espace de Berkovich, pas comme 
espace rigide analytique.} et la fibre de $r$ en tout point maximal de ${\cal Y}$ est un singleton. L'image inverse de l'ensemble des points maximaux (i.e. points g\'en\'eriques des composantes irr\'eductibles) de ${\cal Y}$ est le {\it bord de Shilov de $Y$}, i.e. le plus petit ferm\'e de $Y$ sur lequel toute fonction 
 $|f|$ ($f\in \O(Y)$) atteint son maximum. 
Si $Y$ est r\'eduit et si $U$ est un ouvert affine de 
${\cal Y}$, le tube $]U[=r^{-1}(U)$ de $U$ est un affino\"{\i}de
de $Y$ et sa r\'eduction canonique s'identifie \`a $U$ \cite[lemma 4.8.1]{Fvdp}. 

Soit $Y$ un affino\"{\i}de lisse, de dimension~$1$ sur $C$. 
Alors ${\cal Y}$ est une courbe
affine sur $k_C$ (en g\'en\'eral ni lisse, ni irr\'eductible)
sans multiplicit\'es.  
{\it Le bord $\partial Y$} de $Y$ (vu comme
espace de Berkovich) co\"{\i}ncide avec le bord de Shilov de $Y$ et est en bijection avec l'ensemble
des composantes irr\'eductibles de ${\cal Y}$: si $s\in\partial Y$, la composante
irr\'eductible ${\cal Y}_s$ qui lui correspond est affine et d\'efinit une valuation (de Gauss) $v_s$
sur $\O(Y)$, et donc un point de type 2 de l'espace de Berkovich associ\'e \`a $Y$.

\smallskip
Si $s\in\partial Y$, on associe \`a $s$ un ensemble $A_0(s)$ (qui sera un ensemble d'ar\^etes
de sommet $s$ quand on aura d\'efini le graphe dual de la fibre sp\'eciale de $Y$).
Cet ensemble est en bijection avec les points de $\overline{\cal Y}_s\moins {\cal Y}_s$,
o\`u $\overline{\cal Y}_s$ est la compactifi\'ee de la courbe ${\cal Y}_s$ par des points lisses.
Si $a\in A_0(s)$, on lui associe:

$\bullet$ un point $P_a$ de $\overline{\cal Y}_s\moins {\cal Y}_s$,

$\bullet$ 
une valuation $v_{s,a}$ de rang~$2$ sur $\O(Y)$ par $v_{s,a}(f)=(v_s(f),
v_{P_a}(\alpha^{-1}f))$, o\`u $\alpha\in C$ v\'erifie $v_p(\alpha)=v_s(f)$.

\vskip.1cm
Si $\omega\in\Omega^1(Y)$, et si $a\in A_0(s)$, on d\'efinit
{\it le r\'esidu} ${\rm Res}_a(\omega)$ comme le r\'esidu de la restriction
de $\omega$ \`a $Y_{a,s}$ (i.e. le coefficient $\alpha_0$ dans le d\'eveloppement
$\omega_{|Y_{a,s}}=\sum_{k\in\Z}\alpha_kT^k\frac{dT}{T}$).
On a $\sum_{s\in\partial Y}\sum_{a\in A_0(s)}{\rm Res}_a(\omega)=0$.

\begin{rema}\label{constr5}
{\rm (i)} 
Comme $P_a\notin {\cal Y}_s$, si $a\in A_0(s)$, la valuation $v_{s,a}$ n'est pas totalement positive
sur $\O^+(Y)$ et $v_{s,a}$ est un point (de type 5) du bord 
$$\partial Y^{\rm ad}={\rm Spa}\big(\O(Y),\O_C+\O^{++}(Y)\big)\moins
{\rm Spa}\big(\O(Y),\O^{+}(Y)\big)$$ 
de l'espace adique associ\'e \`a $Y$.

{\rm (ii)}
Si $T\in{\rm Fr}(\O(Y))$ v\'erifie $v_{s,a}(T)=(0,1)$, l'anneau
des entiers du compl\'et\'e
de ${\rm Fr}(\O(Y))$ pour $v_{s,a}$ est
$\O_C[[T,T^{-1}\rangle$.
Le sch\'ema formel $Y_{a,s}$ associ\'e est un 
{\og cercle fant\^ome\fg},
obtenu en {\og retirant\fg} tous les points visibles du cercle
${\rm Sp}(C\langle T,T^{-1}\rangle)$.

On d\'efinit la {\it fronti\`ere} $\partial^{\rm ad}Y$ de $Y$ comme
la r\'eunion\footnote{Que l'on peut voir comme contenue dans la courbe adoque associ\'ee
\`a $Y$, voir~\no\ref{adoc8}.}
des cercles fant\^omes $Y_{s,a}$, 
pour $s\in \partial Y$ et $a\in A_0(s)$.
L'application $v_{s,a}\mapsto Y_{s,a}$
fournit une bijection de $\partial Y^{\rm ad}$
sur l'ensemble de ces cercles fant\^omes,
et la fronti\`ere peut aussi \^etre vue
comme le {\og bord adique\fg} de $Y$.

{\rm (iii)} Il y a (au moins) deux mani\`eres de se repr\'esenter une courbe $p$-adique:
soit comme un graphe (\`a la Berkovich ou \`a la Huber), soit comme une surface de Riemann.
Dans ce texte, nous allons privil\'egier la seconde repr\'esentation et construire
des courbes en recollant des affino\"{\i}des et des couronnes le long de cercles fant\^omes.
Cette op\'eration ne semble pas \^etre licite dans les cadres classiques (rigide, Berkovich, adique,
ou sch\'ema formel); pour lui donner un sens, la solution est de modifier un peu la structure de
sch\'ema formel $p$-adique en munissant la fibre sp\'eciale d'une topologie proche de celle
de Berkovich au lieu de la topologie de Zariski; cela m\`ene \`a la g\'eom\'etrie adoque\footnote{Interpolation
entre adique et ad hoc...}
du \S\,\ref{adoc1}.
La m\^eme op\'eration dans le cadre adique revient \`a recoller des graphes en des points
(de type 5), ce qui peut donner l'impression qu'il y a une unique mani\`ere de faire le
recollement, alors que le point de vue des surfaces de Riemann sugg\`ere
une analogie avec la th\'eorie de Teichm\"uller, qui semble plus proche de la r\'ealit\'e.
\end{rema}

\Subsection{Mod\`eles semi-stables des courbes analytiques}\label{GRAB11}
\cite{Duc,BPR,Tem}.
Soit $X$ une $C$-courbe analytique (lisse et connexe).
\subsubsection{Bord et fronti\`ere}
Si $X$ n'est pas propre, alors $X$ est la r\'eunion croissante d'affino\"{\i}des $Y_n$.
On d\'efinit {\it le bord} $\partial X$ de $X$ comme la limite $\cup_n\big(\cap_{k\geq n}\partial Y_k\big)$
des $\partial Y_n$,
et {la fronti\`ere} $\partial^{\rm ad} X$ de $X$
comme la limite des $\partial^{\rm ad} Y_n$.
(Notons que des points de $\partial Y_n$ peuvent ne pas faire partie du bord de $Y_{n+1}$
et donc pas non plus de $\partial X$.)

\subsubsection{Noeuds}\label{TTT21}
Rappelons que les points de $X$
se r\'epartissent en $4$ types~\cite[3.3.2]{Duc} suivant la forme de leur corps r\'esiduel 
compl\'et\'e.
Si $Y\subset X$, on note $Y_{[i]}$ l'ensemble de ses points de type $i\in\{1,2,3,4\}$. On
note aussi $Y_{[2,3]}=Y_{[2]}\cup Y_{[3]}$, etc. En particulier $X_{[1]}=X(C)$ est l'ensemble des points rigides de 
$X$, et pour tout $x\in X_{[2]}$ le corps r\'esiduel de $C(x)$ 
(corps r\'esiduel compl\'et\'e de $x$) est le corps des fonctions d'une courbe projective lisse sur $k_C$,
appel\'ee {\it courbe r\'esiduelle en $x$}. Le {\it genre} de $x\in X_{[2]}$ est alors le genre de cette 
courbe r\'esiduelle.

On dit que $x$ est {\it un noeud de $X$} s'il ne poss\`ede pas de voisinage qui est une couronne
ouverte.  Cela inclut les points de type $2$ et de genre~$\geq 1$ et les points
du bord $\partial X$ de~$X$.
On note $\Sigma(X)$ l'ensemble des noeuds de $X$.

\subsubsection{Squelette analytique}\label{TTT22}
Soit $X$ une courbe sur 
$C$ (lisse et connexe). Le {\it squelette analytique de $X$} est l'ensemble $\Gamma^{\an}(X)$ des points de $X$ n'ayant pas de voisinage qui est un disque ouvert; c'est un sous-graphe ferm\'e de~$X$, localement
fini et m\'etris\'e, 
compos\'e de points de
type $2$ ou $3$ \cite[5.1.11]{Duc}. L'ensemble des noeuds $\Sigma(X)$
de $X$ est une partie de $\Gamma^{\an}(X)$, que l'on prend comme
ensemble des sommets de $\Gamma^{\an}(X)$.
On note $A^{\rm an}(X)$ (resp.~$A^{\rm an}_c(X)$) l'ensemble des ar\^etes
(resp.~ar\^etes relativement compactes) de $\Gamma^{\rm an}(X)$.

\begin{rema}\label{gam2} 
{\rm (i)} Si $X$ est compacte en tant qu'espace de Berkovich, alors $\Gamma^{\rm an}(X)=\emptyset$ si et seulement si $X\cong \piqp$ \cite[5.4.16]{Duc}. Sans hypoth\`ese de compacit\'e l'\'enonc\'e tombe en d\'efaut, par exemple $\A^1$ et les disques ouverts ont aussi un squelette analytique vide. 
(Sur un corps assez gros, i.e.~sph\'eriquement complet et \`a groupe de valuation $\R$, ce sont les seuls 
exemples mais sur un corps non sph\'eriquement complet, le compl\'ementaire d'un point de type~$4$ dans
$\piqp$ n'est pas de ce type, et il y a m\^eme des exemples de telles courbes qui ne se plongent
pas dans $\piqp$ sur $C$ bien qu'elles se plongent dans $\piqp$ sur un corps assez gros~\cite[prop. 5.5]{LIU}.)

{\rm (ii)} Si $\Gamma^{\rm an}(X)\neq\emptyset$, alors
$\Sigma(X)=\emptyset$ si et seulement si
$X$ est une couronne ouverte 
g\'en\'eralis\'ee\footnote{Une couronne ouverte g\'en\'eralis\'ee est
une courbe qui, apr\`es extension des scalaires \`a un corps assez gros, devient
une couronne ouverte ou un disque ouvert \'epoint\'e ou~${\bf G}_m$.}
auquel cas $\Gamma^{\rm an}(X)$ est un segment ouvert,
ou bien $X$ est une courbe de Tate auquel cas $\Gamma^{\rm an}(X)$ est un cercle.

{\rm (iii)}
Si $\Gamma^{\rm an}(X)\neq\emptyset$ et si $X$ n'est pas une courbe de Tate,
les ar\^etes de $\Gamma^{\rm an}(X)$ correspondent \`a des sous-couronnes
ouvertes g\'en\'eralis\'ees de $X$, la longueur de l'ar\^ete 
\'etant {\it la largeur de la couronne} correspondante.
Plus pr\'ecis\'ement:

\qquad$\diamond$ une ar\^ete de longueur finie correspond
\`a une couronne ouverte 
$\alpha<v_p(z)<\beta$
\linebreak
\phantom{X} \hskip1.3cm
g\'en\'eralis\'ee\footnote{Si
l'ar\^ete appartient \`a $A_c^{\rm an}(X)$, on obtient une vraie couronne et $\beta-\alpha\in\Q$.}
 dont la largeur
est $\beta-\alpha$.

\qquad$\diamond$ une demi-droite correspond \`a un disque ouvert \'epoint\'e 
(de largeur $\infty$
\linebreak
\phantom{X} \hskip1.3cm
 car $\beta=+\infty$),

\qquad$\diamond$ une droite correspond \`a ${\bf G}_m$ (de largeur $2\infty$ car $\alpha=-\infty$ et $\beta=+\infty$).

{\rm (iv)} 
Si $\Gamma^{\rm an}(X)\neq\emptyset$,
alors $\Gamma^{\rm an}(X)$ est compact
si et seulement si $X$ est un affino\"{\i}de ou une courbe propre. En effet,
$\Gamma^{\rm an}(X)$ est un sous-graphe analytiquement admissible \cite[th. 5.1.11]{Duc}, donc admissible de 
$X$ (cf. \cite[1.5.1,5.1.3]{Duc} pour ces notions) et donc la r\'etraction canonique\footnote{Si 
$x\notin  \Gamma^{\rm an}(X)$, $r(x)$ est l'unique point du bord de la composante connexe de 
$x$ dans $X\setminus \Gamma^{\rm an}(X)$.}
$r: X\to \Gamma^{\rm an}(X)$ est continue et compacte \cite[th. 1.5.16]{Duc}. Donc 
$\Gamma^{\rm an}(X)$ est compact si et seulement si $X$ l'est (en tant qu'espace de Berkovich) et on conclut en utilisant les r\'esultats \'enonc\'es dans \ref{Global}. 
\end{rema}

On d\'efinit le {\it squelette adique} $\Gamma^{\rm ad}(X)$ en rajoutant
\`a $\Gamma^{\rm an}(X)$ des ar\^etes non relativement compactes de longueur $0^+$
en les sommets $s$ de $\partial X$, une par cercle fant\^ome de $\partial^{\rm ad}X$
au bord de $s$.

\subsubsection{Triangulations}\label{GRAB13}
Une {\it pseudo-triangulation $S$ de $X$} est 
un sous-ensemble discret et ferm\'e de $X$, 
form\'e de points de $X_{[2]}$, tel que les
composantes connexes de $X\moins S$ soient des disques ouverts ou des couronnes ouvertes g\'en\'eralis\'ees.
Le {\it squelette $\Gamma(S)$ de $S$} est le sous-graphe ferm\'e de $X$
trac\'e sur $X_{[2,3]}$, ayant pour sommets $S$ et pour ar\^etes les squelettes
des composantes connexes de $X\moins S$.
On d\'efinit le {\it squelette adique} $\Gamma^{\rm ad}(S)$ en rajoutant
\`a $\Gamma(S)$ des ar\^etes non relativement compactes de longueur $0^+$
en les sommets $s$ de $\partial X$, une par cercle fant\^ome de $\partial^{\rm ad}X$
\`a la fronti\`ere de $s$.

On dit que $X$ est {\it compacte} (resp.~{\it quasi-compacte}, resp.~{\it compl\`ete}),
si $\Gamma^{\rm ad}(S)$ est un espace m\'etrique
compact (resp.~espace quasi-compact, resp.~espace m\'etrique complet),
pour toute pseudo-triangulation~$S$.

\begin{rema} 
{\rm (i)} Les courbes compactes sont les analytifi\'ees des courbes
alg\'ebriques propres. 

{\rm (ii)} Les courbes connexes quasi-compactes mais non compactes sont les affino\"ides connexes.

En effet, pour les deux assertions il suffit 
de raisonner comme dans la preuve du point iv) de la rem.\,\ref{gam2}, en utilisant la r\'etraction
canonique de $X$ sur $\Gamma(S)$.

{\rm (iii)} Une courbe compacte est compl\`ete, mais on peut fabriquer des courbes
compl\`etes non compactes: par exemple, en retirant un nombre fini de points \`a une courbe compacte ou,
plus g\'en\'eralement, un sous-ensemble compact (comme pour le demi-plan de Drinfeld),
ou en prenant un rev\^etement fini \'etale d'une courbe compl\`ete non compacte.
\end{rema}

Une {\it triangulation} de $X$ est une pseudo-triangulation telle que
les composantes connexes de $X\moins S$ soient relativement compactes dans $X$
(en particulier, ce sont des disques
ou de vraies couronnes et pas des disques \'epoint\'es ou des ${\bf G}_m$).

Si $S$ est une pseudo-triangulation de $X$, alors $S$ contient $\Sigma(X)$.
Si $\Gamma^{\rm an}(X)$ est compact, 
et si $\Sigma(X)\neq\emptyset$,
alors $\Gamma^{\rm an}(X)$ est une triangulation de $X$, plus pr\'ecis\'ement la plus petite triangulation
de $X$ \cite[5.4.12]{Duc}.
Si $A^{\rm an}(X)\neq A^{\rm an}_c(X)$, pour construire une triangulation on a besoin de subdiviser
les ar\^etes non relativement compactes de $\Gamma^{\rm an}(X)$ en une infinit\'e
d'ar\^etes, et donc de rajouter une infinit\'e de sommets \`a $\Sigma(X)$. 

\begin{rema}
{\rm (i)} Un r\'esultat fondamental de la th\'eorie est l'existence de triangulations 
pour toute courbe analytique sur $C$. C'est une cons\'equence du th\'eor\`eme de r\'eduction 
semi-stable (cf. \cite[ch.1]{Bal}), mais peut aussi se d\'emontrer "directement", par une \'etude 
de la structure locale des courbes \cite[th. 5.1.4]{Duc}. Plus pr\'ecis\'ement, tout ensemble ferm\'e 
et discret, constitu\'e de points de type $2$ est contenu dans une triangulation de $X$. 
Si $X$ est compacte (en tant qu'espace de Berkovich), alors 
la r\'eunion des squelettes des triangulations de $X$ est $X_{[23]}$ et 
$X$ est hom\'eomorphe \`a la limite inverse de ces squelettes.

{\rm (ii)} Une triangulation de $X$ permet de d\'ecouper $X$ en shorts et jambes
(cf.~\no\ref{Pgener10} pour une version pr\'ecise), ce qui permet de d\'ecrire $X$ \`a partir
d'objets plus simples.

{\rm (iii)} Si $X$ est partiellement propre, une pseudo-triangulation fournit
un recouvrement de $X$ par des pantalons (cf.~\no\ref{TTT23}), qui permet aussi
de d\'ecrire $X$ \`a partir
d'objets plus simples.

{\rm (iv)} Ces deux descriptions de courbes analytiques joueront un grand r\^ole
dans le calcul de leurs cohomologies. Pour \'eviter de se battre avec la combinatoire
du recouvrement, il est souvent utile de raffiner la pseudo-triangulation.
\end{rema}
On dit qu'{\it une pseudo-triangulation $S$ est fine} si
$\Gamma(S)$ ne contient
pas de boucle avec $1$ ou $2$ sommets.
On peut rendre fine n'importe quelle triangulation $S$ en rajoutant un sommet au milieu
de toutes les ar\^etes de $\Gamma(S)$ (et donc en coupant chaque ar\^ete en deux).

\subsubsection{Triangulations et mod\`eles semi-stables}\label{TTT24}
 Il y a une bijection naturelle (\cite[th. 4.11]{BPR} pour les courbes projectives, \cite[ch. 1]{Bal} et \cite[ch. 6]{Duc}
 dans le cas g\'en\'eral) entre les triangulations de $X$ et les mod\`eles formels ($p$-adiques)
semi-stables\footnote{Au sens large, i.e.~\'etale localement, 
de la forme $\Spf \O_C\{X, Y\} /(XY-a) $, $a\in \O_C\moins\{0\}$.} de $X$ sur $\O_C$: 
si ${\cal X}$ est un tel mod\`ele, l'ensemble $S(\sx)$ des pr\'eimages 
(par la sp\'ecialisation) dans $X$ des points g\'en\'eriques
des composantes irr\'eductibles de {\it la fibre sp\'eciale classique} $k_C\otimes_{\O_C}\sx$ de $\sx$
est une triangulation de $X$. 
Pour aller dans l'autre sens il faut se fatiguer un peu plus
(\cite[th 6.3.15]{Duc} ou \cite[ch1]{Bal}), mais au moins pour une courbe projective $X$ et dans le cas o\`u le
squelette $\Gamma(S)$ de la triangulation a au moins deux ar\^etes la construction est 
assez explicite \cite[th 4.11]{BPR}: si $r: X\to \Gamma(S)$ est la retraction canonique, 
et si $a$ est une ar\^ete, alors $r^{-1}(\overline a)$ 
est un domaine affino\"ide de $X$, et le mod\`ele semi-stable 
correspondant s'obtient en recollant les ${\rm Spf}(\O^+(r^{-1}(\overline a)))$ le long des 
${\rm Spf}(\O^+(r^{-1}(s)))$, $s$ parcourant $S$.

\begin{rema}\label{gam3}
Soient $S$ une triangulation de $X$ et $X_S$ le mod\`ele
semi-stable associ\'e.
 
{\rm (i)}
$\Gamma(S)$ est le graphe dual de $k_C\otimes_{\O_C} X_S$:
les sommets de $\Gamma(S)$ correspondent aux composantes
irr\'eductibles de $k_C\otimes_{\O_C} X_S$ et les ar\^etes aux points singuliers.
Si ${\cal Y}_s$ est la composante irr\'eductible correspondant \`a $s\in S$, on note 
$Z_s$ l'image inverse de ${\cal Y}_s$ par l'application de sp\'ecialisation et
$Y_s$ l'image inverse de l'ouvert de lissit\'e de ${\cal Y}_s$ (i.e. ${\cal Y}_s$ priv\'e des points communs avec les autres composantes de $k_C\otimes_{\O_C} X_S$).  
Si $P_a$ est le point singulier correspondant \`a l'ar\^ete $a$, on note $Y_a$
l'image inverse de $P_a$; c'est une couronne ouverte dont on note $\mu(a)$ la largeur. 

{\rm (ii)}
$S$ contient $\partial X$ puisqu'elle contient $\Sigma(X)$;
par la bijection ci-dessus entre \'el\'ements de $S$ et composantes irr\'eductibles,
les points de $\partial X$
correspondent aux composantes irr\'eductibles de $k_C\otimes_{\O_C} X_S$ qui ne sont
pas propres.  
\end{rema}

\Subsection{Fibre sp\'eciale d'une courbe quasi-compacte}\label{TTT25}
Soit $X$ une courbe quasi-compacte (lisse et connexe) sur $C$, 
et soient $S$ une triangulation de $X$ et $X_S$
le mod\`ele semi-stable sur $\O_C$ associ\'e.  La courbe $k_C\otimes_{\O_C} X_S$
est un invariant un peu trop grossier:
on veut garder la trace de 
la largeur
des couronnes correspondant aux points singuliers, ce qui am\`ene naturellement \`a la notion
de courbe marqu\'ee ci-dessous.
\subsubsection{Courbes marqu\'ees}\label{TTT26}
Soit $X$ une courbe sur $k_C$.
Un {\it point marqu\'e} sur $X$ est un couple
$(P,\mu(P))$, o\`u:

\quad $\bullet$ $P\in X(k_C)$, 

\quad $\bullet$ {\it la multiplicit\'e $\mu(P)$}
de $P$ est un \'el\'ement de $\Q_+^\dual\sqcup\{0^+\}$.

\smallskip
Une {\it courbe marqu\'ee} $(X,A,\mu)$ est une courbe $X$ munie d'un ensemble $A$ de points marqu\'es
(et $\mu$ est la fonction associant \`a un point marqu\'e sa multiplicit\'e).
Une courbe marqu\'ee $(X,A,\mu)$ est {\it semi-stable} si:

$\bullet$ $X$ est \`a singularit\'es nodales,

$\bullet$ les composantes irr\'eductibles de $X$ sont propres et ne comportent qu'un
nombre fini de points marqu\'es,

$\bullet$ les points singuliers de $X$ sont marqu\'es et ont une multiplicit\'e~$\in\Q_+^\dual$,
les points marqu\'es non singuliers ont multiplicit\'e $0^+$.

Elle est {\it stable} si elle est semi-stable et si, de plus, aucune composante
connexe de $X$ n'est un $\piqp$ avec $0$, $1$ ou $2$ points marqu\'es.

\begin{rema}\label{gam4}
{\rm (i)} On peut consid\'erer une courbe lisse $X$ comme une courbe marqu\'ee stable
en lui associant la courbe marqu\'ee stable
$(\overline X,\overline X\moins X,0^+)$, o\`u $\overline X$ est la compactification lisse
de $X$ et tous les \'el\'ements de $\overline X\moins X$ sont de multiplicit\'e~$0^+$.

{\rm (ii)} Plus g\'en\'eralement, une courbe $X$ \`a singularit\'es nodales, dont les
points singuliers sont munis d'une multiplicit\'e~$\in\Q_+^\dual$, peut \^etre consid\'er\'ee
comme une courbe marqu\'ee semi-stable: on consid\`ere la compactifi\'ee $\overline X$
de $X$ obtenue en rajoutant des points lisses, et on d\'efinit l'ensemble des
points marqu\'es comme la r\'eunion des points singuliers de $X$ (avec la multiplicit\'e donn\'ee)
et des points de $\overline X\moins X$, de multiplicit\'e~$0^+$.

{\rm (iii)} Si $X$ est une courbe analytique et si $S$ est une triangulation de
$X$, la courbe $k_C\otimes_{\O_C}X_S$ 
est une courbe \`a singularit\'es nodales dont les points singuliers
sont naturellement munis d'une multiplicit\'e rationnelle, \`a savoir la largeur de la couronne
correspondante. On transforme cette courbe en courbe marqu\'ee semi-stable en utilisant
le~(ii).
\end{rema}

Si $X$ est une courbe analytique et si $S$ est une triangulation fine de $X$,
la {\it fibre sp\'eciale} $X_S^{\rm sp}$ de $X_S$ est la courbe marqu\'ee semi-stable obtenue
\`a partir de $k_C\otimes_{\O_C}X_S$ par le proc\'ed\'e du (iii) de la rem.\,\ref{gam4}.

$\bullet$ Ses composantes irr\'eductibles sont en bijection avec les \'el\'ements de $S$;
on note $Y_s^{\rm sp}$ la composante irr\'eductible associ\'ee \`a $s$ (c'est, par construction,
une courbe propre sur $k_C$ et on note $\ocirc Y_s^{\rm sp}$ le compl\'ementaire
dans $Y_s^{\rm sp}$ des points marqu\'es).

$\bullet$ Ses points singuliers sont en bijection avec les ar\^etes de $\Gamma(S)$;
on note $P_a$ le point correspondant \`a $a$; c'est un point marqu\'e
de $X_S^{\rm sp}$ de multiplicit\'e $\mu(a)$ (largeur de la couronne correspondante). 

$\bullet$ Les autres points marqu\'es de $X_S^{\rm sp}$ sont de multiplicit\'e $0^+$
et leur ensemble est le compl\'ementaire de $k_C\otimes_{\O_C}X_S$
dans $X_S^{\rm sp}$.

\subsubsection{Graphe dual d'une courbe marqu\'ee}\label{TTT27}
Si $X$ est une courbe marqu\'ee, on lui associe un graphe semi-m\'etris\'e $\Gamma(X)=(S,A,\mu)$,
{\it le graphe dual de $X$},
d\'efini de la mani\`ere suivante:

$\bullet$ $S$ est en bijection avec l'ensemble des composantes irr\'eductibles de $X$.

$\bullet$ $A$ est en bijection avec les points marqu\'es de $X$,
la longueur
d'une ar\^ete $a$ \'etant la multiplicit\'e du point $P_a$ correspondant\footnote{Qui peut
\^etre $0^+$
ce qui fait que $\Gamma(X)$ peut \^etre seulement semi-m\'etris\'e et pas m\'etris\'e.}.

$\bullet$ Les ar\^etes relativement compactes de $\Gamma(X)$ sont en bijection avec les points marqu\'es
singuliers de $X$, les extr\'emit\'es de l'ar\^ete $a_P$ correspondant \`a $P$ \'etant les
sommets correspondant 
aux composantes irr\'eductibles contenant\footnote{Ces deux extr\'emit\'es peuvent \^etre \'egales
si des composantes irr\'eductibles de $X$ s'autointersectent.} $P$.

$\bullet$ Les ar\^etes non relativement compactes 
de $\Gamma(X)$ sont en bijection avec les points marqu\'es de multiplicit\'e $0^+$,
l'extr\'emit\'e de l'ar\^ete $a_P$ correspondant \`a $P$ \'etant 
le sommet correspondant \`a la
composante irr\'eductible contenant $P$.

\begin{rema}\label{TTT27.1}
Si $X$ est une courbe analytique et si $S$ est une triangulation fine de $X$,
alors 
$$\Gamma(X_S^{\rm sp})=\Gamma^{\rm ad}(S).$$
\end{rema}

\subsection{M\'etrique canonique et valuation}
L'espace $X_{[2,3]}$ des points de type $2$ ou $3$ est naturellement muni
d'une m\'etrique $dt$ qui permet de m\'etriser tout sous-graphe: la
distance associ\'ee est d\'efinie par
$d(r,s)=\inf\big|\int_r^sdt\big|$, le minimum \'etant pris sur tous les
chemins joignant $r$ \`a $s$.
Si $C\subset X$ est une couronne ferm\'ee (i.e.~$C\cong\{\alpha\leq v_p(z)\leq \beta\}$),
et si $r$ et $s$ sont les valuations de Gauss des cercles $v_p(z)=\alpha$
et $v_p(z)=\beta$ aux extr\'emit\'es de la couronne, alors $d(r,s)$
est la largueur $\beta-\alpha$ de la couronne.
Cette distance envoie les points de type $1$ \`a l'infini (i.e.~elle les transforme
en pointes, comme les \'el\'ements de $\R$ pour la m\'etrique de Poincar\'e sur le 
demi-plan de Poincar\'e).

Si $f\in C(X)^\dual$,
si $r,s\in X_{[2,3]}$ 
et si $v_r,v_s\in X_{[2,3]}$ sont
les valuations correspondantes sur $C(X)$, alors
$$v_s(f)-v_r(f)=\int_r^s{\rm Res}_t\big(\tfrac{df}{f}\big)\,dt,$$
l'int\'egrale \'etant prise sur n'importe quel chemin allant de $r$ \`a $s$: un tel chemin est une
r\'eunion finie de segments ferm\'es et la fonction ${\rm Res}\big(\frac{df}{f}\big)$
est constante sur l'int\'erieur d'un tel segment;
cet int\'erieur 
correspond \`a une couronne ouverte de $X$, ses points sont
les points de Gauss de cercles virtuels, et le sens de parcours du segment
pour aller de $r$ \`a $s$ d\'etermine une orientation de la couronne, ce qui
fixe le signe du r\'esidu de $\frac{df}{f}$.

Si $X$ est un affino\"ide, et $r\in X$ est un point de type~$2$,
on pose $d(r,\partial X)=\inf_{s\in\partial X}d(r,s)$, 
si $X$ est la r\'eunion croissante d'affino\"{\i}des $X_n$, on pose
$d(r,\partial X)=\lim_{n\in\N} d(r,\partial X_n)$, et si $X$ est propre, on pose
$d(r,\partial X)=+\infty$.
Si $Z\subset X$ est un affino\"{\i}de, on pose $d(Z,\partial X)=\inf_{r\in Z_{[2]}}d(r,\partial X)$. 
\begin{prop}\label{metr1}
Soient $X$ une courbe 
et $Z\subset X$ un affino\"{\i}de connexe.
Si $f\in\O^+(X)$,
on a $v_Z(f-f(x_0))\geq d(Z,\partial X)$,
pour tout $x_0\in Z(C)$.
\end{prop}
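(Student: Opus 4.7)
\emph{Plan de preuve.} Posons $g := f - f(x_0)$; comme $f \in \O^+(X)$ et $f(x_0) \in \O_C$, on a $g \in \O^+(X)$ et $g(x_0) = 0$. Soit $d = d(Z, \partial X)$. On peut supposer $d > 0$ (le cas $d = 0$ \'etant trivial) et $X$ affino\"{\i}de, le cas g\'en\'eral s'obtenant en \'ecrivant $X = \bigcup_n X_n$ comme r\'eunion croissante d'affino\"{\i}des et en passant \`a la limite via $d(Z, \partial X) = \lim_n d(Z, \partial X_n)$. Comme $v_Z(g) = \min_{r \in \partial Z} v_r(g)$ o\`u $\partial Z$ d\'esigne le bord de Shilov (fini) de $Z$, il suffit d'\'etablir $v_r(g) \geq d$ pour chaque $r \in \partial Z$.

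Fixons un tel $r$. Par la propri\'et\'e de Shilov, $r$ admet au moins une direction de sortie, autrement dit une composante connexe $U$ de $X \moins \{r\}$ disjointe de l'int\'erieur de $Z$ (et en particulier ne contenant pas $x_0$). Deux cas se pr\'esentent: soit $U$ rencontre $\partial X$, et alors tout $s \in U \cap \partial X$ v\'erifie $d(r, s) \geq d(r, \partial X) \geq d$, de sorte que l'on peut suivre la g\'eod\'esique $r \to s$ sur une distance exactement $d$; soit $U$ est un disque ouvert dans $X$, dont le squelette \`a partir de $r$ est de profondeur infinie, et l'on choisit le point de Gauss d'une sous-boule \`a distance $d$ de $r$. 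Dans les deux cas on obtient un point $y \in U \cap X_{[2,3]}$ avec $d(r, y) = d$, v\'erifiant $v_y(g) \geq 0$ puisque $g \in \O^+(X)$, et tel que $r$ se trouve sur l'unique g\'eod\'esique reliant $y$ \`a $x_0$ dans $X_{[2,3]}$.

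La formule d'int\'egration rappel\'ee juste avant la proposition, appliqu\'ee au chemin $\gamma$ de $y$ \`a $r$, donne
\[ v_r(g) - v_y(g) = \int_\gamma {\rm Res}_t(dg/g)\, dt, \]
l'int\'egrand \'etant la pente de Newton de $g$, localement constante \`a valeurs enti\`eres. En tout $t \in \gamma$, cette pente \'egale (au signe pr\`es) le nombre de z\'eros de $g$, qui est sans p\^ole car $g \in \O^+(X)$, dans la composante de $X \moins \{t\}$ contenant $r$; comme $x_0$ appartient \`a cette composante et que $g(x_0) = 0$, on obtient une pente $\geq 1$ en tout $t \in \gamma$. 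En int\'egrant, on conclut $v_r(g) \geq v_y(g) + d(y, r) \geq d$. L'obstacle principal est la discussion topologique sur les composantes de $X \moins \{r\}$ permettant de choisir $y$, ainsi que la v\'erification uniforme de la minoration de la pente le long de $\gamma$.
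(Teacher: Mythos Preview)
Your argument has a real gap at the key step. You assert that for $t \in \gamma$ the slope ${\rm Res}_t(dg/g)$ in the direction of $r$ equals the number of zeros of $g$ lying in the component of $X \moins \{t\}$ containing $r$. This counting formula is correct only when that component is relatively compact in $X$; when it meets $\partial X$ there are boundary terms, and the inequality you need can fail. Take for instance the annulus $X = \{-2 \leq v_p(T) \leq 2\}$, $Z = \{-1 \leq v_p(T) \leq 1\}$, a point $x_0 \in Z(C)$ with $v_p(x_0) = 0$, and $f = p^2 T^{-1} \in \O^+(X)$. Writing $\eta_c$ for the Gauss point at $v_p(T) = c$, one has $v_{\eta_c}(g) = 2$ for $c \leq 0$ and $v_{\eta_c}(g) = 2 - c$ for $c \geq 0$. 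At $r = \eta_{-1} \in \partial Z$ the only admissible $U$ is $\{-2 \leq v_p(T) < -1\}$, forcing $y = \eta_{-2}$; along the segment from $y$ to $r$ the slope towards $r$ is identically $0$, not $\geq 1$, even though $x_0$ lies in the $r$-component of $X \moins \{t\}$ --- that component also contains $\eta_2 \in \partial X$, and the zero at $x_0$ is exactly compensated by the outgoing slope there. Your inequality $v_r(g) - v_y(g) \geq d$ becomes $0 \geq 1$. (A secondary issue: if the skeleton of $X$ has a loop through $r$, there may be \emph{no} component of $X \moins \{r\}$ disjoint from the interior of~$Z$.)

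The paper's proof bypasses this by a descent argument using only local harmonicity. Working on the skeleton $\Gamma$ of $X \moins g^{-1}(0)$, one picks $r \in \Gamma \cap Z$ minimizing $v_s(g)$ and admitting an outgoing edge on which ${\rm Res}(dg/g) \leq -1$ (such an edge exists because $v_s(g) \to +\infty$ along the branch towards $x_0$), and then extends a maximal path along edges with this property. At each interior node the outgoing residues sum to zero, so the path can always be continued; it must therefore terminate on $\partial X$, and its length --- at least $d(Z,\partial X)$ since it starts in $Z$ --- bounds $v_r(g) = v_Z(g)$ from below via the integration formula.
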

\begin{proof}
Soit $g=f-f(x_0)$. Il n'y a rien \`a prouver si $g=0$ ni si $X$ est propre (car alors $g=0$).
On peut donc supposer $X$ affino\"{\i}de (et passer \`a la limite pour le cas g\'en\'eral) et $g\neq 0$.
Soit $X_g$ l'ensemble des z\'eros de $g$; c'est un sous-ensemble discret
de $X$.  Le squelette analytique $\Gamma$ de $X\moins X_g$ est obtenu en rajoutant
des demi-droites \`a $\Gamma^{\rm an}(X)$, une demi-droite par \'el\'ement de $X_g$.
La fonction $t\mapsto {\rm Res}_t\frac{dg}{g}$ est constante sur chaque
ar\^ete orient\'ee de $\Gamma$.  

Soit $r\in \Gamma\cap Z$ r\'ealisant le minimum de $s\mapsto v_s(g)$, et tel qu'il existe
une ar\^ete de bout $r$ sur laquelle ${\rm Res}\frac{dg}{g}\leq -1$: un tel point existe
car $v_s(g)$ tend vers $+\infty$ en l'infini de $\Gamma\cap Z$ (en particulier,
sur la branche correspondant \`a $x_0$) et est croissante sur
les ar\^etes sur lesquelles ${\rm Res}\frac{dg}{g}\geq 0$.

Soit $\gamma$ un chemin maximal, trac\'e sur $\Gamma$,
partant de $r$ et le long duquel ${\rm Res}\frac{dg}{g}\leq -1$.
Alors $\gamma$ aboutit au bord de $\Gamma^{\rm an}(X)$: en effet, il ne peut
pas aboutir au bout d'une des demi-droites correspondant aux z\'eros de $g$
puisque ${\rm Res}\frac{dg}{g}>0$ le long de ces demi-droites, et s'il aboutit
en un point int\'erieur, ce point est forc\'ement un noeud~$r'$ (puisque
${\rm Res}\frac{dg}{g}$ est constant sur les ar\^etes).  Or la somme
des r\'esidus de $\frac{dg}{g}$ le long des ar\^etes arrivant en $r'$ est nulle,
et comme il y en a une pour lequel ce r\'esidu est~$\leq -1$, cela implique
qu'il y en a une $a$ pour lequel ce r\'esidu est~$\geq 1$.  Mais, cela fait
que le r\'esidu sur $a$, consid\'er\'ee comme ar\^ete sortante, est~$\leq -1$,
ce qui prouve que $\gamma$ n'est pas maximal puisqu'on peut le prolonger par
l'ar\^ete~$a$. 

La longueur ${\rm lg}(\gamma)$ de $\gamma$ 
est donc~$\geq d(r,\partial X)\geq d(Z,\partial X)$.
Par ailleurs, si $s\in \gamma$, alors $v_r(g)-v_s(g)=
\int_{r}^s-{\rm Res}\frac{dg}{g}\,dv$ (l'int\'egrale \'etant le long de $\gamma$).
 En passant \`a la limite, en en utilisant
le fait que $v_s(g)\geq 0$ puisque $g\in\O^+(X)$ et que
$-{\rm Res}\frac{dg}{g}\geq 1$, on obtient la minoration
$v_Z(g)=v_r(g)\geq {\rm lg}(\gamma)\geq d(Z,\partial X)$.  
\end{proof}

\begin{coro}\label{metr2}
Si $Y$ est compl\`ete, toute fonction holomorphe born\'ee
est constante.
\end{coro}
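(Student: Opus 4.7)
The plan is to deduce everything from Proposition~\ref{metr1} by checking that, for a complete curve~$Y$, every connected affino\"{\i}de $Z\subset Y$ satisfies $d(Z,\partial Y)=+\infty$. Rescaling $f$ by a suitable element of $C^\dual$, I may assume $f\in\O^+(Y)$. Fix a rigid point $x_0\in Y(C)$; since $Y$ is reduced, it is enough to prove $f(y)=f(x_0)$ for every $y\in Y(C)$. For such a $y$, the description of $Y$ as an increasing union of affino\"{\i}des (cf.~\S\,\ref{Global}) together with its connectedness furnishes a connected affino\"{\i}de $Z\subset Y$ containing both $x_0$ and $y$. Proposition~\ref{metr1} then gives
\[
v_y\bigl(f-f(x_0)\bigr)\;\geq\;v_Z\bigl(f-f(x_0)\bigr)\;\geq\;d(Z,\partial Y),
\]
so $f(y)=f(x_0)$ follows as soon as the right-hand side is $+\infty$.

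The equality $d(Z,\partial Y)=+\infty$ splits in two cases. If $Y$ is compact, it is the analytification of a proper algebraic curve, and the claim is immediate from the definition (moreover $\O(Y)=C$ by \textsc{Gaga}, so the corollary is already trivial in this case). If $Y$ is complete but non-compact, then $Y$ is sans bord and, as recalled in~\S\,\ref{Global}, it admits a strictly increasing exhaustion $Y_1\Subset Y_2\Subset\cdots$ by affino\"{\i}des with $Y=\bigcup_n Y_n$; by definition
\[
d(Z,\partial Y)=\lim_{n\to\infty}d(Z,\partial Y_n).
\]
The sequence $d(Z,\partial Y_n)$ is non-decreasing, so it suffices to show it is unbounded. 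This is where the completeness of the metric skeleton enters: if it were bounded by some $R<\infty$, one could pick for each large $n$ a point $r_n\in\partial Y_n$ of type~$2$ with $d(Z,r_n)\leq R+\tfrac{1}{n}$. The $r_n$ then lie in a closed ball of radius $R+1$ inside the locally finite metric graph $\Gamma^{\rm ad}(S)$, hence admit a convergent subsequence. Its limit $r^*$ would belong to $\Gamma^{\rm ad}(S)\subset Y$, so to some $Y_m$; but $r_n\in\partial Y_n$ for $n>m$ forces $r_n$ to leave $Y_m$, contradicting convergence to $r^*\in Y_m$. Hence $d(Z,\partial Y_n)\to+\infty$.

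The only non-cosmetic point is the last paragraph, namely the extraction argument that converts completeness of $\Gamma^{\rm ad}(S)$ into the divergence $d(Z,\partial Y_n)\to+\infty$. Once this is in hand, Proposition~\ref{metr1} forces $f-f(x_0)$ to vanish at every rigid point of $Y$, so $f$ is constant and the corollary is proved.
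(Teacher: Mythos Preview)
Your strategy coincides with the paper's: invoke Proposition~\ref{metr1} together with $d(Z,\partial Y)=+\infty$. The paper asserts this last equality without argument (``car $Y$ est suppos\'ee compl\`ete''); you attempt to supply one via extraction, and the idea is right, but the execution has a gap. You claim the $r_n\in\partial Y_n$ lie in $\Gamma^{\rm ad}(S)$, yet you never fixed~$S$, and for a general strict exhaustion the Shilov boundary of $Y_n$ consists of arbitrary type-$2$ points of~$Y$ with no reason to sit on a prescribed skeleton. A secondary issue: the bound $d(Z,r_n)\leq R+\tfrac1n$ only places $r_n$ near \emph{some} point of $Z_{[2]}$, a set of infinite diameter, so it does not by itself confine the $r_n$ to a single ball.

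Both points are easy to repair once noticed. Since $\lim_n d(r,\partial Y_n)$ is independent of the exhaustion (two strict exhaustions are cofinal and the sequence is monotone, as any path from $r\in Y_n$ to $\partial Y_{n+1}$ crosses $\partial Y_n$), you may \emph{choose} it: take a triangulation~$S$ and let $Y_n$ be the preimage under the canonical retraction $\rho:Y\to\Gamma(S)$ of an exhausting sequence of connected compact subgraphs $K_n\Subset K_{n+1}$ with type-$2$ frontier in $\Gamma(S)$; then $\partial Y_n\subset\Gamma(S)$. For the ball, note that any path from $Z$ to $r_n\notin Z$ crosses the finite set $\partial Z$, so $d(Z,r_n)=\min_{s\in\partial Z}d(s,r_n)$ and the $r_n$ lie in finitely many closed balls of the complete, locally finite graph $\Gamma(S)$, which are compact. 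Your contradiction then works (take $r^*$ in the Berkovich interior of some $Y_{m+1}$ rather than merely in $Y_m$, since affino\"{\i}des are closed).
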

\begin{proof}
Cela r\'esulte de ce que $d(Z,\partial Y)=+\infty$ si $Z$ est un affino\"ide connexe de $Y$
(car $Y$ est suppos\'ee compl\`ete),
et donc $v_Z(f-f(x_0))=+\infty$, si $x_0\in Z(C)$ (prop.~\ref{metr1}): 
autrement dit, $f$ est constante sur $Z$, pour tout $Z$.

\end{proof}
\begin{rema}
Cela fournit une preuve alternative de l'\'enonc\'e de la rem.\,A.2 de \cite{CDN}.
\end{rema}

\section{Construction de courbes analytiques}\label{constr6}
Dans ce chapitre, on explique comment construire des courbes
en recollant des shorts (ou plus g\'en\'eralement des affino\"{\i}des)
et des jambes (ou des boules ouvertes) le long de cercles fant\^omes.
Pour donner un sens \`a cette construction nous allons avoir besoin de raffiner 
la structure de sch\'ema formel associ\'e \`a une alg\`ebre de Tate, ce qui conduit
\`a la g\'eom\'etrie {\it adoque}.

\Subsection{G\'eom\'etrie adoque}\label{adoc1}
\subsubsection{Affines adoques}\label{adoc2}
Soit $A=\O_C\langle T_1,\dots,T_n\rangle/I$ 
une $\O_C$-alg\`ebre de Tate r\'eduite\footnote{On peut
aussi remplacer $\O_C$ par un \'epaississement pour avoir une th\'eorie {\og en famille\fg}; c'est
ce qui est fait au paragraphe suivant.}, telle que $\overline A=k_C\otimes_{\O_C}A$
soit aussi r\'eduite (typiquement, $A=\O^+(Y)$, o\`u $Y$ est un affino\"{\i}de r\'eduit sur $C$).  
Le sch\'ema formel $p$-adique associ\'e $Y={\rm Spf}(A)$
est un espace annel\'e d'espace topologique sous-jacent la fibre sp\'eciale 
$Y^{\rm sp}={\rm Spec}(\overline A)$,
munie de la topologie de Zariski, le faisceau structural \'etant,
si $A$ est normal,
$U\mapsto \O^+(]U[)$ o\`u $]U[$ d\'esigne le tube de $U$ sur la fibre g\'en\'erique
(c'est un affino\"{\i}de sur $C$): si $U$ est l'ouvert $f\neq 0$ de $Y^{\rm sp}$,
et si $\tilde f$ est un rel\`evement de $f$ dans $A$, alors $\O_Y(U)=A\langle X\rangle/(1-X\tilde f)$.

Nous allons associer \`a $A$ un {\it sch\'ema adoque} ${\rm Sp}^{\rm ado}(A)$ d\'efini
en rajoutant des points et des ouverts
\`a l'espace topologique ${\rm Spf}(A)$ et en modifiant le faisceau structural en cons\'equence.
Avant de donner la d\'efinition de cet espace, commen\c{c}ons par examiner ${\rm Spf}(A)$
du point de vue {\og points classiques\fg}.

Si $K$ est un corps alg\'ebriquement clos,
complet pour une valuation r\'eelle, muni d'un morphisme
continu $\O_C\to K$, alors $X(K)={\rm Hom}(A,K)$ est un sous-espace
ferm\'e de $K^{n}$ (les $s:A\to K$ que l'on consid\`ere
sont les morphismes continus de $\O_C$-alg\`ebres).  Comme $\O_C$ est de caract\'eristique
mixte, on peut prendre $K=C$ ou $K=C^\flat$ (ou n'importe quel corps v\'erifiant les conditions
ci-dessus et contenant un de ces corps).
Alors $X(C)$ est born\'e dans $C^n$ tandis que $X(C^\flat)$ n'est pas born\'e
dans $(C^\flat)^n$, et $X(C^\flat)$ a beaucoup plus d'ouverts naturels
que les compl\'ementaires de sous-vari\'et\'es alg\'ebriques ferm\'ees
(par exemple, l'intersection avec la boule ouverte unit\'e de $(C^\flat)^n$).

\vskip.1cm
\noindent $\bullet$ {\it Les points de ${\rm Sp}^{\rm ado}(A)$}.---
Les points de 
l'espace topologique sont ceux de l'espace
de Berkovich sur $k_C$ associ\'e \`a $k_C\otimes_{\O_C} A$ (notons que cet anneau est discret), 
i.e.~les valuations $v$ sur $k_C\otimes_{\O_C} A$
(i.e. $v(xy)=v(x)+v(y)$ et $v(x+y)\geq\inf(v(x),v(y))$) 
\`a valeurs dans $\Z\sqcup\{\infty\}$, d'image $\{0,\infty\}$ ou 
contenant\footnote{Normaliser l'image \'evite de travailler \`a \'equivalence pr\`es.} $1$ ou $-1$.

Si $v$ est une telle valuation, $I_v=\{a\in \overline A,\ v(a)=\infty\}$ est un id\'eal
premier, et $v$ s'\'etend en une valuation de ${\rm Fr}(\overline A/I_v)$;  on note
$K_v$ le compl\'et\'e de ${\rm Fr}(\overline A/I_v)$ pour $v$.  Alors $v$ est obtenue
en composant $\overline A\to K_v$ avec la valuation $v$ sur $K_v$.

Les valuations \`a valeurs dans $\{0,\infty\}$ correspondent aux \'el\'ements de ${\rm Spec}(\overline A)$.
Si $v$ est \`a valeurs dans $\{0,\infty\}$, la valuation induite sur $K_v$ est 
{\it la valuation triviale}
(i.e. $v(0)=\infty$ et $v(x)=0$ si $x\neq 0$).

\vskip.1cm
\noindent $\bullet$
{\it La topologie de ${\rm Sp}^{\rm ado}(A)$}.---
Si $f\in \overline A$, on note $U_f$ l'ouvert de Zariski 
$$U_f=\{v,\ v(f)<\infty\}.$$
Si $g_1,\dots,g_d\in\overline A$, on note $U_{f,g_1,\dots,g_d}$
l'ensemble 
$$U_{f,g_1,\dots,g_d}=\{v\in U_f,\ v(g_i)>0,\,1\leq i\leq d\}.$$
On munit l'espace ci-dessus de la topologie engendr\'ee par les
$U_{f,g_1,\dots,g_d}$; un ouvert de cette forme est dit {\it standard};
comme $$U_{f_1,g_1,\dots,g_r}\cap U_{f_2,g_{r+1},\dots,g_{r+s}}=
U_{f_1f_2,g_1,\dots,g_{r+s}},$$ on n'a que les r\'eunions quelconques \`a rajouter.
L'espace obtenu est compact.

\vskip.1cm
\noindent $\bullet$
{\it Le faisceau structural}.---
Si $\tilde f,\tilde g_1,\dots,\tilde g_d\in A$ sont des rel\`evements de $f,g_1,\dots,g_d$,
on pose 
\begin{align*}
\overline A_{f,g_1,\dots,g_d}=&\ \overline A[[T_1,\dots,T_d]][X]/(1-Xf,T_1-g_1,\dots,T_d-g_d),\\
A_{f,g_1,\dots,g_d}=&\ A[[T_1,\dots,T_d]]\langle X\rangle/(1-X\tilde f,T_1-\tilde g_1,\dots,T_d-\tilde g_d).
\end{align*}
(Que $A_{f,g_1,\dots,g_d}$ ne d\'epende pas des choix de
$\tilde f,\tilde g_1,\dots,\tilde g_d$ r\'esulte de ce que, si $h_1,h_2\in A$ ont m\^eme
image dans $\overline A$, alors il existe $r>0$ tel que $h_1-h_2\in p^rA$.)

Notons que, si $r>0$ est assez petit, on a
$$A_{f,g_1,\dots,g_d}/p^r\cong (\O_C/p^r)\wotimes_{k_C}\overline A_{f,g_1,\dots,g_d}$$
(Cela r\'esulte de ce que les g\'en\'erateurs de l'id\'eal d\'efinissant $A$
et les coefficients de $f$ et des $g_i$ appartiennent \`a $\O_{\breve C}+p^r\O_C$
si $r$ est assez petit car il n'y en a qu'un nombre fini qui ne sont pas divisibles par $p$.) 

\begin{rema}\label{adoc3}
La topologie de ${\rm Sp}^{\rm ado}(A)$ est moins fine que celle de l'espace de Berkovich correspondant
(en dimension~$1$, qui est le seul cas qui va nous int\'eresser, les deux topologies 
co\"{\i}ncident si on reste sur $k_C$ mais pas si on \'etend les scalaires 
\`a un corps muni d'une valuation non triviale)
et, si $U=U_{f,g_1,\dots,g_r}$ est un ouvert standard, alors $\overline A_{f,g_1,\dots,g_r}$
est inclus dans l'anneau des fonctions analytiques sur l'espace de Berkovich associ\'e \`a $U$.
\end{rema}

On note $\overline \O$ et $\O$ les faisceaux associ\'es aux pr\'efaisceaux
$$U_{f,g_1,\dots,g_d}\mapsto \overline A_{f,g_1,\dots,g_d}
\quad{\rm et}\quad
U_{f,g_1,\dots,g_d}\mapsto A_{f,g_1,\dots,g_d}.$$

\begin{lemm}\label{adoc4}
$\overline\O(U_{f,g_1,\dots,g_d})= \overline A_{f,g_1,\dots,g_d}$ et
$\O(U_{f,g_1,\dots,g_d})= A_{f,g_1,\dots,g_d}$.
\end{lemm}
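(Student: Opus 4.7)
Le plan est d'\'etablir d'abord la propri\'et\'e de faisceau pour $\overline\O$, puis d'en d\'eduire celle pour $\O$ par passage \`a la limite modulo $p^r$.

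Pour r\'eduire \`a $\overline\O$, on utilise la relation $A_{f,g_1,\ldots,g_d}/p^r \cong (\O_C/p^r)\wotimes_{k_C}\overline A_{f,g_1,\ldots,g_d}$ signal\'ee plus haut, valable pour $r>0$ assez petit et \'etendue \`a tout $r\geq 1$ par r\'ecurrence via la suite exacte $0\to p^{r-1}/p^r\to \O_C/p^r\to \O_C/p^{r-1}\to 0$. Le syst\`eme inverse correspondant \'etant Mittag-Leffler (les transitions $\O_C/p^{r+1}\to \O_C/p^r$ \'etant surjectives), on a $A_{f,g_1,\ldots,g_d} = \varprojlim_r A_{f,g_1,\ldots,g_d}/p^r$, et la propri\'et\'e de faisceau se conserve par une telle limite projective. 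Il suffit donc d'\'etablir le r\'esultat pour $\overline\O$.

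Pour $\overline\O$, la compacit\'e de ${\rm Sp}^{\rm ado}(A)$ (signal\'ee plus haut) permet de se ramener \`a des recouvrements finis d'un ouvert standard $U_{f,g_*}$ par des ouverts standards. L'alg\`ebre $\overline A$ \'etant de type fini sur $k_C$ (donc noeth\'erienne), et le quotient par $T_i - g_i$ dans $\overline A[[T_1,\ldots,T_d]]$ identifiant formellement $T_i$ \`a $g_i$, l'alg\`ebre $\overline A_{f,g_1,\ldots,g_d}$ s'identifie au compl\'et\'e $(g_1,\ldots,g_d)$-adique de la localisation $\overline A_f$. La v\'erification de l'exactitude de la s\'equence de \v{C}ech se d\'ecompose alors en deux parties: d'une part la propri\'et\'e de faisceau pour le pr\'efaisceau de localisation sur les ouverts de Zariski de ${\rm Spec}(\overline A)$, classique et reposant sur un argument de partition de l'unit\'e (les $ff_i$ apparaissant dans un recouvrement engendrent un id\'eal contenant une puissance de $f$ modulo le radical); d'autre part la compatibilit\'e de cette propri\'et\'e avec le passage aux compl\'et\'es $(g_*)$-adiques, qui repose sur la platitude de la compl\'etion en anneau noeth\'erien.

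Le point d\'elicat, et la principale difficult\'e technique attendue, est de v\'erifier que pour un recouvrement fini $U_{f,g_*} = \bigcup_i U_{ff_i,g_{i,*}}$, les topologies $(g_*)$-adique et $(g_{i,*})$-adique co\"{\i}ncident sur $\overline A_{ff_i}$: ceci devrait d\'ecouler de l'inclusion $U_i\subset U$ (qui garantit $v(g_j)>0$ et $v(g_{i,k})>0$ sur $U_i$ pour tout $j, k$) combin\'ee au Nullstellensatz pour la $k_C$-alg\`ebre de type fini $\overline A_{ff_i}$, qui assure que les deux id\'eaux ont m\^eme radical. Il reste \`a contr\^oler ce recollement simultan\'ement sur les intersections $U_i\cap U_j$, o\`u les deux topologies de compl\'etion venant de $U_i$ et $U_j$ doivent \^etre r\'econcili\'ees.
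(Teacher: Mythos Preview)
Your reduction from $\O$ to $\overline\O$ via $A_{f,g_*}/p^r \cong (\O_C/p^r)\wotimes_{k_C}\overline A_{f,g_*}$ and $p$-adic completeness is the paper's argument (the paper uses a single small $r>0$; your inductive extension to all $r$ is unnecessary).

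The treatment of $\overline\O$, however, has a genuine gap. Your identification of $\overline A_{f,g_1,\ldots,g_d}$ with the $(g_*)$-adic completion of $\overline A_f$ is incorrect: from the definition one finds instead
\[
\overline A_{f,g_*}\;\cong\;\big(\widehat{\overline A}^{\,(g_*)}\big)[1/f],
\]
i.e.\ one completes $\overline A$ first and then inverts $f$. Completion and localization do not commute here: for $\overline A=k_C[x]$ with $f=g_1=x$, the correct ring is $k_C((x))$ (a phantom circle), whereas your description gives $k_C[x,x^{-1}]$ completed at the unit ideal $(x)$, hence zero. Your decomposition ``Zariski sheaf $+$ flat completion'' rests on the wrong order and does not directly adapt. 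There is also an independent issue with the Nullstellensatz step: the inclusion $U_{ff_i,g_{i,*}}\subset U_{f,g_*}$, tested on trivial valuations, only yields $(g_*)\subset\sqrt{(g_{i,*})}$ in $\overline A_{ff_i}$, not the reverse inclusion you would need for the two topologies to coincide.

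The paper's argument for $\overline\O$ is quite different and more geometric: a section $F\in\overline\O(U)$ is locally in algebras $\overline A_{f_v,g_{v,*}}$, which by rem.\,\ref{adoc3} embed in the analytic functions on the corresponding Berkovich opens; hence $F$ is analytic on the Berkovich space associated to ${\rm Spec}(\overline A_{f,g_*})$. Examining $F$ near the generic points shows that it lies in the total fraction ring, and an analytic function whose poles are only apparent is regular, so $F\in\overline A_{f,g_*}$.
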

\begin{proof}
Notons juste $U$ l'ouvert standard $U_{f,g_1,\dots,g_d}$ et $\overline A$ l'anneau
$\overline A_{f,g_1,\dots,f_d}$.
Soit $F\in \overline\O(U)$.  Il existe donc, pour tout $v\in U$, un ouvert standard $U(v)=U_{f_v,g_{v,1},\dots,g_{v,d_v}}$
tel que $F_{|U(v)}\in \overline A_{f_v,g_{v,1},\dots,g_{v,d_v}}$.

Soient $\eta_i$, $i\in I$, les points g\'en\'eriques des composantes irr\'eductibles
de $\overline X={\rm Spec}(\overline A)$.  Alors $U'=\cup_i U(\eta_i)$ est un ouvert de Zariski
de $\overline X$ et $f_{|U'}$ appartient \`a l'anneau total des fractions
${\rm Fr}(\overline A)$ de $\overline A$.  Par ailleurs $f$ est une fonction analytique
sur $\overline X^{\rm an}$ (espace de Berkovich associ\'e \`a $\overline X$); on en d\'eduit que
les p\^oles de $F$, vu comme \'el\'ement de ${\rm Fr}(\overline A)$, ne sont qu'apparents,
et donc que $F$ se prolonge en une fonction sur $X$, i.e. $F\in\overline A$. 

Cela prouve le premier \'enonc\'e.  Pour en d\'eduire le second, partons de
$F\in\O(U)$; il existe alors un recouvrement fini de $U$ par des
ouverts standard $U_i=U_{f_i,g_{i,1},\dots,g_{i,d_i}}$ tel que
$F_{|U_i}\in A_i=A_{f_i,g_{i,1},\dots,g_{i,d_i}}$.
Soit $A_{i,j}=A_{f_if_j,g_{i,1},\dots,g_{i,d_i},g_{j,1},\dots,g_{j,d_j}}$
et soient 
$$B={\rm Ker}\big[\oplus_iA_i\to\oplus_{i,j}A_{i,j}\big],\quad
\overline B={\rm Ker}\big[\oplus_i\overline A_i\to\oplus_{i,j}\overline A_{i,j}\big]$$
(La fl\`eche $A_i\oplus A_j\to A_{i,j}$ 
\'etant $(f_i,f_j)\mapsto {f_i}_{|U_i\cap U_j}-{f_j}_{|U_i\cap U_j}$.)
La fl\`eche naturelle $A\to\oplus_i A_i$ (resp.~$\overline A\to\oplus_i\overline A_i$)
induit une injection $A\hookrightarrow B$ (resp.~$\overline A\hookrightarrow \overline B$).
On cherche \`a prouver que la premi\`ere injection est un isomorphisme sachant que
la seconde en est un.  

Il suffit de prouver l'\'enonc\'e modulo~$p^r$, avec $r>0$ assez petit, car tout est complet
pour la topologie $p$-adique. Le r\'esultat est donc une cons\'equence des
isomorphismes $A/p^r\cong(\O_C/p^r)\otimes_{k_C}\overline A$
et $B/p^r\cong(\O_C/p^r)\otimes_{k_C}\overline B$.
\end{proof}

\begin{defi}\label{adoc5}
{\rm (i)}
Un espace annel\'e de la forme ${\rm Sp}^{\rm ado}(A)$, avec $A$ une $\O_C$-alg\`ebre de Tate,
ou un de ses ouverts standard est un {\it affine adoque}.  Un {\it sch\'ema adoque}
est un espace annel\'e localement isomorphe \`a un affine adoque.

{\rm (ii)} Si $Y$ est un affine adoque, on pose $\O(Y^{\rm gen})=\O(Y)[\frac{1}{p}]$.
On renvoie au \S\,\ref{appen21} pour des consid\'erations sur {\it la fibre g\'en\'erique
d'un sch\'ema adoque}.
\end{defi}

\subsubsection{Boules ouvertes, jambes, cercles fant\^omes}\label{adoc6}
En dimension~$1$, en prenant le voisinage infinit\'esimal de points bien choisis,
on obtient les affines adoques suivants:

\vskip.1cm
\noindent$\bullet$ {\it Cercle fant\^ome} 

C'est l'affine adoque $Y$ de fonctions globales 
$A=\O_C[[ T,T^{-1}\rangle$. Alors 
$\overline A=k_C((T))$, et $Y$ ne poss\`ede qu'un point (la valuation ${\rm ord}_T$),
et donc aussi un unique ouvert non vide $\{{\rm ord}_T\}$, et on a $\O(\{{\rm ord}_T\})=
\O_C[[ T,T^{-1}\rangle$.

Les points classiques de $Y$ sont $Y(C)=\emptyset$ et $Y(C^\flat)={\goth m}_{C^\flat}\moins\{0\}$.
Autrement dit, $Y$ n'a pas de points en caract\'eristique $0$ (d'o\`u son aspect fant\^ome),
mais en caract\'eristique~$p$ on obtient un ouvert de la droite affine analytique (la boule ouverte
priv\'ee de son centre).

\vskip.1cm
\noindent$\bullet$ {\it Boule ouverte adoque}

C'est l'espace l'affine adoque $Y$ de fonctions globales $A=\O_C[[ T]] $.
Alors $\overline A=k_C[[T]]$, et $Y$ ne poss\`ede que deux points (le point classique $T=0$
et la valuation ${\rm ord}_T$),
et deux ouverts non vides $\{{\rm ord}_T\}$ et $Y$. On a $\O(\{{\rm ord}_T\})=
\O_C[[ T,T^{-1}\rangle$ et $\O_Y(Y)=\O_C[[ T]] $.
En particulier, $Y$ contient le cercle fant\^ome
${\rm Sp}^{\rm ado}(\O_C[[ T,T^{-1}\rangle)$ comme sous-sch\'ema adoque ouvert.

Les points classiques de $Y$ sont $Y(C)={\goth m}_C$ et $Y(C^\flat)={\goth m}_{C^\flat}$.
Autrement dit, $Y$ est la boule ouverte unit\'e en caract\'eristique~$0$ et en caract\'eristique~$p$.

\vskip.1cm
\noindent$\bullet$ {\it Jambe de longueur~$r$} 

C'est l'affine adoque $Y$ de fonctions globales $A=\O_C[[T_1,T_2]] /(T_1T_2-p^r)$.
Alors
$\overline A=k_C[[T_1,T_2]]/(T_1T_2)$, et $Y$ poss\`ede trois points: le point classique
d'\'equation $T_1=T_2=0\in k_C$ et les valuations ${\rm ord}_{T_1}$ 
(qui se factorise \`a travers $\overline A/T_2$) 
et ${\rm ord}_{T_2}$ (qui se factorise \`a travers $\overline A/T_1$).  

Les ouverts non vides
sont $Y$, $\{{\rm ord}_{T_1}\}$ et $\{{\rm ord}_{T_2}\}$; notons que
le seul ouvert contenant $0$ est $Y$.

Le faisceau structural est donn\'e par
$\O_Y(Y)=\O_C[[ T_1,T_2]] /(T_1T_2-p^r)$, $\O_Y(\{{\rm ord}_{T_i}\})=\O_C[[T_i,T_i^{-1}\rangle$.
Il en r\'esulte que $Y$ contient, comme sous-espaces adoques ouverts, les deux cercles fant\^omes
$Y_i={\rm Sp}^{\rm ado}(\O_C[[ T_i,T_i^{-1}\rangle)$, pour $i=1,2$.

En ce qui concerne les points classiques, en caract\'eristique $0$, on obtient la
couronne ouverte $0<v_p(T_1)<r$ (de longueur $r$) et, en caract\'eristique $p$, deux boules
unit\'e ouvertes recoll\'ees en leurs centres.

\Subsubsection{La boule unit\'e ferm\'ee}\label{adoc7}

C'est l'affine adoque associ\'ee \`a 
$A=\O_C\langle X\rangle$ (et donc $\overline A=k_C[X]$).

\vskip.1cm
\noindent$\bullet$ {\it Les points}.--- L'espace ${\rm Sp}^{\rm ado}(A)$ a trois types de points:

$\diamond$ Le {\og point g\'en\'erique\fg}, i.e. la valuation triviale sur $A$ ($v(f)=0$ si $f\neq 0$, $v(0)=\infty$).

$\diamond$ Si $a\in k_C$, le point classique correspondant \`a $a$, i.e. la valuation d\'efinie
par $v_a(f)=v_{\rm triv}(f(a))$, o\`u $v_{\rm triv}$ est la valuation triviale sur $k_C$.

$\diamond$ Si $a\in \piqp(k_C)$, la valuation $f\mapsto {\rm ord}_a(f)$, o\`u ${\rm ord}_a(f)$
est l'ordre du z\'ero de $f$ en~$a$ (si $a=\infty$, alors ${\rm ord}_a(f)=-\deg f$).

Du point de vue points classiques, on a $Y(C)=\O_C$ et $Y(C^\flat)=C^\flat$. Autrement dit,
en caract\'eristique~$0$ on obtient la boule unit\'e ferm\'ee et en caract\'eristique~$p$,
la droite affine.

\vskip.1cm
\noindent$\bullet$ {\it La topologie}.---
Une base d'ouverts de la topologie adoque est donn\'ee par:

$\diamond$
les compl\'ementaires de sous-ensembles finis de points classiques.. 

$\diamond$ les $V(a)=\{v_a,{\rm ord}_a\}$, pour $a\in k_C$, (voisinage infinit\'esimal du point $a$,
d\'efini par $\{v,\ v(T-a)>0\}$),

$\diamond$ les $\ocirc{V}(a)=\{{\rm ord}_a\}$, pour $a\in \piqp(k_C)$ (si $a\in k_C$, c'est l'intersection
de $V(a)$ et de l'ouvert $U_{T- a}$, et si $a=\infty$, c'est l'ouvert de $U_{T\neq 0}$
d\'efini par $v(T^{-1})>0$),

\begin{rema}
(i) Le {\og point g\'en\'erique\fg} est ferm\'e mais tr\`es gros: le compl\'ementaire
de tout ouvert le contenant est inclus dans la r\'eunion de $\ocirc{V}(\infty)$
et d'un nombre fini de~$V(a)$.

(ii)
Par comparaison, une base d'ouverts adiques est donn\'ee par:

$\diamond$ les compl\'ementaires d'un nombre fini de points classiques,

$\diamond$ le compl\'ementaire de $V(a)$ si $a\in k_C$ (d\'efini par $v(T-a)\leq v(1)$)
ou de $V(\{{\rm ord}_\infty\})$ (d\'efini par $v(T)\geq v(1)$).

Pour la topologie adique, le point g\'en\'erique est dense (contrairement
\`a la topologie adoque), et ${\rm ord}_a$ est dense
dans $V(a)$ (comme pour la topologie adoque).

(iii)
Le sch\'ema formel associ\'e \`a $A$ a pour espace topologique
sous-jacent le sch\'ema ${\rm Spec}(k_C[T])$; il n'a pour points que le point g\'en\'erique
et les points classiques, et pour ouverts  non vides uniquement les compl\'ementaires
de sous-ensembles finis de points classiques (et le point g\'en\'erique est dense).
\end{rema}

\noindent $\bullet$ {\it Le faisceau structural adoque}

--- Si $a\in k_C$, alors $\O_Y(V(a))=\O_C[[T-a]]$ (i.e.~$V(a)$ est une boule unit\'e ouverte).

--- Si $a\in \piqp(k_C)$, alors $\ocirc V(a)$ est un cercle fant\^ome (on a
$\O_Y(V(a))=\O_C[[ T-a,(T-a)^{-1}\rangle$, si $a\in k_C$, et $\O_Y(\ocirc{V}(\infty))=
\O_C[[ T^{-1},T\rangle$). Remarquons que le cercle fant\^ome $\ocirc{V}(\infty)$
a un statut sp\'ecial car il est 
aussi ferm\'e contrairement \`a tous les autres (l'adh\'erence de $\ocirc{V}(a)$ est $V(a)$, si $a\in k_C$).

--- Si $a_1,\dots,a_k\in k_C$, alors $\O_Y(Y\moins\{a_1,\dots,a_k\})=
\O_C\big\langle T,\frac{1}{T-a_1},\dots,\frac{1}{T-a_k}\big\rangle$.
(comme pour le sch\'ema formel correspondant).

\begin{rema}
{\rm (i)}
 $U=\sqcup_{a\in k_C} V(a)$ est un ouvert strict de $B$ (son compl\'ementaire
est constitu\'e du point g\'en\'erique et de ${\rm ord}_\infty$). 
On a $\O_Y(U)=\prod_{a\in k_C}\O_C[[T-[a]]]$ (une sorte de compl\'et\'e profini
de $\O_C\langle T\rangle$).

{\rm (ii)} Si $U$ est un ouvert de Zariski (i.e. le compl\'ementaire d'un ensemble fini $A$
de points classiques), alors $U$ contient les cercles fant\^omes $\ocirc V(a)$ correspondant
aux $a\in A$ (ainsi que la cercle fant\^ome $\ocirc V(\infty)$).
\end{rema}

\Subsubsection{Affino\"{\i}des}\label{adoc8}
La boule unit\'e adoque est un cas particulier de $\O_C$-short, cf.~\no\ref{adoc14}.
C'est aussi 
un cas particulier d'affine adoque $Y^{\rm ado}$
associ\'e \`a un affino\"{\i}de~$Y$ (i.e.~associ\'e \`a $\O^+(Y)$) de dimension~$1$.
L'espace topologique $Y^{\rm ado}$ est la r\'eduction canonique de $Y$ du \no\ref{constr4}, vue
comme espace de Berkovich au lieu de sch\'ema (ce qui rajoute des points et des ouverts).

Comme la boule unit\'e, $Y^{\rm ado}$ a trois types de points: les points g\'en\'eriques
des composantes irr\'eductibles de la r\'eduction canonique, les points classiques,
et des valuations de rang~$1$ pour tout point des compactifi\'ees lisses des composantes
irr\'eductible.

Une base d'ouverts de la topologie est constitu\'ee des ouverts de Zariski
(compl\'ementaire d'ensembles finis
de points classiques), des voisinages infinit\'esimaux des points classiques
(le voisinage infinit\'esimal d'un point singulier peut \^etre tr\`es compliqu\'e),
et des cercles fant\^omes associ\'es aux valuations de rang~$1$.

\begin{rema}
La fronti\`ere $\partial^{\rm ad}Y$ de $Y$ du (ii) de la rem.~\ref{constr5}
est un ouvert de $Y^{\rm ado}$ constitu\'e de cercles fant\^omes.
\end{rema}

\Subsection{$R$-alg\`ebres de Tate}\label{adoc9}
\subsubsection{\'Epaississements de $\O_C$}\label{adoc10}
Un {\it \'epaississement $R$ de $\O_C$} est un anneau muni d'un morphisme
surjectif $\theta_R:R\to\O_C$ tel que, si $r\in\Q_+^\dual$, et si $I_r=\theta_R^{-1}(p^r\O_C)$,
alors $R$ est s\'epar\'e et complet pour la topologie $I_r$-adique.  En particulier,
$R$ est local, de corps r\'esiduel $k_C$; on note ${\goth m}_R$ son id\'eal maximal.
Des exemples naturels d'\'epaississements de $\O_C$ sont $\O_C$, $\ainf$ ou $\O_C[[T_1,\dots,T_d]]$.

Si $R$ est un \'epaississement de $\O_C$, on munit $R$ de la topologie 
$I_r$-adique (qui ne d\'epend pas du choix de $r\in\Q_+^\dual$) et on note ${\rm Sp}(R)$ l'ensemble
des id\'eaux premiers ferm\'es ${\goth p}$ de $R$, tels que ${\goth p}[\frac{1}{p}]$
soit un id\'eal maximal de $R[\frac{1}{p}]$.  

-- Si $R=\O_C[[T_1,\dots,T_d]]$,
alors ${\rm Sp}(R)$ n'est autre que l'ensemble ${\goth m}_R^d$ 
des $C$-points de la boule
unit\'e ouverte de dimension~$d$.  

-- Le cas $R=\ainf$ est plus amusant (cf. \cite[cor. 3.3]{FFpreface} pour ce r\'esultat de Fargues et Fontaine): 
dans ce cas ${\rm Sp}(R)$ a un \'el\'ement un peu \'etonnant, \`a savoir
${\rm Ker}\,\theta_0$ pour lequel $R/{\rm Ker}\,\theta_0=\O_{\breve C}$;
tous les autres \'el\'ements de ${\rm Sp}(\ainf)$ sont de la forme
$(p-[a])$, avec $a\in{\goth m}_{C^\flat}$ (non uniquement d\'etermin\'e),
et $\ainf/(p-[a])$ est l'anneau des entiers d'un corps $C_a$ alg\'ebriquement clos,
complet pour~$v_p$ et dont le bascul\'e $C_a^\flat$ est $C^\flat$.  
(Si $a=p^\flat$ -- et donc $[a]=\tilde p$ -- 
ce quotient n'est autre que $\O_C$.) 

On \'ecrit $\tilde p=0$ pour le quotient $\ainf/{\rm Ker}\,\theta_0$
et $p=[a]$ pour $\ainf/(p-[a])$ (et $p=\tilde p$, si $a=p^\flat$, et donc $\ainf/(p-[a])=\O_C$).

\subsubsection{$R$-alg\`ebres de Tate}\label{adoc11}
Soit $R$ un \'epaississement de $\O_C$.
Un $R$-module $M$ est dit {\it orthonormalisable} s'il est isomorphe
au module $\ell^\infty_0(I,R)$ des suites $(x_i)_{i\in I}$ tendant vers~$0$
en $\infty$: un tel isomorphisme fournit 
une famille $(e_i)_{i\in I}$ d'\'el\'ements de $M$ telle que tout
\'el\'ement $x$ de $M$ puisse s'\'ecrire, de mani\`ere unique, sous
la forme $\sum_{i\in I}x_ie_i$ avec $x_i\to 0$ quand $i\to\infty$;
une telle famille est appel\'ee {\it une base orthonormale de $M$ sur $R$}.

Une {\it $R$-alg\`ebre de Tate} $B$ est une $R$-alg\`ebre orthonormalisable, 
quotient d'un
$R\langle x_1,\dots,x_n\rangle$.
On pose $\overline B=k_C\otimes_R B$
(c'est un quotient de $k_C[x_1,\dots,x_n]$).

Soit $B$ une $R$-alg\`ebre de Tate de la forme\footnote{En particulier $R$ est suppos\'ee
contenir $\O_K$; par exemple, si $R=\ainf$, cela impose $K\subset\breve{C}$.}
 $R\wotimes_{\O_K}B_K$, o\`u
$B_K$ est une $\O_K$-alg\`ebre de Tate, et $K$ est un sous-corps complet de $C$, 
{\it de valuation discr\`ete}. 
\begin{lemm}\label{PP1}
Soient $r>0$ et $R^{(r)}=\O_K+I_r$. Si $(e_i)_{i\in I}$
est une famille d'\'el\'ements de $R^{(r)}\wotimes_{\O_K}B_K$,
les conditions suivantes sont \'equivalentes:

{\rm (i)} $(e_i)_{i\in I}$ est une base orthonormale de $B$ sur $R$,

{\rm (ii)} $(\overline e_i)_{i\in I}$ est une base alg\'ebrique de $\overline B$ sur $k_C$.
\end{lemm}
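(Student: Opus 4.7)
Puisque $\O_K$ est un anneau de valuation discr\`ete, $B_K$ admet une base orthonormale $(f_j)_{j\in J}$ sur $\O_K$; par changement de base, on obtient une base orthonormale de $B=R\wotimes_{\O_K}B_K$ sur $R$. Apr\`es r\'eduction modulo $\mathfrak{m}_R$, l'inclusion $I_r\subset\mathfrak{m}_R$ (qui r\'esulte de $\theta_R(I_r)=p^r\O_C\subset\mathfrak{m}_C$) permet d'identifier $\overline B=k_C\otimes_{\O_K}B_K=k_C\otimes_{k_K}\overline{B_K}$ avec la somme directe usuelle $k_C^{(J)}$, ayant $(\overline{f_j})$ comme base alg\'ebrique (la somme directe plut\^ot que la somme compl\'et\'ee appara\^\i t car $k_C$ est discret).

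Pour (i)$\Rightarrow$(ii), partant du d\'eveloppement orthonormal $b=\sum_i x_ie_i$ avec $x_i\to 0$ dans la topologie $I_r$-adique, l'inclusion $I_r\subset\mathfrak{m}_R$ force $x_i\in\mathfrak{m}_R$ pour presque tout $i$; la r\'eduction $\overline b=\sum_{i\in F}\overline{x_i}\,\overline{e_i}$ est donc une somme finie, et $(\overline{e_i})$ engendre $\overline B$ sur $k_C$. L'ind\'ependance lin\'eaire d\'ecoule de l'unicit\'e du d\'eveloppement orthonormal: une relation $\sum_{i\in F}c_i\overline{e_i}=0$ se rel\`eve en $\sum_F\tilde c_ie_i\in\mathfrak{m}_R B$, et tout \'el\'ement de $\mathfrak{m}_R B$ a tous ses coefficients orthonormaux dans $\mathfrak{m}_R$ (car $\mathfrak{m}_R B$ est engendr\'e par les produits $mb$ dont le d\'eveloppement orthonormal a pour coefficients $m$ fois ceux de $b$), ce qui force $\tilde c_i\in\mathfrak{m}_R$, d'o\`u $c_i=0$.

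Pour (ii)$\Rightarrow$(i), on \'ecrit $e_i=\sum_jA_{ij}f_j$ avec $A_{ij}\in R^{(r)}$ et $A_{ij}\to 0$ en $j$; la famille $(e_i)$ est une base orthonormale de $B$ sur $R$ si et seulement si la matrice infinie $A=(A_{ij})$ d\'efinit un isomorphisme $R$-lin\'eaire $\phi:\ell^\infty_0(I,R)\to\ell^\infty_0(J,R)$, et l'hypoth\`ese (ii) signifie pr\'ecis\'ement que la r\'eduction $\overline A$ est un isomorphisme $k_C^{(I)}\to k_C^{(J)}$. L'approche est de relever $\overline A^{-1}$ en une matrice infinie $B=(B_{ji})$ \`a coefficients dans $R$ et de montrer que $AB$ et $BA$ sont inversibles, d'o\`u l'isomorphie de $\phi$.

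Le principal obstacle est que les puissances de $\mathfrak{m}_R$ ne tendent pas vers z\'ero dans $R$ (par exemple $\mathfrak{m}_C^n=\mathfrak{m}_C$), de sorte qu'une it\'eration na\"{\i}ve \`a la Nakayama dans $\mathfrak{m}_R$ ne peut pas converger. La solution exploite la base discr\`etement valu\'ee: une uniformisante $\pi_K$ de $\O_K$ v\'erifie $\pi_K^n\to 0$ dans la topologie $I_r$-adique de $R$, et l'hypoth\`ese $A_{ij}\in R^{(r)}=\O_K+I_r$ fournit la structure $\pi_K$-adique requise. En effectuant l'approximation successive modulo les puissances croissantes de $\pi_K$, en appliquant Nakayama en niveau fini sur chaque anneau local artinien $\O_K/\pi_K^n$ (dont le corps r\'esiduel $k_K$ se plonge dans $k_C$) conjointement avec (ii), puis en recollant les approximations gr\^ace \`a la compl\'etude $I_r$-adique de $R$, on obtient un inverse bilat\`ere de la matrice $A$, d'o\`u l'isomorphisme cherch\'e.
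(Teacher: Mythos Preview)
Your argument for (i)$\Rightarrow$(ii) is correct and more detailed than the paper's one-line ``imm\'ediate''.

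For (ii)$\Rightarrow$(i), your route is more complicated than the paper's, and the final sketch is too vague to count as a proof. The paper's key simplification is in the \emph{choice} of the auxiliary basis: instead of taking an arbitrary orthonormal basis $(f_j)_{j\in J}$ of $B_K$, one takes $f_i\in B_K$ lifting $\overline{e_i}$ (same index set~$I$). The classical discrete-valuation case --- which you invoke anyway --- says that $(f_i)$ is then automatically an orthonormal basis of $B_K$ over $\O_K$, hence of $B$ over $R$. Now the transition matrix $M$ from $(f_i)$ to $(e_i)$ has entries in $R^{(r)}$ and reduces to the identity modulo $\mathfrak m_R$; since $R^{(r)}\cap\mathfrak m_R=\pi_K\O_K+I_r\subset I_{r'}$ for $r'=\min(r,v_p(\pi_K))$, one has $M=1-N$ with $N$ having entries in $I_{r'}$, and the Neumann series $\sum N^k$ converges. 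No iteration modulo $\pi_K^n$ is needed.

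Your approach can be salvaged, but not along the lines you sketch. The phrase ``approximation successive modulo les puissances croissantes de $\pi_K$, en appliquant Nakayama\ldots\ sur $\O_K/\pi_K^n$'' does not make sense as stated: the matrix $A$ lives over $R^{(r)}$, not $\O_K$, and one cannot reduce modulo $\pi_K^n$ in $R$. What would actually work is: observe that $\overline A$ has entries in $k_K$ (since $I_r\subset\mathfrak m_R$), descend invertibility from $k_C$ to $k_K$ by flatness, lift $\overline A^{-1}$ to a matrix $B_0$ over $\O_K$, and then note that $AB_0-1$ has entries in $R^{(r)}\cap\mathfrak m_R\subset I_{r'}$, so Neumann series applies. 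But once you see this, you realise it is exactly the paper's argument with $(f_i)$ replaced by $(\sum_j (B_0)_{ji}f_j)$ --- the clever choice of basis is doing the descent and the $\O_K$-lifting for you in one step.
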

\begin{proof}
L'implication (i)$\Rightarrow$(ii) est imm\'ediate; prouvons la r\'eciproque.
Soit $(f_i)_{i\in I}$ une famille d'\'el\'ements de $B_K$ telle que $f_i$ ait
pour image $\overline e_i$ dans $\overline B$.  Les $f_i$ forment une
base orthonormale de $B_K$ sur $\O_K$ 
(on est
dans le cas de valuation discr\`ete o\`u l'\'equivalence entre (i) et (ii) est parfaitement
classique),
et donc aussi de $B$ sur $R$.

Soit $M$ la matrice des $e_i$ dans la base des $f_i$.  Par construction, $M$ est
\`a coefficients dans $R^{(r)}$ et congrue \`a $1$ modulo ${\goth m}_R$, et donc
$M=1-N$, avec $N$ \`a coefficients dans $I_r$.  Il s'ensuit que $M$ est inversible,
d'inverse $1+N+N^2+N^3+\cdots$ (la s\'erie converge car $N^k$ est \`a coefficients
dans $I_r^k$ et $R$ est s\'epar\'e et complet pour la topologie $I_r$-adique).
Les $e_i$, pour $i\in I$, forment donc une base orthonormale de $B$ sur $R$.
\end{proof} 

\begin{lemm}\label{PP2}
Soit $B$ de la forme $R\wotimes_{\O_K}B_K$ comme ci-dessus, et 
soit $A$ une sous-$R$-alg\`ebre ferm\'ee de $B$ v\'erifiant:

$\bullet$ $\overline A$ est de type fini sur $k_C$,

$\bullet$ il existe $r>0$ et une famille $(e_i)_{i\in I}$
d'\'el\'ements de $R^{(r)}\wotimes_{\O_K}B_K$ dont les
images dans $B/A$ forment une base orthonormale de $B/A$.

Alors $A$ est une $R$-alg\`ebre de Tate.
\end{lemm}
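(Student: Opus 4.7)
My plan is to produce an orthonormal basis of $A$ over $R$ by extending the given family $(e_i)$ to an orthonormal basis of $B$ via Lemma~\ref{PP1}, then projecting onto $A$; the presentation of $A$ as a quotient of $R\langle x_1,\dots,x_n\rangle$ will follow from the finite-type hypothesis on $\overline A$. The hypothesis yields a splitting of $0 \to A \to B \to B/A \to 0$: letting $N$ be the closed $R$-submodule with orthonormal basis $(e_i)$, one has $B = A \oplus N$, and reducing modulo ${\goth m}_R$ gives $\overline B = \overline A \oplus \overline N$ with $(\overline e_i)$ a $k_C$-basis of $\overline N$. Since $e_i \in R^{(r)}\wotimes_{\O_K} B_K$, the reductions $\overline e_i$ lie in $\overline{B_K}$; let $V \subset \overline{B_K}$ be their $k_K$-span. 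Using that the coefficients of the expansion of an element of $B_K$ in the $R$-orthonormal basis $(e_i \bmod A)$ of $B/A$ lie in $R^{(r)}$ (a consequence of the hypothesis on the $e_i$), I check that the projection $\overline B \to \overline N$ restricted to $\overline{B_K}$ lands in $V$, from which it follows that $\overline A = k_C \otimes_{k_K} \overline A_K$ where $\overline A_K := \overline A \cap \overline{B_K}$.

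By faithfully flat descent along $k_K \hookrightarrow k_C$, the algebra $\overline A_K$ is a $k_K$-subalgebra of $\overline{B_K}$ of finite type, and in particular admits a $k_K$-basis $(\overline g_j)_{j \in J}$; I lift each $\overline g_j$ to an element $\tilde g_j \in B_K \subset R^{(r)}\wotimes_{\O_K} B_K$. The combined family $(\tilde g_j) \cup (e_i)$ then lies in $R^{(r)}\wotimes_{\O_K} B_K$, and its reductions form a $k_K$-basis of $\overline{B_K}$, hence a $k_C$-basis of $\overline B$; Lemma~\ref{PP1} thus implies that $(\tilde g_j) \cup (e_i)$ is an orthonormal basis of $B$ over $R$. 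Letting $M \subset B$ be the closed $R$-submodule with orthonormal basis $(\tilde g_j)$, the two decompositions $B = M \oplus N$ and $B = A \oplus N$ yield a canonical $R$-linear isomorphism $A \cong M$ via projection along $N$; defining $g_j := \pi_A(\tilde g_j) \in A$, where $\pi_A : B \to A$ denotes the projection along $N$, the family $(g_j)_{j\in J}$ is, by construction, an orthonormal basis of $A$ over $R$.

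For the presentation of $A$ as a quotient of $R\langle x_1,\dots,x_n\rangle$, I choose finitely many generators $\overline a_1,\dots,\overline a_n$ of $\overline A_K$ as $k_K$-algebra, lift them to $\tilde a_i \in B_K$, and set $a_i := \pi_A(\tilde a_i) \in A$. The continuous $R$-algebra homomorphism $\phi : R\langle x_1,\dots,x_n\rangle \to A$ defined by $x_i \mapsto a_i$ is well-defined thanks to the $I_r$-adic completeness of $R$ and the continuity of multiplication in $B$, and its reduction modulo ${\goth m}_R$ is the surjection $k_C[x_1,\dots,x_n] \to \overline A$. A topological Nakayama argument, using the orthonormal basis $(g_j)$ of $A$ from the previous paragraph to control successive approximations modulo powers of $I_r$, then yields the surjectivity of $\phi$. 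The main technical obstacle is the descent step in the first paragraph, i.e., the identity $\overline A = k_C \otimes_{k_K} (\overline A \cap \overline{B_K})$: this is not formal and is what allows Lemma~\ref{PP1} to be applied to an extension of $(e_i)$, without which one would only recover $A$ as an abstract direct summand of $B$ and not an $R$-algebra of Tate.
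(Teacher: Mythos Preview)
Your argument is considerably more involved than the paper's, and the extra complication stems from a step that is both unjustified and unnecessary.

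The descent claim --- that the projection $\overline B\to\overline N$ sends $\overline{B_K}$ into the $k_K$-span of the $\overline e_i$, hence $\overline A=k_C\otimes_{k_K}\overline A_K$ --- does not follow from the hypothesis in the way you suggest. You assert that ``the coefficients of the expansion of an element of $B_K$ in the $R$-orthonormal basis $(e_i\bmod A)$ of $B/A$ lie in $R^{(r)}$'', but knowing $e_i\in R^{(r)}\wotimes_{\O_K}B_K$ gives no a priori control on the $R$-coefficients of an arbitrary $b\in B_K$ when expanded in the basis $(\bar e_i)$ of $B/A$: those coefficients depend on $A$, about which nothing of this nature is assumed. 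So this step is a genuine gap.

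The paper avoids the issue entirely by lifting \emph{into $A$} rather than into $B_K$. One picks generators $\overline z_1,\dots,\overline z_s$ of $\overline A$ over $k_C$, and monomials $\overline{\bf z}^{\bf j}$ (${\bf j}\in J$) forming a $k_C$-basis of $\overline A$; one then lifts the $\overline z_i$ to $z_i\in A$. The point is that any element of $B$ lies in $R^{(r')}\wotimes_{\O_K}B_K$ for some $r'>0$, so after shrinking $r$ the $z_i$ (and hence the ${\bf z}^{\bf j}$) lie in $R^{(r)}\wotimes_{\O_K}B_K$. Now the family $({\bf z}^{\bf j})_{\bf j\in J}\cup(e_i)_{i\in I}$ satisfies the hypotheses of Lemma~\ref{PP1}, so it is an orthonormal basis of $B$. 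Since the ${\bf z}^{\bf j}$ already lie in $A$ and the $e_i$ span $B/A$, this forces the $({\bf z}^{\bf j})$ to be an orthonormal basis of $A$; and since they are monomials in $z_1,\dots,z_s\in A$, the presentation $R\langle z_1,\dots,z_s\rangle\twoheadrightarrow A$ is immediate --- no separate Nakayama argument is needed. Your projection $\pi_A$ and the attendant descent machinery are thus superfluous once one lifts into $A$ from the start.
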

\begin{proof}
Il existe $\overline z_1,\dots,\overline z_s\in\overline A$ engendrant $\overline A$ sur
$k_C$, et donc $J\subset \N^s$ tel que les $\overline{\bf z}^{\bf j}=
\overline z_1^{j_1}\cdots\overline z_s^{j_s}$, pour ${\bf j}=(j_1,\dots,j_s)\in J$,
forment une base de $\overline A$ sur $k_C$.

Choisissons des rel\`evements $z_1,\dots,z_s$ de $\overline z_1,\dots,\overline z_s$ dans $A$.
Quitte \`a diminuer $r$, on peut supposer que les $z_i$ appartiennent \`a 
$R^{(r)}\wotimes_{\O_K}B_K$.  Les ${\bf z}^{\bf j}$, pour ${\bf j}\in J$
appartiennent aussi \`a $R^{(r)}\wotimes_{\O_K}B_K$, et il r\'esulte
du lemme~\ref{PP1} que les ${\bf z}^{\bf j}$, pour ${\bf j}\in J$, et les
$e_i$, pour $i\in I$, forment une base orthonormale de $B$ sur $R$.
On en d\'eduit que les ${\bf z}^{\bf j}$, pour ${\bf j}\in J$,
forment une base orthonormale de $A$ sur $R$, et donc que $A$ est orthonormalisable
et quotient de $R\langle z_1,\dots,z_s\rangle$.

Cela permet de conclure.
\end{proof}

\subsubsection{Rel\`evement de morphismes}\label{adoc12}
Si $K$ est un sous-corps ferm\'e de $C$ et si $A$ est une alg\`ebre de Tate 
sur $\O_K$,
posons $\overline A=A/{\goth m}_KA$.  Rappelons que, si $A$ est sans $\O_K$-torsion, alors $A$ est 
{\it formellement lisse} sur $\O_K$ si et seulement si
$\overline A$ est lisse sur~$k_K$.
\begin{prop}\label{QQ1}
{\rm (\cite[th.\,A-1]{Cn85})}
Soient $A, B$ des alg\`ebres de Tate sur $\O_K$, 
avec $A$
formellement lisse sur $\O_K$.
Si $\overline s:\overline A\to\overline B$ est un morphisme 
de $\O_K$-alg\`ebres,
il existe $s:A\to B$ relevant $\overline s$.
\end{prop}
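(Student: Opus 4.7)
I would deduce the proposition by successive approximation modulo powers of a uniformiser of $\O_K$, invoking the infinitesimal lifting property which characterises formally smooth algebras.

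\textbf{Main construction.} Fix a uniformiser $\pi$ of $\O_K$. Since $B$ is a Tate $\O_K$-algebra, it is $\pi$-adically separated and complete, so $B = \varprojlim_n B/\pi^n B$, and it suffices to build a compatible tower of $\O_K$-algebra morphisms $s_n : A \to B/\pi^n B$ with $s_1 = \overline s$, and then set $s := \varprojlim_n s_n$. One proceeds by induction on $n$: given $s_n$, the projection $B/\pi^{n+1} B \twoheadrightarrow B/\pi^n B$ is surjective with kernel $\pi^n B / \pi^{n+1} B$, which for $n \geq 1$ is a square-zero ideal of $B/\pi^{n+1} B$ carrying its natural $\overline B$-module structure. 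Formal smoothness of $A$ over $\O_K$ is precisely the statement that $\O_K$-algebra maps from $A$ lift across such square-zero nilpotent thickenings, which provides the desired $s_{n+1}$. Concretely, if $A = \O_K\langle x_1,\dots,x_m\rangle/I$, one lifts the images $s_n(x_i)$ arbitrarily to elements $y_i \in B/\pi^{n+1} B$, then uses the smoothness of $\overline A$ over $k_K$ to correct the $y_i$ by a derivation with values in $\pi^n B/\pi^{n+1} B$ so that the relations of $I$ are satisfied.

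\textbf{Main obstacle.} The only genuine subtlety is to bridge the purely algebraic infinitesimal lifting property and the topological setting of Tate algebras. Two remarks suffice. First, any $\O_K$-algebra map from $A$ to a quotient annihilated by $\pi^{n+1}$ such as $B/\pi^{n+1} B$ is automatically $\pi$-adically continuous, so the inductive abstract lifts $s_n$ qualify as morphisms of topological algebras. Second, the hypothesis of formal smoothness is, for $A$ without $\O_K$-torsion, equivalent to the smoothness of $\overline A$ over $k_K$ (as recalled just before the statement), and the latter is exactly the vanishing condition which guarantees that the obstruction to each stepwise lift is trivial. The projective limit $s = \varprojlim_n s_n : A \to B$ is then a well-defined continuous $\O_K$-algebra morphism, and reduces to $\overline s$ modulo $\pi$ by construction.
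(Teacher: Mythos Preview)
The paper does not give its own proof of this proposition; it simply attributes the result to Coleman \cite[th.\,A-1]{Cn85}. Your successive-approximation argument via the infinitesimal lifting property of formally smooth algebras is the standard proof of such a statement and is correct when $\O_K$ is discretely valued (which is Coleman's setting).

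One point deserves care: the paper allows $K$ to be an arbitrary closed subfield of $C$, and in particular applies the proposition with $K=C$ in the proof of Cor.~\ref{QQ2}\,(ii). In that case there is no uniformiser, so your base step $s_1=\overline s$ does not land in $B/\pi B$: one has $\overline B = B/{\goth m}_K B$, and the kernel of $B/pB\to\overline B$ is ${\goth m}_K B/pB$, which is not nilpotent since ${\goth m}_K^2={\goth m}_K$. The fix is minor. Writing $A=\O_K\langle x_1,\dots,x_n\rangle/(f_1,\dots,f_m)$ and choosing arbitrary lifts $b_i\in B$ of $\overline s(\bar x_i)$, the finitely many obstructions $f_j(b)$ lie in ${\goth m}_K B=\bigcup_{r>0}p^rB$, hence all lie in some fixed $p^rB$; this gives a map $s_r:A\to B/p^rB$ reducing to $\overline s$. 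From there your inductive doubling $s_r\leadsto s_{2r}\leadsto s_{4r}\leadsto\cdots$ via square-zero lifts along $B/p^{2^kr}B\to B/p^{2^{k-1}r}B$ works verbatim, and $s=\varprojlim_k s_{2^kr}$ is the desired lift.
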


\begin{coro}\label{QQ2}
{\rm (i)} Si $A$ est une alg\`ebre de Tate
formellement lisse sur $\O_{\breve C}$, il existe
$\varphi:A\to A$ relevant $x\mapsto x^p$ sur $\overline A$ {\rm (un tel $\varphi$
est un {\it frobenius de $A$})}.

{\rm (ii)} Si $B$ est une alg\`ebre de Tate 
formellement lisse sur $\O_C$, il existe une alg\`ebre de Tate
$\breve B$ sur $\O_{\breve C}$ telle que l'on ait $B\cong\O_C\wotimes_{\O_{\breve C}}
\breve B$.
\end{coro}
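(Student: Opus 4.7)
Les deux assertions se d\'eduisent de la proposition \ref{QQ1}.

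Pour (i), on applique cette proposition avec $K = \breve C$ et $B = A$, en prenant pour $\overline s : \overline A \to \overline A$ le Frobenius absolu $x \mapsto x^p$: ce dernier est un morphisme de $k_C$-alg\`ebres si l'on twiste la structure de la cible par le Frobenius de $k_C$, ce qui est licite car $k_C$ est parfait. Le rel\`evement fourni par la proposition \ref{QQ1} est alors un morphisme d'anneaux $\varphi : A \to A$ relevant l'\'el\'evation \`a la puissance $p$ modulo ${\goth m}_C$, c'est-\`a-dire un frobenius de $A$.

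Pour (ii), la strat\'egie est en deux temps: on construit d'abord un candidat $\breve B$ sur $\O_{\breve C}$, puis on l'identifie \`a $B$ apr\`es extension des scalaires \`a $\O_C$. Comme $\overline B$ est une $k_C$-alg\`ebre lisse de type fini, elle admet une pr\'esentation $k_C[x_1, \ldots, x_n]/(\overline f_1, \ldots, \overline f_r)$ avec une condition jacobienne appropri\'ee; en relevant arbitrairement les $\overline f_i$ en des $\breve f_i \in \O_{\breve C}\langle x_1, \ldots, x_n\rangle$, on obtient une $\O_{\breve C}$-alg\`ebre de Tate $\breve B := \O_{\breve C}\langle x_1, \ldots, x_n\rangle / (\breve f_1, \ldots, \breve f_r)$ formellement lisse et de r\'eduction $\overline B$. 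L'alg\`ebre $B'' := \O_C \wotimes_{\O_{\breve C}} \breve B$ est alors une $\O_C$-alg\`ebre de Tate formellement lisse telle que $\overline{B''} = \overline B$. En appliquant la proposition \ref{QQ1} (avec $K = C$) aux deux identifications $\overline{B''} \to \overline B$ et $\overline B \to \overline{B''}$, on obtient des morphismes $s : B'' \to B$ et $t : B \to B''$ de $\O_C$-alg\`ebres relevant l'identit\'e.

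Le point d\'elicat est alors de d\'eduire de $s$ et $t$ un isomorphisme $B \cong B''$: les compos\'ees $s \circ t$ et $t \circ s$ sont des endomorphismes de $\O_C$-alg\`ebres relevant l'identit\'e modulo ${\goth m}_C$, et l'assertion se ram\`ene \`a montrer que tout tel endomorphisme d'une $\O_C$-alg\`ebre de Tate formellement lisse est un automorphisme. C'est l\`a la principale obstruction: puisque ${\goth m}_C$ n'est pas nilpotente (en fait ${\goth m}_C^2 = {\goth m}_C$), on ne peut pas simplement inverser par un d\'eveloppement g\'eom\'etrique. Il faudra exploiter conjointement l'orthonormalisabilit\'e, la lissit\'e formelle et la compl\'etude $p$-adique, en construisant l'inverse modulo $p^r$ pour tout $r \in \Q_+^\dual$ \`a l'aide d'un rel\`evement successif des \'equations, puis en passant \`a la limite $p$-adique.
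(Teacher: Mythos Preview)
Your treatment of (i) is correct and coincides with the paper's.

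For (ii), you correctly pinpoint the crux---that ${\goth m}_C$ is idempotent, so an endomorphism congruent to the identity modulo ${\goth m}_C$ cannot be inverted by a geometric series---but your proposed resolution (``construire l'inverse modulo $p^r$ \`a l'aide d'un rel\`evement successif des \'equations'') is not a proof: the ideal ${\goth m}_C/p^r\O_C$ is still idempotent in $\O_C/p^r$, so the same obstruction persists modulo every $p^r$, and nothing in your sketch explains how to overcome it. The paper's argument is both simpler and different. It applies Proposition~\ref{QQ1} only once to obtain $s\colon A'\to B$ (there is no need for your second map $t$), and shows directly that $s$ is an isomorphism. The observation you are missing is this: for $0<t\le 1$ the inclusion $\O_{\breve C}\hookrightarrow\O_C$ induces a ring section $k_C\to\O_C/p^t$ (since $\O_{\breve C}\cap p^t\O_C=p\O_{\breve C}$), and because $A'$ and $B$ are of finite type over $\O_C$, for $t>0$ small enough their finitely many defining relations have coefficients in $\O_{\breve C}+p^t\O_C$. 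This yields canonical isomorphisms $A'/p^t\cong(\O_C/p^t)\otimes_{k_C}\overline B\cong B/p^t$ through which $s\bmod p^t$ becomes the identity. One then concludes by $p$-adic completeness (surjectivity) and flatness (injectivity).
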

\begin{proof}
Le (i) est une application directe de la prop.~\ref{QQ1} (avec $B=A$).
Pour prouver le (ii), on part d'un rel\`evement formellement lisse $A$ de $\overline B$
sur $\O_{\breve C}$ (il en existe par la propri\'ete de rel\`evement infinit\'esimal 
des morphismes lisses),
 et on observe que 
c'est une alg\`ebre de Tate sur $\O_{\breve C}$. Alors 
$A':=\O_C\wotimes_{\O_{\breve C}} {A}$ est une alg\`ebre de Tate formellement lisse sur 
$\O_C$ et $A'/{\goth m}A'\cong \overline{B}$. La prop.~\ref{QQ1} fournit un rel\`evement $s: A'\to B$ de l'identit\'e de $\overline{B}$. Il reste \`a v\'erifier que $s$ est un isomorphisme.

Soit $r>0$. Le morphisme induit $s_r: A'/p^r\to B/p^r$ est un morphisme d'alg\`ebres de type
fini sur $\O_C/p^r$. Il s'ensuit que, pour $t < r$ assez petit, $s_t$ est un isomorphisme
(on a $A'/p^t\cong (\O_C/p^t)\otimes\overline{B}$ et $B/p^t\cong (\O_C/p^t)\otimes\overline{B}$,
si $t$ est assez petit, et
$s_t$ est l'identit\'e puisque $s$ est un rel\`evement de l'identit\'e). On conclut en utilisant
la compl\'etude de $A'$ et $B$ pour la topologie $p$-adique.
\end{proof}

\begin{rema}\label{QQ3}
 Il r\'esulte de la preuve du (ii) qu'une alg\`ebre de Tate 
formellement lisse sur $\O_C$ est, \`a isomorphisme pr\`es,
compl\`etement d\'etermin\'ee par $\overline B$.
\end{rema}

\begin{rema}\label{QQ4}
Une {\it $\dagger$-alg\`ebre} sur $\O_K$ est
une $\O_K$-alg\`ebre plate $A$ de la forme
$\O_K[x_1,\dots,x_n]^\dagger/I$, o\`u $I$ est un id\'eal de type fini.
On dit que $A$ est {\it formellement lisse} sur $\O_K$ si
$\overline A=A/{\goth m}_KA$ est lisse sur $k_K$.
La prop.~\ref{QQ1} s'\'etend \`a ce cadre: si $A, B$ sont
des $\dagger$-alg\`ebres sur $\O_K$, 
avec $A$
formellement lisse sur $\O_K$, et si
$\overline s:\overline A\to\overline B$ est un morphisme de $k_K$-alg\`ebres,
il existe $s:A\to B$ relevant $\overline s$.
Cela permet de construire des
rel\`evements de Frobenius surconvergents.
\end{rema}

\Subsection{Shorts, jambes et cercles fant\^omes}\label{adoc13}

\subsubsection{Shorts}\label{adoc14}
{\it Un short} $Y$ est le sch\'ema adoque associ\'e \`a un sch\'ema formel 
affine, lisse sur $\O_{C}$,
dont la fibre g\'en\'erique $Y_{C}$
est un affino\"{\i}de connexe, de dimension~$1$.
On a $\O(Y)=\O^+(Y_{C})$ et
$\O(Y)$ est un quotient formellement lisse 
de $\O_{C}\langle x_1,\dots,x_n\rangle$, pour un certain~$n$.

\begin{rema}\label{tetrapil21}
{\rm (i)} D'apr\`es le (ii) du cor.~\ref{QQ2}, un short $Y$ admet un mod\`ele
$\breve Y$ sur $\O_{\breve C}$, i.e.~$\O(Y)=
R\wotimes_{\O_{\breve C}}\O({\breve Y})$, et $\breve Y$ est unique \`a isomorphisme pr\`es.

{\rm (ii)}
Si $R$ est un \'epaississement de $\O_C$, on d\'efinit {\it un $R$-short}
comme l'extension des scalaires \`a $R$ d'un short $\breve Y$ sur $\O_{\breve C}$.
Si $Y$ est un short, on peut le plonger dans un $R$-short $Y_R$ pour tout \'epaississement $R$ de $\O_C$: il
suffit de choisir un mod\`ele $\breve Y$ de $Y$
sur $\O_{\breve Y}$, et de poser $\O(Y_R)=R\wotimes_{\O_{\breve C}}\O({\breve Y})$.

{\rm (iii)} 
En particulier, on peut plonger
un short $Y$ dans un $\ainf$-short $\widetilde Y$.
La fibre en $p=\tilde p$ de $\widetilde Y$ n'est autre que $Y$, tandis que celle
en $\tilde p=0$ est $\breve Y$.

Un choix de frobenius $\varphi$ sur $\breve Y$ en induit un sur $\widetilde Y$
puisqu'on dispose d'un frobenius sur $\ainf$.  Alors $\breve Y$ est stable par le
frobenius agissant sur $\widetilde Y$, et la restriction de ce frobenius \`a $\breve Y$
est celui dont on est parti.
\end{rema}
\vskip.2cm
\begin{center}
\begin{tikzpicture}
[scale=.8]
\draw[very thick]  (0,-2) ..controls +(2.5,0) and +(2.5,0).. (0,8);
\draw[very thick]  (0,-2) ..controls +(-2.5,0) and +(-2.5,0).. (0,8);
\draw[very thick]  (-1,1) ..controls +(0.3,-.7) and +(-0.3,-.7).. (1,1);
\draw[very thick]  (-.75,.75) ..controls +(.2,.3) and +(-.2,.3).. (.75,.75);
\draw[very thick]  (-1,3) ..controls +(0.3,-.7) and +(-0.3,-.7).. (1,3);
\draw[very thick]  (-.75,2.75) ..controls +(.2,.3) and +(-.2,.3).. (.75,2.75);
\draw[very thick]  (-1,5) ..controls +(0.3,-.7) and +(-0.3,-.7).. (1,5);
\draw[very thick]  (-.75,4.75) ..controls +(.2,.3) and +(-.2,.3).. (.75,4.75);
\draw[very thick, dotted, color=red, fill=gray!20] (1,2) circle (.27 and .3);
\draw[red](.2,2) node{$]P_{a_3}[$};
\draw[very thick, dotted, color=red, fill=gray!20] (1,4.1) circle (.27 and .3);
\draw[red](.2,4.1) node{$]P_{a_2}[$};
\draw[very thick, dotted, color=red, fill=gray!20] (-.3,6.8) circle (.3);
\draw[red](.5,6.8) node{$]P_{a_1}[$};
\draw[very thick,color=blue] (.1,-1.5) circle (.35);
\draw[color=blue] (-.05,-1.4) circle (.12);
\draw[color=blue] (.2,-1.6) circle (.12);
\draw[very thick,color=blue] (.5,-.3) circle (.35);
\draw[very thick,color=blue] (-.8,.1) circle (.35);
\draw[color=blue] (-.95,.2) circle (.12);
\draw[color=blue] (-.7,0) circle (.12);
\draw[very thick,color=blue] (1.35,1) circle (.2 and .3);
\draw[very thick,color=blue] (.1,1.4) circle (.3);
\draw[very thick,color=blue] (-.8,1.7) circle (.3);
\draw[very thick,color=blue] (-1.2,3.4) circle (.25 and .3);
\draw[very thick,color=blue] (-.7,4.05) circle (.3);
\draw[very thick,color=blue] (.4,6) circle (.3);
\draw[very thick,color=blue] (.1,7.5) circle (.3);
\draw[very thick]  (8,-2) ..controls +(.5,2) and +(.5,-2).. (8,3);
\draw[very thick]  (8,3) ..controls +(-.5,2) and +(-.5,-2).. (8,8);
\draw[ball color=red] (7.75,6.8) circle (.12); \draw[red] (8.8,6.8)node{$(P_{a_1},0^+)$};
\draw[ball color=red] (7.75,4.1) circle (.12); \draw[red] (8.8,4.1)node{$(P_{a_2},0^+)$};
\draw[ball color=red] (8.2,2) circle (.12); \draw[red] (9.25,2)node{$(P_{a_3},0^+)$};
\draw[ball color=blue] (8.1,-1.5) circle (.12);
\draw[ball color=blue] (8.33,-.3) circle (.12);
\draw[ball color=blue] (8.35,.2) circle (.12);
\draw[ball color=blue] (8.35,1) circle (.12);
\draw[ball color=blue] (8.3,1.4) circle (.12);
\draw[ball color=blue] (8.25,1.7) circle (.12);
\draw[ball color=blue] (7.9,3.4) circle (.12);
\draw[ball color=blue] (7.83,3.75) circle (.12);
\draw[ball color=blue] (7.65,6) circle (.12);
\draw[ball color=blue] (7.9,7.5) circle (.12);
\draw[color=blue] (.15,7.78) -- (7.9,7.5); \draw[color=blue] (.15,7.22) -- (7.9,7.5);
\draw[color=blue] (.45,6.28) -- (7.65,6); \draw[color=blue] (.45,5.72) -- (7.65,6);
\draw[color=blue] (-1.15,3.68) -- (7.9,3.4); \draw[color=blue] (-1.15,3.12) -- (7.9,3.4);
\draw[color=blue] (1.4,1.28) -- (8.35,1); \draw[color=blue] (1.4,.72) -- (8.35,1);
\draw[color=blue] (.16,-1.18) -- (8.1,-1.5); \draw[color=blue] (.16,-1.82) -- (8.1,-1.5);
\draw[dotted, color=red] (1.05,4.38)--(7.8,4.1); \draw[dotted, color=red] (1.05,3.82)--(7.8,4.1);

\draw[thick,color=blue,{->}] (0,-4) -- +(-90:.6); \draw[thick,color=blue,{->}] (0,-4) -- +(-70:.6);
\draw[thick,color=red,{->}] (0,-4) -- +(-50:.6); \draw[thick,color=blue,{->}] (0,-4) -- +(-30:.6);
\draw[thick,color=blue,{->}] (0,-4) -- +(-10:.6); \draw[thick,color=blue,{->}] (0,-4) -- +(10:.6);
\draw[thick,color=blue,{->}] (0,-4) -- +(30:.6); \draw[thick,color=blue,{->}] (0,-4) -- +(50:.6);
\draw[thick,color=blue,{->}] (0,-4) -- +(70:.6); \draw[thick,color=blue,{->}] (0,-4) -- +(90:.6);
\draw[thick,color=blue,{->}] (0,-4) -- +(110:.6); \draw[thick,color=blue,{->}] (0,-4) -- +(130:.6);
\draw[thick,color=blue,{->}] (0,-4) -- +(150:.6); \draw[thick,color=blue,{->}] (0,-4) -- +(170:.6);
\draw[thick,color=red,{->}] (0,-4) -- +(190:.6); \draw[thick,color=blue,{->}] (0,-4) -- +(210:.6);
\draw[red](0,-4)+(190:.8) node{$a_1$};
\draw[red](0,-4)+(250:.8) node{$a_2$};
\draw[red](0,-4)+(-50:.8) node{$a_3$};
\draw[thick,color=blue,{->}] (0,-4) -- +(230:.6); \draw[thick,color=red,{->}] (0,-4) -- +(250:.6);
\draw[ball color=black] (0,-4) circle (.2); 
\draw[color=blue] (0,-4)+(90:.6) -- (0,-2.8);
\draw[color=blue] (0,-2.8)--(.15,-2.2); \draw[color=blue] (0,-2.8)--(.35,-2.2);
\draw[color=blue] (.15,-2.2)--(-.1,-1.4); 
\draw[color=blue] (.15,-2.2)--(0,-1.45); 
\draw[color=blue] (.35,-2.2)--(.2,-1.6);
\draw[color=blue] (.35,-2.2)--(.25,-1.65);
\draw[color=blue] (.35,-2.2)--(.15,-1.55);
\draw[color=blue] (0,-4)+(110:.6) -- (-1,-2);
\draw[color=blue] (-1,-2)--(-1.2,-.5); \draw[color=blue] (-1,-2)--(-1,-.5);
\draw[color=blue] (-1.2,-.5)--(-.95,.25); 
\draw[color=blue] (-1.2,-.5)--(-.90,.2); 
\draw[color=blue] (-1,-.5)--(-.75,0);
\draw[color=blue] (-1,-.5)--(-.65,.05);

\draw(-1.8,-4)node{$\Gamma^{\rm ad}(Y)$};
\draw (3.5,-5)node{Un short $Y$};
\draw (9,-2)node{$Y^{\rm sp}$};

\end{tikzpicture}
\end{center}

{\Small Ce dessin repr\'esente un short $Y$, avec l'ensemble de ses points classiques (la surface de Riemann),
sa fibre sp\'eciale $Y^{\rm sp}$ (la courbe) et son squelette (le point h\'eriss\'e de fl\`eches).
Il est obtenu en retirant \`a une courbe de genre $3$ avec bonne r\'eduction
les tubes $]P_{a_1}[$, $]P_{a_2}[$, $]P_{a_3}[$ des points marqu\'es $P_{a_1}$, $P_{a_2}$, $P_{a_3}$
de $Y^{\rm sp}$.

$\bullet$ Le squelette (adique) $\Gamma^{\rm ad}(Y)$ 
est constitu\'e du point noir (point de Gauss de $Y$) 
et des trois fl\`eches rouges $a_1$, $a_2$, $a_3$
(correspondant aux trois points marqu\'es de $Y^{\rm sp}$).  
Le squelette analytique $\Gamma^{\rm an}(Y)$ est juste le point noir.

$\bullet$ Les disques bleus de la surface de Riemann sont les points classiques des
tubes des points non marqu\'es de $Y^{\rm sp}$
(ils sont cens\'es recouvrir la surface priv\'ee de $]P_{a_1}[$, $]P_{a_2}[$, $]P_{a_3}[$); on a dessin\'e
les sp\'ecialisations de cinq de ces disques).
Les cercles rouges en pointill\'es sont les cercles fant\^omes \`a la fronti\`ere
du short.

$\bullet$ L'espace de Berkovich est l'arbre issu du point noir dont les branches aboutissent aux points
classiques
 (auxquelles il faut rajouter des branches s'arr\^etant en chemin, correspondant
aux points de type 4, puisqu'on n'a pas suppos\'e $C$ sph\'eriquement complet); il ne voit pas
les fl\`eches bleues.  

$\bullet$ Les fl\`eches bleues sont en bijection
avec les points non marqu\'es de $Y^{\rm sp}$; ce sont les points adiques de type 5 dans l'adh\'erence
du point de Gauss;
chacune est 
la racine du sous-arbre des branches aboutissant
dans le tube correspondant (on a dessin\'e une petite partie de deux de ces arbres). 

}

\subsubsection{$R$-Jambes}\label{adoc15}
Si $R$ est un \'epaississement de $\O_C$,
{\it une $R$-jambe} $Y$ est le sch\'ema adoque associ\'e \`a un anneau
de la forme $R[[T_0,T_1]]/(T_0T_1-\alpha)$ (la topologie est la $(I_r, T_0, T_1)$-adique), 
avec $\alpha\in{\goth m}_R$.
(Si $\alpha=0$, la jambe est singuli\`ere et sa longueur est $+\infty$.) 
Si $R=\O_C$, {\it la longueur de $Y$} est $v_p(\alpha)$.
Une $\O_C$-jambe est dite {\it normalis\'ee}, si $\alpha=p^r$, avec $r\in\Q_+^\dual$;
sa longueur est alors $r$.

\vskip.3cm
\begin{center}
\begin{tikzpicture}

\draw[very thick]  (0,.4) ..controls +(1,-.1) and +(-1,-.1).. (4,.4);
\draw[very thick]  (0,-.4) ..controls +(1,.1) and +(-1,.1).. (4,-.4);

\draw[very thick, dotted, color=red] (0,-.4) arc (-90: 90 : .2 and .4);
\draw[very thick, dashed, color=red] (0,.4) arc (90: 270 : .2 and .4);
\draw[very thick, dotted, color=red] (4,-.4) arc (-90: 90 : .2 and .4);
\draw[very thick, dashed, color=red] (4,.4) arc (90: 270 : .2 and .4);
\draw[very thick, dashed, color=blue] (2.5,-.33) arc (-90: 90 : .2 and .33);
\draw[very thick, color=blue] (2.5,.33) arc (90: 270 : .2 and .33);
\draw[very thick, dashed, color=blue] (1,-.34) arc (-90: 90 : .2 and .34);
\draw[very thick, color=blue] (1,.34) arc (90: 270 : .2 and .34);

\draw[blue] (2.5,-.33) -- (2.5,-1.4);
\draw[blue] (2.34,-.13) -- (2.5,-.8);

\draw[very thick]  (0,-1.5) -- (4,-1.5);
\draw[very thick, red, {>->}]  (-.2,-1.5) -- (.2,-1.5); 
\draw[very thick, red, {>->}]  (4.2,-1.5) -- (3.8,-1.5); 
\draw[ball color=blue] (2.5,-1.5) circle(.12);
\draw[ball color=blue] (1,-1.5) circle(.12);
\draw (2,-1.8) node{$\mu$};
\draw (-.8,-1.7) node{$\Gamma^{\rm ad}(Y)$};
\draw (6.3,-1.3) node{$Y^{\rm sp}$};

\draw[ball color=red] (6,-.8) circle (.2); \draw[red] (6.8,-.8)node{$(P,\mu)$};

\draw (3,-2.2) node{Une jambe $Y$ de longueur $\mu$};

\end{tikzpicture}
\end{center}

{\Small 
Ce dessin repr\'esente une jambe de longueur $\mu$. Sa fibre sp\'eciale est un point marqu\'e
de multiplicit\'e $\mu$, et son squelette est un segment ouvert de longueur $\mu$.
Le cyclindre repr\'esente l'ensemble des points classiques de $Y$; chaque point de type $2$
du squelette est la base d'un arbre aboutissant sur un cercle du cyclindre. Les deux fl\`eches
rouges du squelette correspondent aux deux cercles fant\^omes (repr\'esenta\'es par des cercles
rouges en pointill\'es) \`a la fronti\`ere.

}

\vskip.2cm
Si $Y$ est une $\O_C$-jambe normalis\'ee 
(avec $\O(Y)=\O_C[[T_0,T_1]]/(T_0T_1-p^r)$), on peut la plonger dans la
$\ainf$-jambe $\widetilde Y$ d\'efinie par $\O(\widetilde Y)=\ainf[[T_0,T_1]]/(T_0T_1-\tilde p^r)$.
On note $\breve Y$ la fibre de $\widetilde Y$ en $\tilde p=0$:
on a $\O(\breve Y)=\O_{\breve C}[[T_0,T_1]]/(T_0T_1)$, et donc $\breve Y$
est singuli\`ere, et obtenue en recollant deux boules ouvertes sur $\O_{\breve C}$
en leurs centres.
On munit $\breve Y\subset\widetilde Y$ de frobenius compatibles $\varphi$
par la formule $\varphi(T_i)=T_i^p$, si $i=0,1$.

\subsubsection{$R$-Cercles fant\^omes}\label{adoc16}
{\it Un $R$-cercle fant\^ome} $Y$ est le sch\'ema adoque associ\'e \`a un anneau
de la forme
$R[[T,T^{-1}\rangle$ (compl\'et\'e de $R[[T]][T^{-1}]$ pour la topologie $I_r$-adique).
 Un {\it param\`etre local de $Y$} est un \'el\'ement $z$
de $\O(Y)=R[[T,T^{-1}\rangle$ dont l'image $\overline z$ dans $k_C((T))$
v\'erifie $v_T(\overline z)=1$ (donc, en particulier, $T$ est
un param\`etre local de $Y$); on a alors un isomorphisme
$R[[z,z^{-1}\rangle\overset{\sim}{\to}\O(Y)$.

Si $Y$ est un $\O_C$-cercle fant\^ome avec $\O(Y)=\O_C[[T,T^{-1}\rangle$, on le plonge dans
le $\ainf$-cercle fant\^ome $\widetilde Y$ d\'efini par
$\O(\widetilde Y)=\ainf[[T,T^{-1}\rangle$.  On note $\breve Y$ la fibre
de $\widetilde Y$ en $\tilde p=0$ (et donc $\O(\breve Y)=\O_{\breve C}[[T,T^{-1}\rangle$).
On munit $\breve Y\subset\widetilde Y$ de frobenius compatibles $\varphi$
par la formule $\varphi(T)=T^p$.

\subsubsection{Partie polaire en un cercle fant\^ome de la fronti\`ere}\label{adoc17}
Soit $Y$ un $R$-short, et soit 
$\O^{++}(Y)={\rm Ker}(\O(Y)\to k_C\otimes_{R}\O(Y))$.
Soit $P\in \breve Y^{\rm sp}$
de multiplicit\'e $0^+$, et soit $z\in\O^+(\breve Y)$ v\'erifiant $v_{{\breve Y},P}(z)=(0,-1)$
(un tel $z$ n'existe pas forc\'ement s'il n'y a qu'un seul point de multiplicit\'e $0^+$
sur $Y^{\rm sp}$, mais existe toujours si le nombre de tels points est~$\geq 2$ gr\^ace
au th\'eor\`eme de Riemann-Roch).
Alors $z^{-1}$ est un param\`etre du cercle fant\^ome 
correspondant \`a $P$
\`a la fronti\`ere
de $\breve Y$ .
Soit $T$ un param\`etre du $R$-cercle fant\^ome d\'efini par ce cercle fant\^ome;
alors $z^{-1}\in R[[T,T^{-1}\rangle$ et
on a un isomorphisme
$\iota:R[[z^{-1},z\rangle\overset{\sim}{\to} R[[T,T^{-1}\rangle$.
\begin{lemm}\label{PP3}
Les $\iota(z^n)$, pour $n\in\N$,
 forment une base orthonormale de $R[[T,T^{-1}\rangle/TR[[T]]$ sur $R$.
En particulier, $\iota$ induit des surjections
$$\O^+(Y)\to R[[T,T^{-1}\rangle/TR[[T]]
\quad{\rm et}\quad
\O^{++}(Y)\to {\goth m}_R[[T,T^{-1}\rangle/T{\goth m}_R[[T]]$$
\end{lemm}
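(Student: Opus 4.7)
The plan is to reduce everything modulo $\goth m_R$, observe a triangular structure with unit diagonal, and then lift by a Nakayama-type argument using $I_r$-adic completeness; the two surjectivity claims will then follow directly.

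First I would identify the canonical orthonormal basis of $M:=R[[T,T^{-1}\rangle/TR[[T]]$: by definition every element of $R[[T,T^{-1}\rangle$ writes uniquely as $\sum_{j\in\Z}a_jT^j$ with $a_j\in R$ and $a_j\to 0$ $I_r$-adically as $j\to-\infty$, and $TR[[T]]$ kills the positive powers of $T$, so $\{T^{-m}\}_{m\geq 0}$ is orthonormal over $R$ in $M$. Using that $z^{-1}$ is a parameter of the phantom circle (so its image in $k_C((T))$ equals $uT$ for some $u\in k_C[[T]]^\times$), one gets that $\iota(z^n)$ reduces modulo $\goth m_R$ to $u^{-n}T^{-n}\in k_C((T))$; writing $u^{-n}=\sum_{j\geq 0}b_j^{(n)}T^j$ with $b_0^{(n)}=b_0^n$ (a unit in $k_C$), this yields, modulo $\goth m_R+TR[[T]]$,
\[
\iota(z^n) \equiv \sum_{m=0}^{n} b_{n-m}^{(n)}\, T^{-m}.
\]

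The hard part will be the Nakayama-type step. The matrix $(M_{n,m})_{n,m\geq 0}$ expressing $\iota(z^n)$ in the basis $\{T^{-m}\}$ satisfies, modulo $\goth m_R$: $M_{n,n}=b_0^n\in k_C^\times$ and $M_{n,m}=0$ for $m>n$. Over $k_C$ one deduces at once, by induction on $m$ and the leading-coefficient argument, that $\{\overline{\iota(z^n)}\}_{n\geq 0}$ is a $k_C$-basis of $M/\goth m_R M\cong k_C^{(\N)}$. Over $R$ the entries satisfy $M_{n,n}\in R^\times$, $M_{n,m}\in\goth m_R$ for $m>n$, and $M_{n,m}\to 0$ in $m$ $I_r$-adically; to promote this to a genuine orthonormal basis of $M$, I would reduce modulo $I_r^k$ (where the $I_r$-adic topology becomes discrete), apply Nakayama on finite truncations of the matrix over the local ring $R/I_r^k$, and then lift via the $I_r$-adic completeness of $R$, in the spirit of the proof of Lemma~\ref{PP1}. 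The subtle point is that $R$ may be non-Noetherian and $\goth m_R$ not finitely generated, so one must keep the $I_r$-adic filtration separate from the residual Jacobson filtration when running the iteration.

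The two surjectivity statements then follow at once. Since $z\in\O^+(\breve Y)\subset R\wotimes_{\O_{\breve C}}\O^+(\breve Y)=\O^+(Y)$, any family $(a_n)_{n\geq 0}$ in $R$ with $a_n\to 0$ $I_r$-adically defines a convergent series $\sum_n a_nz^n\in\O^+(Y)$, whose image in $M$ via the restriction map to the phantom circle equals $\sum_n a_n\iota(z^n)$; by the orthonormal basis property every element of $M$ is so reached, giving surjectivity of $\O^+(Y)\to M$. Specializing to $a_n\in\goth m_R$ yields $\sum_n a_nz^n\in\goth m_R\cdot\O^+(Y)\subset\O^{++}(Y)$, whose image is precisely the sub-$R$-module $\goth m_R[[T,T^{-1}\rangle/T\goth m_R[[T]]$ of $M$ (itself orthonormal on $\{T^{-m}\}$ with $\goth m_R$-coefficients), giving the second surjectivity.
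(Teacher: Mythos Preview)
Your triangular-mod-$\goth m_R$ reduction is correct, and the derivation of the two surjectivities from the orthonormal-basis claim is fine. The gap is in the lifting step. Being lower-triangular with unit diagonal modulo $\goth m_R$ is \emph{not} enough to conclude that a family is an orthonormal basis of $M$ over $R$, and your proposed ``Nakayama on finite truncations over $R/I_r^k$'' does not close the gap: the $N\times N$ block of the transition matrix is indeed invertible over the local ring $R/I_r^k$, but $\iota(z^n)$ for $n\le N$ still has components along $T^{-m}$ with $m>N$ lying in $\goth m_R\setminus I_r^k$, so you cannot conclude that $\{\iota(z^n)\}_{n\le N}$ and $\{T^{-m}\}_{m\le N}$ span the same submodule. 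A concrete obstruction: over $R=\O_C$, take $e_n=T^{-n}+p^{1/2^{n+1}}T^{-(n+1)}$. Then $\overline e_n=T^{-n}$ in $k_C[T^{-1}]$, so the family is lower-triangular with unit diagonal mod $\goth m_C$, yet attempting to express $T^0$ gives coefficients $c_n$ with $v_p(c_n)=1-2^{-n}\to 1$, which do \emph{not} tend to~$0$; so $\{e_n\}$ is not orthonormal.

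What saves the actual situation, and what the paper exploits, is that the above-diagonal control can be upgraded from $\goth m_R$ to a \emph{fixed} $I_r$. After normalising so that the $T^{-1}$-coefficient of $\iota(z)$ reduces to $1$, the negative-degree coefficients of $\iota(z)-T^{-1}$ lie in $\goth m_R=\bigcup_{r>0}I_r$ and tend to $0$ $I_r$-adically; since only finitely many fail to lie in a given $I_s$, there exists a single $r>0$ with $\iota(z)-T^{-1}\in I_r\,R[[T,T^{-1}\rangle+R[[T]]$. The multiplicative structure $\iota(z^k)=\iota(z)^k$ then propagates this to $\iota(z^k)-T^{-k}\in I_r\,R[[T,T^{-1}\rangle+T^{-(k-1)}R[[T]]$ with the \emph{same} $r$. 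Now over $R/I_r$, which is discrete (so elements of $(R/I_r)[[T,T^{-1}\rangle$ are Laurent polynomials), the family is \emph{unipotent} lower-triangular and a finite subtraction argument gives the basis property; one then iterates mod $I_r^n$ and passes to the $I_r$-adic limit. Your sketch identifies the right subtlety but does not supply this $I_r$-uniformity, without which the iteration does not converge.
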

\begin{proof}
Il existe $\alpha\in k_C^\dual$ tel que 
$\iota(z)\equiv[\alpha]T^{-1}$ mod~${\goth m}_R\langle T^{-1}\rangle+R[[T]]$
et, quitte \`a changer $T$ en $[\alpha^{-1}]T$, on peut supposer $\alpha=1$.
Il existe alors $r>0$ tel que
$\iota(z)-T^{-1}\in I_r\O(Y)+R[[T]]$.
Alors $\iota(z^k)-T^{-k}\in I_r\O(Y)+T^{-(k-1)}R[[T]]$, pour tout $k\geq 0$.
On en d\'eduit que tout \'el\'ement $f$ de $(R/I_r)[[T,T^{-1}\rangle$ 
peut s'\'ecrire, de mani\`ere
unique, sous la forme $f^++f^-$, avec $f^+\in T(R/I_r)[[T]]$ et 
$f^-\in (R/I_r)[\iota(z)]$.
On prouve,
par r\'ecurrence, le m\^eme r\'esultat modulo $I_r^n$ et, par passage
\`a la limite, on en tire
que tout \'el\'ement $f$ de $R[[T,T^{-1}\rangle$ peut s'\'ecrire, de mani\`ere
unique, sous la forme $f^++f^-$, avec $f^+\in TR[[T]]$ 
et $f^-\in R\langle \iota(z)\rangle$.

Le r\'esultat s'en d\'eduit.
\end{proof}
\Subsection{Compactification d'un affino\"{\i}de}\label{Pconstr7}
\begin{prop} {\rm (Van der Put~\cite{vdp})}\label{Pconstr9}
Soit $Y_C$ un affino\"{\i}de sur $C$.  Alors il existe une courbe alg\'ebrique propre $X_C$
contenant $Y_C$ et telle que $X_C\moins Y_C$ soit une r\'eunion finie de disques ouverts.
\end{prop}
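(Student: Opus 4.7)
The plan is to compactify $Y_C$ by gluing closed formal discs along the ghost circles at its adic frontier. First, I would pick a semi-stable formal model $\mathfrak{Y}$ of $Y_C$ over $\O_C$ whose special fiber $\mathfrak{Y}^{\rm sp}$ has smooth irreducible components meeting transversally (this is recalled in \S\ref{Global}). The non-proper components $\mathcal{Y}_s$ correspond bijectively to the boundary points $s \in \partial Y_C$ (cf.\ \S\ref{constr4}), and for each such $s$ the finite set $A_0(s) \subset \overline{\mathcal{Y}}_s \setminus \mathcal{Y}_s$ enumerates the points one must add to compactify $\mathcal{Y}_s$, equivalently the ghost circles of $\partial^{\rm ad} Y_C$ attached to $s$.

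Next, for every pair $(s,a)$ with $s \in \partial Y_C$ and $a \in A_0(s)$, I would choose a local parameter $T$ at $P_a$ on $\overline{\mathcal{Y}}_s$, lift it to $\O(Y_{s,a}) = \O_C[[T,T^{-1}\rangle$, and form the pushout
\[
\mathfrak{X} \;=\; \mathfrak{Y} \sqcup_{\,\bigsqcup_{(s,a)} \mathrm{Spf}(\O_C[[T,T^{-1}\rangle)}\; \Bigl(\,\bigsqcup_{(s,a)} \mathrm{Spf}(\O_C\langle T \rangle)\Bigr),
\]
the right-hand map being the inclusion of the ghost circle into the closed adoque unit disc as the formal neighborhood of its generic point (\no\ref{adoc7}). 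The key structural point to verify is that this gluing is indeed legitimate, and this is precisely where the adoque viewpoint of \S\ref{adoc1} pays off: the ghost circle $\mathrm{Spf}(\O_C[[T,T^{-1}\rangle)$ is an honest open adoque subscheme of both $\mathfrak{Y}$ (as part of $\partial^{\rm ad} Y_C$) and of the closed disc $\mathrm{Spf}(\O_C\langle T \rangle)$.

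The special fiber of $\mathfrak{X}$ is obtained from $\mathfrak{Y}^{\rm sp}$ by filling in the missing smooth points of each non-proper component, so it is a proper curve over $k_C$; consequently $\mathfrak{X}$ is proper and flat of relative dimension one over $\O_C$. Its generic fiber $X_C$ is therefore a proper smooth analytic curve over $C$, hence (by Köpf, or equivalently rigid GAGA in dimension one) the analytification of a projective algebraic curve. By construction $Y_C = \mathfrak{Y}^{\rm gen} \subset X_C$ and the complement
\[
X_C \setminus Y_C \;=\; \bigsqcup_{(s,a)} \bigl(\mathrm{Spf}(\O_C\langle T \rangle)^{\rm gen} \setminus \{|T|=1\}\bigr)
\]
is a finite disjoint union of open unit discs, one for every ghost circle of $\partial^{\rm ad} Y_C$.

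The only genuine obstacle is the legitimacy of the gluing step: one must check that the chosen parameter $T$ identifies the ghost circle at $P_a$ inside $\mathfrak{Y}$ with the canonical ghost circle $\mathring{V}(0)$ inside the closed disc $\mathrm{Spf}(\O_C\langle T \rangle)$, compatibly with structure sheaves. This comes down to the computation of \no\ref{adoc7}, namely $\O(\mathring{V}(0)) = \O_C[[T, T^{-1}\rangle$, together with the fact (lemma~\ref{PP3}) that the restriction $\O^+(Y_C) \to \O_C[[T,T^{-1}\rangle/T\O_C[[T]]$ is surjective, so the local parameter at $P_a$ may indeed be chosen inside $\O^+(Y_C)$. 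Once this identification is granted, properness of $\mathfrak{X}$ and the description of the complement are tautological from the construction.
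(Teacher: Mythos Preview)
Your plan has the right shape but goes wrong in two places.

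First, you glue \emph{closed} discs $\mathrm{Spf}(\O_C\langle T\rangle)$ along the ghost circle $\ocirc V(0)$, whereas the paper glues \emph{open} balls $\mathrm{Spf}(\O_C[[T]])$ along their unique ghost circle. This matters: the adoque space of the open ball has just two points $\{0,{\rm ord}_T\}$ (cf.~\no\ref{adoc6}), so gluing along $\{{\rm ord}_T\}$ adds exactly the missing point $P_a$ and the resulting underlying space is the compactified curve~$\overline{\cal Y}$. By contrast, the adoque space of the closed ball is the Berkovich $\mathbb{A}^1_{k_C}$ (\no\ref{adoc7}); gluing along $\ocirc V(0)$ attaches an entire affine line at $P_a$, and the special fiber is no longer proper. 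Your description of $X_C\setminus Y_C$ is then incoherent: the ghost circle $\ocirc V(0)$ sits at the \emph{center} of the closed disc, not at its boundary $\{|T|=1\}$, so nothing in your gluing identifies that boundary with anything in~$Y_C$.

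Second, and more substantially, you treat the passage from the glued adoque scheme to a genuine $p$-adic formal scheme as automatic (``properness of $\mathfrak{X}$\ldots\ tautological''). It is not: gluing along ghost circles only makes sense in the adoque category, and the actual content of the paper's proof is the verification of the GaGa principle --- that the structure sheaf, restricted to Zariski opens of~$\overline{\cal Y}$, produces $\O_C$-Tate algebras. Concretely, for a small Zariski open $U$ containing exactly one new point~$P$, one writes $\O_X(U)$ as a kernel, uses lemma~\ref{PP3} to obtain exact sequences and an explicit orthonormal basis of $B/A$, and then applies lemma~\ref{PP2} to conclude that $\O_X(U)$ is a Tate algebra. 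Your appeal to lemma~\ref{PP3} (merely to choose a local parameter) misses its real role. Once this verification is done, properness and rigid GAGA finish the argument exactly as you say.
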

\begin{proof}
Soit $Y$ le sch\'ema adoque associ\'e au sch\'ema formel ${\rm Spf}(\O^+(Y_C))$,
et soit $X$ le sch\'ema adoque obtenu en recollant des boules ouvertes le long des cercles fant\^omes
\`a la fronti\`ere de $Y$.  L'espace topologique de $X$ est la compactifi\'ee $\overline{\cal Y}$
de la fibre sp\'eciale ${\cal Y}$ de ${\rm Spf}(\O^+(Y_C))$.  Il s'agit de prouver que
si on restreint le faisceau structural de $X$ aux ouverts de Zariski,
on obtient des $\O_C$-alg\`ebres de Tate: cela implique que cette restriction
est un sch\'ema formel $p$-adique\footnote{Une manifestation d'un principe GaGa (G\'eom\'etrie adoque
et G\'eom\'etrie analytique).} qui est un mod\`ele d'une courbe $X_C$, propre puisque sa
fibre sp\'eciale est propre, et qui est l'espace rigide associ\'e \`a une courbe alg\'ebrique
d'apr\`es GAGA rigide.

Si $U$
est un ouvert (de Zariski) assez petit de $\overline{\cal Y}$, il y a deux cas possibles:

$\bullet$  $U\subset {\cal Y}$
auquel cas $]U[$
est un ouvert affino\"{\i}de de $Y_C$, et on a $\O_X(U)=\O^+(]U[)$, ce qui
fait qu'il n'y a rien \`a prouver.

$\bullet$  $U\moins(U\cap{\cal Y})=\{P\}$, auquel cas $U\moins\{P\}$ est un
ouvert de ${\cal Y}$ que l'on peut supposer lisse et connexe (et donc $]U\moins\{P\}[$
est un short et on note $C_P$ le cercle fant\^ome \`a la fronti\`ere de $]U\moins\{P\}[$ 
correspondant \`a $P$; on recolle $]U\moins\{P\}[$ et le disque ouvert $D_P$
avec $\O(D_P)=\O_C[[T]]$ le long de $C_P$): 
$]U\moins\{P\}[$ est obtenu, par extension des scalaires, \`a partir d'un short $\breve Y_P$
d\'efini sur $\O_{\breve C}$ (i.e.~$\O^+(]U\moins\{P\}[)=\O_C\wotimes_{\O_{\breve C}}\O(\breve Y_P)$);
on choisit 
un param\`etre
$z\in\O^+(\breve Y_P)$ du cercle fant\^ome correspondant \`a $P$, et un isomorphisme
$\iota:\O_\C[[z,z^{-1}\rangle\overset{\sim}{\to} \O_C[[T,T^{-1}\rangle$.
La propri\'et\'e de faisceau fournit alors la formule:
\begin{align*}
\O_Y(U)=&\ {\rm Ker}\big[\O^+(]U\moins\{P\}[)\oplus \O_C[[T]]\to \O_C[[T,T^{-1}\rangle\big]
\\=&\ 
{\rm Ker}\big[\O^+(]U\moins\{P\}[)\to \frac{\O_C[[T,T^{-1}\rangle}{\O_C[[T]]}\big],
\end{align*}
o\`u l'application $\O^+(]U\moins\{P\}[)\to \O_C[[T,T^{-1}\rangle$ est la restriction de
$\iota$.
Pour simplifier les notations, posons $A=\O_Y(U)$ et $B=\O^+(]U\moins\{P\}[)$.
L'intersection $A^{++}$ de $A$ et $B^{++}$ est le noyau
$$A^{++}={\rm Ker}\big[B^{++}\to \frac{{\goth m}_C[[T,T^{-1}\rangle}{{\goth m}_C[[T]]}\big].$$
D'apr\`es le lemme~\ref{PP3}, on a des suites exactes
\begin{align*}
0\to A\to B\to \frac{\O_C[[T,T^{-1}\rangle}{\O_C[[T]]}\to 0,\quad
0\to A^{++}\to B^{++}\to \frac{{\goth m}_C[[T,T^{-1}\rangle}{{\goth m}_C[[T]]}\to 0.
\end{align*}
On en d\'eduit une suite exacte
$$0\to\overline A\to \overline B\to \frac{k_C((T))}{k_C[[T]]}\to 0.$$
Il en r\'esulte que $\overline A=\O_{\cal Y}(U)$.
En particulier, $\overline A$
est de type fini sur $k_C$.

Par ailleurs, d'apr\`es le lemme~\ref{PP3}, les $z^{-n}$, pour $n\geq 1$, forment une
base de Banach de $B/A$ sur $\O_C$.  Or les $z^{-n}$ appartiennent \`a $\breve B=\O(\breve Y_P)$
et on est dans les conditions d'application du lemme~\ref{PP2} ce qui
permet d'en d\'eduire que $A$ est une $\O_C$-alg\`ebre de Tate, ce que l'on voulait.
\end{proof}

\Subsection{Patron d'une courbe}\label{Pconstr10}
\subsubsection{D\'efinition}
Un {\it patron de courbe} $(\Gamma,(Y_i)_{i\in I},(\iota_{i,j})_{(i,j)\in I_2})$ est la donn\'ee de:

$\bullet$ Un graphe bipartite marqu\'e fini\footnote{Cette restriction n'est pas essentielle mais
nous n'aurons besoin que de patrons finis dans ce qui suit.}
$\Gamma=(I,I_2,\mu)$, $I=A_c\sqcup S$, $\mu(a)\in\Q_{>0}$ si $a\in A_c$, 
sans boucle ayant $1$ ou $2$ sommets.

$\bullet$ Pour tout $a\in A_c$, une jambe $Y_a$ de longueur $\mu(a)$
avec $$\O(Y_a)=\O_C[[T_{a,s_1},T_{a,s_2}]]/(T_{a,s_1}T_{a,s_2}-p^{\mu(a)}),$$ 
o\`u $s_1,s_2$ sont
les extr\'emit\'es de $a$, et on note  
$Y_{a,s_i}$, pour $i=1,2$,
 le cercle fant\^ome avec $\O(Y_{a,s_i})=\O_C[[T_{a,s_i},T_{a,s_i}^{-1}\rangle$.

$\bullet$ Pour tout $s\in S$, un short $Y_s$ dont la fronti\`ere $\partial^{\rm ad}Y_s$
est constitu\'ee de cercles fant\^omes $Y_{s,a}$ index\'es par $a\in A(s)$,

$\bullet$ Pour tout couple $(a,s)$, avec $a\in A_c$ et $s\in S(a)$, un isomorphisme
$\iota_{a,s}:Y_{s,a}\overset{\sim}{\to} Y_{a,s}$ de cercles fant\^omes.

\begin{rema}\label{Pgener}
On peut g\'en\'eraliser la notion de patron en ajoutant un sous-ensemble
$B$ de $A\moins A_c$, et pour tout $b\in B$:

$\bullet$ une boule ouverte $Y_b$, avec
$\O(Y_b)=\O_C[[T_b]]$, 

$\bullet$ un cercle fant\^ome $Y_{b,s}$, o\`u $S(b)=\{s\}$,
avec $\O(Y_{b,s})=\O_C[[T_{b,s},T_{b,s}^{-1}\rangle$,

$\bullet$ un isomorphisme $\iota_{b,s}:Y_{s,b}\overset{\sim}{\to} Y_{b,s}$.
\end{rema}

\subsubsection{Patrons d'une courbe quasi-compacte}\label{Pgener10}
On peut associer un patron de courbe \`a une courbe quasi-compacte $Y$ sur $C$,
munie d'une triangulation~$S$.
Soit $Y_S$ le $\O_C$-mod\`ele semi-stable associ\'e \`a $S$.
On suppose $S$
suffisamment fine pour 
que la fibre sp\'eciale $Y_S^{\rm sp}$ ait au moins deux composantes
irr\'eductibles, que ces composantes irr\'eductibles
soient lisses et que deux d'entre elles s'intersectent en au plus un point.
Le patron de $Y$ associ\'e \`a $S$ est obtenu de la mani\`ere suivante.

$\bullet$ On note 
$\Gamma=(I,I_2,\mu)$, avec $I=A_c\sqcup S$, le graphe bipartite marqu\'e associ\'e
au graphe dual de la fibre sp\'eciale $Y_S^{\rm sp}$ de $Y_S$ (cf.~\no\ref{TTT27}).
(on dispose donc de courbes propres $Y_s^{\rm sp}$, pour $s\in S$,
d'un ouvert $\ocirc{Y}_s^{\rm sp}$ de $Y_s^{\rm sp}$, et de points marqu\'es $P_a$
de multiplicit\'e $\mu(a)$, avec $\mu(a)\in\Q_+^\dual$ si $a\in A_c$, et $\mu(a)=0^+$
si $a\in I_2\moins I_{2,c}$).

$\bullet$ Si $s\in S$, on note $A(s)$ l'ensemble des ar\^etes ayant $s$
pour extr\'emit\'e (les $P_a$, pour $a\in A(s)$, 
sont les points marqu\'es de $Y^{\rm sp}_s$),
et $A_c(s)= A(s)\cap A_c$ 
(les $P_a$, pour $a\in A_c(s)$, sont les points singuliers de $Y^{\rm sp}_s$).

$\bullet$ 
Si $s\in S$, on munit d'une structure de $\O_C$-short le sch\'ema formel $Y_s$
 d\'efini par $\O(Y_s)=\O^+(]\ocirc{Y}^{\rm sp}_s[)$, o\`u
$]\ocirc{Y}^{\rm sp}_s[$ est le tube de $\ocirc{Y}^{\rm sp}_s$ dans
$Y$.
Si $a\in A_c$, on note $Y_a$ la $\O_C$-jambe d\'efinie par
$\O(Y_a)=\O^+(]P_a[)$, o\`u $]P_a[$ est 
le tube de $P_a$ dans $Y$; elle est de longueur $\mu(a)$.

(Les $Y_i$, pour $i\in I$,
forment un recouvrement adoque de $Y$.) 

$\bullet$ Si $a\in A_c$, 
on note $S(a)$ l'ensemble des extr\'emit\'es de $a$,
i.e.~l'ensemble des $s\in S$ tels que $P_a\in Y^{\rm sp}_s$.
Alors $S(a)$ a deux \'el\'ements $s_1=s_1(a)$ et $s_2=s_2(a)$ dont
l'ordre est d\'etermin\'e par l'orientation choisie de $\Gamma$.
On choisit une fonction $T_{a,s_1}$ sur un ouvert de
$Y_{s_1}\sqcup Y_a\sqcup Y_{s_2}$ dont la restriction
\`a $Y_a$ induit un isomorphisme de $Y_a$ sur la couronne
$0<v_p(T_{a,s_1})<\mu(a)$, et telle que $v_{s_1}(T_{a,s_1})=0$.
On pose $T_{a,s_2}=p^{\mu(a)}/T_{a,s_1}$, donc $v_{s_2}(T_{a,s_2})=0$ et 
$$\O(Y_a)=\O_C[[T_{a,s_1},T_{a,s_2}]]/(T_{a,s_1}T_{a,s_2}-p^{\mu(a)}).$$

$\bullet$ Si $(a,s)\in I_{2,c}$,
 $Y_a$ intersecte $Y_{s}$ le long d'un cercle fant\^ome $Y_{a,s}$
et comme $S$ est fine, 
$Y_i\cap Y_j\neq\emptyset$ si et seulement si 
$(i,j)\in I_{2,c}$ (si $i\neq j$ bien s\^ur).

\Subsection{Construction de courbes \`a partir d'un patron}\label{Pconstr11}
La proc\'edure fournissant un patron
d'une courbe quasi-compacte \`a partir d'une triangulation peut s'inverser, ce qui
permet de construire des courbes (et m\^eme des familles de courbes) \`a partir
d'un patron.

\subsubsection{Patrons de $R$-courbes}
Un {\it patron de $R$-courbe} $(\Gamma,(Y_i)_{i\in I},(\iota_{i,j})_{(i,j)\in I_{2,c}})$ est la donn\'ee de:

$\bullet$ Un graphe bipartite marqu\'e fini $\Gamma=(I,I_2,m)$, avec $I=A_c\sqcup S$,
et $m:A_c\to {\goth m}_R$,
sans boucle ayant $1$ ou $2$ sommets.

$\bullet$ Pour tout $a\in A_c$, une $R$-jambe $Y_a$ 
avec $$\O(Y_a)=R[[T_{a,s_1},T_{a,s_2}]]/(T_{a,s_1}T_{a,s_2}-m(a)),$$ 
o\`u $s_1,s_2$ sont
les extr\'emit\'es de $a$, et on note  
$Y_{a,s_i}$, pour $i=1,2$ le 
$R$-cercle fant\^ome avec $\O(Y_{a,s_i})=R[[T_{a,s_i},T_{a,s_i}^{-1}\rangle$.

$\bullet$ Pour tout $s\in S$, un $R$-short $Y_s$ avec $\O(Y_s)\cong R\wotimes_{\O_{\breve C}}\O(\breve Y_s)$,
o\`u $\breve Y_s$ est un short d\'efini sur $\O_{\breve C}$,
dont la fronti\`ere $\partial^{\rm ad}\breve Y_s$
est constitu\'ee de cercles fant\^omes $\breve Y_{s,a}$ index\'es par $a\in A(s)$,
et on d\'efinit le $R$-cercle fant\^ome $Y_{s,a}$ par
 $\O(Y_{s,a})=R\wotimes_{\O_{\breve C}}\O(\breve Y_{s,a})$.

$\bullet$ Pour tout couple $(i,j)\in I_{2,c}$, un isomorphisme
$\iota_{i,j}:Y_{j,i}\overset{\sim}{\to} Y_{i,j}$ de $R$-cercles fant\^omes.

\subsubsection{Construction de $R$-courbes}
A partir d'un patron de $R$-courbe, on construit une $R$-courbe adoque $Y^{\rm ado}$ 
en recollant
les $Y_s$ et les $Y_a$ via les $\iota_{a,s}$.  
Si le patron est le patron d'une $\O_C$-courbe $Y$, alors $Y^{\rm ado}$ est
le sch\'ema adoque associ\'e \`a $Y$.  En g\'en\'eral, on a
le r\'esultat suivant qui est une manifestation d'un principe GaGa.
\begin{theo}\label{PP4}
$Y^{\rm ado}$ est
le sch\'ema adoque associ\'e \`a une $R$-courbe $Y$
{\rm (i.e.~un $R$-sch\'ema formel de dimension relative~$1$)}.
\end{theo}
\begin{proof}
Notons que, si $s\in S$, alors $k_C\otimes_R\O(Y_s)=k_C\otimes_{\O_{\breve C}}\O(\breve Y_s)$,
et donc $Y_s$ et $\breve Y_s$ ont m\^eme fibre sp\'eciale $Y_s^{\rm sp}$ 
qui est, par d\'efinition, une courbe projective lisse munie de points marqu\'es
de multiplicit\'e $0^+$ dont l'int\'erieur $\ocirc Y_s^{\rm sp}$ est le compl\'ementaire
de ces points marqu\'es (et $\O(\ocirc Y_s^{\rm sp})=k_C\otimes_R\O(Y_s)$).
Si $s\in S$ et si $a\in A(s)$, on note $P_{s,a}$ le point de $Y^{\rm sp}_s$
correspondant \`a $Y_{s,a}$.  Alors $Y^{\rm sp}$ (resp.~$\ocirc Y^{\rm sp}$) est obtenue \`a partir
de la r\'eunion disjointe des $Y^{\rm sp}_s$ (resp.~$\ocirc Y^{\rm sp}_s$), pour $s\in S$,
en identifiant les points $P_{s_1(a),a}$ et $P_{s_2(a),a}$, pour $a\in A_c$.
Le point $P_a=P_{s_1(a),a}=P_{s_2(a),a}$ est alors un point marqu\'e singulier
de $Y^{\rm sp}$ (appartenant \`a $\ocirc Y^{\rm sp}$),
 et on d\'efinit sa multiplicit\'e comme \'etant $m(a)$.

Ceci fournit une description de l'espace topologique sous-jacent \`a $Y^{\rm ado}$.
Pour d\'emontrer le th\'eor\`eme, il reste \`a prouver que la restriction 
$\O_Y$, aux ouverts de Zariski,
 du faisceau structural $\O_{Y^{\rm ado}}$
de $Y^{\rm ado}$ 
produit des $R$-alg\`ebres de Tate, et il
suffit de v\'erifier ceci pour un voisinage assez petit $U$ de tout point ferm\'e $P$
de $\ocirc Y^{\rm sp}$. 
Il y a deux cas \`a consid\'erer pour $P$.

\quad $\bullet$
Si $P$ n'est pas un point marqu\'e, alors $P$ appartient \`a une unique
composante $Y^{\rm sp}_s$ et on prend pour $U$ un ouvert 
de $\ocirc{Y}^{\rm sp}_s$; son tube $]U[$ dans (la fibre g\'en\'erique de) $\breve Y_s$
est un sous-affino\"{\i}de de $\breve Y_s$, et on a
$\O_Y(U)=R\wotimes_{\O_{\breve C}}\O^+(]U[)$ qui est bien une $R$-alg\`ebre de Tate.

\quad $\bullet$
Si $P=P_a$, avec $a\in A_c$, et si $S(a)=\{s_0,s_1\}$, alors
$P_a$ est le point d'intersection de $Y^{\rm sp}_{s_0}$ et $Y^{\rm sp}_{s_1}$.
On prend pour $U$ un ouvert de la forme $U_0\cup U_1$, o\`u $U_i$
est un ouvert de $Y^{\rm sp}_{s_i}$ contenant $P_a$ et suffisamment petit pour qu'il existe
$Z_i\in\O(U_i)$ ayant un z\'ero simple en $P_a$.
La propri\'et\'e de faisceau d\'ecrit $\O_Y(U)$ comme le noyau
$$\O_Y(U)={\rm Ker}\big[ R\wotimes_{\O_{\breve C}}(\O^+(]U_0[)\times\O^+(]U_1[))\oplus\O(Y_a)\to
\O(Y_{a,s_0})\times\O(Y_{a,s_1})\big]\,,$$
o\`u $]U_i[$ est le tube de $U_i$ dans $\breve Y_{s_i}$,
les fl\`eches de $\O(Y_a)$ dans les $\O(Y_{a,s_i})$ sont induites par les inclusions
des $Y_{a,s_i}$ dans $Y_a$, et la fl\`eche de $\O^+(]U_i[)$ dans $\O(Y_{a,s_i})$
par $Y_{a,s_i}\cong Y_{s_i,a}\subset ]U_i[$, si $i=0,1$.

Posons $A=\O_Y(U)$, $\breve B=\O^+(]U_0[)\times\O^+(]U_1[)$
et $B=R\wotimes_{\O_{\breve C}}\breve B$.  Commen\c{c}ons par prouver que la suite
$$0\to A\to B\to\frac{R[[T_0,T_0^{-1}\rangle\oplus R[[T_1,T_1^{-1}\rangle}{R[[T_0,T_1]]/(T_0T_1-m(a))}
\to 0$$
est exacte.  Elle est exacte \`a gauche et au milieu par d\'efinition de $A$.
Pour prouver la surjectivit\'e de la fl\`eche de droite, il suffit de le faire modulo $m(a)$ car
tout est s\'epar\'e et complet pour la topologie $m(a)$-adique.
Mais alors, si $R_a=R/m(a)$, l'application
 $B/m(a)\to \frac{ R_a [[T_0,T_0^{-1}\rangle\oplus  R_a [[T_1,T_1^{-1}\rangle}{ R_a [[T_0,T_1]]/(T_0T_1)}$
se factorise \`a travers
$\frac{ R_a [[T_0,T_0^{-1}\rangle}{T_0 R_a [[T_0]]}\oplus
\frac{ R_a [[T_1,T_1^{-1}\rangle}{T_1 R_a [[T_1]]}$.
Comme $B=B_0\oplus B_1$, avec $B_i=R\wotimes_{\O_{\breve C}}\breve B_i$ et 
$\breve B_i=\O^+(]U_i[)$, l'application ci-dessus est la somme
des $B_i/m(a)\to \frac{ R_a [[T_i,T_i^{-1}\rangle}{T_i R_a [[T_i]]}$,
ce qui permet d'utiliser le lemme~\ref{PP3} pour prouver sa surjectivit\'e.

Soient $B^{++}={\goth m}_R\wotimes_{\O_{\breve C}}\breve B$ et $A^{++}=A\cap B^{++}$.
Le lemme~\ref{PP3} permet aussi de prouver que la suite
$$0\to A^{++}\to B^{++}\to \frac{{\goth m}_R[[T_0,T_0^{-1}\rangle\oplus {\goth m}_R[[T_1,T_1^{-1}\rangle}{{\goth m}_R[[T_0,T_1]]/(T_0T_1-m(a))}
\to 0$$
est exacte. 
Si $\overline B=B/B^{++}$ et $\overline A=A/A^{++}$, on a donc une suite exacte
$$0\to \overline A\to \overline B\to \frac{k_C((T_0))\oplus k_C((T_1))}{k_C[[T_0,T_1]]/(T_0T_1)}\to 0.$$
Il en r\'esulte que $\overline A=\O_{Y^{\rm sp}}(U)$;
en particulier, $\overline A$
est de type fini sur $k_C$.

Par ailleurs, si $z_i\in \O^+(]U_i[)$ est tel que $z_i^{-1}$ est un param\`etre du cercle fant\^ome
$Y_{s_i,a}$, on d\'eduit du lemme~\ref{PP3}, que $1$ et les $z_i^{-k}$, pour $i=0,1$ et $k\geq 1$,
forment une base orthonormale de $B/A$. Comme cette base est constitu\'ee d'\'el\'ements
de $\breve B$, le lemme~\ref{PP2} permet d'en d\'eduire que $A$ est une $R$-alg\`ebre de Tate,
ce que l'on voulait.
\end{proof}
\begin{rema}\label{Pgener1}
On peut adapter la construction ci-dessus
\`a un patron g\'en\'eralis\'e
comme dans la rem.\,\ref{Pgener}; on recolle des $R$-boules ouvertes $Y_b$ (avec $\O(Y_b)=R[[T_b]]$)
le long des $R$-cercles fant\^omes $Y_{s,b}$, pour $b\in B$.
La courbe ainsi obtenue est propre si et seulement si
$B=A\moins A_c$.
\end{rema}

\begin{center}
\begin{tikzpicture}
[scale=.8]
\draw[very thick] (0,4) arc (50: 310 : 1.4 and 3);
\draw[very thick] (5,4) arc (130: -130 : 1.4 and 3);

\draw[very thick]  (0,.2) ..controls +(1,-.1) and +(-1,-.1).. (5,.2);
\draw[very thick]  (0,-.6) ..controls +(1,.1) and +(-1,.1).. (5,-.6);
\draw[very thick, dotted, red] (0,-.6) arc (-90: 90 : .2 and .4); \draw[very thick, dashed, red] (0,.2) arc (90: 270 : .2 and .4);
\draw[very thick, dotted, red] (5,-.6) arc (-90: 90 : .2 and .4); \draw[very thick, dashed, red] (5,.2) arc (90: 270 : .2 and .4);

\draw[very thick]  (0,4) ..controls +(1,-.1) and +(-1,-.1).. (1.8,4);
\draw[very thick]  (0,3.2) ..controls +(1,.1) and +(-1,.1).. (1.8,3.2);
\draw[very thick, dotted, red] (0,3.2) arc (-90: 90 : .2 and .4); \draw[very thick, dashed, red] (0,4) arc (90: 270 : .2 and .4);
\draw[very thick, dotted, red] (1.8,3.2) arc (-90: 90 : .2 and .4); \draw[very thick, dashed, red] (1.8,4) arc (90: 270 : .2 and .4);
\draw[very thick, dotted, red] (3.2,3.2) arc (-90: 90 : .2 and .4); \draw[very thick, dashed, red] (3.2,4) arc (90: 270 : .2 and .4);
\draw[very thick, dotted, red] (5,3.2) arc (-90: 90 : .2 and .4); \draw[very thick, dashed, red] (5,4) arc (90: 270 : .2 and .4);
\draw[very thick]  (5,4) ..controls +(-1,-.1) and +(1,-.1).. (3.2,4);
\draw[very thick]  (5,3.2) ..controls +(-1,.1) and +(1,.1).. (3.2,3.2);

\draw[very thick] (1.8,3.2) arc (-150:-30:.8); \draw[very thick] (1.8,4) arc (150:30:.8);

\draw[very thick]  (5,.2) ..controls +(.5,0) and +(.5,0).. (5,3.2);
\draw[very thick]  (0,.2) ..controls +(-.5,0) and +(-.5,0).. (0,3.2);

\draw[thick, blue] (-1,-.85) circle(.25);
\draw[thick, blue] (6,-.85) circle(.25);
\draw[blue] (-1,-3) -- (-1,-.7);
\draw[blue] (-1,-1.5) -- (-.9,-.9);
\draw[ball color=blue] (9.5,-.85) circle (.12);
\draw[ball color=blue] (11.5,-.85) circle (.12);
\draw[blue](6,-.6)--(9.5,-.85);
\draw[blue](6,-1.1)--(9.5,-.85);

\draw[thick, blue] (-1,1.75) circle(.25);
\draw[ball color=blue] (8.85,1.75) circle (.12);
\draw[blue](-1,2)--(8.85,1.75);
\draw[blue](-1,1.5)--(8.85,1.75);

\draw[very thick]  (-1.5,-.2) ..controls +(0.15,-.35) and +(-0.15,-.35).. (-.5,-.2);
\draw[very thick]  (-1.375,-.325) ..controls +(.1,.15) and +(-.1,.15).. (-.625,-.325);

\draw[very thick]  (-1.7,1) ..controls +(0.15,-.35) and +(-0.15,-.35).. (-.7,1);
\draw[very thick]  (-1.575,.875) ..controls +(.1,.15) and +(-.1,.15).. (-.825,.875);
\draw[very thick]  (-1.7,2.5) ..controls +(0.15,-.35) and +(-0.15,-.35).. (-.7,2.5);
\draw[very thick]  (-1.575,2.375) ..controls +(.1,.15) and +(-.1,.15).. (-.825,2.375);

\draw[very thick]  (-1.5,3.8) ..controls +(0.15,-.35) and +(-0.15,-.35).. (-.5,3.8);
\draw[very thick]  (-1.375,3.675) ..controls +(.1,.15) and +(-.1,.15).. (-.625,3.675);

\draw[very thick]  (5.5,3.8) ..controls +(0.15,-.35) and +(-0.15,-.35).. (6.5,3.8);
\draw[very thick]  (5.625,3.675) ..controls +(.1,.15) and +(-.1,.15).. (6.375,3.675);

\draw[very thick]  (5.7,1.6) ..controls +(0.15,-.35) and +(-0.15,-.35).. (6.7,1.6);
\draw[very thick]  (5.825,1.475) ..controls +(.1,.15) and +(-.1,.15).. (6.575,1.475);

\draw[very thick]  (5.5,-.2) ..controls +(0.15,-.35) and +(-0.15,-.35).. (6.5,-.2);
\draw[very thick]  (5.625,-.325) ..controls +(.1,.15) and +(-.1,.15).. (6.375,-.325);

\draw[very thick, dotted, color=red, fill=gray!20] (6.7,2.8) circle (.27 and .3);
\draw[thick, dotted, red] (6.7,3.1) -- (12.5,2.6);
\draw[thick, dotted, red] (6.7,2.5) -- (12.5,2.6);
\draw[very thick, dotted, color=red, fill=gray!20] (6.7,.8) circle (.27 and .3);

\draw(2.5,0.45) node{$Y_{a_3}$};
\draw(1,4.3) node{$Y_{a_1}$};
\draw(4,4.3) node{$Y_{a_2}$};
\draw(2.5,4.7) node{$Y_{s_2}$};
\draw(-1.9,4.5) node{$Y_{s_1}$};
\draw(6.9,4.5) node{$Y_{s_3}$};

\draw(2.5,-3.4) node{$\Gamma^{\rm ad}(Y)$};
\draw[ball color=black] (-1,-3) circle (.2); 
\draw[ball color=black] (6,-3) circle (.2); 
\draw[ball color=black] (2.5,-2) circle (.2); 
\draw[very thick] (-1,-3)--(6,-3);
\draw[very thick] (-1,-3)--(2.5,-2);
\draw[very thick] (2.5,-2)--(6,-3);
\draw (.75,-2.2) node{$a_1$};
\draw (4.25,-2.2) node{$a_2$};
\draw (2.5,-2.75) node{$a_3$};
\draw (6.2,-2.2) node{$a_4$};
\draw (6.9,-2.7) node{$a_5$};
\draw (-1,-3.4) node{$s_1$};
\draw (2.5,-1.6) node{$s_2$};
\draw (6,-3.4) node{$s_3$};
\draw[ball color=blue] (3.5,-3) circle (.12);
\draw[very thick, dashed, blue] (3.5,-.55) arc (-90: 90 : .2 and .35); \draw[very thick, blue] (3.5,.15) arc (90: 270 : .2 and .35);
\draw[blue] (3.5,-3) -- (3.35,0);
\draw[thick,color=red,{->}] (6,-3) -- +(80:.7); \draw[thick,color=red,{->}] (6,-3) -- +(20:.7);
\draw[thick,color=red,{->}] (6,-3) -- (5.5,-2.86); \draw[thick,color=red,{->}] (6,-3) -- (5.5,-3);
\draw[thick,color=red,{->}] (-1,-3) -- (-.5,-2.86); \draw[thick,color=red,{->}] (-1,-3) -- (-.5,-3);
\draw[thick,color=red,{->}] (2.5,-2) -- (2,-2.14); \draw[thick,color=red,{->}] (2.5,-2) -- (3,-2.14);

\draw[very thick]  (8,4.5) ..controls +(.5,-3) and +(-2,.5).. (12,-1);
\draw[very thick]  (13,4.5) ..controls +(-.5,-3) and +(2,.5).. (9,-1);
\draw[very thick]  (7.5,3.5) -- (13.5,3.5);
\draw[ball color=red] (10.5,-.24) circle (.12);
\draw (11.6,-.24) node{$(P_{a_3},\mu_3)$};
\draw[ball color=red] (8.2,3.5) circle (.12);
\draw (9.2,3.2) node{$(P_{a_1},\mu_1)$};
\draw[ball color=red] (12.8,3.5) circle (.12);
\draw (13.7,3.2) node{$(P_{a_2},\mu_2)$};
\draw[ball color=red] (12.5,2.6) circle (.12);
\draw (13.5,2.4) node{$(P_{a_4},0^+)$};
\draw[ball color=red] (11.55,.8) circle (.12);
\draw (12.5,.6) node{$(P_{a_5},0^+)$};
\draw(10.5,3.9) node{$Y_{s_2}^{\rm sp}$};
\draw(8.45,4.3) node{$Y_{s_1}^{\rm sp}$};
\draw(12.55,4.3) node{$Y_{s_3}^{\rm sp}$};
\draw(10.5,-1.2) node{$Y^{\rm sp}$};

\draw (5.5,-4.4) node{Un affino\"{\i}de $Y$ construit \`a partir de 3 shorts et 3 jambes};

\end{tikzpicture}
\end{center}

{\Small
Ce dessin repr\'esent un affino\"{\i}de $Y$ obtenu en recollant, le long de cercles
fant\^omes (en pointill\'es rouges), des shorts $Y_{s_1}$, $Y_{s_2}$
et $Y_{s_3}$, de genres respectifs $4$, $0$ et $3$, et des jambes $Y_{a_1}$, $Y_{a_2}$
et $Y_{a_3}$, de longueurs respectives $\mu_1$, $\mu_2$, $\mu_3$.
La fibre sp\'eciale $Y^{\rm sp}$ a cinq points marqu\'es: trois correspondant aux sp\'ecialisations des jambes
et deux aux tubes enlev\'es sur $Y_{s_3}$.  Le squelette $\Gamma^{\rm an}(Y)$ est constitu\'e
du triangle de sommets $s_1$, $s_2$ et $s_3$ (les longueurs des ar\^etes $a_1$, $a_2$ et $a_3$
sont $\mu_1$, $\mu_2$ et $\mu_3$) et on obtient $\Gamma^{\rm ad}(Y)$ en rajoutant
les fl\`eches $a_4$ et $a_5$.

Notons que $Y_{s_2}^{\rm sp}$ est un ${\bf P}^1$ avec deux points marqu\'es.  Il peut donc se contracter
et les deux points fusionner en un point de multiplicit\'e $\mu_1+\mu_2$. Cela comprime la sph\`ere
$Y_{s_2}$ en un cercle et les deux jambes $Y_{a_1}$ et $Y_{a_2}$ se recollent le long de ce cercle
pour former une jambe de longueur $\mu_1+\mu_2$. Enfin, le sommet $s_2$ dispara\^{\i}t et les ar\^etes
$a_1$ et $a_2$ fusionnent en une ar\^ete de longueur $\mu_1+\mu_2$.

}

\subsubsection{Cas particuliers}\label{familles}
Le th.~\ref{PP4} admet un certain nombre de sp\'ecialisations 
sympathiques.

\smallskip
\noindent $\bullet$ {\it Le cas $R=\O_C$}.

Par sp\'ecialisation
au cas $R=\O_C$ et $m(a)=p^{\mu(a)}$, on 
tire le r\'esultat suivant (l'unicit\'e est imm\'ediate; l'existence est fournie
par le th.~\ref{PP4}).
\begin{theo}\label{Pconstr12}
Si $(\Gamma,(Y_i)_{i\in I},(\iota_{i,j})_{(i,j)\in I_{2,c}})$ est un patron de courbe,
il existe un unique couple $(Y,S)$, o\`u $Y$ est une courbe quasi-compacte
et $S$ une triangulation de~$Y$, dont ce soit le patron.
\end{theo}

\noindent $\bullet$ {\it Le cas $R=\O_C[[T_a,\ a\in A_c]]$}.

Soit $(\Gamma,(Y_i)_i,(\iota_{i,j})_{(i,j)\in I_{2,c}})$ un patron de courbe,
avec $\Gamma=(A,S,\mu)$, et soit $Y$ la $\O_C$-courbe qui lui correspond.
On fabrique un patron de $R$-courbe en changeant 
$a\mapsto\mu(a)$ en $a\mapsto T_a$.
La $R$-courbe associ\'ee peut \^etre vue comme une famille de courbes param\'etr\'ees par
la boule unit\'e ouverte $\ocirc B^{A_c}$.  La fibre en $(p^{\mu(a)})_{a\in A_c}$,
 est $Y$;
toutes les courbes de la famille ont la m\^eme
fibre sp\'eciale (au marquage pr\`es: $P_a$ devient de multiplicit\'e $v_p(z_a)$
sur la fibre en $(z_a)_{a\in A_c}$); 
la fibre en $(0)_{a\in A_c}$,
est une courbe singuli\`ere ayant m\^eme graphe dual que la fibre sp\'eciale.

\smallskip
\noindent $\bullet$ {\it Le cas $R=\ainf$}.

Soit $(\Gamma,(Y_i)_i,(\iota_{i,j})_{(i,j)\in I_{2,c}})$ un patron de courbe,
avec $\Gamma=(I,I_2,\mu)$, et soit $Y$ la $\O_C$-courbe qui lui correspond.
On peut fabriquer un patron de $\ainf$-courbe
$(\Gamma,(\widetilde Y_i)_i,(\tilde\iota_{i,j})_{(i,j)\in I_{2,c}})$ 
en choisissant des mod\`eles $\breve Y_s$ sur $\O_{\breve C}$ des $Y_s$, pour $s\in S$,
et en posant:

$\diamond$ $\O(\widetilde Y_s):=\ainf\wotimes_{\O_{\breve C}}\O(\breve Y_s)$, si $s\in S$,

$\diamond$ $\O(\widetilde Y_a):=\ainf[[T_{a,s_1},T_{a,s_2}]]/(T_{a,s_1}T_{a,s_2}-\tilde p^{\mu(a)})$,
si $a\in A_c$,

\noindent et en choisissant:

$\diamond$ $\tilde\iota_{a,s}:\ainf[[T_{s,a},T_{s,a}^{-1}\rangle\overset{\sim}{\to}
\ainf[[T_{a,s},T_{a,s}^{-1}\rangle$ un rel\`evement de $\iota_{a,s}$,
si $(a,s)\in I_{2,c}$ (un tel rel\`evement est obtenu en choisissant
un rel\`evement 
$\tilde\iota_{a,s}(T_{s,a})\in
\ainf[[T_{a,s},T_{a,s}^{-1}\rangle$ de
$\iota_{a,s}(T_{s,a})\in \O_C[[T_{a,s},T_{a,s}^{-1}\rangle$).

Le patron ainsi d\'efini d\'epend du choix des $\breve Y_s$ et des $\tilde\iota_{i,j}$;
la $\ainf$-courbe $\widetilde Y$ qui lui correspond peut \^etre 
vue comme une famille de courbes ayant toutes
la m\^eme fibre sp\'eciale,
param\'etr\'ees par les id\'eaux premiers ferm\'es de $\ainf$:

-- la fibre en $p=\tilde p$ est $Y$;

-- la fibre en $p=[a]$ avec $a\in {\goth m}_{C^\flat}\moins\{0\}$
est un mod\`ele sur $\O_{C_a}$ d'une courbe lisse d\'efinie sur $C_a$;

-- la fibre $\breve Y$ en $\tilde p=0$ est un mod\`ele 
sur $\O_{\breve C}$
d'une courbe singuli\`ere
d\'efinie sur $\breve C$,
ayant m\^eme graphe dual que la fibre sp\'eciale
de $Y$; 

-- la fibre en $p=0$ un mod\`ele sur $\O_{C^\flat}$
d'une courbe lisse sur $C^\flat$.

On peut donc, en particulier, voir $\widetilde Y$ comme une famille 
de courbes interpolant entre
$Y$ et $\breve Y$.

\subsection{Cohomologie de de Rham adoque}\label{tetrapil1}
Soit $(\Gamma,(Y_i)_i,(\iota_{i,j})_{(i,j)\in I_{2,c}})$ un patron de courbe,
avec $\Gamma=(I,I_2,\mu)$, et soit $Y$ la $\O_C$-courbe qui lui correspond.
On d\'ecompose $I$ sous la forme $A_c\sqcup S$ habituelle. Si $a\in A_c$
a pour extr\'emit\'es $s_1,s_2$, on note
$U_a^{\rm sp}$ l'ouvert de $Y^{\rm sp}$
constitu\'e de $P_a$ et des lieux de lissit\'e
de $\ocirc Y_{s_1}^{\rm sp}$ et $\ocirc Y_{s_2}^{\rm sp}$.
Le tube $U_a$ de $U^{\rm sp}_a$ s'obtient en recollant $Y_{s_1}$ et $Y_{s_2}$
\`a $Y_a$ le long de $Y_{a,s_1}$ et $Y_{a,s_2}$.

Les $U_a$ forment un recouvrement de $Y$, et on peut calculer
la cohomologie de de Rham de $Y$ \`a la \v{C}ech en utilisant ce recouvrement.
Si $a_1,a_2\in A(s)$, on a $U_{a_1}\cap U_{a_2}=\emptyset$ sauf s'il
existe $s\in S$ tel que
$a_1,a_2\in A(s)$, auquel cas
$U_{a_1,a_2}:=U_{a_1}\cap U_{a_2}=Y_s$.
On note $C^\bullet_{\rm dR}(Y)$ le complexe
$$C^\bullet_{\rm dR}(Y):=\big[\prod_{a\in A_c}\Omega^\bullet(U_a)\longrightarrow
\prod_{a_1,a_2}\Omega^\bullet(U_{a_1,a_2})\big]$$
Ses groupes d'hypercohomologie sont les groupes $H^i_{\rm dR}(Y)$ de cohomologie de de Rham (logarithmique)
de $Y$.
Un $1$-cocycle est de la forme 
\begin{align*}
((\omega_a)_a,(f_{a_1,a_2})_{a_1,a_2}), 
\quad\omega_a\in\Omega^1(U_a),\  f_{a_1,a_2}\in\O(U_{a_1,a_2}),\\
df_{a_1,a_2}=\omega_{a_2}-\omega_{a_1},\quad f_{a_1,a_2}+f_{a_2,a_3}+f_{a_3,a_1}=0,
{\text{ si $a_1,a_2,a_3\in A(s)$.}}
\end{align*}
Un $1$-cobord est de la forme $((df_a)_a,(f_{a_2}-f_{a_1})_{a_1,a_2})$.

De m\^eme, on d\'efinit les $H^i_{\rm dR}(Y^{\rm ado})$ comme
les groupes d'hypercohomologie du complexe
$$C^\bullet_{\rm dR}(Y^{\rm ado}):=\big[\prod_{i\in I}\Omega^\bullet(Y_i)\longrightarrow
\prod_{(i,j)\in I_{2,c}}\Omega^\bullet(Y_{i,j})\big]$$
On choisit $a(s)\in A(s)$, pour tout $s\in S$. Cela fournit une application
$Z^1_{\rm dR}(Y)\to Z^1_{\rm dR}(Y^{\rm ado})$ envoyant
$((\omega_a)_a,(f_{a_1,a_2})_{a_1,a_2})$ sur $((\eta_i)_i,(g_{i,j})_{i,j})$, avec
$\eta_a=\omega_a$ si $a\in A_c$, $\eta_s=\omega_{a(s)}$ si $s\in S$
et $g_{a,s}=f_{a,a(s)}$ si $(a,s)\in I_{2,c}$.
\begin{prop}\label{tetrapil2}
L'application $Z^1_{\rm dR}(Y)\to Z^1_{\rm dR}(Y^{\rm ado})$
ci-dessus induit un isomorphisme
$$H^1_{\rm dR}(Y)\overset{\sim}{\to} H^1_{\rm dR}(Y^{\rm ado})$$
qui ne d\'epend pas du choix des $a(s)$.
\end{prop}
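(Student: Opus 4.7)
\noindent\emph{Plan de preuve.}
Les deux complexes sont les complexes de \v{C}ech-de Rham associés à deux recouvrements ouverts (adoques) de $Y^{\rm ado}$: le recouvrement $\mathcal{U}=(U_a)_{a\in A_c}$ et le recouvrement $\mathcal{V}=(Y_i)_{i\in I}$. Le second raffine le premier: en effet, on a $Y_a\subset U_a$ pour tout $a\in A_c$, et le choix de $a(s)\in A(s)$ fournit une inclusion $Y_s\subset U_{a(s)}$ pour tout $s\in S$. Sur les intersections, si $(a,s)\in I_{2,c}$, alors $Y_{a,s}=Y_a\cap Y_s$ est contenu dans $U_a\cap U_{a(s)}=Y_s$. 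L'application décrite dans l'énoncé est exactement le morphisme de \v{C}ech induit par ce raffinement, et il est immédiat de vérifier qu'elle respecte les relations cocycliques (pour un cocycle $((\omega_a)_a,(f_{a_1,a_2}))$, la condition $df_{a,a(s)}=\omega_{a(s)}-\omega_a$ sur $U_a\cap U_{a(s)}=Y_s$ restreinte à $Y_{a,s}$ donne bien $dg_{a,s}=\eta_s-\eta_a$).

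Pour conclure que le morphisme est un quasi-isomorphisme, la stratégie est de montrer que chacun des deux recouvrements est \emph{acyclique} (de Leray) pour les faisceaux $\O$ et $\Omega^1$ sur $Y^{\rm ado}$, ce qui permet d'invoquer la suite spectrale de \v{C}ech dégénérescente et d'en déduire que les deux complexes calculent tous deux $R\Gamma(Y^{\rm ado},\Omega^\bullet)$, et que le morphisme de raffinement induit l'isomorphisme canonique entre ces deux calculs. Pour $\mathcal{V}$: les shorts $Y_s$, les jambes $Y_a$, et les cercles fantômes $Y_{i,j}$ sont des affines adoques, donc la cohomologie supérieure de tout faisceau quasi-cohérent y est nulle. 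Pour $\mathcal{U}$: chaque $U_a$ est, par construction, le tube d'un ouvert affine $U_a^{\rm sp}$ du modèle semi-stable $Y_S$ (la courbe $U_a^{\rm sp}$ étant une réunion de deux courbes affines recollées au point nodal $P_a$, donc affine), ce qui en fait également un affine adoque; les intersections doubles $U_{a_1,a_2}=Y_s$ ou vides sont déjà traitées, et les intersections multiples (non vides seulement si toutes les arêtes partent d'un même sommet $s$) se réduisent encore à $Y_s$.

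La partie la plus délicate est donc la vérification de l'acyclicité des $U_a$ et plus généralement la mise en place rigoureuse du formalisme de Leray dans le cadre adoque, qui n'est pas standard. Une fois cela acquis, l'indépendance vis-à-vis du choix de $(a(s))_{s\in S}$ est formelle: si $(a'(s))_{s\in S}$ est un autre choix, les deux morphismes de raffinement diffèrent par une homotopie de \v{C}ech explicite (construite en utilisant, pour chaque $s\in S$, la fonction $f_{a(s),a'(s)}\in\O(Y_s)$ du cocycle), donc coïncident au niveau cohomologique. Un argument alternatif, peut-être plus direct mais plus calculatoire, consisterait à construire explicitement un inverse homotopique: partant d'un cocycle $((\eta_i),(g_{i,j}))\in Z^1_{\rm dR}(Y^{\rm ado})$, on recolle les $\eta_{s_1(a)}$, $\eta_a$, $\eta_{s_2(a)}$ en une forme sur $U_a$ via la suite de Mayer-Vietoris pour $\Omega^1$ sur $U_a$ (dont l'exactitude est équivalente à l'acyclicité de $U_a$), les fonctions $g_{s_i(a),a}$ fournissant les décalages nécessaires; l'obstruction au recollement disparaît alors dans $H^1_{\rm dR}(U_a)$ modulo cobords, ce qui redonne la surjectivité sur $H^1$, l'injectivité étant analogue.
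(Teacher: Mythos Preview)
Your framing of the map as the \v{C}ech refinement morphism between $\mathcal U=(U_a)_{a\in A_c}$ and $\mathcal V=(Y_i)_{i\in I}$ is correct, and so is your argument for independence of the choice of $a(s)$: the paper does precisely this, writing down the explicit homotopy $((f_{a(s),a'(s)})_s,(0)_a)$.

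However, the heart of your argument --- Leray acyclicity of both coverings for $\O$ and $\Omega^1$ in the adoc topology --- is a genuine gap, not a routine detail. The paper introduces adoc geometry \emph{ad hoc} and only proves a Theorem~B-type statement (lemme~\ref{adoc4}: sections over standard opens are what one expects); no vanishing of higher cohomology on affine adoques is established. Even granting that, you would still need to check that each $U_a$ is an affine adoque (the proof of th.~\ref{PP4} shows $\O(U)$ is a Tate algebra only for \emph{small} neighborhoods of the nodal point, not for all of $U_a$). Your alternative route via Mayer--Vietoris for $\Omega^1$ on $U_a$ runs into the same obstacle, since exactness of that sequence is equivalent to the acyclicity you need.

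The paper avoids this entirely with a different, more hands-on strategy. Injectivity is proved by an explicit gluing: if the image of $((\omega_a),(f_{a_1,a_2}))$ is a coboundary $(\eta_i=dg_i,\,g_{i,j}=g_j-g_i)$, one defines $f_a\in\O(U_a)$ piecewise as $g_a$ on $Y_a$ and $g_{s_i}-f_{a(s_i),a}$ on $Y_{s_i}$, checks these agree on the ghost circles (using $f_{a,a(s)}=g_{a,s}=g_s-g_a$), and verifies that the original cocycle is the boundary of $(f_a)_a$. Surjectivity is then obtained by a mod-$p^r$ reduction: since both complexes are derived $p$-complete and injectivity is already known, it suffices to check surjectivity modulo $p^r$ for $r>0$ small enough that $Y/p^r\cong(\O_C/p^r)\otimes_{k_C}\ocirc Y^{\rm sp}$; one is then reduced (using that $H^2$ is $p$-torsion-free) to the same comparison for the special fiber $\ocirc Y^{\rm sp}$ over $k_C$, viewed as a Berkovich space, where both presentations classically compute de Rham cohomology. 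This reduction to characteristic~$p$ is what replaces your appeal to acyclicity.
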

\begin{proof}
Si on change $a(s)$ en $a'(s)$, cela modifie le cocycle $((\eta_i)_i,(g_{i,j})_{i,j})$
par le bord de $((f_{a(s),a'(s)})_s,(0)_a)$, ce qui montre que le morphisme
de la proposition ne d\'epend pas du choix des $a(s)$.

Prouvons son injectivit\'e. Si $\eta_i=dg_i$ et $g_{i,j}=g_j-g_i$, et si $a\in A_c$
a pour extr\'emit\'es $s_1$ et $s_2$, alors $g_a$
est la restriction \`a $Y_a$ de la fonction $f_a$ sur $U_a$ valant $g_{s_i}-f_{a(s_i),a}$
sur $Y_{s_i}$ (ces fonctions se recollent sur les $Y_{a,s_i}$ car $f_{a,a(s)}=g_{a,s}=g_s-g_a$). 
Alors $((\omega_a)_a,(f_{a_1,a_2})_{a_1,a_2})$ est le bord
de $(f_a)_a$, ce qui prouve l'injectivit\'e.

Passons \`a la surjectivit\'e.
Compte-tenu de l'injectivit\'e et de ce que tout est (d\'eriv\'e) complet pour la topologie $p$-adique,
il suffit de v\'erifier le r\'esultat mod $p^r$, avec $r>0$. On choisit $r$ assez petit pour que
$Y/p^r\cong (\O_C/p^r)\otimes \ocirc Y^{\rm sp}$. Auquel cas, comme $H^2$ est sans $p$-torsion,
on est ramen\'e au m\^eme \'enonc\'e
pour $\ocirc Y^{\rm sp}$, o\`u les deux membres calculent la cohomologie
de de Rham de $\ocirc Y^{\rm sp}$ vu comme espace de Berkovich et sont donc \'egaux.
\end{proof}

\begin{rema}\label{tetrapil3}
Les m\^emes techniques permettent de prouver que, si on compactifie un short~$Y$
en une $\O_C$-courbe propre $X$
en recollant des disques $D_i$ le long des cercles fant\^omes $C_i$ \`a la fronti\`ere, alors
on peut calculer la cohomologie de de Rham de $X$ en utilisant le recouvrement par $Y$ et les $D_i$.
On en d\'eduit une suite exacte
$$0\to H^1_{\rm dR}(X)\to H^1_{\rm dR}(Y)\oplus \prod_i H^1_{\rm dR}(D_i)
\to \prod_i H^1_{\rm dR}(C_i)\to H^2_{\rm dR}(X)$$
\end{rema}

\section{Cohomologie des boules, des jambes et des cercles fant\^omes}
Dans ce chapitre, on d\'efinit les symboles $\ell$-adiques des jambes et des cercles fant\^omes. Si $\ell\neq p$,
ces groupes ont une expression simple (et classique), et on relie (prop.\,\ref{basic34})
les symboles $p$-adiques \`a la
cohomologie syntomique en se ramenant
aux cas des boules unit\'es ouverte et ferm\'ee. Le lien entre cette cohomologie syntomique et la cohomologie
\'etale $p$-adique est explor\'e dans l'appendice.
\Subsection{Symboles $\ell$-adiques}
\subsubsection{Fonctions inversibles}
Le lemme~\ref{basic32} et le cor.\,\ref{basic33} sont parfaitement classiques,
mais la preuve que nous en donnons est une bonne introduction aux m\'ethodes
utilis\'ees dans l'article.

On munit $\bcris^+[[T]]$ du frobenius $\varphi(\sum_{n\geq 0}a_nT^n)=\sum_{n\geq 0}\varphi(a_n)T^{pn}$.
\begin{lemm}\label{basic31}
Soit $\tilde u\in 1+T\bcris^+[[T]]$.  Les conditions suivantes sont \'equivalentes:

{\rm a)} $\tilde u\in 1+T\acris[[T]]$,

{\rm b)} $\frac{\varphi(\tilde u)}{\tilde u^p}\in 1+pT\acris[[T]]$.

{\rm c)} $\log\frac{\varphi(\tilde u)}{\tilde u^p}\in pT\acris[[T]]$.
\end{lemm}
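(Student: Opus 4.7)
Le plan est de ramener les \'equivalences b) $\Leftrightarrow$ c) \`a la correspondance $\log/\exp$, de traiter a) $\Rightarrow$ b) par un calcul direct exploitant le fait que $\varphi \equiv (\cdot)^p \pmod p$ sur $\acris$, et d'\'etablir l'implication difficile b) $\Rightarrow$ a) par une r\'ecurrence sur les coefficients de $\tilde u$.

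Pour b) $\Leftrightarrow$ c), l'id\'ee est que l'exponentielle et le logarithme \'echangent $1 + pT\acris[[T]]$ et $pT\acris[[T]]$: si $w \in pT\acris[[T]]$, les s\'eries $\log(1+w) = \sum_{n \geq 1} (-1)^{n-1} w^n/n$ et $\exp(w) - 1 = \sum_{n \geq 1} w^n/n!$ convergent dans $pT\acris[[T]]$ gr\^ace aux puissances divis\'ees (on a $p^n/n! \in \acris$, d'o\`u $p^n/n \in \acris$ puisque $n \mid n!$). Pour a) $\Rightarrow$ b), si $\tilde u \in 1 + T\acris[[T]]$ alors $\tilde u^p$ est une unit\'e de $\acris[[T]]$, et la congruence $\varphi(\tilde u) \equiv \tilde u^p \pmod{p\acris[[T]]}$ (combin\'ee \`a l'annulation en $T=0$ et \`a la $p$-torsion nulle de $\bcris^+[[T]]$) fournit $\varphi(\tilde u) - \tilde u^p \in pT\acris[[T]]$, puis b) en divisant par $\tilde u^p$.

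L'implication b) $\Rightarrow$ a) est la partie d\'elicate. \'Ecrivons $\tilde u = 1 + \sum_{n \geq 1} a_n T^n$ avec $a_n \in \bcris^+$ et proc\'edons par r\'ecurrence forte: supposons $a_1, \dots, a_{n-1} \in \acris$. Comparons les coefficients de $T^n$ dans $\varphi(\tilde u) = v \tilde u^p$ (o\`u $v \in 1 + pT\acris[[T]]$). Le membre de gauche vaut $\varphi(a_{n/p})$ si $p \mid n$, et $0$ sinon. En posant $X = \sum_{k \geq 1} a_k T^k$, on a $[T^n]\tilde u^p = p a_n + F_n$, o\`u $F_n$ ne fait intervenir que $a_1, \dots, a_{n-1}$, tandis que la contribution $[T^n](v-1)\tilde u^p$ appartient \`a $p\acris$ par r\'ecurrence (c'est une somme finie de termes $p b_m \cdot [T^{n-m}]\tilde u^p$ avec $b_m \in \acris$ et $[T^{n-m}]\tilde u^p \in \acris$). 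Le point combinatoire cl\'e est que dans $[T^n]X^p = \sum_{k_1 + \dots + k_p = n} a_{k_1} \cdots a_{k_p}$, chaque partition $\lambda \vdash n$ de longueur $p$ appara\^{\i}t avec un coefficient multinomial $p!/\prod m_i!$ divisible par $p$, \emph{sauf} la partition constante, qui n'existe que si $p \mid n$ et apporte alors $a_{n/p}^p$ avec coefficient $1$. Combin\'e \`a $\varphi(a_{n/p}) \equiv a_{n/p}^p \pmod{p\acris}$, ceci transforme la relation obtenue en $p a_n \in p\acris$, d'o\`u $a_n \in \acris$ par $p$-torsion nulle.

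L'obstacle principal est donc ce calcul combinatoire de la derni\`ere \'etape: il faut isoler pr\'ecis\'ement le terme exceptionnel $a_{n/p}^p$ dans l'expansion de $\tilde u^p$ et le compenser contre $\varphi(a_{n/p})$, tous les autres termes se rangeant dans $p\acris$ soit par divisibilit\'e multinomiale soit par l'hypoth\`ese de r\'ecurrence.
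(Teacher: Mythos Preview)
Your proof is correct. The easy implications a)$\Rightarrow$b) and b)$\Leftrightarrow$c) match the paper, and for b)$\Rightarrow$a) both you and the paper argue by induction on the degree-$n$ coefficient. The difference lies in the coordinate system: you expand additively as $\tilde u=1+\sum_{n\geq 1}a_nT^n$ and extract $a_n$ by a direct comparison of $[T^n]$ in $\varphi(\tilde u)=v\,\tilde u^p$, which forces the multinomial bookkeeping and the cancellation $\varphi(a_{n/p})-a_{n/p}^p\in p\acris$. The paper instead factorizes multiplicatively, $\tilde u=\prod_{n\geq 1}(1+u_nT^n)$, and passes to $\tilde u_n=\tilde u\prod_{i<n}(1+u_iT^i)^{-1}\in 1+u_nT^n+T^{n+1}\bcris^+[[T]]$; since $\varphi$ pushes the leading term to degree $pn>n$, the degree-$n$ coefficient of $\varphi(\tilde u_n)/\tilde u_n^p$ is simply $-pu_n$, and the induction closes without any combinatorics. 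Your route is more hands-on but entirely valid; the multiplicative trick buys brevity by absorbing the combinatorial identity $\sum_{k_1+\cdots+k_p=n}a_{k_1}\cdots a_{k_p}\equiv a_{n/p}^p\pmod{p}$ into the factorization itself.
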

\begin{proof}
L'implication a)$\Rightarrow$b) est imm\'ediate ainsi que l'\'equivalence de b) et c).
Il ne reste donc que b)$\Rightarrow$a) \`a prouver.
On peut \'ecrire $\tilde u$ sous la forme $\tilde u=\prod_{n\geq 1}(1+u_nT^n)$,
et on montre, par r\'ecurrence que $u_n\in \acris$: si $$\tilde u_n=
\tilde u\prod_{i\leq n-1}(1+u_iT)^{-1},$$ l'hypoth\`ese de r\'ecurrence
permet de montrer que $\frac{\varphi(\tilde u_n)}{\tilde u_n^p}\in 1+pT\acris[[T]]$.
Or le terme de degr\'e $n$ est juste $-pu_n$, et donc $u_n\in\acris$.
\end{proof}

\begin{lemm}\label{basic32}
{\rm (i)}
Tout \'el\'ement $u$ de
 $(\O_C[[T,T^{-1}\rangle[\frac{1}{p}])^\dual$
peut s'\'ecrire, de mani\`ere unique, sous la forme
$u=cT^ku_+u_-$, avec $k\in\Z$, $c\in C^\dual$, $u_+\in 1+T\O_C[[T]]$
et $u_-\in 1+T^{-1}{\goth m}_C\langle T^{-1}\rangle$.

{\rm (ii)} $(v(u),v'(u)):=(v_p(c),k)$ ne d\'epend pas du choix
de l'uniformisante~$T$.
\end{lemm}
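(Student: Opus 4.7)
I would prove (i) in two stages: uniqueness by Laurent-coefficient comparison, and existence via a $p$-adic Birkhoff-type factorization that first extracts $cT^k$ from the reduction of $u$ modulo ${\goth m}_C$ and then factors the remainder by $\log$--$\exp$ after a multi-scale approximation argument. Part (ii) then follows by exhibiting intrinsic descriptions of $v$ and $v'$.

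\textbf{Uniqueness.} Suppose $cT^ku_+u_- = c'T^{k'}u'_+u'_-$. Setting $A := u'_+/u_+ \in 1 + T\O_C[[T]]$ and $B := u'_-/u_- \in 1 + T^{-1}{\goth m}_C\langle T^{-1}\rangle$ (each factor group is closed under inversion), the identity becomes $(c/c')T^{k-k'} = AB$. The constant coefficient of $AB$ is $1 + \sum_{n\ge1}A_nB_{-n}$, which lies in $1 + {\goth m}_C$ and is in particular nonzero; this forces $k = k'$. The equation $AB = c/c'$ can then be rewritten $B = (c/c')A^{-1}$, whose right-hand side is in $C^\dual\cdot(1+T\O_C[[T]])$ (only non-negative powers of $T$, constant $c/c'$) while the left-hand side has constant $1$ and only non-positive powers. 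Comparing Laurent expansions gives $c = c'$, $A^{-1} = 1$, and $B = 1$.

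\textbf{Existence.} Define $v := \inf_n v_p(a_n)$ for $u = \sum a_nT^n$; the infimum is a well-defined real number since $v_p(a_n) \to +\infty$ as $n\to-\infty$ and denominators in $R[\tfrac{1}{p}]$ (with $R := \O_C[[T,T^{-1}\rangle$) are bounded. The Gauss norm $v$ is multiplicative, so $v(u)+v(u^{-1})=0$; rescaling by some $c_0 \in C^\dual$ with $v_p(c_0) = v$ reduces to $u \in R^\dual$. The reduction $\bar u \in k_C((T))^\dual$ then admits the canonical factorization $\bar cT^k(1 + T\bar w)$ with $\bar c \in k_C^\dual$, $k \in \Z$, $\bar w \in k_C[[T]]$; lift $\bar c$ to $c_1 \in \O_C^\dual$, lift $1 + T\bar w$ to $\tilde u_+ \in 1 + T\O_C[[T]]$, and set $c := c_0c_1$. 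Then $u'' := u/(cT^k\tilde u_+) = 1 + \rho$ with $\rho \in \ker(R \to k_C((T)))$, i.e.\ every coefficient of $\rho$ lies in ${\goth m}_C$. After refining $\tilde u_+$ so that $\rho$ lands in $pR$ (see below), the series $\ell := \log(u'')$ converges in $R$; splitting $\ell = \ell_+ + \ell_0 + \ell_-$ by positive, zero, and negative powers of $T$, and setting $u_+'' := \exp(\ell_+) \in 1 + T\O_C[[T]]$ and $u_-'' := \exp(\ell_0 + \ell_-) \in 1 + T^{-1}{\goth m}_C\langle T^{-1}\rangle$, commutativity of $\ell_+$ with $\ell_0 + \ell_-$ in the commutative ring $R$ yields $u_+''\cdot u_-'' = \exp\ell = u''$. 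Reassembling, $u = cT^k(\tilde u_+u_+'')\cdot u_-''$ is the required decomposition.

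\textbf{Main obstacle and part (ii).} The technical crux is driving $\rho$ into $pR$: the kernel of $R \to k_C((T))$ consists of series whose coefficients lie in ${\goth m}_C$ but without a uniform lower bound on $v_p$, so a single correction of $\tilde u_+$ is not enough. I plan a multi-scale iteration along the filtration $\{a \in \O_C : v_p(a) \ge \epsilon\}$ for $\epsilon \searrow 0$, exploiting the $p$-adic completeness of $R$ to assemble the corrections (with an additional iteration in the case $p=2$, where $\exp$ requires $\ell \in p^2R$). Part (ii) is then immediate from intrinsic characterizations: $v(u) = \inf_n v_p(a_n)$ is the Gauss norm, which is the unique valuation on $R[\tfrac{1}{p}]$ extending $v_p$ on $C$ and satisfying $v(T') = 0$ for every uniformizer $T'$ (since any such $T'$ reduces to a uniformizer of $k_C((T))$, hence has Gauss norm zero), while $v'(u)$ equals the $T$-adic order of $\bar u$ in $k_C((T))$, itself the unique discrete valuation of $k_C((T))$ and so independent of the chosen uniformizer.
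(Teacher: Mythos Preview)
Your uniqueness argument and your treatment of (ii) are correct and close to the paper's. The gap is in existence, precisely at the point you flag as the ``main obstacle''.

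The issue is real: the kernel of $R \to k_C((T))$ consists of series with all coefficients in ${\goth m}_C$, and an element like $\sum_{n\ge 1} p^{1/n}T^n$ shows this is strictly larger than $\bigcup_{\epsilon>0}p^\epsilon R$, so $\log(1+\rho)$ need not converge. Your phrase ``multi-scale iteration along $\{v_p \ge \epsilon\}$ for $\epsilon \searrow 0$'' is not a proof, and the direction $\epsilon \searrow 0$ is puzzling. What actually works is the following: once all coefficients of $\rho$ lie in ${\goth m}_C$, the \emph{non-positive} part $\rho_{\le 0}$ satisfies $\delta := \min_{n\le 0} v_p(\rho_n) > 0$ (the minimum is attained because $\rho_n \to 0$ as $n\to -\infty$). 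Dividing by $1+\rho_+$ gives a new error $\rho_{\le 0}\cdot(1+\rho_+)^{-1}$, \emph{all} of whose coefficients now have $v_p \ge \delta$. Iterating, the positive part of the error gains $p^\delta$ at each step while the non-positive part stabilises $p$-adically, so $\prod_k(1+\rho_+^{(k)})$ converges in $1+T\O_C[[T]]$ and what remains is purely non-positive. This yields the factorisation directly; $\log/\exp$ never enters, so your exponential step is a red herring.

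The paper's existence proof is genuinely different. After the same normalisation to $u_0$, it lifts $u_0$ to $\tilde u_0 \in \acris[[T,T^{-1}\rangle$, sets $\tilde u_+ := \exp\bigl(\int f_+\,\tfrac{dT}{T}\bigr)$ from the positive part $f_+$ of $\tfrac{d\tilde u_0}{\tilde u_0}$ (a priori only in $1+T\bcris^+[[T]]$), and then invokes the preceding lemma's Frobenius criterion ($\tilde u \in 1+T\acris[[T]] \Leftrightarrow \varphi(\tilde u)/\tilde u^p \in 1+pT\acris[[T]]$) to force integrality; one recovers $u_+ := \theta(\tilde u_+)$. This $\varphi$-trick replaces the iteration; it is less elementary but foreshadows the syntomic methods used throughout the paper.

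A smaller point: your appeal to multiplicativity of the Gauss norm to rescale into $R^\dual$ is not justified. The paper instead first proves (via Newton polygons) that $u \in R[\tfrac{1}{p}]$ is a unit iff $\inf_n v_p(a_n)$ is \emph{attained}, and the rescaling is then immediate.
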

\begin{proof}
L'unicit\'e d'une telle \'ecriture est claire: si
$aT^ku_+u_-=bT^\ell v_+v_-$ avec $k\geq \ell$, alors
$\frac{v_-}{u_-}=\frac{a}{b}T^{k-\ell}\frac{u_+}{v_+}$, d'o\`u l'on d\'eduit
que $v_-=u_-$ (car le membre droite appartient \`a $\O_C[[T]][\frac{1}{p}]$ et celui de
gauche \`a $1+T^{-1}{\goth m}_C\langle T^{-1}\rangle$ et l'intersection de
ces deux ensembles est r\'eduite \`a $\{1\}$),
puis que $k=\ell$ et $a=b$ (regarder le terme constant du membre de droite), et donc $u_+=v_+$ aussi.

Prouvons l'existence.  En utilisant la th\'eorie des polygones de Newton, on prouve que
$u=\sum_{n\in\Z}c_nT^n\in \O_C[[T,T^{-1}\rangle[\frac{1}{p}]$ est inversible
si et seulement si $\inf_{n\in\Z}v_p(c_n)$ est atteint.
Si on note $k$ le minimum des $n$ pour lesquels $v_p(c_n)$ est minimum, 
on peut \'ecrire $u$ sous la forme $c_kT^ku_0$, avec $u_0=\sum_{n\in\Z}a_nT^n$, 
avec $a_n\in\O_C$ pour tout $n$, $a_0=1$, $a_n\in{\goth m}_C$ si $n<0$ et
$a_n\to 0$ quand $n\to -\infty$.

Relevons $u_0$ en $\tilde u_0\in\acris[[T,T^{-1}\rangle$ (par exemple
en posant $\tilde u_0=\sum_{n\in\Z}[a_n^\flat]T^n$).  Alors
$\tilde u_0$ est inversible et donc 
on peut \'ecrire $\frac{d\tilde u_0}{\tilde u_0}=a\frac{dT}{T}+f_+\frac{dT}{T}+f_-\frac{dT}{T}$,
avec $a\in\acris$, $f_+\in T\acris[[T]]$ et $f_-\in T^{-1}\acris\langle T^{-1}\rangle$.

On a $\frac{\varphi(\tilde u_0)}{\tilde u_0^p}\in 1+p\acris[[T,T^{-1}\rangle$
et on peut donc \'ecrire $\log\frac{\varphi(\tilde u_0)}{\tilde u_0^p}=p(b+g_++g_-)$,
avec $b\in\acris$, $g_+\in T\acris[[T]]$ et $g_-\in T^{-1}\acris\langle T^{-1}\rangle$,
et on a $d g_++d g_-=(\frac{\varphi}{p}-1)(a+f_++f_-)\frac{dT}{T}$ 
et comme $\varphi(T)$ respecte la d\'ecomposition en puissances positives et n\'egatives de $T$,
on en d\'eduit que $dg_+=(\frac{\varphi}{p}-1)f_+\frac{dT}{T}$.

Posons $\tilde u_+=\exp(\int f_+\frac{dT}{T})\in 1+T\bcris^+[[T]]$. 
On a $\log\frac{\varphi(\tilde u_+)}{(\tilde u_+)^p}=pg_+\in p\acris[[T]]$.
Il en r\'esulte, gr\^ace au lemme~\ref{basic31}, que $\tilde u_+\in 1+\acris[[T]]$,
et donc, si $u_+=\theta(\tilde u_+)$, on a $u_+\in 1+T\O_C[[T]]$.
De plus, par construction, $\frac{du_0}{u_0}-\frac{d u_+}{u_+}=
\theta(a+f_-)\frac{dT}{T}\in \O_C\langle T^{-1}\rangle\frac{dT}{T}$,
et donc $v=\frac{u_0}{u_+}\in\O_C\langle T^{-1}\rangle$
Si $v=\sum_{n\leq 0}b_nT^n$,
il suffit alors de poser $c=c_kb_0$ et $u_-=\frac{v}{b_0}$ pour avoir
la factorisation de $u$ cherch\'ee.

Pour prouver le (ii), posons $\Lambda=\O_C[[T,T^{-1}\rangle$ et $I={\goth m}_C[[T,T^{-1}\rangle$.
Si $S\in\Lambda$ est tel que $\iota_S:\O_C[[S,S^{-1}\rangle\to\Lambda$ est un isomorphisme,
alors l'image de ${\goth m}_C[[S,S^{-1}\rangle$ est encore $I$ et 
$\iota_S$ induit un isomorphisme $k_C((S))\overset{\sim}{\to} k_C((T))$ par passage au quotient
modulo $I$.
Maintenant,
$r=v_p(c)$ est caract\'eris\'e par l'appartenance
de $p^{-r}u$ et $p^ru^{-1}$ \`a $\Lambda$ et $k$ est la valuation $T$-adique
de l'image de $p^{-r}u$ dans $k_C((T))$; c'est donc aussi la valuation $S$-adique
de $p^{-r}u$ dans $k_C((S))$ au vu de l'isomorphisme ci-dessus.
Il s'ensuit que $(r,k)$ ne d\'epend que de $u$ et pas du choix de $T$, comme voulu.
\end{proof}

On rappelle que si $Y$ est une jambe ou un cercle fant\^ome on d\'efinit 
$\O(Y^{\rm gen})$ comme \'etant $\O(Y)[\frac{1}{p}]$; on a donc 
$\O(Y^{\rm gen})^{\dual\dual}=\O(Y)^{\dual\dual}$.

\begin{coro}\label{basic33}
Si $Y$ est une jambe, avec $\O(Y)=\O_C[[T_1,T_2]]/(T_1T_2-p^r)$, et
si $u\in \O(Y^{\rm gen})^\dual$, on peut \'ecrire $u$, de mani\`ere unique, sous la forme
$u=cT_1^ku_+u_-$, avec $k\in\Z$, $c\in C^\dual$, $u_+\in 1+T_1\O_C[[T_1]]$
et $u_-\in 1+T_2\O_C[[T_2]]$.

\end{coro}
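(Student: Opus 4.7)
The plan is to deduce Corollary~\ref{basic33} from Lemma~\ref{basic32} applied twice, once in each uniformizer $T_i$, exploiting the invariance formula $v_2(u)=v_1(u)+r\,v'_1(u)$, $v'_2(u)=-v'_1(u)$ stated in the footnote of that lemma. The key geometric input is that $\O(Y^{\rm gen})=\O(Y)[\frac{1}{p}]$ embeds into both $\O_C[[T_1,T_1^{-1}\rangle[\frac{1}{p}]$ and $\O_C[[T_2,T_2^{-1}\rangle[\frac{1}{p}]$ via the relation $T_1T_2=p^r$, so Lemma~\ref{basic32} is applicable in each uniformizer.

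For uniqueness, I would first note the inclusion $1+T_2\O_C[[T_2]]\subset 1+T_1^{-1}{\goth m}_C\langle T_1^{-1}\rangle$: if $u_-=1+\sum_{n\geq 1}\beta_n T_2^n$ with $\beta_n\in\O_C$, then $T_2^n=p^{rn}T_1^{-n}$ gives coefficients $\beta_n p^{rn}\in p^{rn}\O_C\subset{\goth m}_C$ for $n\geq 1$, since $r>0$. Hence any decomposition of the form asserted by the corollary is also a decomposition of the form given in Lemma~\ref{basic32}(i), and uniqueness follows from the uniqueness there.

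For existence, apply Lemma~\ref{basic32}(i) to $u$ viewed in $(\O_C[[T_1,T_1^{-1}\rangle[\frac{1}{p}])^\dual$ to obtain $u=cT_1^k u_+ v_-$ with $c\in C^\dual$, $k\in\Z$, $u_+\in 1+T_1\O_C[[T_1]]$ and $v_-\in 1+T_1^{-1}{\goth m}_C\langle T_1^{-1}\rangle$; the claim is that $v_-\in 1+T_2\O_C[[T_2]]$. Since $T_1$ is invertible in $\O(Y^{\rm gen})$ (with $T_1^{-1}=p^{-r}T_2$) and $u_+$ is a unit of $\O(Y)$, we have $v_-=u\,(cT_1^k u_+)^{-1}\in\O(Y^{\rm gen})^\dual$. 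Applying Lemma~\ref{basic32} to $v_-$ itself in the uniformizer $T_1$ gives the trivial decomposition, so $(v_1(v_-),v'_1(v_-))=(0,0)$; the invariance formula then yields $(v_2(v_-),v'_2(v_-))=(0,0)$ as well. Applying Lemma~\ref{basic32}(i) to $v_-$ in the uniformizer $T_2$ produces a decomposition $v_-=c''T_2^{k''}u''_+ v''_-$ with $v_p(c'')=0$ and $k''=0$. Converting $v_-=1+\sum_{n\geq 1}a_n T_1^{-n}$ into the $T_2$-expansion yields $v_-=1+\sum_{n\geq 1}(a_n p^{-rn})T_2^n$, whose constant term is $1$; comparing with $v_-=c''u''_+v''_-$ forces $c''=1$. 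Since $v_-$ and $u''_+$ both lie in $C[[T_2]]$, the ratio $v''_-=v_-/u''_+$ also lies in $C[[T_2]]$; together with $v''_-\in 1+T_2^{-1}{\goth m}_C\langle T_2^{-1}\rangle$ this forces $v''_-=1$, so $v_-=u''_+\in 1+T_2\O_C[[T_2]]$, as required.

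The main obstacle is showing that the negative $T_1$-part $v_-$ actually has its $T_2$-series coefficients in $\O_C$. A priori Lemma~\ref{basic32} only says the $T_1^{-n}$-coefficients $a_n$ lie in ${\goth m}_C$, giving at best $v_p(a_n p^{-rn})>-rn$ in the $T_2$-expansion, far from the integrality $v_p(a_n p^{-rn})\geq 0$ needed. Upgrading the bound requires genuinely using the hypothesis $u\in\O(Y^{\rm gen})^\dual$ (not merely $u\in(\O_C[[T_1,T_1^{-1}\rangle[\frac{1}{p}])^\dual$), and this additional constraint enters precisely through the invariance of $(v,v')$ under change of uniformizer, via the $T_1\leftrightarrow T_2$ symmetry built into the jambe relation $T_1T_2=p^r$.
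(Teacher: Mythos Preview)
Your approach is the paper's: apply Lemma~\ref{basic32} in $T_1$, note that $u_-=c^{-1}T_1^{-k}u_+^{-1}u$ lies in $\O(Y^{\rm gen})^\dual$, and deduce $u_-\in 1+T_2\O_C[[T_2]]$. The paper compresses this last step into the one-line identity $\O(Y^{\rm gen})^\dual\cap\big(1+T_1^{-1}\O_C\langle T_1^{-1}\rangle\big)=1+T_2\O_C[[T_2]]$; your argument via the invariance formula and a second application of Lemma~\ref{basic32} in $T_2$ is precisely what is needed to justify it.

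One small slip: the constant-term comparison does not yet force $c''=1$, because the $T_2^0$-coefficient of $u''_+v''_-$ is $1+\epsilon$ with $\epsilon\in{\goth m}_C$, not $1$. The fix is immediate: you already know $v_p(c'')=0$, so compute $v''_-=v_-/(c''u''_+)\in C[[T_2]]\cap\big(1+T_2^{-1}{\goth m}_C\langle T_2^{-1}\rangle\big)=\{1\}$ first, and \emph{then} the constant-term comparison of $v_-=c''u''_+$ gives $c''=1$.
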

\begin{proof}
On a $\O(Y^{\rm gen})^\dual\subset(\O_C[[T_1,T_1^{-1}\rangle[\frac{1}{p}])^\dual$,
ce qui fournit une factorisation de $u$ sous la forme
$u=cT_1^ku_+u_-$ avec $k\in\Z$, $c\in C^\dual$, $u_+\in 1+T_1\O_C[[T_1]]$
et $u_-\in 1+T_1^{-1}{\goth m}_C\langle T_1^{-1}\rangle$.
Mais $$u_-=c^{-1}T_1^{-k}(u^+)^{-1}u\Rightarrow u_-\in \O(Y^{\rm gen})^\dual\cap 
\big(1+T_1^{-1}\O_C\langle T_1^{-1}\rangle\big)=1+T_2\O_C[[T_2]],$$
ce qui permet de conclure.
\end{proof}

\subsubsection{Symboles}\label{BAS6.6}
Si $Y$ est un cercle fant\^ome une jambe ou la boule unit\'e ferm\'ee, on pose
$${\rm Symb}_\ell(Y^{\rm gen})=\Z_\ell\wotimes\O(Y^{\rm gen})^\dual.$$
Si $Y$ est une jambe ou la boule unit\'e ferm\'ee, 
on d\'efinit $A_{\ell,n}(Y^{\rm gen})$ comme le sous-groupe de $C(Y^{\rm gen})^\dual$
 des $f$ dont le diviseur est \`a support fini (ceci est automatique dans le cas
de la boule) et divisible par $\ell^n$.
\begin{lemm}\label{basic13}
On a un isomorphisme naturel
$${\rm Symb}_\ell(Y^{\rm gen})
\overset{\sim}{\to}
\varprojlim_n\big(A_{\ell,n}(Y^{\rm gen})/(C(Y^{\rm gen})^\dual)^{\ell^n}\big)$$
\end{lemm}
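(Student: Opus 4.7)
The plan is to build a natural compatible system of maps
$$\varphi_n\colon \O(Y^{\rm gen})^\dual/(\O(Y^{\rm gen})^\dual)^{\ell^n}\longrightarrow A_{\ell,n}(Y^{\rm gen})/(C(Y^{\rm gen})^\dual)^{\ell^n},$$
show that each $\varphi_n$ is an isomorphism, and pass to the inverse limit, using that
${\rm Symb}_\ell(Y^{\rm gen})=\Z_\ell\wotimes\O(Y^{\rm gen})^\dual=\varprojlim_n \O(Y^{\rm gen})^\dual/(\O(Y^{\rm gen})^\dual)^{\ell^n}$. The map $\varphi_n$ comes from the observation that any $u\in \O(Y^{\rm gen})^\dual$ is nowhere vanishing on $Y^{\rm gen}$, hence has empty divisor and lies in $A_{\ell,n}(Y^{\rm gen})$; the inclusion obviously sends $\ell^n$-th powers to $\ell^n$-th powers, and the resulting $\varphi_n$ are compatible in $n$.

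The key geometric input needed in both cases is that every finitely supported divisor on $Y^{\rm gen}$ is principal, and that any rational function with empty divisor is already a unit in $\O(Y^{\rm gen})$. For the closed unit ball this is classical: $C\langle T\rangle$ is a principal ideal domain whose nonzero prime ideals are precisely the $(T-a)$, $a\in\O_C$. For the jambe, cor.~\ref{basic33} describes $\O(Y^{\rm gen})^\dual$ explicitly as $C^\dual\cdot T_1^{\Z}\cdot (1+T_1\O_C[[T_1]])\cdot (1+T_2\O_C[[T_2]])$; a Weierstrass-type factorization for bounded Laurent series on an open annulus then shows that every nonzero element of $\O(Y^{\rm gen})$ factors, up to such a unit, as a finite product of linear terms $T_1-a$ indexed by its zeros in $Y^{\rm gen}(C)$, from which the principality of finite divisors and the identification of ``meromorphic with empty divisor'' with ``unit'' both follow.

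Granting this, injectivity of $\varphi_n$ is immediate: if $u\in \O(Y^{\rm gen})^\dual$ satisfies $u=h^{\ell^n}$ with $h\in C(Y^{\rm gen})^\dual$, then $\ell^n\,{\rm Div}(h)={\rm Div}(u)=0$, so ${\rm Div}(h)=0$ by torsion-freeness of $\mathrm{Div}(Y^{\rm gen})$, so $h\in \O(Y^{\rm gen})^\dual$, i.e.\ $u\in(\O(Y^{\rm gen})^\dual)^{\ell^n}$. Surjectivity is equally short: given $f\in A_{\ell,n}(Y^{\rm gen})$ with ${\rm Div}(f)=\ell^n D$, principality of $D$ yields $h\in C(Y^{\rm gen})^\dual$ with ${\rm Div}(h)=D$; then $f\cdot h^{-\ell^n}$ has empty divisor, hence lies in $\O(Y^{\rm gen})^\dual$, and represents the class of $f$ modulo $(C(Y^{\rm gen})^\dual)^{\ell^n}$.

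The main obstacle is the careful verification of the Weierstrass-type factorization in the jambe case, where elements of $\O(Y^{\rm gen})$ are bounded Laurent series with infinitely many nonzero terms on both sides; here cor.~\ref{basic33} is decisive, as it pins down exactly what a unit looks like and lets one separate the ``positive'' and ``negative'' contributions of the factorization. Once this is in hand, the passage to the inverse limit is a formality, since the $\varphi_n$ are manifestly compatible with the obvious transition maps on both sides.
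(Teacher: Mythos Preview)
Your proposal is correct and follows essentially the same line as the paper's proof: both rest on the two geometric facts that every finitely supported divisor on $Y^{\rm gen}$ is principal and that a meromorphic function with empty divisor is already a unit (the paper phrases the latter via normality of $\O(Y^{\rm gen})$). The only organizational difference is that you prove each finite-level map $\varphi_n$ is an isomorphism and pass to the limit, whereas the paper argues injectivity and surjectivity directly at the level of the projective system by constructing compatible lifts $u_n=f_nh_n^{-\ell^n}$ and checking $u_{n+1}/u_n\in(\O(Y^{\rm gen})^\dual)^{\ell^n}$; the content is the same.
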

\begin{proof}
L'injectivit\'e est une cons\'equence de ce que, si $f\in\O(Y^{\rm gen})^\dual$
et si $f=g^{\ell^n}$ avec $g\in C(Y^{\rm gen})^\dual$, alors $g\in\O(Y^{\rm gen})^\dual$
puisque $\O(Y^{\rm gen})$ est normal (ou bien en remarquant que $g$ n'a ni z\'ero ni p\^ole sur $Y$). 
Pour la surjectivit\'e soit $f=(f_n)_{n\in\N}$ un \'el\'ement du membre de droite. Puisque 
tout diviseur de support fini
est principal,
il existe $h_n\in C(Y^{\rm gen})^\dual$ tel que $u_n:=f_nh_n^{-\ell^n}\in\O(Y^{\rm gen})^\dual$. Or
$f_{n+1}/f_n\in (C(Y^{\rm gen})^\dual)^{\ell^n}$, donc $u_{n+1}/u_n\in (C(Y^{\rm gen})^\dual)^{\ell^n}$ et en utilisant encore la normalit\'e de $\O(Y^{\rm gen})$ on obtient $u_{n+1}/u_n\in (\O(Y^{\rm gen})^\dual)^{\ell^n}$. Ainsi 
$(u_n)_n$ induit un \'el\'ement de $\Z_\ell\wotimes\O(Y^{\rm gen})^\dual$ s'envoyant sur $(f_n)_n$, ce qui permet de conclure.
\end{proof}

\begin{prop}\label{jambe11}
Si $Y$ est une jambe ou un cercle fant\^ome, et si $\ell\neq p$, 
l'application r\'esidu\footnote{Obtenue, par continuit\'e,
\`a partir de $f\mapsto{\rm Res}_Y\frac{df}{f}$.} induit un isomorphisme
$${\rm Res}:{\rm Symb}_\ell(Y^{\rm gen})\overset{\sim}{\longrightarrow}\Z_\ell.$$
\end{prop}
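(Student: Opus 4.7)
The plan is to read off \({\rm Symb}_\ell(Y^{\rm gen})\) from the explicit multiplicative decomposition of units provided by the preceding lemmas, and then show that tensoring with \(\Z_\ell\) kills every factor except the one generated by the uniformizing parameter, on which the residue is visibly the identity \(\Z \hookrightarrow \Z_\ell\).

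More precisely, for a cercle fant\^ome \(Y\) with \(\O(Y)=\O_C[[T,T^{-1}\rangle\), Lemma \ref{basic32} yields a direct product decomposition of abelian groups
\[
\O(Y^{\rm gen})^\dual \;\cong\; C^\dual \;\times\; T^\Z \;\times\; \bigl(1+T\O_C[[T]]\bigr) \;\times\; \bigl(1+T^{-1}{\goth m}_C\langle T^{-1}\rangle\bigr),
\]
and for a jambe \(Y\), Corollaire \ref{basic33} gives the analogous decomposition with \(T_1^\Z\) in place of \(T^\Z\) and \((1+T_2\O_C[[T_2]])\) in place of the negative-power factor. Since \(\Z_\ell\wotimes(-)\) commutes with finite products, it suffices to compute \(\Z_\ell\wotimes\) on each factor.

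The key step is then to observe that, for \(\ell\neq p\), each factor other than \(T^\Z\) (resp.\ \(T_1^\Z\)) is \(\ell\)-divisible in the strong sense that \(M = \ell^n M\) for all \(n\), so that \(\Z_\ell\wotimes M = \varprojlim_n M/\ell^n M = 0\). For \(C^\dual\) this follows from \(C\) being algebraically closed. For each factor of the form \(1+J\) with \(J\) a topologically nilpotent ideal (namely \(T\O_C[[T]]\), \(T^{-1}{\goth m}_C\langle T^{-1}\rangle\) or \(T_i\O_C[[T_i]]\)), the binomial series \((1+x)^{1/\ell^n} = \sum_k \binom{1/\ell^n}{k} x^k\) converges because the binomial coefficients lie in \(\Z_{(\ell)}\subset\O_C^\dual\) and the terms tend to zero, so every element admits a unique \(\ell^n\)-th root inside the same factor. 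Consequently \(\Z_\ell\wotimes\O(Y^{\rm gen})^\dual \cong \Z_\ell\), generated by the class of \(T\) (resp.\ \(T_1\)).

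It remains to check that the residue sends this generator to \(1\) and vanishes on the other factors, which is a direct computation: \(\mathrm{Res}(dT/T)=1\); the one-forms \(du_+/u_+\) and \(du_-/u_-\) coming from the factors \(1+T\O_C[[T]]\) and \(1+T^{-1}{\goth m}_C\langle T^{-1}\rangle\) lie in \(\O_C[[T]]dT\) and \({\goth m}_C\langle T^{-1}\rangle\, dT\) respectively (hence have zero constant term in \(\frac{dT}{T}\)); and constant functions contribute nothing. In the jambe case, for \(u_-\in 1+T_2\O_C[[T_2]]\) one writes \(du_-/u_- = h(T_2)\,dT_2\) with \(h\in\O_C[[T_2]]\) and uses \(dT_2 = -T_2\,dT_1/T_1\) to see that \(du_-/u_-\) involves only strictly negative powers of \(T_1\), so its residue is zero. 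Combined with the preceding step this gives the desired isomorphism. There is no real obstacle here beyond some bookkeeping; the only point that requires a little care is the residue computation on the \(1+T_2\O_C[[T_2]]\) factor of a jambe, because one has to pass between the two uniformizers \(T_1\) and \(T_2\).
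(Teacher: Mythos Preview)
Your proof is correct and follows essentially the same approach as the paper: the same multiplicative decomposition of $\O(Y^{\rm gen})^\dual$ coming from Lemme~\ref{basic32} and Corollaire~\ref{basic33}, the same $\ell$-divisibility argument via the binomial series $\sum_k\binom{1/\ell}{k}(x-1)^k$ on the factors $1+J$, and the same conclusion that only the $T^{\Z}$ (resp.\ $T_1^{\Z}$) factor survives. You are slightly more explicit than the paper in verifying that the residue is indeed $1$ on the surviving generator and $0$ on the other factors, which is a welcome addition but changes nothing in substance.
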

\begin{proof}
Il ressort du lemme~\ref{basic32} et du cor.~\ref{basic33} (et de ce que $C^\dual$
est $\ell$-divisible) que l'on a des isomorphismes
$$\Z_\ell\wotimes\O(Y^{\rm gen})^\dual\cong
\begin{cases}
\Z_\ell\oplus(\Z_\ell\wotimes\O(\ocirc B)^\dual)\oplus (\Z_\ell\wotimes\O(\ocirc B)^\dual)
&{\text{si $Y$ est une jambe,}}\\
\Z_\ell\oplus(\Z_\ell\wotimes\O(\ocirc B)^\dual)\oplus (\Z_\ell\wotimes\O(B)^{\dual\dual})
&{\text{si $Y$ est un cercle fant\^ome}}
\end{cases}$$
On conclut en remarquant que $1+X\O_C[[X]]$ et $1+X{\goth m}_C\langle X\rangle$
sont tous les deux $\ell$-divisibles si $\ell\neq p$ car la s\'erie
$\sum_{k\geq 0}\binom{1/\ell}{k}(x-1)^k$ converge dans les deux cas.
\end{proof}

\Subsection{R\'egulateur syntomique}\label{BAS19.1}
\subsubsection{Le r\'egulateur}\label{BAS19.2}
Si $Y$ est une jambe (avec $\O(Y)=\O_C[[T_1,T_2]]/(T_1T_2-p^r)$),
on pose $\O(\widetilde Y)=\acris[[T_1,T_2]]/(T_1T_2-\tilde p^r)$ comme au \no\ref{adoc15}.
Si $Y$ est
un cercle fant\^ome (avec $\O(Y)=\O_C[[T,T^{-1}\rangle$),
on pose $\O(\widetilde Y)=\acris[[T,T^{-1}\rangle$.
Si $Y$ est la boule unit\'e ouverte $\ocirc B$ (et donc $\O(Y)=\O_C[[T]]$),
on pose $\O(\widetilde Y)=\acris[[T]]$, et si $Y$ est la boule unit\'e
ferm\'ee $B$ (i.e.~$\O(Y)=\O_C\langle T\rangle$), on pose
$\O(\widetilde Y)=\acris\langle T\rangle$.

Dans tous les cas:

$\bullet$ On dispose d'une surjection $\O(\widetilde Y)\to\O(Y)$ et on note
$F^1\O(\widetilde Y)$ le noyau.

$\bullet$
 On munit $\O(\widetilde Y)$ du frobenius $\varphi$, agissant
comme on pense sur $\acris$, et par $T_i\mapsto T_i^p$ ou $T\mapsto T^p$ suivant les cas. 

$\bullet$ On note $\Omega^1(\widetilde Y)$ le module des 
diff\'erentielles continues de $\O(\widetilde Y)$ relativement \`a $\acris$. On a donc 
 $\Omega^1(\widetilde Y)=\O(\widetilde Y) \frac{dZ}{Z}$ avec $Z=T_1$ (resp. $Z=T$) si $Y$ est une jambe
 (resp. un cercle fant\^ome), et on a $\Omega^1(\widetilde Y)=\O(\widetilde Y) dT$ si 
 $Y=B$ ou $Y=\ocirc B$. 

\smallskip
Si $Y=\ocirc{B}, B$, une jambe ou un cercle fant\^ome,
les {\it groupes de cohomologie syntomique $H^i_{\rm syn}(Y,1)$},
sont les groupes de cohomologie du complexe (notons que
$\varphi(F^1\O(\widetilde Y))\subset p \O(\widetilde Y)$ et
$\varphi(\Omega^1(\widetilde Y))\subset p\Omega^1(\widetilde Y)$, ce qui donne un sens \`a $\frac{\varphi}{p}$):
$${\rm Syn}(Y,1):=\xymatrix@C=1.2cm{F^1\O(\widetilde Y)\ar[r]^-{d,1-\frac{\varphi}{p}}
&\Omega^1(\widetilde Y)\oplus \O(\widetilde Y)\ar[r]^-{(1-\frac{\varphi}{p})-d}
&\Omega^1(\widetilde Y)}.$$
Remarquons que, dans tous les cas, un \'el\'ement de $H^0$ est une constante (car tu\'e par $d$)
appartenant \`a $(F^1\acris)^{\varphi=p}$, et donc
$$H^0_{\rm syn}(Y,1)=\Z_pt.$$
Comme $\Z_p\wotimes C^\dual=0$, on peut \'ecrire $u\in \Z_p\wotimes \O(Y^{\rm gen})^\dual$
sous la forme $\prod_{n\geq 0}(T^{a_n}u_n^{p^n})$, avec $a_n\in\Z$ (et $a_n=0$ si $Y=\ocirc B,B$)
et $u_n\in\O(Y)^{\dual}$. 
Si on choisit un rel\`evement $\tilde u_n$ de $u_n$ dans $\O(\widetilde Y)^*$,
et si on pose $a=\sum_{n\geq 0}p^na_n$ et
$$x=a\tfrac{dT_1}{T_1}+\sum_{n\geq 0}p^n\tfrac{d\tilde u_n}{\tilde u_n}
\quad{\rm et}\quad
 y=\tfrac{1}{p}\sum_{n\geq 0}p^n\log\tfrac{\varphi(\tilde u_n)}{\tilde u_n^p},$$
alors $(x,y)$ est un $1$-cocycle de ${\rm Syn}(Y,1)$.
 Comme $\tilde u_n$ est bien d\'etermin\'e \`a un \'el\'ement de $1+F^1\O(\widetilde Y)$ pr\`es, la classe de $(x,y)$ dans $H^1_{\rm syn}(Y,1)$ ne d\'epend que de $u$, et pas du choix
des $\tilde u_n$ (car $(\tfrac{d\tilde v}{\tilde v}, \frac{1}{p}\log\tfrac{\tilde v^p}{\varphi(\tilde v)})$
est le bord de $\log\tilde v$ qui appartient \`a $F^1\O(\widetilde Y)$ si $v\in 1+F^1\O(\widetilde Y)$ 
puisque $F^1\acris$ admet des puissances divis\'ees).
Elle d\'efinit donc une application 
$$\delta_Y:{\rm Symb}_p(Y^{\rm gen})\cong \Z_p\wotimes\O(Y^{\rm gen})^\dual\to H^1_{\rm syn}(Y,1).$$

\subsubsection{La boule ouverte}\label{BAS19}
Dans ce cas, 
$1+T\O_C[[T]]$ est complet pour la topologie $p$-adique, et
l'application naturelle $1+T\O_C[[T]]\to \Z_p\wotimes\O(\ocirc{B}^{\rm gen})^\dual$
est un isomorphisme.
On peut donc d\'efinir $\delta_{\ocirc{B}}$ sans passage \`a la limite:
si $u\in 1+T\O_C[[T]]$, on choisit un rel\`evement $\tilde u$
de $u$ dans $1+T\acris[[T]]$ (i.e $\theta(\tilde u)=u$), et alors
$\big(\frac{d\tilde u}{\tilde u},\frac{1}{p}\log\frac{\tilde u^p}{\varphi(\tilde u)}\big)$
d\'efinit un $1$-cocycle de ${\rm Syn}({\ocirc{B}},1)$: on a $\frac{\tilde u^p}{\varphi(\tilde u)}\in 1+pT\acris[[T]]$,
et donc $\frac{1}{p}\log\frac{\tilde u^p}{\varphi(\tilde u)}\in\acris[[T]]$.  Ce cocycle est
le cobord de $\log\tilde u$ qui n'appartient \`a $F^1\acris[[T]]$ que si $u=1$.
On a donc construit une injection naturelle
$$\delta_{\ocirc{B}}:1+T\O_C[[T]]\hookrightarrow H^1_{\rm syn}({\ocirc{B}},1).$$
\begin{prop} \label{basic30}
On a 
$H^0_{{\rm syn}}({\ocirc{B}},1)=\Z_pt$, $H^2_{\rm syn}({\ocirc{B}},1)=0$
et $\delta_{\ocirc{B}}$ induit 
un isomorphisme
$$\delta_{\ocirc{B}}:1+T\O_C[[T]]\overset{\sim}\to H^1_{\rm syn}({\ocirc{B}},1).$$
\end{prop}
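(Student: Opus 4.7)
Le plan est de traiter s\'epar\'ement les trois assertions de la proposition. Pour $H^0_{\rm syn}(\ocirc B, 1)$: si $x = \sum_{n \geq 0} x_n T^n \in F^1\acris[[T]]$ satisfait $dx = 0$, alors $nx_n = 0$ pour $n \geq 1$, et comme $\acris$ est sans $\Z$-torsion (il s'injecte dans la $\Q_p$-alg\`ebre $\bcris^+$), $x$ se r\'eduit \`a un \'el\'ement constant de $F^1\acris$; la condition $(1-\varphi/p)x = 0$ l'identifie alors \`a un \'el\'ement de $(F^1\acris)^{\varphi=p} = \Z_p t$ (fait classique issu de la th\'eorie de Fontaine). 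Pour $H^2 = 0$, il suffit de montrer que $1-\varphi/p: \acris[[T]]\,dT \to \acris[[T]]\,dT$ est surjectif (on prend alors $y=0$ et on r\'esout pour $\omega$). Le calcul cl\'e est $(1-\varphi/p)(T^m\,dT) = T^m\,dT - T^{pm+p-1}\,dT$: partant de $\eta = T^n\,dT$, la s\'erie $\omega = \sum_{k \geq 0} T^{p^k(n+1)-1}\,dT$ converge $T$-adiquement dans $\acris[[T]]\,dT$ et se t\'elescope en $T^n\,dT$ sous $1-\varphi/p$; par lin\'earit\'e et finitude coefficient par coefficient (chaque exposant $p^k(n+1)-1$ d\'etermine un nombre fini de sources), on en d\'eduit la surjectivit\'e voulue.

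Pour la surjectivit\'e de $\delta_{\ocirc B}$, partant d'un cocycle $(\omega, y) \in Z^1_{\rm syn}$, on pose $\tilde F = \int_0^T \omega \in T\bcris^+[[T]]$, la primitive formelle (les d\'enominateurs $1/n$ introduisent n\'ecessairement $\bcris^+$). La condition de cocycle $(1-\varphi/p)\omega = dy$ entra\^{\i}ne $d((1-\varphi/p)\tilde F - y) = 0$, donc $(1-\varphi/p)\tilde F - y$ est une constante dans $\bcris^+$; l'\'evaluation en $T=0$ (o\`u $\tilde F$ et $\varphi\tilde F$ s'annulent, puisque $\varphi(T) = T^p$) force cette constante \`a z\'ero, d'o\`u $y(0) = 0$ et $(1-\varphi/p)\tilde F = y$ identiquement. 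Posant $\tilde u := \exp(\tilde F) \in 1 + T\bcris^+[[T]]$, on a $\log\tilde u = \tilde F$, et donc $\log(\varphi\tilde u / \tilde u^p) = -p \cdot (1-\varphi/p)\tilde F = -py \in pT\acris[[T]]$. Le lemme~\ref{basic31} entra\^{\i}ne alors $\tilde u \in 1 + T\acris[[T]]$, et l'on obtient $u := \theta(\tilde u) \in 1 + T\O_C[[T]]$ avec $\delta_{\ocirc B}(u) = (\omega, y)$ exactement (et non seulement modulo bords).

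Pour l'injectivit\'e, si $\delta_{\ocirc B}(u) = (d\log\tilde u, (1-\varphi/p)\log\tilde u) = (dx, (1-\varphi/p)x)$ pour un $x \in F^1\acris[[T]]$, alors $d(x - \log\tilde u) = 0$ dans $\bcris^+[[T]]$, donc $c := x - \log\tilde u \in \bcris^+$ est une constante; la relation $(1-\varphi/p)c = 0$ impose $c \in (\bcris^+)^{\varphi=p} = \Q_p t$. En appliquant $\theta$ coefficient par coefficient \`a l'\'egalit\'e $x = \log\tilde u + c$ et en utilisant $\theta(x) = 0$, $\theta(\log\tilde u) = \log u$ et $\theta(t) = 0$, on tire $\log u = 0$, d'o\`u $u = 1$. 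Le principal obstacle technique est la gestion de l'int\'egralit\'e tout au long de l'argument: les primitives formelles vivent naturellement dans $\bcris^+[[T]]$ plut\^ot que dans $\acris[[T]]$, et c'est le lemme~\ref{basic31}, qui caract\'erise l'appartenance \`a $1 + T\acris[[T]]$ \`a l'int\'erieur de $1 + T\bcris^+[[T]]$ via la condition frobenius, qui assure le retour \`a l'int\'egralit\'e apr\`es l'exponentielle.
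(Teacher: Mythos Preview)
Your approach matches the paper's: integrate $\omega$, exponentiate, invoke Lemma~\ref{basic31}. The $H^0$ and $H^2$ computations are correct (your telescoping series for $H^2$ is precisely the paper's inverse $1 + T^{p-1}\varphi + (T^{p-1}\varphi)^2 + \cdots$ applied to $T^n$).

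There is however a gap in the surjectivity argument. You claim that evaluating the constant $(1-\tfrac{\varphi}{p})\tilde F - y$ at $T=0$ ``force cette constante \`a z\'ero, d'o\`u $y(0)=0$''; but evaluation at $T=0$ only \emph{identifies} the constant as $-y(0)$, it does not make it vanish. For a general cocycle $(\omega,y)$ the relation $(1-\tfrac{\varphi}{p})\omega = dy$ imposes nothing on $y(0) \in \acris$, so your construction only yields $u$ with $\delta_{\ocirc B}(u) = (\omega,\, y-y(0))$, not $(\omega,y)$. To conclude you must show that the residual cocycle $(0,y(0))$ is a coboundary, i.e.\ that $1-\tfrac{\varphi}{p} : F^1\acris \to \acris$ is surjective. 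The paper handles this by splitting ${\rm Syn}(\ocirc B,1) = {\rm Syn}(\ocirc B,1)^+ \oplus C_0^\bullet$ with $C_0^\bullet = [F^1\acris \xrightarrow{1-\varphi/p} \acris \to 0]$ and asserting $H^1(C_0^\bullet)=0$; on the ``$+$'' part one has $y \in T\acris[[T]]$ by construction, and there your argument is valid verbatim. A smaller slip in your injectivity argument: the claim $(\bcris^+)^{\varphi=p} = \Q_p t$ is false (this space surjects onto $C$ via $\theta$). What saves you is that $c = x(0) \in F^1\acris$ and $(F^1\bcris^+)^{\varphi=p} = \Q_p t$ by the fundamental exact sequence --- or, more simply, comparing coefficients of $T^n$ for $n\geq 1$ in $x = \log\tilde u + c$ and applying $\theta$ already gives $(\log u)_n = 0$, hence $u=1$, with no need to analyse $c$.
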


\begin{proof}
La nullit\'e de $H^2$ r\'esulte de ce que $1-T^{p-1}\varphi$ est surjectif
sur $\acris[[T]]$ (et m\^eme bijectif, d'inverse $1+T^{p-1}\varphi+(T^{p-1}\varphi)^2+\cdots$), 
ce qui implique que 
$1-\frac{\varphi}{p}: \Omega^1(\tilde{\ocirc{B}})\to  \Omega^1(\tilde{\ocirc{B}})$ est surjectif. 

Maintenant, on a 
$${\rm Syn}({\ocirc{B}},1)={\rm Syn}({\ocirc{B}},1)^+\oplus C_0^\bullet,$$ o\`u
$${\rm Syn}({\ocirc{B}},1)^+
:=\xymatrix@C=1.2cm{F^1\O(\tilde{\ocirc{B}})_0\ar[r]^-{d,1-\frac{\varphi}{p}}
&\Omega^1(\tilde{\ocirc{B}})\oplus \O(\tilde{\ocirc{B}})_0\ar[r]^-{(1-\frac{\varphi}{p})-d}
&\Omega^1(\tilde{\ocirc{B}})}$$
avec $\O(\tilde{\ocirc{B}})_0=T\acris[[T]]$,
et $$C_0^\bullet=
[\xymatrix@C=.6cm{F^1\acris\ar[r]^-{1-\frac{\varphi}{p}}&\acris\ar[r]&0}].$$
On a $H^0(C_0^\bullet)=\Z_pt$, $H^1(C_0^\bullet)=0$ et
$H^0({\rm Syn}({\ocirc{B}},1)^+)=0$.  Pour conclure, il suffit donc
de prouver que 
l'injection $\delta_{\ocirc{B}}:1+T\O_C[[T]]\hookrightarrow 
H^1({\rm Syn}({\ocirc{B}},1)^+)$
est une surjection.
Soit donc $(x,y)$ un $1$-cocycle de ${\rm Syn}({\ocirc{B}},1)^+$,
avec $x=\big(\sum_{n\geq 0}x_n T^n\big)dT$, $y=\sum_{n\geq 1}y_nT^n$.
Soit $f=\sum_{n\geq 1}\frac{x_{n-1}}{n}T^n$ de telle sorte que
$f\in T\bcris^+[[T]]$ v\'erifie $df=x$, et soit $\tilde u=\exp f\in1+T\bcris^+[[T]]$.
Il s'agit de prouver qu'en fait $\tilde u\in 1+T\acris[[T]]$ car alors 
$(x,y)=\delta_{\ocirc{B}}(\theta(\tilde u))$. 

Or $(1-\tfrac{\varphi}{p})f=y$ car les deux membres ont m\^eme terme constant (i.e.~$0$)
et m\^eme diff\'erentielle puisque
$$d\big((1-\tfrac{\varphi}{p})f\big)=(1-\tfrac{\varphi}{p})\cdot df=
(1-\tfrac{\varphi}{p})x=dy,$$
et donc $\varphi(f)-pf=py\in pT\acris[[T]]$ et
$\frac{\varphi(\tilde u)}{\tilde u^p}\in 1+pT\acris[[T]]$,
ce qui permet d'utiliser le lemme~\ref{basic31} pour conclure.
\end{proof}

\subsubsection{La boule ferm\'ee}
Soit $B$ la boule ferm\'ee.  Alors 
$$\Z_p\wotimes\O(B^{\rm gen})^{\dual}=\Z_p\wotimes(1+T{\goth m}_C\langle T\rangle).$$
Soit ${\goth m}_{\rm cris}=\{x\in\acris,\ \theta(x)\in{\goth m}_C\}$.
\begin{prop}\label{boule1}
$H^2_{\rm syn}(B,1)=\Z_pt$, $H^2_{\rm syn}(B,1)=0$ et
$\delta_B$ induit un isomorphisme
$$\delta_B:\Z_p\wotimes\O(B^{\rm gen})^{\dual\dual}\cong H^1_{\rm syn}(B,1),$$
si $p>2$ {\rm(}si $p=2$, $\delta_B$ est presque un isomorphisme\footnote{Cela veut dire
que $\delta_B$ est injectif et son conoyau est tu\'e par $2$.}{\rm )}.
\end{prop}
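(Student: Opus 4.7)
The plan is to mirror the argument of Proposition \ref{basic30} (open-ball case) while accounting for the Tate-algebra structure of $\O_C\langle T\rangle$. First, I would split the syntomic complex as ${\rm Syn}(B,1) = {\rm Syn}(B,1)^+ \oplus C_0^\bullet$, where $C_0^\bullet = [F^1\acris \xrightarrow{1-\varphi/p} \acris]$ is the constant part (identical to the open-ball case) and ${\rm Syn}(B,1)^+$ involves $T\acris\langle T\rangle$, $\acris\langle T\rangle\, dT$, and $T\acris\langle T\rangle$. The constant part contributes $H^0 = (F^1\acris)^{\varphi=p} = \Z_pt$ and $H^1(C_0^\bullet) = 0$ (via the fundamental exact sequence $0 \to \Z_p t \to F^1\acris \to \acris \to 0$), giving $H^0_{\rm syn}(B,1) = \Z_p t$ and no contribution from $C_0^\bullet$ to $H^1$ or $H^2$.

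Next, to see $H^2_{\rm syn}(B,1) = 0$, I would prove that $1 - \varphi/p$ is surjective on $\Omega^1(\widetilde B) = \acris\langle T\rangle\, dT$. Since $\varphi/p$ sends $aT^n\, dT$ to $\varphi(a) T^{p(n+1)-1}\, dT$, the $T$-degree increases strictly, and the formal Neumann series $\sum_{k\geq 0}(\varphi/p)^k$ converges in the Tate algebra (each $T$-degree receives contributions from finitely many iterates, and the resulting coefficients still tend to $0$ in $\acris$).

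For the main statement on $H^1$, I construct $\delta_B$ exactly as in the open-ball case: for $u \in 1 + T{\goth m}_C\langle T\rangle$, pick a lift $\tilde u \in 1 + T{\goth m}_{\rm cris}\langle T\rangle$ (convergence of $\log\tilde u$ follows from Lemma \ref{paire10}), and assign the cocycle $(d\log\tilde u,\,\tfrac{1}{p}\log(\tilde u^p/\varphi\tilde u))$. Different lifts differ by a factor in $1 + F^1\O(\widetilde B)$, which produces a coboundary. For injectivity, if this cocycle equals the coboundary of some $h \in F^1\O(\widetilde B)_0$, then $d(\log\tilde u + h) = 0$ forces $\tilde u = \exp(-h)$ up to a $\Z_p t$-constant, whence $u = \theta(\tilde u) = 1$. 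Extension to the $p$-adic completion $\Z_p \wotimes(\cdot)$ follows by $p$-adic continuity, the target being $p$-adically complete.

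The hard part will be surjectivity. Given a cocycle $(x,y)$ with $x = f\, dT$, $f = \sum x_n T^n \in \acris\langle T\rangle$, and $y = \sum_{n\geq 1} y_n T^n \in T\acris\langle T\rangle$, I formally integrate to $g = \sum_{n\geq 1}\tfrac{x_{n-1}}{n} T^n$. The key obstacle: unlike the open-ball case, the $1/n$ denominators prevent $g$ from lying in the Tate algebra $B_{\rm cris}^+\langle T\rangle$; it lies only in $TB_{\rm cris}^+[[T]]$. Setting $\tilde u = \exp g \in 1 + TB_{\rm cris}^+[[T]]$, the cocycle relation yields $(1-\varphi/p)g = y$, hence $\varphi\tilde u/\tilde u^p = \exp(-py) \in 1 + pT\acris\langle T\rangle$ provided $p \geq 3$ (since $\exp$ converges on $pT\acris\langle T\rangle$ only for $p > 2$). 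I then need a Tate-algebra analog of Lemma \ref{basic31}: if $\tilde u \in 1 + TB_{\rm cris}^+[[T]]$ satisfies $\varphi\tilde u/\tilde u^p \in 1 + pT\acris\langle T\rangle$, then $\tilde u \in 1 + T\acris\langle T\rangle$. The same iterative proof applies, writing $\tilde u = \prod(1 + u_n T^n)$ and extracting each $u_n$ from the degree-$n$ coefficient of $\varphi\tilde u/\tilde u^p$; the Tate condition on the right forces the $u_n$ to tend to $0$ in $\acris$. Applying $\theta$ and using $\Z_p\wotimes$ to absorb the $p$-adic completion yields a preimage in $\Z_p\wotimes(1 + T{\goth m}_C\langle T\rangle) = \Z_p\wotimes\O(B^{\rm gen})^{\dual\dual}$. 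For $p=2$ the convergence $\exp(-py) \in 1 + p\acris\langle T\rangle$ requires $v_p(py) > 1$, a condition that fails in general, introducing a discrepancy killed by a single factor of $2$ and accounting for the "almost isomorphism" statement.
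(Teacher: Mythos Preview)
Your proposal has two genuine gaps.

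\textbf{The $H^2=0$ argument fails.} You claim $1-\varphi/p$ is surjective on $\Omega^1(\widetilde B)=\acris\langle T\rangle\,dT$ via the Neumann series $\sum_{k\ge0}(\varphi/p)^k$. But $(\varphi/p)^k(a\,T^n\,dT)=\varphi^k(a)\,T^{p^k(n+1)-1}\,dT$, so the coefficient of $T^{p^k-1}$ in $\sum_k(\varphi/p)^k(a_0\,dT)$ is $\varphi^k(a_0)$, which does \emph{not} tend to $0$ for general $a_0\in\acris$ (take $a_0=1$). The series therefore does not converge in the Tate algebra. The paper instead shows $T^k\,dT\in(1-\varphi/p)\Omega^1(\widetilde B)+d\,\O(\widetilde B)$ by a \emph{finite} construction: writing $k+1=p^in$ with $(n,p)=1$ and $f_n=T^n/n$, one has $T^k\,dT=df_n+(1-\varphi/p)\omega$ with $\omega=-\sum_{j=0}^{i-1}(\varphi/p)^j(df_n)$. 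The use of exact forms is essential.

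\textbf{The Tate-algebra analog of Lemma~\ref{basic31} you invoke is false.} Take $\tilde u=\exp_{\rm AH}(T)=\exp\bigl(\sum_{i\ge0}p^{-i}T^{p^i}\bigr)\in 1+T\Z_p[[T]]$. Then $\varphi(\tilde u)/\tilde u^p=\exp(-pT)\in 1+pT\acris\langle T\rangle$ for $p>2$, yet the coefficients of $\exp_{\rm AH}(T)$ lie in $\Z_p^\dual$ and do not tend to~$0$, so $\tilde u\notin 1+T\acris\langle T\rangle$. (This particular $\tilde u$ does not arise from a valid cocycle since $d\log\tilde u\notin\Omega^1(\widetilde B)$, but it shows your iterative argument cannot work as stated.) The deeper point is that the formal primitive $g$ of a genuine cocycle $(x,y)$ need not have $\exp g$ in the Tate algebra; one must \emph{modify} $g$ by an element of $TF^1\acris\langle T\rangle$ (a coboundary) to achieve this. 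The paper does this via two lemmas: Lemma~\ref{trick} controls the $\O_{\breve C}$-component of the coefficients (showing $\theta_0(b_{i,j})/p^{i+1}\to0$ from the recursion $b_{i,j}=\varphi(b_{i-1,j})+p^ia_{i,j}$), and Lemma~\ref{AH} handles the $\ker\theta_0$-component by reducing to Artin--Hasse exponentials $\exp_{\rm AH}([b]T)$ with $b\in{\goth m}_{C^\flat}$, plus the explicit structure $\acris=\ainf[\tilde p^k/k!]^\wedge$. Both the $\varphi$-equivariant splitting $\acris=\O_{\breve C}\oplus\ker\theta_0$ and the Artin--Hasse exponential are ideas your sketch does not contain.
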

\begin{proof}
Le calcul de $H^0$ a d\'ej\`a \'et\'e fait.  Pour prouver la nullit\'e de $H^2$, il suffit de montrer que $T^kdT\in (1-\varphi/p)\Omega^1(\widetilde B)+d\O(\widetilde B)$ pour tout 
$k$. En \'ecrivant $k+1=p^in$ avec $n$ premier \`a $p$ et en posant $f_n=\frac{T^n}{n}$, on a 
$$T^kdT=df_n+(1-\varphi/p)\omega, \quad \omega:=-\sum_{j=0}^{i-1} (\varphi/p)^j(df_n).$$

L'injectivit\'e de $\delta_B$ se d\'emontre comme pour la boule ouverte. La surjectivit\'e est nettement plus d\'elicate et demande quelques 
pr\'eliminaires, qui seront aussi utilis\'es dans le chapitre \ref{short1}.

\begin{lemm}\label{trick}
 Soient $(a_n)_{n\geq 0}$ et $(b_n)_{n\geq 0}$ deux suites dans $\acris$, qui tendent vers $0$ et telles que 
 $b_n=\varphi^n(a_0)+p\varphi^{n-1}(a_1)+...+p^na_n$ pour tout $n$. Alors $\theta_0(b_n)/p^{n+1}\in \O_{\breve C}$ pour tout $n$ et 
 $\lim_{n\to\infty} \theta_0(b_n)/p^{n+1}=0$.
 \end{lemm}

\begin{proof} On a $\acris=\O_{\breve C}\oplus{\rm Ker}\,\theta_0$ et cette d\'ecomposition est $\varphi$-\'equivariante.
En appliquant $\theta_0$ \`a l'\'egalit\'e 
$$b_{n+s}=\varphi^s(b_n)+p^{n+1} \varphi^{s-1}(a_{n+1})+...+p^{n+s} a_{n+s}$$
et en passant \`a la limite pour $s\to\infty$, on obtient 
$$v_p(\theta_0(b_{n}))\geq \inf_{k\geq n+1} (k+v_p(\theta_0(a_{k}))),$$
 ce qui permet de conclure puisque $\lim_{k\to\infty} v_p(\theta_0(a_k))=\infty$.
\end{proof}

\begin{lemm}\label{AH}
 Si $p>2$ et $a\in \ker(\theta_0)$, il existe $z\in TF^1\acris\langle T\rangle$ tel que 
 $$\exp\big(z+\sum_{\ell\geq 0}p^{-\ell}\varphi^\ell(a)T^{p^\ell}\big)\in
1+T{\goth m}_{\rm cris}\langle T\rangle.$$ 
Si $a\in p^N\ker(\theta_0)$ on peut choisir un tel $z$ dans $p^NTF^1\acris\langle T\rangle$. 
\end{lemm}

\begin{proof} Puisque $\acris=\ainf[\frac{\tilde p^k}{k!}]^\wedge$, il existe une suite $x_k\in \ainf$ tendant vers $0$ telle que 
$a=\sum_{k\geq 0} x_k\frac{\tilde p^k}{k!}$. Notons $a_1=\sum_{k=0}^{p-1} x_k\frac{\tilde p^k}{k!}$ et 
$a_2=a-a_1$. Puisque $\theta_0(a)=0$, on a $x_0\in  W({\goth m}_{C^\flat})$ et donc $a_1\in  W({\goth m}_{C^\flat})$ aussi. 
En \'ecrivant $a_1=\sum_{k\geq 0} p^k[b_k]$ avec $b_k\in {\goth m}_{C^\flat}$, on obtient 
$$\exp\big(\sum_{\ell\geq 0}p^{-\ell}\varphi^\ell(a_1)T^{p^\ell}\big)=\prod_{k\geq 0} \exp_{\rm AH}([b_k]T)^{p^k}\in 1+T{\goth m}_{\rm cris}\langle T\rangle,$$
puisque l'exponentielle d'Artin-Hasse
$$\exp_{\rm AH}(X)=\exp(\sum_{i\geq 0}p^{-i}X^{p^i})$$ appartient \`a $1+X\Z_p[[X]]$. 
  
   Il suffit donc de d\'emontrer le r\'esultat pour $a_2$. La relation $\varphi(\tilde{p})=\tilde{p}^p$ et l'in\'egalit\'e 
   $v_p(\frac{(p^\ell k)!}{k!})=k\frac{p^\ell-1}{p-1}\geq p^\ell$, valable pour $k\geq p$
et $\ell\geq 1$, montrent que la s\'erie $\sum_{\ell\geq 1}p^{-\ell}\varphi^\ell(a_2)T^{p^\ell}$ converge dans 
$p\acris\langle T\rangle$ et, puisque $p>2$, son exponentielle est dans $1+T{\goth m}_{\rm cris}\langle T\rangle$. Il reste donc \`a montrer l'existence de $z\in TF^1\acris\langle T\rangle$ tel que $\exp(z+a_2T)\in  1+T{\goth m}_{\rm cris}\langle T\rangle$. Il suffit de poser 
$z=-\sum_{k\geq p} x_k\frac{\tilde{p}^k-p^k}{k!}T$ et de noter que $z+a_2T=\sum_{k\geq p} x_k\frac{p^k}{k!}T\in p\acris\langle T\rangle$.
\end{proof}

\begin{rema}
Pour $p=2$, la preuve ci-dessus tombe en d\'efaut mais reste valable si on remplace $a$ par $2a$.
\end{rema}

Revenons \`a la preuve de la surjectivit\'e. Soit $(x,y)$ un $1$-cocycle de ${\rm Syn}(B,1)^+$,
avec $x=\big(\sum_{n\geq 0}x_n T^n\big)dT$ et $y=\sum_{n\geq 1}y_nT^n$.
En d\'ecomposant $n$ sous la forme $n=p^ij$, avec $(j,p)=1$,
on peut \'ecrire $$x=\sum_{(j,p)=1} jp^{c(j)}b_j,\quad y=\sum_{(j,p)=1} p^{c(j)}a_j$$
avec  $c(j)\to+\infty$
quand $j\to\infty$ et $$b_j=\sum_{i\geq 0}b_{i,j}T^{jp^i-1}dT\in \acris\langle T\rangle dT,\quad a_j=\sum_{i\geq 0}a_{i,j}T^{jp^i}\in \acris\langle T\rangle.$$

  Soit $$f_j=\sum_{i\geq 0}\tfrac{b_{i,j}}{p^i}T^{jp^i}.$$ Il suffit de montrer l'existence de 
  $z_j\in TF^1\acris\langle T\rangle $ qui tendent vers $0$ et tels que $\tilde u_j=\exp (z_j+ f_j)\in1+T\bcris^+[[T]]$
appartienne \`a $1+T{\goth m}_{\rm cris}\langle T\rangle$, car alors
$(x,y)=\delta_B(\prod_j\theta(\tilde u_j)^{p^{c(j)}})$ dans $H^1_{\rm syn}(B,1)$.

La relation $(1-\frac{\varphi}{p})x=dy$ se traduit par
$b_{0,j}=a_{0,j}$ et $b_{i,j}=\varphi(b_{i-1,j})+p^{i}a_{i,j}$ si $i\geq 1$.
On a alors $$b_{i,j}=\varphi^{i}(a_{0,j})+p\varphi^{i-1}(a_{1,j})+\cdots+p^{i}a_{i,j}.$$
Cela montre d'une part que 
$$f_j=\sum_{k\geq 0}\big(\sum_{\ell\geq 0}p^{-\ell}\varphi^\ell(a_{k,j})T^{jp^{k+\ell}}\big)$$
et d'autre part 
(lemme \ref{trick}) que $\sum_{i\geq 1}\theta_0(b_{i,j}) p^{-i}T^{jp^i}$ converge dans $p\O_{\breve C}\langle T\rangle$, donc 
 son exponentielle appartient \`a $1+T{\goth m}_{\rm cris}\langle T\rangle$
(si $p=2$, il faut multiplier $x$ et $y$ par $2$ pour assurer cette appartenance). 
En utilisant la d\'ecomposition $\varphi$-\'equivariante $\acris=\O_{\breve C}\oplus{\rm Ker}\,\theta_0$ on peut ainsi se ramener 
 au cas $a_{i,j}\in \ker(\theta_0)$ pour tous 
 $i,j$. Comme $a_{k,j}\to 0$ dans ${\rm Ker}\,\theta_0$ (i.e.~$p$-adiquement), il suffit d'appliquer le lemme \ref{AH} pour conclure. 
\end{proof}

\subsubsection{Jambes et cercles fant\^omes}
Soit $Y$ une jambe 
ou un cercle fant\^ome 
(i.e.~$\O(Y)=\O_C[[T_1,T_2]]/(T_1T_2-p^r)$ ou
$\O_C[[T_1,T_1^{-1}\rangle$).

\begin{prop}\label{basic34}
$H^0_{\rm syn}(Y,1)=\Z_pt$ et
$\delta_Y: {\rm Symb}_p(Y^{\rm gen})\to H^1_{\rm syn}(Y,1)$
est un isomorphisme, si $p>2$ {\rm(}et presque un isomorphisme si $p=2${\rm)}.
\end{prop}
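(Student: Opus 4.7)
The plan is to reduce to Propositions~\ref{basic30} and~\ref{boule1} by decomposing the syntomic complex into three orthogonal pieces. First, I would set up the additive decomposition of $\acris$-modules $\O(\widetilde Y) = \acris \oplus \O^+ \oplus \O^-$, where for the jambe $\O^\pm = T_\pm\acris[[T_\pm]]$ with $T_+ = T_1,\ T_- = T_2$ (using $T_1T_2 = \tilde p^r$ to reduce every monomial to a pure power of $T_1$ or $T_2$), and for the cercle fant\^ome $\O^+ = T\acris[[T]]$, $\O^- = T^{-1}\acris\langle T^{-1}\rangle$. The module $\Omega^1(\widetilde Y)$ admits the companion splitting $\acris\,\frac{dT}{T} \oplus \Omega^+ \oplus \Omega^-$ (writing $T = T_1$ in the jambe case and using $dT_1/T_1 = -dT_2/T_2$). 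Since $\varphi(T_\pm) = T_\pm^p$ and $\varphi(dT/T) = p\,dT/T$, and since $d$ and $\theta$ act coefficient-wise, this decomposition is preserved by $\varphi$, $d$ and $F^1$, producing a splitting
$${\rm Syn}(Y,1) = {\rm Syn}_0 \oplus {\rm Syn}^+ \oplus {\rm Syn}^-.$$

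Next I would compute $H^*({\rm Syn}_0)$ by hand. The constant piece is
$$F^1\acris \xrightarrow{(0,\,1-\varphi/p)} \acris\tfrac{dT}{T}\oplus\acris \longrightarrow \acris\tfrac{dT}{T},$$
where the second differential simplifies to $(c\,\tfrac{dT}{T},\beta) \mapsto (1-\varphi)(c)\,\tfrac{dT}{T}$ (because $d\beta = 0$ on constants and $(1-\varphi/p)(c\,dT/T) = (1-\varphi)(c)\,dT/T$). One then finds $H^0 = (F^1\acris)^{\varphi=p} = \Z_p t$ and $H^1 = \Z_p \oplus \acris/(1-\tfrac{\varphi}{p})F^1\acris$, the first summand coming from $\acris^{\varphi=1} = \Z_p$. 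Fontaine's fundamental exact sequence $0 \to \Z_p t \to F^1\acris \xrightarrow{1-\varphi/p} \acris \to 0$ (exact for $p\geq 3$, almost exact for $p=2$) kills the second summand, leaving $H^1({\rm Syn}_0) \cong \Z_p$. Inspecting the defining formula of $\delta_Y$, one checks that $\delta_Y(T_1)$ (resp. $\delta_Y(T)$) is represented by the cocycle $(dT_1/T_1, 0)$ (resp. $(dT/T, 0)$), which generates this $\Z_p$; this matches the valuation-like factor $T_1^k$ (resp. $T^k$) of the factorisations in Corollaire~\ref{basic33} (resp. Lemme~\ref{basic32}).

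Third, I would identify ${\rm Syn}^\pm$ with complexes already treated. Via the obvious change of variable, ${\rm Syn}^+$ (and, in the jambe case, ${\rm Syn}^-$ with $T_2$ playing the role of $T$) is isomorphic to the subcomplex ${\rm Syn}(\ocirc B,1)^+$ appearing in the proof of Proposition~\ref{basic30}, which yields $H^0 = 0$ and $H^1({\rm Syn}^\pm) \cong 1 + T_\pm\O_C[[T_\pm]]$ via $\delta_{\ocirc B}$. For the cercle fant\^ome, ${\rm Syn}^-$ becomes, under $T \mapsto T^{-1}$ (which sends $dT$ to $-T^{-2}dT$), the analog ${\rm Syn}(B,1)^+$ of the closed ball case; Proposition~\ref{boule1} then provides $H^1({\rm Syn}^-) \cong \Z_p\wotimes(1+T^{-1}{\goth m}_C\langle T^{-1}\rangle)$ (almost iso if $p = 2$).

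Finally, comparing with the decomposition of $\Z_p\wotimes\O(Y^{\rm gen})^\dual$ furnished by Lemme~\ref{basic32} or Corollaire~\ref{basic33} (in which the factor $C^\dual$ disappears since $\Z_p\wotimes C^\dual = 0$), one sees that $\delta_Y$ splits as the direct sum of the three isomorphisms above, giving the claimed (almost) isomorphism. The hardest ingredient will be the (near-)surjectivity of $1-\varphi/p : F^1\acris \to \acris$, which is Fontaine's fundamental exact sequence at $r = 1$; the $p=2$ pathologies are governed by the same Artin-Hasse-type manipulations already used in the proof of Proposition~\ref{boule1}.
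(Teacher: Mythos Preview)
Your proposal is correct and follows essentially the same route as the paper: decompose both ${\rm Symb}_p(Y^{\rm gen})$ (via Lemme~\ref{basic32} or Corollaire~\ref{basic33}) and ${\rm Syn}(Y,1)$ according to the sign of the exponent of $T$, identify the nonconstant pieces with ${\rm Syn}(\ocirc B,1)^+$ (and ${\rm Syn}(B,1)^+$ in the cercle fant\^ome case), and handle the constant piece $C_1^\bullet$ by hand using $\acris^{\varphi=1}=\Z_p$ together with the surjectivity of $1-\varphi/p:F^1\acris\to\acris$. Your write-up of the constant piece is a bit more detailed than the paper's, but the argument is the same.
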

\begin{proof}
Il ressort du lemme~\ref{basic32} et du cor.~\ref{basic33} (et de ce que $C^\dual$
est $p$-divisible) que l'on a des isomorphismes
$$\Z_p\wotimes\O(Y^{\rm gen})^\dual\cong
\begin{cases}
\Z_p\oplus(\Z_p\wotimes\O(\ocirc B)^\dual)\oplus (\Z_p\wotimes\O(\ocirc B)^\dual)
&{\text{si $Y$ est une jambe,}}\\
\Z_p\oplus(\Z_p\wotimes\O(\ocirc B)^\dual)\oplus (\Z_p\wotimes\O(B)^{\dual\dual})
&{\text{si $Y$ est un cercle fant\^ome.}}
\end{cases}$$
Le complexe ${\rm Syn}(Y,1)$ se d\'ecompose pareillement (en regardant le signe
des puissances de $T$) sous la forme
$${\rm Syn}(Y,1)\cong
\begin{cases}
{\rm Syn}(\ocirc B,1)^+\oplus {\rm Syn}(\ocirc B,1)^+\oplus C_1^\bullet
&{\text{si $Y$ est une jambe,}}\\
{\rm Syn}(\ocirc B,1)^+\oplus {\rm Syn}(B,1)^+\oplus C_1^\bullet
&{\text{si $Y$ est un cercle fant\^ome,}}
\end{cases}$$
o\`u $C_1^\bullet$ est le complexe
$$ [\xymatrix@C=1.2cm{F^1\acris
\ar[r]^-{(0,1-\frac{\varphi}{p})}&\acris\tfrac{dT_1}{T_1}\oplus\acris\ar[r]
^-{(1-\frac{\varphi}{p})-(0)}&\acris\tfrac{dT_1}{T_1}}].$$
On a $H^0(C_1^\bullet)=\Z_pt$ et $H^1(C_1^\bullet)=\Z_p\frac{dT_1}{T_1}$ (car
$\varphi(\frac{dT_1}{T_1})=p\frac{dT_1}{T_1}$).
Cela permet de d\'eduire le r\'esultat de ceux pour les boules
(prop.~\ref{basic30} pour la boule ouverte et prop.~\ref{boule1} pour la boule ferm\'ee).
\end{proof}

\begin{rema}
Si $Y$ est la boule ouverte  ou une jambe,
la relation entre les $H^i_{\rm syn}(Y,1)$ et les $H^i_{\eet}(Y,\Z_p(1))$ est analys\'ee
dans l'appendice.
\end{rema}

\subsection{D\'eg\'en\'erescence de couronnes}\label{jambe13.1}
Les r\'esultats de ce \S~et du suivant seront utilis\'es dans le chap.\,\ref{BAS4}
 pour les preuves
de la prop.\,\ref{dege1} et celle du th.\,\ref{basic21} (via le lemme~\ref{BAS12.1}).

Soit $Y$ une jambe, avec $\O(Y)=\O_C[[T_1,T_2]]/(T_1T_2-p^r)$
et soient $C_1,C_2$ les cercles fant\^omes aux extr\'emit\'es de $Y$,
avec $\O(C_i)=\O_C[[T_i,T_i^{-1}\rangle$ si $i=1,2$.

\subsubsection{D\'eg\'en\'erescence g\'eom\'etrique} \label{jambe13.2}
Soit $Y^\infty$ la couronne singuli\`ere $\O(Y^\infty)=\O_C[[T_1,T_2]]/(T_1T_2)$.
Les cercles fant\^omes aux extr\'emit\'es de $Y^\infty$ sont encore $C_1$ et $C_2$.
Si $Z=Y,Y^\infty,C_i$, on d\'efinit $\Omega^1(Z)$ comme le module $\O(Z)\frac{dT_1}{T_1}$
(avec la relation $\frac{dT_2}{T_2}=-\frac{dT_1}{T_1}$).
Soit $\Omega^\bullet(Z)$ le complexe
$$\Omega^\bullet(Z)=\big(\O(Z)\to\Omega^1(Z)\big).$$

Notons que tout \'el\'ement de $\O(Y^\infty)$ ou $\O(Y)$ peut s'\'ecrire,
de mani\`ere unique, sous la forme $a_0+\sum_{n\geq 1}a_nT_1^n+\sum_{n\geq 1}b_nT_2^n$.
De m\^eme, tout \'el\'ement de $\Omega^1(Y^\infty)$ ou $\Omega^1(Y)$ peut s'\'ecrire,
de mani\`ere unique, sous la forme 
$\big(a_0+\sum_{n\geq 1}a_nT_1^n+\sum_{n\geq 1}b_nT_2^n)\frac{dT_1}{T_1}$.
Cela fournit un isomorphisme naturel de complexes $\Omega^\bullet(Y^\infty)\cong \Omega^\bullet(Y)$.

Le probl\`eme que nous allons rencontrer est que cet isomorphisme ne commute pas
aux fl\`eches naturelles vers $\Omega^\bullet(C_i)$.  Pour rem\'edier \`a ce probl\`eme, on
introduit le quotient $\overline \Omega^\bullet(C_i)$ de $\Omega^\bullet(C_i)$ 
d\'efini par:
$$\overline \Omega^\bullet(C_i)=\Omega^\bullet(C_i)/\big(T_i^{-1}\O_C\langle T_i^{-1}\rangle
\to d(T_i^{-1}\O_C\langle T_i^{-1}\rangle)\big).$$
(Le complexe $T_i^{-1}\O_C\langle T_i^{-1}\rangle
\to d(T_i^{-1}\O_C\langle T_i^{-1}\rangle)$ est acyclique et donc $\overline \Omega^\bullet(C_i)$
est quasi-isomorphe \`a $\Omega^\bullet(C_i)$.)
\begin{lemm}\label{jambe1}
Si $p^Nr\geq 1$, 
le diagramme suivant commute \`a $p^{N-1}$ pr\`es:
$$\xymatrix@R=.5cm{\Omega^\bullet(Y^\infty)\ar[r]\ar[d]& \overline \Omega^\bullet(C_i)\ar@{=}[d]\\
\Omega^\bullet(Y)\ar[r]& \overline \Omega^\bullet(C_i)}$$
\end{lemm}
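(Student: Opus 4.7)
Mon plan est de comparer les deux fl\`eches obtenues dans le diagramme --- la fl\`eche directe $\alpha:\Omega^\bullet(Y^\infty)\to \overline\Omega^\bullet(C_i)$ et la compos\'ee $\beta$ passant par l'identification $\Omega^\bullet(Y^\infty)\cong\Omega^\bullet(Y)$ puis la fl\`eche $\Omega^\bullet(Y)\to\overline\Omega^\bullet(C_i)$ --- en tirant parti de la d\'ecomposition unique $f=a_0+\sum_{n\geq 1}a_nT_1^n+\sum_{n\geq 1}b_nT_2^n$ rappel\'ee avant l'\'enonc\'e. Cette d\'ecomposition fournit un isomorphisme de complexes de $\O_C$-modules entre $\Omega^\bullet(Y^\infty)$ et $\Omega^\bullet(Y)$ (on v\'erifie sans peine que la diff\'erentielle, qui s'exprime par $d(T_i^n)=nT_i^n\frac{dT_i}{T_i}$ dans les deux cas, est pr\'eserv\'ee). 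Par la sym\'etrie $T_1\leftrightarrow T_2$, je me concentre sur le cas $i=1$.

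Sur les mon\^omes $T_1^n$ et $T_1^n\frac{dT_1}{T_1}$ (pour $n\geq 0$), les deux compositions $\alpha$ et $\beta$ co\"{\i}ncident trivialement, puisque la sp\'ecialisation vers $\O(C_1)$ ne voit que la partie en $T_1$. La diff\'erence $\alpha-\beta$ se concentre donc sur les termes $b_nT_2^n$ pour $n\geq 1$: via $\alpha$, on a $T_2\mapsto 0$ et on obtient z\'ero, tandis que via $\beta$ et la relation $T_2=p^r/T_1$ dans $\O(Y)$, on obtient $b_np^{rn}T_1^{-n}$ en degr\'e $0$, resp.\ $b_np^{rn}T_1^{-n}\frac{dT_1}{T_1}$ en degr\'e $1$. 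En degr\'e $0$, comme $p^{rn}\in\O_C$ et $-n\leq -1$, l'\'el\'ement $b_np^{rn}T_1^{-n}$ appartient \`a $T_1^{-1}\O_C\langle T_1^{-1}\rangle$ et s'annule dans $\overline\O(C_1)$; la commutation est donc exacte en degr\'e $0$, sans m\^eme invoquer l'hypoth\`ese sur $N$.

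En degr\'e $1$, la diff\'erence sur $b_nT_2^n\frac{dT_1}{T_1}$ est $b_np^{rn}T_1^{-n}\frac{dT_1}{T_1}$, et l'outil central sera la relation $d(T_1^{-n})=-nT_1^{-n}\frac{dT_1}{T_1}$ dans $\overline\Omega^1(C_1)$. En \'ecrivant $n=p^am$ avec $(m,p)=1$ (donc $m$ inversible dans $\O_C$), cette relation montre que la classe de $T_1^{-n}\frac{dT_1}{T_1}$ dans le quotient par $d(T_1^{-1}\O_C\langle T_1^{-1}\rangle)$ est tu\'ee par $n$, donc s'identifie \`a un g\'en\'erateur d'un $\O_C/p^a\O_C$-module. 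Par cons\'equent, la classe de $b_np^{rn}T_1^{-n}\frac{dT_1}{T_1}$ est repr\'esent\'ee par $b_np^{rn}$ dans $\O_C/p^a\O_C$. L'hypoth\`ese $p^Nr\geq 1$, soit $r\geq p^{-N}$, fournit la minoration $rn\geq p^{a-N}m$, qu'il faudra confronter \`a $\min(N-1,a)$ pour conclure que cette classe appartient \`a $p^{N-1}\cdot (\O_C/p^a\O_C)$ ou, de mani\`ere \'equivalente, que $b_np^{rn}\in p^{N-1}\O_C+p^a\O_C$. La convergence $b_n\to 0$ dans $\O_C$ assure ensuite que la somme sur $n$ de toutes ces erreurs ponctuelles reste dans $p^{N-1}\overline\Omega^1(C_1)$.

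La partie d\'elicate --- et principal obstacle pr\'evisible --- sera la v\'erification pr\'ecise de la borne $p$-adique en degr\'e $1$ pour les valeurs de $n$ o\`u $a$ est interm\'ediaire par rapport \`a $N$ et $m$ est petit. Dans ces r\'egimes, la minoration na\"{\i}ve $rn\geq p^{a-N}m$ ne suffit pas directement et il faudra soit raffiner la repr\'esentation de la classe en la modifiant par un \'el\'ement exact de $d(T_1^{-1}\O_C\langle T_1^{-1}\rangle)$ bien choisi, soit exploiter plus finement la structure du $\O_C$-module $\overline\Omega^1(C_1)$. C'est pr\'ecis\'ement \`a cet endroit que l'hypoth\`ese $p^Nr\geq 1$ intervient de mani\`ere essentielle, et non pas seulement la positivit\'e de $r$.
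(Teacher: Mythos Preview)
Your overall strategy is exactly the paper's: split along the monomial decomposition, observe that degree~$0$ commutes on the nose, and in degree~$1$ reduce to controlling the image of $p^{rn}T_1^{-n}\tfrac{dT_1}{T_1}$ in the quotient $\overline\Omega^1(C_1)$ via the relation $d(T_1^{-n})=-nT_1^{-n}\tfrac{dT_1}{T_1}$.

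The ``partie d\'elicate'' you anticipate in the final paragraph is not real; it comes from a misreading of the phrase \emph{commute \`a $p^{N-1}$ pr\`es}. In this paper the convention is that the difference of the two compositions is \emph{annihilated} by $p^{N-1}$, not that it lands in $p^{N-1}\cdot\overline\Omega^\bullet(C_1)$. Under your reading you need $b_np^{rn}\in p^{N-1}\O_C+p^a\O_C$, i.e.\ $rn\geq\min(N-1,a)$, which indeed fails for small~$a$. Under the correct reading you need $p^{N-1}\cdot p^{rn}T_1^{-n}\tfrac{dT_1}{T_1}\in d\bigl(T_1^{-1}\O_C\langle T_1^{-1}\rangle\bigr)$, i.e.\ $p^{N-1}\tfrac{p^{rn}}{n}\in\O_C$, i.e.
\[
(N-1)+rn\ \geq\ v_p(n).
\]
Writing $k=v_p(n)$, one has $n\geq p^k$ and $r\geq p^{-N}$, so it suffices to check $(N-1)+p^{k-N}\geq k$, i.e.\ $p^{j}\geq j+1$ with $j=k-N\in\Z$, which is elementary. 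This is exactly the paper's argument (``il suffit de consid\'erer $n=p^k$''), and no refinement or further manipulation of exact terms is needed.
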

\begin{proof}
On peut supposer que $i=1$.  Alors $T_1^n\in\O(Y^\infty)$, pour $n\geq 0$, s'envoie
sur $T_1^n$, que ce soit par $\O(Y^\infty)\to \O(C_1)$ ou par $\O(Y^\infty)\to\O(Y)\to\O(C_1)$.
Quant \`a $T_2^n$, pour $n\geq 1$, il s'envoie sur $0$ par $\O(Y^\infty)\to \O(C_1)$
et sur $p^{nr}T_1^{-n}$ par $\O(Y^\infty)\to\O(Y)\to\O(C_1)$, et donc sur $0$ si on
quotiente par $T_1^{-1}\O_C\langle T_1^{-1}\rangle$.

De m\^eme, $T_1^n\frac{dT_1}{T_1}$, pour $n\geq 0$, s'envoie sur $T_1^n\frac{dT_1}{T_1}$
par les deux chemins.  Par contre, $T_2^n\frac{dT_1}{T_1}$, pour $n\geq 1$, s'envoie
sur $0$ par $\Omega^1(Y^\infty)\to \Omega^1(C_1)$ et sur
$p^{nr}T_1^{-n}\frac{dT_1}{T_1}$ par $\Omega^1(Y^\infty)\to\Omega^1(Y)\to\Omega^1(C_1)$.
Or $p^{nr}T_1^{-n}\frac{dT_1}{T_1}=-d(\frac{p^{nr}}{n}T_1^{-n})$,
et on v\'erifie (il suffit de consid\'erer $n=p^k$)
que $p^{N-1}\frac{p^{nr}}{n}\in \Z_p$, pour tout $n\geq 1$, si $p^Nr\geq 1$,
et donc $p^{N-1}T_2^n\frac{dT_1}{T_1}$ s'envoie sur $0$ si on quotiente par
$d(T_1^{-1}\O_C\langle T_1^{-1}\rangle)$.
\end{proof}

\subsubsection{D\'eg\'en\'erescence arithm\'etique}\label{jambe13.3}
On pose 
$$\O(\widetilde Y)=\acris[[T_1,T_2]]/(T_1T_2-\tilde p^r)
\quad{\rm et}\quad
\O(\breve Y)=\O_{\breve C}[[T_1,T_2]]/(T_1T_2).$$
On a $$\O(Y)=\O_C\otimes_{\acris}\O(\widetilde Y)
\quad{\rm et}\quad
\O(\breve Y)=\O_{\breve C}\otimes_{\acris}\O(\widetilde Y).$$

Si $h$ est\footnote{Nous n'aurons besoin que de $h=0$ dans cet article sauf dans
le \no\ref{BAS12} pour passer du complexe d\'efinissant la cohomologie de Hyodo-Kato
\`a un complexe quasi-isomorphe sur lequel l'isomorphisme de Hyodo-Kato devient transparent.}
 un entier~$\geq 0$,
et si $i=1,2$, on d\'efinit $\O(\widetilde C_i)^{{\rm PD}_h}$ et $\O(\breve C_i)^{{\rm PD}_h}$
comme les compl\'et\'es $p$-adiques\footnote{${\rm PD}_h$ r\'ef\`ere \`a "puissances divis\'ees
partielles de niveau $h$"; si $h=0$, on obtient les puissances divis\'ees standard.
Le $U$ et ses puissances divis\'ees n'ont d'int\'er\^et 
que dans une situation o\`u le cercle fant\^ome est l'intersection
d'une jambe $Y$ et d'un short $Y'$, et on veut mettre un frobenius sur le complexe de de Rham,
ce qui demande de travailler dans $\widetilde Y\times\widetilde Y'$ et de prendre
un ${\rm PD}_h$-voisinage de l'intersection; le lemme de Poincar\'e permet de montrer que rajouter $U$
ne change pas la cohomologie de de Rham de $\widetilde C_i$, \`a $p^h$ pr\`es.}
\begin{align*}
\O(\widetilde C_i)^{{\rm PD}_h}&=\big(\acris[[T_i,T_i^{-1}\rangle[\tfrac{U^k}{[k/p^h]!},\ k\in\N]\big)^{\wedge{\text-}p}
\\
\O(\breve C_i)^{{\rm PD}_h}&=\big(\O_{\breve C}[[T_i,T_i^{-1}\rangle[\tfrac{U^k}{[k/p^h]!},\ k\in\N]\big)^{\wedge{\text-}p}
\end{align*}

On d\'efinit les modules $\Omega^1(\widetilde C_i)^{{\rm PD}_h}$ et $\Omega^1(\breve C_i)^{{\rm PD}_h}$
comme les modules des diff\'erentielles continues sur $\acris$ et $\O_{\breve C}$
respectivement, et on d\'efinit les complexes
$$
\Omega^{\tau\leq 1}(\widetilde C_i)=\big(\O(\widetilde C_i)^{{\rm PD}_h}\to \Omega^1(\widetilde C_i)^{{\rm PD}_h}_{d=0}\big),
\quad
\Omega^{\tau\leq 1}(\breve C_i)=\big(\O(\breve C_i)^{{\rm PD}_h}\to \Omega^1(\breve C_i)^{{\rm PD}_h}_{d=0}\big),
$$
Ces complexes calculent la cohomologie de de Rham de $\widetilde C_i$ et $\breve C_i$ (\`a $p^h$ pr\`es), 
mais
comme ci-dessus, nous aurons besoin d'un quotient quasi-isomorphe.
Si $Z=\widetilde C_i,\breve C_i$, et si $\Lambda_Z=\acris$ si $Z=\widetilde C_i$ et
$\Lambda_Z=\O_{\breve C}$ si $Z=\breve C_i$, on pose:
$$\overline \Omega^{\tau\leq 1}(Z)= \Omega^{\tau\leq 1}(Z)/\big(T_i^{-1}\Lambda_Z\langle T_i^{-1}\rangle\to
d\big(T_i^{-1}\Lambda_Z\langle T_i^{-1}\rangle\big)\big).$$
\begin{lemm}\label{jambe2}
Si $p^Nr\geq 1$, 
le diagramme suivant commute \`a $p^N$ pr\`es:
$$\xymatrix@R=.5cm{\Omega^\bullet(\breve Y)\ar[r]\ar[d]& \overline \Omega^{\tau\leq 1}(\breve C_i)\ar[d]\\
\Omega^\bullet(\widetilde Y)\ar[r]& \overline \Omega^{\tau\leq 1}(\widetilde C_i)}$$
\end{lemm}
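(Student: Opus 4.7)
The plan is to follow closely the template of the proof of Lemma~\ref{jambe1}: isolate the discrepancy between the two composite maps on the explicit generators of $\Omega^\bullet(\breve Y)$ and bound it $p$-adically using the divided-power structure of $\acris$. Every element of $\O(\breve Y)=\O_{\breve C}[[T_1,T_2]]/(T_1T_2)$ writes uniquely as $a_0+\sum_{n\geq 1}a_nT_1^n+\sum_{n\geq 1}b_nT_2^n$, and similarly for $\Omega^1(\breve Y)$ against $\tfrac{dT_1}{T_1}$; on the constant and $T_1$-parts the two composites agree tautologically, so I only need to control what happens on the $T_2$-part.

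Fix $i=1$, the case $i=2$ being symmetric. Since $T_2=0$ in $\O(\breve C_1)=\O_{\breve C}[[T_1,T_1^{-1}\rangle$, the right-then-down path sends $T_2^n$ and $T_2^n\tfrac{dT_1}{T_1}$ to $0$; since $T_1T_2=\tilde p^r$ in $\O(\widetilde Y)$, the down-then-right path sends them to $\tilde p^{nr}T_1^{-n}$ and $\tilde p^{nr}T_1^{-n}\tfrac{dT_1}{T_1}$ respectively. In degree $0$ the discrepancy already lies in $T_1^{-1}\acris\langle T_1^{-1}\rangle$, which is killed by the quotient defining $\overline\Omega^{\tau\leq 1}(\widetilde C_1)$, so commutativity is in fact exact in this degree. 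In degree $1$, the discrepancy equals $-d\bigl(\tfrac{\tilde p^{nr}}{n}T_1^{-n}\bigr)$ (it is $d$-closed, since no $dU$ component appears), and it is killed in $\overline\Omega^{\tau\leq 1}(\widetilde C_1)$ as soon as $\tfrac{\tilde p^{nr}}{n}T_1^{-n}\in T_1^{-1}\acris\langle T_1^{-1}\rangle$, i.e.\ as soon as $\tfrac{\tilde p^{nr}}{n}\in\acris$.

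The heart of the argument is thus the arithmetic estimate
\[
p^N\cdot\tfrac{\tilde p^{nr}}{n}\in\acris\qquad\text{for all }n\geq1,\text{ whenever }p^Nr\geq1.
\]
To establish it, I would factor $\tilde p^{nr}=\tilde p^{\lfloor nr\rfloor}\cdot\tilde p^{\{nr\}}$ (the second factor being Teichm\"uller, hence in $\ainf$), expand $\tilde p^{\lfloor nr\rfloor}=(p-(p-\tilde p))^{\lfloor nr\rfloor}$ via the binomial formula, and invoke the divided powers of $p-\tilde p\in\ker\theta$ in $\acris$ to conclude that $\tilde p^{\lfloor nr\rfloor}/\lfloor nr\rfloor!\in\acris$, whence $\tilde p^{nr}/\lfloor nr\rfloor!\in\acris$. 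It then remains to check the elementary bound $v_p(\lfloor nr\rfloor!/n)\geq-N$: for $n=p^km$ with $(m,p)=1$, the case $k\leq N$ is immediate, and for $k>N$ the assumption $p^Nr\geq1$ forces $\lfloor nr\rfloor\geq p^{k-N}$, so Legendre's formula $v_p(P!)=(P-s_p(P))/(p-1)$ combined with the elementary inequality $p^j\geq j(p-1)+1$ yields $v_p(\lfloor nr\rfloor!)\geq k-N$. Multiplying the degree $1$ discrepancy by $p^N$ then produces $-d\bigl((p^N\tilde p^{nr}/n)T_1^{-n}\bigr)\in d(T_1^{-1}\acris\langle T_1^{-1}\rangle)$, killed in the quotient.

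The main obstacle is precisely this arithmetic estimate: the simultaneous appearance of a fractional power $\tilde p^{nr}$ (not accessible through a naive binomial expansion in $\acris$) and of the denominator $n$ forces one to combine the divided-power structure of $\acris$ with a careful combinatorial bound on $v_p(\lfloor nr\rfloor!)$. By contrast, the presence of the extra variable $U$ (with its partial divided powers) in $\O(\widetilde C_i)^{{\rm PD}_h}$ is essentially inert here, since both composite maps land in the $U$-free sub-$\acris$-algebra $\acris[[T_i,T_i^{-1}\rangle$, and the quotient defining $\overline\Omega^{\tau\leq 1}$ respects this splitting.
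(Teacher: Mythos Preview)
Your proposal is correct and follows essentially the same approach as the paper, which says only that the proof is identical to that of Lemma~\ref{jambe1} modulo the estimate $p^N\tfrac{\tilde p^{nr}}{n}\in\acris$ when $p^Nr\geq 1$. The one difference worth noting is that the paper derives this estimate in one line from the relation $\tilde p^p\in p\acris$: writing $\tilde p^{nr}=(\tilde p^p)^{\lfloor nr/p\rfloor}\cdot\tilde p^{nr-p\lfloor nr/p\rfloor}$ gives $\tilde p^{nr}\in p^{\lfloor nr/p\rfloor}\acris$, and one checks directly (worst case $n=p^k$) that $N+\lfloor nr/p\rfloor\geq v_p(n)$ under the hypothesis $p^Nr\geq 1$. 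Your route through $\tilde p^m/m!\in\acris$ and Legendre's formula is correct but more involved than necessary.
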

\begin{proof}
La preuve est la m\^eme que celle du lemme~\ref{jambe1} \`a la petite diff\'erence pr\`es
qu'il faut utiliser l'appartenance de $p^N\frac{\tilde p^{nr}}{n}$ \`a $\acris$, si $p^Nr\geq 1$
(cela r\'esulte de ce que $\tilde p^p\in p\acris$).
\end{proof}

\section{Cohomologie des shorts}\label{short1}
Dans ce chapitre, le plus technique de l'article, on calcule la cohomologie
des affino\"{\i}des ayant bonne r\'eduction: on exprime la cohomologie \'etale $\ell$-adique
en termes de symboles (cor.\,\ref{basic12}), et on en d\'eduit, si $\ell\neq p$, une expression
de cette cohomologie \'etale en termes de la jacobienne de la r\'eduction 
(prop.\,\ref{short21}, r\'esultat parfaitement classique). Dans le cas $\ell=p$, on relie
les symboles \`a la cohomologie syntomique 
via un r\'egulateur syntomique (\no\ref{BAS18.1}) et on prouve un d\'evissage
de la cohomologie \'etale faisant intervenir la cohomologie de de Rham s\'epar\'ee
(cor.\,\ref{shortetale}). Enfin, dans le cas des petits shorts, on prouve
que le r\'egulateur syntomique est un isomorphisme
(th.\,\ref{short11}), ce qui nous permettra
d'\'etendre les r\'esultats au cas de mauvaise r\'eduction dans le chapitre suivant.

\Subsection{Cohomologie \'etale des courbes quasi-compactes}\label{BAS5}
Soit $Y$ une courbe quasi-compacte d\'efinie sur $C$. Rappelons que cela signifie que 
$Y$ est lisse, irr\'eductible, et que $Y$ est propre ou affino\"{\i}de. 
Soit $C(Y)$ le corps des fonctions m\'eromorphes sur $Y$.
\subsubsection{Groupe de Picard}\label{SSS12}
Soit ${\rm Div}(Y)$ le $\Z$-module libre de base~$Y(C)$.
Soit ${\rm Pic}(Y)$ le quotient de ${\rm Div}(Y)$ par le sous-$\Z$-module
des ${\rm Div}(f)$, pour $f\in {\rm Frac}(\O(Y))$.
Si $D\in{\rm Div}(Y)$, et si les $U_i$ forment un recouvrement
de $Y$ par des affino\"ides, tels que $D={\rm Div}(f_i)$ sur $U_i$, alors
$f_{i,j}=f_i/f_j\in\O(U_i\cap U_j)^\dual$ et les $f_{i,j}$ d\'efinissent
un \'el\'ement $[D]$ de $H^1(Y,\O^\dual)$ qui ne d\'epend que de $D$,
et l'application $D\mapsto[D]$ induit un isomorphisme de
${\rm Pic}(Y)$ sur $H^1(Y,\O^\dual)$.

Soit $X$ une compactification de $Y$ par recollement de disques ouverts $D_i$
le long des \'el\'ements $C_0,\dots,C_r$ de $\partial^{\rm ad} Y$ (si $Y$ est propre,
alors $\partial^{\rm ad} Y=\emptyset$ et $X=Y$, bien s\^ur).
Alors $X$ est l'analytification d'une courbe alg\'ebrique propre, encore not\'ee $X$.

Choisissons $P_0\in X(C)$ (si $Y$ n'est pas compact, on prend pour $P_0$ le centre
de~$D_0$). 
Soient $J$ la jacobienne de la courbe alg\'ebrique $X$
(le groupe $J(C)$ est alors naturellement un groupe de Lie sur $C$), 
$\iota:X\to J$ le plongement
envoyant $P$ sur la classe de $P-P_0$, et $H$ le sous-groupe de $J(C)$ engendr\'e par les
$\iota(D_i)$. 

Si $Y$ est propre, l'application $\sum_Pn_PP\mapsto\big(\sum_P n_P\iota(P),\sum_Pn_P\big)$
induit un isomorphisme ${\rm Pic}(Y)\cong (J(C)\times\Z)$; si $Y$ n'est pas propre, on
a le r\'esultat suivant:
\begin{prop}\label{VDP}
{\rm (van der Put~\cite{vdp})}
Si $Y$ n'est pas propre, l'application $\sum_Pn_PP\mapsto\sum_P n_P\iota(P)$
induit un isomorphisme $J(C)/H\cong {\rm Pic}(Y)$.
\end{prop}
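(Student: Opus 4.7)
The plan is to define $\psi : {\rm Div}(Y) \to J(C)/H$ by $\sum n_P P \mapsto \sum n_P \iota(P) \bmod H$, well-defined on divisors since $Y$ quasi-compact forces finite support, and to show it factors through ${\rm Pic}(Y)$, inducing an isomorphism. Surjectivity will be immediate: any $\alpha \in J(C)$ is a finite sum $\sum m_j \iota(R_j)$ with $R_j \in X(C)$ (the Abel--Jacobi map from $X^g$ surjects onto $J(C)$), and after dropping the $R_j \in \bigcup_i D_i$ (whose $\iota$-images lie in $H$ by definition), the remaining terms give a divisor on $Y$ mapping to $\alpha \bmod H$. For injectivity, suppose $D = \sum n_P P$ on $Y$ satisfies $\sum n_P \iota(P) = \sum_j m_j \iota(Q_j)$ in $J(C)$, with $Q_j \in \bigcup_i D_i$; then the divisor $D - \sum_j m_j Q_j - \big(\sum_P n_P - \sum_j m_j\big)\,P_0$ on the projective curve $X$ has degree $0$ and trivial class in $J(C) = {\rm Pic}^0(X)$, hence equals ${\rm Div}(F)$ for some $F \in C(X)^\dual$. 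Since the auxiliary terms are supported in $X \setminus Y$, the restriction $F|_Y \in {\rm Frac}(\O(Y))^\dual$ has divisor $D$.

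The main obstacle is well-definedness on ${\rm Pic}(Y)$: given $f \in {\rm Frac}(\O(Y))^\dual$ with ${\rm Div}(f) = D = \sum n_P P$, one must show $\sum n_P \iota(P) \in H$. The strategy is to produce $F \in C(X)^\dual$ whose divisor on $X$ is $D + \sum_i k_i \cdot(\text{centre de }D_i)$ for suitable integers $k_i$: then $F$ principal forces $\sum n_P \iota(P) + \sum_i k_i \iota(\text{centre de }D_i) = 0$ in ${\rm Pic}^0(X) = J(C)$, placing $\sum n_P \iota(P)$ in $H$. To arrange this, the lemme~\ref{basic32} provides, on each cercle fant\^ome $C_i$ of the boundary, the unique factorisation $f|_{C_i} = c_i T_i^{k_i} u_{+,i} u_{-,i}$ with $u_{+,i} \in 1 + T_i \O_C[[T_i]]$ (extending to a unit on the disk $D_i$) and $u_{-,i} \in 1 + T_i^{-1}{\goth m}_C\langle T_i^{-1}\rangle$ (convergent on the $Y$-side of $C_i$); the piece $g_i := c_i T_i^{k_i} u_{+,i}$ is then meromorphic on $D_i$ with divisor $k_i \cdot (\text{centre de }D_i)$.

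The delicate step, where the real work lies, is to absorb all the boundary parts $u_{-,i}$ by a single global unit of $\O(Y)$, so that after this modification $f$ and the $g_i$ glue into a rational function $F$ on $X$. I handle this through the Mayer--Vietoris exact sequence
$$\O(Y)^\dual \times \prod_i \O(D_i)^\dual \longrightarrow \prod_i \O(C_i)^\dual \longrightarrow {\rm Pic}(X) \longrightarrow {\rm Pic}(Y)$$
for the cover $X = Y \cup \bigsqcup_i D_i$ with intersections $C_i$ (using ${\rm Pic}(D_i) = 0$), analysing each factor of lemme~\ref{basic32} in $\prod_i \O(C_i)^\dual$ modulo the image of the first arrow: the $u_{+,i}$ are in the image of $\prod_i \O(D_i)^\dual$; the $u_{-,i}$, convergent on the $Y$-side, are shown (the crux) to come from global units on $Y$; and the monomial pieces $T_i^{k_i}$ project precisely onto the subgroup $H \oplus \Z \subset J(C) \oplus \Z = {\rm Pic}(X)$ under the standard splitting. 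This identifies $\ker({\rm Pic}(X) \to {\rm Pic}(Y))$ with $H \oplus \Z$, yielding the isomorphism ${\rm Pic}(Y) \cong J(C)/H$ via $\psi$.
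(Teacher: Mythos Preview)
The paper does not prove this proposition; it is quoted from van der Put~\cite{vdp} and used as a black box.

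Your reductions in the first two paragraphs are sound: surjectivity and injectivity are straightforward once well-definedness is established, and well-definedness amounts to producing, for each $f\in{\rm Frac}(\O(Y))^\dual$, a rational $F\in C(X)^\dual$ with ${\rm Div}_X(F)-{\rm Div}_Y(f)$ supported in $\bigcup_i D_i$. However, the step you flag as ``the crux'' is not merely hard but \emph{false}: the tuple $(u_{-,i})_i$ produced by lemme~\ref{basic32} does not, in general, lie in the image of $\O(Y)^\dual\to\prod_i\O(C_i)^\dual$.

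Here is why. If your claim held, your construction would yield $F$ with ${\rm Div}_X(F)={\rm Div}_Y(f)+\sum_i k_iP_i$, so $[{\rm Div}_Y(f)]_X$ would land in the subgroup of ${\rm Pic}(X)$ generated by the \emph{centres} $[P_i]$ alone. But the kernel of ${\rm Pic}(X)\to{\rm Pic}(Y)$ is $H\oplus\Z$, generated by \emph{all} $[Q]$ with $Q\in\bigcup_i D_i$; for ${\rm genus}(X)\geq 1$ this is strictly larger (for instance $[Q]-[P_0]=\iota(Q)\neq 0$ for generic $Q\in D_0$, yet $\O_X(Q-P_0)$ trivializes on $Y$). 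Concretely: take $X$ of genus~$\geq 1$, a single disk $D_0$, and $u_{-,0}=1-aT_0^{-1}$ with $a\in{\goth m}_C\moins\{0\}$. Any $u\in\O(Y)^\dual$ with $u|_{C_0}=u_{-,0}$ would extend meromorphically across $D_0$ as $(T_0-a)/T_0$, giving a rational function on $X$ of divisor $Q-P_0$; this forces $\iota(Q)=0$, contradicting the injectivity of $\iota$. Such a $u_{-,0}$ genuinely arises from a meromorphic $f$ on $Y$: take $f=G|_Y$ for $G\in C(X)^\dual$ with ${\rm Div}_X(G)=(R)+(S)-(Q)-(P_0)$ and $R,S\in Y$. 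For the same reason, your assertion that the monomials $T_i^{k_i}$ already sweep out $H\oplus\Z$ is incorrect; they only give the subgroup generated by the~$[P_i]$. The missing contribution to $H$ comes precisely from the $u_{-,i}$ that you tried to kill.
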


\begin{rema}\label{inte3}
Supposons $Y$ non propre (et donc $Y$ est un affino\"{\i}de). 

{\rm (i)}
$H$ est un sous-groupe ouvert de $J(C)$, cf. \cite{vdp}. 

{\rm (ii)} Si $C=\C_p$, alors $J(C)/H$ est un groupe de torsion (cf. \cite[th.4.1]{RLC} ou \cite[th.3]{Cz-inte} pour une preuve plus \'el\'ementaire); il en est donc de
m\^eme de ${\rm Pic}(Y)$. 

{\rm (iii)} ${\rm Pic}(Y)$ est $p$-divisible (puisque $J(C)$ l'est). Puisque $H^2(Y, {\bf G}_m)=0$ 
(cf. \cite[lemma\,6.1.2]{Berk}), la suite de Kummer montre que $H^2(Y, \bmu_p)=0$ 
(voir aussi \cite[cor.\,6.1.3]{Berk}). 
\end{rema}
\subsubsection{La suite de Kummer}\label{SSS15}
Si $\ell$ est un nombre premier, et
si $M=\Z/\ell^n,\Z_\ell,\Q_\ell$, les groupes de cohomologie \'etale
$H^i_{\eet}(Y,M(1))$ sont, par d\'efinition, reli\'es par: 
$$H^i_{\eet}(Y,\Q_\ell(1))=\Q_\ell\otimes H^i_{\eet}(Y,\Z_\ell(1)),
\quad H^i_{\eet}(Y,\Z_\ell(1))=\varprojlim H^i_{\eet}(Y,\Z/\ell^n(1)).$$

La suite exacte $0\to\Z/\ell^n(1)\to{\bf G}_m\to{\bf G}_m\to 0$ induit
la suite exacte de Kummer:
$$0\to (\Z/\ell^n)\otimes\O(Y)^\dual\to H^1_{\eet}(Y,\Z/\ell^n(1))\to {\rm Pic}(Y)[\ell^n]\to 0.$$

En prenant une limite projective sur $n$, puis en inversant $\ell$,
on obtient les suites exactes:
\begin{align*}
0\to \Z_\ell\widehat\otimes\O(Y)^\dual\to H^1_{\eet}(Y,\Z_\ell(1))\to T_\ell({\rm Pic}(Y))\to 0,\\
0\to \Q_\ell\widehat\otimes \O(Y)^\dual\to H^1_{\eet}(Y,\Q_\ell(1))\to V_\ell({\rm Pic}(Y))\to 0.
\end{align*}
\begin{rema}\label{bana1}
Les $\Z_\ell$-modules $\Z_\ell\widehat\otimes\O(Y)^\dual$ et
$T_\ell({\rm Pic}(Y))$ sont sans torsion et complets pour la topologie $\ell$-adique; il en est donc de m\^eme
de $H^1_{\eet}(Y,\Z_\ell(1))$.
Il s'ensuit que $H^1_{\eet}(Y,\Q_\ell(1))$ est un $\Q_\ell$-banach dont la boule unit\'e
est $H^1_{\eet}(Y,\Z_\ell(1))$.
\end{rema}

\subsubsection{Description de la cohomologie \'etale en termes de symboles}\label{BAS6}
Soit $A_{\ell,n}(Y)$ le groupe 
$$A_{\ell,n}(Y)=\{f\in C(Y)^\dual,\
{\rm Div}(f)\in  \ell^n{\rm Div}(Y)\}.$$
Si $f\in A_{\ell,n}(Y)$, on associe \`a $f$ sa classe de Kummer $c_n(f)$ dans
$H^1_{\eet}(Y,\Z/\ell^n(1))$ (comme ${\rm Div}(f)\in  \ell^n{\rm Div}(Y)$, l'alg\`ebre
obtenue en rajoutant $f^{1/\ell^n}$ est \'etale sur $\O(Y)$).
\begin{lemm}\label{inte10}
$c_n$ induit un isomorphisme: $$A_{\ell,n}(Y)/(C(Y)^\dual)^{\ell^n}\cong H^1_{\eet}(Y,\Z/\ell^n(1)).$$
\end{lemm}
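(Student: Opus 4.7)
Le plan est de comparer directement, \textit{via} le lemme des cinq, la suite exacte courte de Kummer
$$0\to (\Z/\ell^n)\otimes\O(Y)^\dual\to H^1_{\eet}(Y,\Z/\ell^n(1))\to {\rm Pic}(Y)[\ell^n]\to 0$$
\`a une suite analogue construite uniquement \`a partir du groupe $A_{\ell,n}(Y)$.

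\textbf{Construction de $c_n$.} \`A partir de $f\in A_{\ell,n}(Y)$, on fabrique un $\mu_{\ell^n}$-torseur sur $Y$, d'o\`u la classe $c_n(f)\in H^1_{\eet}(Y,\Z/\ell^n(1))$: on \'ecrit ${\rm Div}(f)=\ell^n D$, et localement $f=u\cdot h^{\ell^n}$ avec $u\in\O_{Y,x}^\dual$ et $h$ m\'eromorphe, ce qui permet de voir $\O_Y[X]/(X^{\ell^n}-f)$ comme une $\O_Y$-alg\`ebre \'etale par le changement $X=h\,Z$ (globalement, il faut prendre la normalis\'ee, mais la construction est standard). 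La v\'erification $c_n((C(Y)^\dual)^{\ell^n})=0$ est imm\'ediate.

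\textbf{Suite exacte au niveau des symboles.} Je souhaite montrer que
$$0\to (\Z/\ell^n)\otimes\O(Y)^\dual\longrightarrow A_{\ell,n}(Y)/(C(Y)^\dual)^{\ell^n}\overset{\alpha}{\longrightarrow} {\rm Pic}(Y)[\ell^n]\to 0$$
est exacte, o\`u $\alpha(f)=[D]$ si ${\rm Div}(f)=\ell^n D$. La surjectivit\'e de $\alpha$ provient de ce que tout $[D]\in {\rm Pic}(Y)[\ell^n]$ v\'erifie $\ell^n D={\rm Div}(f)$ pour un $f$, qui appartient alors \`a $A_{\ell,n}(Y)$. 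Son noyau: si $[D]=0$ alors $D={\rm Div}(g)$ avec $g\in C(Y)^\dual$, et $fg^{-\ell^n}$ a diviseur nul, donc appartient \`a $\O(Y)^\dual$. L'injection $\O(Y)^\dual\hookrightarrow A_{\ell,n}(Y)/(C(Y)^\dual)^{\ell^n}$ est de noyau $\O(Y)^\dual\cap(C(Y)^\dual)^{\ell^n}=(\O(Y)^\dual)^{\ell^n}$, la derni\`ere \'egalit\'e \'etant cons\'equence de ce que, si $u\in\O(Y)^\dual$ s'\'ecrit $g^{\ell^n}$ avec $g\in C(Y)^\dual$, alors $\ell^n{\rm Div}(g)={\rm Div}(u)=0$, d'o\`u ${\rm Div}(g)=0$ et $g\in\O(Y)^\dual$.

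\textbf{Commutativit\'e et conclusion.} Reste \`a comparer les deux suites exactes \textit{via} $c_n$ et l'identit\'e sur ${\rm Pic}(Y)[\ell^n]$ et $(\Z/\ell^n)\otimes\O(Y)^\dual$. Sur un \'el\'ement $u\in\O(Y)^\dual$, le torseur fabriqu\'e par $c_n$ est exactement le torseur de Kummer habituel ${\rm Spec}(\O_Y[X]/(X^{\ell^n}-u))$, ce qui fait commuter le carr\'e de gauche. Pour le carr\'e de droite, la construction du torseur associ\'e \`a $f$ montre qu'il devient trivial apr\`es restriction au compl\'ementaire du support de $D$, et que son obstruction \`a \^etre globalement trivial est pr\'ecis\'ement la classe $[D]\in {\rm Pic}(Y)[\ell^n]={\rm Ker}(H^1(Y,\O_Y^\dual)\overset{\ell^n}\to H^1(Y,\O_Y^\dual))$, ce qui est l'image par le cobord de Kummer. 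Le lemme des cinq conclut. L'obstacle principal est donc purement la v\'erification que la fl\`eche $c_n$ construite pour des fonctions \`a p\^oles co\"{\i}ncide, au niveau du cobord, avec le cobord usuel de la suite de Kummer; mais cela se ram\`ene au calcul local ci-dessus.
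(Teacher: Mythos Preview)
Your proof is correct and follows essentially the same approach as the paper's: both construct the short exact sequence
$0\to \O(Y)^\dual/(\O(Y)^\dual)^{\ell^n}\to A_{\ell,n}(Y)/(C(Y)^\dual)^{\ell^n}\to {\rm Pic}(Y)[\ell^n]\to 0$
via $f\mapsto \ell^{-n}{\rm Div}(f)$, compare it to the Kummer sequence through $c_n$ and the identification ${\rm Pic}(Y)\cong H^1(Y,\O^\dual)$, and conclude by the five lemma. The paper is somewhat terser on the commutativity verification and the explicit construction of $c_n$, but the strategy is identical.
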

\begin{proof}
L'application $f\mapsto \ell^{-n}{\rm Div}(f)$ induit une
suite exacte
$$0\to \O(Y)^\dual/(\O(Y)^\dual)^{\ell^n}\to A_{\ell,n}(Y)/(C(Y)^\dual)^{\ell^n}\to {\rm Pic}(Y)[\ell^n]\to 0.$$
On en d\'eduit le diagramme commutatif suivant dans lequel les lignes sont exactes
$$\xymatrix@R=.5cm@C=.6cm{
0\ar[r]&\O(Y)^\dual/(\O(Y)^\dual)^{\ell^n}\ar@{=}[d]\ar[r]& A_{\ell,n}(Y)/(C(Y)^\dual)^{\ell^n}\ar[r]\ar[d]^{c_n}
&{\rm Pic}(Y)[\ell^n]\ar[d]^{D\mapsto[D]}\ar[r]&0\\
0\ar[r]&\O(Y)^\dual/(\O(Y)^\dual)^{\ell^n}\ar[r]& H^1_{\eet}(Y,\Z/\ell^n(1))\ar[r]
& H^1(Y, {\bf G}_m)[\ell^n]\ar[r]&0}$$
et on conclut gr\^ace au lemme des 5.
\end{proof}

Posons
\begin{align*}
A_{\ell,\infty}(Y)&= \{(u_n,v_n)_{n\in\N},\ u_n\in A_{\ell,n}(Y),\ u_{n+1}=u_nv_n^{\ell^n},\ v_n\in C(Y)^\dual\}\\
A_{\ell,\infty}(Y)^{\rm triv}&= \{(f_n^{\ell^n}, f_{n+1}^\ell/f_n)_{n\in\N},\ f_n\in C(Y)^\dual\}
\subset A_{\ell,\infty}(Y),
\end{align*}
et d\'efinissons le groupe des symboles $\ell$-adiques par
$${\rm Symb}_\ell(Y)=A_{\ell,\infty}(Y)/A_{\ell,\infty}(Y)^{\rm triv}.$$
Remarquons que $\O(Y)^{\dual\dual}\hookrightarrow {\rm Symb}_\ell(Y)$ (par $u\mapsto(u_n,v_n)_n$,
avec $u_n=u$ et $v_n=1$ pour tout $n$).

\begin{coro}\label{basic12}
On a un isomorphisme naturel
$${\rm Symb}_\ell(Y)\cong H^1_{\eet}(Y,\Z_\ell(1)).$$
\end{coro}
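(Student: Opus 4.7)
The plan is to deduce Corollary \ref{basic12} from Lemma \ref{inte10} by passing to the inverse limit over $n$. First I would set up the natural projection $A_{\ell,\infty}(Y) \to \varprojlim_n \bigl(A_{\ell,n}(Y)/(C(Y)^\dual)^{\ell^n}\bigr)$ sending $(u_n,v_n)_{n\in\N}$ to the family of classes $([u_n])_n$. The relation $u_{n+1} = u_n v_n^{\ell^n}$ ensures $u_{n+1} \equiv u_n$ modulo $(C(Y)^\dual)^{\ell^n}$, so $([u_n])_n$ is indeed compatible under the natural maps $A_{\ell,n+1}(Y)/(C(Y)^\dual)^{\ell^{n+1}} \to A_{\ell,n}(Y)/(C(Y)^\dual)^{\ell^n}$ (well-defined since $A_{\ell,n+1}(Y) \subset A_{\ell,n}(Y)$). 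On trivial elements $(f_n^{\ell^n}, f_{n+1}^\ell/f_n)$ the image is obviously zero, so we get a map $\text{Symb}_\ell(Y) \to \varprojlim_n \bigl(A_{\ell,n}(Y)/(C(Y)^\dual)^{\ell^n}\bigr)$.

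Next I would identify the right-hand side with $H^1_{\eet}(Y,\Z_\ell(1))$. The Kummer classes $c_n$ of Lemma \ref{inte10} are compatible in $n$ (the restriction of $c_{n+1}(u)$ to $H^1_{\eet}(Y,\Z/\ell^n(1))$ equals $c_n(u)$ by functoriality of the Kummer map), so taking inverse limits yields $\varprojlim_n \bigl(A_{\ell,n}(Y)/(C(Y)^\dual)^{\ell^n}\bigr) \cong \varprojlim_n H^1_{\eet}(Y,\Z/\ell^n(1)) = H^1_{\eet}(Y,\Z_\ell(1))$.

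It then remains to show that the map $\text{Symb}_\ell(Y) \to \varprojlim_n \bigl(A_{\ell,n}(Y)/(C(Y)^\dual)^{\ell^n}\bigr)$ is a bijection. For surjectivity, given a compatible family $([u_n])_n$, pick arbitrary lifts $u_n \in A_{\ell,n}(Y)$; compatibility forces $u_{n+1}/u_n \in (C(Y)^\dual)^{\ell^n}$, and we can write $u_{n+1}/u_n = v_n^{\ell^n}$, obtaining an element of $A_{\ell,\infty}(Y)$ mapping to $([u_n])_n$. For injectivity, suppose the class of $(u_n,v_n)$ maps to zero; then $u_n = f_n^{\ell^n}$ for some $f_n \in C(Y)^\dual$, and the relation $u_{n+1} = u_n v_n^{\ell^n}$ yields $f_{n+1}^\ell = \zeta_n f_n v_n$ for some $\zeta_n \in C$ with $\zeta_n^{\ell^n} = 1$.

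The main (and essentially only) obstacle is to absorb these spurious roots of unity $\zeta_n$ so as to recognize $(u_n, v_n)$ as an element of $A_{\ell,\infty}(Y)^{\rm triv}$. For this I would modify the lifts $f_n$ by scalars $\alpha_n \in \mu_{\ell^n}(C)$, requiring $\alpha_{n+1}^\ell = \alpha_n \zeta_n$. This recursion is solvable: starting from $\alpha_0 = 1$, one chooses at each step an $\ell$-th root of $\alpha_n \zeta_n$ in $C^\dual$ (possible since $C$ is algebraically closed), and one checks inductively that the resulting $\alpha_{n+1}$ automatically lies in $\mu_{\ell^{n+1}}(C)$ because $(\alpha_n \zeta_n)^{\ell^n} = 1$. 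Replacing $f_n$ by $\alpha_n f_n$ then gives $v_n = (\alpha_{n+1}f_{n+1})^\ell/(\alpha_n f_n)$, exhibiting $(u_n, v_n)$ as a trivial symbol, which concludes the proof.
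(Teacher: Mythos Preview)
Your proof is essentially the same as the paper's: both reduce to showing that ${\rm Symb}_\ell(Y) \to \varprojlim_n A_{\ell,n}(Y)/(C(Y)^\dual)^{\ell^n}$ is bijective, with injectivity handled by inductively modifying the $f_n$ by roots of unity to absorb the $\zeta_n$. There is one minor slip: from $f_{n+1}^\ell = \zeta_n f_n v_n$ the correct recursion making $(\alpha_{n+1}f_{n+1})^\ell/(\alpha_n f_n) = v_n$ is $\alpha_{n+1}^\ell = \alpha_n \zeta_n^{-1}$ (not $\alpha_n \zeta_n$), but this does not affect the argument.
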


\begin{proof}
Il suffit de prouver la bijectivit\'e de l'application naturelle 
${\rm Symb}_\ell(Y)\to \varprojlim_{n} A_{\ell,n}(Y)/(C(Y)^\dual)^{\ell^n}$. 
La surjectivit\'e est \'evidente, et si $(u_n, v_n)_{n\in \N}\in  A_{\ell,\infty}(Y)$ 
a une image nulle, on peut \'ecrire $u_n=f_n^{\ell^n}$ avec 
$f_n\in C(Y)^\dual$, et donc $f_{n+1}^\ell/f_n=\zeta_n v_n$ avec $\zeta_n\in\mu_{\ell^n}$. Il existe 
$\eta_n\in \mu_{\ell^n}$ tels que $\eta_{n+1}^{\ell}\zeta_n=\eta_n$ pour tout $n$, et en posant $g_n=\eta_n f_n$,
 on a $u_n=g_n^{\ell^n}$ et $v_n=g_{n+1}^{\ell}/g_n$.
\end{proof}

\subsubsection{Le cas des shorts}
Soit $X$ une courbe propre et lisse sur $\O_C$, et soit $J:={\rm Pic}^0(X)$ la jacobienne
de $X$. Alors $J$ est un sch\'ema ab\'elien sur $\O_C$ et on dispose d'un morphisme
de $\O_C$-sch\'ema $X\times X\to J$ envoyant $(P,Q)$ sur la classe du diviseur $Q-P$
(on note $(P,Q)\mapsto Q-P$ ce morphisme ainsi que les applications qui s'en d\'eduisent
sur les fibres g\'en\'eriques et sp\'eciales).

L'injection naturelle $J(\O_C)\to J(C)$ est un
isomorphisme, ce qui fournit une fl\`eche de r\'eduction $J(C)\to J(k_C)$ que l'on note $P\mapsto\bar P$
(on note de m\^eme la r\'eduction $X(C)\to X(k_C)$)
et dont on note $J(C)_0$ le noyau (c'est un sous-groupe de Lie de $J(C)$ car $\overline{Q-P}=\bar{Q}-\bar{P}$
et $J(C)_0$ est ouvert dans $J(C)$).

\begin{lemm}\label{BS1}
Si $P\in X(k_C)$, le sous-groupe de $J(C)$ engendr\'e par
les $Q_1-Q_2$, pour $Q_1,Q_2\in ]P[$, est $J(C)_0$.
\end{lemm}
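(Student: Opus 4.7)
L'inclusion $H\subset J(C)_0$ (où $H$ est le sous-groupe engendré par les $Q_1-Q_2$, $Q_i\in ]P[$) est immédiate: si $Q_1,Q_2\in ]P[$, alors $\overline{Q_1-Q_2}=\bar Q_1-\bar Q_2=P-P=0$ dans $J(k_C)$. Il s'agit donc d'établir l'inclusion réciproque. Le plan est de montrer que $H$ contient un voisinage ouvert de $0$ dans $J(C)_0$ (donc est ouvert dans $J(C)_0$), puis d'utiliser la compacité et la divisibilité pour conclure $H=J(C)_0$.

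Pour l'ouverture, je fixe un point $P_0\in ]P[$ (le tube est non vide puisque $X$ est propre et lisse sur $\O_C$, et donc la réduction $X(C)\to X(k_C)$ est surjective) et je considère le morphisme d'Abel-Jacobi
$$\phi_{P_0}:X^g\to J,\quad (Q_1,\dots,Q_g)\mapsto \textstyle\sum_i(Q_i-P_0).$$
Sa différentielle en $(A_1,\dots,A_g)$ envoie $(v_1,\dots,v_g)$ sur $\omega\mapsto\sum_i\omega(A_i)(v_i)$; c'est un isomorphisme $\bigoplus_i T_{A_i}X\overset{\sim}{\to} T_0J=H^0(X,\Omega^1)^\dual$ si et seulement si $(A_1,\dots,A_g)$ n'appartient pas au diviseur (de thêta tiré en arrière) défini par l'annulation de $\det(\omega_i(A_j))$, où $(\omega_i)$ est une base de $H^0(X,\Omega^1)$. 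Ce déterminant définit une fonction rigide-analytique non identiquement nulle sur $X^g$ (car $\phi_{P_0}$ est dominant pour $g\geq 1$); par continuation analytique, étant donné que $]P[$ est un ouvert non vide de la courbe irréductible $X$ (donc $]P[^g$ n'est contenu dans aucun fermé de Zariski strict de $X^g$), ce déterminant ne s'annule pas identiquement sur $]P[^g$. On peut donc choisir $(A_1,\dots,A_g)\in ]P[^g$ en lequel $\phi_{P_0}$ est étale.

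Le théorème d'inversion locale rigide analytique fournit alors un voisinage ouvert $U$ de $\phi_{P_0}(A_1,\dots,A_g)$ dans $J(C)$ contenu dans l'image de $]P[^g$ par $\phi_{P_0}$, donc dans $H$ (puisque $\sum_i(Q_i-P_0)=\sum_i(Q_i-P_0)$ est une somme d'éléments de $]P[-P_0\subset H$). En translatant par $-\phi_{P_0}(A_1,\dots,A_g)\in H$, on obtient un voisinage ouvert de $0$ dans $J(C)$ contenu dans $H$, ce qui prouve que $H$ est ouvert dans $J(C)_0$.

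Pour conclure, $J$ étant propre, $J(C)$ est compact; comme $J(C)_0$ est fermé (noyau d'une application continue vers le groupe discret $J(k_C)$), il est aussi compact, et donc $J(C)_0/H$ est fini (quotient d'un compact par un sous-groupe ouvert). D'autre part, $J(C)_0$ est divisible: partant de la suite exacte $0\to J(C)_0\to J(C)\to J(k_C)\to 0$, le lemme du serpent appliqué à la multiplication par $n$ donne $J(C)_0/n=0$ dès que $J[n](C)\to J[n](k_C)$ est surjective, ce qui résulte de la platitude (ou étaleté si $(n,p)=1$) de $J[n]$ sur $\O_C$. Ainsi $J(C)_0/H$ est fini et divisible, donc trivial, d'où $H=J(C)_0$.

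L'obstacle principal est l'étape du choix de $(A_1,\dots,A_g)\in ]P[^g$ rendant $\phi_{P_0}$ étale: il faut s'assurer que le lieu où la différentielle dégénère (le diviseur thêta tiré en arrière) n'engloutit pas tout $]P[^g$, ce qui se fait par continuation analytique, en utilisant l'irréductibilité algébrique de $X^g$ et la propriété que $]P[$ est un ouvert rigide non vide de $X$.
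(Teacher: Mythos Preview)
Your argument has a genuine gap in the final step. The claim ``$J$ étant propre, $J(C)$ est compact'' is false: properness of $J$ over $C$ makes the Berkovich space $J^{\an}$ compact, but the set of $C$-points with its $p$-adic Lie group topology is \emph{not} compact, because $C$ is not locally compact (the residue field $k_C$ is infinite, the value group is dense). Concretely, near~$0$ the logarithm identifies $J(C)_0$ with an open in $\mathfrak{m}_C^g$, and $\mathfrak{m}_C$ is not compact. Without compactness you cannot conclude that $J(C)_0/H$ is finite; for a general open subgroup $H'\subset J(C)_0$ the quotient is typically infinite (over $\C_p$ it is torsion by a nontrivial argument, cf.~rem.~\ref{inte3}(ii), but a torsion divisible group need not vanish --- think of $\Q/\Z$). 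So the chain ``open $\Rightarrow$ finite index $\Rightarrow$ divisible and finite $\Rightarrow$ trivial'' breaks at the second step.

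Your first part, producing $(A_1,\dots,A_g)\in\,]P[^g$ where $\phi_{P_0}$ is étale and deducing that $H$ is open, is correct; but openness alone is too weak. The paper bypasses the issue by taking $d\geq 2g+1$ instead of $d=g$: then $\Sigma_d:X^{(d)}\to J$ is a $\mathbf{P}^{d-g}$-bundle (the projectivization of a vector bundle, hence Zariski-locally trivial), so over a Zariski open $U\ni 0$ of $J$ \emph{as an $\O_C$-scheme} one can choose a section $s:U\to X^{(d)}$ with $s(0)=d\cdot\tilde P$. Since $s$ is a morphism over~$\O_C$ and every point of $J(C)_0$ reduces to $0\in J(k_C)$, one gets $s(J(C)_0)\subset\,]d\cdot P[\,=\,]P[^{(d)}$, i.e.\ every $x\in J(C)_0$ equals $\sum_{i=1}^d(Q_i-\tilde P)$ with $Q_i\in\,]P[$. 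This gives $J(C)_0\subset H$ directly, with no quotient argument needed. The key difference is that by increasing~$d$ the paper obtains a genuine algebraic section defined integrally, whereas your local étale inverse at $d=g$ only gives an analytic open image.
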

\begin{proof}
Soit $g$ le genre de $X$.  Il n'y a rien \`a prouver si $g=0$; on peut donc supposer
$g\geq 1$.  
Si $Q_1,Q_2\in]P[$, on a $\bar Q_1=\bar Q_2=P$ et donc $Q_1-Q_2\in J(C)_0$, ce qui prouve une
des deux inclusions.

Pour prouver l'autre,
fixons $\tilde P\in ]P[$ et soit $d\geq 2g+1$.  Alors $\Sigma_d:X^d\to J$
d\'efinie par $\Sigma_d(Q_1,\dots,Q_d)=\sum_{i=1}^d(Q_i-\tilde P)$ fait de $X^d/S_d$
une fibration en ${\bf P}^{d-g}$ au-dessus de~$J$.
Il existe un ouvert de Zariski $U$ de $J$, contenant la section nulle $0\in J(\O_C)$,
tel que cette fibration soit triviale sur $U$, ce qui fournit une section
$s:U\to X^d/S_d$
prenant la valeur $(\tilde P,\dots,\tilde P)$ en~$0$. 
Alors $U(\O_C)$ contient $J(C)_0$ et $s(J(C)_0)$ est contenue dans $]P[^d$, 
ce qui montre que tout \'el\'ement de
$J(C)_0$ est la somme d'au plus $d$ \'el\'ements de la forme $Q-\tilde P$, avec $Q\in]P[$.
D'o\`u le r\'esultat.
\end{proof}

Soit $Y$ un $\O_C$-short obtenu en enlevant \`a $X$ les tubes de points $Q_0,\dots,Q_r\in X(k_C)$, 
deux \`a deux distincts, et soit $Y^{\rm gen}$ l'affino\"{\i}de qui
est la fibre g\'en\'erique de $Y$. 
Les $Q_i$ sont en bijection naturelle avec les \'el\'ements de
$\partial^{\rm ad}Y$. 

Soit $H(k_C)$
le sous-groupe de $J(k_C)$ engendr\'e par les $Q_i-Q_j$.
Il r\'esulte du lemme~\ref{BS1} et de la prop.~\ref{VDP} que:
\begin{coro}\label{BS2}
${\rm Pic}(Y^{\rm gen})=J(k_C)/H(k_C)$.
\end{coro}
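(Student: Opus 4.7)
The plan is to combine Proposition~\ref{VDP} with Lemma~\ref{BS1} via the reduction exact sequence on $J$, and observe that the kernel of reduction is already absorbed into the ``disk subgroup'' $H$ of Proposition~\ref{VDP}.

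First I would apply Proposition~\ref{VDP} to the affinoid $Y^{\rm gen}$, obtained from the projective curve $X$ by removing the open disks $D_i = ]Q_i[$, with base point $P_0\in D_0$. This gives $\mathrm{Pic}(Y^{\rm gen})\cong J(C)/H$, where $H\subset J(C)$ is the subgroup generated by all classes $\iota(P) = P-P_0$ for $P\in D_i(C)$ and $i=0,\dots,r$. Next, since $J$ is smooth and proper over $\mathcal{O}_C$ one has the reduction exact sequence
$$0\longrightarrow J(C)_0 \longrightarrow J(C)\longrightarrow J(k_C)\longrightarrow 0,$$
with $J(C)_0$ the kernel of $P\mapsto\bar P$.

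The second step is to compute the image of $H$ under reduction. For $P\in D_i(C)$, we have $\bar P=Q_i$ and $\bar P_0 = Q_0$, so $\overline{\iota(P)} = Q_i-Q_0$. Hence the image of $H$ in $J(k_C)$ is the subgroup generated by the $Q_i-Q_0$, which coincides with $H(k_C)$ via the identity $Q_i-Q_j=(Q_i-Q_0)-(Q_j-Q_0)$.

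The crucial point — and the only one that requires real input — is that $J(C)_0\subset H$. For this I apply Lemma~\ref{BS1} with the choice $P=Q_0$: it says $J(C)_0$ is generated by differences $Q_1-Q_2$ with $Q_1,Q_2\in ]Q_0[ = D_0$. Any such difference can be rewritten as $(Q_1-P_0)-(Q_2-P_0)=\iota(Q_1)-\iota(Q_2)\in H$, so $J(C)_0\subset H$ as required. Passing to the quotient in the reduction sequence then yields $J(C)/H \cong J(k_C)/H(k_C)$, and combining with the first step concludes the proof. The main technical content is really hidden in Lemma~\ref{BS1}; once that is in hand the corollary reduces to a bookkeeping exercise on reduction maps.
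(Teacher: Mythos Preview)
Your argument is correct and is exactly the approach the paper intends: the paper's proof consists of the single sentence that the result follows from Lemma~\ref{BS1} and Proposition~\ref{VDP}, and you have correctly unpacked this by showing that $J(C)_0\subset H$ (via Lemma~\ref{BS1} applied at $Q_0$) and that the image of $H$ under reduction is $H(k_C)$, so that $J(C)/H\cong J(k_C)/H(k_C)$.
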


\begin{rema}\label{Gerritzen} 
En combinant le cor.\,\ref{BS2} et le calcul standard du groupe de Picard de 
$X^{\rm sp}\moins \{Q_0,...,Q_r\}$ on en d\'eduit un isomorphisme 
$${\rm Pic}(Y^{\rm gen})\cong {\rm Pic}(\bar{Y}),$$ 
o\`u $\bar{Y}={\rm Spec}(\O(Y)^+\otimes_{\O_C} k_C)$ est la r\'eduction canonique de 
$Y$. Ce r\'esultat est un cas particulier d'un th\'eor\`eme de Gerritzen \cite{G} et Heinrich-van der Put \cite{HvdP} dans le cas d'une valuation discr\`ete.
\end{rema}

\begin{rema}\label{BS2.1}
{\rm (i)} On a un isomorphisme\footnote{Notons que $\O(Y^{\rm gen})^{\dual\dual}=\O(Y)^{\dual\dual}$.}
$$\O(Y^{\rm gen})^\dual/C^\dual\O(Y)^{\dual\dual}\overset{\sim}{\to}
 \O(Y^{\rm sp})^\dual/k_C^\dual,$$
la fl\`eche envoyant $u\in \O(Y^{\rm gen})^\dual$ sur la r\'eduction $\overline u$ de $p^{-r}u$,
avec $r=v_Y(u)$. 

{\rm (ii)} On a une suite exacte
$$0\to \O(Y^{\rm sp})^\dual/k_C^\dual\to {\rm Ker}\big(\dbar:\Z^{\partial^{\rm ad}Y}\to\Z\big)
\to H(k_C)\to 0,$$
o\`u $\dbar:\Z^{\partial^{\rm ad}Y}\to\Z$ envoie $(n_i)_i$ sur $\sum_i n_i$,
l'application de $\O(Y^{\rm sp})^\dual/k_C^\dual$ dans $\Z^{\partial^{\rm ad}}$
est $f\mapsto (v_{Q_i}(f))_i$, et celle de
${\rm Ker}\big(\dbar:\Z^{\partial^{\rm ad}Y}\to\Z\big)$ dans $H(k_C)$
est $(n_i)_i\mapsto \sum_i n_iQ_i$.
\end{rema}
On dispose d'une application r\'esidu ${\rm Symb}_\ell(Y^{\rm gen})\to \Z_\ell^{\partial^{\rm ad}Y}$
d\'efinie comme la compos\'ee de la restriction ${\rm Symb}_\ell(Y^{\rm gen})\to
{\rm Symb}_\ell(\partial^{\rm ad}Y)$ et de l'application r\'esidu sur chacun des disque
fant\^omes de $\partial^{\rm ad}Y$. En combinant avec l'isomorphisme
du cor.~\ref{basic12}, cela fournit une application r\'esidu
$H^1_{\eet}(Y^{\rm gen},\Z_\ell(1))\to \Z_\ell^{\partial^{\rm ad}Y}$.
\begin{prop}\label{short21}
{\rm (i)}
Si $\ell\neq p$, l'application r\'esidu induit une suite exacte
$$0\to T_\ell J(k_C)\to H^1_{\eet}(Y^{\rm gen},\Z_\ell(1))\to 
{\rm Ker}\big(\dbar:\Z_\ell^{\partial^{\rm ad}Y}\to\Z_\ell\big)\to 0.$$

{\rm (ii)}
Si $\ell= p$, l'application r\'esidu induit une suite exacte
$$0\to T_p J(k_C)\to H^1_{\eet}(Y^{\rm gen},\Z_p(1))/(\Z_p\wotimes\O(Y)^{\dual\dual})\to 
{\rm Ker}\big(\dbar:\Z_p^{\partial^{\rm ad}Y}\to\Z_p\big)\to 0.$$
\end{prop}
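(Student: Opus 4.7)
\medskip

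\noindent\emph{Proof proposal.} The plan is to combine the Kummer exact sequence for $H^1_{\eet}(Y^{\rm gen},\Z_\ell(1))$ with the computation of ${\rm Pic}(Y^{\rm gen})$ given by Cor.~\ref{BS2} and the two short exact sequences of Rem.~\ref{BS2.1}, and then identify the residue map with the two boundary maps via a diagram chase. From \S\ref{SSS15}, Kummer gives
$$0\to \Z_\ell\wotimes \O(Y^{\rm gen})^\dual\to H^1_{\eet}(Y^{\rm gen},\Z_\ell(1))\to T_\ell {\rm Pic}(Y^{\rm gen})\to 0,$$
and Cor.~\ref{BS2} identifies ${\rm Pic}(Y^{\rm gen})$ with $J(k_C)/H(k_C)$.

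The Tate module of the Picard group I will compute by applying the snake lemma (for multiplication by $\ell^n$ and then $\varprojlim$) to the short exact sequence $0\to H(k_C)\to J(k_C)\to J(k_C)/H(k_C)\to 0$. Over the algebraically closed field $k_C$, $J(k_C)$ is divisible, hence so is $J(k_C)/H(k_C)$; and $H(k_C)$ is finitely generated, so $T_\ell H(k_C)=0$ and $\varprojlim_n H(k_C)/\ell^n H(k_C)=\Z_\ell\otimes H(k_C)$. This yields
$$0\to T_\ell J(k_C)\to T_\ell {\rm Pic}(Y^{\rm gen})\to \Z_\ell\otimes H(k_C)\to 0.$$
In parallel, the short exact sequence of Rem.~\ref{BS2.1}\,(ii) tensored with $\Z_\ell$ (flat) gives
$$0\to \Z_\ell\otimes\bigl(\O(Y^{\rm sp})^\dual/k_C^\dual\bigr)\to {\rm Ker}\bigl(\dbar:\Z_\ell^{\partial^{\rm ad}Y}\to\Z_\ell\bigr)\to \Z_\ell\otimes H(k_C)\to 0.$$

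Next, I will compute the quotient $\Z_\ell\wotimes\O(Y^{\rm gen})^\dual / (\Z_\ell\wotimes\O(Y)^{\dual\dual})$. Writing $C^\dual/(1+{\goth m}_C)\cong k_C^\dual\times\Q$, the groups $k_C^\dual$ and $\Q$ are both $p$-divisible with trivial $p$-torsion in $k_C$-characteristic $p$, so $T_\ell(C^\dual/(1+{\goth m}_C))=0$ and $\varprojlim_n(C^\dual/(1+{\goth m}_C))/\ell^n=0$, for \emph{any} prime $\ell$. Combined with the filtration $\O(Y)^{\dual\dual}\subset C^\dual\cdot\O(Y)^{\dual\dual}\subset \O(Y^{\rm gen})^\dual$ coming from Rem.~\ref{BS2.1}\,(i), the snake lemma on the $\ell^n$-multiplication and a short Mittag-Leffler argument (using that $\O(Y^{\rm sp})^\dual/k_C^\dual$ is a finitely generated free $\Z$-module) identify this quotient with $\Z_\ell\otimes(\O(Y^{\rm sp})^\dual/k_C^\dual)$. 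For $\ell\neq p$ one has moreover $\Z_\ell\wotimes\O(Y)^{\dual\dual}=0$ because $1+\O^{++}(Y)$ is $\ell$-divisible and $\ell$-torsion-free (the binomial series $(1+x)^{1/\ell}$ converges).

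Putting everything together, I get a commutative diagram with exact rows in which the middle row is $0\to\Z_\ell\otimes(\O(Y^{\rm sp})^\dual/k_C^\dual)\to H^1_{\eet}(Y^{\rm gen},\Z_\ell(1))/(\Z_\ell\wotimes\O(Y)^{\dual\dual})\to T_\ell{\rm Pic}(Y^{\rm gen})\to 0$, and the residue map ${\rm res}:H^1_{\eet}(Y^{\rm gen},\Z_\ell(1))\to \Z_\ell^{\partial^{\rm ad}Y}$ (coming from restriction to each cercle fant\^ome of $\partial^{\rm ad}Y$ and Prop.~\ref{jambe11}) restricts on the left-hand subgroup to the valuation map $\O(Y^{\rm sp})^\dual/k_C^\dual\hookrightarrow\ker\dbar$ of Rem.~\ref{BS2.1}\,(ii). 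The main obstacle, and the step I expect to be the most delicate, is verifying that the induced map $T_\ell{\rm Pic}(Y^{\rm gen})\to \Z_\ell\otimes H(k_C)$ on the quotient agrees with the snake-lemma connecting map of the first computation: this amounts to tracing through Kummer theory that the residue of an $\ell^n$-th root symbol $(f_n)$ at a boundary point $Q_i$ equals $\ell^{-n}v_{Q_i}(f_n)$, and then checking that reducing a divisor class modulo $H(k_C)$ corresponds to taking its boundary of valuations modulo the image of $\O(Y^{\rm sp})^\dual/k_C^\dual$. Once this compatibility is established, an elementary diagram chase gives $\ker({\rm res})=T_\ell J(k_C)$ and ${\rm Im}({\rm res})=\ker\dbar$, which yields both (i) and (ii) simultaneously (the only difference being whether one has to quotient by $\Z_\ell\wotimes\O(Y)^{\dual\dual}$, which vanishes exactly when $\ell\neq p$).
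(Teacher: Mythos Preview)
Your proposal is correct and follows essentially the same route as the paper: Kummer plus Cor.~\ref{BS2} for the middle row, the Tate-module snake lemma on $0\to H(k_C)\to J(k_C)\to J(k_C)/H(k_C)\to 0$ for the right column, $\Z_\ell\otimes$(Rem.~\ref{BS2.1}\,(ii)) for the bottom row, and Rem.~\ref{BS2.1}\,(i) to identify the left column. The one place where the paper is more explicit than you is the commutativity you flag as delicate: it fixes $P_0\in\,]Q_0[$, uses the symbol description (Cor.~\ref{basic12}), and observes that the two candidate maps to $\Z_\ell\otimes H(k_C)$ differ termwise by $\iota({\rm Div}(\overline u_n))$ with $\overline u_n$ viewed as a rational function on all of $X^{\rm sp}$, hence by the image of a principal divisor, i.e.\ zero in $J(k_C)$.
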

\begin{proof}
On a une suite exacte (car la multiplication par $\ell^n$ est
surjective sur $J(k_C)$)
\begin{align*}
0\to T_\ell J(k_C)\to T_\ell(J(k_C)/H(k_C))\to \Z_\ell\otimes H(k_C)\to 0
\end{align*}
Le diagramme suivant est alors commutatif \`a lignes et colonnes exactes:

$$\xymatrix@R=.4cm@C=.4cm{
&&0\ar[d]&0\ar[d]\\
&&{\rm Ker}\ar[d]\ar[r]^-{\sim}& T_\ell J(k_C)\ar[d]\\
0\ar[r]&\Z_\ell\otimes\O(Y^{\rm sp})^\dual\ar[r]\ar@{=}[d]
&\frac{H^1_{\eet}(Y^{\rm gen},\Z_\ell(1))}{\Z_\ell\wotimes\O(Y)^{\dual\dual}}\ar[d]\ar[r] 
&T_\ell(J(k_C)/H(k_C))\ar[d]\ar[r]&0\\
0\ar[r]&\Z_\ell\otimes\O(Y^{\rm sp})^\dual\ar[r]&
{\rm Ker}\big(\dbar:\Z_\ell^{\partial^{\rm ad}Y}\to\Z_\ell\big)\ar[r]\ar[d]
&\Z_\ell\otimes H(k_C)\ar[r]\ar[d]&0\\
&&0&0}
$$

$\bullet$
Pour prouver la commutativit\'e, fixons $P_0\in]Q_0[$ et notons $\iota:X\to J$
l'application $P\mapsto P-P_0$ (c'est la restriction \`a $\{P_0\}\times X$ de l'application
consid\'er\'ee plus haut) que l'on \'etend par additivit\'e \`a ${\rm Div}(X)$. 
On note $\overline\iota:{\rm Div}(X)\to J(k_C)$ la compos\'ee de $\iota$ et de la r\'eduction.

On utilise la description de $H^1_{\eet}(Y^{\rm gen},\Z_\ell(1)$
en termes de symboles (cor.\,\ref{basic12}). Soit donc
$(u_n,v_n)_n\in A_{\ell,\infty}(Y)$.
La fl\`eche de $H^1_{\eet}(Y^{\rm gen},\Z_\ell(1))\to \Z_\ell\otimes H(k_C)$ passant
par $T_\ell(J(k_C)/H(k_C))$ envoie $(u_n,v_n)_n$ sur $\lim_{n\to\infty}\overline{\iota}({\rm Div}(u_n))$,
o\`u ${\rm Div}(u_n)$ est le diviseur de $u_n$ sur $Y$ 
($\overline{\iota}({\rm Div}(u_n))$ est aussi l'image $\iota({\rm Div}(\overline u_n))$
dans $J(k_C)$ du diviseur de $\overline u_n$, vue comme fonction sur $Y^{\rm sp}$).
L'autre fl\`eche envoie $(u_n,v_n)_n$ sur $-\lim_{n\to\infty}\sum_iv_{Q_i}(\overline u_n)\,\iota(Q_i)$.
La diff\'erence entre les termes g\'en\'eraux
des deux suites est $\iota({\rm Div}(\overline u_n))$ o\`u,
cette fois-ci, $\overline u_n$ est vue comme une fonction sur $X^{\rm sp}$;
cette diff\'erence est donc nulle dans $J(k_C)$ puisque
c'est l'image d'un diviseur principal sur~$X^{\rm sp}$.

$\bullet$ La premi\`ere ligne est obtenue en quotientant les deux premiers termes
de la suite de Kummer par $\Z_\ell\wotimes\O(Y)^{\dual\dual}$ et en utilisant le 
(i) de la rem.~\ref{BS2.1}.

$\bullet$ La seconde ligne est obtenue en tensorisant par $\Z_\ell$ la suite du (ii) de
la rem.~\ref{BS2.1}.

$\bullet$ La seconde colonne verticale est la suite exacte ci-dessus.

Le r\'esultat s'en d\'eduit en utilisant
le fait que $\Z_\ell\wotimes\O(Y)^{\dual\dual}=0$ si $\ell\neq p$.
\end{proof}

\begin{rema}
{\rm (i)} $\O(Y)^{\dual\dual}$ n'est pas $p$-adiquement complet et il s'injecte strictement
dans $\Z_p\wotimes\O(Y)^{\dual\dual}$.

{\rm (ii)}
Si $\ell\neq p$, alors 
${\rm rg}_{\Z_\ell}
T_\ell J(k_C)=2g$, mais si $\ell=p$, alors
${\rm rg}_{\Z_p}
T_p J(k_C)$ est \'egal \`a la dimension du sous-espace de pente~$0$ du module
de Dieudonn\'e de $J(k_C)$ et peut prendre les valeurs $0,1,\dots,g$.
\end{rema}

\Subsection{Cohomologie syntomique}\label{BAS18}
\subsubsection{Lien avec le complexe de de Rham}
Soit $Y$ un short sur $\O_C$, et soit $\breve Y$ un mod\`ele sur $\O_{\breve C}$.
On choisit un frobenius $\varphi$ sur $\O(\breve Y)$, et on note encore $\varphi$
son extension \`a
$$\O(\widetilde Y):=\acris\wotimes_{\O_{\breve C}}\O(\breve Y)$$
Les $H^i_{\rm syn}(Y,1)$,
sont les groupes de cohomologie du complexe
$${\rm Syn}(Y,1):=\xymatrix@C=1.2cm{F^1\O(\widetilde Y)\ar[r]^-{d,1-\frac{\varphi}{p}}
&\Omega^1(\widetilde Y)\oplus \O(\widetilde Y)\ar[r]^-{(1-\frac{\varphi}{p})-d}
&\Omega^1(\widetilde Y)},$$
o\`u $F^1\O(\widetilde Y)={\rm Ker}(\O(\widetilde Y)\to\O(Y))$.

On note $C_{\rm dR}^\bullet(\widetilde Y)$ le complexe 
$\xymatrix{\O(\widetilde Y)\ar[r]^-d
&\Omega^1(\widetilde Y)}$, et $H^i_{\rm dR}(\widetilde Y)$ ses groupes
de cohomologie. 
Comme on ne peut pas diviser $\varphi$ par $p$ sur tout $\acris$, on introduit
$\O(\widetilde Y)'=\O(\widetilde Y)+\big(1-\frac{\varphi}{p}\big)\O(\widetilde Y)$
(alors $M:=\O(\widetilde Y)'/\O(\widetilde Y)$ est tu\'e par $p$),
et $C_{\rm dR}^\bullet(\widetilde Y)'=
\big(\xymatrix{\O(\widetilde Y)'\ar[r]^-d
&\Omega^1(\widetilde Y)}\big)$.  On a alors une suite exacte
$$0\to {\rm Syn}(Y,1)\to [\xymatrix{C_{\rm dR}^\bullet(\widetilde Y)
\ar[r]^-{1-\frac{\varphi}{p}} & C_{\rm dR}^\bullet(\widetilde Y)'}]
\to (\O(Y)\to M\to 0)\to 0.$$
En passant \`a la suite de cohomologie,
et en utilisant le fait que $1-\frac{\varphi}{p}$ est une surjection
de $H^0(C_{\rm dR}^\bullet(\widetilde Y))=\acris$ sur 
$H^0(C_{\rm dR}^\bullet(\widetilde Y)')$,
cela fournit des suites exactes
\begin{align}\label{BAS18.0}
0\to \Z_pt\to \acris^{\varphi=p}\to {\rm Ker}[\O(Y)\to M]\to
H^1_{\rm syn}(Y)\to H^1_{\rm dR}(\widetilde Y)^{\varphi=p}\to 0,\notag\\
0\to {\rm Ker}[\O(Y)/\O_C\to M]\to
H^1_{\rm syn}(Y)\to H^1_{\rm dR}(\widetilde Y)^{\varphi=p}\to 0.
\end{align}
Remarquons que,
$M$ \'etant de $p$-torsion, ${\rm Ker}[\O(Y)/\O_C\to M]$ est
$p$-isomorphe \`a $\O(Y)/\O_C$.

\subsubsection{R\'egulateur syntomique}\label{BAS18.1}
On rappelle que $$A_{p,n}(Y^{\rm gen})=\{f\in C(Y^{\rm gen})^\dual,\ {\rm Div}(f)\in p^n{\rm Div}(Y^{\rm gen})\}.$$
\begin{lemm}\label{BAS18.2}
Soit $u\in A_{p,n}(Y^{\rm gen})$.

{\rm (i)}
Il existe $u_0\in \breve C(\breve Y)^\dual$, $v\in\O(Y)^\dual$, tels que
$u=u_0 v a^{p^n}$, avec $a\in C(Y^{\rm gen})^\dual$.

{\rm (ii)} Il existe $u_1\in \breve C(\breve Y)^\dual$ tel que 
$u_0=u_1b^{p^n}$, avec $b\in \breve C(\breve Y)^\dual$,
et tel que les diviseurs de $u_0$ et $u_1$ sur la fibre sp\'eciale
soient \'etrangers.
\end{lemm}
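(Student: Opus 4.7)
\emph{Plan.} Le point cl\'e est que $Y$ et $\breve Y$ partagent la m\^eme fibre sp\'eciale (par d\'efinition $\O(Y)=\O_C\wotimes_{\O_{\breve C}}\O(\breve Y)$), donc la m\^eme jacobienne $J$ et le m\^eme sous-groupe $H$. La rem.~\ref{Gerritzen} (et son analogue sur $\breve C$) donne alors ${\rm Pic}(Y^{\rm gen})={\rm Pic}(\breve Y^{\rm gen})=J(k_C)/H(k_C)$. Je vais exploiter ce transfert pour (i), et un lemme de d\'eplacement sur la fibre sp\'eciale pour (ii).

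Pour le (i), je partirais de la suite exacte de Kummer, qui d\'ecompose la classe de $u$ modulo $(C(Y^{\rm gen})^\dual)^{p^n}$ en un produit d'un \'el\'ement de $\O(Y^{\rm gen})^\dual/(\O(Y^{\rm gen})^\dual)^{p^n}$ et d'un repr\'esentant d'une classe $\bar\alpha\in{\rm Pic}(Y^{\rm gen})[p^n]$. Via l'identification ci-dessus, $\bar\alpha$ est repr\'esent\'ee par un diviseur $D_0$ sur $\breve Y^{\rm gen}$ d\'efini sur $\breve C$ (en relevant chaque point de son support par Hensel, gr\^ace \`a la lissit\'e de $\breve Y$ sur $\O_{\breve C}$); comme $p^n[D_0]=0$ dans ${\rm Pic}(\breve Y^{\rm gen})$, il existe $u_0\in\breve C(\breve Y)^\dual$ avec ${\rm Div}(u_0)=p^n D_0$. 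Le quotient $u/u_0$ a alors un diviseur $p^n$-fois une classe triviale dans ${\rm Pic}(Y^{\rm gen})$, donc s'\'ecrit $v'\cdot h^{p^n}$ avec $v'\in\O(Y^{\rm gen})^\dual$ et $h\in C(Y^{\rm gen})^\dual$. Enfin, puisque $Y^{\rm sp}$ est irr\'eductible, $\O(Y^{\rm gen})^\dual=C^\dual\cdot\O(Y)^\dual$, et on peut absorber un scalaire $c\in C^\dual$ dans $h^{p^n}$ (en utilisant une racine $p^n$-i\`eme dans $C$) pour obtenir $v\in\O(Y)^\dual$ et $a=c^{1/p^n}h$, d'o\`u $u=u_0 v a^{p^n}$.

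Pour le (ii), soient $\bar D_0=p^{-n}\overline{{\rm Div}(u_0)}$ le diviseur r\'eduit sur la compactification $\bar X$ de $\breve Y^{\rm sp}$, et $S={\rm supp}(\bar D_0)$. Je chercherais $b\in\breve C(\breve Y)^\dual$ tel que $\overline{{\rm Div}(b)}|_S=\bar D_0|_S$: alors $u_1:=u_0 b^{-p^n}$ a pour diviseur $p^n(D_0-{\rm Div}(b))$, dont la sp\'ecialisation s'annule sur $S$, et les supports des sp\'ecialisations de ${\rm Div}(u_0)$ et ${\rm Div}(u_1)$ sont donc disjoints. Pour construire $b$, je rel\`everais chaque $\bar Q_i\in S$ en $\tilde Q_i\in\breve Y^{\rm gen}(\breve C)$ et formerais le diviseur $E_0=\sum_i(\bar D_0)_{\bar Q_i}\tilde Q_i$; sa classe $\alpha\in{\rm Pic}(\breve Y^{\rm gen})=J(k_C)/H(k_C)$ peut \^etre annul\'ee en ajoutant un diviseur auxiliaire $\sum_j m_j\tilde R_j$ dont les r\'eductions $\bar R_j$ sont choisies dans $\breve Y^{\rm sp}(k_C)\setminus S$ (possible pour $d$ assez grand car l'application sym\'etris\'ee $\breve Y^{\rm sp}(k_C)^d\to J(k_C)$ est surjective, $k_C$ \'etant alg\'ebriquement clos et infini). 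Le diviseur total \'etant principal, il fournit le $b$ voulu; sa sp\'ecialisation co\"{\i}ncide avec $\bar D_0$ en restriction \`a $S$ par construction, les points auxiliaires \'etant hors de $S$. L'obstacle principal est la construction de $b$, qui combine un argument de rel\`evement par Hensel avec un lemme de d\'eplacement reposant sur la divisibilit\'e du jacobien $J(k_C)$.
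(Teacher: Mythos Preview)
Votre démarche est essentiellement celle du papier~: toutes deux reposent sur l'identification ${\rm Pic}(Y^{\rm gen})={\rm Pic}(\breve Y^{\rm gen})=J(k_C)/H(k_C)$ (cor.~\ref{BS2} et son analogue sur $\breve C$) pour transférer le problème de $C$ vers $\breve C$. Le papier invoque explicitement le lemme~\ref{BS1} pour placer les points auxiliaires dans le disque retiré $]P_0[$, alors que vous travaillez directement avec ${\rm Pic}(\breve Y^{\rm gen})$~; c'est plus compact, mais équivalent puisque le cor.~\ref{BS2} repose déjà sur le lemme~\ref{BS1}.

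Un point mérite cependant attention dans votre (ii). Vous relevez \emph{à nouveau} $\bar D_0$ en un diviseur $E_0=\sum_i(\bar D_0)_{\bar Q_i}\tilde Q_i$, puis concluez que la \emph{réduction du diviseur} de $u_1=u_0b^{-p^n}$ évite $S$. C'est correct, mais le papier obtient plus fort~: son $u_1$ a pour diviseur $p^n\breve D_1$ où le \emph{support} lui-même se réduit hors de $S$. Cette propriété plus forte est celle qui sert réellement dans le lemme~\ref{basic28} (pour montrer que $\tfrac{du_{n,0}}{u_{n,0}}$ est holomorphe modulo~$p^n$, on a besoin que les clôtures dans $\breve Y$ des supports de ${\rm Div}(u_{n,0})$ et ${\rm Div}(u_{n,1})$ soient disjointes sur la fibre spéciale, pas seulement que les réductions des diviseurs le soient~: prenez $u_1=\tfrac{T-p}{T-p^2}$ pour voir que $\tfrac{du_1}{u_1}$ a un pôle en $T=0$ modulo~$p^2$ bien que la réduction du diviseur soit nulle). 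Le remède est immédiat~: puisque votre $D_0$ du (i) est déjà, par construction, un relèvement point par point de $\bar D_0$, il suffit de prendre $E_0=D_0$~; alors ${\rm Div}(u_1)=-p^n\sum_j m_j\tilde R_j$ est supporté en des points se réduisant hors de $S$, comme dans la preuve du papier.
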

\begin{proof}
On fixe une compactification $\breve X$ de $\breve Y$, et on note $J$ la jacobienne de $\breve X$.
On note $P_0,\dots,P_r$ les \'el\'ements de $\breve X(k_C)\moins \breve Y(k_C)$
et on fixe un rel\`evement $\breve P_i\in \breve X(\breve C)$ de $P_i$.
On envoie $\breve X$ dans $J$ par $P\mapsto (P-\breve P_0)$. Le cor.\,\ref{BS2} montre que 
${\rm Pic}(Y^{\rm gen})$ est le quotient de $J(k_C)$ par le sous-groupe
engendr\'e par $P_1,\dots,P_r$.  

Soit $u\in A_{p,n}(Y^{\rm gen})$; \'ecrivons ${\rm Div}(u)$ sous la forme $p^nD$, avec $D\in{\rm Div}(Y^{\rm gen})$.
Soit $\overline D\in {\rm Div}(\breve Y(k_C))$ l'image de $D$, et soit $\overline D_0\in
{\rm Div}(\breve Y(k_C))$ ayant m\^eme image que $D$ dans $J(k_C)$ (on peut choisir $\overline D_0$
\'etranger \`a tout ensemble fini $S$ donn\'e; en particulier, on peut trouver deux tels choix
\'etrangers l'un \`a l'autre). Choisissons un rel\`evement $\breve D_0$ de
$\overline D_0$ dans ${\rm Div}(\breve Y(\breve C))$.
Par construction $D-\breve D_0$ a pour image $0$ dans $J(k_C)$ et (lemme~\ref{BS1}) on peut trouver
$Q_1,\dots,Q_d\in ]P_0[$ tels que $D-\breve D_0-\sum_{i=1}^d (Q_i-\breve P_0)=0$ dans $J(C)$.
Il existe donc $N\in\Z$ tel que $D-\breve D_0-\sum_{i=1}^d Q_i-N \breve P_0={\rm Div}(a)$,
avec 
$a\in C(Y^{\rm gen})^\dual$.  Alors $ua^{-p^n}$ a comme
diviseur $p^n\breve D_0$ sur $Y$. Il s'ensuit que $p^nD_0=n_1P_1+\cdots+n_rP_r$ dans
$J(k_C)$, et il existe $R_1,\dots,R_d\in ]P_0[(\breve C)$ tels que
$$p^nD_0-(n_1\breve P_1+\cdots+n_r\breve P_r)-(R_1+\cdots+R_d)+N\breve P_0={\rm Div}(u_0),$$ 
avec
$u_0\in \breve C(\breve X)^\dual$.
Mais alors $v:=ua^{-p^n}u_0^{-1}\in\O(Y^{\rm gen})^\dual$ et, quitte \`a multiplier $a$ par $\alpha\in C^\dual$,
on peut s'arranger pour que $v\in \O(Y)^\dual$.  

Ceci d\'emontre le (i).  Pour prouver le (ii), on part
de $\overline D_1$ \'etranger \`a $\overline D_0$, ayant m\^eme image que $\overline D_0$
dans $J(k_C)$, et on rel\`eve $\overline D_1$ en $\breve D_1$ dans ${\rm Div}(\breve Y)$.
Il existe alors $Q_1,\dots,Q_d\in ]P_0[(\breve C)$ et $N\in\Z$, tels que
$\breve D_0-\breve D_1-(Q_1+\cdots+Q_d)+N \breve P_0={\rm Div}(b)$, avec
$b\in C(Y^{\rm gen})^\dual$. Mais alors $u_1=u_0b^{-p^n}$ a pour diviseur
$p^n\breve D_1$ sur $Y$.  Ceci permet de conclure.
\end{proof}

On rappelle que
$${\rm Symb}_p(Y^{\rm gen})=\{u=(u_n)_{n\in\N}, u_n\in A_{p,n}(Y^{\rm gen}),\ u_{n+1}/u_n=v_n^{p^n}\}/
\{(a_n^{p^n})_{n\in\N}\}.$$
Soit $u=(u_n)_{n\in\N}\in{\rm Symb}_p(Y^{\rm gen})$.  On \'ecrit
$u_n$ sous la forme $u_{n,0} a_n^{p^n}v_n$ comme ci-dessus,
et on choisit des rel\`evements $\tilde v_n\in\O(\widetilde Y)$ de $v_n$
et $\tilde a_n\in {\rm Fr}(\O(\widetilde Y))$ de $a_n$, 
et on pose $\tilde u_n=u_{n,0}\tilde a_n^{p^n}\tilde v_n$.
On pose $\delta_n(u_n)=\big(\tfrac{d\tilde u_n}{\tilde u_n},
\tfrac{1}{p}\log\frac{\varphi(\tilde u_n)}{\tilde u_n^p}\big)$
modulo~$p^n$.
\begin{lemm}\label{basic28}
{\rm (i)} $\delta_n(u_n)$ est un $1$-cocycle de ${\rm Syn}(Y,1)$ modulo~$p^n$.

{\rm (ii)} Les $\delta_n(u_n)$ convergent vers un $1$-cocycle $\delta_Y(u)$
de ${\rm Syn}(Y,1)$ dont la classe dans $H^1_{\rm syn}(Y,1)$ ne d\'epend
que de $u$.

\end{lemm}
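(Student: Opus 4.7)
Je proc\`ederais en exploitant la d\'ecomposition $\tilde u_n = u_{n,0}\,\tilde a_n^{p^n}\,\tilde v_n$ et la multiplicativit\'e des op\'erateurs $d\log$ et $L(f) := \frac{1}{p}\log(\varphi(f)/f^p)$. On obtient formellement, dans le complexe de de Rham m\'eromorphe associ\'e \`a $\widetilde Y$,
$$d\log \tilde u_n = d\log u_{n,0} + p^n\,d\log\tilde a_n + d\log\tilde v_n,\qquad L(\tilde u_n) = L(u_{n,0}) + p^n L(\tilde a_n) + L(\tilde v_n).$$
Puisque $\tilde v_n$ est une unit\'e de $\O(\widetilde Y)$, $d\log\tilde v_n$ est r\'egulier et la s\'erie $\frac{1}{p}\log(1+p\eta) = \eta-p\eta^2/2+\cdots$ converge dans $\O(\widetilde Y)$ (gr\^ace aux puissances divis\'ees sur $F^1\acris$), donnant $L(\tilde v_n)\in\O(\widetilde Y)$.

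Le point-cl\'e consiste \`a montrer que les termes restants d\'efinissent bien un \'el\'ement de ${\rm Syn}(Y,1)/p^n$. Un calcul local en un point $P$ du diviseur de $\tilde a_n$, avec param\`etre $T$ et $\varphi(T) = T^p + ph$, donne pour $p^n\,d\log\tilde a_n$ une partie polaire $p^n m\,dT/T$ (o\`u $m$ est la multiplicit\'e), qui s'annule modulo $p^n$; et pour $p^n L(\tilde a_n)$, l'identit\'e $L(T^m) = \frac{m}{p}\log(1+ph/T^p) = \sum_{j\geq 1} \frac{(-1)^{j-1} m p^{j-1}h^j}{j T^{pj}}$ combin\'ee \`a l'estimation $v_p(p^{n+j-1}/j) = n-1+j-v_p(j) \geq n$ (car $j-v_p(j)\geq 1$ pour tout $j\geq 1$) montre que la partie polaire est divisible par $p^n$ dans les fractions de $\O(\widetilde Y)$, et donc s'annule dans ${\rm Syn}(Y,1)/p^n$. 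Le m\^eme argument s'applique \`a $u_{n,0}$, dont le diviseur sur $\breve Y$ est $p^n\breve D_0$ par le lemme \ref{BAS18.2}. Ainsi $\delta_n(u_n)$ est bien d\'efini dans ${\rm Syn}(Y,1)/p^n$; la condition de cocycle d\'ecoule de l'identit\'e formelle $dL(f) = \pm(1-\frac{\varphi}{p})(d\log f)$ (cons\'equence directe de $\varphi\circ d = d\circ\varphi$), qui tient dans le complexe m\'eromorphe et se transf\`ere modulo $p^n$ par les m\^emes estimations.

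Pour le (ii), la convergence r\'esulte de la relation $u_{n+1} = u_n v_n^{p^n}$: par multiplicativit\'e, $\delta_{n+1}(u_{n+1}) - \delta_{n+1}(u_n) = p^n\delta_{n+1}(v_n)$, dont la classe est nulle modulo $p^n$; et $\delta_{n+1}(u_n)$ se r\'eduit \`a $\delta_n(u_n)$ modulo $p^n$ par les m\^emes estimations, permettant de d\'efinir $\delta_Y(u) := \lim_n \delta_n(u_n)$ dans le compl\'et\'e $p$-adique de $Z^1({\rm Syn}(Y,1))$. L'ind\'ependance vis-\`a-vis des rel\`evements et de la d\'ecomposition du lemme \ref{BAS18.2} se v\'erifie en observant que remplacer $\tilde v_n$ par $\tilde v_n(1+w)$ avec $w\in F^1\O(\widetilde Y)$ change $\delta_n(u_n)$ par le cobord de $\log(1+w)\in F^1\O(\widetilde Y)$ (convergent gr\^ace aux puissances divis\'ees sur $F^1\acris$), et qu'un changement de $\tilde a_n$ ou de $u_{n,0}$ par un facteur multiplicatif introduit une modification \`a coefficient divisible par $p^n$, sans effet sur la classe limite. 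L'obstacle principal est la v\'erification fine de la bonne d\'efinition modulo $p^n$ aux points polaires, toutes les autres assertions en d\'ecoulant relativement formellement une fois les estimations $p$-adiques ci-dessus \'etablies.
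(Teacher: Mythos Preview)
Ton approche est correcte dans les grandes lignes, mais diff\`ere de celle du papier sur le point central du {\rm (i)}, \`a savoir l'holomorphie modulo~$p^n$ de $x_n=\frac{d\tilde u_n}{\tilde u_n}$ et $y_n=\frac{1}{p}\log\frac{\varphi(\tilde u_n)}{\tilde u_n^p}$.

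Tu proc\`edes par calcul local aux points du diviseur, en montrant que les parties polaires sont divisibles par~$p^n$ (via les estimations $v_p(p^{n+j-1}/j)\geq n$). Le papier, lui, \'evite tout calcul local en exploitant la partie {\rm (ii)} du lemme~\ref{BAS18.2}, que tu n'utilises pas~: on dispose de \emph{deux} \'ecritures $u_n=u_{n,0}a_n^{p^n}v_n=u_{n,1}(a_nb_n)^{p^n}v_n$ avec $u_{n,0}$ et $u_{n,1}$ \`a diviseurs \'etrangers sur la fibre sp\'eciale. Modulo~$p^n$, on a donc $x_n\equiv\frac{du_{n,0}}{u_{n,0}}+\frac{d\tilde v_n}{\tilde v_n}\equiv\frac{du_{n,1}}{u_{n,1}}+\frac{d\tilde v_n}{\tilde v_n}$, les p\^oles \'eventuels \'etant port\'es par deux ensembles disjoints, donc absents. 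L'argument pour $y_n$ est identique.

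Ce que chaque approche apporte~: la tienne est plus directe et n'utilise que la partie {\rm (i)} du lemme~\ref{BAS18.2}, mais elle demande de justifier que $\varphi$ pr\'eserve les anneaux locaux compl\'et\'es aux points de la fibre sp\'eciale (ce que tu supposes implicitement en \'ecrivant $\varphi(T)=T^p+ph$ localement), et de donner un sens pr\'ecis \`a {\og la partie polaire s'annule dans ${\rm Syn}(Y,1)/p^n$\fg} quand les p\^oles vivent a priori dans un module m\'eromorphe. Le papier contourne ces subtilit\'es par l'astuce des diviseurs disjoints, au prix d'avoir pr\'epar\'e le terrain avec le {\rm (ii)} du lemme pr\'ec\'edent. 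Pour le {\rm (ii)} du lemme~\ref{basic28} (convergence et ind\'ependance), ton argument co\"incide essentiellement avec celui du papier.
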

\begin{proof}
Soient $x_n=\tfrac{d \tilde u_n}{\tilde u_n}$ 
et $y_n=\tfrac{1}{p}\log\frac{\varphi(\tilde u_n)}{\tilde u_n^p}$
de telle sorte que $\delta_n(u_n)=(x_n,y_n)$ modulo~$p^n$.
On a $(1-\frac{\varphi}{p})x_n=dy_n$, et donc cette
relation est a fortiori v\'erifi\'ee modulo~$p^n$.

De plus, si on prend deux \'ecritures $u_n=u_{n,0} a_n^{p^n}v_n=u_{n,1} (a_nb_n)^{p^n}v_n$,
avec $u_{n,0}$ et $u_{n,1}$ de diviseurs \'etrangers, on a
$x_n=\tfrac{du_{n,0}}{u_{n,0}}+\tfrac{d\tilde v_n}{v_n}=\tfrac{du_{n,1}}{u_{n,1}}+\tfrac{d\tilde v_n}{v_n}$
modulo $p^n$, et comme les diviseurs de $u_{n,0}$ et $u_{n,1}$ sont \'etrangers, cela
montre que $x_n$ est holomorphe (i.e.~$x_n\in\Omega^1(\widetilde Y)$ modulo~$p^n$).
Le m\^eme argument montre que $y_n\in\O(\widetilde Y)$ modulo~$p^n$, ce qui prouve le (i).

La convergence de la suite de terme g\'en\'eral $\delta_n(u_n)$ vient de
ce que $\delta_{n+1}(u_{n+1})=\delta_n(u_n)$ modulo~$p^n$ car $u_{n+1}/u_n$
est une puissance $p^n$-i\`eme.  Si on note $\delta(u)=(x,y)$ la limite,
alors $(1-\frac{\varphi}{p})x=dy$ par passage \`a la limite,
et donc $\delta(u)$ est un $1$-cocycle 
de ${\rm Syn}(Y,1)$.  Ce $1$-cocycle est uniquement d\'etermin\'e au choix pr\`es
des $\tilde v_n$, et donc est bien d\'etermin\'e \`a addition pr\`es du cobord limite
des $\log(\tilde v_n/\tilde v'_n)$ si $\tilde v_n$ et $\tilde v'_n$ sont deux
rel\`evements de $v_n$. La classe de $\delta(u)$ dans $H^1_{\rm syn}(Y,1)$
ne d\'epend donc que de $u$, ce qui prouve le (ii).
\end{proof}
Le {\it r\'egulateur syntomique} est l'application
$$\delta_Y:{\rm Symb}_p(Y^{\rm gen})\to H^1_{\rm syn}(Y,1)$$
dont l'existence d\'ecoule du lemme~\ref{basic28}.
On peut composer $\delta_Y$
avec la fl\`eche $H^1_{\rm syn}(Y,1)\to H^1_{\rm dR}(\widetilde Y)^{\varphi=p}$
de (\ref{BAS18.0}).
\begin{lemm}\label{short10}
Si $u\in\O(Y)^{\dual\dual}$ l'image de $\delta_Y(u)$ dans
$H^1_{\rm dR}(\widetilde Y)^{\varphi=p}$ est de torsion.
\end{lemm}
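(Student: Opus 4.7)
Mon plan est de prouver qu'il existe un entier $N\geq 1$ tel que $p^N\cdot\big[\tfrac{d\tilde u}{\tilde u}\big]=0$ dans $H^1_{\rm dR}(\widetilde Y)$, ce qui donnera le r\'esultat: d'apr\`es la suite exacte (\ref{BAS18.0}), la fl\`eche $H^1_{\rm syn}(Y,1)\to H^1_{\rm dR}(\widetilde Y)^{\varphi=p}$ envoie le cocycle $\delta_Y(u)=\big(\tfrac{d\tilde u}{\tilde u},\tfrac{1}{p}\log\tfrac{\varphi(\tilde u)}{\tilde u^p}\big)$ sur la classe $\big[\tfrac{d\tilde u}{\tilde u}\big]$. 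L'ingr\'edient de d\'epart est le lemme~\ref{paire10}: puisque $u\in\O(Y)^{\dual\dual}$, la s\'erie $L:=\log u$ converge dans $\O(Y)$. Je fixerais alors un rel\`evement $\tilde L\in\O(\widetilde Y)$ de $L$ et un rel\`evement $\tilde u\in\O(\widetilde Y)^\dual$ de $u$ (qui existe puisque $u$ est une unit\'e proche de $1$).

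Le pivot du raisonnement consiste \`a comparer $\tilde u^{p^N}$ et $\exp(p^N\tilde L)$. Pour $N$ satisfaisant $N>1/(p-1)$ (donc $N\geq 1$ si $p\geq 3$, $N\geq 2$ si $p=2$), la s\'erie $\exp(p^N\tilde L)=\sum_{n\geq 0}(p^N\tilde L)^n/n!$ converge dans $\O(\widetilde Y)$, la valuation $p$-adique de $(p^N)^n/n!$ tendant vers $+\infty$. Comme $\theta(\exp(p^N\tilde L))=\exp(p^N L)=u^{p^N}=\theta(\tilde u^{p^N})$, on peut \'ecrire $\tilde u^{p^N}=\exp(p^N\tilde L)\cdot w$ avec $w\in 1+F^1\O(\widetilde Y)$. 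En passant \`a la d\'eriv\'ee logarithmique, on obtient
$$p^N\,\tfrac{d\tilde u}{\tilde u}=p^N\,d\tilde L+\tfrac{dw}{w},$$
ce qui ram\`ene le probl\`eme \`a montrer que les deux classes $[d\tilde L]$ et $\big[\tfrac{dw}{w}\big]$ sont nulles dans $H^1_{\rm dR}(\widetilde Y)$.

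La nullit\'e de $[d\tilde L]$ est imm\'ediate puisque $\tilde L\in\O(\widetilde Y)$, donc $d\tilde L$ est un cobord. La principale difficult\'e est la nullit\'e de $\big[\tfrac{dw}{w}\big]$: elle revient \`a construire $\log w$ comme un \'el\'ement de $\O(\widetilde Y)$. Je m'appuierais sur la structure de puissances divis\'ees de $F^1\acris$, qui s'\'etend \`a $F^1\O(\widetilde Y)=F^1\acris\wotimes_{\O_{\breve C}}\O(\breve Y)$ par la platitude de $\O(\breve Y)$ sur $\O_{\breve C}$ (cons\'equence de la lissit\'e formelle de $\breve Y$). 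Comme $w-1\in F^1\O(\widetilde Y)$, chaque terme $(w-1)^n/n!$ appartient \`a $\O(\widetilde Y)$; en \'ecrivant $(w-1)^n/n=(n-1)!\cdot(w-1)^n/n!$ et en observant que $v_p((n-1)!)$ tend vers $+\infty$, on voit que la s\'erie $\log w=\sum_{n\geq 1}(-1)^{n-1}(w-1)^n/n$ converge dans $\O(\widetilde Y)$. Ainsi $\tfrac{dw}{w}=d\log w$ est un cobord, d'o\`u $p^N\big[\tfrac{d\tilde u}{\tilde u}\big]=0$, et l'image de $\delta_Y(u)$ dans $H^1_{\rm dR}(\widetilde Y)^{\varphi=p}$ est $p^N$-torsion, donc de torsion.
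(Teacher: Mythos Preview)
Your proof is correct and takes essentially the same approach as the paper's: both show that $p^N\big[\tfrac{d\tilde u}{\tilde u}\big]$ is a coboundary in $C^\bullet_{\rm dR}(\widetilde Y)$ by exhibiting a primitive in $\O(\widetilde Y)$. The paper does this in one line by lifting $u$ to $\tilde u\in\O(\widetilde Y)^{\dual\dual}$ and observing that $\log\tilde u$ converges in $\O(\widetilde Y)[\tfrac{1}{p}]$, so that $p^N\log\tilde u\in\O(\widetilde Y)$ for suitable~$N$; your decomposition $p^N\tilde L+\log w$ is exactly $p^N\log\tilde u$, with the PD structure on $F^1\acris$ making the convergence of $\log w$ explicit.
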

\begin{proof}
Si $\tilde u\in \O(\widetilde Y)^{\dual\dual}$ v\'erifie $\theta(\tilde u)=u$,
alors $\log\tilde u$ converge dans $\O(\widetilde Y)[\frac{1}{p}]$ et
$\delta_Y(u)$ est le bord de $\log\tilde u$ et donc est tu\'e par $p^N$ si
$p^N\log\tilde u\in \O(\widetilde Y)$.
\end{proof}
\subsection{Symboles et formes diff\'erentielles}\label{gabr1}
     Soient $\breve Y$ un short sur $\O_{\breve C}$, $Y=\breve Y\otimes_{\O_{\breve C}} \O_C$ et $Y^{\rm gen}$ la fibre g\'en\'erique de $Y$. Soit
     $\bar{Y}={\rm Spec}(\O(Y)\otimes_{\O_C} k_C)$ la fibre sp\'eciale (classique) de $Y$, un $k_C$-sch\'ema lisse irr\'eductible s'identifiant \`a la fibre sp\'eciale de $\breve Y$. On note $\Omega^1_{\breve Y}=\Omega^1_{\breve Y/W(k_C)}$ et 
    $\Omega^1_{\bar{Y}}=\Omega^1_{\bar Y/k_C}$ les faisceaux des diff\'erentielles de $\breve Y$ et $\bar Y$ (sur ${\bar Y}_{\eet}$).

\subsubsection{Formes diff\'erentielles et op\'erateur de Cartier}
On choisit un rel\`evement 
     $\varphi: \breve Y\to \breve Y$ du frobenius absolu de $\bar Y$. Il induit, par fonctorialit\'e, un 
     morphisme $\varphi: \Omega^1_{\breve Y}\to \Omega^1_{\breve Y}$, dont l'image 
est contenue dans $p \Omega^1_{\breve Y}$. Comme $\Omega^1_{\breve Y}$ est sans 
$p$-torsion, cela permet de d\'efinir un endomorphisme $F=\varphi/p: \Omega^1_{\breve Y}\to \Omega^1_{\breve Y}$
    et un calcul imm\'ediat montre que la compos\'ee de la r\'eduction $\bar{F}: \Omega^1_{\bar Y}\to \Omega^1_{\bar Y} $ modulo $p$ de $F$ et de la projection naturelle $\Omega^1_{\bar Y}\to \Omega^1_{\bar Y}/d\O_{\bar Y}$ est l'op\'erateur de Cartier 
  $C^{-1}: \Omega^1_{\bar Y}\to \Omega^1_{\bar Y}/d \O_{\bar Y}$, qui est un isomorphisme puisque $\bar{Y}$ est lisse. 
  On note $C: \Omega^1_{\bar Y}/d\O_{\bar Y}\to \Omega^1_{\bar Y}$ l'inverse de $C^{-1}$, c'est donc un endomorphisme de  
  $\Omega^1_{\bar Y}$ nul sur $d\O_{\bar Y}$. 
  
  Consid\'erons la suite $(B_n)_{n\geq 1}$ de sous-faisceaux de $\Omega^1_{\bar Y}$ d\'efinie par  
  $B_1=d\O_{\bar Y}$ et $B_{n+1}=B_n+\bar{F}^n(B_1)$
   et sa limite $B_{\infty}=\cup_{n} B_n=\sum_{n\geq 0} \bar{F}^n(B_1)$. Le lemme ci-dessous est la combinaison d'un cas particulier d'un 
   r\'esultat de Raynaud et de la th\'eorie classique de l'op\'erateur de Cartier;
c'est la cl\'e des r\'esultats de ce paragraphe. 
      
\begin{lemm}\label{IlRay}
On a une d\'ecomposition en somme directe de faisceaux 
$$\Omega^1_{\bar Y}=B_{\infty}\oplus \big(k_C\otimes_{\mathbb{F}_p} (\Omega^1_{\bar Y})^{C=1}\big),$$
ainsi qu'un isomorphisme de faisceaux
$$d\log: \O_{\bar{Y}}^\dual/(\O_{\bar{Y}}^\dual)^p\cong (\Omega^1_{\bar Y})^{C=1}.$$
   \end{lemm}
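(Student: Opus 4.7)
The plan is to handle the two assertions separately. For the $d\log$ isomorphism $\O_{\bar Y}^\dual/(\O_{\bar Y}^\dual)^p \cong (\Omega^1_{\bar Y})^{C=1}$, I would first verify that $d\log$ is well-defined (it kills $p$-th powers since $d(f^p)=pf^{p-1}df=0$ in characteristic $p$) and that its image lies in $(\Omega^1_{\bar Y})^{C=1}$, using the classical Cartier formula $C(df/f)=df/f$. The bijectivity is the content of Cartier's theorem for smooth schemes; to establish it I would use that both sides are \'etale sheaves of $\mathbb{F}_p$-vector spaces and check the claim \'etale-locally, where a coordinate computation (or the standard short exact sequence $0\to \O^\dual/(\O^\dual)^p \to Z^1 \xrightarrow{C-1} \Omega^1 \to 0$ combined with surjectivity of $C-1$ in the \'etale topology) does the job.

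For the decomposition $\Omega^1_{\bar Y}=B_{\infty}\oplus (k_C\otimes_{\mathbb{F}_p}(\Omega^1_{\bar Y})^{C=1})$, the direct-sum part follows from the semi-linearity properties of $C$. Starting from $C\circ\bar F=\mathrm{id}$ and $C(B_1)=C(d\O_{\bar Y})=0$, an easy induction gives $C(B_{n+1})=B_n$, hence $C^n$ annihilates $B_n$ and every element of $B_\infty$ is killed by some power of $C$. On the other hand, $C$ is $\varphi^{-1}$-semi-linear, so for $\omega=\sum c_iv_i$ with $v_i\in V:=(\Omega^1_{\bar Y})^{C=1}$ linearly independent over $\mathbb{F}_p$ and $c_i\in k_C$, one computes $C^n(\omega)=\sum c_i^{1/p^n}v_i$, which can vanish only if each $c_i=0$, using the perfectness of $k_C$ and the fact that $\mathbb{F}_p$-linearly independent elements of $(\Omega^1_{\bar Y})^{C=1}$ remain $k_C$-linearly independent in $\Omega^1_{\bar Y}$. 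This gives $B_\infty \cap (k_C\otimes_{\mathbb{F}_p}V)=0$.

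The surjectivity $B_\infty + k_C\otimes_{\mathbb{F}_p}V=\Omega^1_{\bar Y}$ is the substantial part, and here I would appeal to the Illusie--Raynaud slope decomposition of the de Rham--Witt complex, reduced at level one. Alternatively, one can proceed \'etale-locally: choose an \'etale coordinate $t$ on a small neighborhood, write $\Omega^1_{\bar Y}$ as a free module generated by $dt$, compute $\bar F(t^n\,dt)=t^{p(n+1)-1}\,dt$ (up to a well-controlled correction by an exact form), and verify by inspection of exponents that the iterates $\bar F^k(d\O)$ exhaust the monomials $t^m\,dt$ with $m+1\not\equiv 0 \pmod{p^{k+1}}$, while the complementary part (corresponding to exponents $m=-1$ in the Laurent expansion at each puncture) is captured by $k_C\cdot dt/t$ via $d\log$. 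Gluing these local models on the \'etale site and reconciling the lifts of $\bar F$ yields the global decomposition.

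The main obstacle will be the surjectivity, both because $B_\infty$ is an infinite union with no obvious finiteness and because one must ensure that the local model decompositions patch together as sheaves. Transcribing the Illusie--Raynaud argument cleanly to level one, or alternatively bounding in each \'etale chart the precise $k_C$-span of logarithmic classes not already absorbed into $B_\infty$, is where the real work lies; everything else reduces to bookkeeping with the semi-linearity of $C$ and $\bar F$.
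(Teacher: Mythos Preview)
Your treatment of the $d\log$ isomorphism matches the paper: both simply invoke Cartier's classical theorem.

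For the decomposition, your argument for $B_\infty \cap (k_C\otimes_{\mathbb{F}_p}V)=0$ via semi-linearity is fine, but your approach to surjectivity is more laborious than necessary. The paper does not use \'etale-local coordinate computations or the full Illusie--Raynaud slope machinery. Instead it observes that $B_\infty$ is exactly the $C$-torsion subsheaf of $\Omega^1_{\bar Y}$, and then uses the key fact (specific to dimension~$1$) that every form $\omega$ is \emph{$C$-finite}: the $k_C$-span of the iterates $C^n\omega$ is finite-dimensional, because $C$ decreases the order of poles. On this finite-dimensional space the $\varphi^{-1}$-semi-linear endomorphism $C$ splits into a nilpotent part (giving the $B_\infty$ component) and an invertible part; on the latter, since $k_C$ is algebraically closed, one can find a basis of $C$-fixed vectors. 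This handles surjectivity in a few lines of finite-dimensional semi-linear algebra, with no gluing issues.

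Your \'etale-local computation is also somewhat imprecise as written: the formula $\bar F(t^n\,dt)=t^{p(n+1)-1}\,dt$ holds for the canonical Frobenius on $\mathbb{A}^1$, but for a general \'etale coordinate and a general lift $\varphi$ there are correction terms, and your mention of ``exponents $m=-1$ at each puncture'' conflates affine charts with punctured disks. The Illusie--Raynaud route would certainly work, but the $C$-finiteness argument is what makes the curve case elementary.
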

   
   \begin{proof} Par d\'efinition (et la discussion ci-dessus) $B_{\infty}$ est le sous-faisceau de $C$-torsion de $\Omega^1_{\bar Y}$. Le premier point d\'ecoule alors du fait que toute forme diff\'erentielle $\omega\in \Omega^1_{\bar Y}(U)$ ($U$ \'etant un ouvert de $\bar Y$) est 
$C$-finie\footnote{i.e. l'espace engendr\'e par les $C^n\omega$ pour $n\geq 0$ est de dimension finie sur $k_C$. Le point cl\'e est que 
$C$ diminue l'ordre des p\^oles des formes diff\'erentielles.}, donc l'espace engendr\'e par les $C^n\omega$ se d\'ecompose en une partie de $C$-torsion et une partie sur laquelle $C$ est inversible, et cette derni\`ere partie est engendr\'ee par les points fixes sous $C$ (car $k_C$ est alg\'ebriquement clos). 
Voir \cite[\S\,2.5]{IldRW} pour plus de d\'etails. Le second point 
   est un th\'eor\`eme classique de Cartier (cf. \cite[th. 2.1.17]{IldRW}).
\end{proof}
   
Notons simplement $H^1_{\bar Y}$ le faisceau $\Omega^1_{\breve Y}/d\O_{\breve Y}$ et $H^1_{\bar Y}[p^{\infty}]$
son sous-faisceau de $p$-torsion. 
      
\begin{lemm}\label{Ray}
La compos\'ee $H^1_{\bar Y}=\Omega^1_{\breve Y}/d\O_{\breve Y}
\to \Omega^1_{\bar Y}/d\O_{\bar Y}=\Omega^1_{\bar Y}/B_1$ induit un isomorphisme de faisceaux
$$ H^1_{\bar Y}[p^{\infty}]/pH^1_{\bar Y}[p^{\infty}]\cong B_{\infty}/B_1.$$
      \end{lemm}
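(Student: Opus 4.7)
The plan is to define the reduction-mod-$p$ map $\Phi : H^1_{\bar Y}[p^\infty]/p H^1_{\bar Y}[p^\infty] \to \Omega^1_{\bar Y}/B_1$ (as in the statement) and prove bijectivity onto $B_\infty/B_1$ in four steps. First I would verify $\Phi$ is well-defined: a representative $\omega \in \Omega^1_{\breve Y}$ of a class in $H^1_{\bar Y} = \Omega^1_{\breve Y}/d\O_{\breve Y}$ is determined modulo $d\O_{\breve Y}$, whose reduction modulo $p$ sits in $B_1$; and any class of the form $p[\eta]$ reduces to $0$ already before dividing by $B_1$. Hence $\Phi$ descends from $H^1_{\bar Y}$ to $H^1_{\bar Y}/pH^1_{\bar Y}$, and in particular to the subquotient in question.

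Second, I would show that the image of $\Phi$ lies in $B_\infty/B_1$. The assertion is local on $\bar Y$, so by \'etale descent one may reduce to the model case $\breve Y = \Spf \O_{\breve C}\langle t \rangle$ and choose the lift $\varphi(t) = t^p$. If $[\omega] \in H^1_{\bar Y}[p^n]$ is represented by $\omega = h(t)\, dt$ with $p^n \omega = df$ and $f = \sum_m a_m t^m$ ($a_m \in \O_{\breve C}$, $a_m \to 0$), then the identity $p^n h = f'$ forces, for each $m = p^k u$ with $(u,p)=1$, the valuation bound $v_p(a_m) \geq n-k$. The nontrivial contributions to $\bar\omega$ come from indices with $v_p(a_m) = n-k$ (so $k \leq n$); each such contribution is, up to a unit in $k_C$, equal to $\bar t^{p^k u - 1} d\bar t = \bar F^k(\bar t^{u-1} d\bar t)$, and since $\bar t^{u-1} d\bar t = u^{-1} d(\bar t^u) \in B_1$ one gets $\bar\omega \in B_1 + \bar F(B_1) + \cdots + \bar F^n(B_1) \subset B_\infty$.

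Third, I would establish surjectivity. Since $B_\infty/B_1 = \sum_{k \geq 1} \bar F^k(B_1)/B_1$, it suffices to produce a preimage of each $\bar F^k(d\bar g) \bmod B_1$, for $\bar g \in \O_{\bar Y}$ and $k \geq 1$. Lifting $\bar g$ to $g \in \O_{\breve Y}$, the element $F^k(dg) \in \Omega^1_{\breve Y}$ satisfies $p^k \cdot F^k(dg) = \varphi^k(dg) = d(\varphi^k(g)) \in d\O_{\breve Y}$; hence $[F^k(dg)]$ lies in $H^1_{\bar Y}[p^k] \subset H^1_{\bar Y}[p^\infty]$, and by construction $\Phi([F^k(dg)]) = \bar F^k(d\bar g) \bmod B_1$.

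Finally, injectivity: if $\Phi([\omega]) = 0$ then $\bar\omega = d\bar g$ for some $\bar g \in \O_{\bar Y}$; lifting $\bar g$ to $g \in \O_{\breve Y}$, the form $\omega' := (\omega - dg)/p \in \Omega^1_{\breve Y}$ is integral and satisfies $p^{n+1}\omega' = d(f - p^n g)$, so $[\omega'] \in H^1_{\bar Y}[p^\infty]$ and $[\omega] = p[\omega'] \in pH^1_{\bar Y}[p^\infty]$. The main obstacle will be the local computation in the second step: one must simultaneously reduce the general smooth case to the polydisc case, organize the $p$-adic bookkeeping so that each index $m$ contributes an element cleanly recognizable as an iterate of $\bar F$ applied to a specific exact form, and verify (implicitly) that the conclusion $\bar\omega \in B_\infty$ is independent of the chosen frobenius lift $\varphi$.
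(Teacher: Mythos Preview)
Your proof is correct. Steps 1, 3 and 4 match the paper's argument essentially verbatim (the paper dismisses injectivity as ``imm\'ediate'' and proves surjectivity by the same explicit lift $F^k(dg)$, though stated slightly differently). The genuine difference is in step~2. The paper does not reduce to explicit coordinates; instead it proves a coordinate-free auxiliary statement: a local section $f\in\O_{\breve Y}$ satisfies $df\in p^n\Omega^1_{\breve Y}$ if and only if it can be written locally as $f=\sum_{k=0}^n p^k\varphi^{n-k}(g_k)$. This is established by induction on $n$ using Cartier theory on the special fiber (the key point being that $\sum_k\bar F^{n-1-k}(d\bar g_k)=0$ forces each $d\bar g_k=0$, whence $g_k=\varphi(a_k)+db_k+pc_k$). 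Once this decomposition is in hand, $\omega=p^{-n}df=\sum_k F^{n-k}(dg_k)$ reduces to $\sum_k\bar F^{n-k}(d\bar g_k)\in B_\infty$ immediately.

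Your coordinate computation on $\O_{\breve C}\langle t\rangle$ with $\varphi(t)=t^p$ accomplishes the same thing by brute force: the $p$-adic bookkeeping you describe is exactly the shadow, in the standard model, of the paper's structural decomposition. Your approach is more elementary and avoids the inductive Cartier argument, at the cost of the \'etale-local reduction and the (correctly flagged) need to check independence of the Frobenius lift. The paper's approach is cleaner in that the auxiliary lemma is intrinsic and reusable; your approach has the virtue that the surjectivity and image-containment steps are visibly dual to one another.
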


\begin{proof} 
Montrons d'abord qu'une section locale $f\in \O_{\breve Y}$ v\'erifie $df\in p^n\Omega^1_{\breve Y}$ si et seulement si l'on peut \'ecrire (localement) $f=\sum_{k=0}^n p^k F^{n-k}(g_k)$ avec $g_k\in \O_{\breve Y}$. C'est un cas particulier de \cite[prop.\,2.3.13]{IldRW}, 
mais la preuve \'etant facile, nous allons la donner pour le confort du lecteur. 
Un sens est \'evident, puisque $d\circ F=p\,F\circ d$. 
Pour l'autre sens, on raisonne par r\'ecurrence.
Supposons donc que l'on dispose d'une \'ecriture $f=\sum_{k=0}^{n-1} p^k F^{n-1-k}(g_k)$. Puisque $d\circ F=p\,F\circ d$ et $\Omega^1_{\breve Y}$ est sans $p$-torsion, la divisibilit\'e de $df$ par $p^n$ se traduit par 
$\sum_{k=0}^{n-1} \bar{F}^{n-1-k}(d\bar{g}_k)=0$ dans $\Omega^1_{\bar Y}$. En utilisant la compatibilit\'e entre 
$\bar{F}$ et $C^{-1}$, on voit que cette relation force $d\bar{g}_k=0$ pour tout $k$, donc (encore par la th\'eorie de Cartier) on peut \'ecrire $g_k=F(a_k)+db_k+pc_k$. En ins\'erant ces relations dans l'\'ecriture de $f$, on obtient une repr\'esentation $f=\sum_{k=0}^n p^k F^{n-k}(h_k)$, ce qui permet de conclure. 
      
Revenons \`a la preuve du lemme. Si la classe de $\omega\in \Omega^1_{\breve Y}$ dans $H^1_{\bar Y}$ est tu\'ee par 
$p^n$, alors $p^n\omega=df$ pour un $f\in \O_{\breve Y}$. 
En \'ecrivant (localement) $f=\sum_{k=0}^n p^k F^{n-k}(g_k)$, on voit comme ci-dessus que $\omega=\sum_{k=0}^n F^{n-k}(dg_k)$ et donc $\overline{\omega}=\sum_{k=0}^n \bar{F}^{n-k} (d\bar{g}_k)\in B_{\infty}$, ce qui montre que la r\'eduction mod $p$ induit bien un morphisme $H^1_{\bar Y}[p^{\infty}]/pH^1_{\bar Y}[p^{\infty}]\to B_{\infty}/B_1$. Son injectivit\'e \'etant imm\'ediate, montrons la surjectivit\'e. Soit $x\in B_{\infty}$, alors il existe 
        $n$ et $f_i\in \Omega_{\bar Y}$ tels que $x=\sum_{k=0}^n \bar{F}^{n-k}(df_k)$. On rel\`eve 
        $f_k$ en $\hat{f}_k\in \Omega_{\breve Y}$. Le m\^eme calcul que ci-dessus montre que la classe de 
        $\omega:=\sum_{k=0}^n p^k F^{n-k}(d\hat{f}_k)$ dans $H^1_{\bar Y}$ est de torsion et se r\'eduit sur la classe de $x$. 
            \end{proof}

\subsubsection{Le s\'epar\'e de la cohomologie de de Rham}

On d\'efinit les $H^i_{\rm dR}(Y)$ comme les groupes de cohomologie du complexe
$\O(Y)\overset{d}{\longrightarrow}\Omega^1(Y)$ munis de la topologie quotient.
On note $H^1_{\rm dR}(Y)_{\rm tors}$ l'adh\'erence, dans $H^1_{\rm dR}(Y)$,
du sous-groupe de torsion (en particulier $H^1_{\rm dR}(Y)_{\rm tors}$ contient
l'adh\'erence de $0$ qui est loin d'\^etre nulle).
On note $H^1_{\rm dR}(Y)^{\rm sep}$ le quotient 
$H^1_{\rm dR}(Y)/H^1_{\rm dR}(Y)_{\rm tors}$. On v\'erifie sans mal que 
$H^1_{\rm dR}(Y)^{\rm sep}$ est sans torsion et complet pour la topologie $p$-adique. 
\begin{prop}\label{sansk0}
On a un isomorphisme naturel
$$H^1_{\rm dR}(\breve Y)^{\rm sep}/pH^1_{\rm dR}(\breve Y)^{\rm sep}\cong k_C\otimes_{\mathbf{F}_p} 
\Omega^1({\bar Y})^{C=1}$$ et une suite exacte
$$0\to \O(\bar Y)^\dual/(\O(\bar Y)^\dual)^p\to H^1_{\rm dR}(\breve Y)^{\rm sep}/pH^1_{\rm dR}(\breve Y)^{\rm sep}\to 
k_C\otimes_{\mathbf{F}_p} {\rm Pic}(\bar{Y})[p]\to 0.$$
\end{prop}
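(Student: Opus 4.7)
The plan is to reduce the computation to the affine special fibre $\bar Y$ and to combine the two preceding lemmas with an \'etale Kummer sequence. Set $M:=H^1_{\rm dR}(\breve Y)$. Affineness of $\bar Y$ gives $M=\Gamma(\bar Y,H^1_{\bar Y})$ for the quasi-coherent sheaf $H^1_{\bar Y}:=\Omega^1_{\breve Y}/d\O_{\breve Y}$, and reducing the de Rham complex modulo $p$ (using flatness of the terms over $\O_{\breve C}$) identifies $H^1_{\bar Y}/pH^1_{\bar Y}$ with $\Omega^1_{\bar Y}/d\O_{\bar Y}$, whence $M/pM\cong \Omega^1(\bar Y)/d\O(\bar Y)$.

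To compute $M^{\rm sep}/pM^{\rm sep}$, I would identify the image in $M/pM$ of the closure $\overline T$ of the torsion subgroup $T:=M[p^\infty]$. A direct convergence argument shows $\overline T+pM=T+pM$, and since $T\cap pM=pT$ (any preimage of a torsion element under $p$ is itself torsion), the image of $\overline T$ in $M/pM$ coincides with $T/pT$. Quasi-coherence and affineness let $\Gamma$ commute with $(-)[p^\infty]$ and with reduction mod $p$, so $T/pT\cong \Gamma(\bar Y,H^1_{\bar Y}[p^\infty]/pH^1_{\bar Y}[p^\infty])$, which by Lemma~\ref{Ray} is $\Gamma(\bar Y,B_\infty/B_1)$. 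Applying Lemma~\ref{IlRay}'s decomposition $\Omega^1_{\bar Y}=B_\infty\oplus (k_C\otimes_{\mathbf{F}_p}(\Omega^1_{\bar Y})^{C=1})$ modulo $B_1$ and taking global sections gives
$$M/pM\cong \Gamma(\bar Y,B_\infty/B_1)\oplus \big(k_C\otimes_{\mathbf{F}_p}\Omega^1(\bar Y)^{C=1}\big),$$
and dividing out the first summand identifies $M^{\rm sep}/pM^{\rm sep}$ with the second, giving the first isomorphism of the proposition.

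For the exact sequence, the $d\log$ half of Lemma~\ref{IlRay} supplies an \'etale sheaf isomorphism $\O_{\bar Y}^\dual/(\O_{\bar Y}^\dual)^p\overset{\sim}{\to}(\Omega^1_{\bar Y})^{C=1}$. Combined with the \'etale short exact sequence $0\to \mathbf{G}_m\overset{p}{\to}\mathbf{G}_m\to \O_{\bar Y}^\dual/(\O_{\bar Y}^\dual)^p\to 0$ and the long exact sequence of \'etale cohomology on $\bar Y$, this produces the $\mathbf{F}_p$-linear extension
$$0\to \O(\bar Y)^\dual/(\O(\bar Y)^\dual)^p\to \Omega^1(\bar Y)^{C=1}\to {\rm Pic}(\bar Y)[p]\to 0,$$
where we use that $(\Omega^1_{\bar Y})^{C=1}$ has the same global sections in the Zariski and \'etale topologies since it sits inside the quasi-coherent sheaf $\Omega^1_{\bar Y}$. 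Tensoring with the flat $\mathbf{F}_p$-module $k_C$ and combining with the first isomorphism then yields the extension of the proposition.

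The main technical obstacle is the bookkeeping for the global-sections functor: one must verify that $\Gamma(\bar Y,-)$ commutes with $(-)[p^\infty]$, with reduction mod $p$, and with $k_C\otimes_{\mathbf{F}_p}(-)$ applied to the $\mathbf{F}_p$-local system $(\Omega^1_{\bar Y})^{C=1}$, all of which rest on affineness of $\bar Y$. The ${\rm Pic}(\bar Y)[p]$ term arises because $d\log$ is an \'etale (not Zariski) sheaf isomorphism; the Picard obstruction is exactly the gap between the two types of global sections, measured by $H^1_{\eet}(\bar Y,\mathbf{G}_m)[p]={\rm Pic}(\bar Y)[p]$.
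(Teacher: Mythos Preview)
Your approach is essentially that of the paper: pass to the sheaf $H^1_{\bar Y}=\Omega^1_{\breve Y}/d\O_{\breve Y}$ on the affine curve $\bar Y$, identify $M/pM$ with $\Omega^1(\bar Y)/d\O(\bar Y)$, compute the image of the torsion via Lemma~\ref{Ray}, and conclude by Lemma~\ref{IlRay} together with the Kummer-type sequence for ${\rm Pic}(\bar Y)[p]$.

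There is one step where your justification does not quite work as written. You claim that ``quasi-coherence and affineness'' let $\Gamma$ commute with reduction modulo $p$ on $\mathcal{F}:=H^1_{\bar Y}[p^\infty]$, giving $T/pT\cong\Gamma(\bar Y,\mathcal{F}/p\mathcal{F})$. But $H^1_{\bar Y}$ (and hence $\mathcal{F}$) is \emph{not} an $\O_{\bar Y}$-module in the usual sense, since $d\O_{\breve Y}$ is not an $\O_{\breve Y}$-submodule of $\Omega^1_{\breve Y}$; Serre vanishing therefore does not apply directly. What you actually need is $H^1(\bar Y,\mathcal{F})=0$. The paper obtains this as follows: Lemma~\ref{Ray} identifies $\mathcal{F}/p\mathcal{F}$ with $B_\infty/B_1$, and the filtration $B_n/B_1$ exhibits this as a direct limit of \emph{coherent} sheaves (each $B_n$ carries a Frobenius-twisted $\O_{\bar Y}$-module structure, as in Illusie's theory of the de~Rham--Witt complex). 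Hence $H^1(\bar Y,\mathcal{F}/p\mathcal{F})=0$ on the affine $\bar Y$, and a d\'evissage (using that $\bar Y$ has cohomological dimension~$1$) gives $H^1(\bar Y,\mathcal{F})=0$. Once this is in place, your chain of identifications $T/pT\cong\Gamma(\mathcal{F})/p\cong\Gamma(\mathcal{F}/p)\cong\Gamma(B_\infty/B_1)$ goes through, and the rest of your argument is correct and matches the paper's.
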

\begin{proof} 
Soient $M=H^0(\breve Y, H^1_{\bar Y})=\Omega^1({\breve Y})/d\O(\breve Y)$ (l'\'egalit\'e d\'ecoule du caract\`ere affine de $\bar Y$) et $\mathcal{F}=H^1_{\bar Y}[p^{\infty}]$. Puisque  
  $H^1_{\rm dR}(\breve Y)^{\rm sep}$ est le quotient de
  $M$ par l'adh\'erence de $M[p^{\infty}]=H^0(\breve Y, \mathcal{F})$, 
on a une suite exacte $$M[p^{\infty}]/p\to M/p\to H^1_{\rm dR}(\breve Y)^{\rm sep}/p\to 0.
$$
  D'autre part comme $\breve Y$ est affine et $\mathcal{F}/p\mathcal{F}$ est limite inductive de faisceaux coh\'erents (lemme 
  \ref{Ray}), on a $H^1(\breve Y, \mathcal{F}/p\mathcal{F})=0$ et par d\'evissage $H^1(\breve Y, \mathcal{F})=0$, donc 
  $M[p^{\infty}]/p\cong H^0(\breve Y, \mathcal{F})/p\cong H^0(\breve Y, \mathcal{F}/p\mathcal{F})\cong H^0(\bar Y, B_{\infty}/B_1)$, le dernier isomorphisme \'etant une cons\'equence du lemme \ref{Ray}. On obtient donc une suite exacte 
  $$ H^0(\bar Y, B_{\infty}/B_1)\to \Omega^1({\bar Y})/d\O(\bar Y)\to H^1_{\rm dR}(\breve Y)^{\rm sep}/p\to 0.$$
  Puisque $\breve Y$ est affine, on obtient $H^1_{\rm dR}(\breve Y)^{\rm sep}/p\cong H^0(\bar Y, \Omega^1_{\bar Y}/B_{\infty})$, qui s'identifie (par le lemme \ref{IlRay}) \`a $H^0(\bar Y, k_C\otimes_{\mathbf{F}_p} (\Omega^1_{\bar Y})^{C=1})=k_C\otimes_{\mathbf{F}_p} \Omega^1({\bar Y})^{C=1}$. Cela montre le premier point. Le second s'en d\'eduit, en utilisant encore une fois 
   le lemme 
  \ref{IlRay} et la suite exacte \'evidente 
  $$0\to \O(\bar Y)^\dual/(\O(\bar Y)^\dual)^p\to H^0(\bar Y, \O_{\bar Y}^\dual/(\O_{\bar Y}^\dual)^p)\to {\rm Pic}(\bar Y)[p]\to 0.\qedhere$$
    \end{proof}
   
\begin{coro}\label{sansk}
{\rm (i)} On a un isomorphisme naturel   
$$(H^1_{\rm dR}(\breve Y)^{\rm sep})^{\varphi=p}/p \cong \Omega^1({\bar Y})^{C=1}$$ 
et une suite exacte 
$$0\to \O(\bar Y)^\dual/(\O(\bar Y)^\dual)^p\to (H^1_{\rm dR}(\breve Y)^{\rm sep})^{\varphi=p}/p\to 
{\rm Pic}(\bar{Y})[p]\to 0.$$

{\rm (ii)} La fl\`eche naturelle
$(H^1_{\rm dR}(\breve Y)^{\rm sep})^{\varphi=p}\to
(\acris\otimes_{W(k_C)} H^1_{\rm dR}(\breve Y)^{\rm sep})^{\varphi=p}$ est 
un isomorphisme de $\Z_p$-modules libres de rang fini.
\end{coro}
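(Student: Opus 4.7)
The strategy is to take $\varphi=p$ invariants (equivalently, $F=1$ invariants, where $F=\varphi/p$) of the mod $p$ identification in Proposition \ref{sansk0}, then bootstrap to $\acris$-coefficients via standard slope/fundamental-sequence arguments.

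First I would determine the action of $F=\varphi/p$ on $H^1_{\rm dR}(\breve Y)^{\rm sep}/p \cong k_C \otimes_{\mathbf{F}_p} \Omega^1(\bar Y)^{C=1}$ coming from Proposition \ref{sansk0}. By construction, the reduction $\bar F$ coincides with the inverse Cartier operator $C^{-1}$ modulo $d\O_{\bar Y}$; since $C^{-1}$ is Frobenius-semilinear and acts as the identity on $\Omega^1(\bar Y)^{C=1}$, the induced operator on $k_C\otimes_{\mathbf{F}_p}\Omega^1(\bar Y)^{C=1}$ is $\sigma\otimes 1$. Using $k_C^{\sigma=1}=\mathbf{F}_p$, I get
$$\bigl(H^1_{\rm dR}(\breve Y)^{\rm sep}/p\bigr)^{\varphi=p} \;=\; \Omega^1(\bar Y)^{C=1}.$$

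Second, I would interchange $\varphi=p$ invariants and reduction modulo $p$. Writing $M=H^1_{\rm dR}(\breve Y)^{\rm sep}$, the point is to show that $F-1$ is surjective on $M$. Modulo $p$ this amounts to surjectivity of $\sigma\otimes 1 - 1$ on $k_C\otimes_{\mathbf{F}_p} V$ (for $V$ the complementary piece to $\Omega^1(\bar Y)^{C=1}$ in the Cartier decomposition of Lemma \ref{IlRay}), which holds because $k_C$ is algebraically closed (Artin–Schreier / Lang surjectivity). One then lifts inductively through the $p$-adic filtration using the $p$-adic separation and completeness of $M^{\rm sep}$ proved earlier. The snake lemma applied to $0\to M\xrightarrow{p}M\to M/p\to 0$ against $F-1$ then delivers $M^{\varphi=p}/p \cong (M/p)^{\varphi=p}$, which combined with Step 1 gives the isomorphism of (i). The exact sequence in (i) is obtained by the same termwise argument applied to the exact sequence of Proposition \ref{sansk0}, after checking that the inclusion $\O(\bar Y)^\dual/(\O(\bar Y)^\dual)^p\hookrightarrow H^1_{\rm dR}(\breve Y)^{\rm sep}/p$ is $\bar F$-equivariant (which follows from $\mathrm{dlog}\circ F = F\circ\mathrm{dlog}$, with $F=1$ on $\mathrm{dlog}$-classes of units).

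For (ii), the injectivity of $M^{\varphi=p}\to(\acris\otimes_{\O_{\breve C}}M)^{\varphi=p}$ is automatic since $\theta_0:\acris\to\O_{\breve C}$ splits the inclusion $\O_{\breve C}\hookrightarrow\acris$. That $M^{\varphi=p}$ is $\Z_p$-free of finite rank follows from part (i) together with Dieudonné–Manin applied to the isocrystal $\breve C\otimes_{\O_{\breve C}}M$: the $\varphi=p$ locus is the slope-$1$ part, which over the algebraically closed $k_C$ has $\Q_p$-dimension of invariants equal to the $\breve C$-dimension of the slope-$1$ component. For surjectivity I would use the fundamental exact sequence
$$0\longrightarrow \Z_p\,t \longrightarrow \acris \xrightarrow{\ \varphi/p-1\ } \acris \longrightarrow 0,$$
tensor with the slope-$1$ piece of $M$, and conclude that any $\varphi=p$ eigenvector in $\acris\otimes_{\O_{\breve C}} M$ is forced to lie in $1\otimes M$ because the quotient $\acris/\O_{\breve C}$-part of the slope decomposition contributes only multiples of $t$ in degrees incompatible with the slope-$1$ condition.

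The main obstacle is the interchange step: the surjectivity of $F-1$ on $M$ requires controlling Frobenius dynamics on the potentially large, non-finitely-generated $\O_{\breve C}$-module $H^1_{\rm dR}(\breve Y)^{\rm sep}$, not just on its slope-$1$ piece. This is where the algebraically closed residue field $k_C$, together with the precise description of the mod $p$ reduction coming from Raynaud's Lemma \ref{Ray} and the Cartier decomposition Lemma \ref{IlRay}, are indispensable; without them the $p$-adic lifting of $\bar F=1$ solutions could fail.
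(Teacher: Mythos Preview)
Your approach to (i) is essentially the paper's: identify $\bar F$ with $\sigma\otimes 1$ on $M/p\cong k_C\otimes_{\mathbf{F}_p}\Omega^1(\bar Y)^{C=1}$, use Artin--Schreier surjectivity over the algebraically closed $k_C$ to get $F-1$ surjective on $M$, then snake. (Your parenthetical about ``$V$ the complementary piece'' is confused, though: there is no complementary piece inside $M/p$; the whole of $M/p$ is $k_C\otimes_{\mathbf{F}_p}\Omega^1(\bar Y)^{C=1}$, and $\bar F-1=(\sigma-1)\otimes 1$ is surjective on all of it.)

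The real gap is in (ii), and it originates in your final paragraph: you worry that $M=H^1_{\rm dR}(\breve Y)^{\rm sep}$ might be ``potentially large, non-finitely-generated'', but it is not. Since $M$ is $p$-torsion free, $p$-adically complete, and $M/p$ is finite-dimensional over $k_C$ (by Proposition~\ref{sansk0}, using that $\O(\bar Y)^\dual/(\O(\bar Y)^\dual)^p$ and ${\rm Pic}(\bar Y)[p]$ are finite), $M$ is a \emph{finite free} $W(k_C)$-module. This is the key observation the paper makes first. Once you have it, (ii) is a one-liner: $M/p$ has a basis of $\bar F$-fixed points, so $F$ is a semilinear automorphism of the finite free $W(k_C)$-module $M$; since $k_C$ is algebraically closed, the standard \'etale $\varphi$-module argument gives $W(k_C)\otimes_{\Z_p}M^{F=1}\overset{\sim}{\to}M$, and hence
\[
(\acris\otimes_{W(k_C)}M)^{\varphi=p}
=(\acris\otimes_{\Z_p}M^{F=1})^{\varphi_{\acris}=1}
=\acris^{\varphi=1}\otimes_{\Z_p}M^{F=1}
=M^{F=1}=M^{\varphi=p},
\]
using only $\acris^{\varphi=1}=\Z_p$. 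Your detour through the fundamental exact sequence and a ``slope-$1$ piece of $M$'' is both unnecessary and muddled: there is no slope-$1$ piece to isolate, because all of $M$ is already unit-root (pure of slope~$1$ for $\varphi=pF$ with $F$ bijective).
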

\begin{proof}
      On d\'eduit directement de la proposition ci-dessus et du fait que 
      $H^1_{\rm dR}(\breve Y)^{\rm sep}$ est sans $p$-torsion et complet 
      pour la topologie $p$-adique
       que $H^1_{\rm dR}(\breve Y)^{\rm sep}$ est un 
      $W(k_C)$-module libre de type fini. Puisque $d\circ F=p\,F\circ d$, $F$ induit un endomorphisme de 
      $H^1_{\rm dR}(\breve Y)=\Omega^1(\breve Y)/d\O(\breve Y)$, qui induit \`a son tour un endomorphisme semi-lin\'eaire $F$ de 
  $H^1_{\rm dR}(\breve Y)^{\rm sep}$. L'isomorphisme $H^1_{\rm dR}(\breve Y)^{\rm sep}/p\cong k_C\otimes_{\mathbf{F}_p} 
      \Omega^1({\bar Y})^{C=1}$ 
   fourni par la proposition ci-dessus est compatible avec l'action de $F$ sur $H^1_{\rm dR}(\breve Y)^{\rm sep}$ et le frobenius de $k_C$. On en d\'eduit que $H^1_{\rm dR}(\breve Y)^{\rm sep}/p$ poss\`ede une base de points fixes sous  
   $\bar{F}$, qui y est donc bijectif. Autrement dit, $F$ est un automorphisme semi-lin\'eaire du 
   $W(k_C)$-module libre de type fini $H^1_{\rm dR}(\breve Y)^{\rm sep}$ et $F-1$ est surjectif,
et donc (puisque $k_C$ est alg\'ebriquement clos) la fl\`eche naturelle 
$$W(k_C)\otimes_{\Z_p} (H^1_{\rm dR}(\breve Y)^{\rm sep})^{F=1}\to H^1_{\rm dR}(\breve Y)^{\rm sep}$$ 
est un isomorphisme. 
  Puisque $\acris^{\varphi=1}=\Z_p$ et $\varphi=pF$, cela montre que la fl\`eche naturelle 
$(H^1_{\rm dR}(\breve Y)^{\rm sep})^{\varphi=p}\to 
(\acris\otimes_{W(k_C)} H^1_{\rm dR}(\breve Y)^{\rm sep})^{\varphi=p}$ 
est un isomorphisme. 
\end{proof}

Si $M$ est un $\O_{\breve C}$-module libre de rang fini,
muni d'un frobenius $\varphi$, le th\'eor\`eme
de Dieudonn\'e-Manin fournit une d\'ecomposition $M[\frac{1}{p}]=\oplus_{r\in\Q_+}M[\frac{1}{p}]^{[r]}$
par les pentes de~$\varphi$.  On d\'efinit $M^{[r]}$ comme 
$M\cap M[\frac{1}{p}]^{[r]}$.

On suppose que $\breve Y$ est obtenu en retirant \`a une courbe propre~$\breve X$
les tubes $]Q_0[,\dots,]Q_r[$ de points $Q_0,\dots,Q_r$ de la fibre sp\'eciale.

\begin{prop}\label{basic9.11}
L'application r\'esidu induit une suite exacte de $\varphi$-modules
$$0\to H^1_{\rm dR}(\breve X)^{[1]}\to H^1_{\rm dR}(\breve Y)^{\rm sep}
\to {\rm Ker}\big(\dbar:\O_{\breve C}^{\partial^{\rm ad}Y}\to\O_{\breve C}\big)(-1)\to 0.$$
\end{prop}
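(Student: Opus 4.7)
The plan is to realize Proposition~\ref{basic9.11} as an integral refinement of the classical residue exact sequence for $\bar Y \subset \bar X$ in rigid cohomology, with exactness controlled by the pure slope~1 structure on $H^1_{\rm dR}(\breve Y)^{\rm sep}$ established in Corollary~\ref{sansk}(ii).

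First, I would construct the residue map. For each cercle fant\^ome $C\in\partial^{\rm ad}Y$, the pullback $\Omega^1(\breve Y)\to \Omega^1(\breve C)$ composed with $\omega\mapsto{\rm Res}_C\omega$ yields a map $H^1_{\rm dR}(\breve Y)\to \O_{\breve C}^{\partial^{\rm ad}Y}$. Since $\varphi(\tfrac{dT}{T})=p\,\tfrac{dT}{T}$ on a cercle fant\^ome, residue transforms $\varphi$ by a factor of $p$, which is exactly the Tate twist $(-1)$ on the target. The image lies in ${\rm Ker}(\dbar)$ by the residue theorem on the proper compactification $\breve X$: any $\omega\in\Omega^1(\breve Y)$ extends meromorphically to $\breve X$ with poles only at the removed points $Q_0,\dots,Q_r$, and the sum of residues of a meromorphic differential on a proper curve vanishes. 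Because ${\rm Ker}(\dbar)$ is torsion-free and $p$-adically separated, the map descends to $H^1_{\rm dR}(\breve Y)^{\rm sep}$.

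Second, surjectivity is explicit. For $i=1,\dots,r$, pick a rational function on $\bar X$ with divisor $Q_0-Q_i$ (available by Riemann--Roch) and lift to a rational function $\tilde f_i$ on $\breve X$ with the same divisor. Then $d\tilde f_i/\tilde f_i$ is holomorphic on $\breve Y$ and its residues are $-1$ at $C_0$, $+1$ at $C_i$, and $0$ at the remaining circles. The $r$ resulting classes generate the free rank $r$ $\O_{\breve C}$-module ${\rm Ker}(\dbar)$, giving integral surjectivity.

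Third, I identify the kernel with $H^1_{\rm dR}(\breve X)^{[1]}$. For injectivity of the restriction $H^1_{\rm dR}(\breve X)\to H^1_{\rm dR}(\breve Y)^{\rm sep}$: if $n\omega=df$ on $\breve Y$ with $\omega$ holomorphic on $\breve X$, a Mayer--Vietoris analysis over the compactification $\breve X=\breve Y\cup\bigcup D_i$ (along the cercles fant\^omes $C_i$) extends $f$ to a function on $\breve X$ up to bounded $p$-torsion, forcing $\omega=0$ in the torsion-free $H^1_{\rm dR}(\breve X)$; since $H^1_{\rm dR}(\breve Y)^{\rm sep}$ is of pure slope~1, the map factors through $H^1_{\rm dR}(\breve X)^{[1]}$. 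Conversely, any class in the kernel of the residue map satisfies $\omega|_{C_i}=dg_i$ for a suitable $g_i$, and a Mayer--Vietoris argument promotes it to a class on $\breve X$. The main obstacle will be controlling $p$-torsion in this splitting (the de Rham complex of an open disk has unbounded $p$-torsion coming from antiderivatives of $T^{p^k-1}dT$), which I expect to handle by systematically working in $H^1_{\rm dR}(\breve Y)^{\rm sep}$ from the start and exploiting slope purity together with Corollary~\ref{sansk} to absorb all denominators.
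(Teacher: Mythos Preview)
Your outline is a coherent strategy, but it diverges substantially from the paper's proof, and the part you flag as ``the main obstacle'' is precisely where the paper takes a different (and cleaner) route.

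The paper does not attempt an integral Mayer--Vietoris on the covering $\breve X=\breve Y\cup\bigcup D_i$. Instead, after observing via the cup-product that $H^1_{\rm dR}(\breve X)^{[1]}\subset\Omega^1(\breve X)$, it reduces the whole statement modulo~$p$. By Prop.~\ref{sansk0} one has $H^1_{\rm dR}(\breve Y)^{\rm sep}/p\cong k_C\otimes_{{\bf F}_p}\Omega^1(\bar Y)^{C=1}$, and since all three terms are $p$-torsion-free and $p$-adically complete, exactness mod~$p$ suffices. The Cartier-fixed condition forces $\Omega^1(\bar Y)^{C=1}=\Omega^1(\bar Y^\times)^{C=1}$ (only simple poles survive), and one is left with the classical residue sequence $0\to\Omega^1(\bar X)\to\Omega^1(\bar Y^\times)\to{\rm Ker}(\dbar)\to 0$ on the special fibre, which follows from $H^1(\bar X,\Omega^1_{\bar X})\cong k_C$.

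Your direct approach is not wrong in spirit, but the resolution you sketch for the torsion problem is too vague to be convincing. Slope purity of $H^1_{\rm dR}(\breve Y)^{\rm sep}$ tells you the restriction $H^1_{\rm dR}(\breve X)\to H^1_{\rm dR}(\breve Y)^{\rm sep}$ factors through the slope-$1$ part and gives rational exactness, but it does not by itself guarantee that the image of $H^1_{\rm dR}(\breve X)^{[1]}$ is \emph{saturated} in $H^1_{\rm dR}(\breve Y)^{\rm sep}$: a class in the kernel of the residue might a priori lift only to $p^{-N}H^1_{\rm dR}(\breve X)^{[1]}$, and ``absorbing denominators via Cor.~\ref{sansk}'' is not a mechanism that obviously closes this gap. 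The paper's reduction mod~$p$ sidesteps the issue entirely by never confronting the unbounded torsion of $H^1_{\rm dR}(D_i)$.
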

\begin{proof}
Comme la pente de $\varphi$ sur $H^2_{\rm dR}(\breve X)$ est $1$, 
et que $\varphi$ est divisible
par $p$ sur $\Omega^1(\breve X)$, on a $H^1_{\rm dR}(\breve X)^{[1]}\perp \Omega^1(\breve X)$,
et comme $\Omega^1(\breve X)$
est maximalement isotrope pour le cup-produit 
on en d\'eduit que $H^1_{\rm dR}(\breve X)^{[1]}\subset \Omega^1(\breve X)$.

Compte-tenu de ce qui pr\'ec\`ede, de la prop.\,\ref{sansk0}, et de ce que
tout est complet pour la topologie $p$-adique et sans torsion, on est ramen\'e \`a
prouver l'exactitude de la suite
$$0\to k_C\otimes_{{\bf F}_p}\Omega^1(\bar X)^{C=1}
\to k_C\otimes_{{\bf F}_p}\Omega^1(\bar Y)^{C=1}\to
{\rm Ker}\big(\dbar:k_C^{\partial^{\rm ad}Y}\to k_C\big)\to 0$$ 
Maintenant, on a $\Omega^1(\bar Y)^{C=1}=\Omega^1(\bar Y^\times)^{C=1}$, o\`u
$\Omega^1(\bar Y^\times)$ d\'esigne les formes diff\'erentielles ayant des p\^oles
simples en les points de $\bar X\moins \bar Y$. 
Donc 
(en utilisant, comme pr\'ec\'edemment,
que $C$ est semi-lin\'eaire et $k_C$ alg\'ebriquement clos) on se ram\`ene \`a prouver
l'exactitude de la suite
$$0\to \Omega^1(\bar X)
\to \Omega^1(\bar Y^\times)\to
{\rm Ker}\big(\dbar:k_C^{\partial^{\rm ad}Y}\to k_C\big)\to 0$$ 
Celle-ci d\'ecoule de ce que $H^1(\bar X,\Omega^1_{\bar X})\cong k_C$
(si $\bar X= U\cup V$ et $\omega\in\Omega^1(U\cap V)$ l'image de
$\omega$ dans $k_C$ est $\sum_{x\in V}{\rm Res}_x\omega=-\sum_{x\in U}{\rm Res}_x\omega$).
\end{proof}

\subsubsection{D\'evissage de la cohomologie \'etale}
     Rappelons que l'on dispose d'un r\'egulateur 
  $\delta_Y:{\rm Symb}_p(Y^{\rm gen})\to H^1_{\rm syn}(Y,1)$ ainsi que d'une projection $H^1_{\rm syn}(Y,1)\to (\acris\otimes _{W(k_C)}H^1_{\rm dR}(\breve Y)^{\rm sep})^{\varphi=p}\cong (H^1_{\rm dR}(\breve Y)^{\rm sep})^{\varphi=p}$. 
On en d\'eduit une application 
  $$R: {\rm Symb}_p(Y^{\rm gen})\to (H^1_{\rm dR}(\breve Y)^{\rm sep})^{\varphi=p}.$$
 Concr\`etement, en suivant les diverses identifications et en utilisant les notations des paragraphes pr\'ec\'edents, on a (o\`u $\theta_0:\acris\to \O_{\breve C}$ est le morphisme usuel):
 $$R((u_n)_{n})=\lim_{n\to \infty} \tfrac{d \theta_0(\tilde{u}_n)}{\theta_0(\tilde{u}_n)}.$$
  L'application $R$ s'annule sur $\Z_p\wotimes \O(Y)^{\dual\dual}$ (lemme \ref{short10}). Elle induit donc une application 
  $$R: \frac{{\rm Symb}_p(Y^{\rm gen})}{\Z_p\wotimes \O(Y)^{\dual\dual}}
\to  (H^1_{\rm dR}(\breve Y)^{\rm sep})^{\varphi=p}$$
  
  \begin{prop}\label{shortetale1}
   L'application 
$$R: \frac{{\rm Symb}_p(Y^{\rm gen})}{\Z_p\wotimes \O(Y)^{\dual\dual}}
\to  (H^1_{\rm dR}(\breve Y)^{\rm sep})^{\varphi=p}$$
 ci-dessus est un isomorphisme.
  \end{prop}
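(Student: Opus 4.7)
The plan is to deduce the proposition from the five lemma applied to a commutative diagram
\[
\xymatrix@R=.45cm@C=.35cm{
0\ar[r] & T_pJ(k_C)\ar[r]\ar[d]^-{\alpha} & \dfrac{{\rm Symb}_p(Y^{\rm gen})}{\Z_p\wotimes\O(Y)^{\dual\dual}}\ar[r]^-{\rm Res}\ar[d]^-{R} & {\rm Ker}\bigl(\dbar\colon\Z_p^{\partial^{\rm ad}Y}\to\Z_p\bigr)\ar[r]\ar@{=}[d] & 0\\
0\ar[r] & \bigl(H^1_{\rm dR}(\breve X)^{[1]}\bigr)^{\varphi=p}\ar[r] & \bigl(H^1_{\rm dR}(\breve Y)^{\rm sep}\bigr)^{\varphi=p}\ar[r]^-{\rm Res} & {\rm Ker}\bigl(\dbar\colon\Z_p^{\partial^{\rm ad}Y}\to\Z_p\bigr)\ar[r] & 0
}
\]
whose existence and exactness I will establish in turn. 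Here $\breve X$ denotes the smooth proper compactification of $\breve Y$ obtained by filling in the points $Q_0,\dots,Q_r$ in the special fibre, and $J={\rm Pic}^0(\breve X)$. The top row is Proposition~\ref{short21}(ii), after identifying ${\rm Symb}_p(Y^{\rm gen})$ with $H^1_{\eet}(Y^{\rm gen},\Z_p(1))$ via Corollary~\ref{basic12}.

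For the bottom row, I take $\varphi=p$-eigenspaces of the sequence of Proposition~\ref{basic9.11}. Exactness on the right reduces to verifying that $\varphi-p$ is surjective on $H^1_{\rm dR}(\breve X)^{[1]}$: by Dieudonn\'e--Manin over the algebraically closed field $k_C$, the slope-$1$ part is a free $W(k_C)$-module on which $\varphi/p$ is a semi-linear automorphism, so $\varphi-p=p(\varphi/p-1)$ is surjective modulo $p$ and hence surjective by $p$-adic completeness. A direct computation shows that $\varphi-p$ is also surjective on $\O_{\breve C}(-1)$ with fixed points $\Z_p$, so the Tate twist cancels and the right term is the same as in the top row.

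The right-hand square commutes by construction: unwinding the definition of $R$ on a symbol $u=(u_n)_n$ gives $R(u)=\lim_n d\tilde u_n/\tilde u_n$, whose residue at a cercle fant\^ome of $\partial^{\rm ad}Y$ is, modulo~$p^n$, the $p$-adic valuation of $u_n$ there; this matches the residue map used on symbols in \S\ref{BAS6.6} (Proposition~\ref{jambe11}). The left vertical $\alpha$ is the crystalline--\'etale comparison for the $p$-divisible group of $J$: by Fontaine--Messing theory (or the Scholze--Weinstein description of Dieudonn\'e modules over $\O_C$), $H^1_{\rm crys}(\bar X/W(k_C))\cong H^1_{\rm dR}(\breve X)$ and its slope-$1$ part is canonically the covariant Dieudonn\'e module of the maximal \'etale quotient of $J[p^\infty]$, whose $\varphi=p$-fixed points are precisely $T_pJ(k_C)$.

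The main obstacle is commutativity of the left square, i.e.\ showing that for a symbol $u=(u_n)_n$ whose associated divisor classes specialise to a compatible system $(x_n)\in T_pJ(k_C)$, one has $R(u)=\alpha((x_n))$. This is a compatibility between the syntomic regulator constructed in \S\ref{BAS18.1} and the Fontaine--Messing period isomorphism. I would handle it by factoring through Remark~\ref{basic3}: lift the divisors $D_n$ representing $u_n$ to characteristic zero, express $\alpha((x_n))$ as $\log_{\tilde J}\circ\iota_{\bdr}$ evaluated on the compatible system of $p^n$-th roots, and identify the resulting class in $H^1_{\rm dR}(\breve X)^{[1]}\subset H^1_{\rm dR}(\breve Y)^{\rm sep}$ with $\lim_n d\tilde u_n/\tilde u_n$ via the standard description of the crystalline--de Rham period of an abelian variety as the $d\log$ of its universal cover. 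This identification is essentially formal once the period map is made explicit, but is the only non-routine step; granted it, the five lemma yields the proposition.
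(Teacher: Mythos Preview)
Your residue-filtration strategy is different from the paper's, and the step you flag as ``the only non-routine step'' is a genuine gap.

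First, a small technical error: in arguing exactness of the bottom row you claim that $\varphi-p=p(\varphi/p-1)$ is surjective modulo~$p$, but this map is \emph{zero} modulo~$p$. What is true, and sufficient, is that $F:=\varphi/p$ is a $\sigma$-semilinear automorphism of the unit-root $F$-crystal $H^1_{\rm dR}(\breve X)^{[1]}$ with $F-1$ surjective (since $k_C$ is algebraically closed); the functor $X\mapsto X^{F=1}$ is then exact on unit-root $F$-crystals, giving the row. (Also, Proposition~\ref{jambe11} concerns $\ell\neq p$ and is not the reference you want for the residue.)

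The real problem is the left square. Once the right square commutes, $R$ restricts to \emph{some} map $\alpha'\colon T_pJ(k_C)\to\bigl(H^1_{\rm dR}(\breve X)^{[1]}\bigr)^{\varphi=p}$, and the diagram commutes with $\alpha'$ by construction; the issue is showing $\alpha'$ is an isomorphism. You propose to identify $\alpha'$ with the Dieudonn\'e-theoretic comparison $\alpha$ via Remark~\ref{basic3}, but this is circular in the paper's logic: that remark describes Theorem~\ref{basic49}, whose proof passes through th.\,\ref{basic47}, th.\,\ref{basic40}, cor.\,\ref{basic35.1}, th.\,\ref{short11}, and ultimately the present proposition (see also Remark~\ref{vpj}, where the isomorphism $V_pJ(k_C)\cong(\bcris^+\otimes H^1_{\rm dR}(\breve X)^{[1]})^{\varphi=p}$ is recorded as a \emph{consequence} of this result). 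An independent compatibility between the Kummer/syntomic regulator and the crystalline period map of~$J$ is certainly provable, but it is a real comparison theorem, not ``essentially formal,'' and you have not supplied it.

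The paper's route avoids this entirely. It observes that both source and target of $R$ are free $\Z_p$-modules of finite rank (Prop.~\ref{short21} and Cor.~\ref{sansk}), so it suffices to check that $R\bmod p$ is an isomorphism. Modulo~$p$, each side sits in a short exact sequence
\[
0\longrightarrow \O(\bar Y)^{\dual}/(\O(\bar Y)^{\dual})^p\longrightarrow(\,\text{--}\,)/p\longrightarrow {\rm Pic}(\bar Y)[p]\longrightarrow 0,
\]
the target via Cartier theory (Cor.~\ref{sansk}(i)) and the source via Prop.~\ref{short21}(ii), Rem.~\ref{BS2.1} and Rem.~\ref{Gerritzen}. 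Compatibility of $R\bmod p$ with these sequences then reduces to a one-line check: if $p\mid{\rm Div}(u_1)$ and $u_1/a\in\O(Y^{\rm gen})^{\dual\dual}$ with $a\in\breve C(\breve Y)^{\dual}$, then $\theta_0(\tilde u_1/a)\equiv 1\pmod p$, hence $d\theta_0(\tilde u_1)/\theta_0(\tilde u_1)\equiv da/a\pmod p$. This mod-$p$ approach replaces your delicate integral comparison by an elementary statement on the special fibre.
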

  \begin{proof} Les deux termes sont des $\Z_p$-modules libres de type fini: il suffit d'invoquer la 
  prop.\,\ref{short21} pour le premier et la discussion ci-dessus pour le second. Il suffit donc de montrer que la r\'eduction modulo 
  $p$ de $R$ est un isomorphisme. En identifiant ${\rm Symb}_p(Y^{\rm gen})$ et $ H^1_{\eet}(Y^{\rm gen},\Z_p(1))$, et en utilisant la prop. \ref{short21} et la rem.\,\ref{BS2.1}, ainsi que l'isomorphisme ${\rm Pic}(Y^{\rm gen})\cong {\rm Pic}(\bar Y)$ (rem.\,\ref{Gerritzen}) on obtient une suite exacte 
  $$0\to \O(\bar Y)^{\dual}/ (\O(\bar Y)^{\dual})^p\to 
\big(\frac{ {\rm Symb}_p(Y^{\rm gen})}{\Z_p\wotimes \O(Y)^{\dual\dual}}\big)/p
\to {\rm Pic}(\bar Y)[p]\to 0.$$
  
  D'autre part, le cor.\,\ref{sansk} exhibe une suite exacte analogue, avec 
$(H^1_{\rm dR}(\breve Y)^{\rm sep})^{\varphi=p}/p$ \`a la place de 
$\big(\frac{{\rm Symb}_p(Y^{\rm gen})}{\Z_p\wotimes \O(Y)^{\dual\dual}}\big)/p$. 
Il suffit de montrer que l'application $R$ est compatible, modulo $p$, avec ces deux suites exactes (les applications induites sur les termes extr\^emes \'etant l'identit\'e). En suivant les diverses 
  identifications et les d\'efinitions, on se ram\`ene \`a d\'emontrer l'\'enonc\'e suivant: si
  $u_1\in C(Y^{\rm gen})^*$ et $a\in \breve C(\breve Y)^*$ sont tels que 
  $p\mid {\rm Div}(u_1)$ et $u_1/a\in \O(Y^{\rm gen})^{\dual\dual}$, alors 
  $\frac{d\theta_0(\tilde{u}_1)}{\theta_0(\tilde{u}_1)}\equiv \frac{da}{a}\pmod p.$
Or ceci est \'evident car $\theta_0(\tilde u_1/a)\equiv 1$ mod~$p$.
  \end{proof}

\begin{coro}\label{shortetale}
On dispose  d'une suite exacte naturelle:
  $$0\to  \Z_p\wotimes\O(Y)^{\dual\dual}\to H^1_{\eet}(Y^{\rm gen},\Z_p(1))\to (\acris\otimes H^1_{\rm dR}(\breve Y)^{\rm sep})^{\varphi=p}\to 0$$
  et $(\acris\otimes H^1_{\rm dR}(\breve Y)^{\rm sep})^{\varphi=p}$ est un $\Z_p$-module libre de type fini. 
\end{coro}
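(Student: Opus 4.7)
The plan is to assemble the corollary from the preceding ingredients essentially as a formal consequence; no new construction is required.

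First I would identify $H^1_{\eet}(Y^{\rm gen},\Z_p(1))$ with the group of symbols ${\rm Symb}_p(Y^{\rm gen})$ via the isomorphism of Corollary~\ref{basic12}. Under this identification, the inclusion $\Z_p\wotimes\O(Y)^{\dual\dual}\hookrightarrow{\rm Symb}_p(Y^{\rm gen})$ recalled in~\S\ref{BAS6} provides the left-hand arrow of the sought sequence, and lemma~\ref{short10} ensures that the composition with the regulator map is zero modulo torsion, so that the induced map on the quotient is well-defined.

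Next I would invoke Proposition~\ref{shortetale1} to identify
$$\frac{{\rm Symb}_p(Y^{\rm gen})}{\Z_p\wotimes\O(Y)^{\dual\dual}}\xrightarrow{\sim}(H^1_{\rm dR}(\breve Y)^{\rm sep})^{\varphi=p},$$
and then the second statement of Corollary~\ref{sansk}(ii) to pass from this target to $(\acris\otimes_{W(k_C)}H^1_{\rm dR}(\breve Y)^{\rm sep})^{\varphi=p}$, since these two $\Z_p$-modules are canonically isomorphic. Concatenating these two isomorphisms gives the right-hand arrow of the exact sequence, and the composition produces exactly the syntomic/de Rham map coming from the regulator of~\S\ref{BAS18.1}.

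Finally, the freeness and finite type of $(\acris\otimes H^1_{\rm dR}(\breve Y)^{\rm sep})^{\varphi=p}$ are obtained immediately from Corollary~\ref{sansk}(ii), which already asserts that $(H^1_{\rm dR}(\breve Y)^{\rm sep})^{\varphi=p}$ is a free $\Z_p$-module of finite rank and isomorphic to the $\acris$-twisted version. The naturality in $Y$ is inherited term by term from the naturality of the Kummer map, of the symbol construction, and of the regulator $R$. I do not anticipate any genuine obstacle: the entire content of the corollary is packaged in Proposition~\ref{shortetale1} together with Corollary~\ref{sansk}, and the only thing to check carefully is that the left-hand inclusion coming from the symbol description agrees, under Corollary~\ref{basic12}, with the map $\Z_p\wotimes\O(Y)^{\dual\dual}\to H^1_{\eet}(Y^{\rm gen},\Z_p(1))$ induced by Kummer theory --- which is immediate from the definitions since, for $u\in\O(Y)^{\dual\dual}$, the constant symbol $(u)_{n\in\N}$ is sent precisely to the Kummer class of $u$.
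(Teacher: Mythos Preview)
Your proposal is correct and follows exactly the approach the paper intends: the corollary is stated without an explicit proof precisely because it is the concatenation of Cor.~\ref{basic12}, Prop.~\ref{shortetale1}, and Cor.~\ref{sansk}(ii), as you describe. The one point worth tightening is the injectivity of $\Z_p\wotimes\O(Y)^{\dual\dual}\to H^1_{\eet}(Y^{\rm gen},\Z_p(1))$: rather than appealing to the remark in~\S\ref{BAS6} (which only treats $\O(Y)^{\dual\dual}$ before completion), you can simply invoke the Kummer exact sequence of~\S\ref{SSS15}, or note that it is already implicit in Prop.~\ref{short21}(ii).
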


\begin{rema}\label{vpj}
Soit $H^1_{\rm dR}(\breve Y)^{\rm sep}_0\subset H^1_{\rm dR}(\breve Y)^{\rm sep}$
le noyau de l'application r\'esidu. 
Il r\'esulte de la prop.\,\ref{shortetale1}, du (ii) de la prop.\,\ref{short21}
et de la prop.\,\ref{basic9.11}
que l'on a un isomorphisme
$$(\bcris^+\otimes H^1_{\rm dR}(\breve X)^{[1]})\cong V_pJ(k_C).$$
Si on remplace le sous-espace $H^1_{\rm dR}(\breve X)^{[1]}$
par l'espace tout entier, on obtient (en sp\'ecialisant le th.\,\ref{AF1} au cas de
bonne r\'eduction):
$$(\bcris^+\otimes H^1_{\rm dR}(\breve X))^{\varphi=p}\cong V_pJ(\O_C/p).$$
\end{rema}

\Subsection{Comparaison syntomique-\'etale pour les petits shorts}\label{BAS8}
On dit qu'un $\O_{\breve C}$-short $Y$ est {\it petit} si $\O(Y)$ est \'etale au-dessus 
de $\O_{\breve C}\langle T\rangle$ (i.e.~si $Y$ est \'etale au-dessus d'une boule ferm\'ee). 
\subsubsection{L'op\'erateur $\psi$}
Soit $Y$ un petit short sur $\O_{\breve C}$.
Par d\'efinition, 
$\O(Y)$ est \'etale au-dessus de $\O_{\breve C}\langle T\rangle$, et
on munit de $\O(Y)$ du frobenius $\varphi$ v\'erifiant $\varphi(T)=T^p$.

Maintenant, $dT$ est une base de $\Omega^1(Y)$, et donc
$T$ est une $p$-base de $\O(Y)$ sur $\varphi(\O(Y))$:
tout \'el\'ement $x$ de $\O(Y)$ peut s'\'ecrire, de mani\`ere unique,
sous la forme $x=\sum_{i=0}^{p-1}\varphi(x_i)T^i$.
On d\'efinit $\psi:\O(Y)\to\O(Y)$ par
$$\psi\big(\sum\nolimits_{i=0}^{p-1}\varphi(x_i)T^i\big):=x_0$$
L'op\'erateur $\psi$ est un inverse \`a gauche de $\varphi$: on a $\psi\circ\varphi={\rm id}$.
On d\'efinit $\psi$ sur $\Omega^1(Y)$ par la formule
$$\psi(f\,\tfrac{dT}{T})=\psi(f)\,\tfrac{dT}{T}$$
%
On a $\psi\circ\varphi=p$ sur $\Omega^1(Y)$ et
$\psi\circ d=p\, d\circ\psi$.

\begin{lemm}\label{basic5}
$d$ induit un isomorphisme
$d:\O(Y)^{\psi=0}\overset{\sim}{\to}\Omega^1(Y)^{\psi=0}$.
\end{lemm}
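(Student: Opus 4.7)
The plan is to use the $p$-basis decomposition $\O(Y) = \bigoplus_{i=0}^{p-1}\varphi(\O(Y))\,T^i$ induced by the étaleness of $\O(Y)$ over $\O_{\breve C}\langle T\rangle$ to reduce the statement to the bijectivity of a very explicit operator on $\O(Y)$. By definition of $\psi$, this decomposition gives
$$\O(Y)^{\psi=0}=\bigoplus_{i=1}^{p-1}\varphi(\O(Y))\,T^i,$$
and, since $\psi\big(f\tfrac{dT}{T}\big)=\psi(f)\tfrac{dT}{T}$, one has $\Omega^1(Y)^{\psi=0}=\O(Y)^{\psi=0}\cdot\tfrac{dT}{T}$, which after using $\tfrac{dT}{T}=T^{-1}dT$ can be rewritten as $\bigoplus_{j=0}^{p-2}\varphi(\O(Y))\,T^j dT$. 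That $d$ sends the left-hand side into the right-hand side is automatic from the relation $\psi\circ d=p\,d\circ\psi$ stated just above.

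Next I would compute $d$ on each summand. Since $\varphi(T)=T^p$ forces $d\varphi(f)=pT^{p-1}\varphi(f')\,dT$ for $f':=df/dT\in\O(Y)$, a direct computation for $1\le i\le p-1$ yields
$$d(\varphi(f)T^i)=\big(pT^{p+i-1}\varphi(f')+i\varphi(f)T^{i-1}\big)\,dT.$$
Using $T^{p+i-1}=\varphi(T)\,T^{i-1}$ and the multiplicativity of $\varphi$, this becomes
$$d(\varphi(f)T^i)=\varphi\big(pTf'+if\big)\,T^{i-1}\,dT.$$
Identifying $\varphi(\O(Y))T^i$ and $\varphi(\O(Y))T^{i-1}dT$ with $\O(Y)$ via $\varphi(f)T^i\leftrightarrow f$ and $\varphi(f)T^{i-1}dT\leftrightarrow f$ respectively (these are isomorphisms because $\varphi$ is injective on the domain $\O(Y)$), the map $d$ on the $i$-th summand becomes the operator
$$L_i:=pT\tfrac{d}{dT}+i\colon\O(Y)\longrightarrow\O(Y).$$

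The remaining step — and essentially the only nontrivial point — is to show that $L_i$ is bijective on $\O(Y)$ for each $i\in\{1,\dots,p-1\}$. Since $Y$ is a (petit) short, $\O(Y)$ is $p$-adically separated and complete, and the derivation $d/dT$ preserves $\O(Y)$; hence $pT\tfrac{d}{dT}$ sends $\O(Y)$ into $p\,\O(Y)$. As $i$ is invertible in $\Z_p$, the geometric series
$$L_i^{-1}=i^{-1}\sum_{k\ge 0}\big(-i^{-1}pT\tfrac{d}{dT}\big)^k$$
converges $p$-adically in ${\rm End}(\O(Y))$ and provides a two-sided inverse of $L_i$. This makes $d$ an isomorphism on each summand, and summing over $i=1,\dots,p-1$ gives the desired isomorphism $d\colon\O(Y)^{\psi=0}\overset{\sim}{\to}\Omega^1(Y)^{\psi=0}$. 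The main potential obstacle, the compatibility of $d$ with the $p$-basis decomposition, is handled transparently because $\varphi(T)=T^p$ is a monomial, and the required invertibility reduces mod $p$ to multiplication by $i\in k_C^\times$, which is the decisive input.
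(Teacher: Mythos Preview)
Your proof is correct and follows essentially the same route as the paper: both use the $p$-basis decomposition to reduce to showing that $d$ on the $i$-th summand is (up to the identification via $\varphi$) the operator $i+pT\tfrac{d}{dT}$, which is invertible because it reduces to multiplication by $i\in(\Z/p)^\times$ modulo~$p$. The paper simply states ``il suffit de prouver le r\'esultat modulo~$p$'' and checks that mod~$p$ the map becomes $\sum\varphi(x_i)T^i\mapsto\sum i\varphi(x_i)T^i\tfrac{dT}{T}$, leaving the lift by $p$-adic completeness implicit; you make this lift explicit via the geometric series for $L_i^{-1}$, which is a harmless elaboration of the same idea.
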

\begin{proof}
Il suffit de prouver le r\'esultat modulo~$p$.
Un \'el\'ement de $(\O(Y)/p)^{\psi=0}$ (resp.~$(\Omega^1(Y)/p)^{\psi=0}$) s'\'ecrit, de
mani\`ere unique, sous la forme 
$x=\sum_{i=1}^{p-1}\varphi(x_i)\,T^i$ (resp.~$\omega=\sum_{i=1}^{p-1}\varphi(x_i)\,T^i\,\frac{dT}{T}$,
et on a $dx=\big(\sum_{i=1}^{p-1}i\varphi(x_i)T^i\big)\frac{dT}{T}$.
On en d\'eduit le r\'esultat.
%
\end{proof}

\Subsubsection{Structure des $\O(Y)$-modules munis d'un $\psi$}

\begin{lemm}\label{basic6}
Soit $M$ un $\O(Y)$-module de type fini, muni d'un op\'erateur $\psi$
v\'erifiant $\psi(\varphi(a)x)=a\psi(x)$ pour tous $a\in\O(Y)$ et $x\in M$.
Il existe un unique sous-module $M^\sharp$ de $M$,
de rang fini sur $W(k_C)$, stable par $\psi$ et sur lequel $\psi$ est
bijectif.  De plus, si $x\in M$ et $k\geq 1$, alors $\psi^n(x)\in M^\sharp+p^k
M$, pour tout $n\geq n(x,k)$.
\end{lemm}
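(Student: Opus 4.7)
The plan is to reduce the statement modulo $p$, where we can produce $\bar M^\sharp$ by a degree-contraction argument, and then lift to $M$ by $p$-adic completeness.

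For the mod $p$ analysis, set $\bar A=\O(Y)/p$ and $\bar M=M/pM$. Étaleness of $\bar A$ over $k_C[T]$ combined with perfectness of $k_C$ gives that $\bar A$ is free of rank $p$ over its subring $\bar A^{(p)}$ of $p$-th powers with basis $1,T,\dots,T^{p-1}$; every $a\in\bar A$ thus admits a unique decomposition $a=\sum_{j=0}^{p-1}b_j^{p}T^j$. Fix generators $\bar x_1,\dots,\bar x_r$ of $\bar M$ over $\bar A$, and use the finite $k_C[T]$-module structure on $\bar A$ to define a $T$-degree function $\deg$ on $\bar M$. From the relation $\bar\psi\bigl(\sum_j b_j^p T^j\bar x_i\bigr)=\sum_j b_j\,\bar\psi(T^j\bar x_i)$ and the inequality $\deg(b_j)\leq\deg(a)/p$, one gets the key estimate
$$\deg\bar\psi(\bar y)\leq\deg(\bar y)/p+D,\qquad D:=\max_{i,j}\deg\bar\psi(T^j\bar x_i),$$
and iterating yields $\deg\bar\psi^n(\bar y)\leq pD/(p-1)$ for $n$ large, uniformly in $\bar y$. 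Consequently $\bar M^\sharp:=\bigcap_n\bar\psi^n(\bar M)$ is contained in the finite-dimensional $k_C$-subspace $V\subset\bar M$ of elements of degree $\leq pD/(p-1)$. The decreasing chain $\bar\psi^n(\bar M)$ eventually lies in $V$, so it stabilizes; hence $\bar\psi^n(\bar M)=\bar M^\sharp$ for $n$ large, and $\bar\psi$ is bijective on $\bar M^\sharp$ (surjective by construction, injective by finite-dimensionality).

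The same degree-contraction argument applies to $M_k:=M/p^kM$ (by an inductive dévissage along the sequences $0\to\bar M\to M_{k+1}\to M_k\to 0$) and produces finitely generated $W_k(k_C)$-modules $M_k^\sharp:=\bigcap_n\psi^n(M_k)\subset M_k$, stable by $\psi$, on which $\psi$ is bijective, and such that $\psi^n(M_k)=M_k^\sharp$ for $n\geq n_0(k)$. Applying $\psi^{n_0(k)}$ to lifts and using bijectivity of $\psi$ on $M_{k+1}^\sharp$ shows that the transition maps $M_{k+1}^\sharp\to M_k^\sharp$ are surjective. One then defines $M^\sharp:=\varprojlim_k M_k^\sharp\subset\varprojlim_k M_k=M$ by $p$-adic completeness. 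It is a sub-$W(k_C)$-module stable by $\psi$ and bijective under $\psi$ (being so mod each $p^k$); Nakayama applied to the injection $M^\sharp/pM^\sharp\hookrightarrow\bar M^\sharp$ yields finite generation over $W(k_C)$. The convergence statement is immediate from the mod $p^k$ stabilization: for any $x\in M$ and $k\geq 1$, $\psi^n(x)\bmod p^k\in M_k^\sharp$ for $n\geq n(x,k)$, which lifts (thanks to the surjective transitions) to $\psi^n(x)\in M^\sharp+p^kM$. Uniqueness comes for free: any finite-rank $\psi$-stable sub-$W(k_C)$-module $M'$ on which $\psi$ is bijective satisfies $M'=\psi^n(M')\subset\psi^n(M)$ for all $n$, hence reduces into $M_k^\sharp$ mod each $p^k$, and thus lies in $M^\sharp$.

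The main obstacle is the degree-contraction argument itself: defining a degree on $\bar M$ well-adapted to the étale (possibly non-polynomial) algebra $\bar A$ and verifying that it decreases geometrically under $\bar\psi$. The point to exploit is that the $p$-basis $\{1,T,\dots,T^{p-1}\}$ of $k_C[T]$ over $k_C[T^p]$ is preserved by étale base change, so the unique decomposition $a=\sum_jb_j^pT^j$ with controlled degrees is available in $\bar A$. The remaining work is essentially bookkeeping: propagating the mod $p$ argument to each mod $p^k$ level and verifying that the inverse limit construction behaves well under $\psi$.
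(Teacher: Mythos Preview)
Your degree-contraction estimate is correct and is exactly what the paper uses, but the definition $\bar M^\sharp:=\bigcap_n\bar\psi^n(\bar M)$ is wrong, and the subsequent claim that ``the decreasing chain $\bar\psi^n(\bar M)$ eventually lies in $V$'' is false. The estimate $\deg\bar\psi^n(\bar y)\le\deg(\bar y)/p^n+pD/(p-1)$ says that \emph{each fixed element} eventually lands in $V$; it says nothing about the whole image $\bar\psi^n(\bar M)$, which is an $\bar A$-submodule (from $\bar\psi(a^px)=a\bar\psi(x)$) and hence typically infinite-dimensional. The example $M=\O(Y)$ already kills your definition: there $\psi$ is surjective (it has $\varphi$ as a right inverse), so $\bar\psi^n(\bar M)=\bar M$ for all $n$ and your intersection is all of $\bar M$, whereas the correct $M^\sharp$ is $\O_{\breve C}$ (constants only). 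The fix is simple and is what the paper does: first observe that $V$ itself is $\bar\psi$-stable (by the same degree estimate), so the decreasing chain $\bar\psi^n(V)$ inside the finite-dimensional $V$ stabilises, and one sets $\bar M^\sharp:=\bigcap_n\bar\psi^n(V)$. The attraction property then reads: every orbit enters $V$ after finitely many steps (non-uniformly), and once in $V$ it falls into $\bar M^\sharp$ after a bounded further number of steps.

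Your lifting step is also too sketchy. Writing ``the same degree-contraction argument applies to $M_k$ by d\'evissage'' and setting $M_k^\sharp:=\bigcap_n\psi^n(M_k)$ inherits the same error (again $\psi$ is surjective on $\O(Y)/p^k$). The paper proceeds differently: from the $\varphi^{-1}$-semilinearity of $\bar\psi$ on $\bar M^\sharp$ it extracts, via Dieudonn\'e--Manin, a basis $(e_i)$ of $\bar M^\sharp$ with $\bar\psi(e_i)=e_i$, and then lifts these fixed vectors inductively to $M/p^{k+1}$ by first iterating $\psi$ on an arbitrary lift until the error $\psi(x)-x$ lies in $p^k\bar M^\sharp$, then killing that error using surjectivity of $\psi-1$ on $p^k\bar M^\sharp$. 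This produces $M_k^\sharp$ as the $W_k(k_C)$-span of the lifted fixed basis, with the surjectivity of $M_{k+1}^\sharp\to M_k^\sharp$ built in. Your inverse-limit packaging at the end is fine once $M_k^\sharp$ is correctly defined.
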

\begin{proof}
Commen\c{c}ons par prouver le m\^eme r\'esultat modulo~$p$.
Choisissons une famille g\'en\'eratrice $m_1,\dots,m_s$ de $M$ sur
$\O(Y)$ et \'ecrivons
$\O(Y)$ comme un quotient de $\O_{\breve C}\langle X_1,\dots,X_r\rangle$.
D\'efinissons le degr\'e de $x\in\O(Y)/p$ comme le minimum
des degr\'es des \'el\'ements 
de $k_C[X_1,\dots,X_r]$ ayant pour image $x$, et
le degr\'e $\deg x$ de $x\in M/p$
comme le minimum sur toutes les \'ecritures possibles $x=\sum_ix_im_i$
du maximum des degr\'es des $x_i$.

Soit $x\in M/p$ et soit
$\sum_{i,j}a_{i}X^im_j$ une \'ecriture de
$x$ de degr\'e $\deg x$.  En \'ecrivant $i$ 
sous la forme $i_0+pi'$,
avec $i_0\in\{0,\dots,p-1\}^{[1,r]}$,
cela permet d'ecrire $x$ sous la forme
$$x=\sum_{i_0,j}X^{i_0} x_{i_0,j}^p m_j,$$
avec $x_{i_0,j}\in\O(Y)/p$ de degr\'e~$\leq\frac{1}{p}\deg x$.
Alors $$\psi(x)=\sum_{i_0}
\psi(X^{i_0}m_j) x_{i_0,j}.$$
Soit $N$ le maximum des degr\'es des
$\psi(X^{i_0}m_j)$.
La formule ci-dessus fournit la majoration
$\deg(\psi(x))\leq N+\frac{1}{p}\deg(x)$.
On en d\'eduit que $\deg(\psi^n(x))\leq \frac{pN}{p-1}$, si $n$
est assez grand.
L'ensemble $M_N$ des $x\in M/p$ tels que $\deg x\leq \frac{pN}{p-1}$
est un $k_C$-espace de dimension finie, qui est stable par $\psi$
d'apr\`es ce qui pr\'ec\`ede.
On note $\overline M^\sharp$ l'intersection des $\psi^n(M_N)$, pour $n\geq 1$
(cette intersection se stabilise pour un $n\leq\dim_{k_C}M_N$, i.e. il
existe $n_0$ tel que $\overline M^\sharp=\psi^{n_0}(M_N)$).
Alors $\overline M^\sharp$ est un $k_C$-espace de dimension finie, stable par $\psi$,
sur lequel $\psi$ est surjectif et donc bijectif, et c'est l'unique
$k_C$-espace de dimension finie avec ces propri\'et\'es.
De plus, si $x\in M$, il existe $n(x)\in\N$ tel que
$\psi^n(x)\in \overline M^\sharp$, pour tout $n\geq n(x)$.

Notons que $\psi$ est $\varphi^{-1}$-semi-lin\'eaire.  Il existe donc, d'apr\`es
le th\'eor\`eme de Dieudonn\'e-Manin, une
base $(e_1,\dots,e_t)$ de $\overline M^\sharp$ v\'erifiant
$\psi(e_i)=e_i$ pour tout $i$.

Prouvons maintenant, par r\'ecurrence sur $k$, l'existence
et l'unicit\'e de $M_k^\sharp\subset M_k=M/p^k$ satisfaisant les exigences du lemme
(d'apr\`es ce qui pr\'ec\`ede $M_1^\sharp=\overline M^\sharp$), 
et la surjectivit\'e de $M_k^\sharp\to M_1^\sharp$.
Commen\c{c}ons par prouver qu'il existe $e_{i,k+1}\in M_{k+1}$
ayant pour image $e_{i,k}$ dans $M_k$, et v\'erifiant $\psi(e_{i,k+1})=
e_{i,k+1}$.  Pour cela, choisissons un rel\`evement arbitraire
$x$ de $e_{i,k}$ dans $M_{k+1}$.  Alors $\psi(x)-x\in
p^kM_{k+1}\cong M_1$.
Il existe donc $n_0$ tel que $\psi^n(\psi(x)-x)\in p^k M_1^\sharp$ et,
quitte \`a remplacer $x$ par $\psi^{n_0}(x)$, on peut supposer
que $\psi(x)-x\in p^kM_1^\sharp$.  Maintenant, $\psi-1$ est surjectif sur
$p^kM_1^\sharp$ (car $\psi$ est $\varphi^{-1}$-semi-lin\'eaire); il existe
donc $y\in p^kM_1^\sharp$ tel que $\psi(y)-y=\psi(x)-x$, et
on peut prendre $e_{i,k+1}=x-y$.

Soit $x\in M_{k+1}$.
D'apr\`es l'hypoth\`ese de r\'ecurrence, il existe
$n_0$ tel que l'image de $\psi^{n_0}(x)$ dans $M_k$
appartienne \`a $M_k^\sharp$.  On peut donc \'ecrire
$\psi^{n_0}(x)=\sum_ia_ie_{i,k+1}+py$, avec
$y\in p^kM_{k+1}(\cong M_1)$.
Il existe $n_1$ tel que $\psi^{n_1}(y)\in p^kM_1^\sharp$
et donc $\psi^{n_1}(y)=\sum_ip^kb_ie_{i,k+1}$.
Il s'ensuit que $\psi^{n_0+n_1}(x)=\sum_i(a_i^{p^{-n_1}}+p^kb_i)e_{i,k+1}$,
et donc que $M_{k+1}^\sharp=\oplus_{i=1}^r(W(k)/p^{k+1})e_{i,k+1}$ a toutes
les propri\'et\'es voulues.

On en d\'eduit le r\'esultat en passant \`a la limite projective.
\end{proof}

\begin{lemm}\label{basic7}
Si $x\in M$, il existe $c_0(x)\in M^\sharp$, unique, tel
que $\psi^n(x-c_0(x))\to 0$ quand $n\to+\infty$.
\end{lemm}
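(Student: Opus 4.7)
Le plan est d'extraire $c_0(x)$ comme rel\`evement $p$-adique des {\og composantes stables\fg} des it\'er\'es $\psi^n(x)$ dans les quotients $M^\sharp/p^kM^\sharp$, en utilisant que $\psi$ est bijectif sur $M^\sharp$.

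Je fixe d'abord $k\geq 1$. D'apr\`es le lemme~\ref{basic6}, pour $n\geq n(x,k)$ on a $\psi^n(x)\in M^\sharp+p^kM$; pour donner un sens \`a la {\og partie $M^\sharp$\fg} de $\psi^n(x)$ modulo $p^k$, j'aurai besoin de savoir que $M^\sharp/p^kM^\sharp\hookrightarrow M/p^kM$, ce qui se lit dans la construction inductive des $M_k^\sharp\subset M_k$ faite au cours de la preuve du lemme~\ref{basic6}. Ceci \'etant acquis, je d\'efinirai $m_n^{(k)}\in M^\sharp/p^kM^\sharp$ comme l'unique classe telle que $\psi^n(x)\equiv m_n^{(k)}\pmod{p^kM}$, puis $c_n^{(k)}:=(\varphi^\sharp)^n(m_n^{(k)})$, o\`u $\varphi^\sharp$ d\'esigne l'inverse de $\psi$ sur $M^\sharp/p^kM^\sharp$ fourni par le lemme~\ref{basic6}.

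L'\'etape cl\'e sera d'observer que $\psi$ pr\'eserve s\'epar\'ement $M^\sharp$ et $p^kM$, ce qui fournit la compatibilit\'e $m_{n+1}^{(k)}=\psi(m_n^{(k)})$, et donc $c_{n+1}^{(k)}=c_n^{(k)}$: la suite $(c_n^{(k)})_n$ se stabilise sur un $c^{(k)}\in M^\sharp/p^kM^\sharp$, et les $c^{(k)}$ sont manifestement compatibles par r\'eduction. Comme $M^\sharp$ est un $W(k_C)$-module de rang fini, donc $p$-adiquement complet, il existe un unique $c_0(x)\in M^\sharp$ rel\`evant les $c^{(k)}$. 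Par construction, $\psi^n(c_0(x))$ et $\psi^n(x)$ ont m\^eme image $m_n^{(k)}$ dans $M^\sharp/p^kM^\sharp$ pour $n\geq n(x,k)$, d'o\`u $\psi^n(x-c_0(x))\in p^kM$, et donc la convergence vers~$0$.

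Pour l'unicit\'e, il suffira de montrer que si $d\in M^\sharp$ v\'erifie $\psi^n(d)\to 0$, alors $d=0$: sous l'hypoth\`ese $d\neq 0$, j'\'ecrirai $d=p^kd'$ avec $d'\notin pM^\sharp$ (en utilisant que $M^\sharp$ est libre sur $W(k_C)$, ce que montre la construction du lemme~\ref{basic6}), et la relation $\psi(p^km)=p^k\psi(m)$ (cons\'equence de $\varphi(p)=p$ et de la semi-lin\'earit\'e de $\psi$) donnera $\psi^n(d)=p^k\psi^n(d')$; la bijectivit\'e de $\bar\psi$ sur $\overline{M^\sharp}$ interdira $\psi^n(d')\in pM^\sharp$, contredisant $\psi^n(d)\to 0$. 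Le point le plus d\'elicat me semble \^etre la v\'erification soign\'ee de l'injection $M^\sharp/p^kM^\sharp\hookrightarrow M/p^kM$ (qui garantit \`a la fois l'existence non ambigu\"e de $m_n^{(k)}$ et la libert\'e de $M^\sharp$ utilis\'ee dans l'unicit\'e); une fois ce point acquis, le reste de la preuve n'est qu'une manipulation formelle d'it\'er\'es de $\psi$ et $\varphi^\sharp$.
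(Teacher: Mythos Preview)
Your proposal is correct and follows essentially the same approach as the paper. The only presentational difference is that the paper works directly with the submodules $M_k^\sharp\subset M_k=M/p^kM$ constructed in the proof of lemma~\ref{basic6} (so the injection $M^\sharp/p^kM^\sharp\hookrightarrow M/p^kM$ that you flag as delicate is never needed as a separate step), defines $c_{0,k}(x):=\psi_k^{-n}(\psi^n(x))\in M_k^\sharp$ for any $n\geq n(x,k)$, checks compatibility in $k$, and passes to the limit $c_0(x)\in M^\sharp=\varprojlim_k M_k^\sharp$; your $m_n^{(k)}$, $c^{(k)}$ are the same objects under the identification $M_k^\sharp\cong M^\sharp/p^kM^\sharp$. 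For uniqueness the paper is terser (just ``$\psi$ bijectif sur $M^\sharp$ et donc $\psi^n(x)\to 0$ implique $x=0$''), while you spell out the valuation argument; both are fine.
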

\begin{proof}
L'unicit\'e r\'esulte de ce que $\psi$ est bijectif sur $M^\sharp$,
et donc $\psi^n(x)\to 0$ implique $x=0$, si $x\in M^\sharp$.

Passons \`a l'existence.
Soit $x\in M$.  Alors l'image de $\psi^n(x)$
dans $ M_k=M/p^kM$ appartient \`a $M^\sharp_k$, si $n\geq n(x,k)$.
Maintenant, $\psi$ est bijectif sur $M^\sharp_k$ et, si on note
$\psi_k^{-n}$ l'inverse de $\psi^n$ sur $M^\sharp_k$, alors $\psi_k^{-n}(\psi^n(x))$
ne d\'epend pas du choix de $n\geq n(x,k)$.
On note $c_{0,k}(x)\in M^\sharp_k$ cette quantit\'e;
on a $c_{0,k+1}(x)=c_{0,k}(x)$ dans $M^\sharp_k$, et donc les
$c_{0,k}(x)$ d\'efinissent un \'el\'ement $c_0(x)$ de $M^\sharp$.
Par construction, $\psi^n(x)=\psi^n(c_0(x))$ modulo~$p^k$, si $n\geq n(x,k)$.

Ceci permet de conclure.
\end{proof}

\begin{lemm}\label{basic8}
Si $M$ est muni d'un op\'erateur $\varphi_M$ tel que $\psi\circ\varphi_M=1$,
tout \'el\'ement $x$ de $ M$ peut s'\'ecrire, de mani\`ere unique,
sous la forme $x=c_0(x)+\sum_{i\geq 1}\varphi_M^{i-1}(c_{i}(x))$, avec $c_0(x)\in M^\sharp$,
$c_i(x)\in  M^{\psi=0}$ et $c_i(x)\to 0$ quand $i\to +\infty$.
\end{lemm}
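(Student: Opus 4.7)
L'id\'ee est d'appliquer it\'erativement l'identit\'e de d\'ecomposition fondamentale $z=\varphi_M(\psi(z))+(z-\varphi_M(\psi(z)))$, o\`u le deuxi\`eme terme appartient \`a $M^{\psi=0}$ gr\^ace \`a $\psi\circ\varphi_M=1$. Plus pr\'ecis\'ement, je commencerais par utiliser le lemme~\ref{basic7} pour d\'efinir $c_0(x)\in M^\sharp$ comme l'unique \'el\'ement tel que $\psi^n(x-c_0(x))\to 0$, puis je poserais $x_0=x-c_0(x)$ et, pour $i\geq 1$,
\[
c_i(x):=\psi^{i-1}(x_0)-\varphi_M(\psi^i(x_0)).
\]
La relation $\psi\circ\varphi_M=1$ donne imm\'ediatement $\psi(c_i(x))=0$, donc $c_i(x)\in M^{\psi=0}$.

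Ensuite, je v\'erifierais par r\'ecurrence l'identit\'e t\'elescopique
\[
x_0=\varphi_M^n(\psi^n(x_0))+\sum_{i=1}^n\varphi_M^{i-1}(c_i(x)),
\]
qui r\'esulte directement de la d\'efinition des $c_i(x)$ et de l'application it\'er\'ee de $\varphi_M$. Le point-cl\'e est alors le passage \`a la limite $n\to\infty$: comme $\psi^n(x_0)\to 0$ $p$-adiquement d'apr\`es le lemme~\ref{basic7}, on a $\psi^n(x_0)\in p^{k(n)}M$ avec $k(n)\to\infty$, et la compatibilit\'e de $\varphi_M$ avec la filtration $p$-adique (via sa $\varphi$-semi-lin\'earit\'e sur $\mathcal{O}(Y)$, qui donne $\varphi_M(p^kM)\subset p^kM$) entra\^{\i}ne $\varphi_M^n(\psi^n(x_0))\to 0$ dans $M$. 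On en d\'eduit la convergence de la s\'erie et l'\'egalit\'e $x_0=\sum_{i\geq 1}\varphi_M^{i-1}(c_i(x))$; l'assertion $c_i(x)\to 0$ se lit sur la formule d\'efinissant $c_i(x)$ en utilisant la continuit\'e de $\varphi_M$.

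Pour l'unicit\'e, je partirais d'une d\'ecomposition $x=c_0+\sum_{i\geq 1}\varphi_M^{i-1}(c_i)$ avec $c_0\in M^\sharp$, $c_i\in M^{\psi=0}$ et $c_i\to 0$. En appliquant $\psi$ et en utilisant $\psi(\varphi_M^{i-1}(c_i))=\varphi_M^{i-2}(c_i)$ pour $i\geq 2$ et $\psi(c_1)=0$, on obtient $\psi^n(x-c_0)\to 0$, ce qui force $c_0=c_0(x)$ par l'unicit\'e du lemme~\ref{basic7}. Puis $c_1=x_0-\varphi_M(\psi(x_0))$ est d\'etermin\'e par $x_0$, et les $c_i$ pour $i\geq 2$ se retrouvent par r\'ecurrence en appliquant $\psi$ au reste.

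\textbf{Principal obstacle.} Le seul point d\'elicat me semble \^etre la convergence de $\varphi_M^n(\psi^n(x_0))$ vers z\'ero: il faut s'assurer que la $p$-contractivit\'e de $\psi^n$ sur $x_0$ l'emporte sur l'absence de contraction de $\varphi_M$ (qui pr\'eserve la valuation $p$-adique sans la faire cro\^{\i}tre). C'est garanti par le fait que $\varphi_M$, \'etant $\varphi$-semi-lin\'eaire, commute \`a la multiplication par $p$, si bien que l'ordre de divisibilit\'e par $p$ ne diminue pas sous $\varphi_M$. Ce point, qui rel\`eve plus d'une v\'erification de continuit\'e que d'un calcul essentiel, est le seul endroit o\`u l'hypoth\`ese implicite de compatibilit\'e de $\varphi_M$ avec la structure de $\mathcal{O}(Y)$-module (non explicit\'ee dans l'\'enonc\'e mais n\'ecessaire en pratique) intervient.
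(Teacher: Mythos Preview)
Votre preuve est correcte et suit exactement la m\^eme strat\'egie que celle du papier: la formule $c_i(x)=\psi^{i-1}(x_0)-\varphi_M(\psi^i(x_0))$ que vous proposez n'est autre que $(1-\varphi_M\psi)(\psi^{i-1}(x-c_0(x)))$, qui est celle du papier, et votre identit\'e t\'elescopique est ce que le papier qualifie d'\og imm\'ediat\fg. Vous \^etes m\^eme plus soigneux que le papier sur le point de convergence $\varphi_M^n(\psi^n(x_0))\to 0$: le papier laisse ce passage implicite, alors que vous identifiez correctement qu'il requiert que $\varphi_M$ pr\'eserve la filtration $p$-adique (ce qui est bien le cas dans les applications, o\`u $\varphi_M=\varphi$ ou $\varphi/p$ est $\Z_p$-lin\'eaire).
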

\begin{proof}
Si $c_0+\sum_{i\geq 1}\varphi_M^{i-1}(c_i)=0$, alors $\psi^n(c_0)+\sum_{i\geq n}
\varphi_M^{i-n}(c_i)=0$, et donc $\psi^n(c_0)\to 0$.  Il s'ensuit
que $c_0=0$.  En appliquant $\varphi_M^n\psi^n$ \`a la s\'erie,
on d\'emontre alors par r\'ecurrence que $\varphi_M^{i-1}(c_i)=0$
et donc $c_i=0$, pour tout $i$.  On en d\'eduit l'unicit\'e.

Pour
l'existence, posons
$c_i(x)=(1-\varphi_M\psi)(\psi^{i-1}(x-c_0(x)))$, si $i\geq 1$.
Comme $\psi\circ(1-\varphi_M\psi)=\psi-\psi=0$, on a
$\psi(c_i(x))=0$, et comme $\psi^{i-1}(x-c_0(x))\to 0$, on
a $c_i(x)\to 0$.  L'identit\'e
$x=c_0(x)+\sum_{i\geq 1}\varphi_M^{i-1}((1-\varphi_M\psi)(\psi^{i-1}(x-c_0(x))))$
est alors imm\'ediate.
\end{proof}

\subsubsection{Application au calcul de la cohomologie de de Rham}

Les groupes $H^1_{\rm dR}(Y)$ et $H^1_{\rm dR}(Y)^{\rm sep}$ sont munis d'actions de $\varphi$ et $\psi$,
et on a $\psi\circ\varphi=p$.

On peut appliquer le lemme~\ref{basic8} \`a $M=\O(Y)$ (et $\varphi_M=\varphi$)
et $M=\Omega^1(Y)$ (et $\varphi_M=p^{-1}\varphi$).
On obtient des d\'ecompositions
\begin{equation}\label{basic9.1}
\O(Y)=M_0^\sharp\oplus\big(\widehat\oplus_{i\geq 0}\varphi^{i}(\O(Y)^{\psi=0})\big),\quad
\Omega^1(Y)=M_1^\sharp\oplus\big(\widehat\oplus_{i\geq 0}p^{-i}\varphi^{i}(\Omega^1(Y)^{\psi=0})\big).
\end{equation}
Par ailleurs, comme $\psi^n\circ d=p^n\,d\circ\psi^n$ et que $\psi$ est surjectif
sur $M_0^\sharp$, on a $d=0$ sur $M_0^\sharp$, et donc $M_0^\sharp=\O_{\breve C}$.
On en d\'eduit, en utilisant le lemme~\ref{basic8},
le r\'esultat suivant.
\begin{prop}\label{basic9}
L'application naturelle $H^1_{\rm dR}(Y)\to H^1_{\rm dR}(Y)^{\rm sep}$ induit
un isomorphisme $M_1^\sharp\cong H^1_{\rm dR}(Y)^{\rm sep}$ qui est $\psi$ et $\varphi$-\'equivariant,
et on a une d\'ecomposition $\psi$ et $\varphi$-\'equivariante
$$H^1_{\rm dR}(Y)\cong H^1_{\rm dR}(Y)^{\rm sep}\oplus H^1_{\rm dR}(Y)_{\rm tors}.$$
De plus, 
$$H^1_{\rm dR}(Y)^{\rm sep}=\O_{\breve C}\otimes_{\Z_p}(H^1_{\rm dR}(Y)^{\rm sep})^{\psi=1}.$$
\end{prop}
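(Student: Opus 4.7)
The plan is to exploit directly the decompositions (\ref{basic9.1}) from lemmas \ref{basic6}-\ref{basic8}, together with the isomorphism $d:\O(Y)^{\psi=0}\overset{\sim}{\to}\Omega^1(Y)^{\psi=0}$ of lemma \ref{basic5}, to compute $d\O(Y)$ explicitly inside $\Omega^1(Y)$ and read off the result.

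First I would verify that $d$ respects the $\psi$-decompositions in the expected way. Since $d\circ\varphi=\varphi\circ d$ on $\O(Y)$ and $\varphi=p\cdot(p^{-1}\varphi)$ on $\Omega^1(Y)$, for any $a\in\O(Y)^{\psi=0}$ one has $d(\varphi^i(a))=p^i\cdot(p^{-i}\varphi^i)(da)$ with $da\in\Omega^1(Y)^{\psi=0}$. Combined with $dM_0^\sharp=d\O_{\breve C}=0$, and passing to the completed sum (which converges because $p^i\cdot p^{-i}\varphi^i(da_i)\to 0$ as soon as $a_i\to 0$), this gives
\[ d\O(Y)\;=\;\widehat{\bigoplus}_{i\geq 0} p^i\cdot p^{-i}\varphi^i(\Omega^1(Y)^{\psi=0}), \]
a submodule of $\widehat{\bigoplus}_{i\geq 0} p^{-i}\varphi^i(\Omega^1(Y)^{\psi=0})$ disjoint from the summand $M_1^\sharp$. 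Taking the quotient against (\ref{basic9.1}) yields
\[ H^1_{\rm dR}(Y)\;\cong\;M_1^\sharp\;\oplus\;T,\qquad T:=\widehat{\bigoplus}_{i\geq 0}\,p^{-i}\varphi^i(\Omega^1(Y)^{\psi=0})\big/p^i\cdot p^{-i}\varphi^i(\Omega^1(Y)^{\psi=0}). \]
This decomposition is $\psi$ and $\varphi$-equivariant because each piece in (\ref{basic9.1}) is stable under $\varphi,\psi$ and because $d$ is.

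Next I would identify $T$ with the closure of torsion in $H^1_{\rm dR}(Y)$. The $i$-th summand of $T$ is the quotient $\varphi^i(\Omega^1(Y)^{\psi=0})/p^i\varphi^i(\Omega^1(Y)^{\psi=0})$, killed by $p^i$, so $T$ is contained in the torsion-closure. Conversely, by lemma \ref{basic6} applied to $\Omega^1(Y)$ (which is a finitely generated torsion-free $\O(Y)$-module, hence the construction produces $M_1^\sharp$ torsion-free of finite rank over $\O_{\breve C}=W(k_C)$, i.e.\ free), the module $M_1^\sharp$ is $p$-adically separated and torsion-free. Together with the direct sum decomposition, this forces $H^1_{\rm dR}(Y)_{\rm tors}=T$ and gives $H^1_{\rm dR}(Y)^{\rm sep}\cong M_1^\sharp$, the projection coinciding with the map $M_1^\sharp\hookrightarrow\Omega^1(Y)\twoheadrightarrow H^1_{\rm dR}(Y)^{\rm sep}$; all identifications are $\psi$ and $\varphi$-equivariant by construction.

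For the final statement, I would apply Dieudonné-Manin on $M_1^\sharp$: it is a free $\O_{\breve C}$-module of finite rank, on which $\psi$ is bijective and $\varphi^{-1}$-semilinear (lemma \ref{basic6}); since $k_C$ is algebraically closed, the same descent argument used inside the proof of lemma \ref{basic6} (solving $\psi(y)=y$ to produce $\psi$-fixed lifts of a $\bar\psi$-fixed basis modulo $p$, then iterating) produces a basis $(e_1,\dots,e_t)$ of $M_1^\sharp$ with $\psi(e_j)=e_j$, giving $M_1^\sharp=\O_{\breve C}\otimes_{\Z_p}(M_1^\sharp)^{\psi=1}$. The main subtlety I foresee is the topological point in step two, namely checking that the completed direct sum $T$ is indeed closed and exhausts the closure of torsion; this should follow from the fact that $M_1^\sharp$ being a free $\O_{\breve C}$-module of finite rank is a topological direct summand of $\Omega^1(Y)/d\O(Y)$ in its natural (quotient of) $p$-adic topology, but this is the only place where the proof is not just formal bookkeeping on the decompositions (\ref{basic9.1}).
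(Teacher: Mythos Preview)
Your overall strategy---computing $d\O(Y)$ via the decompositions~(\ref{basic9.1}) and lemma~\ref{basic5}, then reading off $H^1_{\rm dR}(Y)\cong M_1^\sharp\oplus T$ and identifying $T$ with $H^1_{\rm dR}(Y)_{\rm tors}$---is exactly the paper's approach, and those steps are fine. The Dieudonn\'e--Manin argument in your last paragraph for the $\psi=1$ basis is also correct.

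There is, however, a genuine gap at the $\varphi$-equivariance step. You assert that ``each piece in~(\ref{basic9.1}) is stable under $\varphi,\psi$'', but this is false for $M_1^\sharp\subset\Omega^1(Y)$: lemma~\ref{basic6} only constructs a $\psi$-stable submodule, with no $\varphi$-stability built in. In fact $M_1^\sharp$ is generally \emph{not} $\varphi$-stable inside $\Omega^1(Y)$: for $e\in M_1^\sharp$ with $\psi(e)=e$, one has $\psi(\varphi(e)-pe)=\psi\varphi(e)-p\psi(e)=0$, so $\varphi(e)-pe$ lands in $\Omega^1(Y)^{\psi=0}$, the \emph{complement} of $M_1^\sharp$ in~(\ref{basic9.1}).

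What the paper observes---and what you are missing---is that this obstruction dies in $H^1_{\rm dR}(Y)$: by lemma~\ref{basic5}, $\Omega^1(Y)^{\psi=0}=d(\O(Y)^{\psi=0})\subset d\O(Y)$, hence $\varphi(e)=pe$ in $H^1_{\rm dR}(Y)$. Applying this to the $\psi$-fixed basis $e_1,\dots,e_d$ (which you already produce in your final paragraph) shows that the image of $M_1^\sharp$ in $H^1_{\rm dR}(Y)$ is $\varphi$-stable, with $\varphi$ acting by $p$ on each $e_i$. So your Dieudonn\'e--Manin step is actually doing double duty: it yields both the last assertion of the proposition \emph{and} the $\varphi$-stability you claimed earlier without proof---but only once you feed the output back through lemma~\ref{basic5}.
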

\begin{proof}
$M_1^\sharp$ 
est de rang fini sur $\O_{\breve C}$ et stable par $\psi$ qui agit
bijectivement et $\varphi^{-1}$-semi-lin\'eairement.  Il r\'esulte du th\'eor\`eme
de Dieudonn\'e-Manin que $M_1^\sharp$ admet une base $e_1,\dots,e_d$ sur $\O_{\breve C}$,
telle que $\psi(e_i)=e_i$ pour tout $i$.  Mais alors $(\varphi-p)\cdot e_i$ est tu\'e
par $\psi$ puisque $\psi\circ\varphi=p$, et donc $(\varphi-p)\cdot e_i=0$
dans $H^1_{\rm dR}(Y)$ (cela d\'ecoule du lemme~\ref{basic5}).
Autrement dit, $\varphi(e_i)=pe_i$ dans $H^1_{\rm dR}(Y)$, ce qui prouve
que $M_1^\sharp$ est stable par $\varphi$ et permet de conclure. 
\end{proof}

\subsubsection{Injectivit\'e du r\'egulateur syntomique}
Soit $\tilde R$ une $\ainf$-alg\`ebre s\'epar\'ee et compl\`ete pour la topologie
$(p,\tilde p)$-adique, et telle que $\tilde R/p$ soit int\`egre et int\'egralement close.
On suppose $\tilde R$ munie d'un frobenius $\varphi$ relevant $x\mapsto x^p$ sur
$\tilde R/p$ (on a donc $\varphi(x)-x^p\in p\tilde R$, si $x\in\tilde R$) et co\"{\i}ncidant
avec le frobenius usuel sur $\ainf$.

Soit $\tilde R^{\dual\dual}=\varinjlim_{r>0}1+(p,\tilde p^r)\tilde R\subset\tilde R^\dual$.

\begin{lemm}\label{Aff5}
Soit 
$x\in \tilde R^{\dual\dual}$. 

{\rm (i)} Si $x^p=\varphi(u)$, avec $u\in\tilde R$, alors
il existe $u'\in\tilde R^{\dual\dual}$ tel que $x=\varphi(u')$ et $u=(u')^p$.

{\rm (ii)} Si $\varphi(x)/x^p=v^{p^N}$, avec $v\in\tilde R$,
alors il existe $u\in\tilde R^{\dual\dual}$ tel que $x=u^{p^N}$.
\end{lemm}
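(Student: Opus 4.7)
The plan is to establish (i) directly, then derive (ii) from it by induction on $N$. For (i), the first observation is that $u$ itself lies in $\tilde R^{\dual\dual}$: from $\varphi(u)=x^p$ we get modulo $p$ that $\overline u^p=\overline x^p$ in the integral domain $\tilde R/p$, whence $\overline u=\overline x$ by injectivity of Frobenius, so $u\equiv x\pmod p$ and $u\in\tilde R^{\dual\dual}$. A useful technical point is that, $\tilde R$ being $p$-torsion-free (tacit from $\ainf$-flatness) and $\tilde R/p$ integral, the Frobenius $\varphi$ is injective on all of $\tilde R$: if $\varphi(y)=0$ then $\overline y^p=0$ gives $\overline y=0$, so $y\in p\tilde R$, and one iterates using $p$-torsion-freeness and $(p,\tilde p)$-adic separation. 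In particular, it suffices to produce $u'\in\tilde R^{\dual\dual}$ satisfying $\varphi(u')=x$; the identity $(u')^p=u$ will then follow from $\varphi((u')^p)=\varphi(u)=x^p=\varphi(u')^p$ combined with Frobenius injectivity on $\tilde R$.

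To construct $u'$, one exploits the Frobenius pullback of the finite $\tilde R$-algebra $\tilde R[Y]/(Y^p-u)$: since $\varphi(u)=x^p$, one has
$$\tilde R\otimes_{\tilde R,\varphi}\tilde R[Y]/(Y^p-u)\cong \tilde R[Y]/(Y^p-x^p)\cong\prod_{\zeta\in\mu_p}\tilde R,$$
using $\mu_p\subset\ainf^\dual$. The hypothesis that $\tilde R/p$ is integrally closed provides a $p$-th root of $\overline u$ in $\tilde R/p$ congruent to $\overline x$ modulo $p$, singling out a distinguished factor of the split extension above, and the compatibility $\varphi(u)=x^p$ is precisely what is needed to descend this splitting to an element $u'\in\tilde R$ with $\varphi(u')=x$. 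Membership $u'\in\tilde R^{\dual\dual}$ then follows from the congruence $u'\equiv x\pmod p$ together with $x\in\tilde R^{\dual\dual}$.

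For (ii), I would induct on $N$. Writing $\varphi(x)/x^p=v^{p^N}=(v^{p^{N-1}})^p$ gives $\varphi(x)=(xv^{p^{N-1}})^p$, so applying (i) with $xv^{p^{N-1}}$ in place of $x$ and $x$ in place of $u$ (the hypothesis $xv^{p^{N-1}}\in\tilde R^{\dual\dual}$ follows from its $p$-th power lying in $\tilde R^{\dual\dual}$, by the argument of the first step of (i)) yields $u_1\in\tilde R^{\dual\dual}$ with $u_1^p=x$ and $\varphi(u_1)=xv^{p^{N-1}}$; this handles $N=1$ with $u=u_1$. For $N\geq 2$, the ratio $\varphi(u_1)/u_1^p=v^{p^{N-1}}$ is of the form required by the inductive hypothesis for $N-1$, yielding $u\in\tilde R^{\dual\dual}$ with $u_1=u^{p^{N-1}}$, hence $x=u_1^p=u^{p^N}$. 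The main obstacle throughout is the construction of $u'$ in (i): standard Hensel's lemma fails because the derivative $pY^{p-1}$ of $Y^p-u$ is divisible by $p$, so one cannot simply lift a residual $p$-th root; instead one must use the Frobenius-descent viewpoint above, where both the integral closedness of $\tilde R/p$ and the compatibility $\varphi(u)=x^p$ play essential roles.
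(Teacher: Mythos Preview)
Your treatment of (ii) by induction on $N$ is correct and matches the paper's approach (in fact, by using both conclusions of (i) you avoid the extraction of a $p$-th root that forces the paper to treat $p=2$ separately). Your reduction of (i) to producing $u'\in\tilde R$ with $\varphi(u')=x$, via injectivity of $\varphi$, is also fine.

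However, your construction of $u'$ in (i) does not work. First, the claim $\mu_p\subset\ainf^{\dual}$ is false for $p$ odd: if $\zeta\in\ainf$ satisfies $\zeta^p=1$, then $\overline\zeta=1$ in the domain $\ainf/p=\O_{C^\flat}$, so $\zeta=1+p\alpha$, and expanding $(1+p\alpha)^p=1$ forces $\alpha=0$. Thus $\tilde R[Y]/(Y^p-x^p)$ is not a product of copies of $\tilde R$. Second, and more fundamentally, even granting a splitting of $\varphi^*A$ there is no descent mechanism: $\varphi\colon\tilde R\to\tilde R$ is merely a ring endomorphism, not an fppf cover. Unwinding your setup, a $\tilde R$-algebra section of $\varphi^*A=\tilde R[Y]/(Y^p-\varphi(u))$ is exactly a $p$-th root of $\varphi(u)=x^p$ in $\tilde R$ (you already have one, namely $x$), and what you need is precisely that this root lies in $\varphi(\tilde R)$ --- which is the original problem. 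The appeal to integral closedness of $\tilde R/p$ to produce ``a $p$-th root of $\overline u$'' is also unjustified: integral closedness yields $p$-th roots only of elements that already have a $p$-th root in the fraction field, which is not given here.

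The paper's proof of (i) is instead an explicit successive approximation. Writing $x=1+pa_1+[\beta]b_1$ with $\beta\in\mathfrak m_{C^\flat}$, one uses $x^p=\varphi(u)$ to show that $[\beta]b_1+[\beta]^2z$ is a $p$-th power modulo $p$ for some $z$; integral closedness of $\tilde R/p$ then allows one to divide by $[\beta]$ (since $\beta^{1/p}\in\O_{C^\flat}\subset\tilde R/p$) and conclude $b_1+[\beta]z\equiv\varphi(b_1')\pmod p$. This yields $x/\varphi(1+[\beta^{1/p}]b_1')\equiv 1+[\beta^2]b_2\pmod p$, and iterating gives $x=\varphi(u_0)(1+pa)$. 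A second, analogous iteration handles the factor $1+pa$.
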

\begin{proof}
Nous allons commencer par montrer que 
si $x\equiv 1+[\beta] b_1\pmod p$, avec $b_1\in \tilde R$ et $\beta\in {\goth m}_{C^{\flat}}$, alors on peut trouver 
$u\equiv 1$ mod~$[\beta^{1/p}]$ et $b_2\in \tilde R$ tels que $x/\varphi(u)\equiv 1+[\beta^2]b_2$ mod~$p$. 
Ecrivons $x=1+pa_1+[\beta]b_1$ avec $a_1,b_1\in\tilde R$. On a alors $x^p=1+p(pa_1+[\beta]b_1)+[\beta]^pb_1^p$ mod~$p(p,[\beta])^2$
et $\varphi(1+[\beta]b_1)=1+[\beta]^p(b_1^p+pb'_1)$, avec $b'_1\in\tilde R$,
et donc
$x^p/\varphi(1+[\beta]b_1)=1+py$, avec $y=pa_1+[\beta]b_1$ mod $(p,[\beta])^2$.
De plus, comme $x^p=\varphi(u)$, on en d\'eduit que $y$ est de la forme $\varphi(v)$,
avec $v\in\tilde R$ (\`a priori $v\in \tilde R[1/p]$, mais comme $\tilde R/p$ est r\'eduit, on voit que l'on a $v\in\tilde R$).  Il en r\'esulte qu'il existe $z\in \tilde R$ tel que $[\beta]b_1+[\beta]^2z$
soit une puissance $p$-i\`eme
modulo $p$. Puisque $\tilde R/p$ est int\'egralement clos, il s'ensuit que $b_1+[\beta]z$ est une puissance 
$p$-i\`eme modulo $p$ et donc il existe $b_1'\in \tilde R$ tel que $b_1+[\beta]z\equiv \varphi(b_1')$ mod~$p$. Mais alors 
$[\beta]b_1\equiv \varphi([\beta^{1/p}]b_1')$ mod~$(p,[\beta]^2)$ et donc $x/\varphi(1+[\beta^{1/p}]b_1')=1+pa_2+[\beta^2]b_2$ pour certains $a_2,b_2\in \tilde R$, ce qui permet de conclure.

En r\'eit\'erant ce proc\'ed\'e, et en passant \`a la limite, on voit
que l'on peut \'ecrire $x=\varphi(u_0)(1+pa)$ pour des $u_0,a\in \tilde R$. 
Comme $(1+pa)^p=\varphi(u/u_0)$, on a $p^2(a+p(\frac{p-1}{2}a^2+\cdots))=\varphi((u/u_0)-1)$,
et comme $\tilde R/p$ est int\`egre, cela implique que $\varphi((u/u_0)-1)=p^2\varphi(w)$,
et donc\footnote{Si $p=2$, on obtient $a+a^2=(a'_1)^2$ mod~$2$, et donc $a=(a+a'_1)^2$ mod~$2$.}
 que $a=(a_1')^p$ modulo $p$. 
Cela permet d'\'ecrire $1+pa$ sous la forme
$(1+p\varphi(a_1'))(1+p^2a_1)$ et, par r\'ecurrence et passage \`a la limite,
$1+pa$ sous la forme $\varphi(\prod(1+p^ka'_k))$. Ceci prouve le (i).

Passons au (ii).
On a $\varphi(x)=(xv^{p^{N_1}})^p$, et il r\'esulte du (i) qu'il existe
$u_0$ tel que $x=u_0^p$.  Mais alors $(\varphi(u_0)/u_0^p)^p=v^{p^N}$ ce qui, si $p\neq 2$,
implique $\varphi(u_0)/u_0^p=v^{p^{N-1}}$ et une r\'ecurrence imm\'ediate fournit le r\'esultat
(si $p=2$, le m\^eme raisonnement s'applique en rempla\c{c}ant \'eventuellement $u_0$ par $-u_0$).
\end{proof}

L'injectivit\'e de 
$\delta_Y:{\rm Symb}_p(Y^{\rm gen})\to H^1_{\rm syn}(Y,1)$ est une cons\'equence
du r\'esultat plus pr\'ecis suivant.
\begin{lemm}\label{short9}
Si $\frac{d\tilde u_1}{\tilde u_1}$ 
et $\frac{1}{p}\log\frac{\varphi(\tilde u_1)}{\tilde u_1^p}$
sont divisibles par $p$, alors $u_1$ est une puissance $p$-i\`eme.
\end{lemm}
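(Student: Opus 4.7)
The plan is to apply Lemma~\ref{Aff5}(ii) with $N=1$ to (a modification of) $\tilde u_1$ in the ring $\tilde R := \O(\widetilde Y)=\acris\wotimes_{\O_{\breve C}}\O(\breve Y)$, which is a $\ainf$-algebra separated and complete for the $(p,\tilde p)$-adic topology, equipped with the chosen frobenius. The hypotheses of that lemma require $\tilde R/p$ to be a normal domain; since $Y$ is small, $\O(\breve Y)/p$ is \'etale over $k_C[T]$, and this should propagate to $\tilde R/p$. Granting this, the strategy is to (i) use the second divisibility condition to check that $\varphi(\tilde u_1)/\tilde u_1^p$ is a $p$-th power in $\tilde R^\dual$, and (ii) use the first to arrange $\tilde u_1\in \tilde R^{\dual\dual}$, after factoring out a $p$-th power.

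For step (i), the hypothesis $\frac{1}{p}\log\frac{\varphi(\tilde u_1)}{\tilde u_1^p}\in p\tilde R$ means $\log\frac{\varphi(\tilde u_1)}{\tilde u_1^p}\in p^2\tilde R$. Since $p^2\tilde R$ lies in the domain of the exponential (for $p>2$; the case $p=2$ introduces the usual small defect), we get
$$\frac{\varphi(\tilde u_1)}{\tilde u_1^p}=\exp(p^2 a)=\bigl(\exp(pa)\bigr)^{p}$$
for some $a\in\tilde R$, exhibiting the ratio as a $p$-th power. For step (ii), the hypothesis $\frac{d\tilde u_1}{\tilde u_1}\in p\,\Omega^1(\widetilde Y)$ implies $d\bar u_1=0$ in $\Omega^1_{\O(\bar Y)/k_C}$. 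Because $Y$ is small (so $\O(\bar Y)$ is \'etale over $k_C[T]$), the kernel of $d$ on $\O(\bar Y)$ consists precisely of $p$-th powers (Cartier). Hence $\bar u_1=\bar v^p$ for some unit $\bar v$; lift to $v\in \tilde R^\dual$ and write $\tilde u_1=v^p\tilde u_1'$ with $\tilde u_1'\in 1+p\tilde R\subset \tilde R^{\dual\dual}$. A direct computation shows
$$\frac{\varphi(\tilde u_1')}{(\tilde u_1')^p}=\frac{\varphi(\tilde u_1)}{\tilde u_1^p}\cdot\Bigl(\frac{v^p}{\varphi(v)}\Bigr)^{p},$$
which is again a $p$-th power by step (i) and the obvious $p$-th power in the second factor.

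Lemma~\ref{Aff5}(ii) applied to $\tilde u_1'$ then yields $\tilde u_1'=u^p$ with $u\in\tilde R^{\dual\dual}$, so $\tilde u_1=(vu)^p$ in $\tilde R^\dual$; specializing via $\theta:\acris\to\O_C$ gives $u_1=\theta(vu)^p$ in $\O(Y)^\dual$, as desired. The main obstacle is the verification that $\tilde R/p$ satisfies the integrality/normality hypothesis of Lemma~\ref{Aff5}: one has to identify $\tilde R/p$ precisely (it is a tensor product involving $\acris/p$, which carries divided-power nilpotents) and check that the smallness of $Y$ still guarantees that the version of the extraction-of-roots argument used in the proof of Lemma~\ref{Aff5} goes through. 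The exp/log manipulations are routine for $p>2$; the case $p=2$ requires the same factor-of-$2$ accommodation as in Lemma~\ref{AH}, and accordingly the conclusion should be stated up to the familiar $2$-torsion defect when $p=2$.
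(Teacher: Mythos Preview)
Your strategy is the paper's: exhibit $\varphi(\tilde u_1)/\tilde u_1^p$ as a $p$-th power via $\exp$, use the differential condition to peel off a $p$-th power so as to land in $(\cdot)^{\dual\dual}$, and then invoke Lemma~\ref{Aff5}(ii). The paper carries out the middle step slightly differently, and the difference matters.

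In your step~(ii) you write $\bar u_1=\bar v^p$ ``for some unit $\bar v$'' and lift to $v\in\tilde R^\dual$. But $u_1\in A_{p,1}(Y^{\rm gen})$ is only a rational function; its reduction $\bar u_1$ lives in $k_C(\bar Y)^\dual$, not in $\O(\bar Y)^\dual$, so $\bar v$ need not be a unit and there is no lift to $\tilde R^\dual$. The paper sidesteps this by first factoring $u_1=ab$ with $a\in{\rm Fr}(\O(\breve Y))$ and $b\in\O(Y)^{\dual\dual}$ (always possible: take $a$ to be any lift in the $\breve C$-model of the normalized reduction of $u_1$); the Cartier argument then shows $a$ may be replaced by a $p$-th power in the fraction field, reducing to $u_1\in\O(Y)^{\dual\dual}$. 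Only after this does one choose a lift $\tilde u_1$ in $\tilde R^{\dual\dual}$ and apply Lemma~\ref{Aff5}. Your variant can be repaired by making the same move: lift $\bar v$ in the fraction field, not in $\tilde R^\dual$. Relatedly, your claim $\tilde u_1'\in 1+p\tilde R$ is too strong; the reduction you perform is to the special fibre (mod ${\goth m}_{\rm cris}$), not mod $p$, so you only get $\tilde u_1'\in\tilde R^{\dual\dual}$ --- which is all Lemma~\ref{Aff5} needs.

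Your flagged obstacle is real: $\tilde R/p=(\acris/p)\wotimes_{k_C}\O(\bar Y)$ carries divided-power nilpotents and is neither integral nor integrally closed, so Lemma~\ref{Aff5} does not apply verbatim to $\tilde R=\O(\widetilde Y)$. The paper glosses over this too; the clean way out is to work instead with $\ainf\wotimes_{\O_{\breve C}}\O(\breve Y)$, whose reduction mod $p$ is $\O_{C^\flat}\wotimes_{k_C}\O(\bar Y)$ (a normal domain since $\O(\bar Y)$ is smooth and geometrically integral over $k_C$), and to check that the hypothesis ``$\varphi(\tilde u_1)/\tilde u_1^p$ is a $p$-th power'' descends to this smaller ring once one has reduced to $u_1\in\O(Y)^{\dual\dual}$ and chosen the lift there.
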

\begin{proof}
On peut \'ecrire $u_1=ab$, avec $a\in{\rm Fr}(\O(\breve Y))$ et $b\in\O(Y)^{\dual\dual}$.
(On regarde $u_1$ sur la fibre sp\'eciale (apr\`es avoir divis\'e par $p^r$ pour obtenir
un \'el\'ement de valuation~$0$) et on prend pour $a$ un rel\`evement dans ${\rm Fr}(\O(\breve Y))$.)
La condition $\frac{d\tilde u_1}{\tilde u_1}$ divisible par $p$ implique que $u_1$
est une puissance $p$-i\`eme sur la fibre sp\'eciale, et donc peut se relever
en une puissance $p$-i\`eme, ce qui permet de supposer que $a=1$, et donc que $u_1\in\O(Y)^{\dual\dual}$.
Le lemme~\ref{Aff5} permet alors d'en d\'eduire que $\tilde u_1$ est une puissance $p$-i\`eme
(car $\frac{\varphi(\tilde u_1)}{\tilde u_1^p}=
\big(\exp(\frac{1}{p}\log\frac{\varphi(\tilde u_1)}{\tilde u_1^p})\big)^p$ en est une).
On en d\'eduit que $u_1$ est une puissance $p$-i\`eme, ce que l'on voulait.
\end{proof}

\subsubsection{Bijectivit\'e du r\'egulateur syntomique}
On note $H^1_{\rm dR}(\widetilde Y)^{\rm sep}$ le quotient de 
$H^1_{\rm dR}(\widetilde Y)$ par l'adh\'erence de son sous-groupe de torsion.
Il r\'esulte de la description de $H^1_{\rm dR}(\widetilde Y)$
donn\'ee dans la prop.~\ref{basic9} que l'on a
$$H^1_{\rm dR}(\widetilde Y)^{\rm sep}=M_1^\sharp\otimes_{\Z_p} \acris.$$

\begin{prop}\label{basic29}
Si $p>2$, on a une suite exacte {\rm (}si $p=2$, la suite est presque exacte{\rm)}
$$0\to \Z_p\wotimes\O(Y)^{\dual\dual}\to H^1_{\rm syn}(Y,1)
\to (H^1_{\rm dR}(\widetilde Y)^{\rm sep})^{\varphi=p}\to 0.$$

\end{prop}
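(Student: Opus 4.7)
La strat\'egie consiste \`a utiliser la d\'ecomposition $\varphi$-\'equivariante $H^1_{\rm dR}(\widetilde Y)=H^1_{\rm dR}(\widetilde Y)^{\rm sep}\oplus H^1_{\rm dR}(\widetilde Y)_{\rm tors}$ provenant de la prop.\,\ref{basic9} (avec $H^1_{\rm dR}(\widetilde Y)^{\rm sep}=M_1^\sharp\otimes_{\Z_p}\acris$ libre sur $\acris$ donc sans $p$-torsion), combin\'ee avec la suite exacte (\ref{BAS18.0}) qui fournit
$$0\to {\rm Ker}[\O(Y)/\O_C\to M]\to H^1_{\rm syn}(Y,1)\to H^1_{\rm dR}(\widetilde Y)^{\varphi=p}\to 0.$$
Le plan est de d\'efinir d'abord $\alpha:\Z_p\wotimes\O(Y)^{\dual\dual}\to H^1_{\rm syn}(Y,1)$ comme la compos\'ee de l'inclusion $\O(Y)^{\dual\dual}\hookrightarrow{\rm Symb}_p(Y^{\rm gen})$, du r\'egulateur $\delta_Y$ et d'un passage \`a la limite, en v\'erifiant que $H^1_{\rm syn}(Y,1)$ est s\'epar\'e et complet pour la topologie $p$-adique (ce qui r\'esulte de la $p$-compl\'etude de $\O(\widetilde Y)$ et d'un calcul \'el\'ementaire sur le complexe). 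On en tire alors aussit\^ot la surjectivit\'e voulue, car la projection de $H^1_{\rm syn}(Y,1)$ sur $(H^1_{\rm dR}(\widetilde Y)^{\rm sep})^{\varphi=p}$ est, via (\ref{BAS18.0}), la compos\'ee de deux surjections.

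Pour prouver que $\alpha$ s'annule dans $(H^1_{\rm dR}(\widetilde Y)^{\rm sep})^{\varphi=p}$, on observe que, par le lemme~\ref{short10}, l'image de $\delta_Y(u)$ dans $H^1_{\rm dR}(\widetilde Y)^{\varphi=p}$ est de $p$-torsion pour $u\in\O(Y)^{\dual\dual}$; or $(H^1_{\rm dR}(\widetilde Y)^{\rm sep})^{\varphi=p}$ est sans $p$-torsion, donc l'image est nulle, et on conclut par continuit\'e. L'injectivit\'e de $\alpha$ se d\'emontre par r\'ecurrence sur la filtration $p$-adique \`a l'aide du lemme~\ref{short9}: si $u\in\Z_p\wotimes\O(Y)^{\dual\dual}$ v\'erifie $\alpha(u)=0$ et si l'on rel\`eve $u$ modulo $p^k$ en $u_k\in\O(Y)^{\dual\dual}$, alors $\delta_Y(u_k)$ est un cobord modulo~$p^k$, ce qui se traduit par la divisibilit\'e par $p^k$ de $\frac{d\tilde u_k}{\tilde u_k}$ et de $\frac{1}{p}\log\frac{\varphi(\tilde u_k)}{\tilde u_k^p}$; l'it\'eration du lemme~\ref{short9} donne $u_k\in(\O(Y)^{\dual\dual})^{p^k}$ (au moins si $p>2$), donc $u\equiv 0$ modulo $p^k$ dans $\Z_p\wotimes\O(Y)^{\dual\dual}$, d'o\`u $u=0$.

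Le point le plus d\'elicat est d'identifier le noyau de $H^1_{\rm syn}(Y,1)\to (H^1_{\rm dR}(\widetilde Y)^{\rm sep})^{\varphi=p}$ avec l'image de $\alpha$. Par une chasse au diagramme, ce noyau s'ins\`ere dans une suite exacte
$$0\to {\rm Ker}[\O(Y)/\O_C\to M]\to {\rm Ker}\to (H^1_{\rm dR}(\widetilde Y)_{\rm tors})^{\varphi=p}\to 0.$$
L'id\'ee est que l'exponentielle $f\mapsto\exp(f)$ fournit une application $\O(Y)/\O_C\to\Q_p\otimes\O(Y)^{\dual\dual}\subset\Z_p\wotimes\O(Y)^{\dual\dual}$ qui, apr\`es composition avec $\alpha$, redonne la fl\`eche de (\ref{BAS18.0}) (un calcul direct montre que $\exp(f)$ donne le cocycle $(d\tilde f,(\varphi/p-1)\tilde f)$). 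On construit ainsi un morphisme de la suite exacte ci-dessus vers celle de l'\'enonc\'e, dont les fl\`eches aux extr\'emit\'es se contr\^olent: \`a gauche par la compatibilit\'e de $\alpha$ avec $\exp$, \`a droite en r\'ealisant que la partie torsion de $H^1_{\rm dR}(\widetilde Y)^{\varphi=p}$ (qui, localement sur les coordonn\'ees \'etales, s'apparente au noyau de $\alpha$ pour la boule ferm\'ee) est atteinte par les limites $p$-adiques non alg\'ebriques dans $\Z_p\wotimes\O(Y)^{\dual\dual}$, via une variante des lemmes~\ref{trick} et~\ref{AH} (en reprenant l'argument de la prop.~\ref{boule1} mais relativis\'e sur $\breve Y$ gr\^ace \`a la structure \'etale de $Y$ sur $\O_{\breve C}\langle T\rangle$). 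La principale difficult\'e r\'eside pr\'ecis\'ement dans cette combinaison: il faut traiter simultan\'ement la partie \og s\'epar\'ee\fg\ (contr\^ol\'ee par la th\'eorie de Dieudonn\'e-Manin via le~\S\,\ref{BAS8}) et la partie \og de torsion\fg\ (contr\^ol\'ee par l'exponentielle d'Artin-Hasse comme dans la preuve pour la boule ferm\'ee), et montrer que leur somme est pr\'ecis\'ement l'image de $\Z_p\wotimes\O(Y)^{\dual\dual}$.
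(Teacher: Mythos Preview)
Votre strat\'egie est correcte et co\"{\i}ncide avec celle du papier pour l'injectivit\'e, la surjectivit\'e, et le fait que la suite est un complexe (lemmes~\ref{short9} et~\ref{short10}, suite~(\ref{BAS18.0}), scindage de la prop.~\ref{basic9}). La diff\'erence porte sur l'exactitude au milieu, o\`u vous restez vague pr\'ecis\'ement au point crucial.

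Vous proposez d'atteindre $(H^1_{\rm dR}(\widetilde Y)_{\rm tors})^{\varphi=p}$ par une \og variante des lemmes~\ref{trick} et~\ref{AH}, en reprenant l'argument de la prop.~\ref{boule1} relativis\'e sur $\breve Y$\fg. C'est bien l'id\'ee, mais le passage du cas absolu au cas relatif n'est pas gratuit: dans la prop.~\ref{boule1}, l'exponentielle d'Artin-Hasse fonctionne parce que les coefficients $a_{k,j}$ sont dans $\acris$ et qu'on peut les d\'ecomposer via $\acris=\O_{\breve C}\oplus{\rm Ker}\,\theta_0$ puis en teichm\"ullers. Sur $\breve Y$, les \og coefficients\fg{} sont des \'el\'ements de $\O(\breve Y)$, et il faut un m\'ecanisme pour faire converger $\sum_{i\geq 0}p^{-i}\varphi^i(e)$ avec $e\in\O(\breve Y)$ non constant. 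Le papier r\'esout ceci en deux \'etapes: d'abord il utilise la base orthonormale $(e_j)_{j\in J}$ de $\O(\breve Y)^{\psi=0}$ issue de la th\'eorie du~\S\,\ref{BAS8} (d\'ecomposition~(\ref{basic9.1})) pour \'ecrire explicitement un \'el\'ement de torsion comme $\sum_{i,j}b_{i,j}\,d(p^{-i}\varphi^i(e_j))$ avec $b_{i,j}\in\acris$; ensuite il d\'emontre le lemme~\ref{gabr}, qui remplace le lemme~\ref{AH} et repose sur les op\'erations $\delta_n$ de Bhatt--Scholze~\cite{BS} pour r\'e\'ecrire $\sum_{i\geq 0}p^{-i}[a^{p^i}]\varphi^i(e)$ comme une somme $\sum_n\ell([a^{p^n}]\delta_n(e))$ dont l'exponentielle d'Artin-Hasse converge dans $\O(\widetilde Y)^{\dual\dual}$. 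C'est ce recours aux $\delta_n$ qui fait marcher l'argument relativement; sans lui (ou un substitut \'equivalent), votre \og variante\fg{} reste \`a justifier.
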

\begin{proof}
Il r\'esulte du lemme~\ref{short10} que $\O(Y)^{\dual\dual}$ s'envoie sur $0$
dans $(H^1_{\rm dR}(\widetilde Y)^{\rm sep})^{\varphi=p}$, et donc
que $\Z_p\wotimes\O(Y)^{\dual\dual}$ aussi; la suite est donc un complexe.

$\bullet$ L'exactitude \`a droite r\'esulte de la surjectivit\'e
de $H^1_{\rm syn}(Y,1)
\to H^1_{\rm dR}(\widetilde Y)^{\varphi=p}$ (cf.~suite exacte~\ref{BAS18.0})
et de ce que $H^1_{\rm dR}(\widetilde Y)^{\rm sep}$ s'identifie
\`a un sous-module de $H^1_{\rm dR}(\widetilde Y)$ stable par~$\varphi$
(prop.~\ref{basic9}), et donc $(H^1_{\rm dR}(\widetilde Y)^{\rm sep})^{\varphi=p}
\subset H^1_{\rm dR}(\widetilde Y)^{\varphi=p}$.

$\bullet$ L'exactitude \`a gauche, i.e.~l'injectivit\'e
de $\Z_p\wotimes \O(Y)^{\dual\dual}\to H^1_{\rm syn}(Y,1)$, r\'esulte
du lemme~\ref{short9} puisque
${\rm Symb}_p(Y^{\rm gen})$ contient $\Z_p\wotimes \O(Y)^{\dual\dual}$.

$\bullet$ Il reste \`a prouver l'exactitude au milieu.
Soit $(e_j)_{j\in J}$ une base orthonormale de $\O(\breve Y)^{\psi=0}$ sur $\O_{\breve C}$.
Soit $x\in H^1_{\rm dR}(\widetilde Y)_{\rm tors}^{\varphi=p}$ et soit $\hat x\in
\Omega^1(\widetilde Y)$ ayant pour image $x$. 
Il r\'esulte des d\'ecompositions~(\ref{basic9.1})
que l'on peut \'ecrire $\hat x$,
de mani\`ere unique, sous la forme $\hat x=\sum_{i\geq 1}\sum_{j\in J}b_{i,j}d(p^{-i}\varphi^i(e_j))$,
avec $b_{i,j}\in\acris$ et $b_{i,j}\to 0$ quand $(i,j)\to\infty$, et la condition
$(\varphi-p)\hat x=0$ se traduit par l'existence de $a_{i,j}\in\acris$, 
$a_{i,j}\to 0$ quand $(i,j)\to\infty$, 
tels que $b_{1,j}=a_{1,j}$ et $b_{i,j}=\varphi(b_{i-1,j})+p^{i-1}a_{i,j}$ si $i\geq 2$.
On a alors $$b_{i,j}=\varphi^{i-1}(a_{1,j})+p\varphi^{i-2}(a_{2,j})+\cdots+p^{i-1}a_{i,j}.$$
On d\'eduit la prop.\,\ref{basic29} du lemme~\ref{gabr} ci-dessous
en raisonnant comme pour la preuve de la prop.~\ref{boule1} (i.e.~en utilisant
la d\'ecomposition $\acris=\O_{\breve C}\oplus{\rm Ker}\,\theta_0$, puis 
le fait que $\acris=\ainf[\frac{\tilde p^k}{k!},\  k\geq 0]^\wedge$ et enfin, le fait que
tout \'el\'ement de $\ainf$ a un d\'eveloppement de Teichm\"uller).
\end{proof}
\begin{lemm}\label{gabr}
Si $e\in \O(\breve Y)$ et $v_{C^\flat}(a)>0$,
alors $\Sigma(a,e)=\sum_{i\geq 0}[a^{p^i}]d(p^{-i}\varphi^i(e))$
peut s'\'ecrire sous la forme $\frac{dv}{v}$ avec $v\in\O(\widetilde Y)^\dual$.
\end{lemm}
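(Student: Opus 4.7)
The natural candidate is $v=\exp(f)$, where
$$f=\sum_{i\geq 0}p^{-i}[a^{p^i}]\varphi^i(e)=\sum_{i\geq 0}p^{-i}\varphi^i([a]e),$$
the second equality using the multiplicativity of the Teichm\"uller lift, which gives $[a^{p^i}]=\varphi^i([a])$. The identity $\Sigma(a,e)=df$ is then purely formal: since $d$ is $\acris$-linear and $[a^{p^i}]\in\acris$, one has $df=\sum_{i\geq 0}[a^{p^i}]\,d(p^{-i}\varphi^i(e))$. All the substance is in showing that $v$ genuinely defines an element of $\O(\widetilde Y)^\dual$; once this is done, $\frac{dv}{v}=df=\Sigma(a,e)$.

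To control $v$, I would compare $f$ with the logarithm of the Artin--Hasse exponential. Set $w_0=\exp_{\rm AH}([a]e)$. Since $\exp_{\rm AH}(X)\in 1+X\Z_p[[X]]$ and $[a]e$ is topologically nilpotent in $\O(\widetilde Y)$ (because $v_{C^\flat}(a)>0$ forces $[a]\in\ker(\theta_0)\subset{\goth m}_{\rm cris}$, so $[a]e$ lies in a power-multiplicatively small ideal), $w_0$ converges in $1+[a]e\cdot\O(\widetilde Y)\subset\O(\widetilde Y)^\dual$. One has $\log w_0=\sum_{i\geq 0}p^{-i}[a^{p^i}]e^{p^i}$, so
$$f-\log w_0=\sum_{i\geq 0}p^{-i}[a^{p^i}]\bigl(\varphi^i(e)-e^{p^i}\bigr).$$
The idea is then to write $v=w_0\cdot\exp(f-\log w_0)$ and prove convergence of the second factor in $1+F^1\acris\cdot\O(\widetilde Y)$ (or some similar divided-power ideal), which will make it a unit.

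For this convergence, I would use two complementary estimates. First, the Frobenius-lift property $\varphi(e)\equiv e^p\pmod p$ together with the standard lemma $x\equiv y\pmod{p^k}\Rightarrow x^{p^j}\equiv y^{p^j}\pmod{p^{k+j}}$ gives an inductive bound on the $p$-divisibility of $\varphi^i(e)-e^{p^i}$. Second, the element $[a^{p^i}]=[a]^{p^i}$ acquires divided-power denominators in $\acris$: writing $[a]$ in terms of $\tilde p$ and using the expansion $\acris=\ainf[\tilde p^k/k!,\ k\geq 0]^{\wedge}$ exactly as in the proof of Lemma~\ref{AH}, one splits $[a]$ into a ``low-order'' part handled by iterated Artin--Hasse exponentials on Teichm\"uller representatives, and a ``high-order'' part $a_2$ for which the terms $p^{-i}[a^{p^i}]_2(\varphi^i(e)-e^{p^i})$ already lie in $p\cdot(\acris\wotimes\O(\breve Y))$ and converge to $0$ there, so $\exp$ of the sum lies in $1+{\goth m}_{\rm cris}\cdot\O(\widetilde Y)\subset\O(\widetilde Y)^\dual$.

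The main obstacle will be the bookkeeping in this last step: keeping simultaneous track of how the powers $p^i$ in the denominator are absorbed by (i) the $p$-adic smallness of $\varphi^i(e)-e^{p^i}$ coming from iteration of the Frobenius lift and (ii) the divided-power structure on $[a]^{p^i}$ in $\acris$. The template is entirely that of Lemma~\ref{AH}, with $T^{p^i}$ replaced by $\varphi^i(e)$; once the decomposition $[a]=a_1+a_2$ from that proof is transported to the present setting and the estimate $v_p\bigl(\tilde p^{p^i k}/k!\bigr)\geq p^i$ (for $k\geq p$) is replayed, the convergence follows, and $v\in\O(\widetilde Y)^\dual$ with $\frac{dv}{v}=\Sigma(a,e)$ is obtained.
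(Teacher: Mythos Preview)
Your setup is right: the candidate is $v=\exp(f)$ with $f=\sum_{i\geq 0}p^{-i}[a^{p^i}]\varphi^i(e)$, and factoring out $w_0=\exp_{\rm AH}([a]e)$ is a correct first move. The gap is in your treatment of the remainder
$$f-\log w_0=\sum_{i\geq 1}p^{-i}[a^{p^i}]\bigl(\varphi^i(e)-e^{p^i}\bigr).$$
Neither of your two estimates does the job. First, iterating $\varphi(e)\equiv e^p\pmod p$ only yields $\varphi^i(e)-e^{p^i}\in p\,\O(\widetilde Y)$, not $p^i\,\O(\widetilde Y)$: you get $\varphi^i(e)\equiv\varphi^{i-1}(e)^p\pmod p$ and $\varphi^{i-1}(e)^p\equiv e^{p^i}\pmod{p^2}$, but these combine to a congruence only modulo~$p$. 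One factor of~$p$ is far from enough to kill $p^{-i}$. Second, your appeal to the decomposition of Lemma~\ref{AH} is misplaced: there the coefficient $a\in\acris$ is split as $a_1+a_2$ with $a_1\in W({\goth m}_{C^\flat})$ and $a_2$ carrying the divided-power tail, but here $[a]$ is \emph{already} a pure Teichm\"uller lift in $W({\goth m}_{C^\flat})$, so $a_2=0$ and the estimate $v_p\bigl((p^\ell k)!/k!\bigr)\geq p^\ell$ you want to ``replay'' simply does not enter.

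The paper's proof repairs this by decomposing on the $e$-side rather than the $[a]$-side. It invokes the $\delta$-ring identity (Bhatt--Scholze, \cite[rem.~2.13]{BS}): there exist maps $\delta_n:\O(\widetilde Y)\to\O(\widetilde Y)$ with $\delta_0={\rm id}$ and
$$\varphi^n(e)=\delta_0(e)^{p^n}+p\,\delta_1(e)^{p^{n-1}}+\cdots+p^n\delta_n(e).$$
Substituting and reindexing gives directly
$$f=\sum_{n\geq 0}\ell\bigl([a^{p^n}]\delta_n(e)\bigr),\qquad \ell(x)=\sum_{k\geq 0}p^{-k}x^{p^k},$$
whence $v=\prod_{n\geq 0}\exp_{\rm AH}\bigl([a^{p^n}]\delta_n(e)\bigr)$, each factor lying in $\O(\widetilde Y)^{\dual\dual}$ because $[a^{p^n}]\delta_n(e)$ is topologically nilpotent and $\exp_{\rm AH}\in 1+X\Z_p[[X]]$. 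Your single factor $w_0$ is the $n=0$ term of this product; what you are missing is the Witt-polynomial shape of $\varphi^i(e)-e^{p^i}$, which is precisely what produces the remaining Artin--Hasse factors. Iterating your own procedure (peel off $w_0$, observe the remainder has the same form with $e$ replaced by $\delta_1(e)$ and $[a]$ by $[a^p]$, repeat) would reconstruct the Bhatt--Scholze formula, but that inductive structure is absent from your write-up.
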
 
\begin{proof} 
D'apr\`es la rem.\,2.13 de \cite{BS}, 
il existe une suite d'applications  $\delta_n:\O(\widetilde Y)\to \O(\widetilde Y)$, pour $n\geq 0$, avec $\delta_0(x)=x$,
telles que $$\varphi^n(x)=\delta_0(x)^{p^n}+p\delta_1(x)^{p^{n-1}}+\cdots+p^n\delta_n(x).$$
On en d\'eduit que 
$$\sum_{i\geq 0}p^{-i}[a^{p^i}]\varphi^i(e)=\sum_{n\geq 0}\ell([a^{p^n}]\delta_n(e)),
\quad{\text{avec $\ell(x)=\sum_{k\geq 0}p^{-k}x^{p^k}$.}}$$
Il s'ensuit que 
$$\exp\big(\sum_{i\geq 0}p^{-i}[a^{p^i}]\varphi^i(e)\big)=
\prod_{n\geq 0}\exp_{\rm AH}([a^{p^n}]\delta_n(e)),$$
o\`u $\exp_{\rm AH}$ est l'exponentielle d'Artin-Hasse (qui appartient \`a $1+X\Z_p[[X]]$);
le produit converge dans $\O(\widetilde Y)^{\dual\dual}$ et si $v$ est le r\'esultat,
on a $\frac{dv}{v}=\Sigma(a,e)$, ce que l'on voulait.
\end{proof}

\begin{theo}\label{short11}
$\delta_Y:{\rm Symb}_p(Y^{\rm gen})\to H^1_{\rm syn}(Y,1)$ est un isomorphisme si $p>2$
{\rm(}si $p=2$, c'est presque un isomorphisme{\rm)}.
\end{theo}
\begin{proof}
Cela r\'esulte, via le lemme des 5, du diagramme commutatif suivant dont
les lignes sont exactes:
%
$$\xymatrix@R=.4cm@C=.5cm{
0\ar[r]&\Z_p\wotimes\O(Y)^{\dual\dual}\ar@{=}[d]\ar[r]& {\rm Symb}_p(Y^{\rm gen})
\ar[d]\ar[r] & (\acris\otimes H^1_{\rm dR}(\breve Y)^{\rm sep})^{\varphi=p}\ar[d]\ar[r]&0\\
0\ar[r]&\Z_p\wotimes\O(Y)^{\dual\dual}\ar[r]& H^1_{\rm syn}(Y,1)\ar[r]
&(\acris\otimes H^1_{\rm dR}(\breve Y)^{\rm sep})^{\varphi=p}\ar[r]&0
}$$
(L'exactitude de la ligne du haut est la prop.\,\ref{shortetale1} et celle
de la ligne du bas est la prop.\,\ref{basic29}.)
%
%
\end{proof}

\section{Cohomologie des courbes quasi-compactes}\label{BAS4}
Dans ce chapitre, on calcule la cohomologie d'une courbe quasi-compacte
\`a partir d'une triangulation: une telle triangulation fournit un patron de la
courbe et on exprime toutes les cohomologies en termes des cohomologies
des termes de ce patron: en particulier, les symboles (prop.\,\ref{basic14}),
la cohomologie \'etale $\ell$-adique (th.\,\ref{ladique}
plus formule {\og de Picard-Lefschetz\fg} de la rem.\,\ref{PL}), la cohomologie de de Rham (cor.\,\ref{basic16.2})
et sa s\'epar\'ee (th.\,\ref{Aff1}), la cohomologie de Hyodo-Kato (th.\,\ref{basic21} et
rem.\,\ref{BAS12.4}).  Cela permet de relier les symboles 
$p$-adiques, la cohomologie syntomique et la cohomologie
\'etale $p$-adique (th.\,\ref{basic35} et cor.\,\ref{basic35.1}).
Enfin, on exprime la cohomologie syntomique en termes du complexe de de Rham 
(rem.\,\ref{basic24} et th.\,\ref{basic26}) et on en d\'eduit le th.\,\ref{intro1.1}
(cf.~th.\,\ref{basic40}).

\subsection{Notations}
Soit $Y$ une
courbe quasi-compacte sur $C$ (i.e.~un affino\"{\i}de ou une
courbe propre).  Soit $S$ une triangulation de $Y$, et soit
$Y_S$ le $\O_C$-mod\`ele semi-stable associ\'e.
On suppose $S$
suffisamment fine pour
que:

\quad $\bullet$ la fibre sp\'eciale $Y_S^{\rm sp}$ ait au moins deux composantes
irr\'eductibles

\quad $\bullet$ ces composantes irr\'eductibles
soient lisses et deux d'entre elles s'intersectent en au plus un point,

\quad $\bullet$ les shorts $Y_s$, pour $s\in S$, soient\footnote{
Cette derni\`ere condition n'intervient que dans le \S\,\ref{BAS14} o\`u l'on
utilise les r\'esultats du \S\,\ref{BAS8}. Si on part d'une triangulation $S$
v\'erifiant les deux premiers points, on en fabrique une v\'erifiant le dernier de la
mani\`ere suivante: si $s\in S$, on peut d\'ecomposer le short $Y_s$ en un petit
short et un nombre fini de boules ouvertes, tubes de points de la fibre sp\'eciale;
on peut d\'ecomposer chacune de ces boules en une boule ferm\'ee et une couronne, et
il suffit de rajouter \`a $S$ les points de Gauss de ces boules ferm\'ees pour
obtenir une triangulation ayant les propri\'et\'es voulues.}
 petits.

\smallskip
Soit $(\Gamma,(Y_i)_{i\in I},(\iota_{i,j})_{(i,j)\in I_{2,c}})$ un patron
de $Y$ associ\'e \`a $S$ (cf.~\S\,\ref{Pconstr10}); cela inclut:

\quad $\bullet$ un graphe bipartite marqu\'e $\Gamma=(I,I_2,\mu)$, avec
$I=S\sqcup A_c$ et $I_{2,c}=\{(a,s),\ a\in A_c,\ s\in S(a)\}$,
$\mu(a)\in\Q_+^\dual$ si $a\in A_c$,

\quad $\bullet$ des shorts $Y_s$ pour $s\in S$, des jambes $Y_a$ pour $a\in A_c$ (avec $Y_a$ de
longueur $\mu(a)$), la fibre sp\'eciale $Y_s^{\rm sp}$ de $Y_s$ \'etant propre (et lisse) par convention
(avec certains points marqu\'es de multiplicit\'e $0^+$ si $s\in\partial Y$),

\quad $\bullet$ si $a\in A_c$, un param\`etre local $T_{a,s_1}$ 
de $Y_a$ 
(ce qui fournit une orientation de $Y_a$
et donc fixe une origine $s_1$ et un bout $s_2$)
admettant un prolongement \`a des ouverts de $Y_{s_1}$ et $Y_{s_2}$,
et $T_{a,s_2}=p^{\mu(a)}/T_{a,s_1}$,

\quad $\bullet$ des cercles fant\^omes $Y_{a,s}$, pour $a\in A(s)$, avec 
$\O(Y_{a,s})=\O_C[[T_{a,s},T_{a,s}^{-1}\rangle$,

\vskip.1cm
A ces donn\'ees, on rajoute deux entiers $N_1(S)$ et $N(S)$ d\'efinis par\footnote{Si $Y_S$ est d\'efini
sur $K$, d'indice de ramification absolu $e$, alors $N_1(S)\leq N(s)\leq\lceil\frac{\log e}{\log p}\rceil$.}:

\qquad $\bullet$ $N_1(S)$ est le plus petit entier $N$ tel que $p^N\mu(a)\geq 1$, pour tout $a\in A_c$.

\qquad $\bullet$
 $N(S)$ est le plus petit entier $N\geq N_1(S)$ tel que $T_{a,s}\in\O(\breve Y_s)+p^{1/p^N}\O(Y_s)$,
pour tout $(a,s)\in I_{2,c}$.

\smallskip
On rappelle que $\Sigma(Y)$ est l'ensemble des noeuds de $Y$; on a $\Sigma(Y)\subset S$.

\Subsection{Cohomologie \'etale}
\subsubsection{Localisation des symboles}
Soit $\ell$ un nombre premier.
On a d\'efini les groupes ${\rm Symb}_\ell(Z)$ pour $Z=Y,Y^{\rm gen}_i,Y^{\rm gen}_{i,j}$
(cf.~\no\ref{BAS6} pour $Y$ et $Y^{\rm gen}_s$ et \no\ref{BAS6.6} pour $Y^{\rm gen}_a$ et~$Y^{\rm gen}_{a,s}$).
Les restrictions induisent des applications naturelles
$${\rm Symb}_\ell(Y)\to {\rm Symb}_\ell(Y^{\rm gen}_i)\to {\rm Symb}_\ell(Y^{\rm gen}_{i,j}).$$

Posons 
\begin{align*}
A_{\ell,\infty}(Y_S)=&\ 
\genfrac{\{}{\}}{0pt}{}{((u_{i,n},v_{i,n}\in C(Y_i)^\dual)_{i\in I,n\in\N}, 
(g_{i,j,n}\in C(Y_{i,j})^\dual)_{(i,j)\in I_{2,c}, n\in\N},}
{u_{i,n+1}=u_{i,n}v_{i,n}^{\ell^n},\ 
u_{i,n}=g_{i,j,n}^{\ell^n}u_{j,n},\ g_{i,j,n+1}^\ell=g_{i,j,n}\frac{v_{i,n}}{v_{j,n}}},\\
A_{\ell,\infty}(Y_S)^{\rm triv}=&\ 
\{(a_{i,n}^{\ell^n},a_{i,n+1}^\ell/a_{i,n})_{i\in I,n\in\N},(a_{i,n}/a_{j,n})_{(i,j)\in I_{2,c},n\in\N}\}
\subset A_{\ell,\infty}(Y_S)
\end{align*}
On pose alors
$${\rm Symb}_\ell(Y_S)=A_{\ell,\infty}(Y_S)/A_{\ell,\infty}(Y_S)^{\rm triv}.$$ 
\begin{lemm}\label{basic14.1}
On a un isomorphisme naturel
$${\rm Symb}_\ell(Y_S)\cong {\rm Symb}_\ell(Y).$$
\end{lemm}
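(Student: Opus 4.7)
The plan is to construct the natural restriction morphism $\rho:{\rm Symb}_\ell(Y)\to{\rm Symb}_\ell(Y_S)$ sending $(u_n,v_n)_{n\in\N}$ to the family $((u_n{|_{Y_i}},v_n{|_{Y_i}})_{i\in I},(g_{i,j,n}=1)_{(i,j)\in I_{2,c}})$, and to show that $\rho$ is bijective. The map is well-defined: the restriction of a symbol $(u_n,v_n)\in A_{\ell,\infty}(Y)$ to each open piece $Y_i^{\rm gen}$ still satisfies ${\rm Div}(u_n{|_{Y_i}})\in\ell^n{\rm Div}(Y_i^{\rm gen})$; for $i\in A_c$ (a jambe) the required finite support of the divisor comes from the fact that ${\rm Div}(u_n)$ is already finite on the quasi-compact curve $Y$. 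Trivial elements go to trivial elements because the globally defined $a_n\in C(Y)^\dual$ restrict to consistent $a_{i,n}$ on each piece with $a_{i,n}/a_{j,n}=1$ on overlaps.

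For injectivity, suppose $\rho((u_n,v_n))$ is trivial in ${\rm Symb}_\ell(Y_S)$: there exist local $a_{i,n}\in C(Y_i^{\rm gen})^\dual$ with $u_n{|_{Y_i}}=a_{i,n}^{\ell^n}$, $v_n{|_{Y_i}}=a_{i,n+1}^\ell/a_{i,n}$, and $a_{i,n}/a_{j,n}=1$ on each $Y_{i,j}$. Since $Y$ is connected, all the function fields $C(Y_i^{\rm gen})$ can be embedded in $C(Y)$ and the $a_{i,n}$ agree on overlaps; they therefore patch to a single $a_n\in C(Y)^\dual$ exhibiting $(u_n,v_n)$ as trivial.

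The main work is surjectivity. The assumption that $S$ is fine guarantees that the nerve of the covering $\{Y_i\}$ has no triple intersections (two shorts are disjoint, two jambes are disjoint, and $Y_s\cap Y_a$ is non-empty only when $s\in S(a)$), so the natural cocycle condition on the $g_{i,j,n}$ is vacuous. It therefore suffices, given a representative $((u_{i,n},v_{i,n}),g_{i,j,n})$, to find functions $a_{i,n}\in C(Y_i^{\rm gen})^\dual$ such that $a_{j,n}/a_{i,n}=g_{i,j,n}$ on each fant\^ome circle $Y_{a,s}$: once such $a_{i,n}$ are constructed, the modified data $(u_{i,n}\,a_{i,n}^{\ell^n},v_{i,n}\,a_{i,n+1}^\ell/a_{i,n},1)$ lies in the same class and has trivial gluing, so the $u_{i,n}\,a_{i,n}^{\ell^n}$ patch to a global $u_n\in C(Y)^\dual$. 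The existence of such a factorisation is the local content of lemme~\ref{basic32} and cor.~\ref{basic33}: any unit on a fant\^ome circle $Y_{a,s}$ factors as $c\,T_{a,s}^k u_+ u_-$ with $u_+\in 1+T_{a,s}\O_C[[T_{a,s}]]$ extending to a unit on the short $Y_s$, and $u_-\in 1+T_{s,a}\O_C[[T_{s,a}]]$ (together with the monomial factor) extending to a meromorphic function on the jambe $Y_a$.

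The main obstacle is to carry out this factorisation coherently for all $n$ at once, so that the resulting families $(a_{i,n})_n$ satisfy $a_{i,n+1}^\ell/a_{i,n}=v_{i,n}$ and are thus compatible with the new $v_{i,n}$. This will be handled by a recursive construction on $n$: once $a_{i,n}$ has been chosen, the relation $g_{i,j,n+1}^\ell=g_{i,j,n}\,v_{i,n}/v_{j,n}$ ensures that a factorisation $g_{i,j,n+1}=a_{j,n+1}/a_{i,n+1}$ can be chosen with $a_{i,n+1}^\ell=a_{i,n}v_{i,n}$, the remaining indeterminacy (multiplication by roots of unity) being absorbed by simultaneous choice of compatible $\ell$-th roots along the finitely many pieces of the patron. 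After the $g_{i,j,n}$ are trivialised in this way, the resulting global sequence $(u_n,v_n)$ lies in $A_{\ell,\infty}(Y)$ and maps to the original class, completing the proof.
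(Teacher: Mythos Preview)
Your surjectivity argument has a genuine gap. First a minor point: in the factorisation $g=c\,T_{a,s}^k u_+ u_-$ of lemme~\ref{basic32}, the roles are reversed from what you state. The factor $u_+\in 1+T_{a,s}\O_C[[T_{a,s}]]$ (together with $c\,T_{a,s}^k$) extends to the \emph{jambe} $Y_a$, since positive powers of $T_{a,s}$ live in $\O(Y_a)$; it is $u_-\in 1+T_{a,s}^{-1}{\goth m}_C\langle T_{a,s}^{-1}\rangle$ that extends to a unit in $\O(Y_s)^{\dual\dual}$, because $T_{a,s}^{-1}$ corresponds to a regular function $z\in\O^+(Y_s)$ with a pole at $P_a$ on the compactification.

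The essential problem is that, even with this corrected, the local factorisation does not trivialise the cocycle $(g_{i,j,n})$ globally. The lift of $u_-$ to $\O(Y_s)^{\dual\dual}$ is far from unique, and its restriction to the \emph{other} phantom circles of $Y_s$ is uncontrolled; these cross-terms land back in factors of type $1+T\O_C[[T]]$ that must be reabsorbed on the jambe side. When $\Gamma$ has a cycle (or even when a short has several adjacent jambes), this back-and-forth does not close up. One way to see the obstruction: modulo ${\goth m}_C$, a generic $u_+\in 1+T_{a,s}k_C[[T_{a,s}]]$ is transcendental over $k_C(T_{a,s})$, whereas every element in the image of $C(Y_s)^\dual$ at the phantom circle reduces into the function field $k_C(\bar Y_s)$. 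So $u_+$ can never be absorbed by a meromorphic function on $Y_s$; it must go to the jambe side, where it then pollutes the other end.

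This is precisely why the paper's proof takes a different, global route: it interprets the $(g_{i,j,n})$ as transition functions for a line bundle on the compactification $X$ (the local factorisation of lemme~\ref{basic32} is used here, but only to pass from adoc to rigid transition data --- see the footnote in the proof), invokes GAGA to make the bundle algebraic, and then takes a global rational section. That section is what furnishes the $a_{i,n}$ (denoted $g_{i,n}$ there); its existence is a genuinely global fact about proper algebraic curves that your local recursive construction cannot replace.
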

\begin{proof}
Compactifions $Y$ en recollant des disques $Y_a$ le long des cercles fant\^omes
$Y_{a,s}$, pour $a\in A\moins A_c$ (auquel cas $S(a)$ n'a qu'un \'el\'ement $s=s(a)$),
via $T_{s,a}$.
On note $X$ la courbe ainsi obtenue (si $Y$ est propre, alors $X=Y$, bien s\^ur); 
c'est l'analytifi\'ee d'une courbe alg\'ebrique propre
d'apr\`es GAGA rigide.

Posons $g_{i,j,n}=1$ si $(i,j)\in I_2\moins I_{2,c}$.
Les $g_{i,j,n}$, pour $(i,j)\in I_2$, d\'efinissent un fibr\'e en droites ${\cal F}$
sur $X$ vu comme espace adique, et par le principe GaGa,
cela fournit\footnote{${\cal F}$ admet des sections
globales $e_i$ sur $Y_i$, pour $i\in I'$, et on a $e_a=g_{a,s,n}e_s$ sur~$Y_{a,s}$. Si $U_{a,s}$
est un ouvert de $Y_s$ sur lequel $T_{a,s}$ est holomorphe, il r\'esulte du lemme~\ref{basic32}
que l'on peut factoriser $g_{a,s,n}$ sous la forme $u_{a,s}v_{a,s}$, avec $u_{a,s}\in\O(U_{a,s})^\dual$
et $v_{a,s}\in\O(Y_a)^\dual$. Il s'ensuit que ${\cal F}$ admet une section globale
$e_{a,s}$ sur $V_{a,s}=U_{a,s}\sqcup Y_a$ (recoll\'es le long de $Y_{a,s}$), o\`u
$e_{a,s}=u_{a,s}^{-1}e_s$ sur $U_{a,s}$ et $e_{a,s}=v_{a,s}e_a$ sur $Y_a$).
Les $V_{a,s}$ formant un recouvrement de $X$ par des ouverts rigides, cela d\'efinit
un fibr\'e en droites sur $X$ vu comme vari\'et\'e rigide.}
un fibr\'e en droites sur $X$ vu comme vari\'et\'e rigide
et donc, gr\^ace \`a GAGA rigide, un fibr\'e en droites sur la courbe alg\'ebrique
sous-jacente.

${\cal F}$ \'etant alg\'ebrique sur $X$,
il poss\`ede une section globale m\'eromorphe
sur $X$ tout entier et donc, a fortiori, sur $Y$.
Autrement dit, on peut trouver des $g_{i,n}\in C(Y^{\rm gen}_i)^\dual$, pour $i\in I$, tels que
$\frac{g_{i,n}}{g_{j,n}}=g_{i,j,n}$. 
Les $\frac{u_{i,n}}{g_{i,n}^{\ell^n}}$ se recollent alors pour fabriquer $u_n\in A_{\ell,n}(Y)$,
et on a
$u_{n+1}=u_n\big(\frac{v_{i,n}g_{i,n}}{g_{i,n+1}^\ell}\big)^{p^n}$ sur $Y_i$.
Or $\frac{v_{i,n}g_{i,n}}{g_{i,n+1}^\ell}=\frac{v_{j,n}g_{j,n}}{g_{j,n+1}^\ell}$ sur $Y_{i,j}$
puisque $g_{i,j,n+1}^\ell=g_{i,j,n}\frac{v_{i,n}}{v_{j,n}}$, et donc les $\frac{v_{i,n}g_{i,n}}{g_{i,n+1}^\ell}$ se recollent en $v_n\in C(Y)^\dual$.
Il s'ensuit que $(u_n,v_n)_n\in A_{\ell,\infty}(Y)$.

Les $g_{i,n}$ sont uniques \`a multiplication simultan\'ee pr\`es par $g_n\in C(Y)^\dual$.
Il s'ensuit que l'image de $(u_n)_{n\in\N}$ dans ${\rm Symb}_\ell(Y)$ ne d\'epend
pas du choix des $g_{i,n}$, ce qui fournit une
fl\`eche naturelle ${\rm Symb}_\ell(Y_S)\to {\rm Symb}_\ell(Y)$.

Cette fl\`eche est surjective car $(u_n,v_n)_{n\in\N}\in A_{\ell,\infty}(Y)$
est, bien \'evidemment, l'image de 
$(u_{i,n},v_{i,n})_{i\in I,n\in\N}, (g_{i,j,n})_{(i,j)\in I_{2,c},n\in\N}$, 
o\`u $u_{i,n}$ et $v_{i,n}$ sont les restrictions de $u_n$ et $v_n$ \`a $Y_i$
et $g_{i,j,n}=1$.

Elle est injective car, si l'image est triviale, $u_n=a_n^{\ell^n}$ et $v_n=a_{n+1}^\ell/a_n$,
et donc $u_{i,n}=(a_ng_{i,n})^{\ell^n}$, $v_{i,n}=(a_{n+1}g_{i,n+1})^\ell/(a_ng_{i,n})$ 
et $g_{i,j,n}=
(a_ng_{i,n})/(a_ng_{j,n})$, ce qui montre que notre symbole localis\'e est trivial. 

Ceci permet de conclure.
\end{proof}

\begin{prop}\label{basic14}
On a une suite exacte
$$0\to H^1(\Gamma,\Z_\ell(1))\to {\rm Symb}_\ell(Y_S)\to {\rm Ker}
\big[\prod_{i\in I}{\rm Symb}_\ell(Y^{\rm gen}_i)\to
\hskip-.2cm\prod_{(i,j)\in I_{2,c}}\hskip-.3cm{\rm Symb}_\ell(Y^{\rm gen}_{i,j}) \big]\to 0.$$
\end{prop}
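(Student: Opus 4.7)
The plan is to view the definition of ${\rm Symb}_\ell(Y_S)$ as a \v{C}ech-type bicomplex built from the covering $(Y_i)_{i\in I}$ and the gluing cercles fant\^omes $(Y_{i,j})_{(i,j)\in I_{2,c}}$, and to read off the exact sequence by comparing the symbol conditions on the pieces with the cocycle conditions on the bipartite graph $\Gamma^{(2)}\simeq\Gamma$. Concretely, the natural restriction maps $(u_{i,n},v_{i,n},g_{i,j,n})\mapsto ((u_{i,n},v_{i,n}))_i\mapsto ({u_{i,n}}_{|Y_{i,j}})-({u_{j,n}}_{|Y_{i,j}})$ form a complex since $u_{i,n}/u_{j,n}=g_{i,j,n}^{\ell^n}$ is killed in ${\rm Symb}_\ell(Y^{\rm gen}_{i,j})=\Z_\ell\wotimes\O(Y^{\rm gen}_{i,j})^\dual$.

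For the leftmost injection: send a compatible system $(\zeta_{i,j,n})_n$ with $\zeta_{i,j,n+1}^\ell=\zeta_{i,j,n}$ (i.e., a $1$-cochain on $\Gamma^{(2)}$ valued in $T_\ell\mu(C)=\Z_\ell(1)$) to the class of $(u_{i,n}=1,v_{i,n}=1,g_{i,j,n}=\zeta_{i,j,n})$; the trivial-symbol subgroup absorbs exactly the coboundaries coming from families $(\eta_{i,n})\in\mu_{\ell^n}^{I}$ satisfying $\eta_{i,n+1}^\ell=\eta_{i,n}$, so this is well-defined and injective, producing the cokernel $H^1(\Gamma^{(2)},\Z_\ell(1))=H^1(\Gamma,\Z_\ell(1))$.

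For exactness at the middle: if the image of $(u_{i,n},v_{i,n},g_{i,j,n})$ in $\prod_i{\rm Symb}_\ell(Y^{\rm gen}_i)$ vanishes, pick $a_{i,n}\in C(Y_i)^\dual$ with $u_{i,n}=a_{i,n}^{\ell^n}$ and $v_{i,n}=a_{i,n+1}^\ell/a_{i,n}$; translating by the trivial element $((a_{i,n})_{i,n})$ brings the symbol to the form $(1,1,g'_{i,j,n})$ with $(g'_{i,j,n})^{\ell^n}=1$, hence $g'_{i,j,n}=\zeta_{i,j,n}\in\mu_{\ell^n}(C)$ (using that $Y_{i,j}$ is the generic fibre of a cercle fant\^ome, so its ring is a domain and $\ell^n$-th roots of $1$ are constants). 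The relation $g_{i,j,n+1}^\ell=g_{i,j,n}v_{i,n}/v_{j,n}$ becomes $\zeta_{i,j,n+1}^\ell=\zeta_{i,j,n}$, and the ambiguity in $(a_{i,n})$ (which lies in $\mu_{\ell^n}(C)^I$) turns the $\zeta$'s into a well-defined class in $\varprojlim_n H^1(\Gamma^{(2)},\mu_{\ell^n})=H^1(\Gamma,\Z_\ell(1))$, proving the image equals the kernel of the quotient.

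For surjectivity onto the displayed kernel, which is the main technical point: start with $((u_{i,n},v_{i,n}))_i$ whose restrictions to every $Y^{\rm gen}_{i,j}$ are trivial. The triviality provides $g_{i,j,n}\in C(Y_{i,j})^\dual$, a priori only up to $\mu_{\ell^n}$, with $u_{i,n}/u_{j,n}=g_{i,j,n}^{\ell^n}$. A direct computation from $u_{i,n+1}=u_{i,n}v_{i,n}^{\ell^n}$ yields $(g_{i,j,n+1}^\ell)^{\ell^n}=(g_{i,j,n}v_{i,n}/v_{j,n})^{\ell^n}$, so $g_{i,j,n+1}^\ell=\xi_n\cdot g_{i,j,n}v_{i,n}/v_{j,n}$ with $\xi_n\in\mu_{\ell^n}(C)$. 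The hard part is now essentially bookkeeping: one inductively rectifies the choice by replacing $g_{i,j,n+1}$ with $\eta_{n+1}g_{i,j,n+1}$ for a suitable $\eta_{n+1}\in\mu_{\ell^{n+1}}(C)$, whose $\ell$-th power kills $\xi_n$; the compatibility condition can always be achieved and the resulting $(u_{i,n},v_{i,n},g_{i,j,n})$ defines a class in ${\rm Symb}_\ell(Y_S)$ that lifts the given element, completing the proof. The whole argument reduces the exactness to the purely combinatorial fact that $\Gamma^{(2)}$ and $\Gamma$ have the same $H^1$, together with the local computation that constants on a connected cercle fant\^ome are the only $\ell^n$-th roots of unity.
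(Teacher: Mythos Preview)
Your proof is correct and follows essentially the same approach as the paper: both arguments reduce the exactness to the graph-cohomology identification $H^1(\Gamma^{(2)},\Z_\ell(1))=H^1(\Gamma,\Z_\ell(1))$ by normalizing to $u_{i,n}=v_{i,n}=1$ for exactness in the middle, and both prove surjectivity by inductively adjusting the $\mu_{\ell^{n+1}}$-ambiguity in $g_{i,j,n+1}$ to force the compatibility $g_{i,j,n+1}^\ell=g_{i,j,n}\,v_{i,n}/v_{j,n}$. The paper's write-up is more terse but the underlying mechanism is identical.
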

\begin{proof}
Partons de symboles $(u_{i,n},v_{i,n})\in A_{\ell,\infty}(Y_i)$ se recollant sur $Y_{i,j}$.
Il existe donc $g_{i,j,n}$ tel que $u_{i,n}=g_{i,j,n}^{\ell^n}u_{j,n}$ sur $Y_{i,j}$,
et comme cela ne d\'efinit $g_{i,j,n}$ qu'\`a une racine $\ell^n$-i\`eme de l'unit\'e pr\`es,
on peut, par r\'ecurrence sur $n$, imposer que $g_{i,j,n+1}=g_{i,j,n}^\ell\frac{v_{i,n}}{v_{j,n}}$.
On en d\'eduit la surjectivit\'e \`a droite.

Pour l'exactitude au milieu, on part de $A_{\ell,\infty}(Y_S)$ tel que
$u_{i,n}=v_{i,n}=1$, pour tous $i\in I$ et $n\in\N$.  Alors les $(g_{i,j,n})_n$ d\'efinissent
un \'el\'ement $g_{i,j}$ de $\Z_\ell(1)$ et le r\'esultat appartient \`a
$A_{\ell,\infty}(Y_S)^{\rm triv}$ si et seulement si il existe des $a_i\in \Z_\ell(1)$,
pour $i\in I$, tels que $g_{i,j}=a_i-a_j$.  Le passage au quotient
nous donne, gr\^ace \`a la rem.\,\ref{change}, le $H^1(\Gamma,\Z_\ell(1))$ que l'on voulait.
\end{proof}

\subsubsection{Le cas $\ell\neq p$}
Si $\ell\neq p$, la prop.\,\ref{basic14}, combin\'ee
avec le lemme~\ref{basic14.1} et l'isomorphisme ${\rm Symb}_\ell(Y)\cong H^1_{\eet}(Y,\Z_\ell(1))$
du cor.\,\ref{basic12},
fournit une description concr\`ete de $H^1_{\eet}(Y,\Z_\ell(1))$.
Voir aussi \cite[5.2]{Duc} pour une approche un peu diff\'erente. 

\begin{theo}\label{ladique}
Si $\ell\neq p$, alors $H^1_{\eet}(Y,\Z_\ell(1))$
admet une filtration naturelle dont les quotients successifs sont:
$$H^1_{\eet}(Y,\Z_\ell(1))=\big[\xymatrix@C=.4cm{
H^1(\Gamma,\Z_\ell(1))\ar@{-}[r]& \prod_{s\in \Sigma(Y)}H^1_{\eet}(Y_s^{\rm sp},\Z_\ell(1))
\ar@{-}[r]& H^1_c(\Gamma,\Z_\ell)^\dual}\big].$$
\end{theo}
\begin{proof}
La prop.~\ref{basic14} fournit le premier cran de la filtration.
Le second cran est le noyau de l'application r\'esidu dont l'image est
$$
{\rm Ker}\big[\prod_{i\in I}{\rm Ker}\big[\dbar_i:\Z_\ell^{\partial^{\rm ad}Y_i}\to \Z_\ell^{\{i\}}\big]
\to \Z_\ell^{I_{2,c}}\big]$$
(prop.~\ref{short21} combin\'ee
avec l'isomorphisme $H^1_{\eet}(Y^{\rm sp}_s,\Z_\ell(1))\cong T_\ell {\rm Pic}(Y^{\rm sp}_s)=
T_\ell(J(Y^{\rm sp}_s))$ pour $s\in S$, et prop.~\ref{jambe11} pour $i\in A_c$).
On conclut en utilisant les rem.\,\ref{lapl1} et~\ref{change}, et en supprimant les
$s\in S\moins\Sigma(Y)$ puisque, pour un tel $s$, on a $Y_s^{\rm sp}=\piqp$
et donc $H^1_{\eet}(Y_s^{\rm sp},\Z_\ell(1))=0$.
\end{proof}

\begin{rema}\label{PL}
{\rm (i)}  
Comme $H^1_{\eet}(Y,\Q_\ell(1))=\Q_\ell\otimes_{\Z_\ell}H^1_{\eet}(Y,\Z_\ell(1))$,
la description ci-dessus de $H^1_{\eet}(Y,\Z_\ell(1))$ permet, gr\^ace \`a la rem.\,\ref{monod},
de d\'efinir, si $t\in\Z_\ell(1)$, un op\'erateur de monodromie
 $$tN:H^1_{\eet}(Y,\Q_\ell(1))\to H^1_{\eet}(Y,\Q_\ell(1)).$$ 

{\rm (ii)} Le choix de $r\mapsto p^r$ fournit une section de
la projection modulo~$H^1(\Gamma,\Q_\ell(1))$.
En effet, on peut imposer \`a $(u_{i,n},v_{i,n})_n\in A_{\ell,\infty}(Y_i)$ les
conditions suppl\'ementaires suivantes:

$\bullet$ si $i=a\in A_c$, alors $u_{i,n}=T_{a,s_1}^{k_{a,n}}$, o\`u $k_{a,n}$
a une limite $k_a\in\Z_\ell$,

$\bullet$ si $i=s\in S$, alors la restriction de $u_{s,n}$ \`a $Y_{a,s}$, pour $a\in A(s)$,
est de la forme $T_{a,s}^{k_{s,a,n}}u_{s,n}^0$, avec $u_{s,n}^0\in \O(Y_{a,s})^{\dual\dual}$.

(Pour $a\in A_c$, cela suit de la prop.~\ref{jambe11}; pour $s\in S$, cela r\'esulte de ce qu'on peut
multiplier $u_{s,n}$ par $f_{s,n}^{\ell^n}$, avec $f_{s,n}\in C(Y)^\dual$, ce qui permet
de modifier \`a loisir le coefficient dominant en un nombre fini de points.) 

Comme $x\in\O(Y_{a,s})^{\dual\dual}$ a une racine $\ell^n$-i\`eme
naturelle, \`a savoir $\sum_{k\geq 0}\binom{1/\ell^n}{k}(x-1)^k$,
il en est de m\^eme de
$\frac{u_{a,n}}{u_{s,n}}$,
si $a\in A_c$ et $s\in S(a)=\{s_1,s_2\}$: 

-- Si $s=s_1$, $\frac{u_{a,n}}{u_{s,n}}=T_{a,s}^{k_{a,n}-k_{a,s,n}}(u_{s,n}^0)^{-1}$
et
$k_{a,n}-k_{a,s,n}\in\ell^n\Z$.

-- Si $s=s_2$, $\frac{u_{a,n}}{u_{s,n}}=p^{k_{a,n}\mu(a)}
T_{a,s}^{-(k_{a,n}+k_{a,s,n})}(u_{s,n}^0)^{-1}$ et
$k_{a,n}+k_{a,s,n}\in\ell^n\Z$, 
et $p^{k_{a,n}\mu(a)}$ a comme racine $\ell^n$-i\`eme $p^{\ell^{-n}k_{a,n}\mu(a)}$.

Si on note $g_{a,s,n}$ la racine $\ell^n$-i\`eme naturelle de $\frac{u_{a,n}}{u_{s,n}}$,
on fabrique un scindage $s_{r\mapsto p^r}$ en envoyant la classe de
$(u_{i,n},v_{i,n})_{i,n}$ sur celle de $((u_{i,n},v_{i,n})_{i,n},(g_{i,j,n})_{i,j,n})$.

{\rm (iii)} Si on change $r\mapsto p^r$ en $r\mapsto \zeta(r)p^r$,
o\`u $\zeta$ est un morphisme de groupes de $\Q/\Z$ dans le groupe des racines de l'unit\'e,
cela multiplie $g_{a,s_2,n}$ par $\zeta(\ell^{-n} k_{a,n}\mu(a))$ sans modifier
$g_{a,s_1,n}$.  On en d\'eduit la formule {\og de Picard-Lefschetz\fg}
$$s_{r\mapsto \zeta(r)p^r}=s_{r\mapsto p^r}+t_\zeta N,$$
o\`u $t_\zeta=(\zeta(\ell^{-n}))_{n\in\N}\in\Z_\ell(1)$.
\end{rema}

\begin{rema}\label{PL2}
Si $Y$ est d\'efini sur $\O_K$, on peut utiliser ce qui pr\'ec\`ede pour d\'ecrire
l'action du sous-groupe d'inertie $I_K$ de $ G_K$ sur $H^1_{\eet}(Y,\Q_\ell(1))$, mais il vaut mieux
prendre les jambes $Y_a$ de la forme 
${\rm Spf}(\O_K[[Y_{a,s_1},Y_{a,s_2}]]/(Y_{a,s_1}Y_{a,s_2}-\pi^{e\mu(a)}))$,
o\`u $\pi$ est une uniformisante de $K$, $e$ l'indice de ramification absolu de $K$
(et $e\mu(a)$ est entier), et choisir un morphisme $r\mapsto \pi^r$ (de $\Z[\frac{1}{\ell}]$
dans $C^\dual$) plut\^ot
que $r\mapsto p^r$. 

Si $\sigma\in I_K$, il existe $\zeta_\ell:\Z[\frac{1}{\ell}]\to\mu_{\ell^n}$ tel
que l'on ait $\sigma(\pi^r)=\zeta_\sigma(r)\pi^r$,
et alors $t_\sigma=(\zeta_\sigma(\ell^{-n}))_n\in \Z_\ell(1)$ et $\sigma\mapsto t_\sigma$
est un $1$-cocycle sur $I_K$ \`a valeurs dans $\Z_\ell(1)$,
et on a $\sigma(c)=c+t_\sigma N(c)$ si $c\in H^1_{\eet}(Y,\Q_\ell(1))$.
Voir~\cite{Il} pour un point de vue diff\'erent.
\end{rema}

\Subsection{Cohomologie de de Rham et variantes}\label{BAS7}
On peut 
calculer les diverses cohomologies de $Y_S$ \`a la \v{C}ech, en utilisant le recouvrement
par les $Y_i$, pour $i\in I$. Ce calcul est simplifi\'e par
le fait que
les seules intersections non vides sont les $Y_{i,j}$, pour $(i,j)\in I_{2,c}$. 
Par exemple (prop.~\ref{tetrapil2}), 
les groupes $H^i_{\rm dR}(Y_S)$ de cohomologie de de Rham (logarithmique) de $Y_S$
sont les groupes
de cohomologie du complexe:
$$C_{\rm dR}^\bullet(Y_S):=
\big[\prod_{i\in I}\Omega^\bullet(Y_i)\longrightarrow \prod_{(i,j)\in I_{2,c}}\Omega^\bullet(Y_{i,j})\big].$$
Le complexe ci-dessus est le complexe naturel pour calculer la cohomologie
de de Rham mais, \`a $p^{2N(S)}$ pr\`es, on peut aussi utiliser
$$\overline C_{\rm dR}^\bullet(Y_S):=
\big[\prod_{i\in I}\Omega^\bullet(Y_i)\longrightarrow \prod_{(i,j)\in I_{2,c}}
\overline \Omega^\bullet(Y_{i,j})\big],$$
o\`u $\overline\Omega^\bullet(Y_{i,j})$ est le quotient de $\Omega^\bullet(Y_{i,j})$
d\'efini dans la discussion pr\'ec\'edant le lemme~\ref{jambe1} ci-dessous.

\subsubsection{Cohomologie de de Rham}\label{BAS9}

\smallskip
Soit $$ S_{\rm int}=S\moins \partial Y.$$

Si $s\in S$, on 
construit une 
compactification partielle $Y_s^\lozenge$
de $Y_s$ en recollant, via les $T_{a,s}$,
les disques ouverts $\{v_p(T_{a,s})>0\}$ le long des cercles fant\^omes
$Y_{a,s}$, pour $a\in A_c(s)$.
\begin{rema}
Si $s\in S_{\rm int}$, alors $Y_s^\lozenge$ est propre; si $s\in \partial Y $,
c'est un short.
\end{rema}

On note
$H^1_{\rm dR}(Y_s^\lozenge)_0$
l'ensemble des $\omega\in H^1_{\rm dR}(Y_s^\lozenge)$
tels que ${\rm Res}_a(\omega)=0$ pour tout $a\in A(s)\moins A_c(s)$.
(Si $s\in S_{\rm int}$, alors $H^1_{\rm dR}(Y_s^\lozenge)_0=H^1_{\rm dR}(Y_s^\lozenge)$,
mais si $s\in \partial Y $ et $|A(s)\moins A_c(s)|=r$,
alors $H^1_{\rm dR}(Y_s^\lozenge)_0$ est de corang~$r-1$ dans $H^1_{\rm dR}(Y_s^\lozenge)$.)

\begin{rema}
$Y_s^\lozenge$ d\'epend du choix des $T_{a,s}$, mais
$H^1_{\rm dR}(Y_s^\lozenge)$ (et donc aussi $H^1_{\rm dR}(Y_s^\lozenge)_0$)
n'en d\'epend pas
car il s'identifie \`a la cohomologie convergente de $Y^{\rm sp}_s$
si $s\in S_{\rm int}$, et \`a celle de l'ouvert compl\'ementaire
des points de multiplicit\'e $0^+$ si $s\in \partial Y $.
\end{rema}

\begin{prop}\label{basic16}
$H^1_{\rm dR}(Y_S)$ a une filtration dont les quotients
successifs sont, \`a $p^{N_1(S)}$ pr\`es,
$$H^1_{\rm dR}(Y_S)=\big[\xymatrix@C=.5cm{
H^1(\Gamma,\O_C)\ar@{-}[r]&
\prod_{s\in S} H^1_{\rm dR}(Y_s^\lozenge)_0\ar@{-}[r]&
H^1_c(\Gamma,\O_C)^\dual}\big].$$
\end{prop}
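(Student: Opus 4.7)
L'id\'ee est d'analyser la filtration naturelle sur $H^1$ du complexe de \v{C}ech $C^\bullet_{\rm dR}(Y_S)$ en utilisant la stratification $I=S\sqcup A_c$ du recouvrement, les trois pi\`eces de la filtration correspondant respectivement aux $1$-cocycles g\'eom\'etriques venant du nerf du recouvrement, \`a ceux ayant des r\'esidus nuls, et aux r\'esidus eux-m\^emes.

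Rappelons qu'un $1$-cocycle est un couple $((\omega_i)_{i\in I},(f_{i,j})_{(i,j)\in I_{2,c}})$ satisfaisant $df_{i,j}=\omega_j-\omega_i$ sur $Y_{i,j}$ et la condition de cocycle usuelle. Je d\'efinirais d'abord l'application r\'esidu
$$\mathrm{Res}\colon Z^1_{\rm dR}(Y_S)\longrightarrow \O_C^{A_c}\oplus\O_C^{A\moins A_c},$$
o\`u la composante en $a\in A_c$ est ${\rm Res}_{Y_{a,s_1(a)}}(\omega_{s_1(a)})$ (\'egale \`a $-{\rm Res}_{Y_{a,s_2(a)}}(\omega_{s_2(a)})$ par la condition de cocycle combin\'ee au th\'eor\`eme des r\'esidus sur la jambe $Y_a$), et la composante en $a\in A(s)\moins A_c(s)$ est la r\'esidu de $\omega_s$ au cercle fant\^ome $Y_{a,s}$. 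Le th\'eor\`eme des r\'esidus appliqu\'e \`a chaque short~$Y_s$ (apr\`es compactification propre) montre que l'image v\'erifie $\dbar=0$ et s'identifie donc \`a ${\rm Ker}(\dbar:\O_C^A\to\O_C^S)=H^1_c(\Gamma,\O_C)^\dual$ via la rem.\,\ref{lapl1}. Sa surjectivit\'e est \'el\'ementaire: on relie n'importe quelle donn\'ee de r\'esidus par des formes $c\frac{dT_{a,s}}{T_{a,s}}$.

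Soit $K$ le noyau de cette application r\'esidu. Pour $a\in A_c$, une forme sur $Y_a$ \`a r\'esidu nul admet une primitive holomorphe; en modifiant le cocycle par le bord de $(f_a)_a$ ad\'equatement choisi, on peut donc normaliser tout \'el\'ement de $K$ de sorte que $\omega_a=0$ pour tout $a\in A_c$. C'est pr\'ecis\'ement ici qu'intervient la perte $p^{N_1(S)}$: la primitive $f_a=\int\omega_a$ fait intervenir des diviseurs $n$ dans $\sum (a_{-n}/n)\,p^{-n\mu(a)} T_2^n$, et l'appartenance de $f_a$ \`a $\O(Y_a)$ (sans p\^ole au centre $T_1=T_2=0$) n\'ecessite $p^N\mu(a)\geq 1$ au niveau des coefficients $T_2^n$ via le lemme~\ref{jambe1}. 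Apr\`es cette normalisation (et apr\`es avoir utilis\'e les $f_{a,s}$ pour tuer les r\'esidus correspondants de $\omega_s$), chaque $\omega_s$ est une forme sur $Y_s$ dont les r\'esidus sur tous les cercles fant\^omes \`a sa fronti\`ere sont nuls, donc s'\'etend aux disques $\{v_p(T_{a,s})>0\}$, $a\in A_c(s)$, et d\'efinit ainsi une classe dans $H^1_{\rm dR}(Y_s^\lozenge)_0$. On obtient une application $K\to\prod_{s\in S}H^1_{\rm dR}(Y_s^\lozenge)_0$ dont je v\'erifierais la surjectivit\'e par recollement \'el\'ementaire (une classe sur chaque $Y_s^\lozenge$ se restreint en une forme sur $Y_s$, et les fonctions $f_{a,s}$ sont fournies par l'existence de primitives sur les phantom circles puisque les r\'esidus sont nuls).

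Enfin, le noyau de $K\to\prod_sH^1_{\rm dR}(Y_s^\lozenge)_0$ est calcul\'e ainsi: un tel cocycle peut, apr\`es modification par un bord, se mettre sous la forme $(\omega_i=0,\ f_{i,j}=c_{i,j})$ avec $c_{i,j}\in\O_C$, la condition de cocycle se traduisant en $c_{i,j}+c_{j,k}+c_{k,i}=0$; les bords r\'esiduels sont $(c_{i,j})=(c_j-c_i)$ avec $c_i\in\O_C$, et on obtient pr\'ecis\'ement $H^1$ du complexe $\O_C^I\to\O_C^{I_{2,c}}$, qui calcule $H^1(\Gamma,\O_C)$ d'apr\`es la rem.\,\ref{change}. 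La principale difficult\'e technique sera de contr\^oler soigneusement, \`a chaque \'etape de normalisation, le facteur $p^{N_1(S)}$ (qui appara\^{\i}t uniquement dans la primitivation sur les jambes via le lemme~\ref{jambe1} et la discussion qui le pr\'ec\`ede).
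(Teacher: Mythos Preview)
Il y a une vraie lacune \`a l'\'etape de normalisation $\omega_a=0$. Une forme $\omega_a\in\Omega^1(Y_a)$ de r\'esidu nul n'admet \emph{pas} en g\'en\'eral de primitive dans $\O(Y_a)$: la classe de $T_1^{p^k}\frac{dT_1}{T_1}$ dans $H^1_{\rm dR}(Y_a)$ est de $p^k$-torsion non nulle, et la tuer co\^ute un facteur $p^k$ avec $k$ non born\'e. Ta normalisation induit donc une perte en puissances de $p$ non born\'ee, et non $p^{N_1(S)}$. L'invocation du lemme~\ref{jambe1} est ici un contresens: ce lemme contr\^ole comment une moiti\'e de la jambe s'envoie vers le cercle fant\^ome \emph{oppos\'e}, pas la primitivation sur la jambe elle-m\^eme; la condition $p^N\mu(a)\geq 1$ ne borne en rien les d\'enominateurs~$n$ de ta formule.

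La preuve du papier contourne ceci en ne tuant \emph{pas} $\omega_a$. Elle travaille au niveau de la suite exacte longue du complexe de \v{C}ech (le $H^1(\Gamma,\O_C)$ appara\^{\i}t comme conoyau de $\prod_i H^0_{\rm dR}(Y_i)\to\prod_{(i,j)}H^0_{\rm dR}(Y_{i,j})$), puis analyse l'image dans $\prod_i H^1_{\rm dR}(Y_i)$. Le point cl\'e est la d\'ecomposition $H^1_{\rm dR}(Y_a)=H^1_{\rm dR}(B_{a,s_1})\oplus H^1_{\rm dR}(B_{a,s_2})\oplus\O_C\tfrac{dT_{a,s_1}}{T_{a,s_1}}$: les deux morceaux de boule ouverte (qui portent toute la torsion non born\'ee) ne sont pas tu\'es mais \emph{redistribu\'es} aux deux shorts adjacents. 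C'est alors le lemme~\ref{jambe1} qui montre que l'image de $H^1_{\rm dR}(B_{a,s_i})$ dans $H^1_{\rm dR}(Y_{a,s_{3-i}})$ est tu\'ee par $p^{N_1(S)}$, ce qui d\'ecouple le complexe $\prod_i H^1_{\rm dR}(Y_i)_0\to\prod_{(i,j)} H^1_{\rm dR}(Y_{i,j})_0$ en pi\`eces index\'ees par~$s\in S$, dont les noyaux sont les $H^1_{\rm dR}(Y_s^\lozenge)_0$.
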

\begin{proof}
On a une suite exacte
$$0\to H^0_{\rm dR}(Y_S)\to
\prod_{i\in I}H^0_{\rm dR}(Y_i)\to
\hskip-.3cm \prod_{(i,j)\in I_{2,c}}\hskip-.3cm H^0_{\rm dR}(Y_{i,j})
\to H^1_{\rm dR}(Y_S)\to
\prod_{i\in I}H^1_{\rm dR}(Y_i)\to
\hskip-.3cm \prod_{(i,j)\in I_{2,c}}\hskip-.3cm H^1_{\rm dR}(Y_{i,j}).$$
Le sous-groupe $H^1(\Gamma,\O_C)$ correspond
au conoyau
de $\prod_{i\in I}H^0_{\rm dR}(Y_i)\to
\prod_{(i,j)\in I_{2,c}}H^0_{\rm dR}(Y_{i,j})$.

Le quotient $H^1_c(\Gamma,\O_C)^\dual$ est fourni par
l'application r\'esidu comme dans la preuve du th.\,\ref{ladique}.

Si $a\in A_c$, on a 
$$\Omega^1(Y_a)=\Omega^1(B_{a,s_1})\oplus
\Omega^1(B_{a,s_2})\oplus\O_C\tfrac{dT_{a,s_1}}{T_{a,s_1}},$$ 
o\`u $B_{a,s_1}$
est la boule ouverte avec $\O(B_{a,s_i})=\O_C[[T_{a,s_i}]]$, si $i=1,2$.
Cette d\'ecomposition induit
une d\'ecomposition
$$H_{\rm dR}^1(Y_a)=H_{\rm dR}^1(B_{a,s_1})\oplus
H_{\rm dR}^1(B_{a,s_2})\oplus\O_C\tfrac{dT_{a,s_1}}{T_{a,s_1}}.$$
Maintenant, l'image de $H_{\rm dR}^1(B_{a,s_i})$ dans
$H^1_{\rm dR}(Y_{a,s_{3-i}})$ est tu\'ee par $p^{N_1(S)}$ (cf.~lemme~\ref{jambe1}).
Il s'ensuit que le complexe
$\prod_{i\in I}H^1_{\rm dR}(Y_i)_0\to \prod_{(i,j)\in I_{2,c}}H^1_{\rm dR}(Y_{i,j})_0$
se $p^{N_1(S)}$-d\'ecompose en
$$\prod_{s\in S}\Big[H^1_{\rm dR}(Y_s)_0\oplus\prod_{a\in A_c(s)}\hskip-.2cm H^1_{\rm dR}(B_{a,s})\to
\prod_{a\in A_c(s)}\hskip-.2cm H^1_{\rm dR}(Y_{a,s})_0\Big].$$
Or le noyau de
$H^1_{\rm dR}(Y_s)_0\oplus\prod_{a\in A_c(s)}H^1_{\rm dR}(B_{a,s})\to
\prod_{a\in A_c(s)}H^1_{\rm dR}(Y_{a,s})_0$ n'est autre que
$H^1_{\rm dR}(Y_s^\lozenge)_0$ comme on le voit en consid\'erant le recouvrement
de $Y_s^\lozenge$ form\'e de $Y_s$ et des $B_{a,s}$, pour $a\in A_c(s)$.
(On aurait aussi pu utiliser la prop.\,\ref{dege1} ci-dessous.)

On en d\'eduit le r\'esultat.
\end{proof}
\begin{rema}\label{basic16.1}
On a $H^1_{\rm dR}(Y)\stackrel{\sim}{\leftarrow} C\otimes_{\O_C}H^1_{\rm dR}(Y_S)$ pour tout $S$; en particulier
$C\otimes_{\O_C}H^1_{\rm dR}(Y_S)$ ne d\'epend pas de $S$.  Cela peut se voir directement:
si on raffine $S$ en $S'$, la fl\`eche naturelle $H^1_{\rm dR}(Y_S)\to H^1_{\rm dR}(Y_{S'})$
induit un isomorphisme 
$C\otimes_{\O_C} H^1_{\rm dR}(Y_S)\overset{\sim}{\to}C\otimes_{\O_C} H^1_{\rm dR}(Y_{S'})$ car:

$\bullet$ $\Gamma^{\rm ad}(S')$ se r\'etracte sur $\Gamma^{\rm ad}(S)$ et donc a m\^eme cohomologie.

$\bullet$ Les $Y_s^\lozenge$, pour $s\in S'\moins S$, sont des $\piqp$ et donc
$H^1_{\rm dR}(Y_s^\lozenge)_0=0$ si $s\in S'\moins S$.
\end{rema}
Apr\`es avoir \'elimin\'e les $\piqp$ superflus,
on obtient le corollaire ci-dessous.
\begin{coro}\label{basic16.2}
$H^1_{\rm dR}(Y)$ a une filtration dont les quotients
successifs sont
$$H^1_{\rm dR}(Y)=\big[\xymatrix@C=.5cm{
H^1(\Gamma,C)\ar@{-}[r]&
\prod_{s\in \Sigma(Y)} H^1_{\rm dR}(Y_s^\lozenge)_0\ar@{-}[r]&
H^1_c(\Gamma,C)^\dual}\big].$$
\end{coro}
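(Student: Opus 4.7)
The plan is to deduce this corollary directly from Proposition~\ref{basic16} by tensoring with $C$ over $\O_C$, which wipes out the $p^{N_1(S)}$-ambiguity, and then removing the contributions from superfluous $\piqp$-components. The two main ingredients are Remark~\ref{basic16.1}, which gives the identification $H^1_{\rm dR}(Y) \cong C \otimes_{\O_C} H^1_{\rm dR}(Y_S)$, and the flatness of $C$ over $\O_C$, which turns any $p^N$-exact sequence of $\O_C$-modules into a genuine exact sequence of $C$-vector spaces.

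Concretely, I would first apply Proposition~\ref{basic16} to the triangulation $S$ to get a filtration of $H^1_{\rm dR}(Y_S)$ with successive quotients $H^1(\Gamma,\O_C)$, $\prod_{s\in S}H^1_{\rm dR}(Y_s^\lozenge)_0$ and $H^1_c(\Gamma,\O_C)^\dual$, valid up to a factor of $p^{N_1(S)}$. Applying $C\otimes_{\O_C}(-)$ and invoking Remark~\ref{basic16.1} yields a three-step filtration on $H^1_{\rm dR}(Y)$ with successive $C$-vector space quotients $H^1(\Gamma,C)$, $\prod_{s\in S}H^1_{\rm dR}(Y_s^\lozenge)_0$ (viewed as $C$-vector spaces), and $H^1_c(\Gamma,C)^\dual$.

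It then remains to replace the product indexed by $S$ by one indexed by $\Sigma(Y)$. For $s\in S\setminus\Sigma(Y)$, the point $s$ is neither a boundary point nor a type-2 point of genus $\geq 1$, so by Remark~\ref{gam2} the component $Y_s^{\rm sp}$ is a $\piqp$. Since $s\in S_{\rm int}$, the curve $Y_s^\lozenge$ is proper of genus zero, hence isomorphic to $\piqp$, and $H^1_{\rm dR}(Y_s^\lozenge)_0=H^1_{\rm dR}(\piqp)=0$. The factors indexed by $S\setminus\Sigma(Y)$ therefore drop out, yielding the stated filtration.

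The statement asserts that the three pieces are intrinsic invariants of $Y$, so the only real subtlety is independence of $S$; but this is precisely what Remark~\ref{basic16.1} already verifies, via the same two inputs (the graph cohomologies are preserved under refinement because $\Gamma^{\rm ad}(S')$ retracts onto $\Gamma^{\rm ad}(S)$, and the added pieces contribute zero because they are $\piqp$'s). I do not anticipate any real obstacle beyond cleanly unwinding these identifications.
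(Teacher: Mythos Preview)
Your proposal is correct and matches the paper's own argument essentially line for line: tensor Proposition~\ref{basic16} with $C$ over $\O_C$ (which kills the $p^{N_1(S)}$-ambiguity), invoke Remark~\ref{basic16.1} for the identification $H^1_{\rm dR}(Y)\cong C\otimes_{\O_C}H^1_{\rm dR}(Y_S)$ and independence of $S$, and then drop the factors indexed by $s\in S\setminus\Sigma(Y)$ because those $Y_s^\lozenge$ are $\piqp$'s with vanishing $H^1_{\rm dR}$. The only minor point is that the reference to Remark~\ref{gam2} is slightly off for the claim that such $Y_s^{\rm sp}$ are $\piqp$'s; the relevant fact is that a non-noeud $s\in S$ lies on an edge of the analytic skeleton, hence has valence~$2$ in $\Gamma(S)$ and genus-$0$ residue curve, which is exactly what Remark~\ref{basic16.1} uses.
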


\begin{rema}
Si $Y$ est propre, tous les $Y_s^\lozenge$ sont propres et ont une cohomologie de dimension finie,
ce qui est en accord avec le fait que $H^1_{\rm dR}(Y)$ est de dimension finie.
Si $Y$ est un affino\"ide, alors $H^1_{\rm dR}(Y)$ est de dimension infinie;
c'est d\^u au fait que $\partial Y $ est non vide et que, si $s\in \partial Y $, alors
$Y_s^\lozenge$ est un $\O_C$-short et donc sa cohomologie est de dimension infinie.
\end{rema}

\subsubsection{D\'eg\'enerescence de courbes}
On note $Y_S^\infty$ la courbe obtenue en sp\'ecialisant en $(0)_{a\in A_c}$ la famille
de courbes du \no\ref{familles} dans le cas $R=\O_C[[T_a,\ a\in A_c]]$,
ce qui remplace $Y_a$
par la r\'eunion $Y^\infty_a$ de deux disques attach\'es en un point $P_a$.
Alors $Y_S^\infty$ est aussi la courbe obtenue en recollant les $Y_s^{\lozenge}$ en les $P_a$:
i.e.~on identifie le point $T_{a,s_1}=0$ de $Y_{s_1}^\lozenge$
avec le point $T_{a,s_2}=0$ de $Y_{s_2}^\lozenge$, pour tout $a\in A_c$.
\begin{prop}\label{dege1}
Les complexes $\overline C_{\rm dR}^\bullet(Y_S^\infty)$ et
$\overline C_{\rm dR}^\bullet(Y_S)$ sont naturellement $p^{N_1(S)}$-quasi-isomorphes, et la cohomologie
de de Rham {\rm(}logarithmique{\rm)} de $Y$ est donc naturellement isomorphe \`a celle de $Y^\infty$.
\end{prop}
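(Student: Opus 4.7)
Pour tout $s\in S$, on pose $\phi_s:\Omega^\bullet(Y_s)\to\Omega^\bullet(Y_s)$ \'egale \`a l'identit\'e (car les shorts sont identiques pour $Y_S$ et $Y_S^\infty$); pour tout $a\in A_c$, on pose $\phi_a:\Omega^\bullet(Y_a^\infty)\to\Omega^\bullet(Y_a)$ l'isomorphisme canonique qui, \`a l'\'ecriture unique $a_0+\sum_{n\geq 1}a_nT_1^n+\sum_{n\geq 1}b_nT_2^n$ (et sa version pour les formes diff\'erentielles), associe le m\^eme d\'eveloppement, vu cette fois dans $\O(Y_a)$ ou $\Omega^1(Y_a)$ (noter que cette identification est bien un isomorphisme de complexes). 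Pour $(a,s)\in I_{2,c}$, on prend l'identit\'e sur $\overline\Omega^\bullet(Y_{a,s})$, qui est intrins\`eque au cercle fant\^ome et ne voit pas la diff\'erence entre $Y_S$ et $Y_S^\infty$.

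Ces donn\'ees d\'efinissent une collection de fl\`eches $\phi:\overline C_{\rm dR}^\bullet(Y_S^\infty)\to \overline C_{\rm dR}^\bullet(Y_S)$ composante par composante, chaque composante \'etant un isomorphisme de complexes. Le seul point \`a v\'erifier est que $\phi$ commute aux diff\'erentielles de \v{C}ech, ce qui revient \`a contr\^oler, pour chaque $(a,s)\in I_{2,c}$, la diff\'erence entre les deux chemins possibles $\Omega^\bullet(Y_a^\infty)\to \overline\Omega^\bullet(Y_{a,s})$: d'une part la restriction directe dans $Y_S^\infty$, d'autre part la compos\'ee $\Omega^\bullet(Y_a^\infty)\xrightarrow{\phi_a}\Omega^\bullet(Y_a)\to\overline\Omega^\bullet(Y_{a,s})$. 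C'est pr\'ecis\'ement ce que mesure le lemme~\ref{jambe1}: ces deux fl\`eches co\"{\i}ncident modulo $p^{N_1(S)-1}$ (c'est d'ailleurs pour r\'ecup\'erer cette commutativit\'e que l'on a introduit le quotient $\overline\Omega^\bullet$ au~\no\ref{jambe13.2} plut\^ot que $\Omega^\bullet$).

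Ainsi $\phi$ est un morphisme de complexes modulo $p^{N_1(S)}$, au sens o\`u il existe un op\'erateur $h$ (divisible par $p^{N_1(S)-1}$) entre les composantes de \v{C}ech appropri\'ees tel que $d_{Y_S}\circ\phi-\phi\circ d_{Y_S^\infty}=p^{N_1(S)-1}\cdot h$. Comme chaque composante de $\phi$ est un isomorphisme (en degr\'e~$0$ comme en degr\'e~$1$ de la \v{C}ech), un argument standard de suite spectrale (ou de cobord \`a la \v{C}ech) montre que les noyaux et conoyaux des morphismes induits en cohomologie sont tu\'es par $p^{N_1(S)}$: autrement dit, $\phi$ est un $p^{N_1(S)}$-quasi-isomorphisme.

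Pour la seconde assertion, on rappelle (rem.\,\ref{basic16.1}) que $H^i_{\rm dR}(Y)=C\otimes_{\O_C}H^i_{\rm dR}(Y_S)$, et de m\^eme pour $Y^\infty$ et $Y_S^\infty$. Comme $p$ est inversible dans $C$, tout $p^N$-quasi-isomorphisme devient un vrai isomorphisme apr\`es tensorisation par $C$, et la fonctorialit\'e de $\phi$ assure que l'isomorphisme obtenu $H^1_{\rm dR}(Y^\infty)\overset{\sim}{\to}H^1_{\rm dR}(Y)$ est canonique. L'obstacle principal est la v\'erification pr\'ecise de la compatibilit\'e avec les diff\'erentielles de \v{C}ech, mais elle se ram\`ene directement au lemme~\ref{jambe1}; le passage du niveau des cocycles au niveau de la cohomologie ne pose alors pas de difficult\'e conceptuelle.
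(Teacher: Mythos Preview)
Votre approche est correcte et co\"{\i}ncide avec celle de l'article: on identifie terme \`a terme les complexes (identit\'e sur les shorts et les cercles fant\^omes, isomorphisme $\O_C$-lin\'eaire $T_{a,s_i}^k\mapsto T_{a,s_i}^k$ sur les jambes), et le lemme~\ref{jambe1} contr\^ole le d\'efaut de commutativit\'e avec les restrictions $\Omega^\bullet(Y_a)\to\overline\Omega^\bullet(Y_{a,s})$. L'article est simplement plus laconique et ne d\'etaille pas le passage du $p^{N_1(S)-1}$-d\'efaut de commutation au $p^{N_1(S)}$-quasi-isomorphisme (votre remarque sur la suite spectrale est pertinente mais l'exposant exact m\'eriterait d'\^etre pr\'ecis\'e si l'on voulait une borne optimale).
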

\begin{proof}
Les groupes intervenant dans les complexes $C_{\rm dR}^\bullet(Y_S^\infty)$ 
et $C_{\rm dR}^\bullet(Y_S)$ sont naturellement isomorphes: $\O(Z^\infty)=\O(Z)$ et
$\Omega^1(Z^\infty)=\Omega^1(Z)$,
si $Z=Y_s,Y_{a,s}$ et, si $Z=Y_a$, 
$$\O(Y^\infty_a)=\O_{C}[[T_{a,s_1}T_{a,s_2}]]/(T_{a,s_1}T_{a,s_2}),\quad
\O(Y_a)=\O_C[[T_{a,s_1}T_{a,s_2}]]/(T_{a,s_1}T_{a,s_2}-p^{\mu(a)}),$$
et on dispose d'un isomorphisme 
$\O_{C}$-lin\'eaire\footnote{ 
{\it Ce n'est pas un morphisme d'anneaux} puisque $T_{a,s_1}T_{a,s_2}=0$
dans $\O(Y^\infty_a)$ et $T_{a,s_1}T_{a,s_2}=\tilde p^{\mu(a)}$
dans $\O(Y_a)$.}
qui envoie $T_{a,s_i}^k$ sur $T_{a,s_i}^k$, si $k\geq 0$;
l'isomorphisme correspondant $\Omega^1(Y^\infty_a)\overset{\sim}{\to}\Omega^1(Y_a)$
envoie $T_{a,s_i}^k\frac{dT_{a,s_i}}{T_{a,s_i}}$ sur $T_{a,s_i}^k\frac{dT_{a,s_i}}{T_{a,s_i}}$ 
si $k\geq 0$ (pour $k=0$ les deux valeurs obtenues co\"{\i}ncident puisque
$\frac{dT_{a,s_1}}{T_{a,s_1}}+\frac{dT_{a,s_2}}{T_{a,s_2}}=0$).

Le lemme~\ref{jambe1} permet de montrer que ceci d\'efinit un $p^{N_1(S)}$-quasi-isomorphisme
$\overline C_{\rm dR}^\bullet(Y_S^\infty)\to \overline C_{\rm dR}^\bullet(Y_S)$,
ce qui permet de conclure.
\end{proof}

\subsubsection{Cohomologie de de Rham s\'epar\'ee}\label{BAS10}
Le groupe $H^1_{\rm dR}(Y_S)$ peut avoir de la torsion d'exposant non born\'e
(c'est effectivement le cas si $Y$ est un affino\"{\i}de); on note $H^1_{\rm dR}(Y_S)^{\rm sep}$
son quotient par l'adh\'erence $H^1_{\rm dR}(Y_S)_{\rm tors}$
du sous-groupe de torsion: on a $H^1_{\rm dR}(Y)=C\otimes_{\O_C}
H^1_{\rm dR}(Y_S)$ et $C\otimes_{\O_C}H^1_{\rm dR}(Y_S)^{\rm sep}$ est le s\'epar\'e de
$H^1_{\rm dR}(Y)$
(i.e.~son quotient par l'adh\'erence de $0$).

\begin{exem}\label{constr21}
{\rm (i)} Si $Y$ est propre, $H^1_{\rm dR}(Y_S)$ est sans torsion
et $H^1_{\rm dR}(Y_S)^{\rm sep}=H^1_{\rm dR}(Y_S)$.

{\rm (ii)} Si $Y$ est la fibre g\'en\'erique d'un 
$\O_C$-short $Y_S$, alors $H^1_{\rm dR}(Y_S)^{\rm sep}$ est le $\O_C$-module
$M_1^\sharp$ de la prop.~\ref{basic9}; il est de rang fini, mais la torsion
de $H^1_{\rm dR}(Y_S)$ est d'exposant non born\'e.
\end{exem}

On note $\Gamma_{\rm int}$ le sous-graphe de $\Gamma$ dont les sommets sont $S_{\rm int}$
et les ar\^etes $A_{\rm int}$, i.e. l'ensemble des $a\in A$ telles que 
$S(a)\subset S_{\rm int}$; alors $\Gamma_{\rm int}$ est un graphe compact.

\begin{theo}\label{Aff1}
{\rm (i)}
Le groupe
$H^1_{\rm dR}(Y_S)^{\rm sep}$ est de rang fini
et admet une filtration naturelle dont les quotients successifs, \`a $p^{N_1(S)}$ pr\`es,
sont:
$$H^1_{\rm dR}(Y_S)^{\rm sep}=\big[\xymatrix@C=.3cm{
H^1(\Gamma_{\rm int},\O_C)\ar@{-}[r]&\prod_{s\in S}H^1_{\rm dR}(Y_s^\lozenge)_0^{\rm sep}
\ar@{-}[r]& H^1_c(\Gamma,\O_C)^\dual}\big].$$

{\rm (ii)}
Le groupe
$H^1_{\rm dR}(Y)^{\rm sep}$ est de dimension finie
et admet une filtration naturelle dont les quotients successifs
sont:
$$H^1_{\rm dR}(Y)^{\rm sep}=\big[\xymatrix@C=.3cm{
H^1(\Gamma_{\rm int},C)\ar@{-}[r]&\prod_{s\in \Sigma(Y)}(C\otimes_{\O_C}H^1_{\rm dR}(Y_s^\lozenge)_0^{\rm sep})
\ar@{-}[r]& H^1_c(\Gamma,C)^\dual}\big].$$ 
\end{theo}
\begin{proof}
Le (ii) se d\'eduit du (i) en inversant $p$ et en \'eliminant les $\piqp$ superflus
(cf.~rem.\,\ref{basic16.1}).

Prouvons le (i).
Comme le quotient $H^1_c(\Gamma,\O_C)^\dual$
de $H^1_{\rm dR}(Y_S)$ est s\'epar\'e, on a une suite
exacte
$$0\to H^1_{\rm dR}(Y)_0^{\rm sep}\to H^1_{\rm dR}(Y_S)^{\rm sep}\to
H^1_c(\Gamma,\O_C)^\dual\to 0.$$
Maintenant, si on note $M$ l'intersection
de $H^1(\Gamma,\O_C)$ et de l'adh\'erence de
$(H^1_{\rm dR}(Y_S)_0)^{\rm tors}$, 
une petite chasse au diagramme fournit la suite exacte:
$$0\to H^1(\Gamma,\O_C)/M\to H^1_{\rm dR}(Y_S)_0^{\rm sep}
\to \prod_s H^1_{\rm dR}(Y_s^\lozenge)_0^{\rm sep}\to 0.$$
Comme les $H^1_{\rm dR}(Y_s^\lozenge)_0^{\rm sep}$ sont de rang fini
(c'est clair si $Y_s^\lozenge$ est propre, et si $Y_s^\lozenge$ est un short,
cela r\'esulte de la prop.\,\ref{basic9.11}),
on en d\'eduit que $H^1_{\rm dR}(Y_S)^{\rm sep}$ est
de rang fini.

Pour terminer la preuve du th\'eor\`eme, il ne reste donc plus qu'\`a d\'eterminer $M$,
ce qui fait l'objet du lemme~\ref{basic18} ci-dessous.
\end{proof}

\begin{lemm}\label{basic18}
On a $${\rm Ker}\big[H^1(\Gamma,\O_C)\to H^1_{\rm dR}(Y_S)^{\rm sep}\big]
={\rm Ker}\big[H^1(\Gamma,\O_C)\to H^1(\Gamma_{\rm int},\O_C)\big].$$
\end{lemm}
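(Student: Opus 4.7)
The plan is to prove the two inclusions separately, with the $\supset$ direction being the substantial one.

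For $\supset$, suppose $[c]\in H^1(\Gamma,\O_C)$ has trivial image in $H^1(\Gamma_{\rm int},\O_C)$; I will show $[c]$ lies in the closure of the torsion subgroup of $H^1_{\rm dR}(Y_S)$. By the combinatorial description $H^1(\Gamma,\O_C)={\rm Coker}(\partial:\O_C^S\to\O_C^{A_c})$, triviality on $\Gamma_{\rm int}$ allows me to choose constants $\lambda_s\in\O_C$ for $s\in S_{\rm int}$ such that the representing cocycle $(c_a)_{a\in A_c}$ satisfies $c_a=\lambda_{s_2(a)}-\lambda_{s_1(a)}$ for every $a\in A_{c,{\rm int}}$. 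Extending by $\lambda_s=0$ for $s\in\partial Y$ and subtracting the corresponding coboundary, I reduce to a cocycle supported only on edges adjacent to $\partial Y$, hence to a \v{C}ech 1-cocycle $(0,g_{i,j})$ in $C_{\rm dR}^\bullet(Y_S)$ concentrated on intersections $Y_{i,j}$ with $i$ or $j$ in $\partial Y$.

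The key technical step is then: for each $s\in\partial Y$, each tuple $(\mu_a)_{a\in A(s)}\in\O_C^{A(s)}$ and each $N\geq 0$, there exists $f_{s,N}\in\O(Y_s)$ whose restriction to $Y_{s,a}$ has constant term $\equiv\mu_a\pmod{p^N}$ for every $a\in A(s)$. This uses that $Y_s$ is obtained from a smooth proper curve $\overline{Y}_s$ by removing finitely many open disks (cf.~prop.\,\ref{Pconstr9}), so $\O(Y_s^{\rm gen})$ contains, via Riemann--Roch on $\overline{Y}_s$, meromorphic functions with arbitrary principal parts at the removed points; the map sending such a function to its tuple of constant terms on boundary cercles is $C$-linearly surjective onto $C^{|A(s)|}$, and integrality modulo $p^N$ is then achieved by scaling. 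Combining the $f_{s,N}$ with the $\lambda_s$ on $S_{\rm int}$ and suitable constants on jambes, the coboundary of $(f_{i,N})_i$ approximates $(0,g_{i,j})$ modulo $p^N$, placing $[c]$ in $p^N H^1_{\rm dR}(Y_S)^{\rm sep}$ for every $N$. Since $H^1_{\rm dR}(Y_S)^{\rm sep}$ is of finite rank over $\O_C$ and $p$-adically complete, $\bigcap_N p^N H^1_{\rm dR}(Y_S)^{\rm sep}=0$, giving $[c]=0$.

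For $\subset$, I construct continuous $\O_C$-valued period functionals on $H^1_{\rm dR}(Y_S)$ associated to cycles in $\Gamma_{\rm int}$. Given an oriented cycle $\gamma$ in the compact graph $\Gamma_{\rm int}$ traversing vertices $i_0,i_1,\dots,i_n=i_0$ of $\Gamma^{(2)}$, I choose paths $\alpha_k$ in each $Y_{i_k}$ joining chosen points $P_{k-1},P_k$ of the successive intersections $Y_{i_{k-1},i_k}$ and $Y_{i_k,i_{k+1}}$, and set $\int_\gamma(\omega_i,g_{i,j})=\sum_k(\int_{\alpha_k}\omega_{i_k}+g_{i_k,i_{k+1}}(P_k))$. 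Compactness of $\gamma$ in $\Gamma_{\rm int}$ guarantees convergence, and a direct check of independence from the choices and of vanishing on coboundaries shows this defines a continuous functional $\int_\gamma:H^1_{\rm dR}(Y_S)\to\O_C$; continuity forces it to factor through $H^1_{\rm dR}(Y_S)^{\rm sep}$. On the image of $H^1(\Gamma,\O_C)$, represented by cocycles of the form $(0,g_{i,j})$ with $g_{i,j}$ locally constant, the period $\int_\gamma$ reduces to the natural pairing of $H^1(\Gamma,\O_C)$ with $\gamma\in H_1(\Gamma_{\rm int},\Z)$, and depends only on the restriction of $[c]$ to $\Gamma_{\rm int}$. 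Since $H^1(\Gamma_{\rm int},\O_C)$ is free of rank $b_1(\Gamma_{\rm int})$ and the pairing with $H_1(\Gamma_{\rm int},\Z)$ is perfect over $\O_C$, a class $[c]$ with non-trivial restriction to $\Gamma_{\rm int}$ has non-zero period along some cycle, contradicting vanishing in $H^1_{\rm dR}(Y_S)^{\rm sep}$.

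The main obstacle is the key technical step for $\supset$: proving surjectivity of the constant-term map $\O(Y_s)\to\O_C^{A(s)}$ modulo $p^N$ when $s\in\partial Y$ has several boundary cercles, since the required linear independence in $C^{|A(s)|}$ is genuinely global on $\overline{Y}_s$ and rests on an explicit Riemann--Roch bound combined with the fact that no residue-type obstruction appears for constant terms (in contrast to residues, for which $\sum{\rm Res}=0$ on a compact curve).
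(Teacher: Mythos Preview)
Your argument has genuine gaps in both directions.

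\textbf{The $\supset$ direction.} Controlling only the constant term of $f_{s,N}|_{Y_{s,a}}$ is not enough. The coboundary of $(f_{i,N})_i$ contributes $f_{s,N}|_{Y_{a,s}}-f_{a,N}|_{Y_{a,s}}$ as a \emph{function} on the cercle fant\^ome, and for this to approximate a constant you need the entire Laurent expansion of $f_{s,N}|_{Y_{a,s}}$ to be close to a constant modulo $p^N$, not just the $T^0$-coefficient. You also need $df_{s,N}\equiv 0\pmod{p^N}$, which your Riemann--Roch construction does not address: a function with prescribed values (or principal parts) at finitely many points will in general have $df$ of size~$1$. The paper solves both problems at once with the construction
\[
\phi_n=p^n\log\bigl(1+(\alpha^{1/p^n}-1)\phi\bigr),
\]
where $\phi\in\O(Y_s^\lozenge)$ satisfies $\phi(P_a)=1$ and $\phi(P_b)=0$ for $b\ne a$ (available because $k_C\otimes Y_s^\lozenge$ is affine when $s\in\partial Y$). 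Since $\alpha^{1/p^n}-1$ is small, $d\phi_n$ is divisible by $p^n$, and a direct valuation estimate (the paper's Lemma~\ref{basic18.1}) shows $\phi_n|_{Y_{b,t}}$ is close to $\log\alpha\cdot\delta_{b=a}$ \emph{as a function}, not merely in its constant term. This logarithm trick is the missing idea.

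\textbf{The $\subset$ direction.} Your period functional $\int_\gamma$ is not well-defined: a cercle fant\^ome $Y_{i_k,i_{k+1}}$ has no $C$-points, so the $P_k$ do not exist, and even granting some substitute, the path integral $\int_{\alpha_k}\omega_{i_k}$ has no intrinsic meaning in $p$-adic geometry when $\omega_{i_k}$ is not exact (Coleman integration would require choices incompatible with your claimed continuity in $\O_C$). The paper avoids this entirely: it degenerates the jambes in $A_{\rm bord}$ to singular jambes, producing a curve $Y'_S$ whose interior piece $Y'_{S,{\rm int}}$ is \emph{proper}, so that $H^1_{\rm dR}(Y'_{S,{\rm int}})$ is separated. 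The restriction map $H^1_{\rm dR}(Y_S)_0\to H^1_{\rm dR}(Y'_{S,{\rm int}})_0$ then automatically factors through the separated quotient, and on $H^1(\Gamma,\O_C)$ it reads off as restriction to $\Gamma_{\rm int}$.
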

\begin{proof}
On note $A_{\rm ext}$ l'ensemble des $a\in A$ telles
que $S(a)\subset \partial Y $ et
$A_{\rm bord}=A\moins(A_{\rm ext}\cup A_{\rm int})$ l'ensemble
des $a$ dont une extr\'emit\'es appartient \`a $\partial Y $ et l'autre
\`a $A_{\rm int}$ (on note $s_1=s_1(a)$ celle appartenant
\`a $A_{\rm int}$ et $s_2=s_2(a)$ celle appartenant
\`a $\partial Y $).

On note $Y_S'$ la courbe obtenue en rempla\c{c}ant $p^{\mu(a)}$ par $0$
si $a\in A_{\rm bord}$ dans le patron de $Y_S$.
Cela remplace les $Y_a$, pour $a\in A_{\rm bord}$,
par la r\'eunion $Y'_a$ de deux disques $B_{a,s_1}, B_{a,s_2}$ attach\'es en un point $P_a$.
On note
$Y'_{S,{\rm int}}$ la r\'eunion des $Y_s$ pour $s\in S_{\rm int}$, des $Y_a$ pour
$a\in A_{\rm int}$, et des $B_{a,s_1}$ pour $a\in A_{\rm bord}$;
c'est un mod\`ele semi-stable d'une courbe propre.

Les m\^emes arguments que pour la preuve de la prop.~\ref{dege1} montrent que les
complexes $C_{\rm dR}^\bullet(Y_S)_0$ et $C_{\rm dR}^\bullet(Y_S')_0$
sont $p^{N_1(S)}$-quasi-isomorphes,
et qu'on peut calculer $H^1_{\rm dR}(Y_S)_0$ 
(\`a $p^{N_1(S)}$ pr\`es) en utilisant $C_{\rm dR}^\bullet(Y_S')_0$.

La restriction fournit une fl\`eche naturelle
$H^1_{\rm dR}(Y_S')_0\to H^1_{\rm dR}(Y'_{S,{\rm int}})_0$.
Comme $Y'_{S,{\rm int}}$ est propre, $H^1_{\rm dR}(Y'_{S,{\rm int}})_0$ est
s\'epar\'e et la fl\`eche ci-dessus 
se factorise \`a travers $H^1_{\rm dR}(Y_S')_0^{\rm sep}$.
De plus, cette fl\`eche induit, par restriction,
la fl\`eche naturelle
$H^1(\Gamma,\O_C)\to H^1(\Gamma_{\rm int},\O_C)$, et comme cette derni\`ere fl\`eche
est surjective, 
on en d\'eduit que le membre de gauche dans l'\'enonc\'e du lemme
est inclus dans le membre de droite.

Pour prouver l'\'egalit\'e,
il s'agit donc de prouver que ${\rm Ker}\big[H^1(\Gamma,\O_C)\to H^1(\Gamma_{\rm int},\O_C)\big]$
est dans l'adh\'erence de $0$ dans $H^1_{\rm dR}(Y_S)$.  Or ce noyau est engendr\'e
par des classes de la forme $e_{a,s}=((\omega_i)_i,(g_{i,j})_{i,j})$, 
o\`u $S(a)=\{s,s'\}$ et $s\in \partial Y $, avec $\omega_i=0$ pour tout~$i$,
$g_{a,s'}=1$ et $g_{i,j}=0$ si $(i,j)\neq (a,s')$.

Comme $k_C\otimes Y_s^\lozenge$ est affine puisque $s\in \partial Y $,
on peut trouver $\phi\in\O(Y_s^\lozenge)$
telle que $\phi(P_a)=1$ et $\phi(P_{b})=0$ si $b\in A(s)\moins\{a\}$.

Soit $\alpha\in 1+p\O_C$.  Si $n\in\N$, alors $\phi_n=p^n\log(1+(\alpha^{1/p^n}-1)\phi)\in
\O(Y_s^\lozenge)$.  On peut donc consid\'erer le bord $c_n$
de $(g_{i,n})_{i\in I}$, o\`u
$g_{i,n}=0$ si $i\notin\{s\}\sqcup A(s)$
et $g_{i,n}=\phi_n$ si $i\in\{s\}\sqcup A(s)$.
Le lemme suivant montre que $(\log\alpha)e_{a,s}$ est dans l'adh\'erence de $0$
dans $H^1_{\rm dR}(Y_S)$, ce qui permet de conclure.
\end{proof}
\begin{lemm}\label{basic18.1}
Il existe $N\in\N$ tel que, si $n\geq N$,
alors $c_n-(\log\alpha)e_{a,s}$ est divisible par $p^{n-N}$.
\end{lemm}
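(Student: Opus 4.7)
The plan is to analyze the components of $c_n - (\log\alpha)\,e_{a,s}$ in the \v{C}ech--de Rham complex $\overline C^\bullet_{\rm dR}(Y_S)$ piece by piece, and verify a uniform $p^{n-N}$-divisibility. Set $\varepsilon' := \alpha^{1/p^n}-1$ and $\gamma_n := v_p(\varepsilon')$; since $\alpha \in 1+p\,\O_C$, one has $\gamma_n > 0$ (though $\gamma_n \to 0$ as $n \to \infty$), so the series defining $\phi_n = p^n\log(1+\varepsilon'\phi)$ converges in $\O(Y_s^\lozenge)$. The identity $p^n\log(1+\varepsilon') = \log\alpha$ together with the Taylor expansion $\log(1+\varepsilon') = \varepsilon' + O((\varepsilon')^2)$ gives $p^n\varepsilon' - \log\alpha \in p^{n-N_0}\,\O_C$ for some small $N_0 \geq 0$ (one may take $N_0 = v_p(2)$), which controls the expected leading behaviour $p^n\varepsilon' \to \log\alpha$.

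First I would dispatch the differential components of $c_n$. The formula $d\phi_n = p^n\cdot\frac{\varepsilon'\,d\phi}{1+\varepsilon'\phi}$, together with the observation that $1+\varepsilon'\phi$ is a unit in $\O(Y_s^\lozenge)$ (its difference with $1$ being topologically nilpotent since $\gamma_n > 0$), shows that $d\phi_n \in p^n\,\Omega^1(Y_s^\lozenge)$. Restriction to $Y_s$ and power-series extension in $T_{a',s}$ to each crown $Y_{a'}$ (for $a' \in A(s)$) preserves this $p^n$-divisibility, disposing of all $\omega$-components of $c_n - (\log\alpha)\,e_{a,s}$ since $(\log\alpha)\,e_{a,s}$ has zero differential part.

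Next I would treat the \v{C}ech components on edges other than $(a,s')$. For an edge $(a',s)$ with $a' \in A(s)$, the component of $c_n$ vanishes identically because $g_{s,n}$ and $g_{a',n}$ agree on the phantom circle $Y_{a',s}$ by construction. For an edge $(a',s'')$ with $a' \in A_c(s)$, $s'' \neq s$ and $a' \neq a$, the hypothesis $\phi(P_{a'}) = 0$ yields $\phi = T_{a',s}\psi_{a'}$ on $B_{a',s}$; transporting via $T_{a',s}T_{a',s''} = p^{\mu(a')}$ places $\phi|_{Y_{a',s''}}$ in $p^{\mu(a')}\,\O(Y_{a',s''})$, so the factor $p^n$ coming from $\phi_n = p^n\log(1+\varepsilon'\phi)$ already forces $\phi_n|_{Y_{a',s''}} \in p^n\,\O(Y_{a',s''})$.

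The delicate piece, and the main obstacle, is the edge $(a,s')$, where one must produce exactly the value $\log\alpha$ up to a $p^{n-N}$-small error. I would exploit the factorization
\[
1+\varepsilon'\phi \;=\; \alpha^{1/p^n}\bigl(1+(1-\alpha^{-1/p^n})u\bigr),
\]
valid on $B_{a,s}$ where $\phi = 1+u$ with $u = T_{a,s}\psi_a$, since both sides equal $\alpha^{1/p^n} + \varepsilon'u$. Taking $p^n\log$ yields
\[
\phi_n|_{Y_{a,s'}} \;=\; \log\alpha \;+\; p^n\log\bigl(1+(1-\alpha^{-1/p^n})u\bigr)\big|_{Y_{a,s'}}.
\]
As $u|_{Y_{a,s'}} \in p^{\mu(a)}\,\O(Y_{a,s'})$ by the same change of coordinates, and $v_p(1-\alpha^{-1/p^n}) = \gamma_n > 0$, the argument of this log is topologically nilpotent and the error lies in $p^{n+\mu(a)}\,\O(Y_{a,s'}) \subseteq p^n\,\O(Y_{a,s'})$. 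Setting $N := \max(N_0,1)$ then assembles the local $p^n$-estimates into the required uniform $p^{n-N}$-divisibility of $c_n - (\log\alpha)\,e_{a,s}$. The subtlety to avoid is the naive estimate $\phi_n \approx (\log\alpha)\,\phi$, which would yield only $p^1$-divisibility since $\log\alpha \in p\,\O_C$; it is the multiplicative factorization on $B_{a,s}$ that isolates the $\log\alpha$ cleanly and concentrates the entire remainder into factors of $p^n$.
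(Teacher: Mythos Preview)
Your approach is essentially the same as the paper's: you correctly identify the factorization
\[
\frac{1+\varepsilon'\phi}{\alpha^{1/p^n}} = 1 + (1-\alpha^{-1/p^n})u
\]
on $B_{a,s}$ (where $\phi - 1 = u$ vanishes at $P_a$), and use $\phi(P_{a'})=0$ for the other edges. This is exactly what the paper does.

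There is, however, a small over-optimism in your log estimates. You assert that $\log(1+\varepsilon'\phi)|_{Y_{a',s''}}$ and $\log\bigl(1+(1-\alpha^{-1/p^n})u\bigr)|_{Y_{a,s'}}$ are integral, from which you derive $p^n$-divisibility on the nose and propose $N=\max(N_0,1)$. But if $x \in p^{\mu(b)}\O$ with $\mu(b) < \tfrac{1}{p-1}$, then $\log(1+x)$ need not lie in $\O$: one has $v_p(x^k/k) = k\mu(b) - v_p(k)$, whose infimum over $k\ge 1$ may be negative. This is why the paper writes ``il existe $N(a)$ (ne d\'ependant que de $\mu(a)$) tel que $v_{Y_{b,t}}(\phi_n-\log\alpha)\ge n-N(a)$'' and similarly $v_{Y_{b,t}}(\phi_n)\ge n-N(b)$ for $b\ne a$, rather than $\ge n$. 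The fix is immediate: enlarge your $N$ to absorb the finitely many constants $N(b)$ for $b\in A_c(s)$. With that adjustment your argument is complete and matches the paper's.
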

\begin{proof}
$dg_{i,n}$ est divisible par $p^n$ et,
sur $Y_{i,j}$, on a $g_{i,n}-g_{j,n}=0$ sauf si $(i,j)=(b,t)$, avec $b\in A(s)$
et $t$ est l'autre extr\'emit\'e de $b$, o\`u $g_{t,n}=0$, mais $g_{b,n}=\phi_n\neq 0$: 

\quad $\bullet$ Si $b=a$, comme $\phi(P_a)=1$, on a 
$v_{Y_{b,t}}(\frac{1+(\alpha^{1/p^n}-1)\phi}{\alpha^{1/p^n}}-1)\geq \mu(a)$, et il existe
$N(a)$ (ne d\'ependant que $\mu(a)$) tel que
$v_{Y_{b,t}}(\phi_n-\log\alpha)\geq n-N(a)$.

\quad $\bullet$ Si $b\in A(s)\moins\{ a\}$, comme $\phi(P_b)=0$, 
on a $v_{Y_{b,t}}((1+(\alpha^{1/p^n}-1)\phi)-1)\geq \mu(b)$, et donc
$v_{Y_{b,t}}(\phi_n)\geq n-N(b)$.

Ceci permet de conclure.
\end{proof}

\Subsubsection{Cohomologie cristalline}\label{BAS11}
$\bullet$ Si $s\in S$, on choisit un mod\`ele $\breve Y_s$ de $Y_s$ sur $\O_{\breve C}$
et un frobenius $\varphi$ sur $\O(\breve Y_s)$.
On pose $\O(\widetilde Y_s): =\acris\wotimes_{\O_{\breve C}}\O(\breve Y_s)$;
on a bien \'evidemment 
$$\O(\breve Y_s)=\O_{\breve C}\wotimes_{\acris}\O(\widetilde Y_s)
\quad{\rm et}\quad
\O(Y_s)=\O_{C}\wotimes_{\acris}\O(\widetilde Y_s).$$
Soit $r(S)=p^{-N(S)}$, de telle sorte que
$T_{a,s}\in \O(\breve Y_s)+p^{r(S)}\O(Y_s)$, pour tout $(a,s)\in I_{2,c}$.
On peut donc \'ecrire $T_{a,s}$, sur $Y_s$, sous la forme
$$T_{a,s}=T_{a,s,0}+p^{r(S)}T'_{a,s},\quad{\text{avec $T_{a,s,0}\in\O(\breve Y_s)$
et $T'_{a,s}\in \O(Y_s)$.}}$$
On choisit un rel\`evement $\tilde T'_{a,s}$ de $T'_{a,s}$ dans $\O(\widetilde Y_s)$,
et on pose
$$\tilde T_{a,s}=T_{a,s,0}+\tilde p^{r(S)}\tilde T'_{a,s}\in \O(\widetilde Y_s).$$

$\bullet$ Si $a\in A_c$, 
on pose 
$$\O(\widetilde Y_a):=\acris[[T_{a,s_1}, T_{a,s_2}]]/(T_{a,s_1}T_{a,s_2}-\tilde p^{\mu(a)}).$$
On a 
$$
\O(Y_a)=\O_{C}\wotimes_{\acris}\O(\widetilde Y_a).$$

$\bullet$ Si $s\in S(a)$, 
on pose $\O(\widetilde Y_{a,s}): =\acris[[T_{a,s},T_{a,s}^{-1}\rangle$.
Si $h\geq 0$,
on note $\O(\widetilde Y_{a,s})^{{\rm PD}_h}$ le compl\'et\'e $p$-adique
de l'enveloppe \`a puissances divis\'ees logarithmiques partielles, de niveau $h$,
du noyau de $\O(\widetilde Y_{a})\wotimes_{\acris}\O(\widetilde Y_s)\to
\O(Y_{a,s})$: 
si $U_{a,s}=\frac{T_{a,s}\otimes1}{1\otimes \tilde T_{a,s}}$, on a
un isomorphisme
$$\O(\widetilde Y_{a,s})^{{\rm PD}_h}
\cong\O(\widetilde Y_{a,s})[\tfrac{(U_{a,s}-1)^k}{[k/p^h]!},\,k\in\N]^{p-\wedge}.$$
Si $h\geq N(S)$, on peut remplacer $U_{a,s}$ par $U'_{a,s}=\frac{T_{a,s}\otimes1}{1\otimes T_{a,s,0}}$
dans l'isomorphisme ci-dessus car $\tilde p^{r(S)}$ 
a des puissances divis\'ees partielles de niveau $h$.

\vskip.1cm
$\bullet$ Si $Z=Y_a,Y_s,Y_{a,s}$, on note 
$\Omega^j(\widetilde Z)$  le $\acris$-module $\Omega^j_{\O(\widetilde Z)/\acris}$,
et $\Omega^j(\widetilde Y_{a,s})^{\rm PD}$  
le $\acris$-module $\Omega^j_{\O(\widetilde Y_{a,s})^{\rm PD}/\acris}$.
On a bien s\^ur 
$\Omega^j(\widetilde Z)=0$ si $j\geq 2$ (resp.~$j\geq 3$) et $Z=Y_a,Y_s$
(resp.~$Z=Y_{a,s}$).
Le lemme de Poincar\'e nous donne le r\'esultat suivant:
\begin{lemm}\label{basic19}
Si $h\geq 0$, le complexe de de Rham 
$\big [\O(\widetilde Y_{a,s})\to \Omega^1(\widetilde Y_{a,s})\big]$
est $p^h$-quasi-isomorphe au complexe
$\big[\O(\widetilde Y_{a,s})^{{\rm PD}_h}\to 
\Omega^1(\widetilde Y_{a,s})^{{\rm PD}_h}_{d=0}\big]$.
\end{lemm}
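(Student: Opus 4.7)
The plan is to establish Lemma~\ref{basic19} as a partial-divided-power variant of the Poincaré lemma. Using the isomorphism recalled just above the statement, we may identify $\O(\widetilde Y_{a,s})^{{\rm PD}_h}$ with $\O(\widetilde Y_{a,s})[V^k/[k/p^h]!,\ k\in\N]^{p-\wedge}$, where $V=U_{a,s}-1$ is a formal variable equipped with partial divided powers of level $h$ (the basis elements being $V^n/\lfloor n/p^h\rfloor!$). In particular $\Omega^1(\widetilde Y_{a,s})^{{\rm PD}_h}$ splits as $\O(\widetilde Y_{a,s})^{{\rm PD}_h}\cdot dT_{a,s}/T_{a,s}\oplus\O(\widetilde Y_{a,s})^{{\rm PD}_h}\cdot dV$, and the de~Rham complex of $\O(\widetilde Y_{a,s})^{{\rm PD}_h}$ over $\acris$ has a non-trivial degree~$2$ piece generated by $dT_{a,s}\wedge dV$. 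The strategy is in two steps: (1) show that the full three-term de Rham complex of $\O(\widetilde Y_{a,s})^{{\rm PD}_h}$ is $p^h$-quasi-isomorphic, via a contracting homotopy in the $V$-direction, to the two-term complex $[\O(\widetilde Y_{a,s})\to\Omega^1(\widetilde Y_{a,s})]$; (2) pass from this to the truncated complex $[\O(\widetilde Y_{a,s})^{{\rm PD}_h}\to\Omega^1(\widetilde Y_{a,s})^{{\rm PD}_h}_{d=0}]$.

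For step~(1), I would introduce the augmentation $\epsilon:\O(\widetilde Y_{a,s})^{{\rm PD}_h}\to\O(\widetilde Y_{a,s})$ sending $V\mapsto 0$ (equivalently $U_{a,s}\mapsto 1$), which splits the obvious inclusion of complexes in the reverse direction. I would then define a homotopy $s$ of degree $-1$ on $\Omega^\bullet(\widetilde Y_{a,s})^{{\rm PD}_h}$ by \emph{formal integration in $V$}: writing any form as
$\omega+\eta\wedge dV$ with $\omega,\eta$ expressed in the basis $V^n/\lfloor n/p^h\rfloor!$ over $\Omega^\bullet(\widetilde Y_{a,s})$, one sets $s(\omega+\eta\wedge dV)=\sum_n \eta_n\cdot V^{n+1}/((n+1)\lfloor n/p^h\rfloor!)$. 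The standard verification yields $ds+sd=1-\epsilon$, but now the coefficients $1/((n+1)\lfloor n/p^h\rfloor!)$ need not lie in $\Z_p$; comparing with the basis $V^{n+1}/\lfloor (n+1)/p^h\rfloor!$ shows that the denominator ratio is $(n+1)\lfloor n/p^h\rfloor!/\lfloor (n+1)/p^h\rfloor!$, which is a $p$-adic unit times either $1$ (when $p^h\nmid n+1$) or $p^h$ (when $p^h\mid n+1$). Hence $p^h\cdot s$ is well-defined on the partial-PD algebra and $p^h(ds+sd)=p^h(1-\epsilon)$, giving the $p^h$-quasi-isomorphism.

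For step~(2), I would observe that the natural map
\[
[\O(\widetilde Y_{a,s})\to\Omega^1(\widetilde Y_{a,s})]\longrightarrow
[\O(\widetilde Y_{a,s})^{{\rm PD}_h}\to\Omega^1(\widetilde Y_{a,s})^{{\rm PD}_h}_{d=0}]
\]
is well-defined (any $f\cdot dT_{a,s}/T_{a,s}$ with $f\in\O(\widetilde Y_{a,s})$ is automatically closed, as its exterior derivative only involves $dT_{a,s}\wedge dT_{a,s}=0$). One then compares cohomology: $H^0$ of both sides equals $\acris$ (the $d$-kernel in degree $0$), while $H^1$ of the truncated target equals $\Omega^1(\widetilde Y_{a,s})^{{\rm PD}_h}_{d=0}/d\O(\widetilde Y_{a,s})^{{\rm PD}_h}$, which is precisely $H^1$ of the \emph{full} three-term de Rham complex of $\O(\widetilde Y_{a,s})^{{\rm PD}_h}$. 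Step~(1) identifies this, up to $p^h$, with $H^1$ of $[\O(\widetilde Y_{a,s})\to\Omega^1(\widetilde Y_{a,s})]$, concluding the proof.

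\textbf{Expected difficulty.} The main technical obstacle is the combinatorial book-keeping on the denominators $[k/p^h]!=\lfloor k/p^h\rfloor!$, in order to verify that the contracting homotopy $s$ preserves the partial-PD algebra up to the explicit factor $p^h$. The Poincaré lemma in the full-PD or polynomial settings is entirely formal, but the passage to partial divided powers of level $h$ is precisely what introduces the $p^h$ loss, and this is where careful attention to when $p^h\mid n+1$ is needed. Once this key identity is in place, the remaining reduction from the three-term to the two-term truncated complex is straightforward.
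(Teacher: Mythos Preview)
Your approach is correct and is precisely the Poincar\'e lemma argument that the paper invokes without proof (the paper only writes ``Le lemme de Poincar\'e nous donne le r\'esultat suivant'' before the statement). One small correction to your denominator bookkeeping: writing $n+1=p^a m$ with $(m,p)=1$, the ratio $(n+1)\lfloor n/p^h\rfloor!/\lfloor(n+1)/p^h\rfloor!$ has $p$-adic valuation equal to $\min(a,h)$, not just $0$ or $h$ --- when $0<a<h$ the two floors coincide and the ratio is $n+1$, of valuation $a$. This does not affect your conclusion, since in all cases the valuation is at most $h$, so $p^h\cdot s$ lands in the ${\rm PD}_h$-algebra and yields the desired $p^h$-homotopy.
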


Notons que l'on peut munir $\O(\widetilde Y_s)$ et $\O(\widetilde Y_a)$
d'endomorphismes de Frobenius ($\O(\widetilde Y_s)$ par extension des
scalaires \`a partir de $\O(\breve Y_s)$ qui est lisse sur $\O_{\breve C}$,
et $\O(\widetilde Y_a)$, en envoyant $T_i$ sur $T_i^p$); cela munit
aussi $\O(\widetilde Y_{a,s})^{{\rm PD}_h}$ du frobenius produit tensoriel.

On d\'efinit les $H^i_{\rm dR}(\widetilde Y_S)$, pour $i\leq 2$, comme les groupes
de cohomologie du complexe\footnote{On \'ecrit ${\rm PD}$ pour ${\rm PD}_0$.} 
$$C_{\rm dR}^\bullet(\widetilde Y_S):=
\big[\xymatrix@C=.6cm{
\prod\limits_{i\in I}\O(\widetilde Y_i)
\ar[r]& \prod\limits_{i\in I}\Omega^1(\widetilde Y_i)
\oplus\prod\limits_{(i,j)\in I_{2,c}}\O(\widetilde Y_{i,j})^{\rm PD}\ar[r]&
\prod\limits_{(i,j)\in I_{2,c}}\Omega^1(\widetilde Y_{i,j})^{\rm PD}_{d=0}}\big],$$
la premi\`ere fl\`eche \'etant 
$((f_i)_i)\mapsto ((df_i)_i,(f_i\otimes 1-1\otimes f_j)_{i,j})$,
et la seconde \'etant
$((\omega_i)_i,(f_{i,j})_{i,j})\mapsto (\omega_i\otimes 1-1\otimes\omega_j-df_{i,j})_{i,j}$.
Notons que $\varphi$ commute aux fl\`eches ci-dessus et donc
les $H^i_{\rm dR}(\widetilde Y_S)$ sont munis d'une action de $\varphi$.
\begin{rema}
(i) 
Les techniques cristallines \`a base de lemme de Poincar\'e permettent de montrer
que les $H^i_{\rm dR}(\widetilde Y_S)$, ainsi que l'action de $\varphi$,
ne d\'ependent pas des choix faits: nous laissons au lecteur le soin de v\'erifier que
les $H^i_{\rm dR}(\widetilde Y_S)$ sont les groupes
de cohomologie cristalline logarithmique absolue (i.e.~sur $\Z_p$) de
$(\O_C/p)\otimes Y_S$.

(ii) Si on remplace ${\rm PD}$ par ${\rm PD}_h$ dans la d\'efinition ci-dessus, on 
obtient un complexe $p^h$-quasi-isomorphe d'apr\`es le lemme~\ref{basic19}.

(iii) On pourrait remplacer $\acris$ par $\ainf$ dans les d\'efinitions des $\O(\widetilde Z)$
et utiliser le cas $R=\ainf$ du \no\ref{familles} pour produire un sch\'ema formel
$\widetilde Y_S$ ayant $Y_S$ et $\breve Y_S$ comme sp\'ecialisations.  Malheureusement,
on ne peut pas munir $\widetilde Y_S$ d'un frobenius global, et son existence
n'aide pas pour munir la cohomologie d'une action de $\varphi$.
\end{rema} 
Le complexe
$C_{\rm dR}^\bullet(\widetilde Y_S)$ est le cylindre
$$\big[\prod_{i\in I}\Omega^\bullet(\widetilde Y_i)\to
\prod_{(i,j)\in I_{2,c}}\Omega^{\tau\leq 1}(\widetilde Y_{i,j})\big].$$
Pour faire les calculs, on peut remplacer $C_{\rm dR}^\bullet(\widetilde Y_S)$
par le complexe quasi-isomorphe
$$\overline C_{\rm dR}^\bullet(\widetilde Y_S):=
\big[\prod_{i\in I}\Omega^\bullet(\widetilde Y_i)\to
\prod_{(i,j)\in I_{2,c}}\overline\Omega^{\tau\leq 1}(\widetilde Y_{i,j})\big],$$
o\`u $\overline\Omega^{\tau\leq 1}(\widetilde Y_{i,j})$ est le complexe
d\'efini pour le lemme~\ref{jambe2}.

\subsubsection{Cohomologie de Hyodo-Kato}\label{BAS12}
On d\'efinit les $H^i_{\rm dR}(\breve Y_S)$, pour $i\leq 2$, comme les groupes
de cohomologie du complexe $C_{\rm dR}^\bullet(\breve Y_S)=
\O_{\breve C}\otimes_{\acris}C_{\rm dR}^{\bullet}(\widetilde Y_S)$.
Comme $\theta_0:\acris\to\O_{\breve C}$ commute \`a $\varphi$,
le complexe $C_{\rm dR}^\bullet(\breve Y_S)$ est muni d'une action
de $\varphi$ et donc les $H^i_{\rm dR}(\breve Y_S)$ aussi.

On cherche \`a exprimer les $H^i_{\rm dR}(\widetilde Y_S)$
en termes des $H^i_{\rm dR}(\breve Y_S)$. Pour ce faire, on remplace ${\rm PD}$
par ${\rm PD}_h$ dans la d\'efinition de $C_{\rm dR}^{\bullet}(\widetilde Y_S)$,
ce qui produit un complexe $p^{N(S)}$-isomorphe, si $h= N(S)$.
 
$\bullet$ Si $i\in I$ ou si $(i,j)\in I_{2,c}$, on pose
$$\O(\breve Y_i)=\O_{\breve C}\wotimes_{\acris}\O(\widetilde Y_i),\quad
\O(\breve Y_{i,j})^{{\rm PD}_h}=\O_{\breve C}\wotimes_{\acris}\O(\widetilde Y_{i,j})^{{\rm PD}_h}$$
(Il n'y a pas de conflit de notations si $i\in S$.)
On dispose de
plongements naturels
$$\iota_i:\O(\breve Y_i)\to\O(\widetilde Y_i),
\quad
\iota_{i,j}:\O(\breve Y_{i,j})^{{\rm PD}_h}\to \O(\widetilde Y_{i,j})^{{\rm PD}_h}$$
\hskip.6cm $\bullet$ si $i\in S$, 
ce plongement est l'inclusion naturelle
et est donc un morphisme d'anneaux qui commute \`a $\varphi$.

\hskip.2cm $\bullet$ Si $(i,j)=(a,s)\in I_{2,c}$, 
ce plongement est l'inclusion 
$$\O(\breve{Y}_{a,s})[\tfrac{(U'_{a,s}-1)^k}{[k/p^h]!},\ k\in\N]^{p-\wedge}\hookrightarrow
\O(\widetilde{Y}_{a,s})[\tfrac{(U'_{a,s}-1)^k}{[k/p^h]!},\ k\in\N]^{p-\wedge}$$
(on a $\O(\breve{Y}_{a,s})=\O_{\breve C}[[T_{a,s},T_{a,s}^{-1}\rangle$);
c'est un morphisme d'anneaux qui commute \`a~$\varphi$.

\hskip.2cm $\bullet$ Si $i=a\in A$, on a
$$\O(\breve Y_a)=\O_{\breve C}[[T_{a,s_1}, T_{a,s_2}]]/(T_{a,s_1}T_{a,s_2}),\quad
\O(\widetilde Y_a)=\acris[[T_{a,s_1}, T_{a,s_2}]]/(T_{a,s_1}T_{a,s_2}-\tilde p^{\mu(a)}),$$
et $\iota_a$ est l'application $\O_{\breve C}$-lin\'eaire qui envoie $T_{a,s_i}^k$ sur $T_{a,s_i}^k$.
{\it Ce n'est pas un morphisme d'anneaux} puisque $T_{a,s_1}T_{a,s_2}=0$
dans $\O(\breve Y_a)$ et $T_{a,s_1}T_{a,s_2}=\tilde p^{\mu(a)}$
dans $\O(\widetilde Y_a)$, mais $\iota_a$ commute \`a $\varphi$.

\begin{lemm}\label{BAS12.1}
La
fl\`eche 
$\iota:\overline C_{\rm dR}^\bullet(\breve Y_S)\to
\overline C_{\rm dR}^\bullet(\widetilde Y_S)$, 
induite par les $\iota_s,\iota_a,\iota_{a,s}$, 
est un $p^{N(S)}$-morphisme de complexes qui commute \`a $\varphi$.  
\end{lemm}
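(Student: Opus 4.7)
The plan is to check that $\iota$ respects both the horizontal differentials (i.e.\ de Rham differentials on each factor $\O(\breve Y_i)$, $\O(\breve Y_{i,j})^{{\rm PD}_h}$) and the vertical maps (the \v{C}ech restrictions from $\O(\breve Y_i)$ into $\O(\breve Y_{i,j})^{{\rm PD}_h}$), and then to observe $\varphi$-equivariance separately. Since the assembled complex $\overline C_{\rm dR}^\bullet$ is the total complex of this \v Cech-de Rham bicomplex, verifying these two compatibilities (up to $p^{N(S)}$) will give that $\iota$ is a $p^{N(S)}$-morphism of complexes.

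First I would dispose of the easy cases. The maps $\iota_s$ and $\iota_{a,s}$ are the obvious inclusions of rings $\O(\breve Y_s)\hookrightarrow \acris\wotimes_{\O_{\breve C}}\O(\breve Y_s)=\O(\widetilde Y_s)$ and $\O(\breve Y_{a,s})^{{\rm PD}_h}\hookrightarrow \O(\widetilde Y_{a,s})^{{\rm PD}_h}$ (both obtained from $\O_{\breve C}\hookrightarrow \acris$), so they are genuine ring morphisms commuting with $d$ and with the chosen $\varphi$'s. Hence on the factors indexed by $s\in S$ and on the transitions $\iota_{s,a}$ from $s$-data into $(s,a)$-data, the diagrams commute on the nose.

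Next I would handle the subtle point: the factor indexed by $a\in A_c$, where $\iota_a:\O(\breve Y_a)\to \O(\widetilde Y_a)$ is the $\O_{\breve C}$-linear map $T_{a,s_i}^k\mapsto T_{a,s_i}^k$ ($k\geq 0$), which is \emph{not} a ring morphism since $T_{a,s_1}T_{a,s_2}$ is $0$ on the source and $\tilde p^{\mu(a)}$ on the target. On the level of forms one checks directly that $\iota_a$ commutes with $d$: both sides send $T_{a,s_i}^k \tfrac{dT_{a,s_i}}{T_{a,s_i}}$ to itself, and the relation $\tfrac{dT_{a,s_1}}{T_{a,s_1}}+\tfrac{dT_{a,s_2}}{T_{a,s_2}}=0$ is respected (as was already noted in the proof of Prop.~\ref{dege1}). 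So the horizontal compatibility at $a\in A_c$ holds exactly, not merely modulo $p^{N(S)}$. The $\varphi$-equivariance of $\iota_a$ is the statement $\varphi(T_{a,s_i})=T_{a,s_i}^p$ on both source and target, which holds by construction.

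The main obstacle — and the only place where a loss of $p^{N(S)}$ appears — is the compatibility of $\iota$ with the \v{C}ech restriction from a jambe $\O(\breve Y_a)$ into the adjacent phantom circles $\O(\breve Y_{a,s})^{{\rm PD}_h}$. Here $\iota_a$ is not a ring morphism, so the square involving the two restriction maps $\O(\breve Y_a)\to \O(\breve Y_{a,s})^{{\rm PD}_h}$ and $\O(\widetilde Y_a)\to \O(\widetilde Y_{a,s})^{{\rm PD}_h}$ does \emph{not} commute in $\Omega^{\tau\leq 1}$: the element $T_{a,s_{3-i}}^n$ goes to $0$ on one side and to $\tilde p^{n\mu(a)}T_{a,s_i}^{-n}$ (with its ${\rm PD}_h$-envelope corrections) on the other. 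This is precisely the defect measured by Lemma~\ref{jambe2}: after passing to the quotient complexes $\overline\Omega^{\tau\leq 1}$ (which kills the negative-power tails and the differentials $d(T_i^{-1}\Lambda\langle T_i^{-1}\rangle)$), the diagram
\[
\xymatrix@R=.5cm{\Omega^\bullet(\breve Y_a)\ar[r]\ar[d]^{\iota_a}& \overline\Omega^{\tau\leq 1}(\breve Y_{a,s})\ar[d]\\ \Omega^\bullet(\widetilde Y_a)\ar[r]& \overline\Omega^{\tau\leq 1}(\widetilde Y_{a,s})}
\]
commutes modulo $p^{N(S)}$, since $p^N\mu(a)\geq 1$ for $N\geq N_1(S)\leq N(S)$. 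Thus one sees $\iota$ as a morphism of total complexes at the cost of $p^{N(S)}$ in the only non-trivial square. The $\varphi$-equivariance of the whole package reduces to the $\varphi$-equivariance of each of $\iota_s$, $\iota_{a,s}$, $\iota_a$, which holds since the chosen frobenius on $\O(\widetilde Y_a)$ sends $T_{a,s_i}\mapsto T_{a,s_i}^p$ in the same way as on $\O(\breve Y_a)$, and since the frobenius on $\O(\widetilde Y_{a,s})^{{\rm PD}_h}$ (resp.\ on $\O(\widetilde Y_s)$) is by construction the tensor frobenius of $\acris$ with that on the $\O_{\breve C}$-side.
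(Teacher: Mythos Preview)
Your proof is correct and follows essentially the same approach as the paper, which simply says the lemma is a consequence of Lemma~\ref{jambe2} together with the $\varphi$-equivariance of the maps $\iota_s,\iota_a,\iota_{a,s}$. You have unpacked this in detail: the ring-morphism cases ($\iota_s$, $\iota_{a,s}$) commute exactly, $\iota_a$ commutes with $d$ and $\varphi$ even though it is not multiplicative, and the only nontrivial square---the \v{C}ech restriction from a jambe to its boundary phantom circles---is exactly the content of Lemma~\ref{jambe2}, yielding commutativity modulo $p^{N(S)}$.
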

\begin{proof}
C'est une cons\'equence du lemme~\ref{jambe2} (et du fait que les $\iota_s,\iota_a,\iota_{a,s}$
commutent \`a $\varphi$).
\end{proof}
On en d\'eduit le r\'esultat suivant.
\begin{coro}\label{BAS12.2}
On a des $p^{N(S)}$-isomorphismes 
{\rm (le premier de chaque ligne commute \`a $\varphi$):}
\begin{align*}
\acris\wotimes_{\O_{\breve C}} \overline C_{\rm dR}^\bullet(\breve Y_S)\cong
\overline C_{\rm dR}^\bullet(\widetilde Y_S)
\quad&{\rm et}\quad
\O_C\wotimes_{\O_{\breve C}} \overline C_{\rm dR}^\bullet(\breve Y_S)\cong
\overline C_{\rm dR}^\bullet(Y_S),\\
\acris\otimes_{\O_{\breve C}}H^1_{\rm dR}(\breve Y_S)^{\rm sep}\cong
H^1_{\rm dR}(\widetilde Y_S)^{\rm sep}
\quad&{\rm et}\quad
\O_C\otimes_{\O_{\breve C}}H^1_{\rm dR}(\breve Y_S)^{\rm sep}\cong
H^1_{\rm dR}(Y_S)^{\rm sep}.
\end{align*}
\end{coro}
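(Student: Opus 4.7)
The plan is to deduce both rows of the corollary directly from Lemma \ref{BAS12.1}. The key observation is that, termwise, the morphism $\iota$ from Lemma \ref{BAS12.1} is really an $\acris$-linear isomorphism once we extend scalars: for $i = s \in S$ this is the defining equality $\O(\widetilde Y_s) = \acris \wotimes_{\O_{\breve C}} \O(\breve Y_s)$; for $(a,s) \in I_{2,c}$ it follows from the presentation of $\O(\widetilde Y_{a,s})^{\mathrm{PD}_h}$ in terms of $U'_{a,s}$ (valid for $h \geq N(S)$), so that both sides become, as topological $\acris$-modules, completions of the same polynomial ring with divided powers; and for $i = a \in A_c$, although $\iota_a$ is only $\O_{\breve C}$-linear and not a ring map, the target $\O(\widetilde Y_a)$ and the extended source $\acris \wotimes_{\O_{\breve C}} \O(\breve Y_a)$ share the same orthonormal $\acris$-basis $\{T_{a,s_1}^k\}_{k\geq 0} \cup \{T_{a,s_2}^k\}_{k \geq 1}$. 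The same verification on $\Omega^1$ is immediate.

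First I would assemble these termwise isomorphisms into a map of graded $\acris$-modules $\acris \wotimes_{\O_{\breve C}} \overline C^\bullet_{\rm dR}(\breve Y_S) \to \overline C^\bullet_{\rm dR}(\widetilde Y_S)$ and invoke Lemma \ref{BAS12.1} to see that it respects the differentials up to $p^{N(S)}$ and commutes with $\varphi$. This gives the first half of the top line. For the second half, I would apply the functor $\O_C \wotimes_{\acris} -$, using $\theta: \acris \to \O_C$; this is compatible with all the constructions because $\O_C \wotimes_{\acris} \O(\widetilde Y_i) = \O(Y_i)$, $\O_C \wotimes_{\acris} \Omega^j(\widetilde Y_i) = \Omega^j(Y_i)$ (and similarly for the cercles fant\^omes, using that the $\mathrm{PD}_h$-completion commutes with this base change), so that the resulting complex is exactly $\overline C^\bullet_{\rm dR}(Y_S)$, yielding $\O_C \wotimes_{\O_{\breve C}} \overline C^\bullet_{\rm dR}(\breve Y_S) \cong \overline C^\bullet_{\rm dR}(Y_S)$ as a $p^{N(S)}$-isomorphism.

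For the second row, I would take $H^1$ of the complex isomorphisms. Since $\acris$ is $p$-torsion-free over the discrete valuation ring $\O_{\breve C}$, it is flat, and the same holds for $\O_C$; the formation of cohomology therefore commutes (up to $p^{N(S)}$) with the base change, giving $p^{N(S)}$-isomorphisms at the level of $H^1_{\rm dR}$. The main (and essentially only delicate) point is the passage to the separated quotients: I would argue that since the isomorphisms at the level of complexes are $\acris$- or $\O_C$-linear topological isomorphisms modulo $p^{N(S)}$, they send the adherence of $0$ in the source to the adherence of $0$ in the target. Concretely, the torsion subgroup of $H^1_{\rm dR}(\breve Y_S)$ becomes, after tensoring with $\acris$, exactly the torsion subgroup of $H^1_{\rm dR}(\widetilde Y_S)$ (both being the kernel of the projection onto a finite free module), so the quotients by their topological closures match. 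The hard part will be making this last argument completely clean in the presence of the $p^{N(S)}$-ambiguity, but the fact that $\iota$ is $\varphi$-equivariant means it respects the Dieudonn\'e--Manin decomposition that governs the separated quotient (as in Proposition \ref{basic9}), which suffices.
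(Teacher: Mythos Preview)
Your proposal is correct and follows the same route as the paper, which simply records the corollary as an immediate consequence of Lemma~\ref{BAS12.1} without further argument. One small correction to your final paragraph: Proposition~\ref{basic9} concerns a single small short, not the global curve $Y_S$, so the Dieudonn\'e--Manin decomposition there is not quite the right hook for the separated-quotient step. The cleaner argument is that $H^1_{\rm dR}(\breve Y_S)^{\rm sep}$ is a finite free $\O_{\breve C}$-module (by the $\breve Y_S$-analog of Th.~\ref{Aff1}, cf.\ rem.~\ref{BAS12.4}(i)); tensoring the short exact sequence
\[
0 \to H^1_{\rm dR}(\breve Y_S)_{\rm tors} \to H^1_{\rm dR}(\breve Y_S) \to H^1_{\rm dR}(\breve Y_S)^{\rm sep} \to 0
\]
with the flat $\O_{\breve C}$-algebra $\acris$ (resp.~$\O_C$) yields a sequence whose rightmost term is finite free, hence $p$-adically separated and torsion-free, which forces the leftmost term to be exactly the closure of the torsion in the middle. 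No $\varphi$-equivariance is needed for this step.
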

On d\'efinit les groupes de {\it cohomologie de Hyodo-Kato}
$H^i_{\rm HK}(Y)$ par:
$$H^i_{\rm HK}(Y):=\breve C\otimes_{\O_{\breve C}}H^i_{\rm dR}(\breve Y_S)
=\breve C\otimes_{\acris}H^i_{\rm dR}(\widetilde Y_S).$$
(Ces groupes ne d\'ependent pas de $S$).  Par construction, ce sont
des $\breve C$-espaces et ils sont munis d'une action semi-lin\'eaire
de $\varphi$; on 
les munit aussi d'un op\'erateur de monodromie $N$ 
v\'erifiant $N\varphi=p\varphi N$ (cf.~rem.\,\ref{BAS12.4} ci-dessous).
Le cor.\,\ref{BAS12.2} a pour cons\'equence imm\'ediate l'\'enonc\'e suivant
dans lequel on a pos\'e $H^1_{\rm dR}(\widetilde Y)=\Q_p\otimes_{\Z_p}H^1_{\rm dR}(\widetilde Y_S)$
(le r\'esultat ne d\'epend pas de $S$).
\begin{theo}\label{basic21}
{\rm (i)} On a des isomorphismes 
{\rm (le premier commute \`a $\varphi$):}
$$\bcris^+\otimes_{\breve C}H^1_{\rm HK}(Y)^{\rm sep}\cong
H^1_{\rm dR}(\widetilde Y)^{\rm sep}
\quad{\rm et}\quad
\iota_{\rm HK}:C\otimes_{\breve C}H^1_{\rm HK}(Y)^{\rm sep}\cong
H^1_{\rm dR}(Y)^{\rm sep}.$$

{\rm (ii)}
$H^1_{\rm dR}(\widetilde Y)^{\rm sep}$ 
est un $\bcris^+$-module libre de rang fini
et on a des identifications
$$C\otimes_{\bcris^+}H^1_{\rm dR}(\widetilde Y)^{\rm sep}=H^1_{\rm dR}(Y)^{\rm sep}
\quad{\rm et}\quad
\breve C\otimes_{\bcris^+}H^1_{\rm dR}(\widetilde Y)^{\rm sep}=
H^1_{\rm HK}(Y)^{\rm sep},$$
la seconde \'etant $\varphi$-\'equivariante. 
\end{theo}

\begin{rema}\label{BAS12.4}
{\rm (o)} $\iota_{\rm HK}$ est {\it l'isomorphisme de Hyodo-Kato}.

{\rm (i)} 
En reprenant les arguments des preuves de la prop.\,\ref{basic16} et du th.\,\ref{Aff1},
on obtient les r\'esultats suivants, si $Z=\widetilde Y,\breve Y$, et si $\Lambda_Z=\acris$ 
ou~$\O_{\breve C}$ suivant que $Z=\widetilde Y$ ou~$Z=\breve Y$:
le groupe $H^1_{\rm dR}(Z_S)$ a une filtration stable par $\varphi$ dont les quotients
successifs sont, \`a $p^{N(S)}$ pr\`es,
$$H^1_{\rm dR}(Z_S)=\big[\xymatrix@C=.5cm{
H^1(\Gamma,\Lambda_Z)\ar@{-}[r]&
\prod_{s\in S} H^1_{\rm dR}(Z_s^\lozenge)_0\ar@{-}[r]&
H^1_c(\Gamma,\Lambda_Z)^\dual(-1)}\big].$$
Le (-1) signifie que l'action naturelle de $\varphi$
est multipli\'ee par $p$ (car
$\varphi(\frac{dT_{a,s}}{T_{a,s}})=
p\frac{dT_{a,s}}{T_{a,s}})$.

{\rm (ii)}
En utilisant l'isomorphisme $C\otimes_{\breve C}H^1_{\rm HK}(Y)^{\rm sep}\cong H^1_{\rm dR}(Y)^{\rm sep}$,
le th.\,\ref{Aff1} et la prop.\,\ref{basic9.11}, on prouve que
le groupe
$H^1_{\rm HK}(Y)^{\rm sep}$ 
admet une filtration naturelle stable par $\varphi$ dont les quotients successifs
sont\footnote{Rappelons que $M^{[1]}$ d\'esigne la partie de pente 
$1$ d'un $\breve C$-espace $M$
de dimension finie muni d'une action semi-lin\'eaire de $\varphi$.}:
$$H^1_{\rm HK}(Y)^{\rm sep}=\big[\xymatrix@C=.3cm{
H^1(\Gamma_{\rm int},\breve C)\ar@{-}[r]
&\big(\hskip-.35cm \prod\limits_{s\in \Sigma(Y)_{\rm int}}
\hskip-.35cm H^1_{\rm cris}(Y_s^{\rm sp})\big)
\oplus
\big(\hskip-.15cm\prod\limits_{s\in \partial Y}
\hskip-.15cm H^1_{\rm cris}(Y_s^{\rm sp})^{[1]}\big)
\ar@{-}[r]& H^1_c(\Gamma,\breve C)^\dual(-1)}\big].$$ 

{\rm (iii)}
L'op\'erateur $N_\mu:H^1_c(\Gamma,\Lambda_Z)^\dual\to
H^1(\Gamma,\Lambda_Z)$ du \no\ref{TTT19} induit des op\'erateurs
(de monodromie) v\'erifiant $N\circ N=0$ et $N\varphi=p\varphi N$, et compatibles
entre eux (par extension des scalaires)
$$N:H^1_{\rm dR}(\widetilde Y)\to H^1_{\rm dR}(\widetilde Y),
\quad
N:H^1_{\rm HK}(Y)^{\rm sep}\to H^1_{\rm HK}(Y)^{\rm sep}.$$
\end{rema}

\Subsection{Cohomologie syntomique et cohomologie \'etale $p$-adique}\label{BAS14}
\subsubsection{Cohomologie syntomique}\label{BAS15}
Si $i\in I$, on note
$F^1\O(\widetilde Y_i)$ le noyau du morphisme surjectif
 $\O(\widetilde Y_i)\to\O(Y_i)$,
et si $(i,j)\in I_{2,c}$, on note
$F^1\O(\widetilde Y_{i,j})^{\rm PD}$ le noyau 
de $\O(\widetilde Y_{i,j})^{\rm PD}\to\O(Y_{i,j})$.

On note ${\rm Syn}(Y_S,1)$ le complexe total associ\'e au complexe double
$$\xymatrix@C=.6cm@R=.6cm{
\prod_{i\in I}F^1\O(\widetilde Y_i)
\ar[r]\ar@<.2cm>[d]^-{1-\frac{\varphi}{p}}&
\prod_{i\in I}\Omega^1(\widetilde Y_i)
\oplus\prod_{(i,j)\in I_{2,c}}F^1\O(\widetilde Y_{i,j})^{\rm PD}\ar[r]
\ar@<-1cm>[d]^-{1-\frac{\varphi}{p}}\ar@<1.6cm>[d]^-{1-\frac{\varphi}{p}}&
\prod_{(i,j)\in I_{2,c}}\Omega^1(\widetilde Y_{i,j})^{\rm PD}_{d=0}\ar@<.3cm>[d]^-{1-\frac{\varphi}{p}}\\
\prod_{i\in I}\O(\widetilde Y_i)
\ar[r]& \prod_{i\in I}\Omega^1(\widetilde Y_i)
\oplus\prod_{(i,j)\in I_{2,c}}\O(\widetilde Y_{i,j})^{\rm PD}\ar[r]&
\prod_{(i,j)\in I_{2,c}}\Omega^1(\widetilde Y_{i,j})^{\rm PD}_{d=0}}$$
Les $H^i_{\rm syn}(Y_S,1)$ sont les groupes de cohomologie du complexe
${\rm Syn}(Y_S,1)$. 

\subsubsection{Comparaison syntomique-\'etale}\label{BAS17}
Si $u=(u_n)_n\in {\rm Symb}_p(Y)$, et si $i\in I$,
on choisit des rel\`evements $\tilde u_{i,n}$ et $(u_n)_{|Y_i}$ comme pr\'ec\'edemment,
ce qui permet d'associer 
\`a $u$ un $1$-cocycle $((x_i,y_i)_i,(z_{i,j})_{i,j})$ de
${\rm Syn}(Y_S,1)$, 
en posant 
\begin{align*}
(x_i,y_i)=\delta_{Y_i}(u_{|Y_i})=&\ \big(\lim_{n\to\infty}\tfrac{d\tilde u_{i,n}}{\tilde u_{i,n}},
\lim_{n\to\infty}\tfrac{1}{p}\log \tfrac{\varphi(\tilde u_{i,n})}{\tilde u_{i,n}^p}\big),\\
z_{i,j}=&\ \lim_{n\to\infty} \log\tfrac{\tilde u_{i,n}\otimes 1}{1\otimes \tilde u_{j,n}}
\end{align*}
(on a $z_{i,j}\in F^1\O(\widetilde Y_{i,j})$ car $u_{i,n}$ et $u_{j,n}$ co\"{\i}ncident sur
$Y_{i,j}$).
L'image $\delta_{\tilde p}(u)$ de ce $1$-cocycle dans $H^1_{\rm syn}(Y_S,1)$ ne d\'epend que de $u$,
ce qui fournit
une application $$\delta_{\tilde p}:{\rm Symb}_p(Y)\to H^1_{\rm syn}(Y_S,1).$$
\begin{rema}\label{basic36}
L'application 
$$\delta_{\tilde p}:{\rm Symb}_p(Y)\to H^1_{\rm dR}(\widetilde Y_S)^{\varphi=p}$$
d\'epend
du choix de $r\mapsto p^r$.  En effet, si on modifie $r\mapsto p^r$, cela change $\tilde p$
en $\tilde p[\epsilon]$, avec $\epsilon=(1,\epsilon_1,\dots)\in\O_{C^\flat}$,
et cela change $T_{a,s_2}$ en $[\epsilon^{\mu(a)}]T_{a,s_2}$ sur $\widetilde Y_a$, et donc  
aussi $z_{a,s_2}$ en $z_{a,s_2}+
{\rm Res}_a(\frac{du}{u})\mu(a)\log[\epsilon]$. 
\end{rema}
\begin{theo}\label{basic35}
L'application $\delta_{\tilde p}:{\rm Symb}_p(Y)\to H^1_{\rm syn}(Y_S,1)$
d\'efinie ci-dessus est un isomorphisme si $p>2$ {\rm(}si $p=2$, c'est presque un isomorphisme{\rm)}. 
\end{theo}
\begin{proof}
On a un diagramme commutatif dans lequel les suites horizontales
sont exactes:
$$\xymatrix@C=.5cm@R=.5cm{
0\ar[r]& H^1(\Gamma,\Z_p(1))\ar[d]^-{\wr}\ar[r] & {\rm Symb}_p(Y)
\ar[r]\ar[d] & \prod\limits_{i\in I}{\rm Symb}_p(Y_i)\ar[r]
\ar[d]^-{\wr}& \prod\limits_{(i,j)\in I_{2,c}} {\rm Symb}_p(Y_{i,j})\ar[d]^-{\wr}\\
0\ar[r]& H^1(\Gamma,\Z_pt)\ar[r] & H^1_{{\rm syn}}(Y_S,1)
\ar[r] & \prod\limits_{i\in I}H^1_{{\rm syn}}(Y_i,1)
\ar[r] & \prod\limits_{(i,j)\in I_{2,c}} H^1_{{\rm syn}}(Y_{i,j},1)}
$$
Les isomorphismes verticaux sont \'etablis dans la prop.~\ref{basic29} pour les $Y_i$ avec $i\in S$, 
et dans la prop.~\ref{basic34} pour les $Y_i$, avec $i\in A$, et les $Y_{i,j}$.
Le r\'esultat s'en d\'eduit.
\end{proof}

En combinant le r\'esultat pr\'ec\'edent avec celui du cor.\,\ref{basic12},
on obtient:
\begin{coro}\label{basic35.1} 
On a des isomorphismes naturels si $p>2$ {\rm(}si $p=2$, la fl\`eche de droite
est un isomorphisme, celle de gauche est presque un isomorphisme{\rm)}
$$H^1_{\rm syn}(Y_S,1)\overset{\sim}{\longleftarrow}{\rm Symb}_p(Y)\overset{\sim}{\longrightarrow}H^1_{\eet}(Y,\Z_p(1)).$$
\end{coro}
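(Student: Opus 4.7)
The plan is to deduce this corollary essentially by combining two results already established in the excerpt: Theorem~\ref{basic35}, which gives the left-hand isomorphism via the syntomic regulator $\delta_{\tilde p}$, and Corollary~\ref{basic12} applied to $\ell = p$, which gives the right-hand isomorphism via the étale Kummer regulator. Since both arrows have already been constructed and shown to be isomorphisms (at least up to 2-torsion when $p=2$) in the main body of the chapter, the ``proof'' amounts to formally assembling these two statements into a single diagram.

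Concretely, I would proceed as follows. First, I would recall that the right-hand arrow ${\rm Symb}_p(Y)\to H^1_{\eet}(Y,\Z_p(1))$ is the map which, to a compatible system $(f_n)_{n\in\N}$ of functions with $\ell^n$-divisible divisors, associates the inverse limit of their Kummer classes $c_n(f_n)\in H^1_{\eet}(Y,\Z/p^n(1))$. The bijectivity is exactly Corollary~\ref{basic12}, whose proof relies on the Kummer exact sequence combined with the standard description of ${\rm Pic}(Y)$ and $\O(Y)^\dual$ modulo $p^n$-th powers, and in particular makes no restriction on $\ell$ (so works verbatim for $\ell=p$, with no 2-torsion issue). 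Second, I would recall that the left-hand arrow $\delta_{\tilde p}:{\rm Symb}_p(Y)\to H^1_{\rm syn}(Y_S,1)$ is defined using the choice of the morphism $r\mapsto p^r$ and the local lifts $\tilde u_{i,n}\in \O(\widetilde Y_i)$ of the restrictions of $u_n$ to $Y_i$: it sends $u=(u_n)_n$ to the class of the cocycle whose components are the limits of $\frac{d\tilde u_{i,n}}{\tilde u_{i,n}}$, $\frac{1}{p}\log\frac{\varphi(\tilde u_{i,n})}{\tilde u_{i,n}^p}$ and $\log\frac{\tilde u_{i,n}\otimes 1}{1\otimes\tilde u_{j,n}}$. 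That this map is an isomorphism (almost an isomorphism for $p=2$) is precisely Theorem~\ref{basic35}, whose proof goes through the five-lemma applied to the diagram comparing the localization sequence for symbols (Proposition~\ref{basic14}) with the analogous Mayer--Vietoris-type sequence for syntomic cohomology (built from the complex ${\rm Syn}(Y_S,1)$), local isomorphisms being supplied by Propositions~\ref{basic29} and~\ref{basic34}.

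With both arrows at hand, the corollary is merely a packaging statement; there is no substantive new computation to perform. The only thing worth remarking is the behaviour at $p=2$: in that case the $\ell$-adic Kummer argument of Corollary~\ref{basic12} still gives a genuine isomorphism on the right (as the Kummer sequence involves no divided powers), while the syntomic regulator on the left is only a $2$-isomorphism, essentially because of the $\frac{1}{p}\log$ factor and the fact that the exponential needed to invert it in the Artin--Hasse argument (Lemma~\ref{AH}) loses a factor of $2$. Hence the asymmetric phrasing in the statement is unavoidable but causes no difficulty: it is already built into Theorem~\ref{basic35}. There is thus no step that I would single out as the main obstacle; the genuine work lies in the two results being cited, the corollary itself being immediate.
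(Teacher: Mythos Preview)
Your proposal is correct and follows exactly the paper's approach: the corollary is obtained by combining Theorem~\ref{basic35} (for the left arrow) with Corollary~\ref{basic12} applied to $\ell=p$ (for the right arrow), and your remarks on the $p=2$ asymmetry are accurate.
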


\subsubsection{Cohomologie syntomique et complexe de de Rham}
On note ${\rm HK}(Y_S,1)$ le complexe ${\rm Syn}(Y_S,1)$ avec $F^1\O(\widetilde Z)$ remplac\'e par $\O(\widetilde Z)$
et $\O(\widetilde Z)$ par $\O(\widetilde Z)'=\O(\widetilde Z)+\frac{\varphi}{p}\O(\widetilde Z)$, 
si $Z=Y_i,Y_{i,j}$. Le quotient
${\rm HK}(Y_S,1)/{\rm Syn}(Y_S,1)$ est $p$-isomorphe au complexe
$$\big[\prod_{i\in I}\O(Y_i)\to\prod_{(i,j)\in I_{2,c}}\O(Y_{i,j})\big]$$
qui calcule la cohomologie du faisceau $\O$.

On note ${\rm HK}_S^i$ les groupes de cohomologie du complexe ${\rm HK}(Y_S,1)$.
On a une suite $p^2$-exacte longue
$$0\to H^0_{\rm syn}(Y_S,1)\to {\rm HK}^0_S\to \O(Y_S)\to 
H^1_{\rm syn}(Y_S,1)\to {\rm HK}^1_S\to H^1(Y_S,\O)\to 
H^2_{\rm syn}(Y_S,1)\to {\rm HK}^2_S\to 0.$$

\begin{prop}\label{basic24.5}
On a une suite $p^2$-exacte
$$0\to\O(Y_S)/\O_C\to H^1_{\rm syn}(Y_S,1)\to H^1_{\rm dR}(\widetilde Y_S)^{\varphi=p}
\to H^1(Y_S,\O)$$
\end{prop}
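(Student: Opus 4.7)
The plan is to exploit the short exact sequence of complexes
$$0 \to {\rm Syn}(Y_S,1) \to {\rm HK}(Y_S,1) \to Q^\bullet \to 0,$$
where $Q^\bullet$ is the quotient complex announced just before the statement, $p$-isomorphic to the \v{C}ech complex $[\prod_i \O(Y_i) \to \prod_{i,j} \O(Y_{i,j})]$ computing $H^\bullet(Y_S,\O)$. Since the quotient is $p$-isomorphic (not an actual equality), the induced long exact sequence is $p^2$-exact, giving
$$0 \to H^0_{\rm syn}(Y_S,1) \to {\rm HK}^0_S \to \O(Y_S) \to H^1_{\rm syn}(Y_S,1) \to {\rm HK}^1_S \to H^1(Y_S,\O) \to \cdots,$$
so the proposition reduces to identifying ${\rm HK}^0_S$, ${\rm HK}^1_S$, and the connecting map ${\rm HK}^0_S\to \O(Y_S)$.

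The key identification is that ${\rm HK}(Y_S,1)$ is (by construction) quasi-isomorphic to the mapping cone shifted by $-1$ of
$$1 - \varphi/p : C_{\rm dR}^\bullet(\widetilde Y_S) \longrightarrow C_{\rm dR}^\bullet(\widetilde Y_S)',$$
exactly as in the local calculation of \S\,\ref{BAS18}. The associated long exact sequence for this cone reads
$$\cdots \to H^{i-1}_{\rm dR}(\widetilde Y_S) \xrightarrow{1-\varphi/p} H^{i-1}_{\rm dR}(\widetilde Y_S)' \to {\rm HK}^i_S \to H^i_{\rm dR}(\widetilde Y_S) \xrightarrow{1-\varphi/p} H^i_{\rm dR}(\widetilde Y_S)' \to \cdots$$
Since $Y$ is connected, $H^0_{\rm dR}(\widetilde Y_S) = \acris$, and the surjectivity of $1-\varphi/p:\acris\to\acris'$ (recalled in the discussion surrounding~(\ref{BAS18.0})) gives ${\rm HK}^0_S = \acris^{\varphi=p}$ and ${\rm HK}^1_S = H^1_{\rm dR}(\widetilde Y_S)^{\varphi=p}$.

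It remains to understand the map ${\rm HK}^0_S = \acris^{\varphi=p} \to \O(Y_S)$: unwinding the boundary map, a class in ${\rm HK}^0_S$ is represented by a constant $c \in \acris$ with $\varphi(c)=pc$, and its image in $Q^0 = \prod \O(Y_i)$ is $(\theta(c))_i$, i.e.\ the constant $\theta(c)\in\O_C$. Via the fundamental exact sequence $0 \to \Z_p t \to \acris^{\varphi=p} \xrightarrow{\theta} \O_C \to 0$, which also provides the equality $H^0_{\rm syn}(Y_S,1)=\Z_p t$ from the preceding calculation, we conclude that the image of $\acris^{\varphi=p}\to\O(Y_S)$ is exactly $\O_C$. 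The resulting $p^2$-exact sequence
$$0\to \O(Y_S)/\O_C \to H^1_{\rm syn}(Y_S,1) \to H^1_{\rm dR}(\widetilde Y_S)^{\varphi=p} \to H^1(Y_S,\O)$$
then falls out of the long exact sequence. The only delicate point is verifying that the $p$-isomorphism ${\rm HK}/{\rm Syn} \simeq Q^\bullet$ produces a long exact sequence that is genuinely $p^2$-exact; but this is a routine consequence of the fact that the cone of a $p$-quasi-isomorphism is killed by $p^2$ on cohomology via the two connecting maps involved, exactly as in the local case.
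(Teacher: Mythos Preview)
Your proposal is correct and follows essentially the same approach as the paper: both use the $p^2$-exact long sequence displayed just before the proposition, identify ${\rm HK}^0_S=\acris^{\varphi=p}$ and ${\rm HK}^1_S=H^1_{\rm dR}(\widetilde Y_S)^{\varphi=p}$ via the presentation of ${\rm HK}(Y_S,1)$ as the cylinder of $1-\varphi/p$ on $C^\bullet_{\rm dR}(\widetilde Y_S)$ together with surjectivity of $1-\varphi/p$ on $H^0$, and then observe that the image of ${\rm HK}^0_S\to\O(Y_S)$ is $\O_C$ so that ${\rm HK}^0_S/H^0_{\rm syn}(Y_S,1)\cong\O_C$. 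Your write-up is slightly more explicit about why the $p$-isomorphism of quotient complexes degrades the long exact sequence only to $p^2$-exactness, which the paper simply asserts.
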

\begin{proof}
On a ${\rm HK}_S^0=H^0_{\rm dR}(\widetilde Y_S)^{\varphi=p}=\acris^{\varphi=p}$;
on en d\'eduit que 
$$H^0_{\rm syn}(Y_S,1)=(F^1\acris)^{\varphi=p}=\Z_p t
\quad{\rm et}\quad
{\rm HK}_S^0/H^0_{\rm syn}(Y_S,1)\cong\O_C.$$
Par ailleurs,
${\rm HK}(Y_S,1)$ est le cylindre $[\xymatrix{C_{\rm dR}(\widetilde Y_S)\ar[r]^{1-\frac{\varphi}{p}}&
C_{\rm dR}(\widetilde Y_S)'}]$, o\`u $C_{\rm dR}(\widetilde Y_S)'$ est obtenu
\`a partir de $C_{\rm dR}(\widetilde Y_S)$ en rempla\c{c}ant $\O(\widetilde Z)$ par $\O(\widetilde Z)'$
si $Z=Y_i,Y_{i,j}$.
et comme $1-\frac{\varphi}{p}$ est surjectif de $H^0(C_{\rm dR}(\widetilde Y_S))$ sur 
$H^0(C_{\rm dR}(\widetilde Y_S)')$,
on a ${\rm HK}^1_S=H^1_{\rm dR}(\widetilde Y_S)^{\varphi=p}$.
On en d\'eduit le r\'esultat.
\end{proof}

\begin{rema}\label{basic24}
{\rm (i)} Si $Y$ est propre, alors $\O(Y_S)=\O_C$ et la suite
devient
$$0\to H^1_{\rm syn}(Y_S,1)\to H^1_{\rm dR}(\widetilde Y_S)^{\varphi=p}
\to H^1(Y_S,\O)\to
H^2_{\rm syn}(Y_S,1)\to\cdots$$

{\rm (ii)} Si $Y$ est un affino\"{\i}de, alors $H^1(Y_S,\O)$ est tu\'e par une puissance de $p$,
et la torsion de $H^1_{\rm dR}(\widetilde Y_S)^{\varphi=p}$ est d'exposant non born\'e.

{\rm (iii)} Une classe de $H^1_{\rm syn}(Y_S,1)$ est repr\'esent\'ee par
une collection $((\omega_i)_i,(f_{i,j})_{i,j},(g_i)_i)$ satisfaisant
un certain nombre de relations dont $\omega_i\otimes 1-1\otimes\omega_j=df_{i,j}$.
Cette relation, modulo $F^1$, devient $\omega_i-\omega_j=0$,
ce qui fournit une fl\`eche $H^1_{\rm syn}(Y_S,1)\to \Omega^1(Y_S)$.
De m\^eme, modulo $F^1$, le complexe $C_{\rm dR}(\widetilde Y_S)$ devient
$C_{\rm dR}(Y_S)$, ce qui fournit une fl\`eche $H^1_{\rm dR}(\widetilde Y_S)\to H^1_{\rm dR}(Y_S)$
et un diagramme commutatif \`a lignes exactes
$$\xymatrix@R=.6cm@C=.6cm{0\ar[r]&\O(Y_S)/\O_C\ar[r]\ar@{=}[d]& H^1_{\rm syn}(Y_S,1)\ar[r]\ar[d]
& H^1_{\rm dR}(\widetilde Y_S)^{\varphi=p}
\ar[r]\ar[d]& H^1(Y_S,\O)\ar@{=}[d]\\
0\ar[r]&\O(Y_S)/\O_C\ar[r]& \Omega^1(Y_S)\ar[r]& H^1_{\rm dR}(Y_S)
\ar[r]& H^1(Y_S,\O)\ar[r]&0}$$

\end{rema}

\begin{rema}\label{basic25}
{\rm (i)} La fl\`eche $\O(Y_S)/\O_C\to H^1_{\rm syn}(Y_S,1)$ admet la description suivante\footnote{Elle
n'est d\'efinie que sur le sous-groupe $\O'(Y_S)$ des $g$ tels que $(1-\frac{\varphi}{p})\tilde g_i\in\tilde R_i$,
pour tout $i$. Le quotient $\O(Y_S)/\O'(Y_S)$ est tu\'e par $p$.}.
Si $i\in I$ notons simplement
$R_i$,$\tilde R_i$ les anneaux $\O(Y_i)$, $\O(\widetilde Y_i)$,
et $R_{i,j}$, $\tilde R_{i,j}$ 
les anneaux $\O(Y_{i,j})$, $\O(\widetilde Y_{i,j})$, si $(i,j)\in I_{2,c}$.
Soit alors $g\in\O(Y_S)$. Si $i\in I$, choisissons un
rel\`evement $\tilde g_i$
de la restriction $g_i\in R_i$ de $g$ \`a $Y_i$.
La famille $\tilde g=((d\tilde g_i)_{i},((1-\frac{\varphi}{p})\tilde g_i)_{i},
(\tilde g_i\otimes 1-1\otimes\tilde g_j)_{i,j})$
est un $1$-cocycle du complexe  ${\rm Syn}(Y_S,1)$,
dont la classe de cohomologie ne d\'epend que de $g$,
puisque, si $\tilde g'$ est un autre choix, alors
$\tilde g'-\tilde g$ est le bord de $(\tilde g'_i-\tilde g_i)_i$
et $\tilde g'_i-\tilde g_i\in F^1\tilde R_i$ par construction.

{\rm (ii)}  La recette ci-dessus fournit aussi
une application naturelle $\O(Y)^{\dual\dual}\to
H^1_{\rm syn}(Y_S,1)$.
Si $u\in\O(Y)^{\dual\dual}$, on choisit un rel\`evement $\tilde u_i\in\tilde R_i$
de la restriction $u_i\in R_i$ de $u$ \`a $U_i$.
Comme $v_p(u_i-1)>0$, la s\'erie d\'efinissant $\log \tilde u_i$ converge dans
$\tilde R_i[\frac{1}{p}]$; notons $\tilde g_i$ sa somme.
On a $(1-\frac{\varphi}{p})\tilde g_i=
\frac{1}{p}\log(\frac{\varphi(\tilde u_i)}{\tilde u_i^p})\in \tilde R_i$
car $\frac{\varphi(\tilde u_i)}{\tilde u_i^p}\in 1+p\tilde R_i$.
On a aussi $d\tilde g_i=\frac{d\tilde u_i}{\tilde u_i}\in \tilde\Omega^1_i$,
et $\tilde g_i\otimes 1-1\otimes\tilde g_j=
\log\frac{\tilde u_i\otimes1}{1\otimes\tilde u_j}\in F^1\tilde R_{i,j}$
puisque $\frac{\tilde u_i\otimes1}{1\otimes\tilde u_j}\in 1+F^1\tilde R_{i,j}$.
La famille $\Delta(\tilde u):=((d\tilde g_i)_{i},((1-\frac{\varphi}{p})\tilde g_i)_{i},
(\tilde g_i\otimes 1-1\otimes\tilde g_j)_{i,j})$
est un $1$-cocycle du complexe double ${\rm Syn}(Y_S,1)$,
dont la classe de cohomologie ne d\'epend que de $u$
car, si $\tilde u'_i$ est
un autre rel\`evement de $u$, alors $\log\frac{\tilde u'_i}{\tilde u_i}\in F^1\tilde R_i$
puisque $\frac{\tilde u'_i}{\tilde u_i}\in 1+F^1\tilde R_{i}$.

L'application ci-dessus se prolonge,
par continuit\'e, en une  
application naturelle $\Z_p\wotimes\O(Y)^{\dual\dual}\to
H^1_{\rm syn}(Y_S,1)$.  Pour le voir, on \'ecrit un \'el\'ement
$u$ de $\Z_p\wotimes\O(Y)^{\dual\dual}$ sous la forme $\prod_{n\geq 0}u_n^{p^n}$
et on envoie $u$ sur la classe de $\sum_{n\geq 0}p^n\Delta(\tilde u_n)$ avec
des notations \'evidentes.  Il faut v\'erifier que ceci ne d\'epend
pas de l'\'ecriture et du choix des $\tilde u_n$; cela revient \`a prouver que,
si $\prod_{n\geq 0}u_n^{p^n}=1$, alors $\sum_{n\geq 0}p^n\Delta(\tilde u_n)$
est un cobord.  Dans ce cas, $u_0$ est une puissance $p$-i\`eme dans
$\Z_p\wotimes\O(Y)^{\dual\dual}$ et donc aussi dans $\O(Y)^{\dual\dual}$,
et on peut choisir la racine $p$-i\`eme $v_0$ telle que $v_0\prod_{n\geq 1}u_n^{p^{n-1}}=1$.
On choisit un rel\`evement $\tilde v_0$ de $v_0$ et alors 
$\Delta(\tilde u_0)=p\Delta(\tilde v_0)+dx_0$ (o\`u $d$ est la diff\'erentielle
de ${\rm Syn}(Y_S,1)$, i.e.~$\Delta(\tilde u_0)-p\Delta(\tilde v_0)$ est
un cobord). De m\^eme, $v_0u_1=v_1^p$ avec
$v_1\prod_{n\geq 2}u_n^{p^{n-2}}=1$, et $\Delta(\tilde v_0\tilde u_1)=p\Delta(\tilde v_1)+dx_1$,
etc. Mais alors
\begin{align*}
\Delta(\tilde u_0)+p\Delta(\tilde u_1)+p^2\Delta(\tilde u_2)+\cdots&=
dx_0+p\Delta(\tilde v_0\tilde u_1)+p^2\Delta(\tilde u_2)+\cdots\\
&= dx_0+pdx_1+p^2\Delta(\tilde v_1\tilde u_2)+p^3\Delta(\tilde u_3)+\cdots\\
&= dx_0+pdx_1+p^2x_2+\cdots=d(x_0+px_1+p^2x_2+\cdots)
\end{align*}
Autrement dit, $\sum_{n\geq 0}p^n\Delta(\tilde u_n)$
est un cobord, ce que l'on voulait.

{\rm (iii)}  Les deux applications ci-dessus se correspondent via
l'exponentielle: si $g\in\O(Y_S)$, alors $\exp(2pg)\in\O(Y)^{\dual\dual}$ et le diagramme
suivant est commutatif:
$$\xymatrix@R=.6cm@C=1.5cm{
\O(Y_S)\ar[d]\ar[r]^-{g\mapsto\exp(2pg)}&\O(Y)^{\dual\dual}\ar[d]\\
H^1_{\rm syn}(Y_S,1)\ar[r]^-{x\mapsto 2px}&H^1_{\rm syn}(Y_S,1)}$$

{\rm (iv)}  La compos\'ee de
$\O(Y)^{\dual\dual}\to
H^1_{\rm syn}(Y_S,1)\to H^1_{\rm dR}(\widetilde Y_S)^{\varphi=p}$ fournit
des classes de torsion:
si $N\in\N$ est tel que $p^N\tilde g_i\in R_i$ pour tout $i$, alors
$p^Nc$ est le bord de $(p^N\tilde g_i)_{i\in I}$,
et la classe de $f$ dans ${\rm HK}_S^1=H^1_{\rm dR}(\widetilde Y_S)^{\varphi=p}$ 
est tu\'ee par $p^N$.
On en d\'eduit que l'application compos\'ee
$$\Z_p\wotimes\O(Y)^{\dual\dual}\to H^1_{\rm syn}(Y_S,1)
\to (H^1_{\rm dR}(\widetilde Y_S)^{\rm sep})^{\varphi=p}$$
est identiquement nulle.
\end{rema}

\subsubsection{Monodromie}\label{BAS22}
Pour rem\'edier au probl\`eme soulev\'e dans la rem.~\ref{basic36}, on va 
\'etendre les scalaires \`a $\ast=\acris[{\rm Log}\,\tilde p]^{\rm PD}$,
o\`u ${\rm Log}\,\tilde p$ est transcendant sur $\acris$, 
pour faire dispara\^{\i}tre la {\og monodromie
autour de $p$\fg}.
On munit
$\ast$ d'une action de $\varphi$ en posant $\varphi({\rm Log}\,\tilde p)=
p\,{\rm Log}\,\tilde p$ et de la d\'erivation $N=\frac{d}{d\,{\rm Log}\,\tilde p}$.
On a $N\varphi=p\varphi N$ et
une suite exacte $0\to \acris\to \ast\overset{N}\to\ast\to 0$.
Notons que $\ast$ contient $\log(\tilde p[\epsilon])={\rm Log}\,\tilde p+\log[\epsilon]$
puisque $\log[\epsilon]\in\acris$, et comme $\varphi(\log[\epsilon])=p\log[\epsilon]$,
ni $\ast$, ni $\varphi$ ni $N$ ne d\'ependent
du choix de $\tilde p$ (ou, ce qui revient au m\^eme, de $r\mapsto p^r$).

Sur $\ast\wotimes_{\acris}H^1_{\rm dR}(\widetilde Y_S)$, on dispose du frobenius 
$\varphi=\varphi\otimes\varphi$ et 
de la monodromie $N=N\otimes 1+1\otimes N$, 
et comme $N^2=0$ sur $H^1_{\rm dR}(\widetilde Y_S)$,
l'application $$x\mapsto \iota_{\tilde p}(x)=1\otimes x-({\rm Log}\,\tilde p)\otimes Nx$$
d\'efinit un isomorphisme 
$$\iota_{\tilde p}:H^1_{\rm dR}(\widetilde Y_S)\overset{\sim}{\to}
(\ast\wotimes_{\acris}H^1_{\rm dR}(\widetilde Y_S))^{N=0}$$
qui commute \`a l'action de $\varphi$ (et d\'epend de $\tilde p$).

Soit $\alpha:H^1_{\rm syn}(Y_S,1)\to H^1_{\rm dR}(\widetilde Y_S)^{\varphi=p}$
la surjection naturelle (cf.~(ii) de la rem.~\ref{basic24}).

\begin{prop}\label{basic37}
L'application 
$$\iota_{\tilde p}\circ\alpha\circ\delta_{\tilde p}:{\rm Symb}_p(Y)\to
(\ast\wotimes_{\acris}H^1_{\rm dR}(\widetilde Y_S))^{N=0}$$
et la suite exacte
$$0\to \O(Y_S)/\O_C\to {\rm Symb}_p(Y)\to
(\ast\wotimes_{\acris}H^1_{\rm dR}(\widetilde Y_S))^{N=0,\varphi=p}\to 0$$
ne d\'ependent pas de $r\mapsto p^r$.
\end{prop}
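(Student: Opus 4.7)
The plan is to exploit the compensation between the changes in $\delta_{\tilde p}$ and in $\iota_{\tilde p}$ when one modifies $r\mapsto p^r$. Such a modification replaces $\tilde p$ by $\tilde p[\epsilon]$ for some $\epsilon\in\O_{C^\flat}^\dual$ with $\log[\epsilon]\in\acris$, so ${\rm Log}\,\tilde p$ is replaced by ${\rm Log}\,\tilde p+\log[\epsilon]$ inside $\ast$. Fix $u\in{\rm Symb}_p(Y)$ and write $c=\alpha\circ\delta_{\tilde p}(u)$ and $c'=\alpha\circ\delta_{\tilde p[\epsilon]}(u)$ in $H^1_{\rm dR}(\widetilde Y_S)^{\varphi=p}$.

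First I would use rem.\,\ref{basic36}: the cocycle representing $c'-c$ in $\overline C_{\rm dR}^\bullet(\widetilde Y_S)$ has all components equal to zero except those of the form $z_{a,s_2}=\mu(a)\,{\rm Res}_a(\tfrac{du}{u})\log[\epsilon]$, for $a\in A_c$. The crucial step is to identify this cohomology class with $\log[\epsilon]\cdot Nc$, where $N$ is the monodromy operator of rem.\,\ref{BAS12.4}(iii). Recall that $c$ projects, under the top map in the filtration of rem.\,\ref{BAS12.4}(i), to the residue collection $({\rm Res}_a(\tfrac{du}{u}))_{a\in A_c}\in H^1_c(\Gamma,\acris)^\dual(-1)$, and by definition of $N_\mu$ (cf.\,\S\,\ref{TTT19}) this maps to the class in $H^1(\Gamma,\acris)$ represented by the $1$-cochain $a\mapsto\mu(a)\,{\rm Res}_a(\tfrac{du}{u})$. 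Unwinding the embedding of $H^1(\Gamma,\acris)$ into the corresponding subquotient of $H^1_{\rm dR}(\widetilde Y_S)$ yields precisely the cocycle above, multiplied by $\log[\epsilon]$.

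Granted this identification, the algebraic verification is routine. Since $N^2=0$ on $H^1_{\rm dR}(\widetilde Y_S)$, one has $Nc'=N(c+\log[\epsilon]\cdot Nc)=Nc$, and therefore
\[
\iota_{\tilde p[\epsilon]}(c')=1\otimes c'-({\rm Log}\,\tilde p+\log[\epsilon])\otimes Nc'=\bigl(1\otimes c+\log[\epsilon]\otimes Nc\bigr)-({\rm Log}\,\tilde p+\log[\epsilon])\otimes Nc=\iota_{\tilde p}(c).
\]
This proves that $\iota_{\tilde p}\circ\alpha\circ\delta_{\tilde p}$ is independent of $\tilde p$. For the exact sequence, ${\rm Symb}_p(Y)$ itself has no $\tilde p$-dependence (prop.\,\ref{basic14}), and the left-hand term $\O(Y_S)/\O_C$ injects into ${\rm Symb}_p(Y)$ through the $\tilde p$-independent construction of rem.\,\ref{basic25}(iii); its image lies in the kernel of $\alpha\circ\delta_{\tilde p}$ by prop.\,\ref{basic24.5}. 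Surjectivity onto $(\ast\wotimes_{\acris}H^1_{\rm dR}(\widetilde Y_S))^{N=0,\varphi=p}$ follows by combining cor.\,\ref{basic35.1}, the surjection $H^1_{\rm syn}(Y_S,1)\twoheadrightarrow H^1_{\rm dR}(\widetilde Y_S)^{\varphi=p}$ of prop.\,\ref{basic24.5}, and the fact that $\iota_{\tilde p}$ identifies $H^1_{\rm dR}(\widetilde Y_S)$ with the $(N=0)$-subspace of $\ast\wotimes_{\acris}H^1_{\rm dR}(\widetilde Y_S)$ $\varphi$-equivariantly.

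The main obstacle is the identification in the second step, namely matching the explicit cocycle shift coming from rem.\,\ref{basic36} with the abstract monodromy $N$ on $H^1_{\rm dR}(\widetilde Y_S)$. Once one commits to the identification of $N$ with $N_\mu$ acting on the graph-cohomology subquotients and is careful with the orientation conventions on the parameters $T_{a,s_i}$ (so that the residues come out with the correct signs), everything else reduces to the one-line computation using $N^2=0$. The independence of the exact sequence is then a formal consequence, since each of its three terms has been produced in a manner that either avoids $\tilde p$ altogether or passes through the single $\tilde p$-independent composition already established.
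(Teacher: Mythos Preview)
Your proof is correct and follows essentially the same approach as the paper: use rem.\,\ref{basic36} to compute $\alpha\circ\delta_{\tilde p[\epsilon]}(u)=\alpha\circ\delta_{\tilde p}(u)+\log[\epsilon]\cdot N(\alpha\circ\delta_{\tilde p}(u))$, compute how $\iota_{\tilde p}$ changes, and check that the two variations cancel via $N^2=0$. You are more explicit than the paper about the identification of the cocycle shift with the monodromy operator and about the exact sequence, but the core argument is the same.
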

\begin{proof}
Comme on l'a vu dans la rem.~\ref{basic36}, changer $\tilde p$ en $\epsilon\tilde p$
change les $z_{a,s_2}$ en 
$z_{a,s_2}+
{\rm Res}_a(\frac{du}{u})\mu(a)\log[\epsilon]$.
On en d\'eduit la formule
$$
\alpha\circ\delta_{\tilde p[\epsilon]}(u)=\alpha\circ\delta_{\tilde p}(u)+
N(\alpha\circ\delta_{\tilde p}(u))\log[\epsilon].$$
Maintenant 
$$\iota_{\tilde p[\epsilon]}(x)=1\otimes x-(\log\tilde p+\log[\epsilon])\otimes Nx=
\iota_{\tilde p}(x)-\log[\epsilon]\otimes Nx.$$
On en d\'eduit l'identit\'e
$$\iota_{\tilde p[\epsilon]}(\alpha\circ\delta_{\tilde p[\epsilon]}(u))=
\iota_{\tilde p}(\alpha\circ\delta_{\tilde p}(u))$$
qui permet de conclure.
\end{proof}

\subsubsection{Affino\"{\i}des}\label{BAS16}

\begin{theo}\label{basic26}
Si $Y$ est un affino\"{\i}de,
on a une suite $p^2$-exacte\footnote{Rappelons que $H^1(Y_S,\O)$ est tu\'e par $p^N$, pour $N$ assez grand.}
$$0\to \Z_p\wotimes\O(Y)^{\dual\dual}\to H^1_{\rm syn}(Y_S,1)
\to (H^1_{\rm dR}(\widetilde Y_S)^{\rm sep})^{\varphi=p}\to H^1(Y_S,\O).$$
\end{theo}
\begin{proof}
Il r\'esulte du (iv) de la rem.~\ref{basic25}
que l'on a un complexe
$\Z_p\wotimes\O(Y)^{\dual\dual}\to H^1_{\rm syn}(Y_S,1)
\to (H^1_{\rm dR}(\widetilde Y_S)^{\rm sep})^{\varphi=p}$.

$\bullet$  L'exactitude \`a gauche, i.e.~l'injectivit\'e
de $\Z_p\wotimes \O(Y)^{\dual\dual}\to H^1_{\rm syn}(Y_S,1)$, est une
cons\'equence du th\'eor\`eme de comparaison symboles-syntomique (th.~\ref{basic35})
et de la suite exacte de Kummer en cohomologie \'etale.

$\bullet$ Pour prouver l'exactitude au milieu, 
partons d'un $1$-cocycle $c=((\omega_i)_{i},(f_i)_{i},(g_{i,j})_{i,j})$
dans l'adh\'erence de la torsion.
Maintenant, $c$ \'etant dans l'adh\'erence de la torsion, 
il en est de m\^eme de sa restriction \`a $Y_i$,
et il existe $\tilde v_i\in\Z_p\wotimes \tilde R_i^{\dual\dual}$ tel que
$\omega_i=\frac{d\tilde v_i}{\tilde v_i}$ et $f_i=(1-\frac{\varphi}{p})\log\tilde v_i$
(cf.~prop.\,\ref{basic29} si $i\in S$, et~prop.\,\ref{basic34} (plus lemme~\ref{basic13}) si
$i\in A_c$; en fait $\tilde v_i\in \tilde R_i^{\dual\dual}$, si $i\in A_c$,
car $\tilde R_i^{\dual\dual}$ est complet pour la topologie $p$-adique).

Alors $\log \frac{\tilde v_i\otimes 1}{1\otimes \tilde v_j}
=g_{i,j}\in F^1\tilde R_{i,j}$.  On en d\'eduit, si $v_i=\theta(\tilde v_i)$, que
$v_i=v_j$ (resp. $v_i^2=v_j^2$ si $p=2$) sur $Y_{i,j}$ car $\log v_i=\log v_j$ (ce qui implique
que le quotient est une racine de l'unit\'e d'ordre $p^N$)
et $\log \frac{\tilde v_i\otimes 1}{1\otimes \tilde v_j}$ est entier, ce qui implique
que cette racine de l'unit\'e a un rel\`evement dans $\acris$ dont le logarithme est entier,
et donc est \'egale \`a $1$ (ou \`a $-1$ si $p=2$).

Les $v_i$ (resp.~les $v_i^2$) se recollent donc pour donner naissance \`a
$v\in \Z_p\wotimes\O(Y)^{\dual\dual}$.
Mais alors $c-\delta_{\tilde p}(v)$ est dans l'adh\'erence de la torsion
et dans $H^1(\Gamma,\acris^{\varphi=p})$.  Une petite modification de
la preuve du lemme~\ref{basic18.1} permet alors de prouver
que $c-\delta_{\tilde p}(v)$ est dans l'image de $\Z_p\wotimes\O(Y)^{\dual\dual}$,
ce qui permet de conclure.

$\bullet$ Comme $\Z_p\wotimes\O(Y)^{\dual\dual}$ se surjecte sur 
$(H^1_{\rm dR}(\widetilde Y_S)_{\rm tors})^{\varphi=p}$, et comme cette fl\`eche se factorise
\`a travers $H^1_{\rm syn}(Y_S,1)$ dont l'image dans $H^1(Y_S,\O)$ est nulle, cela induit, par 
passage au quotient, la fl\`eche $(H^1_{\rm dR}(\widetilde Y_S)^{\rm sep})^{\varphi=p}\to H^1(Y_S,\O)$ 
du th\'eor\`eme.
L'exactitude en $(H^1_{\rm dR}(\widetilde Y_S)^{\rm sep})^{\varphi=p}$ est alors
une cons\'equence de ce qui pr\'ec\`ede et de la prop.\,\ref{basic24.5}.

Ceci permet de conclure.
\end{proof}
\begin{rema}\label{basic27}
Il r\'esulte du (iii) de la rem.~\ref{basic24} que
la suite exacte du th.~\ref{basic26} se compl\`ete en le diagramme
commutatif suivant 
$$\xymatrix@R=.5cm@C=.6cm{
0\ar[r]& \Z_p\wotimes\O(Y)^{\dual\dual}\ar[r]\ar@<-.3cm>@{=}[d]& H^1_{\rm syn}(Y_S,1)
\ar[r]\ar[d]& (H^1_{\rm dR}(\widetilde Y_S)^{\rm sep})^{\varphi=p}\ar[r]\ar[d]& 0\\
0\ar[r]& \Z_p\wotimes\O(Y)^{\dual\dual}\ar[r]^-{\rm dlog}& \Omega^1(Y_S)
\ar[r]& H^1_{\rm dR}(Y_S)^{\rm sep}\ar[r]& 0}$$
dans lequel la ligne du haut est $p^{N(S)}$-exacte et celle du bas est un complexe
(exact \`a gauche et \`a droite mais pas au milieu).
\end{rema}

\subsubsection{Le th\'eor\`eme de comparaison pour les courbes quasi-compactes}\label{BAS23}
En modifiant le diagramme du (iii) de la rem.~\ref{basic24} gr\^ace \`a l'isomorphisme
du (i) du th.\,\ref{basic21}
et \`a la suite exacte du th.\,\ref{basic26},
on obtient le r\'esultat suivant, \`a la surjectivit\'e pr\`es
de $(\bst^+\otimes H^1_{\rm HK}(Y)^{\rm sep})^{N=0,\varphi=p}
\to H^1(Y,\O)$ dans le cas compact (dans le cas non compact, on a $H^1(Y,\O)=0$
et cette surjectivit\'e est triviale; voir le (i) de la rem.\,\ref{basic41} pour le cas compact).

\begin{theo} \label{basic40}
Si $Y$ est une courbe quasi-compacte, on a un diagramme commutatif dont la premi\`ere
ligne est exacte et la seconde est un complexe:
$$\xymatrix@C=.5cm@R=.5cm{
0\ar[r]& \Q_p\wotimes\O(Y)^{\dual\dual}\ar[r]\ar@<-.3cm>@{=}[d]& H^1_{\rm syn}(Y,1)
\ar[r]\ar[d]& (\bst^+\otimes _{\breve{C}}H^1_{\rm HK}(Y)^{\rm sep})^{N=0,\varphi=p}
\ar[r]\ar[d]&H^1(Y,\O)\ar@{=}[d]\ar[r]& 0\\
0\ar[r]& \Q_p\wotimes\O(Y)^{\dual\dual}\ar[r]^-{\rm dlog}& \Omega^1(Y)
\ar[r]& H^1_{\rm dR}(Y)^{\rm sep}\ar[r]&H^1(Y,\O)\ar[r]& 0}$$
\end{theo}

\begin{rema}\label{basic41}
{\rm (i)} Si $Y$ est propre, alors
$$\Q_p\wotimes \O(Y)^{\dual\dual}=0,\quad H^1_{\rm HK}(Y)^{\rm sep}=H^1_{\rm HK}(Y),\quad
\dim_{\Q_p}H^1_{\proet}(Y,\Q_p(1))<\infty,$$
et la suite du haut devient
$$0\to H^1_{\proet}(Y,\Q_p(1))
\to (\bst^+\otimes _{\breve{C}}H^1_{\rm HK}(Y))^{N=0,\varphi=p}\to
H^1(Y,\O)\to 0.$$  
L'exactitude \`a droite peut se prouver en utilisant un argument de Dimensions d'Espaces
de Banach: $(\bst^+\otimes H^1_{\rm HK}(Y)^{\rm sep})^{N=0,\varphi=p}$
et $H^1(Y,\O)$ sont les $C$-points d'Espaces de Banach ${\mathbb X}_{\rm st}$ et ${\mathbb X}_{\rm dR}$
de Dimension finie\footnote{
L'espace ${\mathbb X}_{\rm st}$ est le foncteur 
$\Lambda\mapsto (\Bst^+(\Lambda)\otimes_{\breve C} H^1_{\rm HK}(Y))^{N=0,\varphi=p}$,
l'espace ${\mathbb X}_{\rm dR}$ est le foncteur
$\Lambda\mapsto \Lambda\otimes_C H^1(Y,\O)$ et $f=(f_\Lambda)_\Lambda$,
avec $f_\Lambda=\theta_\Lambda\otimes\iota'_{\rm HK}$,
o\`u $\theta_\Lambda:\Bst^+(\Lambda)\to\Lambda$ est l'application habituelle, et
$\iota'_{\rm HK}$ est l'application de Hyodo-Kato
compos\'ee avec la surjection $H^1_{\rm dR}(Y)\to H^1(Y,\O)$.}
 et la fl\`eche ci-dessus s'\'etend en un morphisme $f:{\mathbb X}_{\rm st}\to
{\mathbb X}_{\rm dR}$. Si $Y$ est de genre $g$, alors ${\mathbb X}_{\rm dR}$ est l'espace trivial
${\mathbb V}^g$, de Dimension $(g,0)$, tandis que ${\mathbb X}_{\rm st}$ est de dimension
$\sum_{r_i\leq 1}(1-r_i,1)$, o\`u les $r_i$ sont les pentes de $\varphi$ sur $H^1_{\rm HK}(Y)$ avec
multiplicit\'e;
comme $0\leq r_i\leq 1$ et $r_i\mapsto 1-r_i$ est une bijection de l'ensemble des
$r_i$ car le cup-produit induit un accouplement parfait, commutant \`a $\varphi$,
de $H^1_{\rm HK}(Y)\times H^1_{\rm HK}(Y)$ dans $\breve C(-1)=H^2_{\rm HK}(Y)$,
on a ${\rm Dim}({\mathbb X}_{\rm st})=(g,2g)$.  On conclut en utilisant \cite[cor.\,5.17 (ii)]{CN}
ou bien en utilisant le fait que $\dim_{\Q_p}H^1_{\proet}(Y,\Q_p(1))=2g$ et la formule
$${\rm Dim}({\rm Im}\,f)={\rm Dim}({\mathbb X}_{\rm st})-{\rm Dim}({\rm Ker}\,f)=
(g,2g)-(0,2g)=(g,0)={\rm Dim}({\mathbb X}_{\rm dR}).$$
\hskip.4cm {\rm (ii)} si $Y$ n'est pas propre
{\rm (i.e.~$Y$ est un affino\"{\i}de)}, alors
$H^1(Y,\O)=0$, et la suite devient
$$0\to \Q_p\wotimes \O(Y)^{\dual\dual}\to H^1_{\proet}(Y,\Q_p(1))
\to (\bst^+\otimes _{\breve{C}}H^1_{\rm HK}(Y)^{\rm sep})^{N=0,\varphi=p}\to0.$$
En particulier, $H^1_{\proet}(Y,\Q_p(1))$ n'est pas de dimension finie
sur $\Q_p$ puisqu'il contient $\Q_p\otimes (\O(Y)^{\dual\dual}/1+{\goth m}_C)$ qui est un $C$-espace
de dimension infinie (isomorphe \`a $\O(Y)/C$ par le logarithme).
\end{rema}

\section{Affino\"{\i}des surconvergents}\label{BAS24}
Dans ce chapitre, on regarde ce qui se passe quand on transforme un affino\"{\i}de 
en courbe sans bord en le rendant surconvergent: il est bien connu que cela simplifie nettement
la structure de la cohomologie de de Rham, et la cohomologie pro\'etale $p$-adique prend aussi une forme 
plus sympathique (th.\,\ref{basic47}, la cohomologie pro\'etale $\ell$-adique, pour $\ell\neq p$, 
quant \`a elle ne change pas). On donne aussi deux interpr\'etations du
groupe $(\bst^+\otimes_{\breve C}H^1_{\rm HK}(X))^{N=0,\varphi=p}$, pour $X$ courbe propre,
en termes du rev\^etement universel du groupe $p$-divisible de la jacobienne de $X$,
l'une (th.\,\ref{basic49}) via la description de la cohomologie pro\'etale $p$-adique,
l'autre (th.\,\ref{inte1}) via l'int\'egration $p$-adique.
\Subsection{Cohomologie d'un affino\"{i}de surconvergent}
\subsubsection{Structure surconvergente sur un affino\"{i}de}\label{SSS11}
Soit $Y$ un affino\"{\i}de lisse de dimension $1$ sur $C$.
On fixe une structure surconvergente $Y^\dagger$ sur $Y$: on plonge $Y$ dans l'int\'erieur
d'une courbe quasi-compacte $X$ (par exemple en recollant des boules ouvertes le long des cercles
fant\^omes \`a la fronti\`ere de $Y$ pour obtenir (cf.~prop.\,\ref{Pconstr9}) une courbe propre).
On choisit une triangulation $S_X$ de $X$ contenant une triangulation $S_Y$ de $Y$,
et donc on a une inclusion de squelettes $\Gamma(S_Y)\subset\Gamma(S_X)$.
Si $\delta>0$ est assez petit, alors $\{r\in \Gamma(S_X),\ d(r,\Gamma(S_Y))\leq\delta\}$
est le squelette d'une triangulation d'un sous-affino\"{\i}de $Y_\delta$ de $X$,
et $Y_\delta$ est contenu dans l'int\'erieur de $Y_{\delta'}$ si $\delta<\delta'$;
de plus $\cap_{\delta>0}Y_\delta=Y$.
Alors $Y^\dagger$ est la {\og limite projective\fg} des $Y_{\delta}$, pour $\delta>0$, 
i.e. 
$$\O(Y^\dagger)=\varinjlim\nolimits_{\delta>0} \O(Y_{\delta})$$ 
et les $Y_\delta$, pour $\delta>0$, forment une {\it pr\'esentation} de $Y$. 
Le sous-anneau $\O(Y^\dagger)$ de $\O(Y)$
({\it des fonctions surconvergentes} sur $Y$) d\'epend du choix de $X$ (ou, ce qui revient
au m\^eme, de la pr\'esentation par les $Y_\delta$), mais deux pr\'esentations
diff\'erentes donnent des $\O(Y^\dagger)$ isomorphes (non canoniquement).
En pratique, $Y$ est souvent un sous-affino\"{\i}de d'un $X$ comme ci-dessus et
on dispose alors d'une pr\'esentation naturelle, mais dans le cas contraire tous
les objets consid\'er\'es d\'ependent du choix d'une pr\'esentation.

Si $\delta$ est assez petit, alors $Y_{\delta}\moins Y$ est contitu\'e de couronnes, une par cercle fant\^ome
\`a la fronti\`ere de $Y$, ouvertes
du c\^ot\'e de $Y$ et ferm\'ees de l'autre c\^ot\'e: si $X$ est obtenu
en recollant des boules ouvertes $D_i^-$, on peut identifier $D_i^-$ \`a la boule unit\'e
modulo le choix
d'un param\`etre local $z_i$, et alors $Y_\delta\moins Y$ est la r\'eunion des
couronnes $\{z_i\in D_i^-,\ 0<v_p(z_i)\leq\delta\}$.

\subsubsection{Cohomologie de de Rham}
On d\'efinit les $H^i_{\rm dR}(Y^\dagger)$ comme les groupes
de cohomologie du complexe $\O(Y^\dagger)\to\Omega^1(Y^\dagger)$.
On a
$$H^0_{\rm dR}(Y^\dagger)=C
\quad{\rm et}\quad
H^1_{\rm dR}(Y^\dagger)=\varinjlim\nolimits_{\delta>0}H^1_{\rm dR}(Y_\delta).$$

\begin{prop}\label{basic43}
L'application naturelle $H^1_{\rm dR}(Y^\dagger)\to H^1_{\rm dR}(Y)^{\rm sep}$
est surjective.
\end{prop}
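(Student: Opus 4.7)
The plan is to combine two ingredients: first, the finite-dimensionality of $H^1_{\rm dR}(Y)^{\rm sep}$ (Theorem \ref{Aff1}(ii)), and second, the explicit filtration of that space whose successive quotients are $H^1(\Gamma_{\rm int},C)$, $\prod_{s\in\Sigma(Y)}\bigl(C\otimes_{\O_C}H^1_{\rm dR}(Y_s^\lozenge)_0^{\rm sep}\bigr)$, and $H^1_c(\Gamma,C)^\dual$. Since de Rham cohomology commutes with filtered colimits of complexes, we have $H^1_{\rm dR}(Y^\dagger)=\varinjlim_\delta H^1_{\rm dR}(Y_\delta)$, and so it suffices to prove that for a given class $[\omega]\in H^1_{\rm dR}(Y)^{\rm sep}$ one can find $\delta>0$ small enough and a class in $H^1_{\rm dR}(Y_\delta)$ restricting to $[\omega]$. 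Since $H^1_{\rm dR}(Y)^{\rm sep}$ has finite dimension, a finite list of lifts is needed.

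I would first choose, for $\delta>0$ sufficiently small, a triangulation $S_\delta$ of $Y_\delta$ extending the chosen fine triangulation $S$ of $Y$, so that $\Gamma_{\rm int}(Y_\delta)=\Gamma_{\rm int}(Y)$, so that for every interior noeud $s\in\Sigma(Y)_{\rm int}$ one has $Y_{s,\delta}^\lozenge=Y_s^\lozenge$, and so that for every boundary noeud $s\in\partial Y$ the short $Y_{s,\delta}^\lozenge$ is obtained from $Y_s^\lozenge$ by glueing an annular collar of width $\delta$ along each cercle fant\^ome of $\partial^{\rm ad}Y$ attached to $s$. The restriction $H^1_{\rm dR}(Y_\delta)^{\rm sep}\to H^1_{\rm dR}(Y)^{\rm sep}$ is then compatible with the filtrations of Theorem \ref{Aff1}, and the lifting problem splits into three subproblems, one for each graded piece.

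The graph piece $H^1(\Gamma_{\rm int},C)$ lifts tautologically since the interior graph is preserved, and for each interior $s$ the short $Y_{s,\delta}^\lozenge=Y_s^\lozenge$ is unchanged. For the residue piece $H^1_c(\Gamma,C)^\dual$, given a prescribed collection of residues at the cercles fant\^omes of $\partial^{\rm ad}Y$, one extends them explicitly across the collar annuli: on $\{0<v_p(z)\le\delta\}$ the form $\alpha\,\tfrac{dz}{z}$ lives, has residue $\alpha$, and glues to any representative on $Y$ with the same residue (with, if needed, a corrective exact form $df$ on $Y$ to match the asymptotic). The only substantive task is the middle piece, which reduces to the following local claim for each $s\in\partial Y$: if $Z=Y_s^\lozenge$ and $Z'=Y_{s,\delta}^\lozenge$ are small shorts on $\O_{\breve C}$ with $Z\subset Z'$ differing only by annular collars added at some of the boundary cercles fant\^omes, then the restriction $H^1_{\rm dR}(Z')^{\rm sep}\to H^1_{\rm dR}(Z)^{\rm sep}$ is surjective.

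The hard step is precisely this local surjectivity, and it is where Proposition \ref{basic9} is indispensable. Concretely, by that proposition $H^1_{\rm dR}(Z)^{\rm sep}\cong M_1^\sharp(Z)$ is identified with a canonical $\psi$-stable finitely generated submodule of $\Omega^1(Z)$ generated by $\varphi$-eigenvectors with eigenvalue $p$, obtained from the Dieudonn\'e--Manin decomposition applied to $\psi$. Choosing the Frobenius lift on $\breve{Z}'$ compatibly with the one on $\breve{Z}$ (which is possible by the relevement of morphisms given in Corollary \ref{QQ2}), the endomorphism $\psi$ on $Z'$ restricts to $\psi$ on $Z$, and the characterization of $M_1^\sharp$ via the intersection $\bigcap_n\psi^n(M_N)$ (as in the proof of Lemma \ref{basic6}) shows that every $\varphi=p$ eigenvector in $H^1_{\rm dR}(Z)^{\rm sep}$ already extends to a $\varphi=p$ eigenvector of $H^1_{\rm dR}(Z')^{\rm sep}$. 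The corresponding form then extends from $Z$ to $Z'$ canonically, and after scalar extension to $C$ we obtain the desired lift, completing the proof.
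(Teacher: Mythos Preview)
Your approach is considerably more elaborate than necessary, and the hard step you single out has a genuine gap.

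The paper's proof is a three-line density argument: $\Omega^1(Y_\delta)$ is dense in $\Omega^1(Y)$ (restriction from a larger affinoid has dense image), and $W=H^1_{\rm dR}(Y)^{\rm sep}$ is finite-dimensional by Theorem~\ref{Aff1}. One picks $\omega_i\in\Omega^1(Y)$ lifting a basis $e_1,\dots,e_d$ of $W$, approximates each $\omega_i$ by some $\omega_{i,n_i}\in\Omega^1(Y_\delta)$ so well that ${\rm pr}(\omega_{i,n_i})$ is close to $e_i$ in a fixed valuation on $W$, and concludes by Nakayama that the ${\rm pr}(\omega_{i,n_i})$ already form a basis of $W$. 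No filtration, no local analysis, no $\psi$-operator.

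Your argument, by contrast, tries to lift each graded piece of the filtration of Theorem~\ref{Aff1}, and the difficulty concentrates at $s\in\partial Y$. There are two problems there. First, the geometry is not as you describe: with the natural triangulation $S_\delta=S\cup\partial Y_\delta$, a vertex $s\in\partial Y$ becomes \emph{interior} in $Y_\delta$, so $Y_{s,\delta}^\lozenge$ is obtained from $Y_s^\lozenge$ by gluing open \emph{disks} (not annular collars) along the formerly-boundary cercles fant\^omes, and is therefore \emph{proper} (this is exactly what is used in the proof of Lemma~\ref{basic44}). In particular $Z$ and $Z'$ are not both ``small shorts on $\O_{\breve C}$'' as you assume. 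Second, and more seriously, your justification that ``every $\varphi=p$ eigenvector in $H^1_{\rm dR}(Z)^{\rm sep}$ already extends to one of $H^1_{\rm dR}(Z')^{\rm sep}$'' does not follow from the characterization $M_1^\sharp=\bigcap_n\psi^n(M_N)$: that description is intrinsic to each space and gives no mechanism for lifting eigenvectors along a restriction map. What is actually needed here is that the image of $H^1_{\rm dR}(Y_{s,\delta}^\lozenge)$ in $H^1_{\rm dR}(Y_s^\lozenge)^{\rm sep}_0$ covers the slope-$1$ part $C\otimes H^1_{\rm dR}(\breve X)^{[1]}$ identified by Proposition~\ref{basic9.11}, and this is a nontrivial statement that you have not established.

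Note that the filtration comparison you are attempting is essentially the content of Lemma~\ref{basic44} and Corollary~\ref{basic45}, which the paper proves \emph{after} Proposition~\ref{basic43}; the simple density argument is what makes the whole sequence work without circularity.
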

\begin{proof}
Fixons $\delta>0$.  Si $\omega\in \Omega^1(Y)$, on peut \'ecrire
$\omega$ comme la limite d'une suite
$(\omega_n)_{n\geq 0}$, avec $\omega_n\in\Omega^1(Y_{\delta})$,
la suite convergeant dans $\Omega^1(Y)$ mais, en g\'en\'eral,
pas dans $\Omega^1(Y_{\delta})$.

Soit $W=H^1_{\rm dR}(Y)^{\rm sep}$, et soit ${\rm pr}:\Omega^1(Y)\to W$
la projection naturelle.  Alors $W$ est de dimension finie;
choisissons en une base $e_1,\dots,e_d$, et munissons $W$ de la valuation
$v_W(x_1e_1+\cdots+x_de_d)=\inf_iv_p(x_i)$. Choisissons $\omega_i\in\Omega^1(Y)$
ayant pour image $e_i$, et \'ecrivons 
$\omega_i$ comme la limite d'une suite 
 $(\omega_{i,n})_{n\geq 0}$ comme ci-dessus.
Alors ${\rm pr}(\omega_{i,n})$ tend vers $e_i$, et il existe $n_i$ tel que
$f_i={\rm pr}(\omega_{i,n_i})$ v\'erifie $v_W(e_i-f_i)>0$.
Les $f_i$ forment donc une base de $W$, qui est incluse
dans l'image de
la fl\`eche $H^1_{\rm dR}(Y^\dagger)\to H^1_{\rm dR}(Y)^{\rm sep}$.

Ceci permet de conclure.
\end{proof}

Si $\beta>\alpha>0$, on choisit une triangulation de $Y_{\beta}$ adapt\'ee
\`a $Y_{\alpha}$ et $Y$ (on peut supposer, ce que nous ferons,
que $S_\beta\moins S_\alpha=\partial Y_\beta$).  Cela nous fournit des mod\`eles sur $\O_C$,
semi-stables, $Y_S\subset Y_{S_\alpha}\subset Y_{S_\beta}$ 
de $Y\subset Y_{\alpha}\subset Y_{\beta}$.
On note $\Gamma\subset\Gamma_\alpha\subset\Gamma_\beta$ les graphes
correspondants: ces graphes ont la m\^eme cohomologie car on passe de l'un
\`a l'autre en changeant juste les longueurs des ar\^etes issues des
points de $\partial^{\rm ad}Y$ (et en rajoutant des points sur ces ar\^etes).

\begin{lemm}\label{basic44}
L'application naturelle $H^1_{\rm dR}(Y_{\beta})\to H^1_{\rm dR}(Y_{\alpha})$
se factorise \`a travers $H^1_{\rm dR}(Y_{\beta})^{\rm sep}$.
\end{lemm}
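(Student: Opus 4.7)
The plan is to combine the filtration of $H^1_{\rm dR}(Y_\beta)$ furnished by Theorem~\ref{Aff1} with the identification of the closure of $0$ coming from Lemma~\ref{basic18}, and show that this closure maps to literal $0$ in $H^1_{\rm dR}(Y_\alpha)$ (which is the content of the lemma, since the kernel of $H^1_{\rm dR}(Y_\beta) \to H^1_{\rm dR}(Y_\beta)^{\rm sep}$ is exactly the closure of $0$). The proof of Theorem~\ref{Aff1} together with Lemma~\ref{basic18} identifies two sources for this closure: (a) the subgroup $K := \ker[H^1(\Gamma_\beta, C) \to H^1(\Gamma_{\beta, {\rm int}}, C)]$ of $H^1(\Gamma_\beta, C) \subset H^1_{\rm dR}(Y_\beta)$, and (b) the closures of $0$ inside each $H^1_{\rm dR}(Y_s^\lozenge)_0$ for $s \in \partial Y_\beta$. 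I would treat these pieces separately.

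For (a), which is purely topological: by construction, $\Gamma_\alpha$ is obtained from $\Gamma_{\beta, {\rm int}}$ by attaching, at each vertex of $\partial Y_\alpha$, a finite family of semi-infinite rays of length $0^+$ (replacing the compact edges of $\Gamma_\beta$ that were truncated when passing to the interior, since $S_\beta \setminus S_\alpha = \partial Y_\beta$). Such rays are contractible, so the inclusion $\Gamma_{\beta, {\rm int}} \hookrightarrow \Gamma_\alpha$ induces an isomorphism on $H^1$. Hence the restriction map $H^1(\Gamma_\beta, C) \to H^1(\Gamma_\alpha, C)$ factors through $H^1(\Gamma_{\beta, {\rm int}}, C)$ and automatically kills $K$, and the contribution of (a) to the image in $H^1_{\rm dR}(Y_\alpha)$ vanishes.

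For (b): the key observation is that for $s \in \partial Y_\beta$ the body of the short $Y_s^\lozenge$ lies entirely in $Y_\beta \setminus Y_\alpha$; only the truncated couronnes $Y_a \cap Y_\alpha$ for $a \in A_c(s)$ are visible from $Y_\alpha$. Revisiting the proof of Lemma~\ref{basic18.1}, every element in the closure of $0$ inside $H^1_{\rm dR}(Y_s^\lozenge)_0$ is the limit of coboundaries of cocycles whose only nontrivial components on the adjacent couronnes arise from functions of the form $\phi_n = p^n \log(1 + (\alpha^{1/p^n}-1)\phi)$, with $\phi \in \O(Y_s^\lozenge)$ having prescribed values at the endpoints of the $Y_a$'s. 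On each strictly smaller couronne $Y_a \cap Y_\alpha$ the sequence $\phi_n$ converges to an honest function (either $\log \alpha$ or $0$, depending on the endpoint), yielding a primitive in $\O(Y_a \cap Y_\alpha)$ that exhibits the restricted class as a genuine coboundary in the \v{C}ech-style complex $\overline{C}^\bullet_{\rm dR}(Y_{S_\alpha})$.

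The hard part will be step (b): I expect that establishing the coherent convergence of the approximating primitives $\phi_n$ across all the couronnes attached to a single $s \in \partial Y_\beta$, and checking that their limits on the restricted couronnes glue compatibly under the recollement maps $\iota_{i,j}$ defining the patron of $Y_\alpha$, will require a careful re-examination of the estimates in the proof of Lemma~\ref{basic18.1} applied to the restricted geometry $Y_a \cap Y_\alpha$.
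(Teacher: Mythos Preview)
Your step (b) rests on a misreading of Lemma~\ref{basic18.1}. That lemma does one very specific thing: it shows that certain \emph{graph} classes $e_{a,s}\in H^1(\Gamma,\O_C)$ lie in the closure of $0$ in $H^1_{\rm dR}(Y_S)$, by exhibiting each one as the limit of coboundaries of the particular functions $\phi_n=p^n\log(1+(\alpha^{1/p^n}-1)\phi)$. It says nothing about the closure of $0$ inside $H^1_{\rm dR}(Y_s^\lozenge)_0$ for $s\in\partial Y_\beta$. In fact that closure is \emph{all} of $H^1_{\rm dR}(Y_{\beta,s}^\lozenge)_0$: the space $Y_{\beta,s}^\lozenge$ is a closed disk (the boundary short is an annulus and gluing a disk on its unique $A_c(s)$-side gives a disk), so $H^1_{\rm dR}(Y_{\beta,s}^\lozenge)_0^{\rm sep}=0$. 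A generic class there, say $[T^{p^N-1}\,dT]$, is not produced by the $\phi_n$-construction at all. So your approximation scheme cannot cover $\overline{0}_\beta$, and the ``hard part'' you anticipate is chasing a mechanism that does not exist.

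The paper avoids any explicit construction of primitives. The key structural point---which you gesture at but do not use---is that for every $s\in S_\alpha=S_\beta\moins\partial Y_\beta$, the partial compactification $Y_{\beta,s}^\lozenge$ is \emph{proper} (since $s\in S_{\beta,{\rm int}}$, all ghost circles at $s$ lie in $A_c(s)$ and get filled in). Hence $H^1_{\rm dR}(Y_{\beta,s}^\lozenge)_0$ is already separated for those $s$, while for $s\in\partial Y_\beta$ the separated quotient is $0$. Feeding this into the commutative diagram from Prop.~\ref{basic16}, together with $H^1(\Gamma_\delta,C)=H^1(\Gamma_{\delta,{\rm int}},C)$ for $\delta=\alpha,\beta$, one sees that the right vertical map $\prod_{S_\beta}H^1_{\rm dR}(Y_{\beta,s}^\lozenge)_0\to\prod_{S_\alpha}H^1_{\rm dR}(Y_{\alpha,s}^\lozenge)_0$ factors through $\prod_{S_\beta}H^1_{\rm dR}(Y_{\beta,s}^\lozenge)_0^{\rm sep}$, and then a short diagram chase using Th.~\ref{Aff1} (and continuity of restriction, so that $\overline{0}_\beta$ lands in $\overline{0}_\alpha$, which itself injects into the product) finishes the job. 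No estimates on $\phi_n$ are needed.
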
 
\begin{proof}
Si $\delta=\alpha,\beta$, notons $H^1_{\rm dR}(Y_\delta)_0$ le noyau de l'application 
$\omega\mapsto {\rm Res}(\omega)$, i.e.~${\rm Ker}[H^1_{\rm dR}(Y_\delta)\to
H^1_c(\Gamma_\delta,C)^\dual]$. Comme $H^1_c(\Gamma,C)^\dual$, qui est
isomorphe \`a $H^1_{\rm dR}(Y_\delta)/H^1_{\rm dR}(Y_\delta)_0$, est un quotient
de $H^1_{\rm dR}(Y_\delta)^{\rm sep}$, il suffit de prouver l'\'enonc\'e
pour $H^1_{\rm dR}(Y_{\beta})_0\to H^1_{\rm dR}(Y_{\alpha})_0$.

La prop.~\ref{basic16} nous donne (apr\`es avoir invers\'e $p$)
un diagramme commutatif, dont les lignes sont $p^N$-exactes (avec $N$
d\'ependant de $\alpha$ et $\beta$)
$$\xymatrix@R=.5cm@C=.6cm{
0\ar[r]&H^1(\Gamma_\beta,C)\ar[r]\ar@{=}[d]&H^1_{\rm dR}(Y_\beta)_0\ar[r]\ar[d]&
\prod_{s\in S_\beta}H^1_{\rm dR}(Y_{\beta,s}^\lozenge)_0\ar[r]\ar[d]&0\\
0\ar[r]&H^1(\Gamma_\alpha,C)\ar[r]&H^1_{\rm dR}(Y_\alpha)_0\ar[r]&
\prod_{s\in S_\alpha}H^1_{\rm dR}(Y_{\alpha,s}^\lozenge)_0\ar[r]&0\\
}$$
La fl\`eche de droite est le
produit sur $s\in S_\beta$ des fl\`eches naturelles:

$\bullet$ si $s\in S_\alpha\moins\partial Y_\alpha$, alors $Y_{\beta,s}^\lozenge= Y_{\alpha,s}^\lozenge$
et la fl\`eche correspondante est l'identit\'e,

$\bullet$ si $s\in \partial Y_\alpha$, 
alors $Y_{\beta,s}^\lozenge$ est une compactification de $Y_{\alpha,s}^\lozenge$
obtenue en recollant des boules ouvertes le long des cercles fant\^omes appartenant \`a $Y_s$
\`a la fronti\`ere de $Y_\alpha$ et la fl\`eche est l'injection naturelle,

$\bullet$ si $s\in\partial Y_\beta$, la fl\`eche correspondante est identiquement nulle.

On conclut en utilisant le th.~\ref{Aff1}, 
qui fournit une suite exacte
$$0\to H^1(\Gamma_{\beta,{\rm int}},C)\to H^1_{\rm dR}(Y_\beta)_0^{\rm sep}\to 
\prod_{s\in S_\beta}H^1_{\rm dR}(Y_{\beta,s}^\lozenge)^{\rm sep}_0\to 0,$$
le fait que $\Gamma_{\delta,{\rm int}}$ et $\Gamma_\delta$ ont m\^eme cohomologie,
si $\delta=\alpha,\beta$, et les \'egalit\'es
$H^1_{\rm dR}(Y_{\beta,s}^\lozenge)^{\rm sep}_0=H^1_{\rm dR}(Y_{\beta,s}^\lozenge)_0
=H^1_{\rm dR}(Y_{\beta,s}^\lozenge)$,
si $s\in S_\beta\moins\partial S_\beta$ (car $Y_{\beta,s}^\lozenge$ est propre), 
et $H^1_{\rm dR}(Y_{\beta,s}^\lozenge)^{\rm sep}_0=0$ si $s\in\partial S_\beta$
(car $Y_{\beta,s}$ est un disque).
\end{proof}

\begin{coro}\label{basic45}
{\rm (i)}
Si $\delta>0$, l'application naturelle $H^1_{\rm dR}(Y_{\delta})\to H^1_{\rm dR}(Y^\dagger)$
induit un isomorphisme
$H^1_{\rm dR}(Y_{\delta})^{\rm sep}\overset{\sim}{\to} H^1_{\rm dR}(Y^\dagger)$.

{\rm (ii)} 
Le groupe
$H^1_{\rm dR}(Y^\dagger)$ est de rang fini
et admet une filtration naturelle dont les quotients successifs
sont:
$$H^1_{\rm dR}(Y^\dagger)=\big[\xymatrix@C=.3cm{
H^1(\Gamma,C)\ar@{-}[r]&\prod_{s\in\Sigma(Y)}H^1_{\rm dR}(Y_s^\lozenge)
\ar@{-}[r]& H^1_c(\Gamma,C)^\dual}\big].$$
\end{coro}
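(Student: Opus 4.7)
The plan is to deduce both parts of the corollary from Lemma~\ref{basic44} and the filtration description in Theorem~\ref{Aff1}. First I would construct the map in (i): Lemma~\ref{basic44} guarantees that for each pair $0<\delta'<\delta$, the restriction $H^1_{\rm dR}(Y_\delta)\to H^1_{\rm dR}(Y_{\delta'})$ factors canonically through $H^1_{\rm dR}(Y_\delta)^{\rm sep}$. Passing to the direct limit as $\delta'\to 0^+$ yields a canonical map $\alpha_\delta:H^1_{\rm dR}(Y_\delta)^{\rm sep}\to H^1_{\rm dR}(Y^\dagger)$, and the same factorisation applied at every stage of the direct system identifies $H^1_{\rm dR}(Y^\dagger)=\varinjlim_{\delta'}H^1_{\rm dR}(Y_{\delta'})$ with $\varinjlim_{\delta'}H^1_{\rm dR}(Y_{\delta'})^{\rm sep}$. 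It therefore suffices to prove that the induced transition maps $H^1_{\rm dR}(Y_{\delta_1})^{\rm sep}\to H^1_{\rm dR}(Y_{\delta_2})^{\rm sep}$ (for $0<\delta_2<\delta_1$) are isomorphisms.

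To verify this I would invoke the filtration from Theorem~\ref{Aff1}, whose three graded pieces on $H^1_{\rm dR}(Y_\delta)^{\rm sep}$ (after inverting $p$) are $H^1(\Gamma_{\delta,{\rm int}},C)$, $\prod_{s\in\Sigma(Y_\delta)}H^1_{\rm dR}(Y_{\delta,s}^\lozenge)_0^{\rm sep}$, and $H^1_c(\Gamma_\delta,C)^\dual$, and analyse each piece. Topologically $\Gamma_\delta$ is obtained from $\Gamma$ by subdividing each non-compact arrete through a new vertex of $\partial Y_\delta$ and is homeomorphic to $\Gamma$; this gives $\delta$-independent identifications $H^1(\Gamma_{\delta,{\rm int}},C)\cong H^1(\Gamma,C)$ and $H^1_c(\Gamma_\delta,C)^\dual\cong H^1_c(\Gamma,C)^\dual$. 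In the middle piece, new vertices $s\in\partial Y_\delta$ have $Y_{\delta,s}^{\rm sp}\cong\piqp$ and contribute nothing; for $s\in\Sigma(Y)$ the short $Y_s$ is unchanged and, since $s$ is now interior to $Y_\delta$, the compactification $Y_{\delta,s}^\lozenge$ is obtained by gluing a disk at \emph{every} cercle fant\^ome at the fronti\`ere of $Y_s$---an operation intrinsic to $Y_s$, hence $\delta$-independent, and producing a proper curve. These identifications commute with the transition maps, which must therefore be isomorphisms, proving~(i).

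Part (ii) follows immediately: via the isomorphism of (i), $H^1_{\rm dR}(Y^\dagger)$ inherits the filtration with graded pieces $H^1(\Gamma,C)$, $\prod_{s\in\Sigma(Y)}H^1_{\rm dR}(Y_s^\lozenge)$ (writing $Y_s^\lozenge:=Y_{\delta,s}^\lozenge$ for any small $\delta$), and $H^1_c(\Gamma,C)^\dual$; since $\Gamma$ is a finite graph and each $Y_s^\lozenge$ is a proper curve, all three pieces are finite-dimensional and so is $H^1_{\rm dR}(Y^\dagger)$.

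The main technical obstacle will be the boundary noeuds $s\in\Sigma(Y)\cap\partial Y$, where the filtration of $H^1_{\rm dR}(Y)^{\rm sep}$ (Theorem~\ref{Aff1}) involves the short $Y_s^\lozenge$ while the filtration of $H^1_{\rm dR}(Y^\dagger)$ must involve the proper compactification $Y_{\delta,s}^\lozenge$. Making the identifications on the middle piece precise amounts to checking that the natural injection $H^1_{\rm dR}(Y_{\delta_1,s}^\lozenge)\hookrightarrow H^1_{\rm dR}(Y_{\delta_2,s}^\lozenge)$ produced in the proof of Lemma~\ref{basic44} is in fact an equality---both sides being the cohomology of the same proper curve---a point which is geometrically clear but requires a careful unpacking of the conventions defining $Y_{\delta,s}^\lozenge$ as $\delta$ varies.
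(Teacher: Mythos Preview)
Your proposal is correct and follows essentially the same route as the paper: use Lemma~\ref{basic44} to identify $H^1_{\rm dR}(Y^\dagger)$ with $\varinjlim_\delta H^1_{\rm dR}(Y_\delta)^{\rm sep}$, then show via the filtration of Theorem~\ref{Aff1} that the transition maps are isomorphisms (the graph pieces being $\delta$-independent, the new boundary vertices contributing nothing since their $Y_s^\lozenge$ is a disk, and the $Y_{\delta,s}^\lozenge$ for $s\in\Sigma(Y)$ being the proper compactification of $Y_s$, hence $\delta$-independent). Your worry in the last paragraph is slightly misplaced: once $\delta>0$, every $s\in\Sigma(Y)$ is already interior to $Y_\delta$, so the transition maps between different $\delta>0$ never involve the delicate case you describe; the only place this matters is in interpreting the notation $Y_s^\lozenge$ in the statement of~(ii), which indeed means the proper $Y_{\delta,s}^\lozenge$ as you say.
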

\begin{proof}
D'apr\`es le lemme~\ref{basic44}, on a
$H^1_{\rm dR}(Y^\dagger)=\varinjlim_{\delta>0}H^1_{\rm dR}(Y_{\delta})^{\rm sep}$.
Or on d\'eduit du th.~\ref{Aff1} que
$H^1_{\rm dR}(Y_{\beta})^{\rm sep}\to H^1_{\rm dR}(Y_{\alpha})^{\rm sep}$
est un isomorphisme si $\beta>\alpha$ (la surjectivit\'e est imm\'ediate
sur la description du th\'eor\`eme, et l'injectivit\'e r\'esulte
de ce que $H^1_{\rm dR}(Y_s^\lozenge)^{\rm sep}_0=0$ si $s\in\partial Y_\delta$
car $s$ est un cercle et $Y_s^{\lozenge}$ est un disque).

Cela prouve le (i), et le (ii) s'en d\'eduit en utilisant de nouveau le th.~\ref{Aff1},
le fait que $\Gamma_{\delta,{\rm int}}$ et $\Gamma_\delta$ ont m\^eme cohomologie
si $\delta>0$, et le fait que $H^1_{\rm dR}(\piqp)=0$ pour passer de $S$ \`a $\Sigma(Y)$.
\end{proof}

\subsubsection{Cohomologie de Hyodo-Kato}\label{basic46}
Si $\beta>\alpha$,
on a un morphisme de complexes $C_{\rm dR}^\bullet(\widetilde Y_\beta)\to
C_{\rm dR}^\bullet(\widetilde Y_\alpha)$
(comme ci-dessus, ce morphisme est l'identit\'e sur toutes les composantes
$Y_a$, $Y_s$, $Y_{a,s}$, pour $a\in A_\alpha$ et $s\in S_\alpha$,
et est l'application nulle sur les autres).

On en d\'eduit une application naturelle $H^1_{\rm HK}(Y_{\beta})\to H^1_{\rm HK}(Y_{\alpha})$
commutant \`a $\varphi$ et $N$ et qui se factorise \`a travers $H^1_{\rm HK}(Y_{\beta})^{\rm sep}$,
et on d\'efinit $H^1_{\rm HK}(Y^\dagger)$ par 
$$H^1_{\rm HK}(Y^\dagger):=\varinjlim\nolimits_{\delta>0}
H^1_{\rm HK}(Y_{\delta})=\varinjlim\nolimits_{\delta>0}
H^1_{\rm HK}(Y_{\delta})^{\rm sep}.$$
Les isomorphismes $C\otimes_{\breve C}H^1_{\rm HK}(Y_{\delta})^{\rm sep}\cong 
H^1_{\rm dR}(Y_{\delta})^{\rm sep}\cong H^1_{\rm dR}(Y^\dagger)$
montrent que l'application naturelle
$H^1_{\rm HK}(Y_{\delta})^{\rm sep}\to H^1_{\rm HK}(Y^\dagger)$
est un isomorphisme pour tout $\delta>0$.
En reprenant les arguments de la preuve du cor.\,\ref{basic45},
on d\'eduit de la rem.~\ref{BAS12.4} le r\'esultat suivant:

\begin{prop}\label{basic45.1}
Le groupe
$H^1_{\rm HK}(Y^\dagger)$ est de rang fini sur $\breve C$
et admet une filtration naturelle dont les quotients successifs
sont:
$$H^1_{\rm HK}(Y^\dagger)=\big[\xymatrix@C=.3cm{
H^1(\Gamma,{\breve C})\ar@{-}[r]&\prod_{s\in\Sigma(Y)}H^1_{\rm cris}(Y^{\rm sp}_s)
\ar@{-}[r]& H^1_c(\Gamma,{\breve C})^\dual}(-1)\big].$$
\end{prop}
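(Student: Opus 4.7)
Le plan est de reproduire la strat\'egie du cor.\,\ref{basic45}, qui a trait\'e le cas de la cohomologie de de Rham, en la transportant au niveau de Hyodo-Kato. Comme on l'a observ\'e juste avant l'\'enonc\'e, l'isomorphisme
$$\iota_{\rm HK}:C\otimes_{\breve C}H^1_{\rm HK}(Y_\delta)^{\rm sep}\cong H^1_{\rm dR}(Y_\delta)^{\rm sep}$$
du (i) du th.\,\ref{basic21}, coupl\'e au cor.\,\ref{basic45}, entra\^{\i}ne que la fl\`eche naturelle
$H^1_{\rm HK}(Y_\delta)^{\rm sep}\to H^1_{\rm HK}(Y^\dagger)$ est un isomorphisme pour tout $\delta>0$.
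Ceci ram\`ene l'\'enonc\'e \`a la description de $H^1_{\rm HK}(Y_\delta)^{\rm sep}$
pour un $\delta>0$ fix\'e.

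Je commencerais par appliquer le (ii) de la rem.\,\ref{BAS12.4} au mod\`ele semi-stable $Y_{S_\delta}$, ce qui fournit une filtration naturelle, stable par $\varphi$, de $H^1_{\rm HK}(Y_\delta)^{\rm sep}$, de quotients successifs
$$H^1(\Gamma_{\delta,{\rm int}},\breve C),\quad
\hskip-.3cm\prod_{s\in\Sigma(Y_\delta)_{\rm int}}\hskip-.3cm H^1_{\rm cris}(Y_{\delta,s}^{\rm sp})\oplus
\hskip-.1cm\prod_{s\in\partial Y_\delta}\hskip-.1cm H^1_{\rm cris}(Y_{\delta,s}^{\rm sp})^{[1]},\quad
H^1_c(\Gamma_\delta,\breve C)^\dual(-1).$$
Il reste alors \`a identifier chacun de ces trois termes avec celui apparaissant dans l'\'enonc\'e.

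Pour le premier et le troisi\`eme terme, l'argument sera de nature purement combinatoire. Rappelons que $Y_\delta\moins Y$ est constitu\'e de couronnes, chacune ayant pour squelette un segment compact reliant un sommet de $\partial Y$ \`a un sommet de $\partial Y_\delta$; le graphe $\Gamma_\delta$ s'obtient donc \`a partir de $\Gamma$ en rempla\c{c}ant chaque ar\^ete non relativement compacte de longueur $0^+$ issue de $\partial Y$ par une ar\^ete compacte prolong\'ee par une ar\^ete de longueur $0^+$ en le nouveau sommet. Le sous-graphe $\Gamma_{\delta,{\rm int}}$ se r\'etracte sur $\Gamma$ par contraction des nouvelles ar\^etes compactes, d'o\`u l'isomorphisme $H^1(\Gamma_{\delta,{\rm int}},\breve C)\cong H^1(\Gamma,\breve C)$. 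L'identification $H^1_c(\Gamma_\delta,\breve C)^\dual\cong H^1_c(\Gamma,\breve C)^\dual$ r\'esulte de la m\^eme r\'etraction, en utilisant la suite exacte du (ii) de la rem.\,\ref{sympat3}.

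Pour le terme central, il convient de remarquer que $\Sigma(Y_\delta)_{\rm int}=\Sigma(Y)$: en effet, les sommets ajout\'es dans $S_\delta\moins S_Y$ sont soit dans $\partial Y_\delta$ (et donc exclus du terme int\'erieur), soit correspondent \`a des $\piqp$ (tubes des nouveaux sommets du squelette, dans les couronnes $Y_\delta\moins Y$) pour lesquels $H^1_{\rm cris}=0$. Quant au terme $\prod_{s\in\partial Y_\delta}H^1_{\rm cris}(Y_{\delta,s}^{\rm sp})^{[1]}$, il est nul puisque, par construction, chaque $Y_{\delta,s}^{\rm sp}$ pour $s\in\partial Y_\delta$ est un $\piqp$ \'epoint\'e et a donc une cohomologie cristalline triviale. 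On obtient ainsi la description annonc\'ee, et la finitude du rang d\'ecoule de celle obtenue dans la rem.\,\ref{BAS12.4}. L'obstacle principal sera de s'assurer que tous ces isomorphismes sont compatibles avec $\varphi$ (le d\'ecalage $(-1)$ \'etant intrins\`eque \`a l'action de $\varphi$ sur les r\'esidus) et qu'ils commutent aux fl\`eches de transition $H^1_{\rm HK}(Y_\beta)^{\rm sep}\to H^1_{\rm HK}(Y_\alpha)^{\rm sep}$ lorsque $\beta>\alpha$, point qui se v\'erifie directement au niveau des complexes d\'efinissant la cohomologie cristalline logarithmique.
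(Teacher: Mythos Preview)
Your proposal is correct and follows exactly the approach the paper indicates: the paper's proof is the single sentence ``En reprenant les arguments de la preuve du cor.\,\ref{basic45}, on d\'eduit de la rem.~\ref{BAS12.4} le r\'esultat suivant'', and you have spelled out precisely those arguments. One minor point: your claim that $\Sigma(Y_\delta)_{\rm int}=\Sigma(Y)$ is not quite literally true (a boundary point of $Y$ of genus~$0$ and valence~$\leq 2$ ceases to be a node in $Y_\delta$), but since such points have $H^1_{\rm cris}=0$ this does not affect the conclusion, and you already invoke the vanishing for $\piqp$'s to handle this.
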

\begin{rema}\label{basic45.2}
En passant \`a la limite dans le (i) du th.\,\ref{basic21},
on en d\'eduit un isomorphisme
$$\iota_{\rm HK}:C\otimes_{\breve C}H^1_{\rm HK}(Y^\dagger)\cong H^1_{\rm dR}(Y^\dagger).$$
\end{rema}

\subsubsection{Cohomologie pro\'etale $p$-adique}
On d\'efinit $H^1_{\proet}(Y^\dagger,\Q_p(1))$ par:
$$H^1_{\proet}(Y^\dagger,\Q_p(1)):=
\varinjlim\nolimits_{\delta>0}H^1_{\proet}(Y_{\delta},\Q_p(1)).$$
(Notons que $H^1_{\proet}(Y_{\delta},\Q_p(1))=H^1_{\eet}(Y_{\delta},\Q_p(1))$ car
$Y_\delta$ est quasi-compact.)

En passant \`a la limite quand $\delta\to 0$ 
dans le diagramme du th.~\ref{basic40},
et en utilisant l'isomorphisme $H^1_{\rm syn}(Y_\delta,1)\cong H^1_{\eet}(Y_{\delta},\Q_p(1))$
du cor.\,\ref{basic35.1},
on obtient le r\'esultat suivant.
\begin{theo}\label{}\label{basic47}
Si $Y^\dagger$ est un affino\"{\i}de surconvergent,
on a le diagramme commutatif fonctoriel d'espaces vectoriels topologiques
suivant:
$$
\xymatrix@R=.6cm@C=.6cm{
0\ar[r] & \O(Y^\dagger)/C\ar[r]^-{\exp}\ar@{=}[d] & H^1_{\proet}(Y^\dagger,\Q_p(1))\ar[d] \ar[r] &
(\bst^+\otimes_{\breve C} H^1_{\rm HK}(Y^\dagger))^{N=0,\varphi=p}\ar[r]\ar[d]^{\theta\otimes\iota_{\rm HK}} & 0\\
0\ar[r]& \O(Y^\dagger)/C \ar[r]^-d & \Omega^1(Y^\dagger) \ar[r]^-{\pi_{\rm dR}} & H^1_{\rm dR}(Y^\dagger)\ar[r] & 0
}
$$
dans lequel les lignes sont exactes et toutes les fl\`eches sont d'image ferm\'ee.
\end{theo}
\begin{proof}
Comme $\Q_p\wotimes C^\dual=0$, on peut supposer que toutes les fonctions qui
interviennent prennent la valeur $1$ en $Q$, o\`u $Q\in Y$ est fix\'e.
Sous cette hypoth\`ese, si
$0<\delta'<\delta$, et
si $g\in \O(Y_\delta)^{\dual\dual}$, alors
$g\in 1+p^{\delta-\delta'}\O(Y_{\delta'})$ d'apr\`es la prop.\,\ref{metr1}
(car $g-1$ s'annule sur $Y_\delta$, en $Q$).
Il en r\'esulte que, si $(f_n)_{n\in\N}$ est une suite d'\'el\'ements
de $\O(Y_\delta)^{\dual\dual}$, alors $\prod_{n\in\N}f_n^{p^n}$ converge dans $\O(Y_{\delta'})^\dual$.
On en d\'eduit que $$\varinjlim \Q_p\widehat\otimes (\O(Y_\delta)^{\dual\dual}/C^\dual)=
\Q_p\otimes (\O(Y^\dagger)^{\dual\dual}/C^\dual),$$
ce qui fournit le diagramme ci-dessus avec $\Q_p\otimes (\O(Y^\dagger)^{\dual\dual}/C^\dual)$
au lieu de $\O(Y^\dagger)/C$.
On conclut en utilisant le fait que
$\log\,f$ converge dans $\O(Y^\dagger)$ si
$f\in \O(Y^\dagger)^{\dual\dual}$, et que $\log$ induit un isomorphisme
$$\Q_p\otimes (\O(Y^\dagger)^{\dual\dual}/C^\dual)\cong \O(Y^\dagger)/C.$$
Les fl\`eches horizontales sont d'image ferm\'ee 
car $H^1_{\rm dR}(Y^\dagger)$ est s\'epar\'e, et donc $H^1_{\rm HK}(Y^\dagger)$
aussi.  La fl\`eche verticale de droite est d'image ferm\'ee
car c'est la trace sur les $C$-points d'un morphisme d'Espaces de Banach
de Dimension finie~\cite{BC} (on peut aussi utiliser \cite[Lemma 3.3]{Ki} au lieu
de la th\'eorie des Espaces de Banach
de Dimension finie).
Cela implique que la fl\`eche verticale du milieu est d'image ferm\'ee
(le conoyau s'injecte dans un espace s\'epar\'e).
\end{proof}
\begin{rema}\label{ELL11}Si $\ell\neq p$, 
il r\'esulte du th.\,\ref{ladique} que
l'application naturelle
$H^1_{\proet}(Y_\delta,\Q_\ell(1))\to H^1_{\proet}(Y,\Q_\ell(1))$ est un isomorphisme
pour tout $\delta>0$.  Il s'ensuit que l'on a un isomorphisme
$$H^1_{\proet}(Y^\dagger,\Q_\ell(1))\overset{\sim}{\longrightarrow} H^1_{\proet}(Y,\Q_\ell(1)),$$
et donc que rendre un affino\"{\i}de surconvergent ne change pas sa cohomologie
pro\'etale $\ell$-adique, si $\ell\neq p$; c'est tr\`es loin d'\^etre le cas si $\ell=p$
comme le montre une comparaison des th.\,\ref{basic47} et~\ref{basic40} (ou plut\^ot du
(ii) de la rem.\,\ref{basic41}).
\end{rema}

\subsubsection{Le rev\^etement universel du groupe $p$-divisible de la jacobienne}\label{SSS4.1} 
Soit $X$ une courbe propre d\'efinie sur un sous-corps complet $K$ de $C$; 
notons $J$ sa jacobienne; alors $J(C)$ est naturellement un groupe de Lie sur $C$.
On note $\hJ$ l'ensemble des suites $x=(x_n)_{n\in\Z}$ d'\'el\'ements
de $J(C)$ telles que $x_n=p\cdot x_{n+1}$, pour tout $n\in\Z$, et
$x_n\to 0$ quand $n\to -\infty$.  Alors $\hJ$ est un $\Q_p$-espace vectoriel
muni d'une action continue de $ G_K$ et
contenant $V_p(J)=\Q_p\otimes_{\Z_p}T_p(J)$ (qui n'est autre que le sous-$\Q_p$-espace vectoriel des $x=(x_n)_{n\in\Z}$,
avec $x_n=0$ si $n\ll 0$).
L'application logarithme $$\log_J:J({C})\to {C}\otimes_K{\rm Lie}(J)$$ induit une
application $$\log_J:\hJ\to {C}\otimes_K{\rm Lie}(J)$$ (envoyant
 $(x_n)$ to $p^{n}\log_Jx_n$, pour n'importe quel choix de $n$),
 et donne naissance \`a la suite exacte
$$0\to V_p(J)\to \hJ\to {C}\otimes_K{\rm Lie}(J)\to 0$$
de $ G_K$-modules.
\begin{rema}\label{basic48}
Supposons que $X$ est obtenu en compactifiant un affino\"{\i}de $Y$ par recollement de boules
ouvertes $D_i^-$ le long des cercles fant\^omes \`a la fronti\`ere de $Y$.
Notons $\iota:X\to J$
l'injection envoyant $P\in X$ sur la classe de $P-P_0$, o\`u $P_0$ est fix\'e.
Si $\delta>0$, soient $Y_\delta=Y\moins\big(\cup_i D(P_i,\delta)^-\big)$
et $H_\delta$ le sous-groupe de $J$ engendr\'e par les
$\iota(D(P_i,\delta)^-)$. On a une suite exacte
$$0\to \Q_p\widehat\otimes (\O(Y_\delta)^{\dual\dual}/C^\dual)\to H^1_{\eet}({Y_\delta},\Q_p(1))_0\to V_p(J/H_\delta)\to 0,$$
pour tout $\delta>0$.
Maintenant, les boules ferm\'ees $D(P_i,\delta)$ se plongent dans $X$,
ce qui implique que l'image de $H_\delta$ 
par $\log_J$ est un sous-groupe ouvert born\'e de $C\otimes_K{\rm Lie}(J)$
et que le sous-groupe de torsion de $H_\delta$ est fini; on en d\'eduit que
$T_p(J/H_\delta)$ est un r\'eseau de $\hJ$ et donc
que  $V_p(J/H_\delta)=\hJ$, pour tout $\delta>0$, et donc
$\varinjlim_\delta V_p(J/H_\delta)=\hJ$.

En passant \`a la limite dans la suite ci-dessus, on en d\'eduit une suite exacte
fonctorielle
$$\xymatrix{
C\ar[r]& \O(Y^\dagger)\ar[r]^-{\exp}& H^1_{\eet}(Y^\dagger,\Q_p(1))_0\ar[r]& \widehat J\ar[r]& 0.}$$
\end{rema}

En comparant les suites exactes de la rem.~\ref{basic48} et du th.~\ref{basic47},
on en d\'eduit le r\'esultat suivant qui admet une interpr\'etation en termes
d'int\'egration $p$-adique comme nous le verrons au \no\ref{SSS5} (cf.~th.\,\ref{inte1}).

\begin{theo}\label{basic49}
On a un isomorphisme naturel 
$$\iota_{\rm st}:\hJ\cong (\bst^+\otimes_{\breve C} H^1_{\rm HK}(X))^{N=0,\varphi=p}.$$
\end{theo}

\begin{rema}\label{basic50}
(i)  Si, au lieu de $J$, on consid\`ere un groupe $p$-divisible ${\cal G}$ sur $\O_C$,
et qu'on d\'efinit le rev\^etement universel $\widehat{\cal G}$ de mani\`ere
analogue, alors on a un isomorphisme naturel (\cite[ch. 4]{FF} ou \cite[sec. 5.1]{SW})
$$\widehat{\cal G}\cong(\bcris^+\otimes_{\breve C}D({\cal G}))^{\varphi=p},$$
o\`u $D({\cal G})$ est le module de Dieudonn\'e (covariant) de ${\cal G}$. 
On pourrait en d\'eduire
le th.\,\ref{basic49} dans le cas o\`u $X$ a bonne r\'eduction.

(ii) $H^1_{\rm HK}(X)$ est contravariant en $X$, 
mais est aussi isomorphe \`a $H^1_{\rm HK}(X)^\dual(-1)$,
et c'est sous cette forme qu'il appara\^{\i}t dans le th.\,\ref{inte1} qui
est la g\'en\'eralisation naturelle de l'\'enonc\'e pour les groupes $p$-divisibles.
\end{rema}

\begin{rema}\label{basic50.1}
Soient $X$ propre et $Y$ un affino\"{\i}de munis d'une triangulation $S$ assez fine, et donc d'un patron.
La filtration sur $H^1_{\rm HK}(X)$ ou $H^1_{\rm HK}(Y)^{\rm sep}$ induit une filtration sur
$\hJ$. On note $H^1_{\rm HK}(X)_0$ et $H^1_{\rm HK}(Y)^{\rm sep}_0$ les noyaux
des fl\`eches vers $H^1_c(\Gamma,\breve C)^\dual(-1)$; l'op\'erateur $N$ est identiquement nul
sur ces sous-groupes (ce qui explique que l'on peut utiliser $\bcris$ au lieu de $\bst$ dans les \'enonc\'es
ci-dessous).

{\rm (i)} Si $X$ est propre,
notons ${\cal J}$ le mod\`ele de N\'eron de $J$, ${\cal J}^0$
la composante connexe de $0$, et $J^{\rm sp}$ la fibre sp\'eciale de
${\cal J}^0$ (c'est une vari\'et\'e semi-ab\'elienne, extension de $\prod_{s\in S}J(Y_s^{\rm sp})$
par $H^1(\Gamma,\Z)\otimes {\bf G}_m$, qui s'identifie \`a ${\rm Pic}(X_S^{\rm sp})$).
Le groupe $\hJ$ vit dans une suite exacte
$$0\to V_p({\cal J}^0(\O_C/p))\to \hJ\to V_p(\pi_0({\cal J}))\to 0,$$
o\`u $\pi_0({\cal J})$ est 
le groupe des composantes connexes (isomorphe \`a $(\Q/\Z)\otimes H^1(\Gamma,\Z)^\dual$).
On a de plus des isomorphismes
\begin{align*}
V_p(J^{\rm sp}(\O_C/p))\cong V_p({\cal J}^0(\O_C/p))\cong (\bcris^+\otimes _{\breve{C}}H^1_{\rm HK}(X)_0)^{\varphi=p},\\
V_p(\pi_0({\cal J}))\cong H^1(\Gamma,\Q_p)^\dual
=(\bcris^+\otimes_{\breve C}H^1(\Gamma,\breve C)^\dual(-1))^{\varphi=p}
\end{align*}

{\rm (ii)} Si $Y$ est un affino\"{\i}de, reprenons les notations du (ii) de la rem.\,\ref{BAS12.4} d\'ecrivant
le groupe $H^1_{\rm HK}(Y)^{\rm sep}$. Rappelons que $Y_S^{\rm sp}$ est, par d\'efinition, la compactifi\'ee
de la fibre sp\'eciale classique de $Y_S$; cela fait que
$H^1_{\rm HK}(Y)_0^{\rm sep}$ est un quotient de
$H^1_{\rm cris}(Y_S^{\rm sp})_0$, sous-groupe du groupe de cohomologie log-cristalline des \'el\'ements
dont les r\'esidus sont nuls en tous les points avec une structure logarithmique.
On note $Y^{\rm sp}_{S,{\rm int}}$ la r\'eunion des $Y_s$, pour $s\in S_{\rm int}$.
Soient $J^{\rm sp}={\rm Pic}(Y_S^{\rm sp})$ et 
$J^{\rm sp}_{\rm int}={\rm Pic}(Y_{S,{\rm int}}^{\rm sp})$.
Alors $J^{\rm sp}$ et $J^{\rm sp}_{\rm int}$ sont des vari\'et\'es semi-ab\'eliennes,
et on dispose d'une fl\`eche naturelle $J^{\rm sp}\to J^{\rm sp}_{\rm int}$;
on note $J^{\rm sp}_{\rm ext}$ le noyau de ce morphisme.
On a, comme ci-dessus, un isomorphisme 
$$(\bcris^+\otimes_{\breve C}H^1_{\rm cris}(Y_S^{\rm sp})_0)^{\varphi=p}\cong
V_p(J^{\rm sp}(\O_C/p))$$ 
et un diagramme commutatif
\`a lignes horizontales exactes:
$$
\xymatrix@R=.5cm@C=.4cm{
0\ar[r]&V_p(J^{\rm sp}_{\rm ext}(\O_C/p))\ar[r]\ar[d]
&(\bcris^+\otimes_{\breve C}H^1_{\rm cris}(Y_S^{\rm sp})_0)^{\varphi=p}
\ar[r]\ar[d]&V_p(J^{\rm sp}_{\rm int}(\O_C/p))\ar[r]\ar@{=}[d]&0\\
0\ar[r]&V_p(J^{\rm sp}_{\rm ext}(k_C))\ar[r]
&(\bcris^+\otimes_{\breve C}H^1_{\rm HK}(Y)_0^{\rm sep})^{\varphi=p}
\ar[r]&V_p(J^{\rm sp}_{\rm int}(\O_C/p))\ar[r]&0}
$$
\end{rema}

\Subsection{Comparaison avec l'int\'egration $p$-adique}\label{BAS25}
Dans ce paragraphe, $C=\C_p$ et $K$ est une extension finie\footnote{Cette restriction
est due au fait que l'int\'egration $p$-adique~\cite{Cn85,Cz-inte} est d\'evelopp\'ee dans ce cadre.
Mais les m\'ethodes de~\cite{Cz-inte} permettent d'\'etendre cette int\'egration sur
une vari\'et\'e ab\'elienne $A$,
dans le cas g\'en\'eral, au sous-groupe de $A(C)$ des $x$ tels que $N!\,x$ tend vers $0$
quand $N\to\infty$ (si $C=\C_p$, tout $x$ v\'erifie cette condition). Cela suffirait pour
\'etendre ce qui suit au cas o\`u $v_p$ est discr\`ete sur $K$ et $C$ est le compl\'et\'e
de sa cl\^oture alg\'ebrique.}
de $\Q_p$,
$X$ est une courbe propre et lisse d\'efinie sur $K$
et $J$ est sa jacobienne.
On fixe $P_0\in X(C)$ et on note $\iota:X\to J$ le plongement
envoyant $P$ sur la classe du diviseur $P-P_0$.

\subsubsection{L'accouplement de p\'eriodes}\label{SSS5}
L'int\'egration $p$-adique induit des accouplements de p\'eriodes, $ G_K$-\'equivariants:
$$\langle\ ,\ \rangle_{\bdr}: H^1_{\rm dR}(X)\otimes_{\Q_p} \hJ\to \bdr^+,
\quad \langle\ ,\ \rangle_{{\C_p}}: H^1_{\rm dR}(X)\otimes_{\Q_p} \hJ\to {\C_p},$$
et on a $\langle\ ,\ \rangle_{{\C_p}}=\theta\circ\langle\ ,\ \rangle_{\bdr}$.

Passer par l'extension universelle $\tJ$ de $J$
permet d'en donner une d\'efinition particuli\`erement compacte
(cf.~\cite[prop.\,B.2.2]{Cz-inte}).
On dispose d'applications naturelles $ G_K$-\'equivariantes (Lemme~12 ou \S\,B.2 de~\cite{Cz-inte}):
$$\iota_{\C_p}:\hJ\to \tJ({\C_p}),\quad \iota_{\bdr}:\hJ\to \tJ(\bdr^+)$$
d\'efinies de la mani\`ere suivante: si $x=(x_n)_{n\in\Z}\in\hJ$, on choisit
une suite born\'ee $(\hat x_n)_{n\in\Z}$ de rel\`evements des $x_n$ dans
$\tJ({\C_p})$ (resp.~$\tJ(\bdr^+$)), et on envoie $x$ sur la limite
de $p^n\cdot \hat x_n$, quand $n\to +\infty$, la multiplication par $p^n$
\'etant celle sur $\tJ$.
On a, bien \'evidemment,
$$\iota_{\C_p}=\theta\circ\iota_{\bdr}.$$
Maintenant,
si $$\log_{\tJ}:\tJ\to H^1_{\rm dR}(J)^\dual=H^1_{\rm dR}(X)^\dual$$ est le logarithme de $\tJ$ \`a valeurs
dans son alg\`ebre de Lie, et si $\eta\in H^1_{\rm dR}(X)$ et $x\in\hJ$, alors
$$\langle\eta ,x \rangle_{\bdr}=\langle\log_{\tJ}\circ\iota_{\bdr}(x),\eta \rangle_{\rm dR}
\quad{\rm et}\quad
\langle\eta ,x \rangle_{{\C_p}}=\langle\log_{\tJ}\circ\iota_{{\C_p}}(x),\eta \rangle_{\rm dR}.$$
De plus, $\log_{\tJ}\circ\iota_{\bdr}:\hJ\to \bdr^+\otimes_K H^1_{\rm dR}(X)^\dual$
est injective (c'est une cons\'equence de la non d\'eg\'en\'erescence de l'accouplement
de p\'eriodes, cf.~\cite[th.\,II.3.5]{Cz-inte} par exemple).

Notons que $\bst^+\otimes_{\breve\C_p} H^1_{\rm HK}(X_{\C_p})^\dual$ 
s'injecte dans $\bdr^+\otimes_KH^1_{\rm dR}(X)^\dual$.
\begin{theo}\label{inte1}
{\rm (i)}
L'application $\log_{\tJ}\circ \iota_{\bdr}:\hJ\to \bdr^+\otimes_KH^1_{\rm dR}(X)^\dual$
se factorise \`a travers $(\bst^+\otimes_{\breve\C_p}H^1_{\rm HK}(X)^\dual)^{N=0,\varphi=1}$.

{\rm (ii)} Si on identifie $H^1_{\rm dR}(X)^\dual$ \`a $H^1_{\rm dR}(X)$ gr\^ace au cup-produit
\`a valeurs dans $H^2_{\rm dR}(X)=K$, alors
$$(\bst^+\otimes_{\breve\C_p}H^1_{\rm HK}(X)^\dual)^{N=0,\varphi=1}
=(\bst^+\otimes_{\breve\C_p} H^1_{\rm HK}(X))^{N=0,\varphi=p}
\quad{\rm et}\quad
\log_{\tJ}\circ \iota_{\bdr}=\iota_{\rm st}.$$
\end{theo}
\begin{rema}\label{basic51}
L'inclusion de $\log_{\tJ}\circ \iota_{\bdr}(\hJ)$
dans $(\bst^+\otimes_{\breve\C_p}H^1_{\rm HK}(X)^\dual)^{N=0,\varphi=1}$
traduit le fait que
la restriction de l'accouplement de p\'eriodes $\langle\ ,\ \rangle_{\bdr}$
\`a $H^1_{\rm HK}(X)$ est \`a valeurs dans $\bst^+$ et commute aux actions de $\varphi$ et $N$ sur
$H^1_{\rm HK}(X)$ en plus de celle de $ G_K$ sur $\hJ$
(cf.~\cite{Cz-periodes,CI} pour des r\'esultats dans cette direction).
\end{rema}
\begin{proof}
Soit $Y$ un affino\"{\i}de de $X$, compl\'ementaire d'un nombre fini
de boules ouvertes.  On a un diagramme (le $0$ en indice
indique le sous-groupe des classes sans r\'esidus le long des cercles fant\^omes
\`a la fronti\`ere des boules enlev\'ees):
$$\xymatrix@R=.5cm@C=1.2cm{H^1_{\proet}(Y_{\C_p}^\dagger,\Q_p(1))_0\ar[d]\ar[r]&\hJ\ar[r]^-{\log_{\tJ}\circ \iota_{\bdr}}&\bdr^+\otimes_K H^1_{\rm dR}(X)^\dual\ar@{=}[d]\\
(\bst^+\otimes_{\breve\C_p} H^1_{\rm HK}(Y_{\C_p}^\dagger)_0)^{N=0,\varphi=p}\ar[r]^-{1\otimes\iota_{\rm HK}}&
\bdr^+\otimes_K H^1_{\rm dR}(Y^\dagger)_0 &\bdr^+\otimes_K H^1_{\rm dR}(X)\ar[l]_-{\sim}}$$
dans lequel les deux fl\`eches partant de $H^1_{\proet}(Y_{\C_p}^\dagger,\Q_p(1))_0$
proviennent de la suite de Kummer (pour l'horizontale)
et de la comparaison avec la cohomologie syntomique (pour la verticale),
et ont m\^eme noyau. Il s'agit
de prouver que le diagramme commute.

Il suffit donc
de prouver que, si $v\in\hJ$, et si $(f_n)_{n\in\N}$ est un rel\`evement
de $v$ dans ${\rm Symb}_p(Y_\delta)$, alors
$\log_{\tJ}(\iota_{\bdr}(v))=\big(\pi_{\rm dR}(\frac{d\tilde f_n}{\tilde f_n})\big)_{n\in\N})$.
On va prouver cet \'enonc\'e apr\`es application de $\theta$
(i.e. en rempla\c{c}ant $\iota_{\rm dR}$ par $\iota_{\C_p}$, ce qui \'evite
de choisir des $\tilde f_n$),
et indiquer les modifications \`a faire pour prouver
le r\'esultat pour $\iota_{\rm dR}$.
Pour prouver l'\'enonc\'e avec $\iota_{\C_p}$,
on va utiliser une fonction de Green du diviseur~$\Theta$ pour
construire un rel\`evement
de $v$ dans ${\rm Symb}_p(Y_\delta)$, avec $\delta>0$ fix\'e.

\subsubsection{Formes diff\'erentielles de troisi\`eme esp\`ece}\label{SSS22}
Rappelons qu'une forme diff\'erentielle m\'eromorphe sur $X$ est dite:

$\bullet$ {\it de premi\`ere esp\`ece} si elle est holomorphe,

$\bullet$ {\it de seconde esp\`ece} si les r\'esidus en ses p\^oles sont tous nuls,

$\bullet$ {\it de troisi\`eme esp\`ece} si elle n'a que des p\^oles simples et si les r\'esidus
en ces p\^oles sont des entiers.

Si $\omega$ est de troisi\`eme esp\`ece, on note ${\rm Div}(\omega)$ son diviseur:
${\rm Div}(\omega)=\sum_x{\rm Res}_x\omega \cdot (x)$; il est de degr\'e $0$.  Si $\omega=\frac{df}{f}$,
alors ${\rm Div}(\omega)={\rm Div}(f)$.  Alors $\tJ$ est le quotient du groupe des
formes de troisi\`eme esp\`ece par celui des $\frac{df}{f}$;
l'application naturelle $\pi_J:\widetilde J\to J$ est induite par $\omega\mapsto{\rm Div}(\omega)$,
son noyau est l'espace $H^0(X,\Omega^1)$ des formes de premi\`ere esp\`ece.  Enfin,
le quotient de l'espace des formes de seconde esp\`ece par celui des $df$
est $H^1_{\rm dR}(X)$.

\smallskip
Choisissons une base $\omega_1,\dots,\omega_g$ de $\Omega^1_{\rm inv}(J)$, et notons $\partial_1,\dots,\partial_g$
la base duale de l'espace des formes diff\'erentielles invariantes sur $J$ (on a donc
$df=\sum_{i=1}^g\partial_if\,\omega_i$), et $\lambda_i$ le logarithme de $J$ solution
de l'\'equation diff\'erentielle $d\lambda_i=\omega_i$.

Soit $\Theta$ le diviseur de $J$, image de $X^{g-1}$ par $(Q_1,\dots,Q_{g-1})\mapsto
\ominus \iota(Q_1)\ominus \cdots\ominus \iota(Q_{g-1})$.  
Si $u\in J$ est g\'en\'eral, l'intersection
$X_u$ de $\iota(X)$ et\footnote{On note $\oplus$ l'addition sur $J$.}
 $u\oplus\Theta$ est constitu\'ee des $g$ points $Q_{u,1},\dots,Q_{u,g}$
solutions de l'\'equation $\iota(Q_{u,1})\oplus\cdots\oplus \iota(Q_{u,g})=u$.

Soit $G$ une fonction de Green de $\Theta$ (cf.~\no2 de \cite[\S\,II.2]{Cz-inte}).
Si $u$ est g\'en\'eral,
$$\eta_{u,i}=\iota^\dual\big(d(\partial_iG(x\ominus u))\big)$$
est une forme diff\'erentielle de seconde esp\`ece sur $X$, holomorphe en dehors
de p\^oles doubles en les points de $X_u$, dont l'image $\eta_i$ dans $H^1_{\rm dR}(X)$
ne d\'epend pas de $u$. De plus, $\iota^\dual\omega_1,\dots,\iota^\dual\omega_g,\eta_1,\dots,\eta_g$
forment une base de $H^1_{\rm dR}(X)$, ce qui nous fournit un scindage de la filtration de Hodge
(cf.~\cite[prop.\,II.2.4]{Cz-inte}).

Si $u$ est g\'en\'eral, soit
$$\beta_u=\iota^\dual( dG(x\ominus u)).$$
Si $u$ et $v$ sont g\'en\'eraux, alors $\beta_u-\beta_v$ est une forme diff\'erentielle
de troisi\`eme esp\`ece dont le diviseur ${\rm Div}(\beta_u-\beta_v)$ est $X_u-X_v$.
De plus, les $\beta_u-\beta_v$ engendrent un suppl\'ementaire
de $H^0(X,\Omega^1)$ dans l'espace des formes diff\'erentielles de troisi\`eme esp\`ece~\cite[prop.\,II.2.9]{Cz-inte}.
Toute forme $\beta$ de troisi\`eme esp\`ece peut donc s'\'ecrire sous la forme
$\beta=\beta^0+\sum_u n_u\beta_u$, o\`u $\beta^0\in H^0(X,\Omega^1)$ est uniquement d\'etermin\'ee,
$\sum_un_n=0$, et les $u$ sont des points g\'en\'eraux de $J$ tels que
${\rm Div}(\beta)=\sum n_u X_u$.
On a alors (cf.~\cite[th.\,II.2.11]{Cz-inte}):
$$\log_{\tJ}(\beta)=\beta^0+\sum_u n_u\sum_{i=1}^g\lambda_i(u)\eta_i=\beta^0+\sum_{i=1}^g\lambda_i({\rm Div}(\beta))\eta_i.$$
Remarquons que $G$ n'est unique qu'\`a addition pr\`es d'un polyn\^ome de degr\'e~$\leq 2$ en les
$\lambda_i$.  Il s'ensuit que, {\it si $\beta$ est donn\'ee, on peut choisir $G$ de telle sorte
que $\beta^0=0$}.

\subsubsection{Fin de la preuve du th.~\ref{inte1}}\label{BAS26}
Soit donc $v=(v_n)_{n\in\Z}\in\hJ$.
Le groupe $\hJ$ est un $\Q_p$-espace vectoriel; on pose $\tilde v_n=\iota_{\C_p}(p^{-n}v)$, si $n\in\Z$.
Alors $(\tilde v_n)_{n\in\N}$ est une suite born\'ee de points de $\tJ$, et on a
$\pi_J(\tilde v_n)=v_n$ pour tout $n$.

Soit $u\in J$ g\'en\'eral, tel que $X_u\subset \cup_i D(P_i,1)^-$,
et tel que $u\oplus v_n$ soit g\'en\'eral pour tout~$n$.
Comme $v_n\to 0$ quand $n\to -\infty$, on a $X_{u\oplus v_{-n}}\subset \cup_i D(P_i,1)^-$, si $n\geq n_0$.
Dans la suite, on suppose que l'on peut prendre $n_0=0$, et que $\lambda_i(v_0)$ est
suffisamment petit pour que le reste du
d\'eveloppement de Taylor de $G$ en $x\ominus u$
$$G(x\ominus u\ominus p^n\cdot v_0)-G(x\ominus u)+\sum_{i=1}^g\partial_iG(x\ominus u)\lambda_i(v_n)$$ soit divisible
par $p^{2n+2}$ sur $Y_\delta$, pour tout $n\in\N$
(c'est possible
 en rempla\c{c}ant $v$ par $p^{N}v$,
ce qui ne fait que tout multiplier par $p^{N}$,
gr\^ace \`a~\cite[Lemme~II.2.6]{Cz-inte} et au fait que $\lambda_i(v_n)=p^n\lambda_i(v_0)$.)

On note $\beta_n$ la forme diff\'erentielle de troisi\`eme esp\`ece, de diviseur $X_{u\oplus v_n}-X_u$,
dont l'image dans $\tJ$ est $\tilde v_n$.
On a alors
$$\beta_n=\beta_{u\oplus v_n}-\beta_u+\beta_n^0,\quad {\text{avec $\beta_n^0\in H^0(X,\Omega^1),$}}$$
et donc
$$\log_{\tJ}(\iota_{\C_p}(v))=p^n\log_{\tJ}(\beta_n)=p^n\big(\beta_n^0+\sum_{i=1}^g\lambda_i(v_n)\eta_i\big),
\quad{\text{pour tout $n\in\Z$.}}$$
Comme $p^n\lambda_i(v_n)$ ne d\'epend pas de $n$, il en est de m\^eme de $p^n\beta_n^0$
et, quitte \`a modifier $G$ par un polyn\^ome de degr\'e~$\leq 2$ en les $\lambda_i$,
{\it on peut supposer que $\beta_n^0=0$ pour tout $n$}.

Si $n\geq 0$, le diviseur $p^{2n}X_{u\oplus v_n}-X_{u\oplus v_{-n}}-(p^{2n}-1)X_u$ est principal,
puisque $p^{2n}\cdot(u\oplus v_n)\ominus(u\oplus v_{-n})\ominus(p^{2n}-1)\cdot u=0$.
C'est donc le diviseur d'une fonction $F_n$, rationnelle sur $X$.
\begin{lemm}\label{inte11}
{\rm (i)} Il existe $f_n\in\O(Y_\delta)-\{0\}$ telle que $f_n^{p^n}=F_n$.

{\rm (ii)} $f_n\in A_{p,n}(Y_\delta)$, et $(f_n)_{n\in\N}$ d\'efinit un \'el\'ement de
$H^1_{\eet}({Y_\delta},\Z_p(1))$ dont l'image dans $\widehat J$ est $v$.
\end{lemm}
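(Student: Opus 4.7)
The plan is to construct $f_n$ as the exponential of a carefully regularized antiderivative of $dF_n/F_n = p^{2n}\beta_n - \beta_{-n}$, and then verify part (ii) by tracing through the Kummer-to-$\widehat J$ identification of Remarque \ref{basic48}. The starting point is the formula
\[
\log F_n = p^{2n}\bigl(G(x\ominus u\ominus v_n) - G(x\ominus u)\bigr) - \bigl(G(x\ominus u\ominus v_{-n}) - G(x\ominus u)\bigr) + c_n,
\]
valid up to a constant $c_n\in\C_p$, as follows from the identity $\beta_w = \iota^\dual dG(x\ominus w)$ and the normalization $\beta_n^0 = 0$ chosen in the setup.

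For (i), I would define $f_n := \exp(p^{-n}(\log F_n - c_n))$ for an appropriate adjustment and verify convergence on $Y_\delta$. The Taylor hypothesis states that the remainder $G(x\ominus u\ominus v_{-n}) - G(x\ominus u) + \sum_i\partial_iG(x\ominus u)\lambda_i(v_{-n})$ is divisible by $p^{2n+2}$ on $Y_\delta$. Applying the analogous reorganization to $p^{2n}(G(x\ominus u\ominus v_n) - G(x\ominus u))$ via the identity $p^{2n}\lambda_i(v_n) = \lambda_i(v_{-n})$ (coming from $p^{2n}\tilde v_n = \tilde v_{-n}$ in $\tilde J$), the linear terms in $\partial_iG(x\ominus u)$ cancel in the definition of $\log F_n$. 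What remains is a quantity divisible by $p^{2n+2}$ on $Y_\delta$, whose division by $p^n$ yields a function in $p^2\O(Y_\delta)$. Exponentiating produces an analytic function with divisor $p^n X_{u\oplus v_n}\cap Y_\delta$ on $Y_\delta$; after absorbing the multiplicative constant coming from the $c_n$ (which, by a separate root-of-unity argument, may be adjusted to a $p^n$-th power using the freedom in choosing $F_n$ up to $\C_p^\dual$), one obtains $f_n\in\O(Y_\delta)\moins\{0\}$ with $f_n^{p^n} = F_n$.

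For (ii), the divisor computation just given shows $f_n\in A_{p,n}(Y_\delta)$. The compatibility $f_{n+1}^p/f_n\in(C(Y_\delta)^\dual)^{p^n}$ is obtained by comparing the Green-function expressions for $F_n$ and $F_{n+1}$: the relation $p(p^{2(n+1)}\tilde v_{n+1} - \tilde v_{-(n+1)}) - (p^{2n}\tilde v_n - \tilde v_{-n}) = 0$ in $\tilde J$ forces $F_{n+1}^p/F_n$ to be a principal divisor whose function extracts a $p^{n+1}$-th root compatibly with the $p^n$-th root of $F_n$. The image in $\widehat J$ is then read off from the diagram of Remarque \ref{basic48}: the class of $p^{-n}\mathrm{Div}(f_n) = X_{u\oplus v_n}\cap Y_\delta$ in ${\rm Pic}(Y_\delta)\cong J(C)/H_\delta$ equals $u\oplus v_n\bmod H_\delta$ (up to the base point), and assembling these classes across $n$ into an element of $V_p(J/H_\delta)\cong\widehat J$ recovers $v$, the contribution from the $u$ shift being absorbed into the $p$-divisible part killed by the passage to $V_p$.

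The main obstacle will be the second step: making rigorous sense of $p^{-n}\log F_n$ when the point $v_n$ is not $p$-adically small, so that the naive Taylor expansion of $G(x\ominus u\ominus y)$ around $y=0$ does not directly converge at $y=v_n$ on $Y_\delta$. The resolution lies in exploiting that although $\lambda_i(v_n) = p^{-n}\lambda_i(v_0)$ can be large, the rescaled quantity $p^{2n}\lambda_i(v_n) = \lambda_i(v_{-n})$ is small, so the combinatorics in the two terms of $\log F_n$ conspire to cancel the divergent contributions after division by $p^n$. This delicate balancing is precisely what the $p^{2n+2}$-divisibility of the Taylor remainder is designed to encode, and verifying it rigorously — ideally by comparing with the $\bdr^+$-valued accounting of \cite[\S\,B.2]{Cz-inte} — is where the technical heart of the argument lies.
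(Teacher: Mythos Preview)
Your framework is right but there is a genuine gap in part (i), precisely at the obstacle you yourself flag. The cancellation of linear terms in $\partial_iG(x\ominus u)$ is correct, but your claim that ``what remains is a quantity divisible by $p^{2n+2}$ on $Y_\delta$'' fails for the first term. The Taylor hypothesis controls only the remainder at $p^n\cdot v_0=v_{-n}$, which is small; it says nothing about the remainder at $v_n$, whose logarithm $\lambda_i(v_n)=p^{-n}\lambda_i(v_0)$ is large. Multiplying by $p^{2n}$ does not help: the quadratic-and-higher part of $G(x\ominus u\ominus v_n)-G(x\ominus u)$ behaves like $O(\lambda(v_n)^2)=O(p^{-2n})$, so after the $p^{2n}$ factor you are left with something of size $O(1)$, not $O(p^n)$. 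There is no cancellation mechanism available for these higher-order terms, and the $\bdr^+$-valued accounting you invoke does not produce one either.

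The paper sidesteps this by an \emph{algebraic} decomposition of the divisor that you are missing: it writes
\[
p^{2n}X_{u\oplus v_n}-X_{u\oplus v_{-n}}-(p^{2n}-1)X_u
=p^n\bigl(p^{n}X_{u\oplus v_n}-X_{u\oplus v_{0}}-(p^{n}-1)X_u\bigr)
+\bigl(p^{n}X_{u\oplus v_0}-X_{u\oplus v_{-n}}-(p^{n}-1)X_u\bigr).
\]
The first bracket is principal (its image in $J$ is $p^n(u\oplus v_n)\ominus(u\oplus v_0)\ominus(p^n-1)u=0$), so its contribution to $F_n$ is automatically a $p^n$-th power, with no analytic estimate needed. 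The second bracket is the divisor of a function $F'_n$ whose Green-function logarithm involves only $v_0$ and $v_{-n}=p^n\cdot v_0$, both small; now the Taylor hypothesis applies directly and gives $\log F'_n\in p^{n+1}\O(Y_\delta)$, so $F'_n$ has a $p^n$-th root on $Y_\delta$. The same splitting trick handles the compatibility $f_{n+1}/f_n\in(C(Y_\delta)^\dual)^{p^n}$ in (ii). Your description of the image in $\widehat J$ via $p^{-n}{\rm Div}(f_n)=X_{u\oplus v_n}$ is correct and matches the paper.
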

\begin{proof}
On a
\begin{align*}
p^{2n}X_{u\oplus v_n}&-X_{u\oplus v_{-n}}-(p^{2n}-1)X_u\\
=&\ p^n(p^{n}X_{u\oplus v_n}-X_{u\oplus v_{0}}-(p^{n}-1)X_u)+
(p^{n}X_{u\oplus v_0}-X_{u\oplus v_{-n}}-(p^{n}-1)X_u)
\end{align*}
Comme $p^{n}X_{u\oplus v_n}-X_{u\oplus v_{0}}-(p^{n}-1)X_u$ est principal, pour prouver le (i),
il suffit de prouver que la fonction $F'_n$, de diviseur
$p^{n}X_{u\oplus v_0}-X_{u\oplus v_{-n}}-(p^{n}-1)X_u$, est une puissance $p^n$-i\`eme sur ${Y_\delta}$.
Or on a
$$\log F'_n = p^n \big(G(x\ominus u\ominus v_0)-G(x\ominus u)\big)-\big(G(x\ominus u\ominus p^n\cdot v_0)-G(x\ominus u)\big),$$
et notre hypoth\`ese sur le d\'eveloppement de Taylor de $G$ en $x\ominus u$
implique que
$\log F'_n$ est divisible par $p^{n+1}$, ce qui permet de d\'emontrer le (i).
De plus, le diviseur de $f_n$ sur ${Y_\delta}$ est $p^nX_{u\oplus v_n}$, ce qui prouve que $f_n\in A_{p,n}(Y_\delta)$.

Le diviseur de $F_{n+1}/F_n^p$ est
$$p^{2n+1}\big(pX_{u\oplus v_{n+1}}-X_{u\oplus p\cdot v_{n+1}}-(p-1)X_u\big)
-(X_{u\oplus p^{n+1}v_0}-X_u)+p(X_{u\oplus p^n\cdot v_0}-X_u).$$
(On a utilis\'e les formules $p\cdot v_{n+1}=v_n$, $v_{-n-1}=p^{n+1}\cdot v_0$ et $v_{-n}=p^n\cdot v_0$.)
Maintenant, $pX_{u\oplus v_{n+1}}-X_{u\oplus p\cdot v_{n+1}}-(p-1)X_u$ est principal
et $$G(x\ominus(u\oplus p^{n+1}v_0))-G(x\ominus u)-p\big(G(x\ominus(u\oplus p^{n}v_0))-G(x\ominus u)\big)$$
est divisible par $p^{2n+3}$ sur ${Y_\delta}$ pour les m\^emes raisons que ci-dessus.
On en d\'eduit que $F_{n+1}/F_n^p$ est une puissance $p^{2n+1}$-i\`eme sur ${Y_\delta}$, et donc que
$f_{n+1}/f_n$ est une puissance $p^n$-i\`eme.
Il s'ensuit que $(f_n)_{n\in\N}$ d\'efinit un \'el\'ement de
$H^1_{\eet}({Y_\delta},\Z_p(1))$.

Enfin, $p^{-n}{\rm Div}(f_n)=X_{u\oplus v_n}$ a pour image $u\oplus v_n$ dans ${\rm Pic}({Y_\delta})$,
mais comme $u$ est dans le sous-groupe $H$ de $J$ tel que ${\rm Pic}({Y_\delta})=J/H$, cette image
est aussi $v_n$, ce qui permet de conclure.
\end{proof}

Revenons \`a la d\'emonstration du th\'eor\`eme.
On a $\frac{df_n}{f_n}=p^n\beta_n-p^{-n}\beta_{-n}$, et comme
$$p^{-n}\beta_{-n}=p^{-n}\big(\iota^\dual dG(x\ominus u\ominus p^n\cdot v_0)-\iota^\dual dG(x\ominus u)\big)=
-\sum_{i=1}^g\lambda_i(v_0)\eta_{u,i}\quad {\rm mod}~p^n,$$
 on voit que
$\frac{df_n}{f_n}$ tend vers $\sum_{i=1}^g\lambda_i(v_0)\eta_{u,i}$, et donc:
$${\rm dlog}((f_n)_{n\in\N})=\sum_{i=1}^g\lambda_i(v_0)\eta_{u,i}
\quad{\rm et}\quad
\pi_{\rm dR}\circ {\rm dlog}((f_n)_{n\in\N})=\sum_{i=1}^g\lambda_i(v_0)\eta_{i}=
\log_{\tJ}(\iota_{\C_p}(v)).$$
On en d\'eduit la commutativit\'e du diagramme dans le cas de $\iota_{\C_p}$.

Pour traiter le cas de $\iota_{\bdr}$, on pose $\tilde v_{n,{\rm dR}}=\iota_{\bdr}(p^{-n}v)$
et $v_{n,{\rm dR}}=\pi_J(\tilde v_{n,{\rm dR}})$
On a donc $\theta(\tilde v_{n,{\rm dR}})=\tilde v_n$ et $\theta(v_{n,{\rm dR}})=v_n$.  
Ensuite, on d\'efinit
$\beta_{n,{\rm dR}}$ comme \'etant $\beta_{u\oplus v_{n,{\rm dR}}}-\beta_u$ en ayant choisi
$G$ pour que l'image de $\beta_{n,{\rm dR}}$ dans $\tilde J(\bdr^+)$ soit $\tilde v_{n,{\rm dR}}$.
Alors
$$\log_{\tJ}(\iota_{\bdr}(v))=p^n\sum_{i=1}^g\lambda_i(v_{n,{\rm dR}})\eta_i,
\quad{\text{pour tout $n\in\Z$.}}$$
On d\'efinit $F_{n,{\rm dR}}$ par ${\rm Div}(F_{n,{\rm dR}})=
p^{2n}X_{u\oplus v_{n,{\rm dR}}}-X_{u\oplus v_{-n,{\rm dR}}}-(p^{2n}-1)X_u$,
et on prouve en reprenant la d\'emonstration du lemme~\ref{inte11} 
que $F_{n,{\rm dR}}=f_{n,{\rm dR}}^{p^n}$
(alors $f_{n,{\rm dR}}$ est un rel\`evement de $f_n$).
Enfin, le m\^eme calcul que ci-dessus montre que $\frac{df_{n,{\rm dR}}}{f_{n,{\rm dR}}}$
tend vers $\sum_{i=1}^g\lambda_i(v_{0,{\rm dR}})\eta_{u,i}$ et donc que son image
dans $H^1_{\rm dR}(Y^\dagger_{\bdr})$ est
$\sum_{i=1}^g\lambda_i(v_{0,{\rm dR}})\eta_{i}=\log_{\tJ}(\iota_{\bdr}(v))$,
ce que l'on voulait.
\end{proof}

\section{Cohomologie des courbes non propres, sans bord}\label{CI67}
Soit $Y$ une courbe non propre, sans bord.
Le th.~\ref{paire00} ci-dessous d\'ecrit la cohomologie pro\'etale $p$-adique
en termes du complexe de de Rham et de la cohomologie de Hyodo-Kato
et, dans le cas o\`u $C=\C_p$, le th.~\ref{ella1} compare la cohomologie
de Hyodo-Kato et la cohomologie pro\'etale $\ell$-adique pour $\ell\neq p$.
Dans le \S\,\ref{affin16}, on explique un autre point de vue, utilisant la g\'eom\'etrie
rigide comme dans~\cite{CI}, pour d\'evoiler les {\og structures cach\'ees sur les courbes $p$-adiques\fg}.

\Subsection{Cohomologie pro\'etale $p$-adique}\label{CI68}
\begin{theo}\label{paire00}
Si $Y$ est une courbe non propre, sans bord,
on a le diagramme commutatif fonctoriel
 de fr\'echets suivant:
$$
\xymatrix@R=.6cm@C=.5cm{
0\ar[r] & \O(Y)/C\ar[r]\ar@{=}[d] & H^1_{\proet}(Y,\Q_p(1))\ar[d] \ar[r] &
(\bst^+\wotimes_{\breve C} H^1_{\rm HK}(Y))^{N=0,\varphi=p}\ar[r]\ar[d]^{\theta\otimes\iota_{\rm HK}} & 0\\
0\ar[r]& \O(Y)/C \ar[r]^-d & \Omega^1(Y) \ar[r] & H^1_{\rm dR}(Y)\ar[r] & 0
}
$$
dans lequel les lignes sont exactes et les fl\`eches verticales sont d'image ferm\'ee.
\end{theo}
\begin{proof}
On peut \'ecrire $Y$ comme, au choix, une r\'eunion croissante stricte
d'affino\"{\i}des $Y_n$ ou d'affino\"{\i}des surconvergents $Y_n^\dagger$.
Le th\'eor\`eme se d\'eduit donc, par passage \`a la limite projective
du th.~\ref{basic40} ou du th.~\ref{basic47} (on a ${\rm R}^1\lim\O(Y_n)=0$ 
d'apr\`es le th\'eor\`eme de Kiehl). 

L'image de $\gamma:(\bst^+\wotimes H^1_{\rm HK}(Y))^{N=0,\varphi=p}\to
H^1_{\rm dR}(Y)$ est ferm\'ee car $\gamma$ est la limite projective 
des $\gamma_n:(\bst^+\wotimes H^1_{\rm HK}(Y_n^\dagger))^{N=0,\varphi=p}\to
H^1_{\rm dR}(Y_n^\dagger)$ dont les images sont ferm\'ees comme nous l'avons d\'ej\`a vu.
Il en r\'esulte que 
$H^1_{\proet}(Y,\Q_p(1))\to \Omega^1(Y)$
est d'image ferm\'ee.
\end{proof}

\begin{rema}\label{paire000}
(i) En passant \`a la limite dans le (ii) du th.\,\ref{Aff1} ou dans le cor.\,\ref{basic45},
on prouve que $H^1_{\rm dR}(Y)$ admet une filtration 
dont les quotients successifs sont:
$$H^1_{\rm dR}(Y)=\big[\xymatrix@C=.3cm{
H^1(\Gamma,C)\ar@{-}[r]&\prod_{s\in\Sigma}H^1_{\rm dR}(Y^\lozenge_s)
\ar@{-}[r]& H^1_c(\Gamma,C)^\dual}\big],$$
o\`u $\Gamma=\Gamma^{\rm an}(Y)$ et $\Sigma=\Sigma(Y)$.

(ii)
En passant \`a la limite dans le (ii) de la rem.\,\ref{BAS12.4} ou dans la prop.\,\ref{basic45.1},
on prouve que $H^1_{\rm HK}(Y)$ admet une filtration stable par $\varphi$
dont les quotients successifs sont:
$$H^1_{\rm HK}(Y)=\big[\xymatrix@C=.3cm{
H^1(\Gamma,{\breve C})\ar@{-}[r]&\prod_{s\in\Sigma}H^1_{\rm cris}(Y^{\rm sp}_s)
\ar@{-}[r]& H^1_c(\Gamma,{\breve C})^\dual}(-1)\big].$$
L'op\'erateur de monodromie s'obtient par passage \`a la limite et est aussi
celui obtenu via la rem.\,\ref{monod}.

(iii)
En passant \`a la limite dans le (i) du th.\,\ref{basic21},
on en d\'eduit un isomorphisme
$$\iota_{\rm HK}:C\otimes_{\breve C}H^1_{\rm HK}(Y)\cong H^1_{\rm dR}(Y).$$
\end{rema}

\Subsection{Fibre sp\'eciale d'une courbe sans bord}\label{bTTT25}
Soit $Y$ une courbe sans bord.
On va d\'efinir ce qui correspond {\og \`a la fibre sp\'eciale du mod\`ele semi-stable minimal de $Y$\fg}.
Comme une courbe analytique n'a pas forc\'ement de mod\`ele semi-stable minimal, la d\'efinition
qui suit est un peu ad hoc, mais a la vertu de fournir un objet sur lequel ${\rm Aut}_CY$
agit (et m\^eme ${\rm Aut}_KY$ si $Y$ est d\'efinie sur $K$).  L'id\'ee est de partir de la
fibre sp\'eciale de n'importe quel mod\`ele semi-stable de $Y$ et de contracter tous les $\piqp$
contractibles.  Ce faisant, on peut tomber sur un point, ce qui demande
de modifier un peu la notion de courbe marqu\'ee du \no~\ref{TTT26}.
\subsubsection{Courbes marqu\'ees}\label{bTTT26}
Soit $X$ une courbe sur $k_C$ (i.e. une r\'eunion d\'enombrable, localement finie,
de courbes irr\'eductibles, ou bien un point ou un {\og point double\fg}).  
Si $X$ n'est pas un point ou un point double,
un {\it point marqu\'e} sur $X$ est un couple
$(P,\mu(P))$, o\`u
$P\in X(k_C)$, et
{\it la multiplicit\'e $\mu(P)$}
de $P$ est un \'el\'ement de\footnote{Comme nos courbes sont sans bord, leur
fibre sp\'eciale n'a pas de points de multiplicit\'e $0^+$.} $\R_+^\dual\sqcup\{\infty\}$.

\smallskip
Une {\it courbe marqu\'ee} $(X,A)$ est une courbe $X$ munie d'un ensemble $A$ de points marqu\'es.
Une courbe marqu\'ee $(X,A)$ est {\it semi-stable} si:

$\bullet$ $X$ est \`a singularit\'es nodales,

$\bullet$ les composantes irr\'eductibles de $X$ sont propres et ne comportent qu'un
nombre fini de points marqu\'es,

$\bullet$ les points singuliers de $X$ sont marqu\'es et leur multiplicit\'e 
appartient \`a~$\Q_+^\dual$. 

Elle est {\it stable} si elle est semi-stable et si, de plus, aucune composante
connexe de $X$ n'est un $\piqp$ avec $0$, $1$ ou $2$ points marqu\'es.

\subsubsection{Contractions de $\piqp$}
A partir d'une courbe marqu\'ee $(X,A)$ semi-stable, on fabrique une courbe marqu\'ee stable,
en contractant tous les $\piqp$ ayant $0$, $1$ ou $2$ points marqu\'es (compt\'es
avec multiplicit\'e):

$\bullet$  Si un $\piqp$ n'a pas
de point marqu\'e ou a un unique point marqu\'e qui est lisse, c'est que la courbe est r\'eduite
 \`a ce $\piqp$, et quand on le contracte on obtient un point 

$\bullet$ Si un $\piqp$ a un unique point marqu\'e qui est singulier,
quand on contracte le $\piqp$ 
le point marqu\'e devient un point
lisse (que l'on d\'emarque) si ce point est le point d'intersection avec une autre composante
irr\'eductible ou bien le $\piqp$ devient un point double si ce $\piqp$ a de l'autointersection.

$\bullet$  Si un $\piqp$
a deux points marqu\'es $P_1$ et $P_2$, 
on marque le point $Q$ obtenu
en contractant le $\piqp$ en posant\footnote{Avec la convention $a+\infty=\infty$.}
$\mu(Q)=\mu(P_1)+\mu(P_2)$.
\begin{rema}\label{gam5}
{\rm (i)}  Les r\`egles pr\'ec\'edentes sont dict\'ees par ce qui se passe
si on a deux triangulations $S$ et $S'$ d'une courbe analytique $Y$, telles que
$S'=S\sqcup\{s_0\}$.  On passe de la fibre sp\'eciale de $Y_{S'}$
\`a celle de $Y_S$ en contractant le $\piqp$ correspondant \`a $s_0$ et il y a deux cas:
soit $s_0$ est de valence $1$ sur $\Gamma(S')$ et on obtient $\Gamma(S)$ \`a partir de $\Gamma(S')$
en retirant $s_0$ et l'ar\^ete qui part de $s_0$, soit $s_0$ est de valence $2$,
et on obtient $\Gamma(S)$ \`a partir de $\Gamma(S')$ en retirant $s_0$ et en fusionnant
les deux ar\^etes partant de $s_0$ en une seule (dont la longueur est la somme des
longueurs des deux ar\^etes).  Il suffit alors de traduire ce qui pr\'ec\`ede
en utilisant le fait que
$\Gamma(S)$ et $\Gamma(S')$ sont les graphes duaux des fibres sp\'eciales
de $Y_S$ et $Y_{S'}$ pour obtenir les r\`egles ci-dessus.

{\rm (ii)}
Pour trianguler une couronne correspondant \`a une ar\^ete non relativement compacte
de $\Gamma^{\rm an}(Y)$, il faut une infinit\'e de sommets et dans l'op\'eration ci-dessus
il peut y avoir une infinit\'e d'op\'erations \`a faire,
ce qui demande de passer \`a la limite dans certaines multiplicit\'es.

{\rm (iii)}
Une fois tous les $\piqp$ contract\'es,
on obtient une courbe stable ou un point (double, par convention, si le $\piqp$
que l'on contracte a un point singulier).
\end{rema}

\subsubsection{Fibre sp\'eciale}\label{bTTT28}
Si $Y$ est une courbe sans bord, on d\'efinit {\it la fibre sp\'eciale $Y^{\rm sp}$ de $Y$}
comme \'etant la courbe stable obtenue \`a partir de la fibre sp\'eciale
du mod\`ele semi-stable associ\'e \`a n'importe quelle triangulation $S$ de $Y$.
(Le r\'esultat ne d\'epend pas de la triangulation car, deux triangulations $S_1$, $S_2$
\'etant donn\'ees, on peut trouver une triangulation $S_3$ plus fine que $S_1$ et $S_2$,
et les courbes stables obtenues \`a partir de $S_1$ et $S_2$ sont isomorphes \`a celle
obtenue \`a partir de $S_3$.)

Les composantes irr\'eductibles de $Y^{\rm sp}$ sont les $Y_s^{\rm sp}$, pour $s\in \Sigma(Y)$.
Si $\Sigma(Y)=\emptyset$, on est dans un des cas suivants:

$\bullet$ $Y$ n'est pas une courbe de Tate et $Y^{\rm sp}$ est un point.

$\bullet$ $Y$ est une courbe de Tate et $Y^{\rm sp}$ est un point double.

\subsubsection{Fonctorialit\'e de la fibre sp\'eciale}\label{bTTT30}
Soit $f:X\to Y$ un morphisme de courbes analytiques (non constant).
Si $S$ et $T$ sont des triangulations de $X$ et $Y$ telles que $S\supset f^{-1}(T)$,
et si $X_S$ et $Y_T$ sont les mod\`eles semi-stables de $X$ et $Y$
associ\'es \`a $S$ et $T$, alors $f$ se prolonge, de mani\`ere unique, en
$f:X_S\to Y_T$; on note $f_0:X_S^{\rm sp}\to Y_T^{\rm sp}$
le morphisme induit sur les fibres sp\'eciales.

\begin{theo}\label{gam7}
Si $f:X\to Y$ est une morphisme de courbes analytiques, il
existe un unique morphisme de courbes $f^{\rm sp}:X^{\rm sp}\to Y^{\rm sp}$ tel
que, pour toutes triangulations $S$ et $T$ de $X$ et $Y$ telles que $S\supset f^{-1}(T)$,
le diagramme suivant soit commutatif:
$$\xymatrix@R=.6cm{X_S^{\rm sp}\ar[d]\ar[r]^-{f_0}&Y_T^{\rm sp}\ar[d]\\
X^{\rm sp}\ar[r]^-{f^{\rm sp}}&Y^{\rm sp}}$$
{\rm(Les fl\`eches verticales sont celles obtenues en contractant tous les $\piqp$ contractibles.)}
\end{theo}
\begin{proof}
Cela r\'esulte de ce que l'image d'un $\piqp$ priv\'e d'un nombre~$N$ de points
est un point ou bien un $\piqp$ priv\'e d'un nombre~$\leq N$ de points:
ceci implique que l'image d'un $\piqp$ contractible sur $X_S^{\rm sp}$ est
un point ou bien est contractible sur $Y_T^{\rm sp}$.
Il s'ensuit que, si $S$ et $T$ sont fix\'es, il existe $f^{\rm sp}_{S,T}$ unique faisant
commuter le diagramme.

Pour montrer que $f^{\rm sp}_{S,T}$ ne d\'epend pas de $S$ et $T$, il suffit,
si $S_1,S_2$ et $T_1,T_2$ sont des triangulations de $X$ et $Y$, de consid\'erer
une triangulation $T_3$ de $X$, plus fine que $T_1$ et $T_2$, et une triangulation
$S_3$ plus fine que $S_1,S_2$ et $f^{-1}(T_3)$, et de constater que
$f^{\rm sp}_{S_1,T_1}=f^{\rm sp}_{S_3,T_3}=f^{\rm sp}_{S_2,T_2}$.
\end{proof}

En appliquant le th.~\ref{gam7} \`a $Y=X$, on en d\'eduit le r\'esultat suivant.
\begin{coro}\label{gam8}
Le groupe ${\rm Aut}_C(Y)$ agit naturellement sur $Y^{\rm sp}$.
\end{coro}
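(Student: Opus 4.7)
My plan is to apply Theorem \ref{gam7} with $X=Y$ to each $\sigma \in {\rm Aut}_C(Y)$, obtaining a morphism $\sigma^{\rm sp}\colon Y^{\rm sp}\to Y^{\rm sp}$ of courbes stables. Concretely, given $\sigma$, I would pick any triangulation $T$ of $Y$ (the target) and a triangulation $S$ of $Y$ refining $\sigma^{-1}(T)$ (the source), so that $\sigma$ extends to a morphism $Y_S\to Y_T$ of mod\`eles semi-stables, inducing $\sigma_0\colon Y_S^{\rm sp}\to Y_T^{\rm sp}$; the theorem then produces the desired $\sigma^{\rm sp}$ on the quotient $Y^{\rm sp}$, independent of the choice of $S$ and $T$.

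The substance of the corollary is checking that $\sigma\mapsto\sigma^{\rm sp}$ is a group homomorphism, i.e.\ that ${\rm id}_Y^{\rm sp}={\rm id}_{Y^{\rm sp}}$ and $(\sigma\tau)^{\rm sp}=\sigma^{\rm sp}\circ\tau^{\rm sp}$ for all $\sigma,\tau\in{\rm Aut}_C(Y)$. The first identity is immediate from the uniqueness clause of Theorem \ref{gam7}, since for any triangulation $S$ the identity of $Y_S^{\rm sp}$ makes the defining diagram commute. For the second identity I would choose triangulations $U,T,S$ of $Y$ with $T\supset\sigma^{-1}(U)$ and $S\supset\tau^{-1}(T)$; then
\[
S\supset\tau^{-1}(T)\supset\tau^{-1}(\sigma^{-1}(U))=(\sigma\tau)^{-1}(U),
\]
so the triple $(S,U)$ satisfies the hypothesis of the theorem for $\sigma\tau$. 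At the level of mod\`eles semi-stables, $(\sigma\tau)_0=\sigma_0\circ\tau_0$ by functoriality of the prolongation $Y_\bullet\to Y_\bullet$; composing with the two vertical quotient maps of Theorem \ref{gam7} and using uniqueness of $\sigma^{\rm sp},\tau^{\rm sp},(\sigma\tau)^{\rm sp}$ yields the claimed identity.

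The naturality of the action amounts to the commutativity of the square from Theorem \ref{gam7} for every $\sigma$, which says that the contraction maps $Y_S^{\rm sp}\to Y^{\rm sp}$ intertwine the actions of ${\rm Aut}_C(Y)$ on both sides (as soon as the triangulation $S$ is $\sigma$-stable, which one can always arrange by replacing $S$ with a common refinement of its translates, at least for a single $\sigma$; for the full group one uses the independence statement). No real obstacle arises: the entire content has been packaged into Theorem \ref{gam7}, and the corollary is a purely formal consequence of its existence and uniqueness clauses.
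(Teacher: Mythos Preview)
Your approach is correct and is exactly the paper's: the paper simply says ``en appliquant le th.~\ref{gam7} \`a $Y=X$'', and you have spelled out the routine verification that $\sigma\mapsto\sigma^{\rm sp}$ is a group homomorphism via the uniqueness clause. Your final paragraph about arranging a $\sigma$-stable triangulation is an unnecessary detour: the commutative square of Theorem~\ref{gam7} already holds for \emph{all} admissible pairs $(S,T)$, so there is no need to produce a single triangulation preserved by $\sigma$ (and for an infinite automorphism group such a common refinement need not exist anyway).
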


\subsubsection{Les groupes ${\rm W}_Y$ et ${\rm WD}_Y$}\label{TTT32}
Soient $K$ une extension finie de $\Q_p$,
$q=|k_K|$ et $C=\C_p$. 
Si $Y$ est d\'efinie sur $K$, alors $ G_K$ agit sur ${\rm Aut}_CY$ par conjugaison, et le groupe
${\rm Aut}_KY$, qui agit naturellement sur $Y^{\rm sp}$,
est un produit: ${\rm Aut}_KY= G_K\times ({\rm Aut}_CY)^{ G_K}$.

Le frobenius g\'eom\'etrique $\varphi$ agit, lui aussi,
sur $Y^{\rm sp}$ comme sur toute vari\'et\'e de
caract\'eristique $p$.
Il en r\'esulte que
le sous-semi-groupe ${\rm Aut}_K(Y)\times\varphi^\N$ 
de ${\rm Aut}_K(Y)\times\varphi^\Z$ agit sur $Y^{\rm sp}$.
Il agit donc aussi sur le corps $k_C$ des constantes, et on note ${\rm W}_Y^+$
le sous-semi-groupe de ${\rm Aut}_K(Y)\times\varphi^\N$
des \'el\'ements agissant trivialement sur $k_C$, et ${\rm W}_Y$ le sous-groupe
de ${\rm Aut}_K(Y)\times\varphi^\Z$ engendr\'e par ${\rm W}_Y^+$.

\begin{rema}
{\rm (i)}
Le {\it groupe de Weil} ${\rm W}_K$ de $K$ est le sous-groupe
de $ G_K\times\varphi^\Z$ des $(g,\varphi^n)$ agissant trivialement
sur $k_C=\overline{\bf F}_p$. La projection $ G_K\times\varphi^\Z\to\varphi^\Z\overset{\sim}\to\Z$
d\'efinit une fonction degr\'e $\deg:{\rm W}_K\to f\Z$, o\`u $q=p^f$,
 et la projection $ G_K\times\varphi^\Z\to G_K$
fournit une injection $\iota:{\rm W}_K\hookrightarrow  G_K$: 
un \'el\'ement $g$ de $ G_K$ est de la forme $\iota(\gamma)$, avec $\gamma\in {\rm W}_K$
si et seulement si il existe $n\in\Z$ tel que $g$ agisse
par $x\mapsto x^{p^{n}}$ sur $\overline{\bf F}_p$, et alors $\deg\gamma=-n$.

{\rm (ii)}
Par construction, on dispose d'applications naturelles
$${\rm deg}:{\rm W}_Y\to\varphi^\Z\overset{\sim}\to\Z
\quad{\rm et}\quad
\iota:{\rm W}_Y\to {\rm Aut}_K(Y),$$
dont les restrictions \`a ${\rm W}_K\subset{\rm W}_Y$ sont les applications ci-dessus.
On a ${\rm W}_Y={\rm W}_K\times ({\rm Aut}_CY)^{ G_K}$, et comme
${\rm W}_K$ est dense dans $ G_K$, on en d\'eduit que
${\rm W}_Y$ {\it est dense dans} ${\rm Aut}_KY$.
\end{rema}
 
On note ${\rm WD}_Y$ le groupe alg\'ebrique produit semi-direct de ${\bf G}_a$
et ${\rm W}_Y$, le produit \'etant donn\'e par
 $(\lambda_1,g_1)(\lambda_2,g_2)=(\lambda_1+p^{\deg g_1}\lambda_2,g_1g_2)$, si $\lambda_1,\lambda_2\in {\bf G}_a$
et $g_1,g_2\in {\rm W}_Y$.
Le sous-groupe de ${\rm WD}_Y$ engendr\'e par ${\rm W}_K$ et ${\bf G}_a$ n'est autre que le groupe de Weil-Deligne
${\rm WD}_K$ de $K$.

Si $L$ est un corps, une {\it $L$-repr\'esentation de ${\rm WD}_Y$} est une repr\'esentation du groupe
de ses $L$-points, et est \'equivalente \`a la donn\'ee d'une $L$-repr\'esentation
de ${\rm W}_Y$ et d'un op\'erateur $N$ tel que $Ng=p^{\deg g}gN$.
Une telle repr\'esentation est dite {\it lisse} si tout \'el\'ement est fix\'e par un sous-groupe
ouvert du groupe d'inertie $I_K$ (naturellement un sous-groupe de ${\rm W}_K\subset {\rm W}_Y$).

Si $L_1$ et $L_2$ sont des corps et si $V_1$ et $V_2$ sont des repr\'esentations
lisses de ${\rm WD}_Y$ sur $L_1$ et $L_2$, on dit que {\it $V_1$ et $V_2$ sont similaires}
si elle deviennent isomorphes apr\`es extension des scalaires \`a un corps contenant $L_1$ et $L_2$.

\subsubsection{Lissit\'e de l'action de ${\rm Aut}_K(Y)$}
On suppose $\Sigma(Y)\neq\emptyset$.
\begin{theo}\label{HDR7}
Si $g\in{\rm Aut}_C(Y)$ agit trivialement sur $Y^{\rm sp}$, alors
$g$ agit trivialement sur $H^1_{\rm dR}(Y)$.
\end{theo}
\begin{proof}
On utilise la description de la rem.\,\ref{paire000}.
Comme $\Sigma(Y)\neq\emptyset$, le graphe $\Gamma=\Gamma^{\rm an}(X)$ est le
graphe dual de $Y^{\rm sp}$; il est donc fixe par $g$, puisque $g$ agit trivialement sur $Y^{\rm sp}$.
Comme $ g $ fixe $\Gamma$, il agit trivialement sur
$H^1(\Gamma,C)$ et sur $H^1_c(\Gamma,C)^\dual$,
et il suffit donc de prouver qu'il agit trivialement sur
$H^1_{\rm dR}(Y^\lozenge_s)$, si $s\in \Sigma$.
Autrement dit, on est ramen\'e au cas o\`u $Y$ est propre et a bonne r\'eduction.

On peut recouvrir $Y^{\rm sp}$ par des ouverts $U_i$ \'etales au-dessus de la droite affine
(et donc munis de $z_i\in \O(U_i)$ tel que $dz_i$ ne s'annule pas sur $U_i$),
et utiliser le recouvrement de $Y$ par les tubes $]U_i[$ des $U_i$ pour calculer
$H^1_{\rm dR}(Y)$.  Cela permet de se ramener au cas d'un affino\"{\i}de $X$ muni
de $z\in\O^+(X)$ tel que $\partial=\frac{d}{dz}$ soit une d\'erivation de $\O^+(X)$
(pour globaliser, on a aussi besoin de savoir que $H^1(Y,C)=0$, ce qui suit de ce que
toutes les intersections des $]U_i[$ sont non vides et donc que le nerf du recouvrement
est un simplexe).  L'hypoth\`ese selon laquelle $g$ fixe la fibre sp\'eciale
implique qu'il existe $r>0$ tel que $g^\dual z-z\in p^r\O^+(X)$.
Il s'ensuit que, si 
$\omega\in\Omega^1(X)$, alors $g^\dual\omega-\omega$
est la diff\'erentielle de 
$\sum_{n\geq 1}\frac{1}{n!}\partial^{n-1}(\frac{\omega}{dz})(g^\dual z-z)^n$,
et la s\'erie converge dans $\O(X)$ car, si $p^N\frac{\omega}{dz}\in \O^+(X)$, alors
$p^N\frac{1}{n!}\partial^{n-1}(\frac{\omega}{dz})\in\frac{1}{n}\O^+(X)$ (pour prouver ceci,
il suffit de v\'erifier que c'est vrai en restriction \`a toute boule r\'esiduelle, i.e.~au tube $]P[$
d'un point $P$ de la fibre sp\'eciale, et cela suit de ce que $z-z(P_0)$ est un param\`etre local de
$]P[$ pour tout choix de $P_0\in]P[$).
Ceci prouve que $g$ agit trivialement sur $H^1_{\rm dR}(X)$ et permet de conclure.
\end{proof}
\begin{rema}
Si $\Sigma(Y)=\emptyset$ et si $Y$ n'est pas une courbe de Tate, $H^1_{\rm dR}(Y)=0$
et le r\'esultat est encore vrai. Si $Y$ est une courbe de Tate, il faut convenir que la multiplication
par $-1$ n'agit pas trivialement sur le point double $Y^{\rm sp}$ si on veut que le r\'esultat
reste valable.
\end{rema}

\Subsection{Cohomologie (pro)\'etale $\ell$-adique}\label{TTT34}

Soit $Y$ une courbe sans bord 
et soit $\Gamma=\Gamma^{\rm an}(Y)$.
On peut \'ecrire $Y$ comme la r\'eunion croissante
d'affino\"{\i}des $Y_n$.
Alors $H^1_{\proet}(Y,\Q_\ell(1))=\varprojlim_n H^1_{\eet}(Y_n,\Q_\ell(1))$ (on a
$H^1_{\proet}(Y_n,\Q_\ell(1))=H^1_{\eet}(Y_n,\Q_\ell(1))$ puisque $Y_n$ est un affino\"{\i}de);
la limite ne d\'epend pas du choix des $Y_n$ car deux tels syst\`emes sont cofinaux.
Comme les $H^1_{\eet}(Y_n,\Q_\ell(1))$ sont des banachs,
$H^1_{\proet}(Y,\Q_\ell(1))$ est naturellement un fr\'echet.

\begin{theo}\label{ELL10}
Si $\ell\neq p$,
le groupe $H^1_{\rm proet}(Y,\Q_\ell(1))$ admet une filtration naturelle
dont les quotients successifs sont donn\'es par:
$$H^1_{\proet}(Y,\Q_\ell(1))=
\big[\xymatrix@C=.4cm{H^1(\Gamma,\Q_\ell)(1)\ar@{-}[r]
&\prod_{s\in\Sigma(Y)}H^1_{\eet}(Y_s,\Q_\ell(1))_0\ar@{-}[r] & H^1_c(\Gamma,\Q_\ell)^\dual}\big].$$
\end{theo}
\begin{proof}
C'est un r\'esultat classique~\cite[5.2.5]{Duc} ou~\cite{Duc1}; il se d\'eduit du th.\,\ref{ladique}
par passage \`a la limite.
\end{proof}

\Subsubsection{De la fibre sp\'eciale \`a la fibre g\'en\'erique}\label{TTT37}
On suppose que $Y$ est d\'efinie sur une extension finie $K$ de $\Q_p$,
et donc $H^1_{\proet}(Y,\Q_\ell(1))$ est muni d'une action de ${\rm Aut}_KY$.

\subsubsubsection{L'action de ${\rm WD}_Y$ sur $H^1_{\proet}(Y^{\rm sp},\Q_\ell(1)),$ $\ell\neq p$}\label{TTT38}
La fibre sp\'eciale $Y^{\rm sp}$ de $Y$ est munie d'une action de ${\rm W}_Y$
et donc sa cohomologie (log)pro\'etale $\ell$-adique aussi.
En tant que $\Q_\ell$-espace vectoriel, on a
$$H^1_{\proet}(Y^{\rm sp},\Q_\ell(1))=
H^1(\Gamma,\Q_\ell)(1)\oplus
\prod_{s\in\Sigma(Y)}H^1_{\eet}(Y^{\rm sp}_s,\Q_\ell(1))\oplus H^1_c(\Gamma,\Q_\ell)^\dual,$$
cette d\'ecomposition \'etant la d\'ecomposition par les poids de frobenius.
L'action de ${\rm W}_Y$ respecte chacun des trois espaces ci-dessus:
${\rm W}_Y$ agit sur $Y^{\rm sp}$ et donc aussi sur $\Gamma$ et, par suite
sur $H^1_c(\Gamma,\Q_\ell)^\dual$ et $H^1(\Gamma,\Q_\ell)$ (cette action est d'ailleurs
obtenue par extension des scalaires d'une action sur 
$H^1_c(\Gamma,\Q)^\dual$ et $H^1(\Gamma,\Q)$).
L'action sur $\prod_{s\in\Sigma(Y)}H^1_{\eet}(Y^{\rm sp}_s,\Q_\ell(1))$ est un produit
d'induites: si $S$ est un syst\`eme de repr\'esentants de $\Sigma(Y)$ modulo l'action
de ${\rm W}_Y$, et si ${\rm W}_s$ est le stabilisateur de $s\in S$ dans ${\rm W}_Y$, alors
$$\prod_{s\in\Sigma(Y)}H^1_{\eet}(Y^{\rm sp}_s,\Q_\ell(1))=
\prod_{s\in S}\big({\rm Ind}_{{\rm W}_s}^{{\rm W}_Y}H^1_{\eet}(Y^{\rm sp}_s,\Q_\ell(1))\big).$$ 
On peut naturellement enrichir cette action de ${\rm W}_Y$
en une action de ${\rm WD}_Y$ en d\'efinissant $N$ par la recette habituelle (rem.\,\ref{monod}).

\subsubsubsection{La recette de Fontaine}\label{TTT38.1}
La description ci-dessus, combin\'ee avec le th.~\ref{ELL10}, montre, en utilisant le fait que 
$$H^1_{\eet}(Y^{\rm sp}_s,\Q_\ell(1))
\cong H^1_{\eet}(Y_s,\Q_\ell(1))_0,$$
que $H^1_{\proet}(Y,\Q_\ell(1))\cong H^1_{\proet}(Y^{\rm sp},\Q_\ell(1))$
en tant que $\Q_\ell$-espaces.
Mais on a bien mieux: {\it les actions de ${\rm Aut}_KY$ et ${\rm WD}_Y$
sur $H^1_{\proet}(Y,\Q_\ell(1))$ et $H^1_{\proet}(Y^{\rm sp},\Q_\ell(1))$
se d\'eduisent l'une de l'autre} (th.~\ref{WD1} ci-dessous).
Rappelons que,
si $V$ est une $\Q_\ell$-repr\'esentation de ${\rm Aut}_KY$, on 
peut lui associer une repr\'esentation
${\rm WD}(V)$
de ${\rm WD}_Y$ par la recette suivante (de Fontaine~\cite{Fo-sst}).

$\bullet$ On choisit des syst\`emes $(\zeta_{\ell^n})_{n\in\N}$ et $(p^{1/\ell^n})_{n\in\N}$
de racines $\ell^n$-i\`emes de $1$ et $p$. 

$\bullet$ On note:

\quad $\diamond$ $\chi_\ell:G_K\to \Z_\ell^\dual$ 
le {\it caract\`ere cyclotomique}:
$g(\zeta_{\ell^n})=\zeta_{\ell^n}^{\chi_\ell(g)}$, si $n\in\N$, 

\quad $\diamond$ $c_\ell:G_K\to \Z_\ell$ le {\it cocycle
de Kummer} associ\'e \`a $p$: $g(p^{1/\ell^n})/p^{1/\ell^n}=\zeta_{\ell^n}^{c_\ell(g)}$.

$\bullet$ On fait agir ${\rm Aut}_KY$ sur $\Q_\ell[u]$ \`a travers son quotient $G_K$ qui agit
par $$g(u)=\chi_\ell^{-1}(g)(u+c_\ell(g)),$$ et on munit $\Q_\ell[u]$ de l'action
triviale\footnote{L'action de $\varphi$ est triviale sur la cohomologie \'etale $\ell$-adique
car c'est l'identit\'e sur les espaces topologiques.  Il n'en sera pas de m\^eme
pour la cohomologie de Hyodo-Kato.} de $\varphi$ et de la d\'erivation $N=\frac{d}{du}$.
On a $N\circ g=\chi_\ell(g)^{-1}g\circ N$, si $g\in {\rm Aut}_KY$ (et donc $N\circ g=p^{\deg g}g\circ N$, si $g\in{\rm W}_Y$).

$\bullet$
On pose alors
$${\rm WD}(V)=\varinjlim\nolimits_{[L:K]<\infty}(\Q_\ell[u]\otimes V)^{I_L}.$$
C'est une repr\'esentation de ${\rm Aut}_KY$ (agissant diagonalement)
munie d'actions de $\varphi$ (triviale) et $N$ via leurs actions sur le premier facteur.
En restreignant l'action de ${\rm Aut}_KY\times\varphi^\Z$ \`a ${\rm W}_Y$
cela fournit une $\Q_\ell$-repr\'esentation de ${\rm W}_Y$
(car $N\circ g=p^{\deg g}g\circ N$ sur $\Q_\ell[u]$, si $g\in{\rm W}_Y$),
qui est lisse par construction. 

Si $V$ est une limite projective $\varprojlim_nV_n$ de repr\'esentations de dimension finie 
$V_n$, on pose ${\rm WD}(V)=\varprojlim_n{\rm WD}(V_n)$.

\begin{rema}
Dans les deux cas, on retrouve $V$ (ou plut\^ot sa restriction au sous-groupe dense ${\rm W}_Y$ de ${\rm Aut}_KY$) 
\`a partir de ${\rm WD}(V)$ par la formule
$$V=(\Q_\ell[u]\wotimes{\rm WD}(V))^{N=0};$$
  il est donc \'equivalent de se donner $V$ ou ${\rm WD}(V)$.
\end{rema}
\begin{theo}\label{WD1}
Si $\ell\neq p$, on a une identification de ${\rm WD}_Y$-repr\'esentations:
$${\rm WD}(H^1_{\proet}(Y,\Q_\ell(1)))=H^1_{\proet}(Y^{\rm sp},\Q_\ell(1)).$$
\end{theo}
\begin{proof}
Cela suit, par passage \`a la limite, 
de ce que $H^1_{\proet}(Y_s,\Q_\ell(1))_0=H^1_{\eet}(Y_s^{\rm sp},\Q_\ell(1))$ 
(on est dans le cas de bonne r\'eduction) en tant que repr\'esentation de $W_s$, et de la
formule {\og de Picard-Lefschetz\fg} (rem.\,\ref{PL2}).
\end{proof}

\subsubsection{Comparaison entre les cohomologies \'etales $p$-adique et $\ell$-adique}
On suppose encore que $Y$ est d\'efinie sur une extension finie $K$ de $\Q_p$.
\begin{theo}\label{ella1}
Les repr\'esentations de ${\rm WD}_Y$ sur 
$H^1_{\proet}(Y^{\rm sp},\Q_\ell(1))$, pour $\ell\hskip.5mm{\neq}\hskip.5mm p$, 
et sur $H^1_{\rm HK}(Y)(1)$,
sont similaires.
\end{theo}
\begin{proof}
La filtration de la rem.\,\ref{paire000} pour $H^1_{\rm HK}(Y)$ admet un scindage
naturel par les poids comme $H^1_{\proet}(Y^{\rm sp},\Q_\ell)$, et ce scindage
est stable par ${\rm W}_Y$.  Il suffit donc de prouver que chacun des termes
fournit des repr\'esentations similaires de ${\rm W}_Y$ et que les op\'erateurs
de monodromie sont les m\^emes.

Sur les parties de la cohomologie ne d\'ependant que du graphe,
c'est clair puisqu'elles sont obtenues, par extension des scalaires, \`a partir d'une
m\^eme $\Q$-repr\'esentation. En particulier, les deux op\'erateurs de monodromie
sont les m\^emes.

Il reste \`a v\'erifier que l'action de ${\rm W}_Y$ est la m\^eme des deux c\^ot\'es
sur le reste.  Pour cela, choisissons
un syst\`eme $(Y_s^{\rm sp})_{s\in S}$ de repr\'esentants de
composantes irr\'eductibles de $Y^{\rm sp}$ modulo l'action de
${\rm W}_Y$ (notons que ${\rm W}_Y$ agit sur les composantes puisque $\varphi$ les laisse stables).  
Si $s\in S$, on note $W_s$ le stabilisateur de $Y^{\rm sp}_s$.
Alors l'action de $W_s$ sur $H^1_{\eet}(Y_s^{\rm sp},\Q_\ell)$ et $H^1_{\rm cris}(Y_s^{\rm sp})$
se factorise \`a travers ${\rm End}(J_s)$, o\`u $J_s$ est la jacobienne de
$Y_s^{\rm sp}$, et on conclut en utilisant le fait
que $H^1_{\eet}(Y_s^{\rm sp},\Q_\ell)$ et $H^1_{\rm cris}(Y_s^{\rm sp})$ sont
similaires en tant que repr\'esentations de ${\rm End}(J_s)$.
\end{proof}

\Subsection{Une d\'ecomposition de la cohomologie de de Rham}\label{affin16}
Dans tout le reste de ce chapitre, on suppose que $C=\C_p$.
Cela permet d'utiliser l'int\'egration sur les courbes (\cite{Cn85,CdS88}) qui repose
sur le fait qu'une puissance de frobenius devient lin\'eaire, ou~\cite{Cz-inte}
qui repose sur le fait que $J(C)/H$ est de torsion si $J$ est la jacobienne
d'une courbe et $H$ est un sous-groupe ouvert de $J(C)$).

Le but de ce \S\ est de faire le lien avec le point de vue de~\cite{CI} pour la d\'efinition
des groupes de cohomologie de Hyodo-Kato.
Nous allons d\'efinir
des sous-groupes $H^1_{\rm dR}(Y)_{\rm int}$,
$H^1_{\rm an}(Y,\C_p)$, $H^1_{\rm dR}(Y)_{{\rm log},{\cal L}}$
de $H^1_{\rm dR}(Y)$ (les deux premiers ne d\'ependent de rien mais le troisi\`eme
d\'epend du choix d'une branche du logarithme ou, ce qui revient au m\^eme,
de ${\cal L}=\log p\in \C_p$) et de prouver le r\'esultat suivant.

\begin{theo}\label{sympat7}
Si $Y$ est une courbe analytique connexe sans bord,
on a une d\'ecomposition fonctorielle
\begin{center}
$H^1_{\rm dR}(Y)=H^1_{\rm dR}(Y)_{\rm int}\bigoplus H^1_{\rm an}(Y,\C_p)\bigoplus
H^1_{\rm dR}(Y)_{{\rm log},{\cal L}},$
\end{center}
et des isomorphismes fonctoriels
\begin{align*}
H^1_{\rm an}(Y,\C_p)= H^1(\Gamma^{\rm an}({Y}),\C_p)
\quad &
H^1_{\rm dR}(Y)_{{\rm log},{\cal L}}=H^1_c(\Gamma^{\rm an}({Y}),\C_p)^\dual\\
H^1_{\rm dR}(Y)_{\rm int}\cong &\prod_{s\in \Sigma({Y})}H^1_{\rm rig}(Y^{\rm sp}_s/\O_{\C_p}).
\end{align*}
\end{theo}

\begin{rema}
(i)
On peut \'ecrire un affino\"{\i}de surconvergent comme une limite
projective de {\og wide opens\fg} pour lesquels le th.~\ref{sympat7} s'applique.
On en d\'eduit un scindage (d\'ependant de ${\cal L}$) de la filtration
du (ii) du cor.\,\ref{basic45}.

(ii) La d\'ependance de ce scindage par rapport au choix de ${\cal L}$ fait intervenir
l'op\'erateur de monodromie $N$ sur $H^1_{\rm dR}(Y)$ (cf.~prop.\,\ref{HDR3}).
\end{rema}

\subsubsection{D\'ecoupage associ\'e \`a une pseudo-triangulation}\label{TTT23}
Soient $S$ une pseudo-triangulation fine de $Y$, et $A$ l'ensembles des
ar\^etes de $\Gamma=\Gamma(S)$.  On fixe une orientation de $\Gamma$.
A partir de $S$, on fabrique un d\'ecoupage de $Y$ en couronnes analytiques (fibres g\'en\'eriques rigides
de jambes) et affino\"{\i}des avec bonne r\'eduction (fibres g\'en\'eriques rigides de shorts).
On pourrait reconstituer $Y$ en recollant ces morceaux le long de {\it cercles fant\^omes analytiques}
-- encore appel\'ees {\it couronnes d'\'epaisseur nulle}, l'anneau des fonctions analytiques
sur un tel objet est l'anneau de Robba $\varinjlim_{r>0}(\varprojlim_{s\in ]0,r]}\O(\{s\leq v_p(z)\leq r\}))$ --
mais on va plut\^ot recouvrir $Y$ par des 
{\it pantalons}\footnote{Un {\og basic wide open\fg} dans la terminologie de Coleman, i.e.~le compl\'ementaire d'un nombre fini de boules ferm\'ees dans une courbe propre ayant bonne r\'eduction.} 
se recollant le long de couronnes ouvertes.

Si $a\in A$, notons $Y_a$ la couronne (i.e.~la fibre g\'en\'erique d'une jambe) correspondant \`a $a$
et, si $s\in S$, notons $Y_s$ l'affino\"ide correspondant
\`a $s$,
et $Z_s$ le sous-espace analytique correspondant \`a l'\'etoile de sommet $s$;
on a donc $Z_s\moins Y_s=\sqcup_{a\in A(s)}Y_a$. 
Alors $Z_s$ est un pantalon, et $Z_s=Y_s\sqcup(\sqcup_{a\in A(s)}Y_a)$
est son d\'ecoupage en (fibre g\'en\'eriques rigides de) short et jambes.
Les $Z_s$ forment un recouvrement de $Y$, et 
comme $S$ est suppos\'ee fine: 

$\bullet$ $Z_{s_1}\cap Z_{s_2}$ est vide ou est une
couronne ouverte $Y_a$, avec $a\in A_c$, si $s_1\neq s_2$,

$\bullet$ $Z_{s_1}\cap Z_{s_2}\cap Z_{s_3}=\emptyset$ si $s_1,s_2,s_3$ sont deux \`a deux distincts.

On peut utiliser
ce recouvrement pour calculer la cohomologie de de Rham de $Y$ \`a la \v{C}ech.
Un $1$-cocycle est une collection 
$$\big((\omega_s)_{s\in S},(f_a)_{a\in A_c}\big),\quad
{\text{avec $\omega_s\in\Omega^1(Z_s)$, $f_a\in\O(Y_a)$, et
 $df_a=\omega_{s_2(a)}-\omega_{s_1(a)}$,}}$$
pour tous $s\in S$ et $a\in A_c$.
Un $1$-cobord est un $1$-cocycle de la forme
$$\big((dF_s)_{s\in S},(F_{s_2(a)}-F_{s_1(a)})_{a\in A_c}\big),
\quad{\text{avec $F_s\in\O(Z_s)$,}}$$ pour tout $s\in S$,
et $H^1_{\rm dR}(Y)$ est le quotient du groupe
$Z^1_{\rm dR}(S)$ des $1$-cocycles par celui $B^1_{\rm dR}(S)$ des $1$-cobords.

\smallskip
Cette description de $H^1_{\rm dR}(Y)$ permet d'introduire
un certain nombre de sous-groupes naturels.

\subsubsection{R\'esidus}\label{affin19}
Si $D$ est un disque ouvert, $H^1_{\rm dR}(D)=0$, et si $Y$ est une couronne ouverte
g\'en\'eralis\'ee $\{\alpha<v_p(z)<\beta\}$, 
alors $H^1_{\rm dR}(Y)$ est un $\C_p$-espace de dimension~$1$, engendr\'e
par la classe de $\frac{dz}{z}$. 
En particulier, 
si $a\in A$, $H^1_{\rm dR}(Y_a)$ est un $\C_p$-espace vectoriel de dimension~$1$
muni d'une base naturelle (au signe pr\`es; le signe est d\'etermin\'e par l'orientation de $a$).
Si $\omega\in\Omega^1(Y_a)$, on d\'efinit le {\it r\'esidu} ${\rm Res}_a\omega$ de $\omega$
comme la coordonn\'ee de l'image de $\omega$ dans cette base
de $H^1_{\rm dR}(Y_a)$.

Si $s\in S$, et si $a\in A(s)$, on d\'efinit le {\it r\'esidu} ${\rm Res}(\omega,a)$ de $\omega\in \Omega^1(Z_s)$
en $a$
comme celui de sa restriction \`a $Y_a$.  La fonction ${\rm Res}(\omega):A(s)\to \C_p$ ainsi
d\'efinie est dans le noyau de $\dbar_s:\C_p^{A(s)}\to \C_p^{\{s\}}$.
Plus pr\'ecis\'ement, si on compactifie $Z_s$ en une courbe propre $\overline Z_s$
en recollant des boules ouvertes $D_a$ le long des couronnes $Y_a$, pour $a\in A(s)$,
alors $Z_s$ et les $D_a$ forment un recouvrement de $\overline Z_s$, ce
qui fournit le diagramme commutatif, \`a lignes exactes, suivant:
$$\xymatrix@R=.6cm@C=.5cm{
\Omega^1(Z_s)
\bigoplus\big(\bigoplus\limits_{a\in A(s)} \Omega^1(D_a)\big)\ar[d]\ar[r]
&\bigoplus\limits_{a\in A(s)}\Omega^1(Y_a)\ar[d]^{\rm Res}\ar[r]& H^1(\overline Z_s,\Omega^1)\ar[d]^{\wr}\ar[r]&0\\
H^1_{\rm dR}(Z_s)\ar[r]^-{\rm Res}
&\C_p^{A(s)}\ar[r]^{\dbar_s}& \C_p^{\{s\}}\ar[r]&0}$$

\subsubsection{Formes localement log-exactes}\label{affin21}
 Une forme diff\'erentielle $\omega\in\Omega^1(Z_s)$ est {\it log-exacte}
si on peut l'\'ecrire sous la forme $df_0+\sum_{i=1}^r\lambda_i\frac{df_i}{f_i}$,
avec $f_0\in\O(Z_s)$ et $f_i\in\O(Z_s)^\dual$, si $1\leq i\leq r$.
Elle est {\it $\Q$-log-exacte} si elle est log-exacte et si on peut prendre
les $\lambda_i$ rationnels dans la d\'ecomposition ci-dessus (ce qui \'equivaut \`a
ce que ${\rm Res}_a\omega\in\Q$, pour tout $a\in A(s)$).
On note $\Omega^1_{\rm log}(Z_s)$ le sous-espace de $\Omega^1(Z_s)$ des
formes log-exactes, et $H^1_{\rm dR}(Z_s)_{\rm log}$
l'image de $\Omega^1_{\rm log}(Z_s)$ dans $H^1_{\rm dR}(Z_s)$.
On a alors un isomorphisme 
$$H^1_{\rm dR}(Z_s)_{\rm log}\cong {\rm Ker}\big[\dbar_s:\C_p^{A(s)}\to \C_p^{\{s\}}\big]$$
d\'ecoulant du
lemme.~\ref{paire12} ci-dessous.
\begin{lemm}\label{paire12}
Si $\phi\in
{\rm Ker}\big(\dbar_s:\Z^{A(s)}\to\Z^{\{s\}}\big)$, il existe $M\in\N$ et
$f\in\O(Z_s)^\dual$ tels que ${\rm Res}(\frac{df}{f})=M\phi$.
\end{lemm}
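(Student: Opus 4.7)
Le plan est de montrer que l'image $L$ de l'application r\'esidu $\rho:\O(Z_s)^\dual\to\ker\dbar_s$, $f\mapsto({\rm Res}_a(df/f))_{a\in A(s)}$, est un sous-groupe d'indice fini; le lemme en d\'ecoule imm\'ediatement, l'entier $M$ demand\'e \'etant un annulateur du quotient $\ker\dbar_s/L$. Soient $\overline{Z_s}$ la compactification lisse de $Z_s$ obtenue en rebouchant chaque couronne $Y_a$ par un disque ouvert $D_a^-$, et $\overline{D}_a\subset D_a^-$ le disque ferm\'e tel que $Z_s=\overline{Z_s}\moins\sqcup_a\overline{D}_a$. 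Tout $f\in\O(Z_s)^\dual$ s'\'etend en une fonction m\'eromorphe sur $\overline{Z_s}$ dont le diviseur est support\'e dans $\sqcup_a\overline{D}_a$; par le th\'eor\`eme des r\'esidus appliqu\'e \`a chaque $D_a^-$, ${\rm Res}_a(df/f)$ est le degr\'e total de ${\rm Div}(f)$ dans $\overline{D}_a$ (au signe pr\`es selon l'orientation). Ainsi $L$ est l'ensemble des $(n_a)\in\ker\dbar_s$ r\'ealisables comme degr\'es d'un diviseur principal \`a support dans $\sqcup_a\overline{D}_a$.

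Soient $J$ la jacobienne de $\overline{Z_s}$ et $\iota:\overline{Z_s}\to J$ le plongement d'Abel-Jacobi bas\'e en un point $P_0$ fix\'e. Fixons des $P_a\in\overline{D}_a$; tout diviseur de degr\'e $n_a$ dans chaque $\overline{D}_a$ et \`a support dans $\sqcup_a\overline{D}_a$ s'\'ecrit $\sum_a n_a P_a+E$ o\`u $E$ est une somme de diviseurs de degr\'e $0$ support\'es dans un unique $\overline{D}_a$. Les classes $[E]\in J(\C_p)$ d\'ecrivent exactement le sous-groupe $H\subset J(\C_p)$ engendr\'e par les diff\'erences $[Q_1-Q_2]$ pour $Q_1,Q_2\in\overline{D}_a$, $a\in A(s)$, et la principalit\'e d'un tel diviseur \'equivaut \`a $[\sum_a n_a P_a]\in H$. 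On a donc $L=\ker\beta$, o\`u $\beta:\ker\dbar_s\to J(\C_p)/H$ est l'application bien d\'efinie $(n_a)\mapsto[\sum_a n_a P_a]$.

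Il reste \`a montrer que $\beta$ est \`a image finie. Le sous-groupe $H$ contient l'ensemble $\{[Q-P_a]:Q\in\overline{D}_a\}$, image du disque ferm\'e $\overline{D}_a$ par une carte analytique de $J$ en l'origine et donc voisinage compact d'int\'erieur non vide de l'origine; il s'ensuit que $H$ est un sous-groupe ouvert de $J(\C_p)$. Par une variante de la rem.~\ref{inte3}~(ii) s'appliquant \`a tout sous-groupe ouvert (adaptation directe des arguments de \cite[th.\,4.1]{RLC} ou \cite[th.\,3]{Cz-inte}), le quotient $J(\C_p)/H$ est un groupe de torsion. L'image de $\beta$ est alors un sous-groupe de torsion de type fini, donc fini, et $L=\ker\beta$ est d'indice fini dans $\ker\dbar_s$. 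Le principal obstacle technique est cette g\'en\'eralisation de la rem.~\ref{inte3}~(ii) aux sous-groupes ouverts quelconques; \`a d\'efaut, on peut la contourner en appliquant la prop.~\ref{VDP} \`a un affino\"{\i}de plus fin de la forme $\overline{Z_s}\moins\sqcup_a U_a$ (avec $U_a\supset\overline{D}_a$ un voisinage ouvert strictement inclus dans $D_a^-$) et en reliant $H$ au sous-groupe de Picard correspondant, ce qui donne la conclusion modulo un indice fini suppl\'ementaire qui ne change pas la conclusion.
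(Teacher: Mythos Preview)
Votre approche est essentiellement la m\^eme que celle du papier (compactifier en une courbe propre, utiliser la jacobienne et le fait que $J(\C_p)/H$ est de torsion pour $H$ ouvert), mais il y a une affirmation fausse qui n'est heureusement pas n\'ecessaire.

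L'erreur: il est faux que tout $f\in\O(Z_s)^\dual$ s'\'etend en une fonction m\'eromorphe sur $\overline{Z_s}$. Par exemple, si $Z_s$ est un disque ouvert dans $\mathbf{P}^1$, les unit\'es normalis\'ees sont les \'el\'ements de $1+T\O_C[[T]]$ (cf.~le calcul de norme de Gauss sur chaque sous-disque ferm\'e), et la plupart ne sont pas rationnelles. Plus g\'en\'eralement, le lemme~\ref{12.2} du papier montre que toute unit\'e de r\'esidu nul sur un pantalon est de la forme $c\cdot\exp(g)$ avec $g\in\O(Z_s)$, donc typiquement transcendante. Votre identit\'e $L=L'$ (o\`u $L'$ est l'ensemble des vecteurs de degr\'es de diviseurs principaux support\'es dans $\sqcup_a\overline D_a$) est donc injustifi\'ee.

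Cela dit, vous n'avez besoin que de l'inclusion $L\supseteq L'$: toute fonction rationnelle sur $\overline{Z_s}$ de diviseur support\'e dans $\sqcup_a\overline D_a$ se restreint en une unit\'e sur $Z_s$, et son vecteur de r\'esidus est bien le vecteur des degr\'es locaux. Le reste de votre argument (identification $L'=\ker\beta$, puis $J(\C_p)/H$ de torsion, donc image de $\beta$ finie) est correct et suffit \`a conclure que $L'$, donc $L$, est d'indice fini.

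Le papier proc\`ede plus directement: au lieu de d\'ecrire $L$ ou $L'$, il construit explicitement $f$ comme fonction rationnelle sur la compactifi\'ee $Z_s^\lozenge$. Il choisit un seul indice $a_0$, prend $H$ engendr\'e par $\iota(D'_{a_0})$ avec $D'_{a_0}=D_{a_0}\moins Y_{a_0}$, et utilise la torsion de $J(\C_p)/H$ pour trouver $M$ et des points $Q_j\in D'_{a_0}$ tels que $M\sum_a\phi(a)(P_a-P_{a_0})+\sum_j\pm(Q_j-P_{a_0})$ soit principal; la fonction $f$ correspondante convient. C'est la m\^eme id\'ee que la v\^otre, pr\'esent\'ee de fa\c con constructive et sans passer par l'affirmation erron\'ee. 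Quant au fait que $J(\C_p)/H$ est de torsion pour $H$ ouvert, le papier l'invoque directement (c'est annonc\'e au d\'ebut du \S\,\ref{affin16} comme cons\'equence de~\cite{Cz-inte}, et dans la preuve via ``$J(\overline{\bf F}_p)$ est de torsion''); votre prudence sur ce point est compr\'ehensible mais superflue ici.
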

\begin{proof}
On peut compactifier $Z_s$ en une courbe propre $Z^\lozenge_s$
en recollant des boules ouvertes $D_a$ le long des $Y_a$ via le choix
d'un param\`etre
local $T_a$ de $Y_a$, de telle sorte que $D_a=\{v_p(T_a)>0\}$ et $Y_a=\{\mu(a)>v_p(T_a)>0\}$;
on note $P_a$ le centre de $D_a$.

Soit $J$ la jacobienne de $Z^\lozenge_s$.  Choisissons $a_0\in A$, et notons $P_{a_0}$
le centre du disque $D_{a_0}$ et $\iota:Z^\lozenge_s\to J$ l'application
$P\mapsto (P-P_{a_0})$.  Le sous groupe $H$ de $J(\C_p)$ engendr\'e
par $\iota (D'_{a_0})$, o\`u $D'_{a_0}=D_{a_0}\moins Y_{a_0}$, est ouvert;
on en d\'eduit, car $J(\overline{\bf F}_p)$ est de torsion, que
$J(\C_p)/H$ est de torsion.  Il existe donc $M\in\N$ et des $Q_j\in D'_{a_0}$
tels que $M\big(\sum_{a\in A}\phi(a)\iota(P_a)\big)+\sum_j\pm\iota(Q_j)=0$.
Mais alors $M\big(\sum_{a\in A}\phi(a)(P_a-P_{a_0})\big)+\sum_j\pm(Q_j-P_{a_0})$
est un diviseur principal et la fonction $f\in\C_p(Z^\lozenge_s)^\dual$
dont c'est le diviseur a les propri\'et\'es voulues.
\end{proof}

Un $1$-cocycle $\big((\omega_s)_{s\in S},(f_a)_{a\in A_c}\big)\in Z^1_{{\rm dR}}(S)$ 
est {\it localement log-exact}
si $\omega_s\in \Omega^1_{\rm log}(Z_s)$ pour tout $s\in S$.
Il est {\it globalement log-exact} s'il existe $f\in\O(Y)^\dual$ tel que $\omega_s=\frac{df}{f}$
pour tout $s\in S$, et $f_a=0$ pour tout $a\in A_c$.
On note $H^1_{\rm dR}(Y)_{\rm log}$ l'image dans
$H^1_{\rm dR}(Y)$ des $1$-cocycles localement log-exacts.

\subsubsection{Cohomologie analytique}\label{affin22}
${\rm Coker}\big[\partial:\C_p^S\to \C_p^{A_c}\big]$ s'identifie 
naturellement \`a un sous-espace
de $H^1_{\rm dR}(Y)_{\rm log}$: si $\phi\in \C_p^{A_c}$, on peut lui associer
le $1$-cocycle $((\omega_s)_{s\in S},(f_a)_{a\in A_c})$, avec $\omega_s=0$
et $f_a=\phi(a)$, qui est trivialement localement log-exact; ce cocycle est un cobord
si et seulement si $\phi$ est dans l'image de $\partial$.
Plus conceptuellement, si on note $H_{\rm an}^1(Y,\C_p)$ le groupe de cohomologie analytique,
on a
$$
{\rm Coker}\big[\partial:\C_p^S\to \C_p^{A_c}\big]= 
H^1(\Gamma,\C_p)=H_{\rm an}^1(Y,\C_p).$$

Si $\omega\in H^1_{\rm dR}(Y)$, on note ${\rm Res}(\omega)\in \C_p^A$
la fonction $a\mapsto {\rm Res}(\omega,a)$, o\`u ${\rm Res}(\omega,a)$
est le r\'esidu de la restriction de $\omega$ \`a $Y_a$.
On a 
$${\rm Res}(\omega)\in{\rm Ker}\big[\dbar:\C_p^A\to \C_p^S\big]=H_c^1(\Gamma,\C_p)^\dual,$$
et ${\rm Res}$ induit une suite exacte
$$0\to H^1(\Gamma,\C_p) \to H^1_{\rm dR}(Y)_{\rm log}\to H_c^1(\Gamma,\C_p)^\dual .$$

\subsubsection{$\log_{\cal L}$-cobords}\label{affin23}
Choisissons une branche $\log_{\cal L}$ du logarithme ($\log_{\cal L}$ est la branche du logarithme
d\'efinie par $\log_{\cal L}p={\cal L}$).
Un $1$-cocycle localement log-exact est un {\it $\log_{\cal L}$-cobord} s'il existe $F_s$, pour $s\in S$,
 de la forme
$F_s=f_{s,0}+\sum_{i=1}^r\lambda_i\log_{\cal L}f_{s,i}$, 
avec $f_{s,0}\in \O(Z_s)$, $f_{s,i}\in\O(Z_s)^\dual$ si $1\leq i\leq r$,
telles que l'on ait $\omega_s=dF_s$ et $f_a=F_{s_2(a)}-F_{s_1(a)}$ pour tous
$s\in S$ et $a\in A_c$.  
On note
$H^1_{\rm dR}(Y)_{{\rm log},{\cal L}}$ le sous-espace de
$H^1_{\rm dR}(Y)_{\rm log}$ engendr\'e par les $\log_{\cal L}$-cobords.

\begin{prop}\label{HDR1}
${\rm Res}$ induit un isomorphisme
$${\rm Res}:H^1_{\rm dR}(Y)_{{\rm log},{\cal L}}\overset{\sim}{\to} {\rm Ker}\,\dbar=H^1_c(\Gamma,\C_p)^\dual.$$
\end{prop}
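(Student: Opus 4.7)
My plan is to reduce the proposition to a local statement on each pantalon $Z_s$ via the \v{C}ech description of $H^1_{\rm dR}(Y)$. At the cocycle level, a $\log_{\cal L}$-cobord $((dF_s)_{s}, (F_{s_2(a)}-F_{s_1(a)})_{a})$ with $F_s = f_{s,0} + \sum_i \lambda_{s,i} \log_{\cal L}(f_{s,i})$ has residue $\phi(a) = \sum_i \lambda_{s,i} v'_a(f_{s,i})$ at each edge $a$. The value is independent of the choice of $s \in \{s_1(a), s_2(a)\}$ (since $f_a \in \O(Y_a)$ has zero residue on $Y_a$), and satisfies $\dbar \phi = 0$ by the residue formula on the proper compactification $\overline Z_s$ of $Z_s$, obtained by filling each cercle fant\^ome $Y_a$ with an open disk.

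The core step is a local assertion: for each $s$, the map $\rho_s : \mathcal{L}_s / \O(Z_s) \to {\rm Ker}\,\dbar_s$ sending $F$ to ${\rm Res}(dF)$ is an isomorphism, where $\mathcal{L}_s$ denotes the space of expressions $f_0 + \sum_i \lambda_i \log_{\cal L}(f_i)$ with $f_0 \in \O(Z_s)$, $f_i \in \O(Z_s)^\dual$, and $\lambda_i \in \C_p$. Surjectivity follows from Lemma~\ref{paire12} applied to a $\Q$-basis of $\Q^{A(s)} \cap {\rm Ker}\,\dbar_s$, combined with $\C_p$-linearity. Injectivity relies on the fact that units on the proper connected rigid curve $\overline Z_s$ are the constants $\C_p^\dual$: any $f \in \O(Z_s)^\dual$ with $v'_a(f)=0$ for every $a \in A(s)$ extends to a unit on $\overline Z_s$, hence is constant. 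This makes $v': \O(Z_s)^\dual/\C_p^\dual \hookrightarrow \Z^{A(s)}$ injective, and since $\C_p$ is $\Z$-flat the tensor product $(\O(Z_s)^\dual/\C_p^\dual)\otimes_\Z\C_p \hookrightarrow \C_p^{A(s)}$ remains injective. Any $F \in \mathcal{L}_s$ with $\rho_s(F)=0$ has its log-part in the image of $\C_p^\dual \otimes_\Z \C_p$ under $\lambda\otimes f \mapsto \lambda\log_{\cal L}(f)$; but this image lies in $\C_p$, so $F \in \O(Z_s)$.

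Given this local statement, globalization is formal. For surjectivity of ${\rm Res}$ on $H^1_{\rm dR}(Y)_{\log, {\cal L}}$, given $\phi \in {\rm Ker}\,\dbar$, I lift $\phi|_{A(s)}$ via $\rho_s^{-1}$ to some $F_s \in \mathcal{L}_s$ for each $s$. The difference $F_{s_2(a)} - F_{s_1(a)}$ restricted to the crown $Y_a$ has zero residue by compatibility of $\phi$, and the analogous (and easier) local isomorphism on $Y_a$ -- a direct consequence of the factorization of units in Lemma~\ref{basic32} -- places this difference in $\O(Y_a)$; the resulting collection constitutes a genuine $\log_{\cal L}$-cobord with residue $\phi$. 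For injectivity of ${\rm Res}$, a $\log_{\cal L}$-cobord with zero residue satisfies $\rho_s(F_s) = 0$ for every $s$, so $F_s \in \O(Z_s)$, and the cocycle is the \v{C}ech coboundary of $(F_s)_{s \in S} \in \prod_s \O(Z_s)$, hence vanishes in $H^1_{\rm dR}(Y)$.

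The main obstacle will be the local injectivity of $\rho_s$. The rigid-analytic extension of a unit from $Z_s$ to $\overline Z_s$ across the cercle fant\^omes is immediate once the zero-winding hypothesis is granted, but interpreting general $\C_p$-linear combinations $\sum \lambda_i \log_{\cal L}(f_i)$ as rigid-analytic functions when the winding numbers cancel cannot be achieved by a purely combinatorial manipulation of the $\lambda_i$ and $f_i$: it genuinely requires the tensor-product argument together with $\Z$-flatness of $\C_p$ to conclude that the corresponding element of $\O(Z_s)^\dual \otimes_\Z \C_p$ lies in $\C_p^\dual \otimes_\Z \C_p$, whose image under $\log_{\cal L}$ is contained in the constants.
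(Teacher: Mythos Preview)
Your surjectivity argument and the globalization are correct and essentially match the paper's proof (which in fact only spells out surjectivity, leaving injectivity implicit). However, there is a genuine gap in your local injectivity argument for $\rho_s$. The claim that every $f \in \O(Z_s)^\dual$ with $v'_a(f)=0$ for all $a\in A(s)$ extends to a unit on the proper curve $\overline Z_s$, and is therefore constant, is false. For a concrete counterexample, take $\overline Z_s=\mathbb{P}^1$, $Y_s=\{|z|=1\}$, $Z_s=\{\epsilon<|z|<1/\epsilon\}$, and $f=1+cz$ with $0<|c|<\epsilon$: then $f$ is a non-constant unit with $v'_a(f)=0$ at both ends (since $|cz|<1$ on all of $Z_s$), but $f$ has a pole at $\infty\in\overline Z_s$. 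More generally, for any non-constant bounded $g\in\O^+(Z_s)$, the unit $1+pg$ has all residues zero (its logarithm converges in $\O(Z_s)$) yet is not constant. Hence $v':\O(Z_s)^\dual/\C_p^\dual\to\Z^{A(s)}$ is \emph{not} injective, and your tensor-product step collapses.

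The repair is to invoke Lemma~\ref{12.2}, which you have not used for injectivity. Replace $\C_p^\dual$ by the full kernel $K:=\ker\big(v':\O(Z_s)^\dual\to\Z^{A(s)}\big)$; then $\O(Z_s)^\dual/K\hookrightarrow\Z^{A(s)}$ is tautologically injective and torsion-free, so tensoring with $\C_p$ over $\Z$ preserves injectivity. The crucial additional input is that $\log_{\cal L}(K)\subset\O(Z_s)$: this is exactly Lemma~\ref{12.2} (if $f\in K$ then $df/f$ is exact on the pantalon $Z_s$, so $\log_{\cal L} f\in\O(Z_s)$). Thus if $\sum_i\lambda_i v'(f_i)=0$ in $\C_p^{A(s)}$, then $\sum_i\lambda_i\otimes f_i$ lies in the image of $K\otimes_\Z\C_p$, and applying $\lambda\otimes f\mapsto\lambda\log_{\cal L}(f)$ lands in $\O(Z_s)$, giving $F\in\O(Z_s)$. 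With this correction your argument goes through; the paper effectively relies on the same mechanism (the uniqueness of $\log_{\cal L}$-primitives up to constants, underlying the retraction $\alpha_{\cal L}$ of Remark~\ref{HDR2}).
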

\begin{proof}
Il r\'esulte du lemme~\ref{paire12} que l'on peut trouver 
$\omega_s$ de la forme $\sum_i\lambda_i\frac{df_i}{f_i}$, avec
$f_i\in \O(Z_s)^\dual$ dont le r\'esidu est $\phi_{|A(s)}$.
On obtient un $\log_{\cal L}$-cobord dont la restriction du r\'esidu \`a $A\moins A_c$
est $\phi$ en faisant la somme
des $(\omega_s,(F_{s,a})_{a\in A_c(s)})$ o\`u, si $a\in A_c(s)$, $F_{s,a}$ est la restriction \`a
$Y_a$ de $\pm\sum_i\lambda_i\log_{\cal L}f_i$
suivant que $s_2(a)=s$
ou $s_1(a)=s$: cela marche car $F_{s_2(a),a}-F_{s_1(a),a}\in\O(Y_a)$
bien que ni $F_{s_2(a),a}$ ni $F_{s_1(a),a}$ ne soient holomorphes sur $Y_a$ a priori.
\end{proof}

\Subsubsection{Formule {\og de Picard-Lefschetz\fg}}
\begin{rema}\label{HDR2}
{\rm (i)} La prop.~\ref{HDR1} prouve que 
$H^1_{\rm dR}(Y)_{\rm log}\to H_c^1(\Gamma,\C_p)^\dual$
est surjective et
fournit un scindage 
(d\'ependant de ${\cal L}$)
de la suite exacte 
$$0\to H^1(\Gamma,\C_p) \to H^1_{\rm dR}(Y)_{\rm log}\to H_c^1(\Gamma,\C_p)^\dual\to 0 .$$

{\rm (ii)}
L'application $\alpha_{\cal L}:H^1_{\rm dR}(Y)_{\rm log}\to H^1(\Gamma,\C_p)$
qui se d\'eduit de ce scindage est la suivante. 
Soit $((\omega_s)_{s\in S},(f_a)_{a\in A_c})$ un $1$-cocycle localement log-exact.
Il existe une primitive
$F_s$ de $\omega_s$ sur $Z_s$ de la forme $f_0+\sum_i\lambda_i\log_{\cal L}f_i$, avec $f_i\in\O(Z_s)^\dual$
(bien d\'efinie \`a addition d'une constante pr\`es).
Si $a\in A_c$, alors $c_a=f_a-(F_{s_2(a)}-F_{s_1(a)})$ est une constante, et
$(c_a)_{a\in A_c}$ d\'efinit un \'el\'ement de $H^1(\Gamma,\C_p)={\rm Coker}\,\partial$,
ce qui fournit une application 
$$\alpha_{\cal L}:H^1_{{\rm dR}}(Y)_{\rm log}\to H^1(\Gamma,\C_p)$$
qui est l'identit\'e sur $H^1(\Gamma,\C_p)$ de mani\`ere \'evidente.

\end{rema}

\begin{prop}\label{HDR3}
{\rm (i)} Si ${\cal L}_1,{\cal L}_2\in \C_p$, alors
$$\alpha_{{\cal L}_2}(\omega)=
\alpha_{{\cal L}_1}(\omega)-({\cal L}_2-{\cal L}_1)\,N_\mu({{\rm Res}}(\omega)).$$

{\rm (ii)}
Si $f\in\O(Y)^\dual$, alors $N_\mu({\rm Res}(\frac{df}{f}))=0$. 
\end{prop}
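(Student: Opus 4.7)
The plan is to compute explicitly how the representative cocycle changes when the branch ${\cal L}$ of the $p$-adic logarithm varies, using the fact that two branches differ by $({\cal L}_2-{\cal L}_1)v_p$ on $\C_p^\dual$, together with the key observation that weighted residues along a crown are always coboundaries.

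I begin with (ii) as a warm-up because it isolates the main combinatorial identity. If $f\in\O(Y)^\dual$, Lemma~\ref{basic32} applied on each $Y_a$ yields a decomposition $f|_{Y_a}=c_aT_{a,s_1}^{k_a}u_{a,+}u_{a,-}$, and by definition ${\rm Res}(df/f,a)=k_a$. The Gauss valuations at the endpoints satisfy $v_{s_2(a)}(f)-v_{s_1(a)}(f)=k_a\mu(a)=\mu(a){\rm Res}(df/f,a)$, since $v_{s_1(a)}(T_{a,s_1})=0$ and $v_{s_2(a)}(T_{a,s_1})=\mu(a)$. Setting $V\in\C_p^S$ by $V(s)=v_s(f)$, this identifies the cochain $\bigl(\mu(a){\rm Res}(df/f,a)\bigr)_{a\in A_c}$ with $\partial V$, which is zero in $H^1(\Gamma,\C_p)={\rm Coker}\,\partial$; hence $N_\mu({\rm Res}(df/f))=0$.

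For (i), fix a locally log-exact cocycle $((\omega_s)_s,(f_a)_a)$ and write a primitive on each $Z_s$ as $F_s^{({\cal L})}=f_{s,0}+\sum_i\lambda_{s,i}\log_{\cal L}f_{s,i}$. Applying $\log_{{\cal L}_2}-\log_{{\cal L}_1}=({\cal L}_2-{\cal L}_1)v_p$ pointwise gives $F_s^{({\cal L}_2)}-F_s^{({\cal L}_1)}=({\cal L}_2-{\cal L}_1)\sum_i\lambda_{s,i}v_p(f_{s,i})$. Restricting to $Y_a$ via the decompositions $f_{s,i}|_{Y_a}=c_{s,i,a}T_{a,s_1}^{k_{s,i,a}}u$, the pointwise valuation becomes $v_p(c_{s,i,a})+k_{s,i,a}v_p(T_{a,s_1}(x))$. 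The linear part of $(F_{s_2(a)}^{({\cal L}_2)}-F_{s_2(a)}^{({\cal L}_1)})-(F_{s_1(a)}^{({\cal L}_2)}-F_{s_1(a)}^{({\cal L}_1)})$ in $v_p(T_{a,s_1}(x))$ has coefficient ${\rm Res}(\omega_{s_2(a)},a)-{\rm Res}(\omega_{s_1(a)},a)$, which vanishes because $\omega_{s_2(a)}-\omega_{s_1(a)}=df_a$ is exact on $Y_a$. This confirms that the difference is a genuine constant on $Y_a$, as it must be for $c_a^{({\cal L})}=f_a-(F_{s_2(a)}^{({\cal L})}-F_{s_1(a)}^{({\cal L})})$ to remain a constant. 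Relating $v_p(c_{s,i,a})$ to the Gauss valuations via $v_{s_2(a)}(f_{s_2(a),i})=v_p(c_{s_2(a),i,a})+k_{s_2(a),i,a}\mu(a)$ and $v_{s_1(a)}(f_{s_1(a),i})=v_p(c_{s_1(a),i,a})$, and setting $V(s)=\sum_i\lambda_{s,i}v_s(f_{s,i})$, the constant equals $({\cal L}_2-{\cal L}_1)\bigl[(V_{s_2(a)}-V_{s_1(a)})-\mu(a){\rm Res}(\omega,a)\bigr]$. The first summand is the coboundary $\partial V$, so modulo ${\rm Im}\,\partial$ one reads off $\alpha_{{\cal L}_2}(\omega)-\alpha_{{\cal L}_1}(\omega)=-({\cal L}_2-{\cal L}_1)N_\mu({\rm Res}(\omega))$ in $H^1(\Gamma,\C_p)$.

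The main obstacle is making rigorous the manipulations with the functions $\log_{\cal L}f_{s,i}$, which are only locally analytic on $Z_s$ (not globally analytic, since $v_p(T_{a,s_1}(x))$ is not analytic on $Y_a$). One must interpret $\log_{\cal L}$ pointwise on $\C_p^\dual$ via $\log_{\cal L}z=\log(z/p^{v_p(z)})+v_p(z){\cal L}$, verify that the combination $\sum_i\lambda_{s,i}\log_{\cal L}f_{s,i}$ is well-defined as a primitive of the analytic form $\omega_s$, and check that the non-analytic pieces cancel exactly across each cercle fant\^ome precisely because $\omega_{s_2(a)}$ and $\omega_{s_1(a)}$ agree modulo the exact form $df_a$, whose residue vanishes.
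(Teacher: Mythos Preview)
Your argument is correct, but it differs from the paper's in two notable ways.

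For (ii), you give a direct combinatorial proof via the valuation identity $v_{s_2(a)}(f)-v_{s_1(a)}(f)=\mu(a){\rm Res}(\tfrac{df}{f},a)$, exhibiting $\bigl(\mu(a){\rm Res}(\tfrac{df}{f},a)\bigr)_{a\in A_c}$ as the coboundary $\partial V$ with $V(s)=v_s(f)$. The paper instead deduces (ii) from (i): since $f$ is global, one may take $F_s=\log_{\cal L}f$ for every $s$, which gives $\alpha_{\cal L}(\tfrac{df}{f})=0$ for all ${\cal L}$, and (i) then forces $N_\mu({\rm Res}(\tfrac{df}{f}))=0$. Your route is independent of (i) and more elementary.

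For (i), the paper takes a shortcut that avoids most of your bookkeeping. Rather than tracking all the $f_{s,i}$, it chooses a single pair of local coordinates $x_{a,1},x_{a,2}$ (meromorphic on $Z_{s_i(a)}$, holomorphic on $Y_a$) with $x_{a,1}x_{a,2}=\alpha_a\in\C_p^\dual$ a \emph{constant}. Since $\omega_{s_i}-\kappa\,\tfrac{dx_{a,i}}{x_{a,i}}$ has zero residue on $Y_a$ (where $\kappa={\rm Res}(\omega,a)$), it is exact there, so on $Y_a$ one has $F_{s_1(a)}=\kappa\log_{\cal L}x_{a,1}+G_1$ and $F_{s_2(a)}=-\kappa\log_{\cal L}x_{a,2}+G_2$ with $G_1,G_2\in\O(Y_a)$. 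The relation $x_{a,1}x_{a,2}=\alpha_a$ then gives $F_{s_2(a)}-F_{s_1(a)}=-\kappa\log_{\cal L}\alpha_a+G_2-G_1$, and the entire ${\cal L}$-dependence sits in the single scalar $\log_{\cal L}\alpha_a$. This normal form makes the cancellation of non-analytic terms automatic and eliminates the auxiliary cochain $V$; your computation reaches the same conclusion but must argue separately that the $v_p(T_{a,s_1}(x))$-terms cancel and then discard $\partial V$ as a coboundary.
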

\begin{proof}
Soit $a\in A_c$.
On peut trouver des fonctions $x_{a,i}$, pour $i=1,2$, m\'eromorphes sur $Y_{s_i(a)}$, holomorphes
sur $Y_a$, telles que $v_{Z_{s_i(a)}}(x_{a,i})=0$, et $x_{a,1}x_{a,2}=\alpha_a$, avec
$v_p(\alpha_a)=\mu(a)$.

Si ${{\rm Res}}(\omega,a)=\kappa$,
alors 
$$F_{s_1(a)}=\kappa \log_{\cal L}x_{a,1}+G_1\quad{\rm et}\quad
F_{s_2(a)}=-\kappa \log_{\cal L}x_{a_2}+G_2$$ sur~$Y_a$, o\`u $G_1,G_2\in\O(Y_a)$,
et donc $F_{s_2(a)}-F_{s_1(a)}=-\kappa \log_{\cal L}\alpha_a+G_2-G_1$,
puisque $x_{a,1}x_{a,2}=\alpha_a$.  La formule
$\log_{{\cal L}_2}\alpha_a=\log_{{\cal L}_1}\alpha_a
+({\cal L}_2-{\cal L}_1)\mu(a)$
permet de prouver le (i).

Si $((\omega_s)_{s\in S},(f_a)_{a\in A_c})=\frac{df}{f}$ est globalement log-exacte,
on peut poser $F_s=\log_{\cal L}f$, pour tout $s$, ce qui prouve que
$$\alpha_{\cal L}(\tfrac{df}{f})=0,$$
pour tout ${\cal L}$,
et permet de d\'eduire le (ii) du (i).
\end{proof}

\begin{rema}\label{HDR4}

  (i) $H^1_{\rm an}(Y,\C_p)$ a une $\Q$-structure naturelle fournie
par $H^1_{\rm an}(Y,\Q)$ qui co\"{\i}ncide avec la $\Q$-structure naturelle
sur ${\rm Ker}\,\partial$. De m\^eme, $H^1_{\rm dR}(Y)_{{\rm log},{\cal L}}$
a une $\Q$-structure naturelle fournie par les
$\log_{\cal L}$-cobords qui sont localement $\Q$-log-exacts,
et cette $\Q$-structure co\"{\i}ncide avec la $\Q$-structure naturelle
sur ${\rm Ker}\,\dbar$.
La formule d\'efinissant $N_\mu$ montre que $N_\mu$ respecte les $\Q$-structures.

 (ii) Pour une formule du m\^eme genre, dans le cas propre, cf. Mieda~\cite{Mied}. 
\end{rema}

\subsubsection{Cohomologie de de Rham int\'erieure}\label{affin20}
On d\'efinit la {\it cohomologie de de Rham int\'erieure}
de $Z_s$ comme le sous-groupe
$$
 H^1_{{\rm dR}}(Z_s)_{\rm int}={\rm Ker}\big[
H^1_{\rm dR}(Z_s)\to \oplus_{a\in A(s)} H^1_{\rm dR}(Y_a)\big].
$$
Ce groupe s'interpr\`ete aussi comme la cohomologie rigide de $Y^{\rm sp}_s$:
on note $Y_s^{{\rm sp},\times}$ la courbe $Y_s^{\rm sp}$ munie de la structure logarithmique induite par $Y^{\rm sp}$
et juste $Y_s^{\rm sp}$ la m\^eme courbe avec la structure logarithmique triviale. On a alors~\cite{GK}
des isomorphismes
$$H^1_{\rig}(Y_s^{{\rm sp},\times}/\O_{\C_p}^{\times})\cong {\C_p}\otimes_{K_0}
H^1_{\rig}(Y_s^{{\rm sp},\times}/\O_{K_0}^{\times}),
\quad
H^1_{\rig}(Y_s^{\rm sp}/\O_{\C_p})\cong {\C_p}\otimes_{K_0}H^1_{\rig}(Y_s^{\rm sp}/\O_{K_0}),$$
et
le diagramme commutatif suivant dans lequel les lignes sont exactes
(la premi\`ere ligne est juste la suite de Gysin):
$$
\xymatrix@C=.4cm@R=.6cm{
0\ar[r] &  H^1_{\rig}(Y_s^{\rm sp}/\O_{{\C_p}})\ar[r]\ar[d]^{\wr} & H^1_{\rig}(Y_s^{{\rm sp},\times}/\O_{\C_p}^{\times})\ar[r]\ar[d]^{\wr} 
& \oplus_{a\in A(s)}H^0_{\rig}(P_a/\O_{\C_p})\ar[r]\ar[d]^{\wr} & H^2_{\rig}(Y_s^{\rm sp}/\O_{\C_p})\ar[r]\ar[d]^{\wr} & 0\\
0\ar[r]& H^1_{{\rm dR}}(Z_s)_{\rm int}\ar[r]& H^1_{\rm dR}(Z_s)\ar[r]^-{{\rm Res}}
& {\C_p}^{A(s)}\ar[r]^-{\dbar_s}& {\C_p}^{\{s\}}\ar[r]& 0}
$$
(cf.~\cite{GK} pour le second isomorphisme vertical, le premier est fourni par la commutativit\'e du diagramme).

On a une d\'ecomposition:
$$H^1_{\rm dR}(Z_s)=H^1_{\rm dR}(Z_s)_{\rm int}\oplus H^1_{\rm dR}(Z_s)_{\rm log},$$
cons\'equence directe de l'exactitude de la seconde ligne du diagramme et de l'isomorphisme
$$H^1_{\rm dR}(Z_s)_{\rm log}\cong {\rm Ker}\big[\dbar_s:{\C_p}^{A(s)}\to {\C_p}^{\{s\}}\big].$$
\begin{lemm}\label{12.2}
Si $Y$ est un pantalon, et si $f\in\O(Y)^\dual$
v\'erifie ${\rm Res}\big(\frac{df}{f}\big)=0$, alors $\frac{df}{f}$ est exacte:
il existe $g\in\O(Y)$ tel que $dg=\frac{df}{f}$.
\end{lemm}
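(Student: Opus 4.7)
The plan is to construct the primitive $g$ explicitly by exploiting the local structure of $f$ on each boundary annulus of $Y$. Since $Y$ is a pantalon, it embeds as an open subspace of a proper smooth curve $\overline Y$ with good reduction, obtained by gluing open disks $D_a$ along the boundary annuli $Y_a$. The decomposition $H^1_{\rm dR}(Y)=H^1_{\rm dR}(Y)_{\rm int}\oplus H^1_{\rm dR}(Y)_{\rm log}$ recalled in~\S\ref{affin20}, combined with the zero-residue hypothesis and the injectivity of the residue map on $H^1_{\rm dR}(Y)_{\rm log}$, shows that the class $[df/f]$ lies in the interior summand $H^1_{\rm dR}(Y)_{\rm int}\cong H^1_{\rig}(\overline{Y}^{\rm sp}/\O_C)$; the lemma thus reduces to the vanishing of this class.

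By Corollary~\ref{basic33}, with parameters $T_{a,1},T_{a,2}$ on $Y_a$ satisfying $T_{a,1}T_{a,2}=p^{\mu(a)}$ (oriented so that $T_{a,1}\to 0$ towards the completing disk $D_a$ outside $Y$ and $T_{a,2}\to 0$ towards the interior), the hypothesis ${\rm Res}_a(df/f)=0$ yields a factorization
\[
f|_{Y_a}=c_a\cdot u_a^+\cdot u_a^-,\qquad c_a\in C^\times,\ u_a^+\in 1+T_{a,1}\O_C[[T_{a,1}]],\ u_a^-\in 1+T_{a,2}\O_C[[T_{a,2}]].
\]
The factor $u_a^+$ extends to a holomorphic unit on $D_a$, while $u_a^-$ extends to a holomorphic unit on a disk adjacent to $Y_a$ inside $Y$; both $\log u_a^+$ and $\log u_a^-$ converge analytically on their respective extension domains. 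Reducing modulo factors with a convergent analytic logarithm, one arrives at the case where $f$ is the restriction to $Y$ of a rational function on $\overline Y$ whose divisor ${\rm div}(f)=\sum_{a,i}n_{a,i}[P_{a,i}]$ is supported in $\bigsqcup_a D_a$ and satisfies $\sum_i n_{a,i}=0$ for every $a$ (by the residue hypothesis). Choosing basepoints $P_a^\star\in D_a$ and a local parameter $z$ near each $D_a$, one expresses
\[
f={\rm const}\cdot\prod_a\prod_i\Big(1+\tfrac{P_a^\star-P_{a,i}}{z-P_a^\star}\Big)^{n_{a,i}};
\]
the ultrametric inequality $|P_a^\star-P_{a,i}|<r_a\leq |z-P_a^\star|$ for $z\in Y$ (where $r_a$ is the radius of $D_a$) ensures that each factor admits a convergent analytic logarithm on $Y$, and summation produces $g=\log f\in\O(Y)$ with $dg=df/f$.

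\emph{The main obstacle} is the reduction from an arbitrary $f\in\O(Y)^\times$ to the rational case, equivalently the vanishing of $[df/f]$ in $H^1_{\rig}(\overline{Y}^{\rm sp}/\O_C)$. I would handle this via a \v{C}ech-type argument on the covering of $Y$ by its central affinoid and the annuli $Y_a$: on each $Y_a$ set $g_a=\log c_a+\log u_a^++\log u_a^-$; on the central affinoid construct a primitive using that the reduction $\bar f$ is a regular unit on the smooth proper reduction of $\overline Y$, invoking the Cartier-theoretic identifications of Lemmas~\ref{IlRay} and~\ref{Ray}; and verify that the residue-zero hypothesis makes the local primitives agree on the phantom-circle overlaps, up to constants absorbable in the branch choices of the $\log c_a$. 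The crucial input is that $f$ is a unit on all of $Y$, not merely on the central affinoid, which constrains $\bar f$ enough to kill the potential obstruction in $H^1_{\rig}(\overline{Y}^{\rm sp}/\O_C)$.
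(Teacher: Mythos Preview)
Your approach is considerably more complicated than the paper's, and the step you flag as the ``main obstacle'' is a genuine gap that your \v{C}ech/Cartier sketch does not close. Two specific problems: First, the reduction to the rational case is not valid as written. The factorization $f|_{Y_a}=c_a u_a^+u_a^-$ is local to each annulus $Y_a$; you cannot globally divide $f$ by these factors to transform it into a rational function on $\overline Y$, because the $u_a^\pm$ are not defined on the central affinoid $Y_s$. Second, your treatment of the rational case via the product $\prod_i\big(1+\frac{P_a^\star-P_{a,i}}{z-P_a^\star}\big)^{n_{a,i}}$ presupposes a global coordinate $z$, which is only available when $\overline Y$ has genus zero; a pantalon can sit inside a curve of arbitrary genus.

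The paper's proof is short and avoids all of this by using the metric estimate of Proposition~\ref{metr1}, which you do not invoke. Since $\mathrm{Res}(df/f)=0$ on each annulus $Y_a$, the valuation $v_p(f)$ is constant on each $Y_a$, hence its minimum is attained on the central affinoid $Y_s$, hence $v_p(f)$ is constant on all of $Y$. After normalizing so that $f(x_0)=1$ for some $x_0\in Y_s(C)$, one applies Proposition~\ref{metr1} to the exhausting sub-affinoids $Y_\delta$ (at distance $\geq\delta$ from $\partial Y$) to obtain $v_{Y_\delta}(f-1)\geq\delta$. The logarithm series therefore converges in $\O(Y_\delta)$ for every $\delta>0$, and $g=\log f\in\O(Y)$ is the desired primitive. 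No cohomological machinery, no Cartier theory, no case analysis on the divisor of $f$ is needed.
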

\begin{proof}
D\'ecoupons $Y$ en un short $Y_s$ et des jambes $Y_a$, pour $a\in A$.
Comme ${\rm Res}\big(\frac{df}{f}\big)=0$, cela signifie que $v_p(f)$ est constante
sur chacune des couronnes, et donc le minimum est atteint sur $Y_s$, et donc $v_p(f)$
est constante. Comme on peut diviser $f$ par une constante, on peut supposer
qu'il existe $x_0\in Y_s(C)$ tel que $f(x_0)=1$, 
et alors $v_Z(f)=0$ sur tout sous-affino\"{\i}de~$Z$ de~$Y$.  

Si $0<\delta<\inf_a\mu(a)$, et si $Y_a$ est la couronne $0<v_p(z_a)<\mu(a)$
(recoll\'ee \`a $Y_s$ le long du cercle fant\^ome en $v_p(z_a)=0$), on 
note $Y_\delta$ l'affino\"{\i}de r\'eunion de $Y_s$ et des couronnes
$0<v_p(a)\leq \mu(a)-\delta$.  Alors $d(Y_\delta,\partial Y)\geq \delta$,
et donc $v_{Y_\delta}(f-1)\geq\delta$ (prop.\,\ref{metr1}) 
puisque $x_0\in Y_\delta$ et $f(x_0)=1$.
Il s'ensuit que la s\'erie d\'efinissant $\log f$ converge dans $\O(Y_\delta)$ et, ceci
\'etant vrai pour tout $\delta$, que $\log f\in\O(Y)$.  On a alors $\frac{df}{f}=d(\log f)$,
ce qui permet de conclure.
\end{proof}

La th\'eorie de l'int\'egration de Coleman fournit un plongement naturel
de la cohomologie int\'erieure des $Z_s$ dans $H^1_{\rm dR}(Y)$.
Soit $s_{0}\in S$, et soit $\omega\in\Omega^1(Z_{s_0})$.
Une branche $\log_{\cal L}$ du logarithme \'etant fix\'ee,
l'int\'egration de Coleman permet de d\'efinir une primitive
$F_\omega$ de $\omega$ sur $Z_{s_0}$, localement analytique (pour la topologie $p$-adique,
pas pour la rigide), uniquement d\'etermin\'ee \`a addition pr\`es d'une constante.
De plus cette int\'egration est fonctorielle, et si $\omega=df_0+\sum_{i=1}^r\lambda_i\frac{df_i}{f_i}$
est log-exacte, alors 
$F_\omega=f_0+\sum_{i=1}^r\lambda_i\log_{\cal L} f_i$ (\`a constante pr\`es).
En particulier, il r\'esulte du lemme~\ref{12.2} que 
$\omega$ d\'efinit une classe de cohomologie int\'erieure
si et seulement si $F_\omega$ est holomorphe sur $Y_a$, pour tout $a\in A(s_0)$
(dans ce cas $F_\omega$ ne d\'epend pas du choix de ${\cal L}$, et $\omega_s$
est exacte si et seulement si $F_\omega$ est holomorphe sur $Y_s$).

Ceci permet, si $\omega\in H^1_{\rm dR}(Z_{s_0})_{\rm int}$,
 de d\'efinir la classe de $\omega$ dans $H^1_{\rm dR}(Y)$ comme la
classe du
cocycle $\big((\omega_s),(f_a)\big)$, avec $\omega_{s_0}=\omega$,
$\omega_s=0$ si $s\neq s_0$, $f_a=0$ si $a\notin A(s_0)$, et $f_a=\pm F_\omega$ 
suivant que $s_2(a)=s_0$
ou $s_1(a)=s_0$, si $a\in A(s_0)$.
Si on change $F_\omega$, cela modifie le cocycle pr\'ec\'edent par un cobord,
et donc
la formule ci-dessus d\'efinit une injection naturelle de 
$H^1_{{\rm dR}}(Z_s)_{\rm int}$ dans
$H^1_{\rm dR}(Y)$, pour tout $s\in S$.

\begin{lemm}\label{HDR5}
L'injection ci-dessus de $H^1_{{\rm dR}}(Z_s)_{\rm int}$ dans
$H^1_{\rm dR}(Y)$, pour tout $s\in S$, s'\'etend en une injection continue
de $\prod_{s\in S}H^1_{{\rm dR}}(Z_s)_{\rm int}$ dans $H^1_{\rm dR}(Y)$.
\end{lemm}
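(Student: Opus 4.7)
The plan is to extend the single-factor construction componentwise and then check that the resulting map from the product is well-defined, continuous, and has trivial kernel.

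First I would define the extension as follows. For a family $(\omega_s)_{s \in S} \in \prod_{s\in S} H^1_{\rm dR}(Z_s)_{\rm int}$, choose for each $s$ a representative form, still denoted $\omega_s \in \Omega^1(Z_s)$, and a Coleman primitive $F_{\omega_s}$ on $Z_s$ (depending on the choice of ${\cal L}$, but well-defined up to an additive constant). By the discussion preceding the lemma, belonging to the interior cohomology is equivalent to $F_{\omega_s}$ being holomorphic on $Y_a$ for every $a \in A(s)$. For each $a \in A_c$ with endpoints $s_1(a)$ and $s_2(a)$, set $f_a = F_{\omega_{s_2(a)}} - F_{\omega_{s_1(a)}}$ on $Y_a$; this is holomorphic on $Y_a$ as a difference of two holomorphic functions, and the cocycle condition $df_a = \omega_{s_2(a)} - \omega_{s_1(a)}$ holds by construction. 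The resulting collection $\big((\omega_s)_{s}, (f_a)_a\big)$ is a $1$-cocycle in $Z^1_{\rm dR}(S)$. Changing the choice of $\omega_s$ within its class (by adding $dg_s$ with $g_s \in \O(Z_s)$) or changing $F_{\omega_s}$ by a constant $c_s$ modifies this cocycle by the coboundary of $(g_s + c_s)_{s \in S}$; so the class in $H^1_{\rm dR}(Y)$ depends only on $(\omega_s)_{s}$, extending each factorwise inclusion to a linear map on the product.

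Continuity is essentially formal: each $H^1_{\rm dR}(Z_s)_{\rm int} \cong H^1_{\rm rig}(Y_s^{\rm sp}/\O_{\C_p})$ is finite-dimensional, and the cocycle construction uses only restriction to the tubes $Y_a$ and subtraction, both continuous in the Fréchet topologies at hand. The product topology on $\prod_s H^1_{\rm dR}(Z_s)_{\rm int}$ and the Fréchet topology on $H^1_{\rm dR}(Y)$ (which is a projective limit of finite-dimensional pieces along increasing affinoid exhaustions, cf.~the analogue of the cor.~\ref{basic45}) are compatible with this, so the map is continuous.

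For injectivity, suppose the cocycle $\big((\omega_s)_{s}, (f_a)_a\big)$ is a coboundary: there exist $G_s \in \O(Z_s)$ with $dG_s = \omega_s$ and $G_{s_2(a)} - G_{s_1(a)} = f_a$ on $Y_a$, for all $s \in S$ and $a \in A_c$. Then on $Z_s$ both $G_s$ and $F_{\omega_s}$ are primitives of $\omega_s$, so $F_{\omega_s} - G_s$ is a Coleman function on the connected space $Z_s$ with vanishing derivative, hence constant. In particular $F_{\omega_s}$ is holomorphic on all of $Z_s$, which via the characterization of interior classes (a class in $H^1_{\rm dR}(Z_s)_{\rm int}$ is trivial iff its Coleman primitive is globally holomorphic on $Z_s$, by the lemma~\ref{12.2} argument) forces $\omega_s = 0$ in $H^1_{\rm dR}(Z_s)_{\rm int}$.

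The main obstacle I anticipate is the bookkeeping in the injectivity step: one must be sure that the Čech coboundary condition $G_{s_2(a)} - G_{s_1(a)} = f_a$ on $Y_a$ really forces $G_s - F_{\omega_s}$ to be \emph{the same} constant on the pieces that get identified along the crowns, i.e. that the rigidification of Coleman primitives by a single constant per connected piece is consistent with the constraint that each $\omega_s$ live in the interior cohomology. Since each $Z_s$ is individually connected, this comes down to a factor-by-factor statement and presents no real difficulty, but must be written out carefully; modulo that, the proof reduces to the already-established local input (Coleman integration and the interior cohomology criterion).
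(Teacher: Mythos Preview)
Your argument is correct and follows essentially the same approach as the paper: sum the single-factor cocycles, note that the sum is locally finite so convergence is automatic, and check injectivity from the coboundary condition. One small simplification: in your injectivity step, once you have $\omega_s = dG_s$ with $G_s \in \O(Z_s)$ from the coboundary condition, the class of $\omega_s$ in $H^1_{\rm dR}(Z_s)$ (hence in $H^1_{\rm dR}(Z_s)_{\rm int}$) is already zero by definition—the detour through the Coleman primitive and the characterization via lemma~\ref{12.2} is not needed, and the ``main obstacle'' you anticipate does not arise. The paper's phrasing (``les $Y_s$ sont disjoints, et l'exactitude de $\omega_s$ sur $Y_s$ \'equivaut \`a l'exactitude de $\omega_s$'') is a slightly different packaging of the same fact, restricting to the disjoint affinoids $Y_s$ and invoking the equivalence established just before the lemma, but the content is the same.
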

\begin{proof}
Si $(\omega_s)_{s\in S}$ est une collection de formes diff\'erentielles dont
les classes sont int\'erieures, la s\'erie des cocycles associ\'es converge
car elle ne fait intervenir, localement, que des sommes finies, et on obtient
ainsi une injection $\prod_{s\in S}H^1_{{\rm dR}}(Z_s)_{\rm int}\to H^1_{{\rm dR}}(Y)$
car les $Y_s$ sont disjoints, et que l'exactitude de $\omega_s$ sur $Y_s$
\'equivaut \`a l'exactitude de $\omega_s$.
\end{proof}

On d\'efinit la {\it cohomologie de de Rham int\'erieure} de $Y$
comme le sous-groupe 
$\prod_{s\in S}H^1_{{\rm dR}}(Z_s)_{\rm int}$
de $H^1_{\rm dR}(Y)$.

\begin{prop}\label{HDR6}
{\rm (i)} $H^1_{{\rm dR}}(Y)_{\rm int}$ ne d\'epend pas du choix de $S$:
$$H^1_{{\rm dR}}(Y)_{\rm int}=
\prod_{s\in \Sigma(Y)}H^1_{{\rm dR}}(Z_s)_{\rm int}.$$

{\rm (ii)}
On a une d\'ecomposition naturelle (d\'ependant de ${\cal L}$)
$$H^1_{\rm dR}(Y)=H^1_{\rm dR}(Y)_{{\rm log},{\cal L}}\oplus
H^1_{\rm an}(Y,{\C_p})\oplus H^1_{{\rm dR}}(Y)_{\rm int}.$$
\end{prop}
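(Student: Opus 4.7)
For part (i), I would exploit the identification $H^1_{\rm dR}(Z_s)_{\rm int}\cong H^1_{\rig}(Y_s^{\rm sp}/\O_{\C_p})$ coming from the diagram of \S\ref{affin20}: each factor depends only on the residual component $Y_s^{\rm sp}$, which is intrinsic to the vertex $s$. Any pseudo-triangulation contains $\Sigma(Y)$, and the vertices $s\in S\setminus\Sigma(Y)$ correspond to components $Y_s^{\rm sp}\cong\piqp$ (by the construction of the stable special fibre in \S\ref{bTTT28}), for which $H^1_{\rig}(\piqp/\O_{\C_p})=0$. Hence
$$\prod_{s\in S}H^1_{\rm dR}(Z_s)_{\rm int}\;=\;\prod_{s\in\Sigma(Y)}H^1_{\rm dR}(Z_s)_{\rm int},$$
and the right-hand side is visibly independent of the chosen pseudo-triangulation.

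For part (ii), I would proceed in two steps. First, the short exact sequence of \no\ref{affin22}
$$0\to H^1_{\rm an}(Y,\C_p)\to H^1_{\rm dR}(Y)_{\rm log}\to H^1_c(\Gamma,\C_p)^\dual\to 0$$
admits the $\cal L$-dependent section furnished by Proposition~\ref{HDR1} (which identifies $H^1_{\rm dR}(Y)_{{\rm log},{\cal L}}$ isomorphically with $H^1_c(\Gamma,\C_p)^\dual$), giving
$$H^1_{\rm dR}(Y)_{\rm log}\;=\;H^1_{\rm an}(Y,\C_p)\oplus H^1_{\rm dR}(Y)_{{\rm log},{\cal L}}.$$

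Second, I would show $H^1_{\rm dR}(Y)=H^1_{\rm dR}(Y)_{\rm int}\oplus H^1_{\rm dR}(Y)_{\rm log}$. Starting from a $1$-cocycle $\bigl((\omega_s)_{s\in S},(f_a)_{a\in A_c}\bigr)$, apply the local direct sum $H^1_{\rm dR}(Z_s)=H^1_{\rm dR}(Z_s)_{\rm int}\oplus H^1_{\rm dR}(Z_s)_{\rm log}$ recalled in \S\ref{affin20} to pick representatives $\omega_s=\omega_s^{\rm int}+\omega_s^{\rm log}$. Let $F_s^{\rm int}$ be a Coleman primitive of $\omega_s^{\rm int}$ on $Z_s$; by interiority, $F_s^{\rm int}$ is holomorphic on each $Y_a$ for $a\in A(s)$. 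Then $\bigl((\omega_s^{\rm int}),(F^{\rm int}_{s_2(a)}-F^{\rm int}_{s_1(a)})\bigr)$ represents the image of $(\omega_s^{\rm int})_s$ under the injection of Lemma~\ref{HDR5}, and its difference with the original cocycle is $\bigl((\omega_s^{\rm log}),(f_a-F^{\rm int}_{s_2(a)}+F^{\rm int}_{s_1(a)})\bigr)$, which is locally log-exact and so defines a class in $H^1_{\rm dR}(Y)_{\rm log}$. For the triviality of the intersection, if $\alpha\in H^1_{\rm dR}(Y)_{\rm int}\cap H^1_{\rm dR}(Y)_{\rm log}$, then $\alpha$ has two representatives, one with each $\omega_s$ interior and one with each $\omega_s'$ locally log-exact; their difference being a coboundary, $[\omega_s]=[\omega_s']$ in $H^1_{\rm dR}(Z_s)$ lies in $H^1_{\rm dR}(Z_s)_{\rm int}\cap H^1_{\rm dR}(Z_s)_{\rm log}=0$ for every $s$, whence $\alpha=0$ by the injectivity in Lemma~\ref{HDR5}.

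\textbf{Main obstacle.} The delicate point is to promote the local direct sum decomposition of classes into an actual decomposition at the level of \v{C}ech cocycles, which requires Coleman integration to produce globally coherent primitives $F_s^{\rm int}$. Their holomorphy on each frontier annulus $Y_a$---which is the very definition of interiority combined with Lemma~\ref{12.2}---is what guarantees that the residual cocycle $\bigl((\omega_s^{\rm log}),(f_a-F^{\rm int}_{s_2(a)}+F^{\rm int}_{s_1(a)})\bigr)$ has components in $\O(Y_a)$ and is locally log-exact. Tracking the ambiguity in the choice of $F_s^{\rm int}$ (an additive constant on $Z_s$) and checking that different choices modify the cocycle by a coboundary is the main verification.
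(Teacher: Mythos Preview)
Your proposal is correct and uses the same ingredients as the paper (vanishing of $H^1_{\rig}(\piqp)$ for (i); the local splitting $H^1_{\rm dR}(Z_s)=H^1_{\rm dR}(Z_s)_{\rm int}\oplus H^1_{\rm dR}(Z_s)_{\rm log}$, Coleman primitives, and Proposition~\ref{HDR1} for (ii)). The paper's only difference is the order of operations in (ii)---it subtracts a $\log_{\cal L}$-coboundary first to kill residues, then an interior class to kill the remaining $[\omega_s]$, landing in $H^1_{\rm an}(Y,\C_p)$---and it leaves the directness of the sum implicit where you spell it out explicitly.
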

\begin{proof}
Le (i) suit de ce que $Y^{\rm sp}_s$ est de genre $0$
si $s\in S\moins \Sigma(Y)$.
Pour prouver le (ii),
on utilise l'isomorphisme 
$H^1_{\rm dR}(Y)_{{\rm log},{\cal L}}\cong {\rm Ker}\,\dbar$
pour tuer les r\'esidus, puis les classes int\'erieures pour tuer les $\omega_s$
et obtenir un \'el\'ement de $H^1_{\rm an}(Y,{\C_p})$.
\end{proof}

\subsubsection{Fonctorialit\'e}\label{affin24}
Soit $u:Y'\to Y$ un morphisme de courbes analytiques sans bord, et soient $S$ et $S'$ des triangulations
de $Y$ et $Y'$.
Si $\omega=\big((\omega_s),(f_a)\big)\in Z^1_{\rm dR}(S)$, on d\'efinit
$u^*\omega=\big((\omega_{s'}),(f_{a'})\big)\in Z^1_{\rm dR}(S')$,
en posant 
$$\omega_{s'}=\begin{cases}u^\dual\omega_s
&{\text{si $u(s')\in Z_s\moins\big(\sqcup_{a\in A_c(s)}Y_a\big)$,}}\\
u^\dual\omega_{s_1(a)} &{\text{si $u({s'})\in Y_a$ et $a\in A_c$.}}\end{cases}$$
$$f_{a'}=\begin{cases}-u^\dual f_a &{\text{si $u(Y_{a'})\subset Y_a$, 
$u(s_1(a'))={s_2(a)}$
et $u({s_2(a')})\neq {s_2(a)}$,}}\\
u^\dual f_a &{\text{si $u(Y_{a'})\subset Y_a$, $u({s_2(a')})={s_2(a)}$
et $u({s_1(a')})\neq {s_2(a)}$,}}\\
0&{\text{dans les autres cas.}}\end{cases}$$
Alors $u^*\omega$ est un cobord si $\omega$ en est un et 
$u^\dual:H^1_{\rm dR}(Y)\to H^1_{\rm dR}(Y')$ est l'application induite.

Il est alors imm\'ediat que:
$$u^\dual(H^1_{\rm an}(Y,\Q))\subset H^1_{\rm an}(Y',\Q)
\quad{\rm et}\quad
u^\dual(H^1_{\rm dR}(Y)_{{\rm log},{\cal L}})\subset H^1_{\rm dR}(Y')_{{\rm log},{\cal L}},$$
et la fonctorialit\'e de l'int\'egration de Coleman fournit l'inclusion
$$u^\dual(H^1_{\rm dR}(Z)_{{\rm int}})\subset H^1_{\rm dR}(Z')_{{\rm int}}.$$
On en d\'eduit la fonctorialit\'e la d\'ecomposition
$$H^1_{\rm dR}(Y)_{\rm log}=H^1_{\rm dR}(Y)_{\rm int}
\oplus H^1_{\rm an}(Y,{\C_p})\oplus H^1_{\rm dR}(Y)_{{\rm log},{\cal L}}.$$

\subsubsection{Cohomologie de Hyodo-Kato}\label{TTT50}
Ce r\'esultat permet de d\'ecrire directement l'isomorphisme de Hyodo-Kato: on d\'efinit
$H^1_{\rm HK}(Y)$ comme
le $\breve\C_p$-espace vectoriel
\begin{center}
$\big(\prod_{s\in \Sigma}H^1_{\rm rig}(Y^{\rm sp}_s/\O_{\breve\C_p})\big)
\bigoplus H^1(\Gamma,\breve\C_p)
\bigoplus H^1_c(\Gamma,\breve\C_p)^\dual(-1).$
\end{center}
que l'on munit:

$\bullet$ du frobenius naturel $\varphi$ sur chacun des facteurs,

$\bullet$ de l'op\'erateur de monodromie $N$ de la rem.\,\ref{monod},

$\bullet$ de
l'isomorphisme $\iota_{{\rm HK},{\cal L}}:K\otimes H^1_{\rm HK}(Y)\cong H^1_{\rm dR}(Y)$
(qui d\'epend du choix de~${\cal L}$), somme directe des isomorphismes
\begin{align*}
K\otimes H^1(\Gamma({\cal Y}),\breve\C_p)\cong H^1_{\rm an}(Y,K),&
\quad K\otimes H^1_c(\Gamma({\cal Y}),\breve\C_p)^\dual\cong H^1_{\rm dR}(Y)_{{\rm log},{\cal L}},\\
K\otimes H^1_{\rm rig}( {\cal Y}_s/\O_{\breve\C_p})& \cong H^1_{\rm dR}(Z_s)_{\rm int}.
\end{align*}

\begin{rema}\label{sympat8.1}
{\rm (i)} L'isomorphisme $\iota_{\rm HK}$ du (iii) de la rem.\,\ref{paire000} correspond
\`a $\iota_{{\rm HK},{\cal L}}$ pour ${\cal L}=0$.

{\rm (ii)} Les fonctorialit\'es de la d\'ecomposition de la cohomologie de de Rham et
de la cohomologie rigide impliquent que 
$Y\mapsto (H^1_{\rm HK}(Y),H^1_{\rm dR}(Y),\iota_{{\rm HK},{\cal L}})$
est fonctoriel.

{\rm (iii)} 
La d\'efinition de
$H^1_{\rm HK}(Y)$ en fournit une d\'ecomposition naturelle qui n'est autre
que la d\'ecomposition
par les poids de frobenius: $H^1_{\rm rig}( {\cal Y}_s/\O_{\breve\C_p})$ est le sous-espace de poids $1$,
$H^1(\Gamma({\cal Y}),K_0)$ celui de poids~$0$ et $H^1_c(\Gamma({\cal Y}),K_0)^\dual(-1)$ celui
de poids $2$.
\end{rema}

\begin{appendix}
\section{Plaidoyer pour un peu de mod\'eration}\label{appen1}
Dans cet appendice, on \'etudie les pathologies de la cohomologie \'etale
de la boule unit\'e ouverte, et on propose une piste pour les supprimer.

Soit $\ocirc{B}$ la boule unit\'e ouverte (i.e.~$\O(\ocirc{B})=\O_C[[T]]$).
On a d\'efini au \no\ref{BAS19.2} le complexe ${\rm Syn}(\ocirc{B},1)$ et calcul\'e
ses groupes de cohomologie $H^i_{\rm syn}(\ocirc{B},1)$ (prop.~\ref{basic30}). 
On va s'int\'eresser au lien entre les $H^i_{\rm syn}(\ocirc{B},1)$ et la cohohomologie
\'etale (\`a coefficients dans $\Z_p(1)$) de {\og la\fg} fibre 
g\'en\'erique\footnote{Comme nous l'expliquons au \S\,\ref{appen21}, il y a plusieurs objets
diff\'erents qui peuvent pr\'etendre \^etre {\og la\fg} fibre g\'en\'erique de $\ocirc{B}$. Dans le texte
principal, nous avons juste d\'efini $\O(\ocirc{B}^{\rm gen})$ sans sp\'ecifier dans quelle cat\'egorie
$\ocirc{B}^{\rm gen}$ vivait.}
$\ocirc{B}^{\rm an}$ de $\ocirc{B}$ (c'est
l'espace rigide associ\'e: si $r_n$ est une suite de rationnels
v\'erifiant $r_n>r_{n+1}$ et $\lim r_n=0$, alors $\ocirc B^{\rm an}$ est la r\'eunion croissante
des $B_{r_n}^{\rm an}$, o\`u $B_r^{\rm an}$ est la fibre g\'en\'erique
de la boule ferm\'ee $B_r=\{v_p(T)\geq r\}$, i.e.~$\O(B_r)=\O_C\langle \frac{T}{p^r}\rangle$).  
Notons que les r\'esultats qui suivent s'\'etendent verbatim aux couronnes ouvertes:
pour toutes ces histoires, une couronne ouverte se comporte
comme la r\'eunion de deux boules ouvertes attach\'ees en un point.

Si $r\in\Q$, on pose $\O(\widetilde B_r)=\acris\langle \frac{T}{\tilde p^r}\rangle$.
On a donc $\Omega^1(\widetilde B_r)=\O(\widetilde B_r)\frac{dT}{\tilde p^r}$.
On dispose du complexe
$${\rm Syn}(B_r,1):=
\xymatrix@C=1.8cm{F^1\O(\widetilde B_r) \ar[r]^-{(d,1-\frac{\varphi}{p})}&
\Omega^1(\widetilde B_r) \oplus \O(\widetilde B_r)
\ar[r]^-{(1-\frac{\varphi}{p})-d}& \Omega^1(\widetilde B_r)},$$
o\`u $\varphi(T)=T^p$.
Ce complexe calcule la cohomologie \'etale de la fibre g\'en\'erique de $B_r$: 
c'est clair pour $H^0$; pour $H^1$, c'est un cas particulier du cor.\,\ref{basic35.1} (ou du th.\,\ref{short11},
via le le cor.\,\ref{basic12}); pour $H^2$, cela d\'ecoule de ce que $H^2_{\rm syn}(B_r,1)=0$ (prop.\,\ref{boule1})
et de ce que
$H^2(B_r^{\rm an},\Z_p(1))=0$, d'apr\`es Berkovich~\cite[cor. 6.1.3]{Berk}.

\subsection{Groupe de Picard}\label{appen2}
Comme $C\langle\frac{T}{p^r}\rangle$ est, pour tout $r\in\Q$, un anneau principal, 
on a $${\rm Pic}(\ocirc{B}^{\rm an})=\frac{\{(f_n)_{n\in\N},\ f_n\in\O(B_{r_n})^\dual\}} 
{\{(g_{n+1}g_n^{-1})_{n\in\N},\ g_n\in\O(B_{r_n})^\dual\}}$$
De plus, on peut remplacer $\O(B_{r_n})^\dual$ par son sous-groupe des $f$ valant $1$ en $0$,
et tous les $f_n$, $g_n$, etc. qui vont intervenir v\'erifient cette propri\'et\'e.

\subsubsection{Le th\'eor\`eme de Lazard}\label{appen3}
Rappelons le r\'esultat fondamental de Lazard~\cite{Laz} concernant le groupe ${\rm Pic}(\ocirc{B}^{\rm an})$.
\begin{prop} \label{appen4}
{\rm (Lazard)} On a la dichotomie suivante:

$\bullet$ Si $C$ est sph\'eriquement complet, alors ${\rm Pic}(\ocirc{B}^{\rm an})=0$.

$\bullet$ Si $C$ n'est pas sph\'eriquement complet, alors ${\rm Pic}(\ocirc{B}^{\rm an})\neq 0$.
\end{prop}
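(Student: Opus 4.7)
L'approche est de r\'einterpr\'eter $\text{Pic}(\ocirc{B}^{\an})$ comme l'obstruction \`a r\'ealiser tout diviseur localement fini sur le disque ouvert comme le diviseur d'une fonction analytique globale. Puisque chaque alg\`ebre de Tate $\O(B_{r_n})=C\langle T/p^{r_n}\rangle$ est un anneau principal, on a $\text{Pic}(B_{r_n}^{\an})=0$ pour tout $n$; combin\'e \`a la description \v{C}ech de l'\'enonc\'e, cela identifie $\text{Pic}(\ocirc{B}^{\an})$ au groupe des diviseurs $\sum_{\alpha} n_\alpha(\alpha)$ sur $\ocirc{B}(C)$ qui sont {\it localement finis} (au sens o\`u seulement un nombre fini des $\alpha$ appartiennent \`a tout $B_r$ fix\'e), modulo les diviseurs principaux $\text{Div}(f)$ pour $f\in\O(\ocirc{B}^{\an})$. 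Cela ram\`ene l'\'enonc\'e \`a une question concr\`ete de construction de produits de Weierstrass convergents.

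Pour la direction $C$ sph\'eriquement complet $\Rightarrow \text{Pic}(\ocirc{B}^{\an})=0$: \'etant donn\'e un diviseur localement fini $D=\sum_\alpha n_\alpha(\alpha)$, je construirais une fonction analytique le r\'ealisant comme produit de Weierstrass convergent. Le produit naif $\prod_\alpha (1-T/\alpha)^{n_\alpha}$ ne converge pas en g\'en\'eral; il faut introduire des unit\'es correctrices $u_\alpha \in 1+T\O(B_{r_\alpha})^{\dual}$ choisies pour que les facteurs modifi\'es $(1-T/\alpha)^{n_\alpha}u_\alpha$ tendent vers $1$ assez rapidement en norme spectrale sur chaque $B_r$ fix\'e. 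La compl\'etude sph\'erique intervient de la mani\`ere suivante: pour choisir les $u_\alpha$ de mani\`ere coh\'erente, il faut identifier, pour chaque suite d\'ecroissante d'annuli, un disque limite contenant toutes les corrections r\'esiduelles, ce qui revient \`a affirmer que toute cha\^{\i}ne d\'ecroissante de disques ferm\'es dans $C$ a une intersection non vide: c'est pr\'ecis\'ement la compl\'etude sph\'erique.

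Pour la r\'eciproque, on fixe une suite d\'ecroissante $(D_n)$ de disques ferm\'es contenus dans $\ocirc{B}$ avec $\bigcap_n D_n=\emptyset$ (possible exactement quand $C$ n'est pas sph\'eriquement complet), et on choisit $\alpha_n\in D_n\moins D_{n+1}$. Le diviseur $D=\sum_n(\alpha_n)$ est localement fini (puisque les $\alpha_n$ s'accumulent vers le {\og bord\fg} commun des $D_n$, seuls un nombre fini appartiennent \`a tout $B_r$). Je montrerais alors que $D$ n'est le diviseur d'aucune $f\in\O(\ocirc{B}^{\an})$: l'id\'ee essentielle est qu'une telle $f$ admettrait sur chaque $D_n$ une factorisation de Weierstrass la for\c{c}ant \`a {\og voir\fg} un z\'ero dans $\bigcap_n D_n$, lequel n'existe pas dans $C$; formaliser cette obstruction conduit \`a une contradiction en \'evaluant $f$ sur une suite bien choisie de points des $D_n$.

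\textbf{Obstacle principal.} La partie d\'elicate est la direction sph\'eriquement complet. Faire converger le produit de Weierstrass demande un contr\^ole fin de l'interaction entre les valuations des $\alpha$ et la jauge fournie par chaque $B_r$; mener \`a bien cette construction avec le bon bookkeeping des unit\'es correctrices constitue le cœur de l'argument de Lazard, et traduire cette convergence en termes de la description \v{C}ech de l'\'enonc\'e exige un lemme technique suppl\'ementaire: toute unit\'e $f_n\in\O(B_{r_n})^\dual$ peut s'\'ecrire, modulo \'equivalence cocyclique, comme restriction d'une unit\'e sur un disque plus grand multipli\'ee par une contribution polynomiale refl\'etant les z\'eros dans l'annulus $B_{r_{n+1}}\moins B_{r_n}$.
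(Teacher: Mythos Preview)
The paper does not actually prove this proposition: it is stated with the attribution ``(Lazard)'' and the surrounding text simply recalls it as ``le r\'esultat fondamental de Lazard~\cite{Laz}'', with no proof environment following. The paper then moves on to prove \emph{refinements} in the non-spherically-complete case (prop.~\ref{appen6}: explicit classes not divisible by~$p$; prop.~\ref{appen24}: ${\rm Pic}$ is torsion-free), which it describes as ``variations sur~\cite[\S\,V, prop.\,6]{Laz}''.

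Your outline is a faithful sketch of Lazard's original argument, and the identification of the obstacle is accurate: the spherically-complete direction is indeed the delicate Weierstrass-product construction, while the other direction is the explicit obstruction via a nested sequence of disks with empty intersection. One remark: your reduction of ${\rm Pic}(\ocirc{B}^{\an})$ to ``locally finite divisors modulo principal divisors'' is correct but uses more than just the principality of each $\O(B_{r_n})$; it also needs that the transition maps $\O(B_{r_{n+1}})^\dual\to\O(B_{r_n})^\dual$ are surjective up to divisor contributions from the annulus, which is exactly the ``lemme technique suppl\'ementaire'' you flag at the end --- so the plan is internally consistent. Since the paper treats this as a black box, there is nothing further to compare.
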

Nous allons avoir besoin de pr\'eciser ce qui se passe dans le cas non sph\'eriquement complet.
Les r\'esultats suivants sont des variations sur~\cite[\S\,V, prop.\,6]{Laz}.

\subsubsection{Construction d'\'el\'ements non divisibles}\label{appen5}
Comme $C$ n'est pas sph\'eriquement complet, il existe des suites
$(D_n)_{n\in\N}$ strictement d\'ecroissantes de boules ferm\'ees, 
dont la valuation $-r_n$ tend vers~$0$,
et telles que $\cap_n D_n=\emptyset$.  On choisit une telle suite et $x_n\in D_n$ pour tout $n$, 
puis $a_n\in D_n\moins D_{n+1}$ v\'erifiant
$v_p(a_n-x_{n+1})>-r_n$; on a alors 
$D_n=B(a_n,-r_n)$
et $$ -r_{n+1}>v_p(a_{n+1}-a_n)> -r_n>v_p(a_n-a_{n-1})\cdots, \quad
\lim_{n\to\infty}r_n=0,\quad \cap_n B(a_n,-r_n)=\emptyset$$
Soit $u_n=1-(a_n-a_{n-1})T$. Comme $v_p(a_n-a_{n-1})>-r_n$, on a $u_n\in\O(B_{r_n})^\dual$.
\begin{prop}\label{appen6}
La classe de $(u_n)_{n\in\N}$ dans ${\rm Pic}(\ocirc{B}^{\rm an})$ n'est pas divisible par $p$ et, plus g\'en\'eralement,
celle de $(u_n^{p^k})_{n\in\N}$ n'est pas divisible par $p^{k+1}$.
\end{prop}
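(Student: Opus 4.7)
\smallskip

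The plan is to argue by contradiction: if the class of $(u_n^{p^k})_{n\in\N}$ in ${\rm Pic}(\ocirc{B}^{\rm an})$ were divisible by $p^{k+1}$, there would exist $v_n,g_n\in\O(B_{r_n})^\dual$ with
\[
u_n^{p^k}\;=\;v_n^{p^{k+1}}\,g_{n+1}\,g_n^{-1}\quad\text{sur }B_{r_n}^{\rm an},
\]
and the idea is to extract a contradiction at the point $T_n:=(a_{n+1}-a_n)^{-1}$, which by the choice of the $a_n$ lies strictly in the annulus $B_{r_{n+1}}^{\rm an}\setminus B_{r_n}^{\rm an}$ (since $-v_p(a_{n+1}-a_n)\in(r_{n+1},r_n)$). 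The polynomial $u_n=1-(a_{n+1}-a_n)T$ extends naturally from $B_{r_n}^{\rm an}$ to $B_{r_{n+1}}^{\rm an}$, acquiring a simple zero at $T_n$, while $g_{n+1}$ is by hypothesis a unit on $B_{r_{n+1}}^{\rm an}$.

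The strategy is to recast the cocycle identity on the larger ball $B_{r_{n+1}}^{\rm an}$. Rewriting $v_n^{p^{k+1}}=u_n^{p^k}\,g_n\,g_{n+1}^{-1}$ on $B_{r_n}^{\rm an}$, I would form the meromorphic function $h_n:=u_n^{p^k}\,g_{n+1}^{-1}$, which is globally defined and holomorphic on $B_{r_{n+1}}^{\rm an}$ with a zero of order exactly $p^k$ at $T_n$. Using Lazard's description of $\O(B_{r_{n+1}}^{\rm an})$ as a principal ideal domain, we can factor $h_n=(T-T_n)^{p^k}\,\epsilon_n$ for a unit $\epsilon_n\in\O(B_{r_{n+1}}^{\rm an})^\dual$. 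The identity then reads $v_n^{p^{k+1}}=(T-T_n)^{p^k}\epsilon_n\, g_n$ on $B_{r_n}^{\rm an}$.

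The heart of the argument is to show that this equation forces $(T-T_n)^{p^k}$ to be, in a suitable local sense near $T_n$, a $p^{k+1}$-th power times a unit -- which is impossible since $p^{k+1}\nmid p^k$. Concretely, one restricts everything to a small closed sub-ball $B\subset B_{r_n}^{\rm an}$ which, after translation of the parameter, can be chosen so that $(T-T_n)$ is a well-defined unit on $B$ with a single simple zero at $T_n$ in its natural extension to an adjacent ball in $B_{r_{n+1}}^{\rm an}$; then the local ring at $T_n$ of the extension is a discrete valuation ring in which the identity reads $v_n^{p^{k+1}}\equiv (T-T_n)^{p^k}\cdot(\text{unit})$ modulo higher order, yielding $p^k\in p^{k+1}\Z$, a contradiction.

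The principal technical obstacle is making rigorous the passage from the cocycle identity on $B_{r_n}^{\rm an}$ -- where everything consists of units -- to an identity of meromorphic germs at $T_n\in B_{r_{n+1}}^{\rm an}$. The point is that neither $v_n$ nor $g_n$ is a priori defined near $T_n$; only the combinations $u_n^{p^k}g_{n+1}^{-1}$ and $v_n^{p^{k+1}}g_n^{-1}$ are equal as functions on $B_{r_n}^{\rm an}$, and one needs to leverage the specific zero structure of the former on the larger ball to extract information about the latter. This can be accomplished by observing that the identity $v_n^{p^{k+1}}g_{n+1}=u_n^{p^k}g_n$ pairs a function holomorphic on $B_{r_{n+1}}^{\rm an}$ (the right-hand side, extended) with one only known to exist on $B_{r_n}^{\rm an}$ (the left-hand side), and that any meromorphic extension of $v_n$ across $T_n$ would be forced to have a zero of fractional order $p^k/p^{k+1}=1/p$, ruling out the existence of such a divisibility relation.
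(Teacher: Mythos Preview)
Your approach has a genuine gap at the crucial step. The identity
$u_n^{p^k}=v_n^{p^{k+1}}\,g_{n+1}\,g_n^{-1}$ holds \emph{only} on $B_{r_n}^{\rm an}$, and both
$v_n$ and $g_n$ live only there. In your rearranged equation
$v_n^{p^{k+1}}g_{n+1}=u_n^{p^k}g_n$ you claim the right-hand side extends to
$B_{r_{n+1}}^{\rm an}$, but it does not: $g_n$ is only defined on $B_{r_n}^{\rm an}$ (just as
$v_n$ on the left is). So there is no legitimate way to read off an order of vanishing at
$T_n\notin B_{r_n}^{\rm an}$. On $B_{r_n}^{\rm an}$ itself your function
$h_n=u_n^{p^k}g_{n+1}^{-1}$ is simply a unit, and writing a unit as
$v_n^{p^{k+1}}g_n^{-1}$ imposes no constraint at all (take $v_n=1$,
$g_n=h_n^{-1}$). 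Hence for a fixed $n$ the equation gives no obstruction; a local
order-of-vanishing argument at one point cannot work.

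The paper's proof is entirely different and exploits the coherence across all~$n$ together
with the non-spherical completeness of~$C$. Writing $g_n=1+\alpha_nT+\cdots$,
$v_n=1+\nu_nT+\cdots$ (normalised to value~$1$ at~$0$), and
$w_n=\prod_{i<n}u_i$, one compares the \emph{linear} coefficients. The cocycle relation
$u_n=(v_{n+1}/v_n)g_n^p$ yields, for $\beta_n:=(\text{coeff.\ of }T\text{ in }w_n/v_n)$,
the recursion $\beta_{n+1}=\beta_n+p\alpha_n$; since $v_p(\alpha_n)>-r_n$, the differences
$\beta_n-\beta_0$ stay uniformly small. But $\beta_n=a_0-a_n-\nu_n$ with
$v_p(\nu_n)>-r_n$, so
\[
a_0-\beta_0=a_n+\nu_n+(\beta_n-\beta_0)\in B(a_n,-r_n)=D_n\quad\text{for every }n,
\]
contradicting $\bigcap_n D_n=\emptyset$. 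The essential input is thus the \emph{global}
accumulation of linear terms, not any localisation at a zero of~$u_n$.
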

\begin{proof}
Suppons que cette classe est divisible par $p$.  Il existe alors $v_n,g_n\in \O(B_{r_n})^\dual$, tels
que $u_n=\frac{v_{n+1}}{v_n}g_n^p$, pour tout $n\in\N$.
Quitte \`a diviser $g_n,v_n$ par des constantes, on peut supposer que 
$$g_n=1+\alpha_n T+\cdots,\quad v_n=1+\nu_nT+\cdots,$$
et l'hypoth\`ese d'inversibilit\'e implique
$$v_p(\alpha_n)>-r_n,\quad v_p(\nu_n)>-r_n.$$
Soit $w_n=(1-(a_1-a_0)T)\cdots(1-(a_{n-1}-a_{n-2})T)$.  On a $u_n=\frac{w_{n+1}}{w_n}$
sur $B_{r_n}$.  On en d\'eduit que
$$\frac{w_{n+1}}{v_{n+1}}=\frac{w_n}{v_n}g_n^p.$$
Si $\frac{w_n}{v_n}=1+\beta_n T+\cdots$, la relation pr\'ec\'edente implique
$$\beta_{n+1}=\beta_n+p\alpha_n,\quad{\text{et donc $\beta_n-\beta_0\in p^{1-r_0}\O_C$, pour tout $n$.}}$$
Maintenant, on a $\beta_n=a_0-a_n-\nu_n$, et donc
$$a_0-\beta_0=a_n+\nu_n+(\beta_n-\beta_0)\in B(a_n,-r_n),\quad{\text{pour tout $n$.}}$$
On en d\'eduit que $a_0-\beta_0\in \cap_n B(a_n,-r_n)$, ce qui conduit \`a une contradiction
puisque cette intersection est vide par hypoth\`ese.  Cela prouve que cette classe n'est pas
divisible par $p$.

Le cas $k$ g\'en\'eral se traite de la m\^eme mani\`ere: il suffit d'\'elever tout \`a la puissance~$p^k$,
ce qui multiplie les coefficients de $T$ par $p^k$.
\end{proof}

\subsubsection{Le sous-groupe de torsion du groupe de Picard}\label{appen23}
Le but de ce num\'ero est de prouver le r\'esultat suivant.
\begin{prop}\label{appen24}
Le groupe ${\rm Pic}(\ocirc{B})$ est sans torsion.
\end{prop}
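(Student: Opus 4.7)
The plan is to show that every $p^k$-torsion class in $\Pic(\ocirc{B}^{\rm an})$ is trivial. Let $(f_n)_{n\in\N}$ with $f_n\in\O(B_{r_n})^\dual$ represent such a class; by assumption, there exist $g_n\in\O(B_{r_n})^\dual$ with $f_n^{p^k}=g_{n+1}/g_n$ for all $n\in\N$. One has to exhibit $h_n\in\O(B_{r_n})^\dual$ with $f_n=h_{n+1}/h_n$.

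First I would normalize, exactly as in the proof of prop.\,\ref{appen6}: since $C^\dual$ is divisible, multiplying $(f_n)$ and $(g_n)$ by appropriate sequences of constants in $C^\dual$ (a coboundary on the $(f_n)$-side) reduces us to the case $f_n(0)=g_n(0)=1$ for every $n$. Under this normalization, $f_n$ and $g_n$ lie in the multiplicative subgroup of units with constant term $1$ on their respective disks, and the constraint $|f_n|_{r_n}=|g_n|_{r_n}=1$ gives $v_p(a_m)\geq -r_n m$ for each coefficient $a_m$ of $f_n-1$ or $g_n-1$.

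Next, working formally in $1+TC[[T]]$, I would set $h_n:=\exp(p^{-k}\log g_n)$, where $\log$ and $\exp$ are the usual formal power series. The cocycle identity $f_n^{p^k}=g_{n+1}/g_n$ gives $p^k\log f_n=\log g_{n+1}-\log g_n$, from which one checks that $h_{n+1}/h_n=f_n$ and $h_n^{p^k}=g_n$ hold as formal identities in $1+TC[[T]]$. The problem is then reduced to showing that the formal series $h_n$ actually converges as an element of $\O(B_{r_n})$, for then it is automatically a unit (its inverse $\exp(-p^{-k}\log g_n)$ being convergent by the same argument) and the cocycle identity $(f_n)=(h_{n+1}h_n^{-1})$ is inherited from the formal level.

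The hard part is the convergence statement: a naive coefficient-by-coefficient analysis of $h_n=g_n^{1/p^k}$ would lose a factor $p^{km}$ in the coefficient of $T^m$ (coming from the binomial coefficients $\binom{1/p^k}{m}$, whose $p$-adic valuation is $-km-v_p(m!)$), which is insufficient to guarantee $|h_n|_{r_n}\leq 1$ when $r_n$ is small. The idea is to use the cocycle structure: the relation $\log g_n=\log g_0+p^k\sum_{i<n}\log f_i$ allows one to track the $p$-adic valuations of the coefficients of $p^{-k}\log g_n$ via the valuations of the coefficients of the $\log f_i$, and the extra factor of $p^k$ in front of the sum absorbs the loss coming from the binomial expansion. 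A Gauss-norm estimate in the spirit of the proof of prop.\,\ref{appen6} then yields the bound $|h_n|_{r_n}\leq 1$. The main obstacle is to organize this estimate cleanly: the individual factors $\log f_i$ live on the smaller disks $B_{r_i}\subset B_{r_n}$ rather than on $B_{r_n}$, so the partial sums must be analyzed coefficient by coefficient, and one must verify that the constraints on the coefficients of the $f_i$ propagated by the cocycle relation produce enough cancellation to recover the missing factor of $p^k$.
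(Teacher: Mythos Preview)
Your construction has a genuine gap: the formal series $h_n=\exp(p^{-k}\log g_n)$ need not lie in $\O(B_{r_n})$, and the cocycle relation does not force it to. Take the simplest case $k=1$, choose $a\in\O_C^\dual$, and set $g_n=1+aT$ for every~$n$; then $g_n\in\O(B_{r_n})^\dual$ (since $aT=ap^{r_n}T_n$ with $v_p(ap^{r_n})=r_n>0$) and $g_{n+1}/g_n=1=f_n^{\,p}$ with $f_n=1$. Your recipe gives $h_n=(1+aT)^{1/p}=\sum_j\binom{1/p}{j}a^jp^{jr_n}T_n^j$, and since $v_p\binom{1/p}{j}=-j\,\tfrac{p}{p-1}+\tfrac{s_p(j)}{p-1}$, the coefficient of $T_n^{p^m}$ has valuation $p^m\big(r_n-\tfrac{p}{p-1}\big)+\tfrac{1}{p-1}\to-\infty$ as soon as $r_n<\tfrac{p}{p-1}$. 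So $h_n\notin\O(B_{r_n})$. Your ``cancellation'' heuristic cannot rescue this: in the identity $p^{-k}\log g_n=p^{-k}\log g_0+\sum_{i<n}\log f_i$ every $\log f_i$ vanishes, and you are left with exactly the divergent term $p^{-k}\log g_0$. The underlying issue is that $\exp$ does not send $\O(B_{r_n})$ to itself unless its argument is small in Gauss norm, and nothing in the hypotheses bounds $|p^{-k}\log g_n|_{r_n}$ below $p^{-1/(p-1)}$.

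The paper's proof avoids taking $p^k$-th roots of the trivializing sequence altogether. It first isolates a key lemma: any cocycle $(u_n)$ with $u_n\in 1+p^{\delta}T_n\O(B_{r_n})$ for some fixed $\delta>0$ is a coboundary (proved by an explicit infinite-product construction). Then, assuming $[(u_n^p)]=0$, it passes to the ``Frobenius twist'' $u_n^{(p)}=1+\sum a_{n,k}^p\,T_n^{pk}$, observes $u_n^{(p)}/u_n^p\in 1+pT_n\O(B_{r_n})$, and applies the lemma to get $u_n^{(p)}=w_{n+1}/w_n$ with $w_n$ involving only $p$-th powers of $T_n$. Taking \emph{coefficient-wise} $p$-th roots (which is harmless since $C$ is algebraically closed) yields $w'_n\in\O(B_{r_n})^\dual$ with $u_n\cdot(w'_{n+1}/w'_n)^{-1}\in 1+p^{1/p}T_n\O(B_{r_n})$, and a second application of the lemma finishes. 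The point is that one never needs the given $g_n$ to be a $p$-th power; one manufactures a \emph{new} trivializing sequence for which the obstruction is controlled.
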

\begin{proof}
Il suffit de prouver que ${\rm Pic}(\ocirc{B})$ n'a pas de $p$-torsion.
Si $(u_n)_{n\in\N}\in\prod_{n\in\N}\O(B_{r_n})^\dual$, on note
$[(u_n)]$ sa classe dans ${\rm Pic}(\ocirc{B})$.
On pose $T_n=T/p^{r_n}$, et donc $T_{n+k}=p^{r_n-r_{n+k}}T_n$.
Alors $\O(B_{r_n})=\O_C\langle T_n\rangle$.

Nous aurons besoin du lemme suivant.
\begin{lemm}\label{appen25}
Soit $(u_n)_{n\in\N}\in\prod_{n\in\N}\O(B_{r_n})^\dual$.
S'il existe $\delta>0$ tel que $u_n\in 1+p^\delta T_n\O(B_{r_n})$ pour tout $n$
assez grand, alors $[(u_n)]=0$.
\end{lemm}
\begin{proof}
On peut \'ecrire $u_n=\prod_{k\geq 1} u_{n,k}$, avec $u_{n,k}=1+p^\delta a_{n,k}T_n^k$
et $a_{n,k}\in\O_C$ tend vers~$0$ quand $k\to\infty$ ($n$ \'etant fix\'e): l'existence
des $a_{n,k}\in\O_C$ est imm\'ediate et pour prouver que $a_{n,k}\to 0$, il suffit
de d\'evelopper, de regarder modulo~$p^{2\delta}$, $p^{3\delta}$, etc., et d'utiliser le
fait que $u_n$ est un polyn\^ome modulo~$p^{2\delta}$, $p^{3\delta}$, etc.

On choisit $N_k$ tel que $\delta-kr_n\geq 0$ si $n\geq N_k$.
On d\'efinit $v_{n,k}$ par $v_{n,k}=1$ si $n=N_k$ et
$v_{n+1,k}=u_{n,k}v_{n,k}$, si $n\in\N$.
De mani\`ere explicite, on a
$$v_{n,k}=\begin{cases}
\prod_{i=N_k}^{n-1}(1+p^\delta a_{i,k}p^{-kr_i}T^k) &{\text{si $n\geq N_k$,}}\\
\prod_{i=n}^{N_k-1}(1+p^\delta a_{i,k}p^{k(r_n-r_i)}T_n^k)^{-1} &{\text{si $n\leq N_k-1$.}}
\end{cases}$$
Comme $a_{n,k}\to 0$ quand $k\to\infty$, on voit que, si $0<\delta'<\delta$,
alors $v_{n,k}\in 1+p^{\delta'}T_n\O(B_{r_n})$,
pour tout $k$ (on a m\^eme $v_{n,k}\in 1+p^\delta T\O_C[[T]]$ si $N_k\leq n$), 
et que $v_{n,k}-1\to 0$ dans $p^{\delta'}T_n\O(B_{r_n})$ quand $k\to +\infty$.
Il s'ensuit que $v_n=\prod_{k\geq 1}v_{n,k}\in\O(B_{r_n})^\dual$, et comme
on a $u_n=\prod u_{n,k}=\prod(v_{n+1,k}/v_{n,k})=v_{n+1}/v_n$,
cela permet de conclure.
\end{proof}

Revenons \`a la preuve de la prop.\,\ref{appen24}. 
On suppose que $[(u_n^p)]=0$ et on veut en d\'eduire que $[(u_n)]=0$.

\'Ecrivons $u_n=1+\sum_{k\geq 1}a_{n,k}T_n^k$ et posons
$u_n^{(p)}=1+\sum_{k\geq 1}a_{n,k}^pT_n^{pk}$.
Alors $u_n^{(p)}/u_n^p\in 1+pT_n\O(B_{r_n})$, pour tout $n$, et il r\'esulte
du lemme~\ref{appen25} que $[u_n^{(p)}]=0$ puisque $[(u_n^p)]=0$.
On peut donc \'ecrire $u_n^{(p)}=w_{n+1}/w_n$ pour tout $n$, avec $w_n\in\O(B_{r_n})^\dual$.
De plus, comme les seules puissances de $T$ intervenant dans $u_n^{(p)}$ sont
les $T^{pk}$, on peut \'eliminer les puissances de $T$ premi\`eres \`a $p$ dans les $w_n$
une par une (on commence par $T$ en remarquant que le coefficient $a_1$ de $T$ dans $w_n$ 
ne d\'epend pas de $n$, ce qui permet de diviser $w_n$ par $1+a_1 T$ pour tout $n$, et on recommence
avec $T^2$, etc.). On a alors $w_n=1+\sum_{k\geq 1}w_{n,k}T_n^{pk}$.

Soit $w'_{n,k}=(w_{n,k})^{1/p}$ (pour un choix de racine $p$-i\`eme),
et soit $w'_n=1+\sum_{k\geq 1}w'_{n,k}T_n^{k}\in 1+T_n\O(B_{r_n})$.
Alors $(u_n(w'_{n+1}/w'_n)^{-1})^p\in 1+pT_n\O(B_{r_n})$ car $(w'_n)^p/w_n\in 1+pT_n\O(B_{r_n})$
et $u_n^p/u_n^{(p)} \in 1+pT_n\O(B_{r_n})$.
Il en r\'esulte que $u_n(w'_{n+1}/w'_n)^{-1}\in 1+p^{1/p}T_n\O(B_{r_n})$, et le lemme~\ref{appen25}
fournit une suite de $v'_n\in\O(B_{r_n})^\dual$ tels que
$u_n(w'_{n+1}/w'_n)^{-1}=v'_{n+1}/v'_n$.  Si on pose alors $v_n=v'_nw'_n$,
on a $v_n\in \O(B_{r_n})^\dual$ et $u_n=v_{n+1}/v_n$ pour tout $n$.
On en d\'eduit que $[(u_n)]=0$, ce que l'on voulait.
\end{proof}

\subsection{Cohomologie \'etale}\label{appen13}
On a une suite exacte 
$$0\to \Z/p^n\otimes \O(\ocirc{B}^{\rm an})^\dual\to H^1_{\eet}(\ocirc{B}^{\rm an},\Z/p^n(1))\to {\rm Pic}(\ocirc{B}^{\rm an})[p^n]\to 0.$$
Comme $\O(\ocirc{B}^{\rm an})^\dual=C^\dual\times(1+\O_C[[T]])$ et que $\Z/p^n\otimes C^\dual=0$,
on d\'eduit des prop.~\ref{appen4} et~\ref{appen24} le r\'esultat suivant 
(le cas de $\Z_p$ se d\'eduit du cas de $\Z/p^n$ en passant \`a la limite
puisque $1+\O_C[[T]]$ est s\'epar\'e et complet pour la topologie $p$-adique).
\begin{coro}\label{appen14}
On a des isomorphismes:
\begin{align*}
&H^1_{\eet}(\ocirc{B}^{\rm an},\Z/p^n(1))\cong
\Z/p^n\otimes (1+\O_C[[T]])
,\quad{\text{si $n\geq 1$,}}\\
&H^1_{\eet}(\ocirc{B}^{\rm an},\Z_p(1))\cong 1+T\O_C[[T]].
\end{align*}
\end{coro}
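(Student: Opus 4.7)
The proof reduces to combining two ingredients already available: the Kummer exact sequence displayed immediately before the statement, and the torsion-freeness of ${\rm Pic}(\ocirc{B}^{\rm an})$ established in Proposition~\ref{appen24}.

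For the first formula, the Kummer sequence together with ${\rm Pic}(\ocirc{B}^{\rm an})[p^n]=0$ yields $H^1_{\eet}(\ocirc{B}^{\rm an},\Z/p^n(1))\cong \Z/p^n\otimes \O(\ocirc{B}^{\rm an})^\dual$. The decomposition $\O(\ocirc{B}^{\rm an})^\dual=C^\dual\times(1+\O_C[[T]])$ recorded in the text (splitting a unit as $f(0)$ times $f/f(0)$) then finishes the computation: since $C$ is algebraically closed, $C^\dual$ is divisible, so $\Z/p^n\otimes C^\dual=0$, and only the second factor survives.

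For the $\Z_p$ statement, pass to the inverse limit: $H^1_{\eet}(\ocirc{B}^{\rm an},\Z_p(1))=\varprojlim_n H^1_{\eet}(\ocirc{B}^{\rm an},\Z/p^n(1))$. After the first formula, this becomes $\varprojlim_n\bigl(M/M^{p^n}\bigr)$ where $M:=1+T\O_C[[T]]$ is the relevant multiplicative group, and one needs to verify that this limit equals $M$ itself, i.e.\ that $(M,\cdot)$ is Hausdorff and complete for the topology defined by the subgroups $M^{p^n}$. One should also check that the ${\rm R}^1\varprojlim$-term vanishes so that passage to the limit in the Kummer sequence is harmless.

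The main obstacle is precisely this last completeness and separatedness check. The plan is to use the canonical factorization $u=\prod_{k\geq 1}(1+a_kT^k)$ with $a_k\in\O_C$ uniquely determined, which converges in the strong ($T$-adic and coefficientwise) topology. Raising to $p^n$-th powers, one has $(1+aT^k)^{p^n}\equiv 1\pmod{p^n\O_C[[T]]}$, so $M^{p^n}\subset 1+p^nT\O_C[[T]]$ gives separatedness; and a bookkeeping argument on the factorization, lifting the $a_k$'s one $k$ at a time and using that the $p^n$-th power map on $1+aT^k\O_C$ has cokernel of exponent $p^n$, shows that compatible sequences of classes mod $M^{p^n}$ admit a unique lift to $M$. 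This yields the isomorphism $M\overset{\sim}{\to}\varprojlim_n M/M^{p^n}$ and hence the second formula of the corollary.
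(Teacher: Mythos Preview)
Your approach is exactly the paper's: Kummer plus ${\rm Pic}(\ocirc{B}^{\rm an})[p^n]=0$ (Prop.~\ref{appen24}) gives the first isomorphism, and the second follows by passing to the limit once one knows that $M=1+T\O_C[[T]]$ is separated and complete for the filtration $(M^{p^n})_n$. The paper asserts this last point without proof; you attempt a justification, and this is where a concrete error appears.

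The claim $(1+aT^k)^{p^n}\equiv 1\pmod{p^n\O_C[[T]]}$ is false: already $(1+T)^p\equiv 1+T^p\pmod p$, so $M^{p^n}\not\subset 1+p^nT\O_C[[T]]$. The correct estimate is $M^{p^n}\subset 1+(p,T)^{n+1}\O_C[[T]]$: for $v\in T\O_C[[T]]$ and $1\le k\le p^n$ one has $v_p\binom{p^n}{k}=n-v_p(k)$ and $v^k\in T^k\O_C[[T]]$, hence $\binom{p^n}{k}v^k\in p^{\,n-v_p(k)}T^k\O_C[[T]]\subset(p,T)^{n+1}$ since $(n-v_p(k))+k\ge n+1$. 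This corrected inclusion gives separatedness at once, and it also yields completeness cleanly: choosing lifts $u_n$ of a compatible system with $u_{n+1}/u_n=w_n^{p^n}$, the sequence $(u_n)$ is $(p,T)$-adically Cauchy in $\O_C[[T]]$ and therefore has a limit $u\in M$, and then $u/u_n=\bigl(\prod_{k\ge n}w_k^{p^{k-n}}\bigr)^{p^n}\in M^{p^n}$ because the inner product converges by the same estimate. Your degree-by-degree sketch can be made to work but is more laborious. The aside about ${\rm R}^1\varprojlim$ is unnecessary here: $H^1_{\eet}(-,\Z_p(1))$ is by definition the inverse limit over $n$, so nothing is being pushed through an exact sequence.
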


Comme $H^2_{\eet}(\ocirc{B}^{\rm an},\Z_p(1))$ est le compl\'et\'e $p$-adique de ${\rm Pic}(\ocirc{B}^{\rm an})$, on d\'eduit
de la prop.~\ref{appen6}
le r\'esultat suivant.
\begin{theo}\label{appen15}
On a la dichotomie suivante:

$\bullet$ Si $C$ est sph\'eriquement complet, $H^2_{\eet}(\ocirc{B}^{\rm an},\Z_p(1))=0$.

$\bullet$ Si $C$ n'est pas sph\'eriquement complet, $H^2_{\eet}(\ocirc{B}^{\rm an},\Z_p(1))\neq 0$
et n'a pas de torsion.
\end{theo}

\begin{rema}\label{appen16}
Supposons $C$ non sph\'eriquement complet.

(i) En utilisant les exemples de la prop.~\ref{appen6}, on peut montrer
que $H^2_{\eet}(\ocirc{B}^{\rm an},\Z_p(1))$ est, non seulement non nul, mais {\og \'enorme\fg}.

(ii) Comme $H^2_{\eet}(\ocirc{B}^{\rm an},\Q_p(1))\neq 0$, cela fournit un exemple de non injectivit\'e
de l'application naturelle de la cohomologie \'etale vers la pro\'etale
puisque 
$H^2_{\proet}(\ocirc{B}^{\rm an},\Q_p(1))=0$ (cf.~\cite{CN2,CDN2}).
\end{rema}
\Subsection{Cohomologie syntomique et cohomologie du groupe fondamental}\label{appen17}

On remarque, en comparant les r\'esultat 
du \S\,\ref{appen13} avec ceux de la prop.\,\ref{basic30},
 que ${\rm Syn}(\ocirc{B},1)$ calcule la cohomologie \'etale de $\ocirc{B}^{\rm an}$ si $C$ est sph\'eriquement
complet mais pas si $C$ n'est pas sph\'eriquement complet.
Une explication est la suivante:
dans tous les cas,  le complexe total associ\'e au complexe double
$$\xymatrix@R=.5cm{
\prod_{n\in\N}F^1\O(\widetilde B_{r_n}) \ar[r]^-{(d,1-\frac{\varphi}{p})}\ar[d]&
\prod_{n\in\N}\Omega^1(\widetilde B_{r_n}) \oplus \prod_{n\in\N}\O(\widetilde B_{r_n})
\ar[r]^-{(1-\frac{\varphi}{p})-d}\ar@<-1cm>[d]\ar@<1cm>[d]& 
\prod_{n\in\N}\Omega^1(\widetilde B_{r_n})\ar[d]\\
\prod_{n\in\N}F^1\O(\widetilde B_{r_n}) \ar[r]^-{(d,1-\frac{\varphi}{p})}&
\prod_{n\in\N}\Omega^1(\widetilde B_{r_n}) \oplus \prod_{n\in\N}\O(\widetilde B_{r_n})
\ar[r]^-{(1-\frac{\varphi}{p})-d}& \prod_{n\in\N}\Omega^1(\widetilde B_{r_n})
}$$
dans lequel les fl\`eches verticales sont $(x_n)_{n\in\N}\mapsto(x_n-x_{n+1})_{n\in\N}$,
calcule la cohomologie \'etale de $\ocirc{B}^{\rm an}$ puisque les $B_{r_n}^{\rm an}$ forment un recouvrement
croissant de $\ocirc{B}^{\rm an}$.

$\bullet$ Si $C$ 
est sph\'eriquement complet, on a ${\rm R}^1\lim_{n}p^{-s_n}\O_C=0$,
pour toute suite strictement d\'ecroissante $(s_n)_{n\in\N}$ de nombres rationnels.
On en d\'eduit des r\'esultats analogues pour $\acris$, $F^1\acris$, etc.
La nullit\'e des ${\rm R}^1\lim$ ci-dessus implique que les fl\`eches
verticales sont surjectives; on peut donc
 remplacer le double complexe
par le complexe des noyaux des fl\`eches verticales qui n'est autre que
${\rm Syn}(\ocirc{B},1)$. 

$\bullet$ Si $C$ 
n'est pas sph\'eriquement complet, les fl\`eches verticales ne sont pas surjectives,
et 
on ne peut pas remplacer le double complexe par ${\rm Syn}(\ocirc{B},1)$.

\smallskip
Il y a donc une dichotomie un peu d\'esagr\'eable mais,
comme nous allons le voir,
${\rm Syn}(\ocirc{B},1)$ calcule dans tous les cas 
la cohomologie continue du groupe fondamental de $\ocirc{B}^{\rm an}$.

Soit $R=\O_C[[T]]$, et soient $\overline R$ la c\^oture int\'egrale de $R$ dans l'extension
profinie \'etale maximale de $R[\frac{1}{p}]$ et $G_R={\rm Aut}(\overline R/R)$.
\begin{prop}\label{appen20}
Le complexe ${\rm Syn}(\ocirc{B},1)$ calcule la cohomologie continue de $G_R$
\`a valeurs dans $\Z_p$ {\rm (ou $\Z_p(1)$, puisque $C$ est alg\'ebriquement clos)}.
\end{prop}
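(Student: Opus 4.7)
The plan is to follow the Fontaine--Messing strategy, replacing the affinoid setup by the formal setup relevant to $R=\O_C[[T]]$. First I would build a period ring $\widetilde{\A}(\overline R)$ carrying a continuous $G_R$-action, a Frobenius $\varphi$, a descending filtration, and a natural derivation. Concretely, let $\widehat{\overline R}$ be the $p$-adic completion of $\overline R$; since $R$ is obtained from $\O_C\langle T^{1/p^\infty}\rangle$-style perfectoid towers by Kummer extractions, $\widehat{\overline R}[\tfrac1p]$ is perfectoid and one can form $\widetilde\A(\overline R):=\acris(\widehat{\overline R})$, the $p$-completed PD-envelope of the kernel of $\theta\colon W(\widehat{\overline R}^\flat)\to \widehat{\overline R}$. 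By construction, $\widetilde\A(\overline R)$ has a $G_R$-equivariant Frobenius lifting, a filtration $F^1=\ker(\widetilde\A(\overline R)\to\widehat{\overline R})$, and a canonical continuous derivation $d$.

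Second, I would establish the relative Fontaine--Messing fundamental exact sequence
$$0\longrightarrow \Z_p(1)\longrightarrow F^1\widetilde\A(\overline R)\xrightarrow{\ 1-\varphi/p\ }\widetilde\A(\overline R)\longrightarrow 0,$$
exact up to a uniformly bounded $p$-power. The kernel identification uses $(F^1\acris)^{\varphi=p}=\Z_p\cdot t$; the surjectivity on the right is the standard argument after inverting $t$, propagated from $\acris$ to $\widetilde\A(\overline R)$ via the perfectoid structure.

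Third, and this is the technical heart, I would compute the continuous $G_R$-cohomology of both terms above, showing that the Poincar\'e-lemma--style quasi-isomorphisms
$$R\Gamma_{\mathrm{cont}}(G_R,\widetilde\A(\overline R))\simeq\bigl[\O(\widetilde{\ocirc B})\xrightarrow{d}\Omega^1(\widetilde{\ocirc B})\bigr],\qquad
R\Gamma_{\mathrm{cont}}(G_R,F^1\widetilde\A(\overline R))\simeq \bigl[F^1\O(\widetilde{\ocirc B})\xrightarrow{d}\Omega^1(\widetilde{\ocirc B})\bigr]$$
hold, compatibly with $\varphi$. This combines two ingredients: (a) an infinitesimal Poincar\'e lemma showing that the de Rham complex of $\widetilde\A(\overline R)$ relative to $\O(\widetilde{\ocirc B})=\acris[[T]]$ is an acyclic resolution; (b) an almost-purity computation showing that the non-derived $G_R$-invariants of $\widehat{\overline R}$ recover $\O_C[[T]]$ up to almost zero modules, and that higher $G_R$-cohomology vanishes almost, the target being $\O_C[[T]]$-modules. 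The tower of Kummer covers $T\mapsto T^{1/n}$ together with finite \'etale covers of $R[\tfrac1p]$ makes $\overline R$ almost faithfully flat over an explicit perfectoid subring, to which Faltings' almost purity applies.

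Combining the second and third steps, taking the cone of $1-\varphi/p$ on the two displayed complexes yields exactly the total complex $\mathrm{Syn}(\ocirc B,1)$, so that $R\Gamma_{\mathrm{cont}}(G_R,\Z_p(1))\simeq \mathrm{Syn}(\ocirc B,1)$. The main obstacle is step~three: the ring $R=\O_C[[T]]$ is formal and neither affinoid nor itself perfectoid, so the standard Faltings/Scholze almost-purity apparatus has to be adapted to this formal-adic context, and the various almost-vanishing and Poincar\'e-lemma statements have to be proved with uniform control of the $p^N$-error so that the argument descends to integral $\Z_p(1)$-coefficients rather than only to $\Q_p(1)$. A secondary difficulty is the verification, not a priori clear, that the $G_R$-invariant part of the geometric Fontaine--Messing sequence yields precisely the chosen model $\O(\widetilde{\ocirc B})=\acris[[T]]$ with the chosen Frobenius $T\mapsto T^p$, rather than some other lift; this should come out of the functoriality of the tilt, sending $T$ to its chosen compatible system of $p$-power roots inside $\overline R$.
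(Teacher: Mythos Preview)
Your overall framework---form $\widetilde\A(\overline R)=\acris(\widehat{\overline R})$, invoke the fundamental exact sequence, and then compute $R\Gamma_{\rm cont}(G_R,-)$ of the two period rings---is a legitimate route, and your displayed targets in step~3 are the right ones. But ingredient~(b) is wrong as stated: the higher continuous $G_R$-cohomology of $\widehat{\overline R}$ does \emph{not} almost vanish. One has $H^1_{\rm cont}(G_R,\widehat{\overline R})\cong\Omega^1_{R/\O_C}=R\,dT$ almost (the Faltings extension), and this nonvanishing is precisely what produces the differential $d$ in your target $[\O(\widetilde{\ocirc B})\to\Omega^1(\widetilde{\ocirc B})]$. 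If~(b) held, your own displayed quasi-isomorphism would collapse to $\acris[[T]]$ in degree~$0$. What almost purity actually gives is the almost vanishing of higher cohomology for the \emph{subgroup} $H={\rm Gal}(\overline R/R_\infty)$, where $R_\infty$ is a chosen perfectoid intermediate extension; one then still has to compute $R\Gamma_{\rm cont}(\Gamma,\acris(R_\infty))$ for $\Gamma=G_R/H\cong\Z_p$, and it is this $\Gamma$-step that accounts for $\Omega^1$. So your sketch is not so much an alternative to the paper's argument as an incomplete version of it: once~(b) is corrected, you are forced into the $\Gamma$-computation that the paper does.

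The paper makes this $\Gamma$-step completely explicit via $(\varphi,\Gamma)$-modules. It takes $R_\infty=R[(1+T)^{p^{-\infty}}]$ (so $\widehat R_\infty$ is perfectoid and $\Gamma\cong\Z_p$ with generator~$\gamma$) and writes the cohomology of~$G_R$ as the Koszul complex for $(1-\gamma,1-\varphi)$ on $\widetilde R=\A_C\otimes_{\A_C^+}\A_C^+[[T]]$, with $\gamma(T)=(1+\pi)T+\pi$ (where $\pi=[\epsilon]-1$) and $\varphi(T)=(1+T)^p-1$. It then passes to ${\rm Syn}(\ocirc B,1)$ in three moves: a decompletion using the left inverse~$\psi$ of~$\varphi$ (invertibility of $\gamma-1$ on $\widetilde R^{\psi=0}$); a reduction from $\widetilde R$ to $\widetilde R^+=\A_C^+[[T]]$ with the correct $\frac{1}{\varphi^{-1}(\pi)}$, $\frac{1}{\pi}$ twists, followed by the embedding $x\mapsto\pi x$ into $F^1\O(\widetilde{\ocirc B})$ and $\O(\widetilde{\ocirc B})$; and finally the identity $\frac{\gamma-1}{\pi}\equiv\partial\pmod\pi$, which converts the $\gamma$-action into the derivation~$d$ (as in \cite{CN},~\cite{SG}). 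This last identity is the concrete incarnation of the Faltings extension and is exactly the mechanism your~(a)+(b) fails to supply.
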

\begin{proof}
On note $R_\infty$ la sous-$R$-alg\`ebre $R[(1+T)^{p^{-\infty}}]$
de $\overline R$, et $\widehat R_\infty$ son compl\'et\'e pour la topologie
$(p,T)$-adique.  Alors $\Gamma={\rm Aut}(R_\infty/R)\cong\Z_p$ (on choisit
un g\'en\'erateur topologique $\gamma$ de $\Gamma$), et $\widehat R_\infty$
est perfecto\"{\i}de.  On en d\'eduit que la cohomologie de $G_R$ est calcul\'ee
par le complexe 
$$\xymatrix@C=1.8cm{\widetilde R\ar[r]^-{(1-\gamma,1-\varphi)}&
\widetilde R\oplus\widetilde R\ar[r]^-{(1-\varphi)-(1-\gamma)}&\widetilde R}$$
o\`u $\A_C=W(C^\flat)$, $\A^+_C=W(\O_{C^\flat})$ et $\widetilde R=
\A_C\otimes_{\A^+_C}\A_C^+[[T]]$, les actions de $\gamma$ et $\varphi$
sur $\widetilde R$ \'etant donn\'ees
par $\gamma(T)=(1+\pi)T+\pi$, o\`u $\pi=[\epsilon]-1$ et
$\varphi(T)=(1+T)^p-1$.
(Pour aboutir \`a cette expression au lieu de $\widetilde R_\infty=W(\widehat R_\infty[\frac{1}{p}]^\flat)$,
il faut utiliser l'inverse \`a gauche $\psi:\widetilde R\to \widetilde R$ de $\varphi$
donn\'e par $\psi(\sum_{i=0}^{p-1}(1+T)^i\varphi(x_i))=x_0$, et le fait que
$\gamma-1$ est inversible sur $\widetilde R^{\psi=0}$; cela permet de d\'ecompl\'eter
comme d'habitude.)

Ensuite, on v\'erifie que le complexe ci-dessus est quasi-isomorphe \`a
$$\xymatrix@C=1.5cm{\frac{1}{\varphi^{-1}(\pi)}\widetilde R^+\ar[r]^-{(1-\gamma,1-\varphi)}&
\widetilde R^+\oplus\frac{1}{\pi}\widetilde R^+\ar[r]^-{(1-\varphi)-(1-\gamma)}&\widetilde R^+},$$
o\`u $\widetilde R=\A_C^+[[T]]$ (c'est comme d'habitude, utiliser le complexe avec $\psi$
au lieu de $\varphi$ et le fait que $\widetilde R^{\psi=1}=(\widetilde R^+)^{\psi=1}$
et le fait qu'une solution $y\in \widetilde R^{\psi=0}$ de $(\gamma-1)y=x$ 
appartient \`a $(\frac{1}{\pi}\widetilde R^+)^{\psi=0}$ si $x\in (\widetilde R^+)^{\psi=0}$).
On plonge $\frac{1}{\varphi^{-1}(\pi)}\widetilde R^+$ et $\frac{1}{\pi}\widetilde R^+$
dans $F^1\widetilde R^+$ et $\widetilde R^+$ par $x\mapsto \pi x$, puis on plonge 
$F^1\widetilde R^+$ et $\widetilde R^+$ dans $F^1\O(\widetilde B)$ et $\O(\widetilde B)$
(il faut v\'erifier que ceci induit un quasi-isomorphisme).
Enfin, on remarque que $\frac{\gamma-1}{\pi}=\partial$ modulo $\pi$,
ce qui permet de passer du complexe obtenu \`a ${\rm Syn}(B,1)$ comme dans~\cite{CN}, \cite{SG}.
\end{proof}

\subsection{La fibre g\'en\'erique adoque}\label{appen21}
Il r\'esulte de la prop.~\ref{appen20} et du \S\,\ref{appen17} que, 
si $C$ est sph\'eriquement complet, la cohomologie
\'etale de $\ocirc{B}^{\rm an}$ est celle de son groupe fondamental (i.e.~$\ocirc{B}^{\rm an}$ est un $K(\pi,1)$), alors
que si $C$ n'est pas sph\'eriquement complet, ce n'est pas du tout le cas.
On peut s'extasier devant la richesse du monde $p$-adique ou consid\'erer que c'est
une pathologie dont on aimerait se d\'ebarrasser.

On peut penser
que le probl\`eme vient de ce que 
l'on n'a pas pris la bonne fibre g\'en\'erique et que, pour restaurer un
peu d'harmonie, il faudrait la remplacer par {\it la fibre g\'en\'erique adoque}
dont nous allons esquisser une d\'efinition possible.

\vskip.2cm
Soit $Y$ un affino\"{\i}de sur $C$.  
On retrouve la topologie rigide sur $Y$ en prenant comme base d'ouverts
les images inverses (i.e.~les tubes) des ouverts de mod\`eles de $Y$ sur $\O_C$.
Si $U$ est un ouvert d'un tel mod\`ele ${\cal Y}$, on a $\O_Y(]U[)=\O_{\cal Y}(U)[\frac{1}{p}]$.

On peut enrichir cette structure, en d\'efinissant $Y^{\rm ado}$ comme l'ensemble
des points de l'espace adique associ\'e \`a $Y$ se sp\'ecialisant en un point de
${\cal Y}^{\rm ado}$ pour un choix de mod\`ele assez fin de $Y$ (on n'obtient ainsi
que des valuations de rang $1$ ou $2$ induisant $v_p$ sur $C$), et en prenant
comme base d'ouverts de $Y^{\rm ado}$ les images inverses (i.e.~les tubes) 
des ouverts des sch\'emas adoques associ\'es aux mod\`eles de $Y$ sur $\O_C$.
Si $U$ est un ouvert adoque d'un tel mod\`ele ${\cal Y}$, on pose 
$\O_{Y^{\rm ado}}(]U[)=\O_{{\cal Y}^{\rm ado}}(U)[\frac{1}{p}]$;
{\it la fibre
g\'en\'erique de $U$} est l'espace annel\'e image inverse de $U$ dans $Y^{\rm ado}$.

Une {\it vari\'et\'e adoque} est alors un espace annel\'e, localement isomorphe
\`a un ouvert de $Y^{\rm ado}$, o\`u $Y$ est un affino\"{\i}de sur $C$.

Si $Y$ est un sch\'ema adoque obtenue en recollant les $U_i$
le long des $U_{i,j}$, sa fibre g\'en\'erique $Y^{\rm gen}$
est la vari\'et\'e adoque obtenue en recollant les fibres g\'en\'eriques
des $U_i$ le long des des fibres g\'en\'eriques des $U_{i,j}$.
En particulier, si $Y$ est un affino\"{\i}de, $Y^{\rm ado}$ 
est la fibre g\'en\'erique de n'importe lequel de ses mod\`eles.

\begin{rema}
{\rm (i)}
Soit $\ocirc{B}^{\rm ado}$ la boule unit\'e ouverte vue comme vari\'et\'e adoque.
Alors $\O(\ocirc{B}^{\rm ado})=\O_C[[T]][\frac{1}{p}]$.
Si on consid\`ere \`a la place la boule unit\'e rigide $\ocirc{B}^{\rm an}$,
alors $\O(\ocirc{B}^{\rm an})={\cal R}^+$, anneau des $\sum_{n\geq 0}a_nT^n$, avec $a_n\in C$
et $v_p(a_n)+nr\to+\infty$ quand $n\to +\infty$, pour tout $r>0$.
Remarquons que $\ocirc{B}^{\rm an}$ peut aussi \^etre consid\'er\'ee comme vari\'et\'e adoque,
r\'eunion des boules ferm\'ees $B_{r_n}$, et on a une injection
$\ocirc{B}^{\rm an}\hookrightarrow \ocirc{B}^{\rm ado}$ (correspondant \`a l'injection naturelle
$\O(\ocirc{B}^{\rm ado})\to \O(\ocirc{B}^{\rm an})$), mais cette injection n'est pas un isomorphisme: le
cercle fant\^ome \`a la fronti\`ere de $\ocirc{B}^{\rm ado}$ n'appartient pas \`a $\ocirc{B}^{\rm an}$
car il n'appartient \`a aucune des $B_{r_n}$.

{\rm (ii)} 
$\ocirc{B}^{\rm ado}$ est quasi-compacte contrairement \`a $\ocirc{B}^{\rm an}$ car
un voisinage du cercle fant\^ome \`a la fronti\`ere
 doit contenir un domaine rationnel non trivial, et on peut donc extraire
de tout recouvrement ouvert un recouvrement fini.  Cela signifie que l'on n'a pas de probl\`eme
de ${\rm R}^1\lim$, et donc que
$${\rm Pic}(\ocirc{B}^{\rm ado})=0.$$
On en d\'eduit que ${\rm Syn}(\ocirc{B},1)$ calcule la cohomologie \'etale
de $\ocirc{B}^{\rm ado}$ ind\'ependamment du fait que $C$ soit ou ne soit pas sph\'eriquement complet.

{\rm (iii)} Ce r\'esultat laisse esp\'erer que les (fibres g\'en\'eriques d') affines adoques
soient des $K(\pi,1)$ comme le sont les affino\"{\i}des d'apr\`es Scholze~\cite[th. 1.2]{RAV}.
Si tel n'est pas le cas, on pourrait envisager de changer la d\'efinition de la cohomologie pro\'etale
pour l'imposer...
\end{rema}

\end{appendix}

\end{document}